\newcommand\nnfootnote[1]{%
  \begin{NoHyper}
  \renewcommand\thefootnote{}\footnote{#1}%
  \addtocounter{footnote}{-1}%
  \end{NoHyper}
}
\newtheorem{thm}{Theorem}[subsection]
\newtheorem{lemma}[thm]{Lemma}
\newtheorem{prop}[thm]{Proposition}
\newtheorem{cor}[thm]{Corollary}
\theoremstyle{remark}
\newtheorem{rem}[thm]{Remark}
\newtheorem{exmp}[thm]{Example}
\newtheorem{problem}[thm]{Problem}
\theoremstyle{definition}
\newtheorem{definition}[thm]{Definition}
\newcommand{\Z}{\mathbb{Z}}
\newcommand{\norm}[1] {\| #1\|}
\newcommand{\Prob}{\mathcal P}
\newcommand{\R}{\mathbb{R}}
\newcommand{\ent}{\text{ent}}
\newcommand{\Ent}{\text{Ent}}
\newcommand{\threeeven}{\Z^3_{\text{even}}}
\newcommand{\mc}[1]{\mathcal{#1}}
\newcommand{\m}[1]{\mathbb{#1}}
\newcommand{\Vol}{\text{Vol}}
\newcommand{\dd}{\mathrm{d}}
\newcommand{\symindex}[1]{\index{{\textbf{Symbols}}!#1}}
\newcommand{\termindex}[1]{\index{{\textbf{Terms}}!#1}}
\title{Large deviations for the 3D dimer model}
\author[1]{Nishant Chandgotia}
\affil[1]{Tata Institute of Fundamental Research - Centre for Applicable Mathematics}
\author[2]{Scott Sheffield}
\affil[2]{Massachusetts Institute of Technology}
\author[3]{Catherine Wolfram}
\affil[3]{ETH Z\"urich and Yale University}
\date{\today}
\begin{document}

\maketitle 
\begin{abstract}
In 2000, Cohn, Kenyon and Propp studied uniformly random perfect matchings of large induced subgraphs of $\mathbb Z^2$ (a.k.a.\ dimer configurations or domino tilings) and developed a large deviation theory for the associated {\em height functions}. We establish similar results for large induced subgraphs of $\mathbb Z^3$. To formulate these results, recall that a perfect matching on a bipartite graph induces a {\em flow} that sends one unit of current from each even vertex to its odd partner. One can then subtract a ``reference flow'' to obtain a {\em divergence-free flow}. (On a planar graph, the curl-free dual of this flow is the height function gradient.)

We show that the flow induced by a {\em uniformly random} dimer configuration converges in law (when boundary conditions on a bounded $R \subset \mathbb R^3$ are controlled and the mesh size tends to zero) to the deterministic divergence-free flow $g$ on $R$ that maximizes 
    $$\int_{R} \ent\big(g(x)\bigr) \,dx$$
given the boundary data, where $\ent(s)$ is the maximal specific entropy obtained by an ergodic Gibbs measure with mean current $s$. The function $\ent$ is not known explicitly, but we prove that it is continuous and {\em strictly concave} on the octahedron $\mathcal O$ of possible mean currents (except on the edges of $\mathcal O$) which implies (under reasonable boundary conditions) that the maximizer is uniquely determined. We further establish two versions of a large deviation principle, using the integral above to quantify how exponentially unlikely the discrete random flows are to approximate {\em other} deterministic flows.

The planar dimer model is mathematically rich and well-studied, but many of the most powerful tools do not seem readily adaptable to higher dimensions (e.g.\ Kasteleyn determinants, McShane-Whitney extensions, FKG inequalities, monotone couplings, Temperleyan bijections, perfect sampling algorithms, plaquette-flip connectivity, etc.)\ Our analysis begins with a smaller set of tools, which include Hall's matching theorem, the ergodic theorem, non-intersecting-lattice-path formulations, and double-dimer cycle swaps. Several steps that are straightforward in 2D (such as the ``patching together'' of matchings on different regions) require interesting new techniques in 3D.
\end{abstract}

\nnfootnote{MSC2020 Subject Classifications: Primary 60F10, 82B20; Secondary 82B30.
Keywords: Dimer tilings, domino tilings, perfect matchings, large deviations principle, variational principle, Gibbs measures, Hall's matching theorem, chain swapping, local move connectedness.}

\tableofcontents 

\section{Introduction}

\subsection{Overview}
Let $G = (V,E)$ be a bipartite graph. A \textit{dimer cover} (a.k.a.\ {\em perfect matching}) of $G$ is a collection of edges so that every vertex is contained in exactly one edge. Throughout this paper, we will assume that $G$ is an induced subgraph of $\mathbb{Z}^d$. We partition $\mathbb{Z}^d$ into \textit{even} vertices (the sum of whose coordinates is even) and \textit{odd} vertices (the sum of whose coordinates is odd). By convention, we represent an ({\em a priori} undirected) edge $e$ by $(a,b)$ where $a$ is even and $b$ is odd.

We can also take a dual perspective, where each vertex $a = (a_1, a_2, \ldots, a_d)$ is represented by the hypercube $[a_1-\frac12, a_1+\frac12] \times \ldots \times [a_d - \frac12, a_d +\frac12]$ and each matched edge is represented by a ``domino'' which is the union of two adjacent hypercubes. The dominoes are $2 \times 1$ (or $1\times 2$) boxes in 2D and $2 \times 1 \times 1$ (or $1\times 2\times 1$ or $1\times 1\times 2)$ boxes in 3D. From this perspective, the perfect matchings corresponding to the subgraph induced by $V \subset \mathbb Z^d$ correspond to {\em domino tilings} of the region formed by the union of the corresponding cubes. The figure below illustrates a domino tiling of a two-dimensional region called the {\em Aztec diamond}. On the left, the domino corresponding to $(a,b)$ is colored one of four colors, according to the value of the unit-length vector $b-a$. On the right, squares are colored by parity.
\begin{figure}[H]
    \centering
    \includegraphics[scale=0.17]{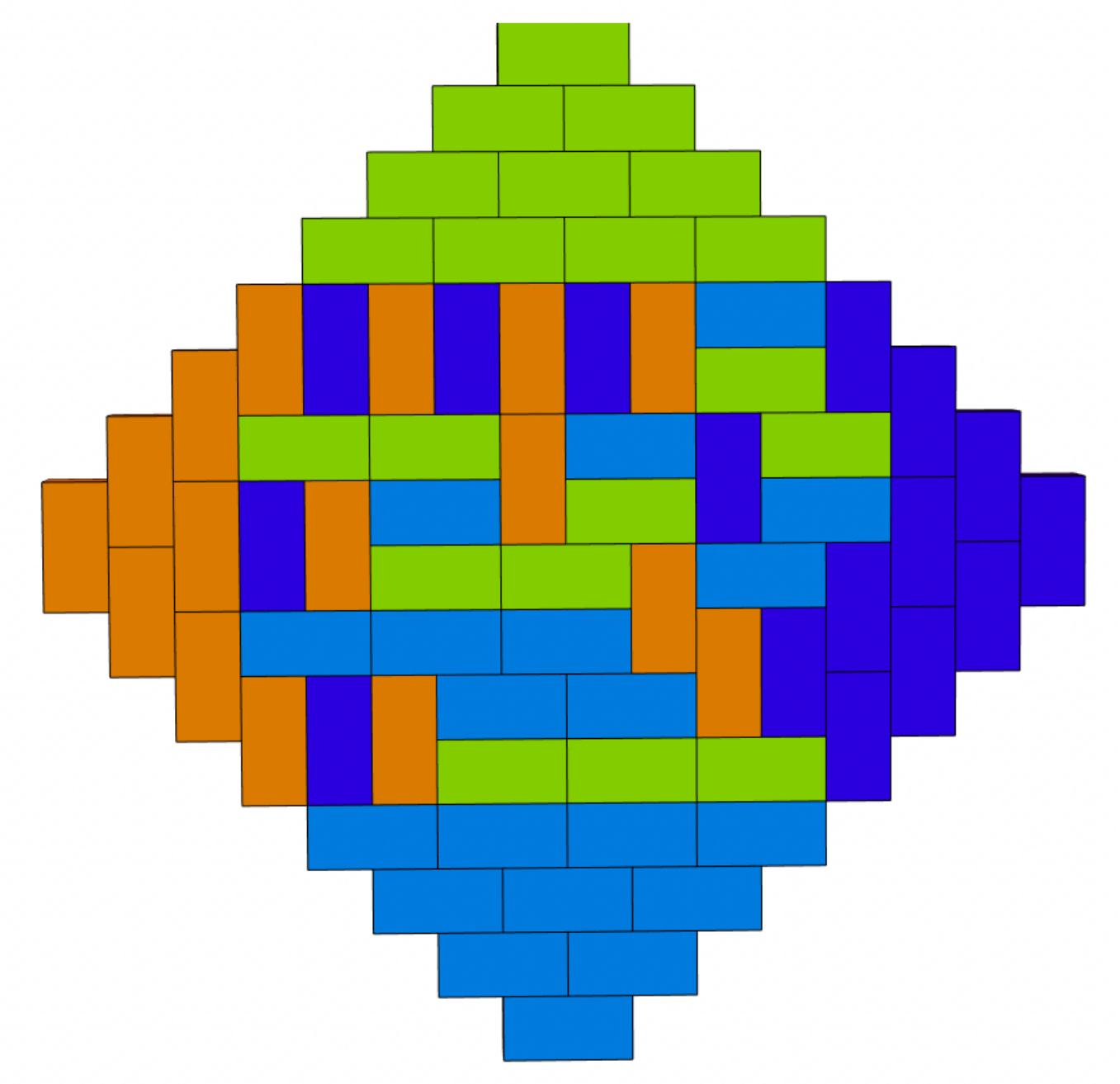}$\qquad\qquad\qquad$\includegraphics[scale=0.21]{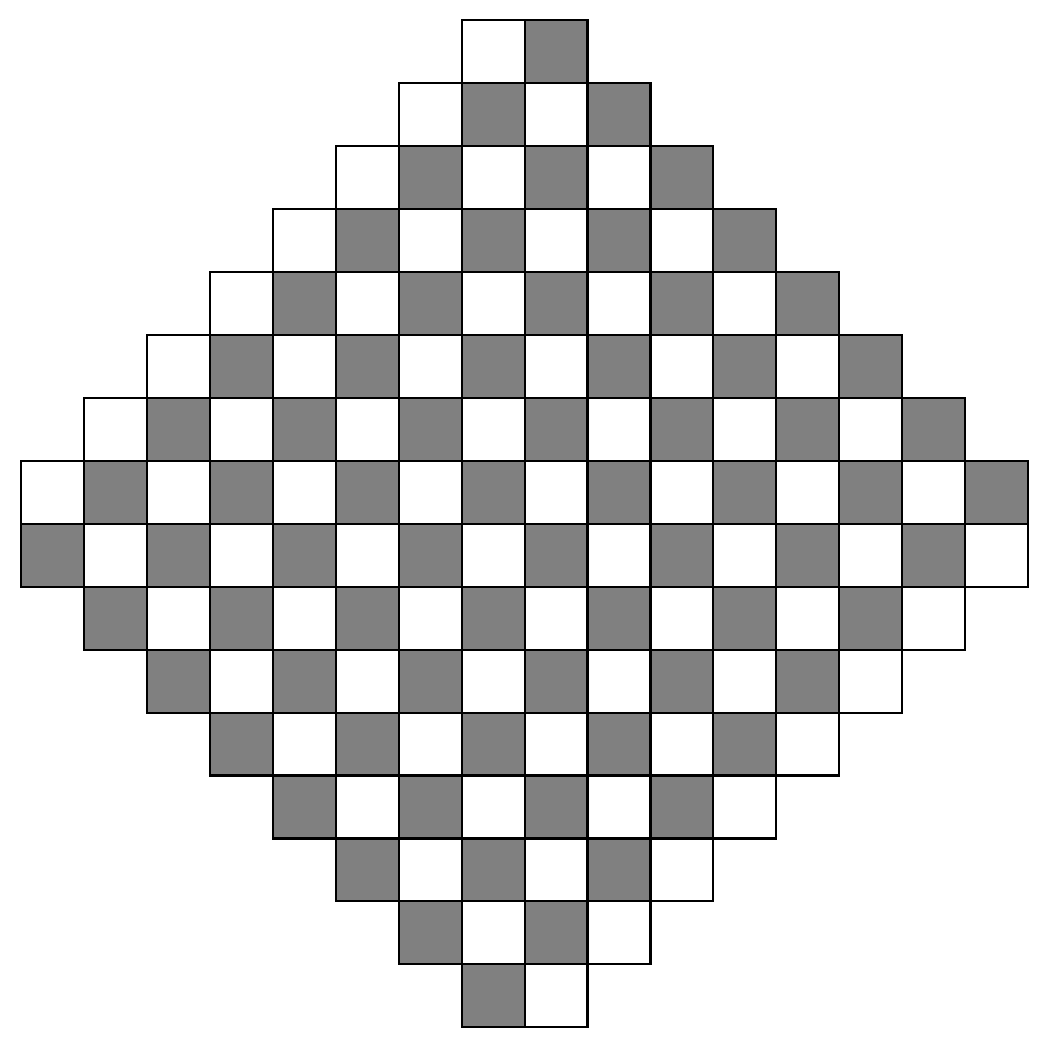} 
\caption{Tiling of an Aztec diamond and bipartite coloring of squares in $\m Z^2$.}
    \label{fig:aztec_diamond}
\end{figure}
In other words, every domino in the tiling on the left contains one square that is black (in the chessboard coloring on the right) and one that is white---and the color of a domino depends on whether its white square lies north, south, east or west of its black square.

\termindex{Chapter 1!brickwork tilings/pattern}Tilings with all dominoes oriented the same way are called {\em brickwork tilings}. There are four brickwork orientations in dimension $2$---and $2d$ brickwork orientations in dimension $d$.
\begin{figure}[H]
    \centering
        \includegraphics[scale=0.17]{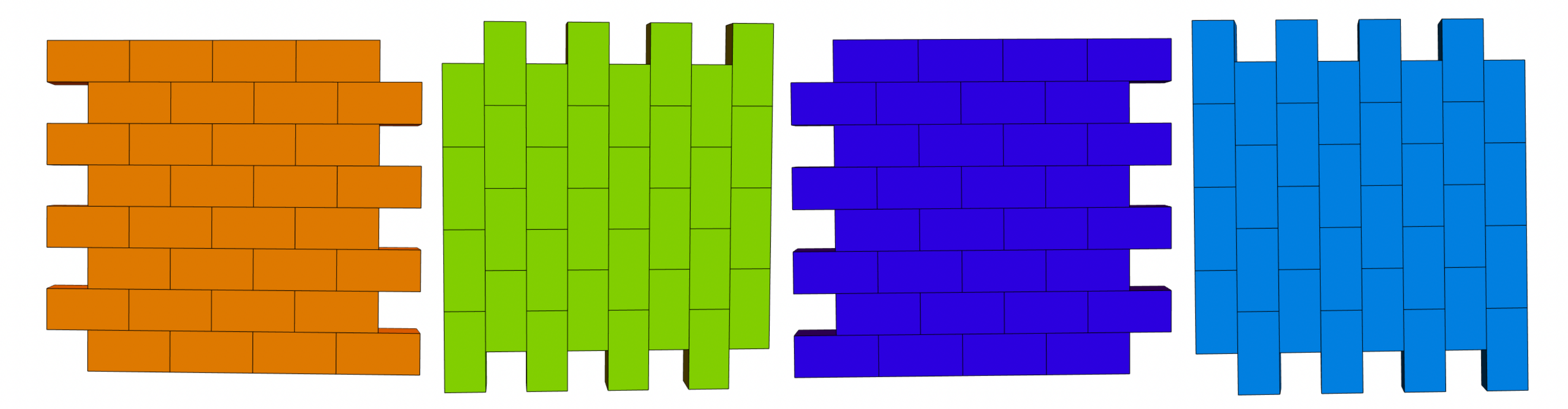}
    \caption{The four brickwork patterns in two dimensions.}
    \label{fig:brickworks}
\end{figure}
A perfect matching $\tau$ of an induced subgraph $R$ of $\mathbb Z^d$ induces an {\em lattice flow} $v_\tau$ that sends one unit of current from every even vertex to the odd vertex it is matched to. If we subtract a ``reference flow'' (which sends a current of magnitude $1/2d$ from each even vertex to each of its $2d$ odd neighbors) we obtain a {\em divergence-free} flow $f_\tau$.  The main problem in this paper is to understand the behavior of the {\em random} divergence-flow $f_\tau$ that corresponds to a $\tau$ chosen uniformly from the set of tilings of a large region, subject to certain boundary conditions.

\subsection{Two-dimensional background}

In two dimensions, the divergence-free flow on $\mathbb Z^2$ described above has a dual flow on the dual graph (obtained by rotating each edge 90 degrees counterclockwise about its center) that is a curl-free flow, and hence can be realized as the discrete gradient of some real-valued function defined on the vertices of the dual graph; see Section \ref{section:tiling_flows}. This function (defined up to additive constant) is called the {\em height function} of the flow. Questions about the random flows associated to random perfect matchings can be equivalently formulated as questions about random height functions. For example, one can ask: when a tiling of a large region is chosen uniformly at random, what does the ``typical'' height function look like?

In 2000, Cohn, Kenyon and Propp studied domino tilings of a domain $R \subset \mathbb R^2$ like the one below, asking what happens in the limit as the mesh size tends to zero and the (appropriately rescaled) height function on the boundary converges to a limiting function \cite{cohn2001variational}. Note that given any tiling $\tau$ that covers $R$ (e.g.\ $\tau$ could be one of the brickwork tilings) one can form a tileable region $R_n$ by restricting to the tiles strictly contained in $R$, and the choice of $R_n$ 
determines how the height function changes along the boundary.
\begin{figure}[H]
    \centering
    \includegraphics[scale=0.45]{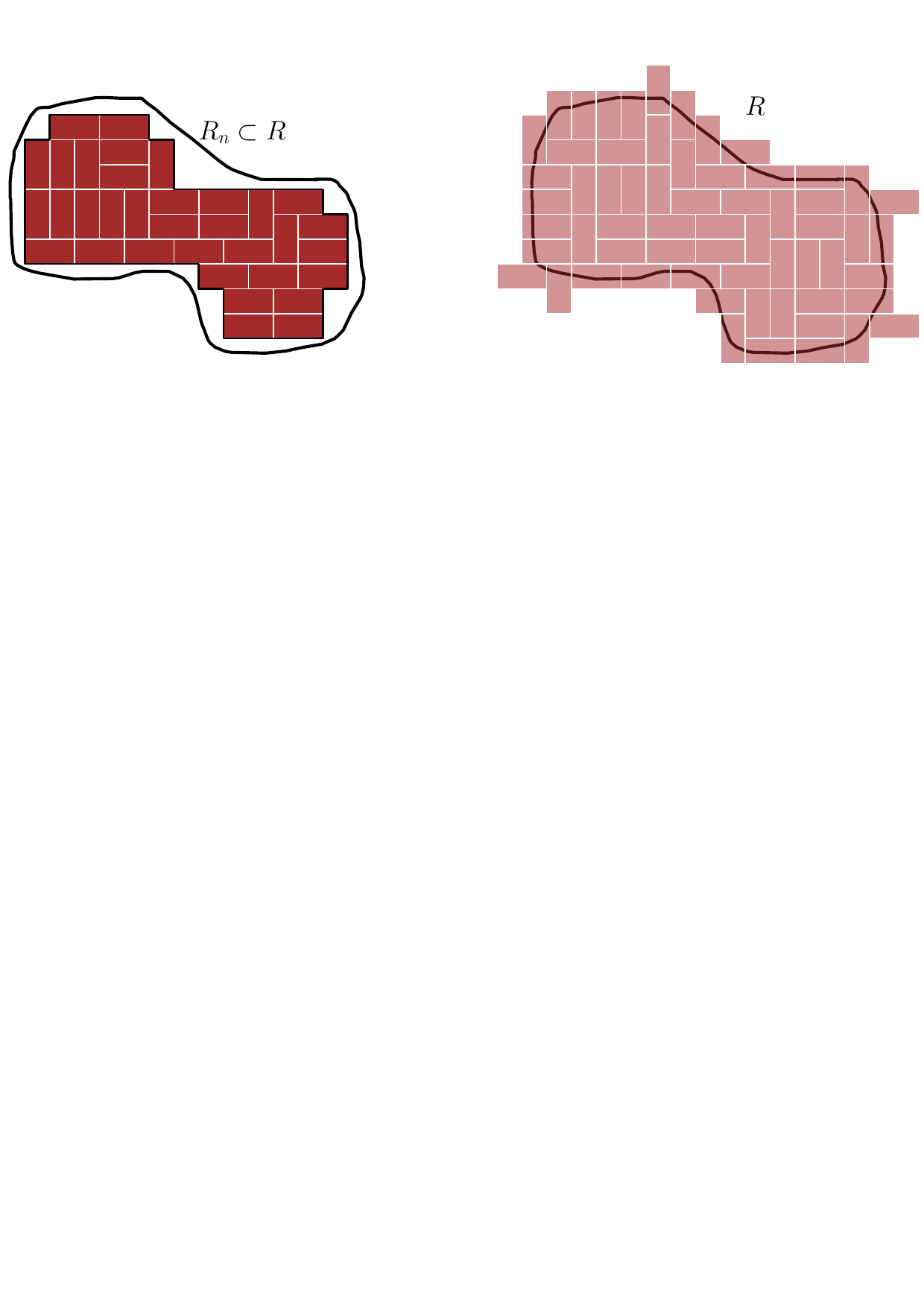}
    \caption{An example of a fixed boundary region $R_n\subset R$ for the LDP in two dimensions.}
    \label{fig:2D_fixed_boundary}
\end{figure}
Cohn, Kenyon and Propp showed that as the mesh size tends to zero (and the rescaled boundary heights converge to some function on $\partial R$) the random height function converges in probability to the unique continuum function $u$ that (given the boundary values) minimizes the integral \begin{equation} \label{eqn::2dent} \int \sigma\big( \nabla u(z) \big) dz, \end{equation} where $\sigma(s)$ is the {\em surface tension} function, which means that $-\sigma(s)$ is the {\em specific entropy} (a.k.a.\ {\em entropy per vertex}) of any ergodic Gibbs measure of slope $s$. More generally, they established a theory of {\em large deviations} by showing how exponentially unlikely the random height function would be to concentrate near any other $u$. Earlier work studied this problem specifically for the \textit{Aztec diamond}, see \cite{CohnElkiesPropp} and \cite{jockusch1998random}.

The proof in \cite{cohn2001variational} used ingredients from the scalar height function theory (McShane-Whitney extensions, monotone couplings, stochastic domination, etc.)\ and the Kasteleyn determinant representation (an exact formula for the entropy function) that do not appear readily adaptable to three dimensions. 

The literature on the two-dimensional dimer model is quite large and we will not attempt a detailed survey here. Introductory overviews with additional references include e.g.\ \cite{kenyon2009lectures} and \cite{gorin2021lectures}.

We remark that fixing the asymptotic height function boundary values on $\partial R$ is equivalent to fixing the asymptotic rate at which current flows though $\partial R$ in the corresponding divergence-free flow. The latter interpretation is the one that extends most naturally to higher dimensions.

\subsection{Three-dimensional setup and simulations}
The goal of this paper is to extend \cite{cohn2001variational} to higher dimensions, where different tools are required. For simplicity and clarity, we focus on 3D, but we expect similar arguments to work in dimensions higher than three. (We discuss possible generalizations and open problems in Section~\ref{sec:open}.) Before we present our main results, we provide a few illustrations. The figure below illustrates a uniformly random tiling of a $10 \times 10 \times 10$ cube, with six colors corresponding to the six orientations. Next to it is the underlying black-and-white checkerboard grid. This figure and the others below were generated by a Monte-Carlo simulation (see Section \ref{sec:uniform sampling}) that is known to be mixing, but whose mixing rate is not known. It was run long enough that the pictures {\em appeared} to stabilize but we cannot quantify how close our samples are to being truly uniform. The efficient {\em exact sampling} algorithms that work in 2D do not have known analogs in 3D.

\begin{figure}[H]
    \centering
    \includegraphics[scale=0.35]{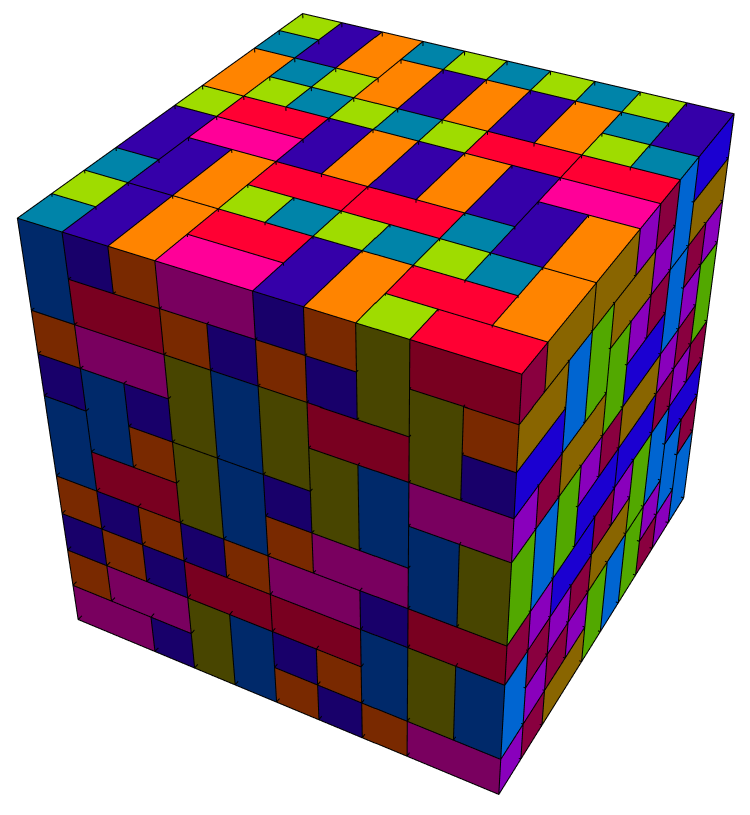}$\qquad \qquad \qquad$ \includegraphics[scale=0.35]{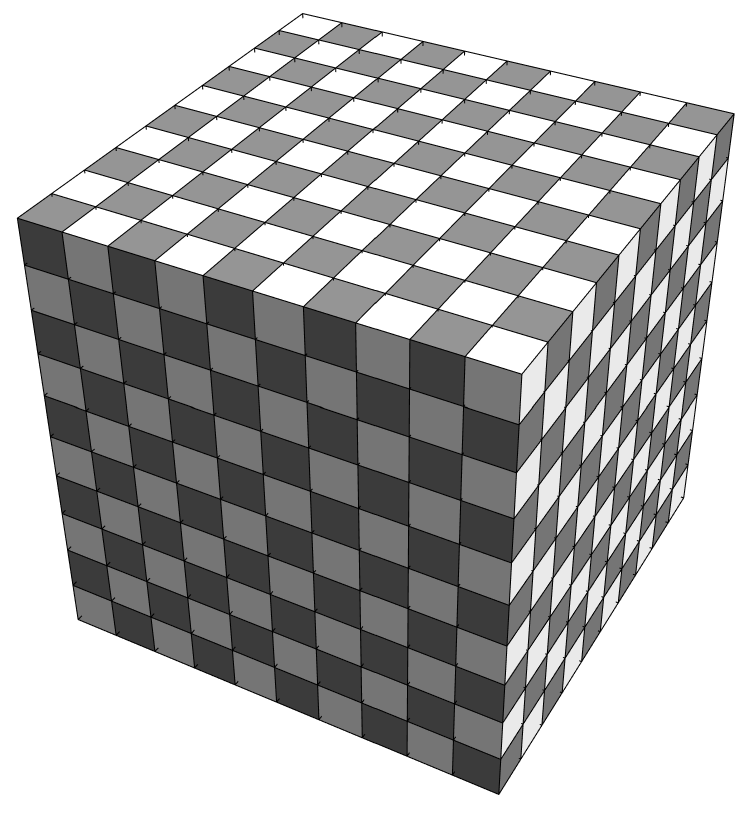}
    \caption{A dimer tiling of the $10\times 10\times 10$ cube and the bipartite coloring of the cubes in $\m Z^3$.}
    \label{fig:cubes!}
\end{figure}
The figure below represents a random tiling $\tau$ of a region $R$ called the {\em Aztec pyramid} (formed by stacking Aztec diamonds of width $2, 4, 6, \ldots, 36$). Next to it is again the underlying black-and-white checkerboard coloring. Recall that (due to the reference flow) the divergence-free flow $f_\tau$ sends a $1/6$ unit of current through each square on the boundary $\partial R$. Such a square divides a cube inside $R$ from a cube outside $R$. The flow is directed {\em into} $R$ if the cube inside $R$ is even, and {\em out of} $R$ if the cube inside $R$ is odd. Of the four triangular faces of the pyramid, two consist entirely of even cubes and the other two consist entirely of odd cubes. This means that $f_\tau$ current enters two opposite triangular faces at its maximal rate, and exits other two triangular faces at its maximal rate, while the net current through the bottom square face is zero (since on the lower boundary, the number of faces bounding even cubes in $R$ equals the number of faces bounding odd cubes in $R$).
\begin{figure}[H]
    \centering
    \includegraphics[scale=0.2]{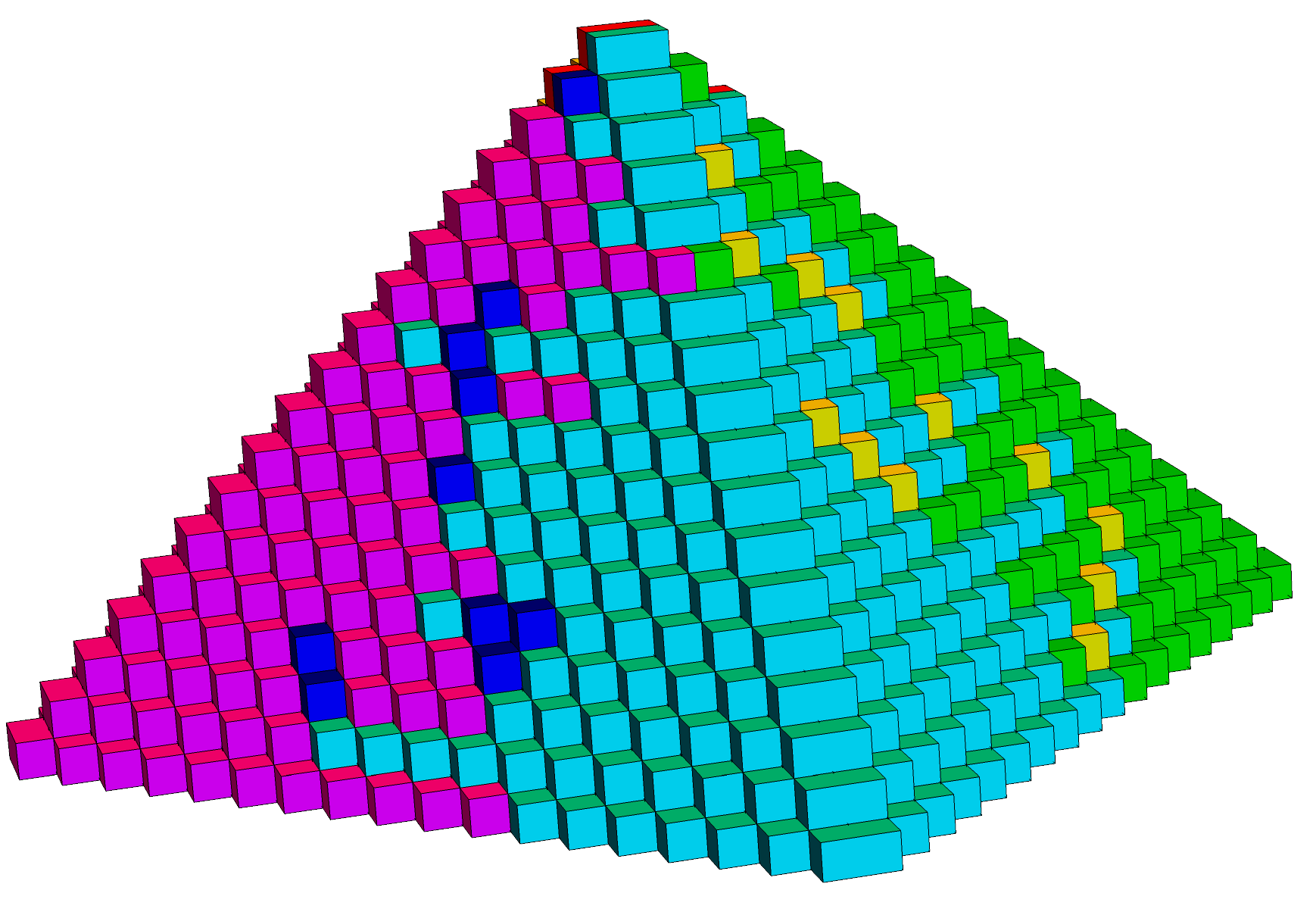}$\qquad\qquad$\includegraphics[scale=0.37]{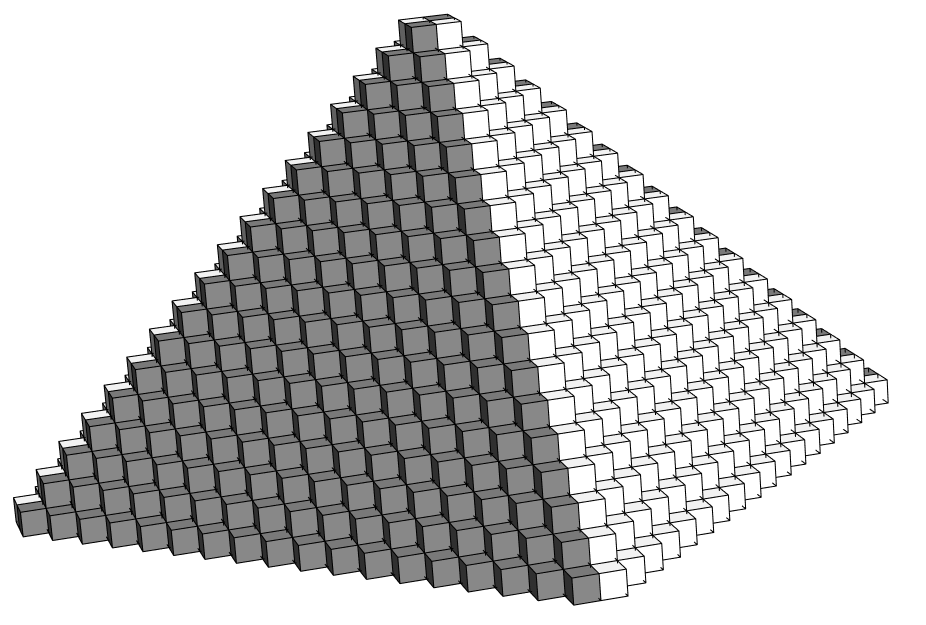}
    \caption{A dimer tiling of an Aztec pyramid and the bipartite coloring of the cubes in $\m Z^3$.}
    \label{fig:pyramids!}
\end{figure}
Below is a larger Aztec pyramid seen from above and from underneath. One can construct a computer animation showing the horizontal cross-sections one at a time. For the three large simulations shown here in Figures~\ref{fig:large_pyramid}, \ref{fig:aztecoctahedron} and \ref{fig:prism}, animations of the slices are available at \href{https://github.com/catwolfram/3d-dimers}{https://github.com/catwolfram/3d-dimers}. In these animations, it appears that each cross section has four ``frozen'' brickwork regions and a roughly circular ``unfrozen'' region, similar to the 2D Aztec diamond.
\vspace{-.15in}
\begin{figure}[H]
    \centering
         \includegraphics[scale=0.50]{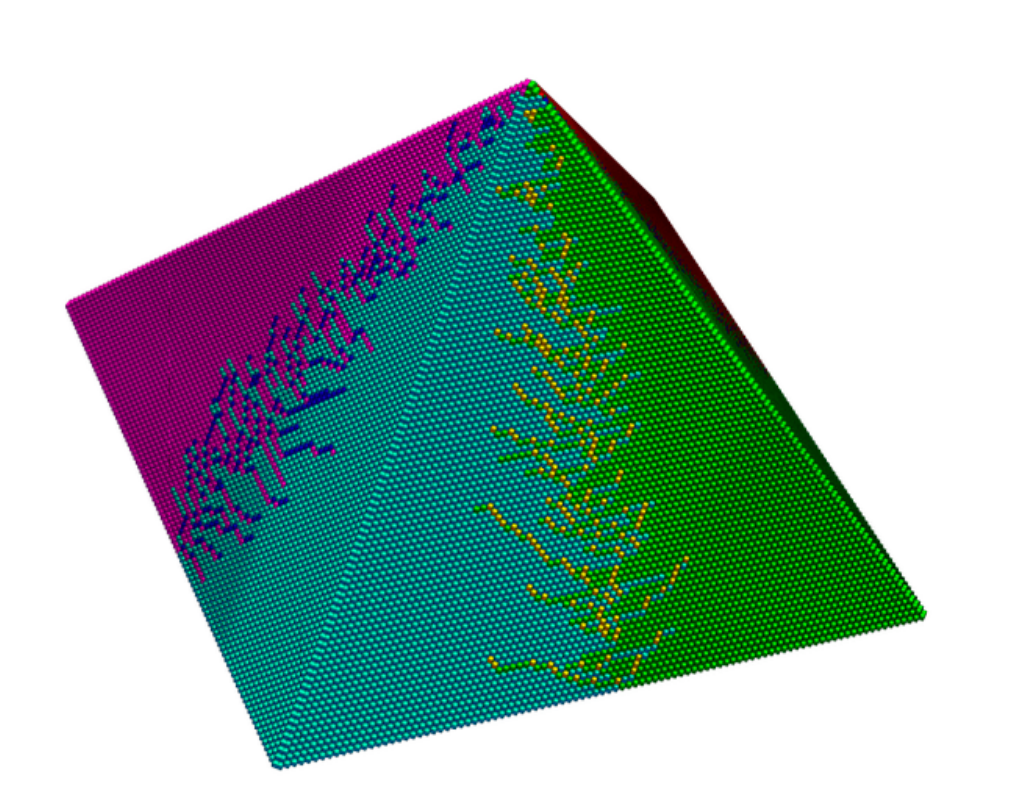}\includegraphics[scale=0.20]{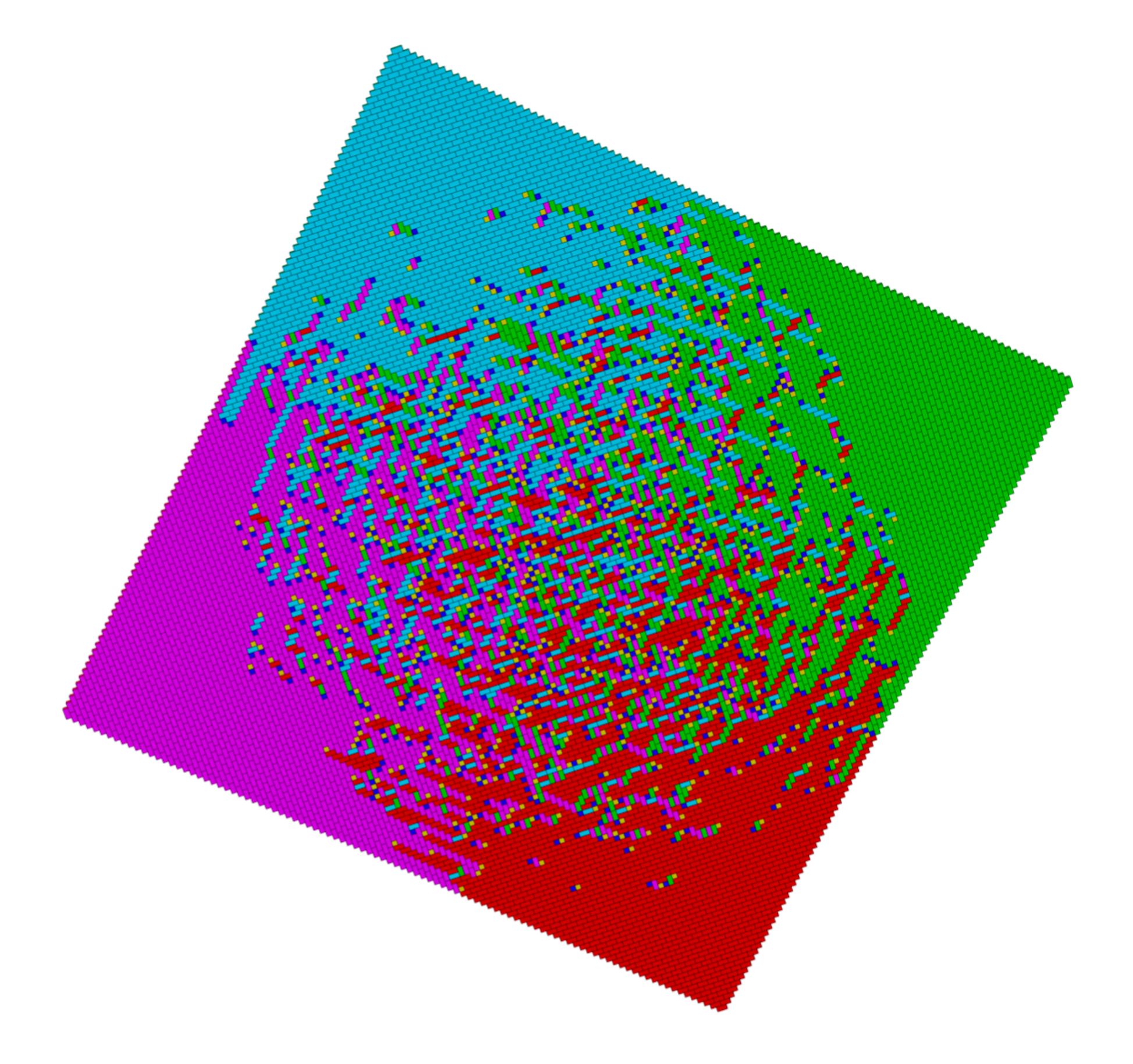}
    \caption{A tiling of a larger pyramid of Aztec diamonds, from the side and from below.}
    \label{fig:large_pyramid}
\end{figure}
\vspace{-.1in}
We now describe the two larger labeled figures. Figure~\ref{fig:aztecoctahedron} illustrates a uniformly random tiling of the {\em Aztec octahedron} formed by gluing two Aztec pyramids along their square face. Four of the eight triangular faces of the octahedron contain only even cubes on their boundary, and the other four contain only odd cubes (and these alternate; distinct faces sharing an edge have opposite parity). In light of this, we can say that the current enters four of the faces at the maximum possible rate and exits the other four faces at the maximum possible rate. In simulations there appear to be twelve frozen regions (one for each {\em edge} of the octahedron) in which one of the six brickwork patterns dominates. (By contrast, tilings of the two dimensional Aztec diamond have four frozen regions, one for each {\em vertex} of the diamond.) Within each brickwork region, current flows at its maximum possible rate from one face (where it enters the octahedron) to an adjacent face (where it exits). Away from these brickwork regions, one sees a mix of colors, with a gradually varying density for each color. These are regions where the magnitude of the current flow is smaller, and the mean current appears to vary continuously across space.

\begin{figure}[!ht] 
\begin{center}
   \includegraphics[scale=0.3]{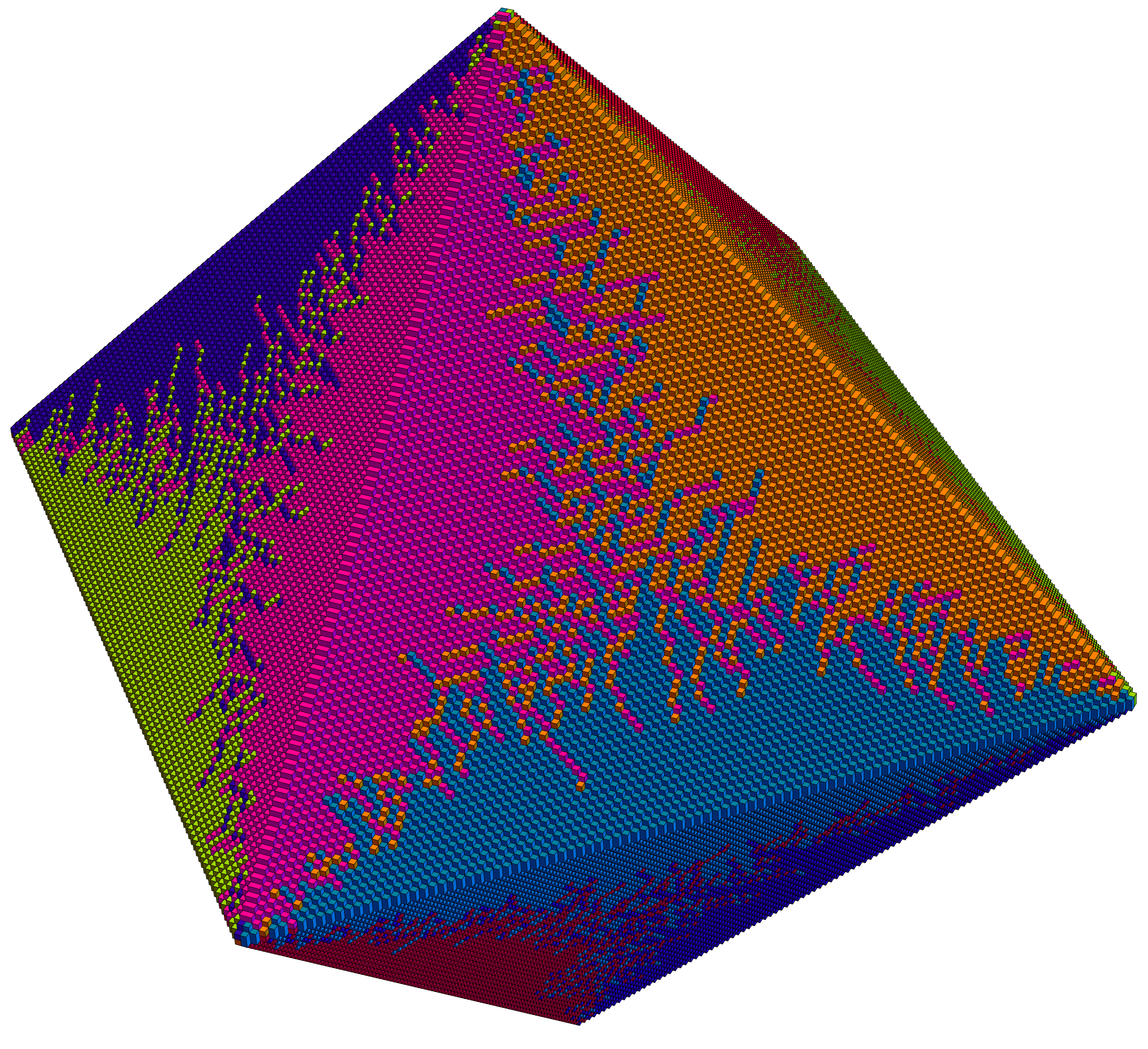}
\end{center}
\caption{Tiling of an Aztec octahedron.}
\label{fig:aztecoctahedron}
\end{figure}

Figure~\ref{fig:prism} illustrates a tiling of the {\em Aztec prism} (formed by stacking Aztec diamonds whose widths alternate between $2n$ and $2n+2$). Again, each slice seems to be frozen outside of a roughly circular region. The width-alternation ensures that each of the four rectangular side faces of the prism has either only even faces or only odd faces exposed. Thus, current enters two of the opposite side faces at its maximal rate, and exits the other two at its maximal rate. The net current flowing through the top and bottom faces is zero. In this figure, and in all of the examples above, the distribution of domino colors in a subset of the tiled region determines the ``mean direction of current flow'' in that subset. We are interested in understanding what the ``typical flow'' looks like in the fine mesh limit.

\begin{figure}[!ht] \begin{center}
\includegraphics[scale=0.2]{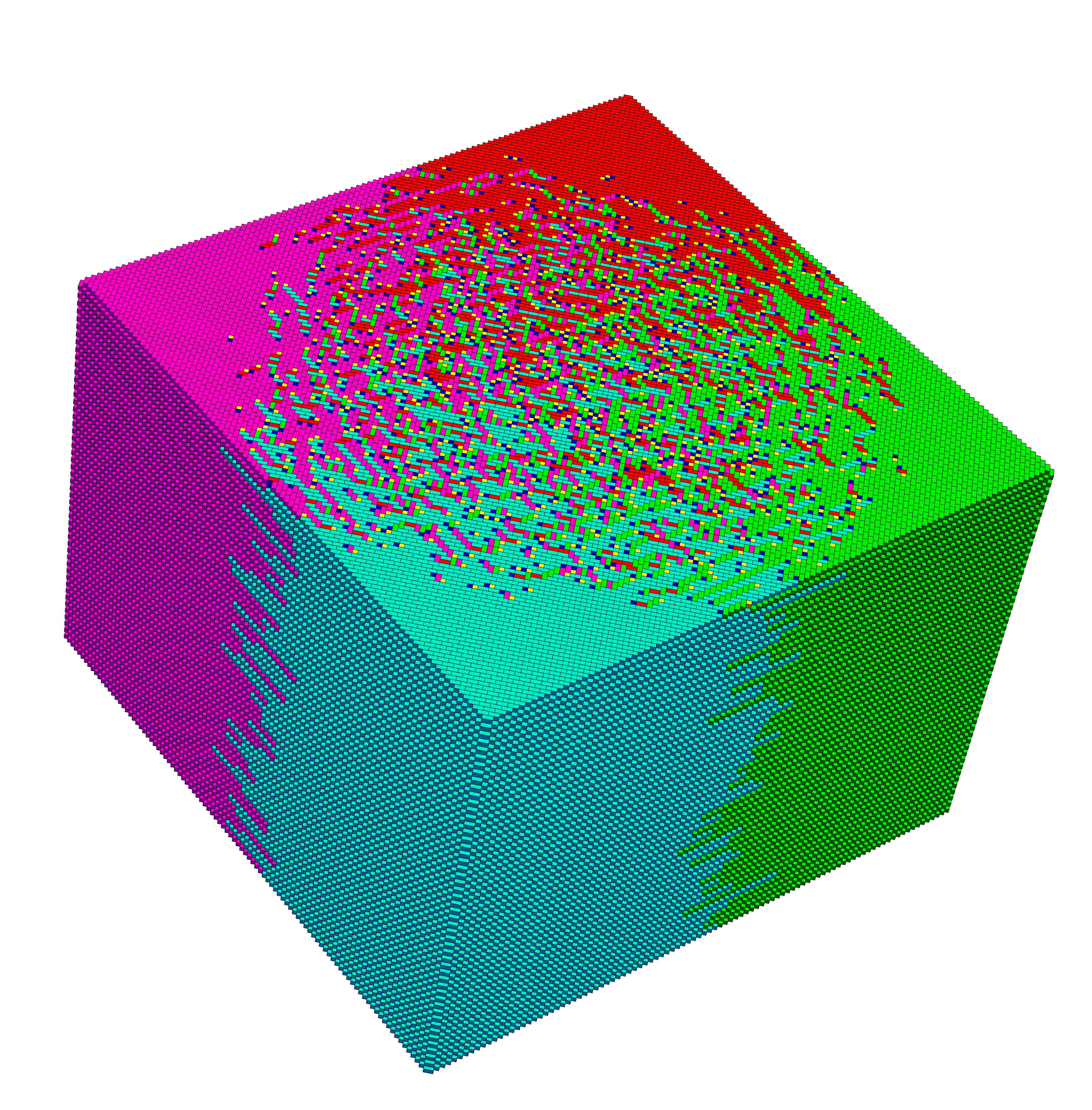}
\end{center}
\vspace{-.3in}
\caption{Tiling of an Aztec prism.}\label{fig:prism}
\end{figure}

\subsection{Main results and methods}\label{sec: main results intro}

The main results of this paper will be two versions of a \textit{large deviation principle} (LDP) for fine-mesh limits of uniformly random dimer tilings of compact regions $R\subset \m R^3$, with some limiting boundary condition. The versions of the LDP we prove differ in how we treat the boundary conditions.

A large deviation principle is a result about a sequence of probability measures $(\rho_n)_{n\geq 1}$ which quantifies the probability of rare events at an exponential scale as $n\to \infty$. More precisely, a sequence of probability measures $(\rho_n)_{n\geq 1}$ on a topological space $(X,\mc B)$ is said to satisfy a large deviation principle (LDP) with rate function $I$ and speed $v_n$ if $I : X \to [0,\infty)$ is a lower semicontinuous function, and for all Borel measurable sets $B$,
\begin{align*}
    -\inf_{x\in B^\circ} I(x) \leq \liminf_{n\to \infty} v_n^{-1} \log \rho_n(B) \leq \limsup_{n\to \infty} v_n^{-1} \log \rho_n(B) \leq -\inf_{x\in \overline{B}} I(x),
\end{align*}
where $B^\circ, \overline{B}$ denote the interior and closure of $B$ respectively. The rate function $I(x)$ is \textit{good} if its sublevel sets $\{x: I(x) \leq a\}$ are compact. Implicit in this definition is a choice of topology $\mc B$ on $X$. A large deviation principle for $(\rho_n)_{n\geq 1}$ implies that random samples from $\rho_n$ are exponentially more likely to be near the minimizers of $I(\cdot)$ as $n\to\infty$. When $I$ is good and has a unique minimizer, this means that random samples from $\rho_n$ \textit{concentrate} as $n\to \infty$ in the sense that if $U$ is any neighborhood of the unique minimizer, then as $n\to \infty$ the probability $\rho_n(X \setminus U)$ tends to zero exponentially quickly in $v_n$. Good references for this subject include \cite{dembo2009large} and \cite{varadhan2016large}. 

To formulate the large deviation principle for dimer tilings, we associate to each tiling $\tau$ of $\m Z^3$ a divergence-free discrete vector field. As mentioned above, we can define a flow $v_\tau$ that moves one unit of flow from the even endpoint to the odd endpoint of each $e$ in $\tau$. Subtracting a reference flow which sends $\frac{1}{6}$ flow along every edge from even to odd produces a divergence-free discrete vector field:
\begin{align*}
    f_\tau(e) = \begin{cases} +5/6 &\quad e\in \tau \\ 
    -1/6 &\quad e\not\in \tau. \end{cases}
\end{align*}
We call this a \textit{tiling flow}, and it will play an analogous role to the height function in two dimensions. The height function in two dimensions is a scalar potential function whose gradient is the {\em dual} of the tiling flow  (i.e., the flow on the dual lattice obtained by rotating each edge 90 degrees counterclockwise about its center). See Section \ref{section:tiling_flows}. 

Tiling flows can also be \textit{scaled} so that they are supported on $\frac{1}{n} \m Z^3$ instead of $\m Z^3$. We scale them so that the total flow per edge is proportional to $\frac{1}{n^3}$, so that in the fine-mesh limit with $n\to \infty$, the total flow in a compact region in $\m R^3$ converges to a finite value. The scaled tiling flows takes values $-\frac{1}{6 n^3}$ and $\frac{5}{6 n^3}$. A \textit{scale $n$ dimer tiling} is a dimer tiling of $\frac{1}{n} \m Z^3$ (or a subset of it).

Fix a compact region $R\subset \m R^3$ (which is sufficiently nice, e.g.\ the closure of a connected domain with piecewise smooth boundary). We define the \textit{free-boundary tilings of $R$ at scale $n$} to be tilings $\tau$ at scale $n$ such that every point in $R$ is contained in a tile in $\tau$, and every tile has some intersection with $R$. 
\begin{figure}[H]
    \centering
\includegraphics[scale=0.4]{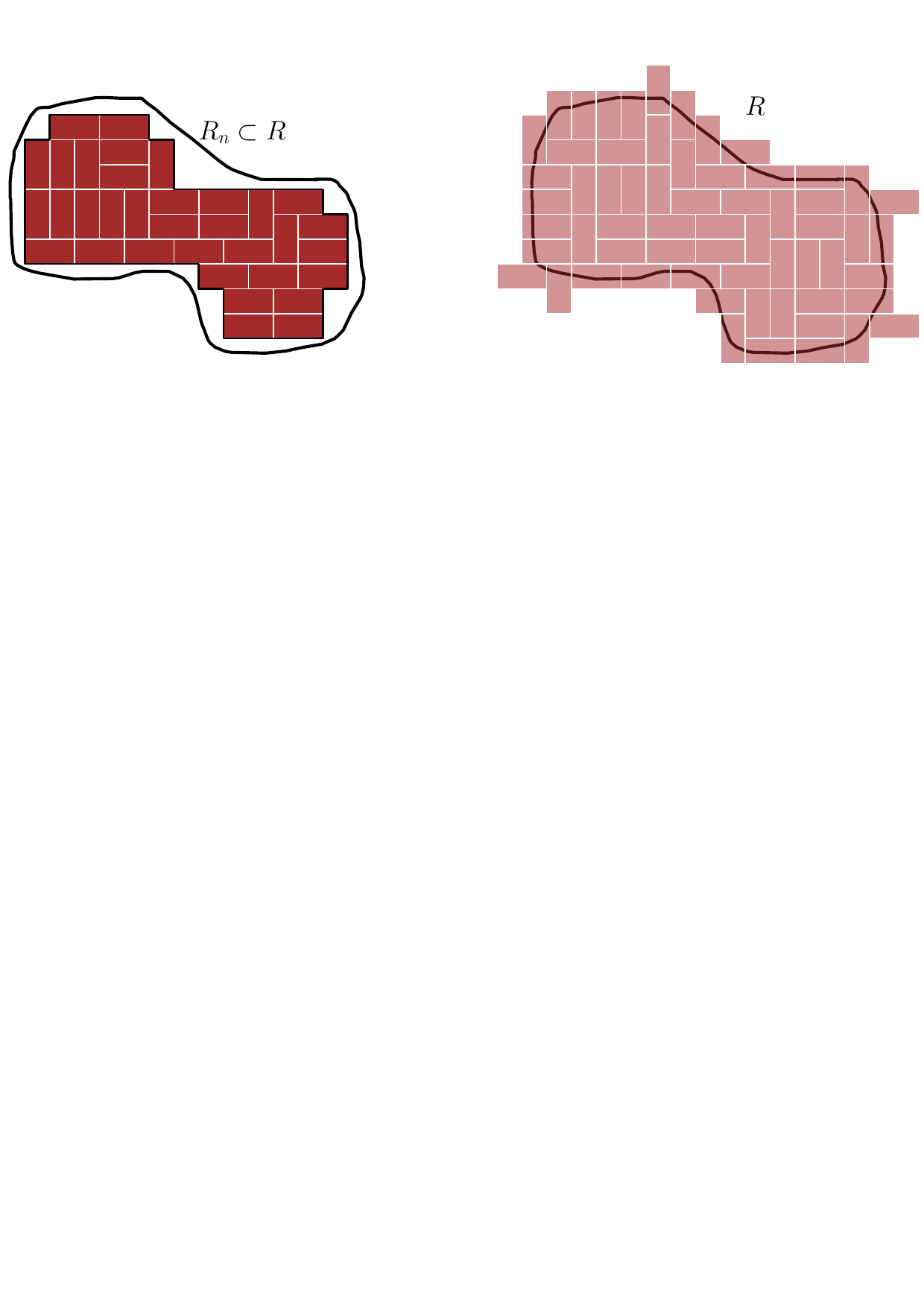}
\caption{An example of a free-boundary tiling of $R$}
    \label{fig:free_boundary_ex}
\end{figure}
We denote the corresponding \textit{free-boundary tiling flows of $R$ at scale $n$} by $TF_n(R)$. Note that $TF_n(R)$ is a finite set for all $n$. There is a signed {\em flux measure} on $\partial R$ (supported on the points where edges of $\frac{1}{n} \mathbb Z^3$ cross $\partial R$) that encodes the net amount of flow directed into $R$. Since $f_\tau$ is divergence-free, the {\em total} flux through $\partial R$ is zero. (See Definition \ref{sec: boundary values of tiling flows}.)

If $\tau$ is a {\em random} tiling of $\mathbb Z^3$ whose law is invariant under translations by even vectors, then one can define the {\em mean current per even vertex} to be $s=\mathbb E[\eta]$ where $\eta$ is the vertex of $\mathbb Z^3$ matched to the origin by $\tau$. Note that $s$ lies in the set
$$\mc O = \{ (s_1,s_2,s_3)\in \m R^3: |s_1| + |s_2| + |s_3|\leq 1\},$$
which is the convex hull of $\{ (\pm 1,0,0), (0,\pm 1, 0), (0,0,\pm 1)\}$, and which we call the {\em mean-current octahedron}. It indicates the expected total amount of current in $v_\tau$ (or equivalently $f_\tau$) per even vertex; see Section \ref{section:measures-currents}.  The vertices of $\mc O$ arise for a random $\tau$ that is a.s.\ equal to one of the six brickwork tilings in three dimensions.

We define $AF(R)$ to be the space of measurable, divergence-free vector fields supported in $R$ taking values in $\mc O$.
The notation $AF$ stands for {\em asymptotic flow} and is chosen because of the fact (formalized in Theorem \ref{thm:formal_fine_mesh}) that these are precisely the flows that can arise as $n \to \infty$ limits of tiling flows on $R$, under a suitable topology.

The topology we use is the {\em weak topology} on the space of flows obtained by interpreting each vector component of the flow as a signed measure, see Section~\ref{sec:asympflows}. This topology can also be generated by the \textit{Wasserstein distance} for flows. Loosely speaking, two flows are considered Wasserstein close if one can be transformed into the other by moving, adding, and deleting flow with low ``cost." The cost is the amount of flow moved times the distance moved, plus the amount of flow added or deleted. The large deviation principles we prove use the same topology (weak topology, which is generated by Wasserstein distance) for the fine-mesh limits of random free-boundary tiling flows of $R$. 

\begin{rem} The 2D large deviation analysis in \cite{cohn2001variational} is based on a topology that at first glance looks different from ours, namely the topology in which two flows are considered close if their corresponding height functions $h$ are close in $L^\infty$. However, it is not too hard to see that $1$-Lipschitz functions on $R$ (with fixed boundary values on $\partial R$) are $L^\infty$ close if and only if their gradients (or equivalently the dual flows of their gradients) are Wasserstein close. We will not use or prove this fact here. 
\end{rem}

In three dimensions, we will also derive an LDP for random perfect matchings on finite regions approximating a continuum domain $R$, with boundary conditions chosen so that the flux through the boundary approximates a continuum boundary flow $b$, in a sense we will explain below. As in two dimensions, the rate function $I_b(\cdot)$ is a function of an asymptotic flow $g\in AF(R)$ and is (up to an additive constant) equal to 
\begin{align} \label{eqn::3dent}
    -\Ent(g) = -\frac{1}{\text{Vol}(R)} \int_{R} \ent(g(x))\, \dd x,
\end{align}
where the additive constant is $C = \max_{f\in AF(R,b)} \Ent(f)$ ($I_b$ depends on $b$ because of this constant). We interpret \eqref{eqn::3dent} as the three-dimensional analog of \eqref{eqn::2dent}. The function $\ent(s)$ is the maximal \textit{specific entropy} of a measure with mean current $s\in \mc O$. It turns out that for every $s \in \mc O$ there exists an \textit{ergodic Gibbs measure} of mean current $s$ that achieves this maximal entropy $\ent(s)$. This essentially follows from the strict concavity of $\ent$ when $s$ is in the interior of $\mc O$, and can also be checked for $s \in \partial \mc O$. See Theorem~\ref{theorem: existence of gibbs} and Theorem~\ref{thm: extremal_entropy}. 

To state our theorems, we need a way of fixing for each $n$ a region $R_n$ that approximates a continuum region $R$, such that boundary flow corresponding to $R_n$ approximates a continuum boundary flow $b$ on $\partial R$. The precise analog of the statement given in \cite{cohn2001variational} is not exactly true in 3D, due to subtleties related to the fact that in 3D the function $\ent$ can be nonzero even on the boundary of $\mc O$ (see  Section~\ref{sec:extreme}). To briefly illustrate what can go wrong, consider a finite region $S$ tiled with only three types of tiles: north, east and up. Then every vertex $x = (x_1,x_2,x_3)$ with $x_1+x_2+x_3 = c$ (with $c$ even) must be connected to a vertex $y$ with $y_1+y_2+y_3 = c+1$, and vice versa. The vertices with coordinate sums in $\{c, c+1 \}$ thus form a ``slab'' of points that are only connected to each other, and one can use Hall's matching theorem to show that this must be the case for {\em any} tiling of $S$. Thus we can view $S$ as a stack of two-dimensional slabs, where the tilings within the different slabs are independent of each other.  These slabs could alternate between brickwork patterns (one slab has only east-going tiles, the next has only north-going tiles, the next has only up-going tiles, etc.) but they could also all be nonzero-entropy mixtures of the three tile types. Rescalings of both types of $S$ might approximate the same continuum $(R,b)$, but the corresponding uniform-random-tiling measures would have very different entropy and very different large deviation behavior (see Example \ref{ex:hb_false} and Section~\ref{sec:extreme}).

We will present two ways of formulating a theorem that {\em is} true in 3D. In the first approach we replace the {\em hard constraint} on the boundary behavior (where an $R_n$ to be tiled is explicitly specified for each $n$) with a {\em soft constraint} (where all scale $n$ tilings that cover $R$ are allowed, provided they induce boundary flows that are ``sufficiently close'' to the desired limiting flow $b$). This ``soft constraint'' LDP will apply to a fairly general set of pairs $(R,b)$. In the second approach we keep the hard constraints (i.e., the fixed $R_n$ regions) but require $(R,b)$ to be ``flexible'' in a sense that prevents the degenerate situation sketched above (where on the discrete level there might be slab boundaries that cannot be crossed by the tiles of {\em any} tiling of $R_n$). Precisely, we say $(R,b)$ is \textit{flexible} if for every $v \in R$ there exists an asymptotic flow $g$ for $(R,b)$ such that for some neighborhood $U \ni v$ we have $\overline{g(U)} \subset \text{Int}(\mathcal O)$. Informally, this means there is no interior point near which $g$ is {\em forced} to lie on $\partial \mathcal O$.
For later purposes, we say that $(R,b)$ is \textit{semi-flexible} if for every $v \in R$ there exists an asymptotic flow $g$ for $(R,b)$ such that for some neighborhood $U \ni v$, the set $\overline{g(U)}$ is disjoint from the edges of $\mc O$. In other words, there is no interior point near which $g$ is forced to lie on an edge of $\mathcal O$.

Using compactness of the space of flows, it is not hard to show that there exists $g$ that minimizes \eqref{eqn::3dent}. However it takes a bit more work to see whether such a $g$ is unique.  If $\ent$ were strictly concave everywhere, then $I_b(g_1)$ and $I_b(g_2)$ could not be minimal for distinct $g_1$ and $g_2$ (since the strict concavity would imply that $I_b\bigl(\frac{g_1+g_2}{2}\bigr)$ was even smaller). The trouble is that $\ent$ is not strictly concave on the edges of $\mathcal O$ (where it is constant) even though we will show that it is strictly concave everywhere else. In principle, there could still exist distinct minimizers $g_1$ and $g_2$ that (outside a set of measure zero) disagree {\em only} at points where they both take values on the same edge of $\mathcal O$. (See Problem~\ref{prob: region with non unique max}.) We have not ruled out this possibility in general, but we can prove that \eqref{eqn::3dent} has a unique minimizer if $(R,b)$ is semi-flexible. (See \cite[Proposition 7.10]{gorin2021lectures} for a 2D analog of this argument.) This in turn implies that the random flows in the corresponding LDP theorems {\em concentrate} near the unique minimizer $g$ of $I_b$. If $(R,b)$ is not semi-flexible then we call it \textit{rigid}. We briefly summarize the conditions needed for each result in the following table before giving more precise statements.
\begin{center} \begin{tabular}{|c|c|c|c|}
    \hline
     $(R,b)$ & SB LDP & $I_b$ has unique minimizer & HB LDP\\
\hline
rigid & {yes} & {not known} & {no} \\
\hline
semi-flexible & {yes} & {yes} & {no}\\
\hline
flexible & {yes} & {yes} & {yes}\\
\hline
\end{tabular}
\end{center}
The results marked ``no" in this table are provably not true in general. By taking limits of the ``stack of slabs" regions discussed above, one can produce a semi-flexible (or rigid) pair $(R,b)$ for which the hard boundary large deviation principle is false (for further discussion of this see Example \ref{ex:hb_false}). 

Now, to introduce the soft boundary large deviation principle, we define the probability measure $\rho_n$ to be the uniform measure on the space of flows in $TF_n(R)$ whose boundary values lie within Wasserstein distance $\theta_n$ of the desired limiting boundary flow, where $\theta_n\to 0$ as $n\to \infty$. We call $\theta_n$ the ``threshold sequence'' and it can be chosen arbitrarily provided that it does not tend to zero {\em too quickly} in a sense we explain later. A rough statement of our main theorem is the following.

\begin{thm}[See Theorem \ref{thm:sb-ldp}] \label{thm:roughmain}
Let $R\subset \m R^3$ be a compact region which is the closure of a connected domain, with piecewise smooth boundary $\partial R$. Let $b$ be a boundary value for an asymptotic flow and let $(\theta_n)_{n\geq 1}$ be a (good enough) sequence of thresholds. Let $\rho_n$ be uniform measure on $TF_n(R)$ conditioned on the boundary values being within $\theta_n$ of $b$. 

Then the measures $(\rho_n)_{n\geq 1}$ satisfy a large deviation principle in the Wasserstein topology on flows with good rate function $I_b(\cdot)$ and speed $v_n = n^3 \text{Vol}(R)$, namely for any Wasserstein-measurable set $A$,
    \begin{equation}\label{eq:ldp-prob-1}
    -\inf_{ g\in A^\circ} I_{ b}( g) \leq \liminf_{n\to \infty} v_n^{-1} \log \rho_n(A) \leq \limsup_{n\to \infty} v_n^{-1} \log \rho_n(A) \leq -\inf_{ g\in \overline{A}} I_{ b}( g).
\end{equation}
If $g$ is an asymptotic flow, the rate function $I_{ b}( g)$ is equal up to an additive constant to $-\Ent(g)$. (Otherwise it is $\infty$.)
\end{thm}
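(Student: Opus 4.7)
The plan is to reduce the full LDP to a local statement around each asymptotic flow $g \in AF(R)$ with the prescribed boundary flow $b$: for every $\epsilon > 0$,
\begin{align*}
-I_b(g) \leq \liminf_{n\to\infty} v_n^{-1}\log\rho_n(B_\epsilon(g)) \leq \limsup_{n\to\infty} v_n^{-1}\log\rho_n(B_\epsilon(g)) \leq -I_b(g) + o_\epsilon(1),
\end{align*}
where $B_\epsilon(g)$ denotes the Wasserstein $\epsilon$-ball. Compactness of $AF(R)$ (flows take values in the compact set $\mc O$ on the bounded region $R$) together with upper semicontinuity of $\Ent$ (inherited from concavity of $\ent$ applied to mesoscopic cube averages of a Wasserstein-convergent sequence) makes $I_b$ a good, lower semicontinuous rate function; standard covering arguments for the upper bound and a nested-ball argument for the lower bound then convert the local estimates into the full LDP. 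The denominator in $\rho_n(B_\epsilon(g))$ -- namely the partition function $Z_n$ counting tilings with boundary values within $\theta_n$ of $b$ -- is handled by the same local-counting machinery applied at a maximizer $g^*$ of $\Ent$ over flows with boundary $b$, giving $Z_n = \exp\bigl(v_n\,\Ent(g^*) + o(v_n)\bigr)$.

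For the local lower bound on $|\{\tau : f_\tau \in B_\epsilon(g)\}|$, mollify and slightly perturb $g$ to a flow $g'$ that is piecewise constant on a partition of $R$ into mesoscopic cubes $\{Q_i\}$ with mean current $s_i \in \text{Int}(\mc O)$ on $Q_i$ and boundary flow within $\theta_n/2$ of $b$; continuity of $\ent$ gives $\Ent(g') \geq \Ent(g) - \delta$. For each $Q_i$, the existence of an ergodic Gibbs measure of mean current $s_i$ achieving $\ent(s_i)$ (established earlier in the paper) together with a Shannon--McMillan counting argument yields at least $\exp\bigl(n^3\Vol(Q_i)(\ent(s_i) - \delta)\bigr)$ tilings of a slight shrinkage of $Q_i$ whose empirical mean current is near $s_i$. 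Patching these essentially independent tilings into a single tiling of $R$ with boundary flow within $\theta_n$ of $b$ -- using Hall's matching theorem to show feasibility of tiling the narrow grout between cubes, and double-dimer cycle swaps to control the entropy cost of the patching -- produces at least $\exp\bigl(v_n(\Ent(g) - 2\delta)\bigr)$ tilings with $f_\tau \in B_\epsilon(g)$.

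For the local upper bound, partition $R$ into mesoscopic cubes $Q_i$ of side length $\ell_n \to 0$ more slowly than $1/n$; any tiling $\tau$ with $f_\tau \in B_\epsilon(g)$ has empirical mean current $s_i(\tau)$ on $Q_i$ close to the $g$-average $\overline g_i$. The number of tilings of a cube with empirical mean current near a prescribed $s_i$ is at most $\exp\bigl(n^3\Vol(Q_i)(\ent(s_i) + \delta)\bigr)$, by the variational characterization of $\ent$ as the supremum of specific entropies over translation-invariant ergodic measures with the given mean current. Summing over the polynomially many discretized profiles $(s_i)$ consistent with $B_\epsilon(g)$, and using concavity of $\ent$ and Jensen's inequality to pass from $\sum_i \Vol(Q_i)\,\ent(\overline g_i)$ to $\Vol(R)\,\Ent(g) + o_\epsilon(1)$, gives the matching upper bound after dividing by $Z_n$.

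The main obstacle is the patching step in the lower bound. In 2D one glues tilings along matching boundary heights using McShane--Whitney extensions and monotone couplings; in 3D there is neither a height function nor monotonicity, so the narrow grout between mesoscopic cubes must be designed so that Hall's condition holds on the residual graph after the interior tilings are fixed, and then tiled without sacrificing a non-trivial fraction of the entropy -- this is precisely where double-dimer cycle swaps are required, and where most of the technical work lies. A secondary issue concerns the soft boundary: one must verify that the lower-bound construction yields a boundary flow within $\theta_n$ of $b$ (which constrains how fast $\theta_n$ may decay relative to the $O(1/n)$ boundary patching errors) and that the extra freedom in the upper bound to admit boundary values at Wasserstein distance up to $\theta_n$ from $b$ contributes only $e^{o(v_n)}$ to the count, which is automatic provided $\theta_n \to 0$.
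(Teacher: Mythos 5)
Your high-level structure—reduce to local upper and lower estimates around each $g$, invoke compactness of $AF(R)$ and upper semicontinuity of $\Ent$, and handle the partition function $Z_n$ by evaluating at a maximizer—matches the paper's proof (which reduces via \cite[Lemma 2.3]{varadhan2016large} to Theorems \ref{thm:lower} and \ref{thm:upper} plus exponential tightness). The upper bound sketch is broadly in the spirit of the paper's coarse-graining argument, though the paper phrases it via a translation-averaged $\threeeven$-invariant measure $\nu$ and specific-entropy upper semicontinuity rather than a direct count-by-profile; this handles the correlations between adjacent mesoscopic cubes more cleanly than a naive ``sum over discretized profiles'' would.

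The genuine gap is in your lower bound. You propose to ``patch these essentially independent tilings into a single tiling of $R$\ldots using Hall's matching theorem to show feasibility of tiling the narrow grout between cubes, and double-dimer cycle swaps to control the entropy cost of the patching.'' Two problems. First, the patching theorem (Theorem \ref{patching}) patches a tiling into an annular region whose \emph{outer} boundary condition already comes from an $\epsilon$-nearly-constant tiling of the right mean current; it does not directly let you glue a collection of mutually independent EGM samples, each with a different mean current $s_i$, along a grout. If you fix random samples $\sigma_i$ from $\mu_{s_i}$ on disjoint cubes $Q_i$ with no common scaffold, there is no \emph{a priori} reason the grout is tileable at all, and Hall's theorem offers no help without some control on the alternating boundary that the $\sigma_i$ induce. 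The paper avoids this by first building an explicit \emph{single} scaffold tiling $\widetilde\tau_n\in T_n(\widetilde R)$ whose flow approximates the piecewise-constant $\widetilde g$---this is the shining-light construction (Theorem \ref{thm:shininglight}), which builds periodic tilings on channels following the flow lines of $w(\widetilde g)+\eta_1$ and glues them---and then cuts small interior boxes out of $\widetilde\tau_n$ and replaces each one by an EGM sample via Theorem \ref{patching}, with $\widetilde\tau_n$ itself supplying the outer boundary condition for each box. Your proposal omits this scaffolding step, and without it the patching cannot start. Second, the role you assign to double-dimer cycle swaps is not what the paper does with them: chain swapping is used to prove strict concavity of $\ent$ on $\mc O\setminus\mc E$ (Theorem \ref{theorem: entropy is strictly concave}), not to control the entropy cost of the grout. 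The patching theorem's proof uses only Hall's matching theorem, the ergodic theorem, and discrete isoperimetric bounds; the entropy cost is controlled by making the annulus width $\delta n$ a small fraction of the box side, not by any swap argument.
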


\begin{rem}
The requirements for the region $R$ are mild---for example, we do not require that $R$ is simply connected. In this sense, the theorem can be viewed as extending both \cite{cohn2001variational} (simply connected 2D) and \cite{kuchumov22} (multiply connected 2D) to three dimensions. The requirement that $\partial R$ be piecewise smooth is probably not necessary, but if the boundary of $R$ is allowed to be too rough, the theorem statements one can make will depend more sensitively on how the boundary conditions are handled. For example, if the boundary of $R$ has positive volume, then tilings that cover $R$ may have volume-order more tiles than the tilings that approximate $R$ ``from within'' and the extra tiles may contribute to the limiting entropy. If the boundary of $R$ has infinite area, then the flux through the boundary may be an infinite signed measure, which would have to be defined more carefully. (For example, we could say that two flows that vanish outside of $R$ have ``equivalent boundary values'' if their difference is a flow on all of $\mathbb R^3$ that is divergence-free in the distributional sense, and then let $b$ denote an equivalence class.) For simplicity, we will focus on the piecewise smooth setting in this paper.
\end{rem}

The distinction between soft and hard boundary conditions only substantially impacts one step of the proof: the argument that there exists a tiling (in the support of $\rho_n$) whose flow approximates a piecewise-constant flow that in turn approximates a given $g \in AF(R)$. Theorem~\ref{thm:roughmain} would still apply if the boundary conditions defining the $\rho_n$ were specified in another way (ensuring convergence to $(R,b)$ in the limit) as long as some version of this step could be implemented. We show using the \textit{generalized patching theorem} (Theorem \ref{thm:generalized_patching}) that under the condition that $(R,b)$ is flexible, this step can be implemented and a hard boundary LDP holds. 

Fixing a specific region $R_n\subset \frac{1}{n}\m Z^3$ and looking only at tilings of exactly this sequence of regions puts a stronger constraint on the boundary values. A rough statement of the \textit{hard boundary} large deviation principle, where we fix the approximating regions $R_n$ exactly, is as follows.

\begin{thm}[Theorem \ref{thm:hb-ldp}]\label{thm:roughmain-hb}
    Suppose that $(R,b)$ is flexible and that $R_n\subset \frac{1}{n}\m Z^3$ is a sequence of tileable regions with boundary values converging to $b$. Let $\overline \rho_n$ be uniform measure on dimer tilings of $R_n$. Then the measures $(\overline{\rho}_n)_{n\geq 1}$ satisfy a large deviation principle in the Wasserstein topology on flows with the same good rate function $I_b(\cdot)$ and speed $v_n = n^3 \text{Vol}(R)$ as the soft boundary measures $\rho_n$. 
\end{thm}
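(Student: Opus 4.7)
The plan is to deduce Theorem \ref{thm:roughmain-hb} from the soft boundary LDP (Theorem \ref{thm:sb-ldp}) together with the generalized patching theorem (Theorem \ref{thm:generalized_patching}). Setting $C = \max_{f \in AF(R,b)} \Ent(f)$, the argument reduces to establishing the two-sided asymptotic count
\begin{equation*}
\log |\{\text{dimer tilings of } R_n\}| = v_n\,(C + o(1)),
\end{equation*}
after which the rate function $I_b(g) = C - \Ent(g)$ appears in both the upper and lower bounds essentially as in the soft boundary case.

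For the lower bound, I would fix an open set $A$ and $g \in A \cap AF(R,b)$ with $\Ent(g) > -\infty$ (otherwise nothing to prove). I would invoke Theorem \ref{thm:generalized_patching} to produce, for each large $n$, a family of tilings of $R_n$ whose flows are Wasserstein-close to $g$ and whose log-cardinality is at least $v_n(\Ent(g) - o(1))$. Flexibility of $(R,b)$ is used precisely here: near every boundary point there is an asymptotic flow with values in $\text{Int}(\mc O)$, giving enough ``room'' for the patching step to bend the interior configuration to match the prescribed lattice boundary $b_n$ on $\partial R_n$ without collapsing to a frozen pattern. Dividing by the total count, which is bounded above by $\exp(v_n(C + o(1)))$ via the soft boundary upper bound (every tiling of $R_n$ is also a free-boundary tiling of $R$ whose boundary lies within Wasserstein distance $W(b_n,b) \to 0$ of $b$), yields $\overline{\rho}_n(A) \geq \exp(-v_n(I_b(g) + o(1)))$.

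For the upper bound, I would fix a closed set $A$ and use the same inclusion of hard boundary tilings into soft boundary tilings with boundary flow within $W(b_n,b)$ of $b$ to write
\begin{equation*}
\overline{\rho}_n(A) \leq \frac{|\{\tau \in TF_n(R) : f_\tau \in A,\ \text{boundary of } \tau \text{ within } W(b_n,b) \text{ of } b\}|}{|\{\text{dimer tilings of } R_n\}|}.
\end{equation*}
The numerator is bounded by $\exp(v_n(C - \inf_{g \in A} I_b(g) + o(1)))$ via the soft boundary upper bound (applied with the threshold $W(b_n,b)$). For the denominator, I would apply the lower bound construction from the previous paragraph to any $g^* \in AF(R,b)$ with $\Ent(g^*)$ arbitrarily close to $C$, giving $|\{\text{dimer tilings of } R_n\}| \geq \exp(v_n(C - o(1)))$. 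Combining the two estimates completes the LDP upper bound, and goodness of $I_b$ as a function on the Wasserstein space is already known from the soft boundary setting.

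The main obstacle is the patching step carried out under the hard boundary constraint: one must interpolate between a piecewise-constant approximation of $g$ in the bulk of $R_n$ and the prescribed lattice boundary $b_n$ on $\partial R_n$ while sacrificing at most $o(v_n)$ in entropy. Flexibility is exactly the hypothesis that rules out the obstructions illustrated by the ``stack of slabs'' example preceding Theorem \ref{thm:roughmain}, where the target mean current near $\partial R$ is pinned on $\partial \mc O$ and the discrete boundary $b_n$ cannot be reached from a generic nearby flow. Once Theorem \ref{thm:generalized_patching} is established in the form needed above, the remainder of the proof is an essentially routine combination of counting arguments with the soft boundary estimates.
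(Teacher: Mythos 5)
Your proposal follows the paper's route: the upper bound comes from including hard-boundary tilings into the soft-boundary ensemble, and the lower bound from the generalized patching theorem, with the hard-boundary LDP then obtained by normalizing by the two-sided count of $\overline Z_n$ (the paper phrases this equivalently as local bounds on the counting measure $\overline\mu_n$ in Theorems \ref{thm:lower-hb} and \ref{thm:upper-hb}, plus the standard exponential-tightness reduction). The only imprecision is that Theorem \ref{thm:generalized_patching} by itself produces a single tiling of $R_n$ near $g$ (this is Corollary \ref{cor:fixed bc}), not exponentially many; to achieve log-cardinality $v_n(\Ent(g)-o(1))$ one must still run the full shining-light and ergodic-Gibbs-sample-patching argument from the proof of Theorem \ref{thm:lower} in the interior, as the paper does explicitly when proving Theorem \ref{thm:lower-hb}.
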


It is straightforward to show that the $(R,b)$ pairs obtained as fine-mesh limits of the ``Aztec regions'' above (Figures \ref{fig:large_pyramid}, \ref{fig:aztecoctahedron}, \ref{fig:prism}) are flexible, despite the fact that {\em typical} tilings appear (in simulations) to have frozen brickwork regions. (See Remark \ref{rem:aztec_is_flexible}.)

Under the condition that $(R,b)$ is \textit{semi-flexible} (this is weaker than flexible, see further discussion in Definition \ref{def: flexible}) we show that $\Ent$ has a unique maximizer (Theorem \ref{thm: unique maximizer}). This together with some basic topological results shows rigorously that \textit{concentration} around a deterministic limit shape occurs, as we see in the simulations. This concentration holds for either the soft boundary measures $\rho_n$ or the hard boundary measures $\overline \rho_n$. 
\begin{cor}[See Corollary \ref{cor:concentration} and Corollary \ref{cor:concentration-hb}]
Assume that $(R,b)$ is semi-flexible. For any $\epsilon>0$, the probability that a uniformly random tiling flow on $R$ at scale $n$ (either sampled from $\rho_n$, i.e.\ with boundary flow conditioned to be in a shrinking interval around $b$, or sampled from $\overline \rho_n$ if $(R,b)$ is flexible, i.e.\ tilings of a fixed region $R_n$ with boundary values converging to $b$) differs from the entropy maximizer with boundary value $b$ by more than $\epsilon$ goes to $0$ exponentially fast in $n^3$ as $n\to \infty$. 
\end{cor}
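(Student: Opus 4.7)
The plan is to deduce the corollary as a standard consequence of the large deviation principles (Theorem \ref{thm:roughmain} for $\rho_n$, and Theorem \ref{thm:roughmain-hb} for $\overline\rho_n$ in the flexible case) combined with uniqueness of the entropy maximizer. Under the semi-flexibility hypothesis, Theorem \ref{thm: unique maximizer} tells us that $\Ent$ has a unique maximizer $g^* \in AF(R,b)$, so that $I_b(g^*) = 0$ while $I_b(g) > 0$ for all $g \neq g^*$ in $AF(R,b)$ (and $I_b \equiv +\infty$ off $AF(R,b)$).

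Fix $\epsilon > 0$ and let $A_\epsilon$ denote the set of flows whose Wasserstein distance from $g^*$ is at least $\epsilon$. Since Wasserstein distance induces the topology, $A_\epsilon$ is closed, and the LDP upper bound applied to $A_\epsilon$ gives
\[ \limsup_{n \to \infty} v_n^{-1} \log \rho_n(A_\epsilon) \leq -\inf_{g \in A_\epsilon} I_b(g), \]
with the same bound holding for $\overline\rho_n$ in place of $\rho_n$ in the flexible setting. The corollary therefore reduces to showing that $\delta := \inf_{g \in A_\epsilon} I_b(g)$ is strictly positive: once this is established, for any $0 < \delta' < \delta$ we get $\rho_n(A_\epsilon) \leq e^{-\delta' v_n}$ for all $n$ sufficiently large, and $v_n = n^3 \Vol(R)$ is exactly the exponential rate claimed.

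The key step is proving $\delta > 0$. Suppose for contradiction that $\delta = 0$ and pick $g_k \in A_\epsilon$ with $I_b(g_k) \to 0$. For $k$ large the sequence lies in the sublevel set $\{I_b \leq 1\}$, which is compact by goodness of $I_b$, so a subsequence converges to some $g_\infty$. Lower semicontinuity of $I_b$ forces $I_b(g_\infty) \leq \liminf_k I_b(g_k) = 0$, so $I_b(g_\infty) = 0$, and uniqueness then gives $g_\infty = g^*$. But $A_\epsilon$ is closed, so $g_\infty \in A_\epsilon$, contradicting $g^* \notin A_\epsilon$. Hence $\delta > 0$.

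The only genuinely model-specific obstacle is uniqueness of the maximizer, which is precisely the content of Theorem \ref{thm: unique maximizer} and the reason the semi-flexibility hypothesis is needed: since $\ent$ is only concave (not strictly concave) on edges of $\mc O$, in a rigid region distinct maximizers could in principle coexist and concentration in the stated form could fail. Once semi-flexibility is assumed, the argument above is soft and applies uniformly to both the soft-boundary measures $\rho_n$ and, under the stronger flexibility hypothesis, the hard-boundary measures $\overline\rho_n$.
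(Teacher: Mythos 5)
Your proof is correct, and it is essentially the textbook way to extract concentration from a good-rate-function LDP: apply the upper bound to the closed complement of the $\epsilon$-ball, then show the infimum of the rate function over that closed set is strictly positive using compactness of sublevel sets, lower semicontinuity, and uniqueness of the zero of $I_b$. The paper proves the same thing by a slightly different packaging: it covers the compact space $AF(R,b)$ by finitely many open balls $B_1,\dots,B_k$ chosen (via upper semicontinuity of $\Ent$) so that $B_1$ is the $\epsilon$-ball around $f_{\max}$ and $\Ent < \Ent(f_{\max})$ on each $\overline{B}_i$ with $i \geq 2$, and then applies the LDP upper bound to each $B_i$ individually. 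The two approaches use the same ingredients (compactness, semicontinuity, uniqueness of the maximizer from Theorem \ref{thm: unique maximizer}); yours avoids the covering step and is slightly cleaner since it sidesteps the implicit observation that the support of $\rho_n$ eventually lies in $\bigcup_i B_i$, whereas the paper's version makes the exponent $\max_{i \geq 2}(\Ent(f_i) - \Ent(f_{\max}))$ a bit more explicit. Both are equally valid.
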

The methods in this paper are substantially different from the methods used to prove the large deviation principle for dimer tilings in two dimensions. The two-dimensional dimer model is \textit{integrable} or {\em exactly solvable} in the sense that one can derive a (beautiful) explicit formula for the specific entropy function analogous to our function $\ent$, and this explicit formula is used in the large deviations proof. The three-dimensional dimer model is not known to be integrable in this way, so we rely on ``softer" arguments. We comment on a few of these below.

One of the key ingredients which does have a 2D analog in \cite{cohn2001variational} is the \textit{patching argument} (Theorem \ref{patching}) which essentially states that if two tilings $\tau_1,\tau_2$ satisfy a requirement that they ``asymptotically have the same mean current $s$" for some $s\in \text{Int}(\mc O)$, then we can cut out a bounded portion of $\tau_2$ and patch it into an unbounded portion of $\tau_1$ by tiling a thin annular region. 

\begin{figure}[H]
    \centering
    \includegraphics[scale=0.65]{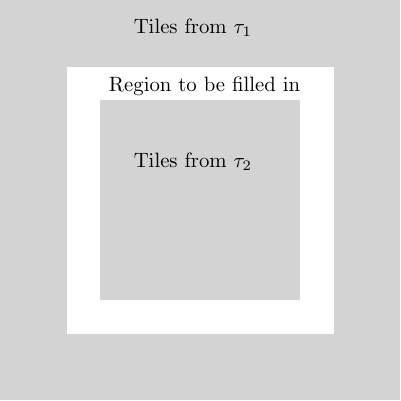}
    \caption{Schematic for the patching theorem.}
    \label{fig:patching_scheme}
\end{figure}

For the hard boundary large deviation principle, we also prove a \textit{generalized patching theorem} (Theorem \ref{thm:generalized_patching}), which says roughly that two tilings can be patched on a \textit{general} annular region $R\setminus R'$ if they have the same boundary value $b$ on $\partial R$ and the inner one approximates a \textit{flexible} flow $g\in AF(R,b)$. 

Proving the patching theorems will be one of the more challenging aspects of this paper. The main input is Hall's matching theorem (proved by Hall in 1935 \cite{hallmarriage}) which gives us a criterion to check if a region (e.g.\ the annular region between the two tilings) is tileable by dimers. It turns out that the criterion we need to check can be phrased in terms of the existence of a discretized minimal surface and leads to an interesting sequence of constructions described for boxes in Section \ref{sec:patching plus hall} and generalized in Section~\ref{sec:ldp}. These arguments are more involved than the 2D patching arguments in \cite{cohn2001variational}, which rely on height functions and Lipschitz-extension theory. It is hard to summarize the argument without giving the details, but the following is a very rough attempt (which can be skimmed on a first read).
\begin{enumerate}
    \item For each $n$, define an annular region $A$ that we want to tile (which is roughly a scale $n$ approximation of a fixed continuum annular region, with outer boundary conditions determined by $\tau_1$ and inner by $\tau_2$). Use Hall's matching theorem to show that if $A$ is not tileable there must exist a ``surface'' dividing the cubes in $A$ into two sub-regions such that 1) the cubes with faces on the surface are odd if they are in the first sub-region, even if in the second and 2) the first sub-region has more even than odd cubes overall.
    \item Reduce to the case that the surface is in some sense a ``minimal monochromatic surface'' given its boundary, which touches both the inside and outside boundaries of $A$. (Here monochromatic means that all cubes on one side of the surface, and adjacent to it, are odd, and minimal monochromatic means that it has minimal area given the monochromatic constraint.)
    \item Use an argument involving isoperimetric bounds to show that such a surface must have at least a constant times $n^2$ faces when $n$ is large.
    \item Show that the even-odd imbalance in the first sub-region {\em cannot} be as large as it would need to be to provide a non-tileability proof. Do this by covering the first region by dominoes (from a tiling sampled from an ergodic measure in Section~\ref{sec:patching plus hall}, or from a tiling that approximates a flow $g$ whose existence is guaranteed by the flexible condition in Section~\ref{sec:ldp}) and use geometric considerations to show that there must be a lot of dominoes with only an odd cube in the first sub-region (including order $n^2$ in the middle of $A$) and relatively fewer dominoes with only an even cube in the first sub-region (using the ergodic theorem and the fact that both tilings approximate the same constant flow $s$, or in the generalized case by using Wasserstein distance bounds that apply near the boundary of $A$). Conclude that the first sub-region has at least as many odd cubes as even cubes, and hence does not prove non-tileability. This argument shows that there exists no surface that proves non-tileability and hence (by Hall's matching theorem) the region is tileable.  
\end{enumerate}

Another key step in proving the main theorems is to derive properties of the entropy function $\ent$ despite not being able to compute it explicitly on all of $\mc O$. Instead, $\ent(s)$ is defined abstractly as the maximum specific entropy $h(\cdot)$ of a measure with mean current $s$ (see Section \ref{sec: entropy prelim}). From a straightforward adaptation of the classical variational principle of Lanford and Ruelle \cite{LanfordRuelle}, it follows that $\ent(s)$ is always realized by a \textit{Gibbs measure} of mean current $s$ (see Theorem~\ref{theorem: entropy maximizer gibbs}, see also Section~\ref{sec::prelim} for the definition of a Gibbs measure). 

To prove strict concavity of $\ent$ on $\text{Int}(\mc O)$ (Theorem \ref{theorem: entropy is strictly concave}), we note that a translation invariant measure $\mu$ with mean current $s$ and with $h(\mu) = \ent(s)$ must be a Gibbs measure, and we then use a variant of the cluster swapping technique used in \cite{AST_2005__304__R1_0} to compare measures of different mean currents. We call this variant \textit{chain swapping}. It is an operation on measures on \textit{pairs} of dimer tilings $(\tau_1,\tau_2)$. From a pair of tilings $(\tau_1,\tau_2)$ (sampled from $\mu$), chain swapping constructs a pair of tilings $(\tau_1',\tau_2')$ by ``swapping" the tiles of $\tau_1$ and $\tau_2$ along some of the infinite paths in $(\tau_1,\tau_2)$ with independent probability $1/2$ (or any probability $p\in (0,1)$). We say that $(\tau_1',\tau_2')$ is sampled from the \textit{swapped measure} $\mu'$. See Section \ref{Subsection: Dimer Swapping} for a more detailed discussion of chain swapping, including Figure \ref{fig:chain swap} for an example.

At a high level, chain swapping is an operation that allows us to take a coupling $\mu$ of measures $\mu_1,\mu_2$ on dimer tilings of mean currents $s_1,s_2$, and construct a coupling $\mu'$ of two new measures $\mu_1',\mu_2'$ on dimer tilings both of mean current $\frac{s_1+s_2}{2}$. We show that this operation preserves the total specific entropy (i.e. $h(\mu) = h(\mu')$) and ergodicity, but \textit{breaks} the Gibbs property. More precisely, if $\mu_1, \mu_2$ are ergodic Gibbs measures of mean currents $s_1,s_2$ and $\frac{s_1+s_2}{2}\in \text{Int}(\mc O)$, then $\mu_1',\mu_2'$ are \textit{not} Gibbs, and hence do \textit{not} have maximal entropy among measures of mean current $\frac{s_1+s_2}{2}$. The proof that the Gibbs property is broken under chain swapping requires very different techniques from those used in \cite{AST_2005__304__R1_0}. 

Under the assumption that there exists an \textit{ergodic} Gibbs measure $\mu_s$ of mean current $s$ for any $s\in \mc O$, and that $\ent(s)=h(\mu_s)$, strict concavity would follow easily: let $\mu_1 = \mu_{s_1}, \mu_2=\mu_{s_2}$ and apply chain swapping to get new measures $\mu_1',\mu_2'$ of mean current $\frac{s_1+s_2}{2}$. Since total entropy is preserved,
$$h(\mu_1') + h(\mu_2') = h(\mu_1)+h(\mu_2)=\ent(s_1)+\ent(s_2).$$
On the other hand, since $\mu_1',\mu_2'$ are not Gibbs, $$h(\mu_1')+h(\mu_2') < 2\ent(\frac{s_1+s_2}{2}),$$
which would complete the proof. A rigorous proof of the theorem is given in Section \ref{subsection: strict concavity}, and relies on casework based on ergodic decompositions as we do not know, a priori, that ergodic Gibbs measures of mean current $s$ exist for all $s\in \mc O$. However it will then follow \textit{from} strict concavity that this is true, and there exist ergodic Gibbs measures of all mean currents $s\in \mc O$ (Corollary~\ref{cor: EGMs exist!}).

The above is a discussion of $\ent$ on $\text{Int}(\mc O)$, where no explicit formula is known. We remark that $\ent$ is explicitly computable when restricted to $\partial \mc O$, since it reduces to a two-dimensional problem (see Section \ref{sec:extreme}).

\subsection{Three-dimensional history and pathology} \label{subsec:history}

The three-dimensional model is fundamentally different from the two-dimensional version in many respects. To give one example, we recall that if $\tau$ and $\sigma$ are distinct perfect matchings of $\mathbb Z^2$ that agree on all but finitely many edges, then one can construct a sequence $\tau = \tau_0, \tau_1, \tau_2, \ldots, \tau_n = \sigma$ of perfect matchings such that for each $k$, the matchings $\tau_k$ and $\tau_{k-1}$ agree on all edges except those contained in a single $2 \times 2$ square ---  and on that square one of $\{\tau,\sigma\}$ has two parallel vertical edges and the other has two parallel horizontal edges \cite{Thurston}. From the domino tiling point of view, we say that $\tau_{k-1}$ and $\tau_k$ differ by a {\em local move} that corresponds to rotating a pair of dominoes as shown below.
\begin{figure}[H]
    \centering
        \includegraphics[scale=0.25]{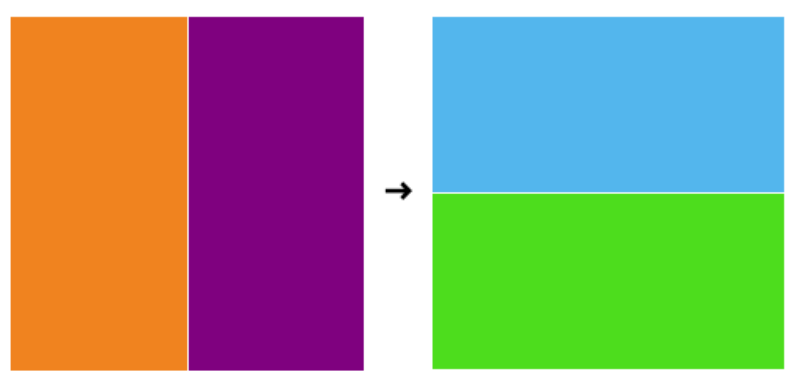}
    \caption{A local move or flip in 2D.}
    \label{fig:2dflip}
\end{figure}
It turns out that the analogous statement is false in 3D. In fact, as we will explain in Section~\ref{sec:localmoves}, there is {\em no} collection of local moves for which the analogous statement would be true in 3D. In 3D, one can construct (for any $K>0$) a tiling $\tau$ of $\m Z^3$ that is
\begin{enumerate}
\item {\em non-frozen} --- i.e., there exists a tiling $\tau' \neq \tau$ that disagrees with $\tau$ on finitely many edges.
\item {\em locally frozen} to level $K$ --- i.e., there exists no tiling $\tau' \neq \tau$ that disagrees with $\tau'$ on fewer than $K$ edges.
\end{enumerate}

To understand why this is the case, recall that in two dimensions, one can superimpose an arbitrary perfect matching with a brickwork matching to obtain a collection of non-intersecting left-to-right lattice paths as follows:
\begin{figure}[H]
    \centering
    \includegraphics[scale=0.5]{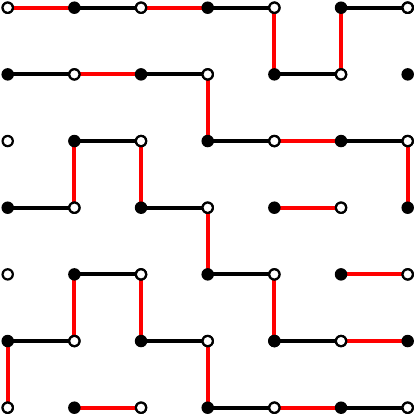}
    \caption{2D non-intersecting paths.}
    \label{fig:non-intersecting2d}
\end{figure}
There is an obvious bijection between non-intersecting path ensembles (as shown above) and dimer tilings (which is one way to deduce the integrability of the dimer model in two dimensions). Applying local moves corresponds to shifting these paths up and down locally. One can analogously superimpose a red three-dimensional matching with a black brickwork matching, to obtain an ensemble of left-to-right paths in three dimensions. But in this case the function that maps each ``left endpoint'' to the ``right endpoint'' on the same path may not be uniquely determined, as the following example shows. For clarity, the black and red edges that coincide with each other are not drawn---so both figures indicate a red perfect matching of $\mathbb Z^3$ that (when restricted to the box) consists only of right-going edges in the brickwork pattern (not shown) and a few non-right-going edges (shown together with the black right-going edges that share their endpoints).
\begin{figure}[H]
    \centering
    \includegraphics[scale=0.7]{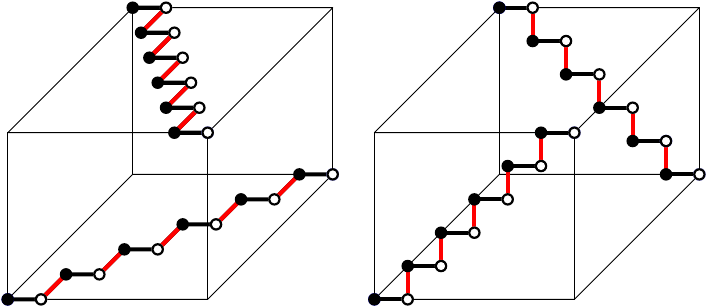}
    \caption{Two examples of 3D non-intersecting paths with the same endpoints.}
    \label{fig:non-intersecting3d}
\end{figure}
In general, there could be many different paths, and many ways to permute the wires from the left before plugging them in on the right. In the example above, the paths are ``taut'' in the sense that they have no freedom to ``move locally'' using local moves that change only, say, three or four edges at a time (and they can be extended to taut paths on all of $\mathbb Z^3$). In general, 3D path ensembles are not nicely ordered from top to bottom, and do not have the same lattice structure that 2D path ensembles enjoy. They can be braided in complicated ways.

Despite this complexity, various ``local move connectedness'' results for 3D tilings have been obtained. See, for example, the series of works by subsets of Freire, Klivans, Milet and Saldanha \cite{ milet2014domino, milet2014twists, milet2015domino, MR3341585, freire2022connectivity, saldanha2019domino, saldanha2020domino, klivans2020domino, MR4245261}, the recent work \cite{localdimer} by Hartarsky, Lichev, and Toninelli, and physics papers by Freedman, Hastings, Nayak, Qi, and separately Bednik about topological invariants and so-called Hopfions \cite{freedman2011weakly, bednik2019hopfions, bednik2019probing}.

One of the basic observations is that even on box-shaped regions in 3D, one cannot transform any tiling to any other tiling with a sequence of {\em flips} (which swap two edges of a lattice square with the other two). There is a quantity associated to a tiling, called the {\em twist} (related to the linking number from knot theory) that is preserved by flip moves but changed by so-called {\em trit} moves, which involve removing three edges contained in the same $2 \times 2 \times 2$ cube and replacing them with three others, see below:
\begin{figure}[H]
    \centering
        \includegraphics[scale=0.25]{flip3d.pdf}$\qquad\qquad$
\includegraphics[scale=0.2]{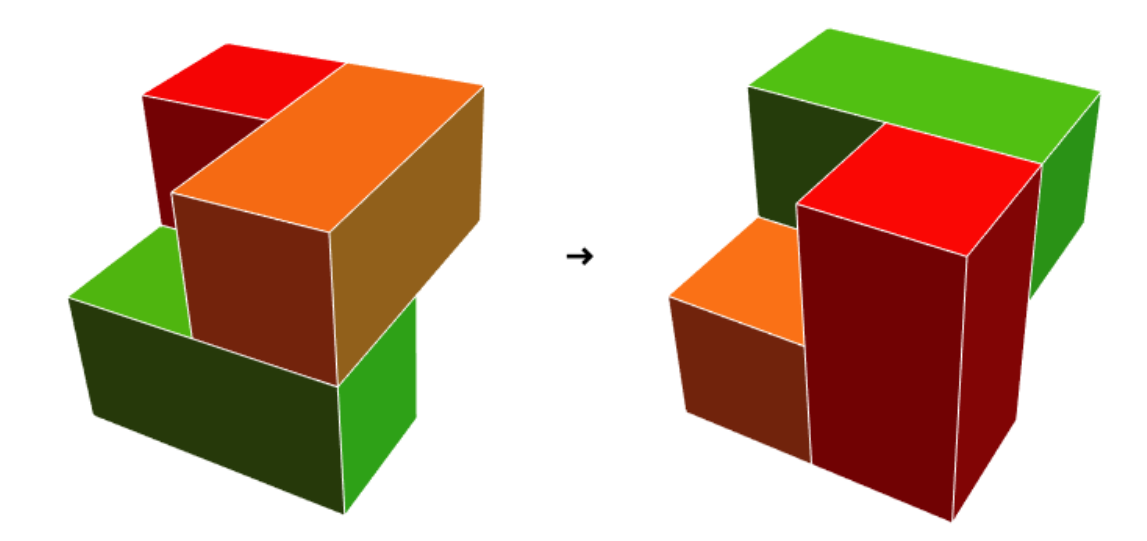}$\qquad\qquad$\includegraphics[scale=0.15]{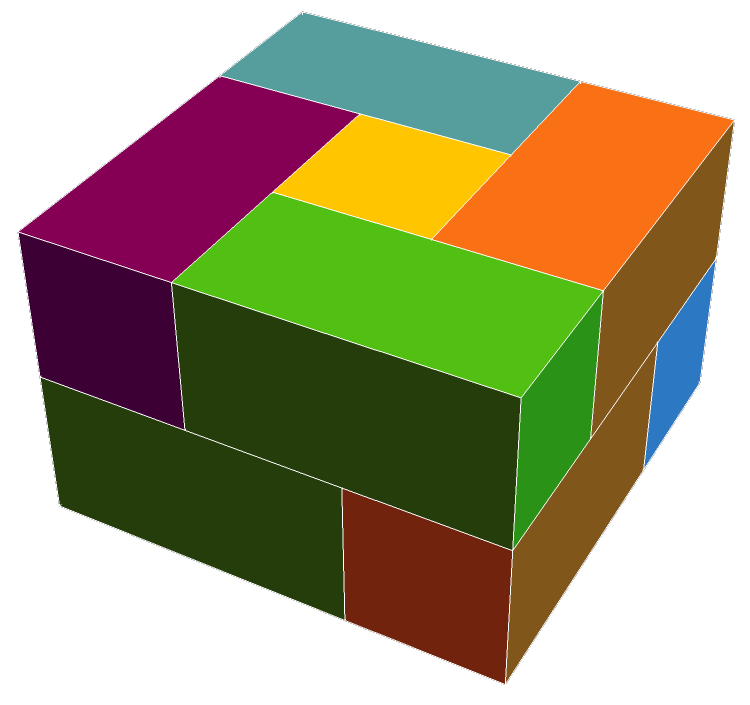}
    \caption{A \textit{flip}, a \textit{trit} and a flip-rigid configuration called a \textit{hopfion}.}
    \label{fig:hopfion_trit}
\end{figure}
It remains open whether it is possible to connect any tiling of a rectangular box to any other using {\em both} the flip and trit moves shown above. It is still possible in 3D to generate random tilings of finite regions using Glauber dynamics (using an update algorithm that allows for the tiling to be modified along cycles of arbitrary length, see Section \ref{sec:uniform sampling}) but little is known about the rate of mixing (though bounds were given for another 3D tiling model in \cite{MR1755523}). 

Quitmann and Taggi have some additional important work on the 3D dimer model, which studies the behavior of loops formed by an independently sampled pair of dimer configurations \cite{taggi2022uniformly,
quitmann2022macroscopicdimer,
quitmann2022macroscopicloops}. Among other things, they find that when one superimposes two independent random dimer tilings on an $n \times n \times n$ torus, the union of the tilings will typically contain cycles whose length has order $n^3$.

Throughout this paper our basic physical intuition is that the 3D dimer model describes a steady current through a non-isotropic medium, and we are studying how the current varies in space. But we stress that papers like the one by Freedman et al \cite{freedman2011weakly} have other field theoretic phenomena in mind ({\em topological excitations}, {\em Majorana fermions}, {\em Abelian anyons}, etc.)\ and we will not attempt to explain these interpretations here, though we will briefly mention a gauge theoretic interpretation of the dimer model in Problem~\ref{prob:gauge}.

Let us also remark that the literature on {\em related} topics is quite large, including (to give just a few examples) work on large deviations for graph homomorphisms $h: \m Z^d\to \m Z$ \cite{KriegerMenzTassy}, weakly non-planar dimer models \cite{Giuliani2, wealkynonplanar}, and a generalization of rhombus tilings to $n$ dimensions \cite{LindaMooreNordahl, Lammers, arctic_oct_MC}.

\subsection{Outline of paper}
We establish notation and a few basic preliminaries in Section~\ref{sec::prelim}. We then illustrate the complexity of the 3D model with a brief discussion of the local move problem and related topics in Section~\ref{sec:localmoves}. 

In Section~\ref{sec:extreme} we describe the ergodic Gibbs measures of {\em boundary mean current} (i.e., having mean current that lies on the boundary $\partial \mathcal{O}$, where $\mathcal O$ is the octahedron of possible mean currents). Not all  Gibbs measures with boundary mean current have zero entropy, but we can still compute the entropy function $\ent$ on $\partial \mathcal O$ by reducing it to a two-dimensional problem (see Theorem \ref{thm: extremal_entropy}).

In Section~\ref{sec:patching plus hall}, we deal with the fundamental problem of how one ``patches together" regions of different tilings to form one perfect matching of a large region (Theorem \ref{patching}). As noted above, the key tool is \textit{Hall's matching theorem}. We give an outline of the proof of Theorem \ref{patching} (in the ``square annulus setting'') in Section \ref{sec: ideas for patching} accompanied by a sequence of two dimensional pictures. In three dimensions, Hall's matching theorem relates non-tileability to the existence of a certain type of minimal surface. The other key classical input in the proof of Theorem \ref{patching} is the isoperimetric inequality.

Section~\ref{sec:entropy} concerns properties of the local entropy function $\ent$, such as continuity and strict concavity. Since no exact formula for $\ent(s)$ is known for mean currents $s$ in the interior of $\mc O$ this section involves interesting methods fairly different from dimension 2, in particular the chain swapping constructions in Section \ref{Subsection: Dimer Swapping}.

Section~\ref{sec:asympflows} is a technical section where we present some of the function-theoretic preliminaries about flows that we need for the large deviation principle. We define scaled tiling flows, the Wasserstein metric on flows for comparing them, and asymptotic flows (which we prove are the scaling limits of tiling flows in Theorem \ref{thm:formal_fine_mesh}). We also define boundary values for both types of flows using a \textit{trace operator} $T$, and show that $T$ is uniformly continuous as a function of the flow (Theorem \ref{thm: boundary_value_uniformly_continuous}). 

In Section~\ref{subsection: properties of Ent}, with the the topology on asymptotic flows established, we leverage the results for $\ent$ in Section~\ref{sec:entropy} to show that $\Ent$ is upper semicontinuous and has a unique maximizer (when the region and boundary conditions are semi-flexible). 

Section~\ref{sec:ldp} finally ties together the ingredients of the previous sections to produce the two large deviation principles (Theorem \ref{thm:sb-ldp} for soft boundary conditions and Theorem \ref{thm:hb-ldp} for hard boundary conditions) which are our main results. Both of these are broken down into proving a lower bound on probabilities (Theorem \ref{thm:lower} for soft boundary and Theorem \ref{thm:lower-hb} for hard boundary) and an upper bound (Theorem \ref{thm:upper} for soft boundary and Theorem \ref{thm:upper-hb} for hard boundary). One of the slightly difficult parts of the paper is the explicit construction of a tiling flow approximating an asymptotic flow.  This is a step in proving the lower bound which we call the ``shining light" argument (Theorem \ref{thm:shininglight}). For the hard boundary lower bound, on top of this we also need a \textit{generalized patching theorem} (Theorem \ref{thm:generalized_patching}) to show that any asymptotic flow can be approximated by a tiling of a fixed region. The proof of the generalized patching theorem is where we make use of the \textit{flexible} condition on $(R,b)$ in the hard boundary large deviation principle. 

Several open problems are given in Section~\ref{sec:open problems}. See the chart below for a graphical representation of some of the dependencies and results that we highlight.

\footnotesize
\tikzset{every picture/.style={line width=0.75pt}} 

\begin{tikzpicture}[x=0.7pt,y=0.75pt,yscale=-0.93,xscale=0.93]

\draw   (90,430) -- (170,430) -- (170,470) -- (90,470) -- cycle ;
\draw   (190,430) -- (310,430) -- (310,470) -- (190,470) -- cycle ;
\draw   (330,430) -- (440,430) -- (440,470) -- (330,470) -- cycle ;
\draw   (190,350) -- (310,350) -- (310,390) -- (190,390) -- cycle ;
\draw   (340,300) -- (460,300) -- (460,360) -- (340,360) -- cycle ;
\draw   (490,270) -- (650,270) -- (650,340) -- (490,340) -- cycle ;
\draw   (490,370) -- (600,370) -- (600,420) -- (490,420) -- cycle ;
\draw   (310,220) -- (410,220) -- (410,270) -- (310,270) -- cycle ;
\draw  [color={rgb, 255:red, 245; green, 166; blue, 35 }  ,draw opacity=1 ] (490,190) -- (600,190) -- (600,250) -- (490,250) -- cycle ;
\draw  [color={rgb, 255:red, 245; green, 166; blue, 35 }  ,draw opacity=1 ] (490,90) -- (600,90) -- (600,150) -- (490,150) -- cycle ;
\draw  [color={rgb, 255:red, 245; green, 166; blue, 35 }  ,draw opacity=1 ] (190,142) -- (293,142) -- (293,193) -- (190,193) -- cycle ;
\draw  [color={rgb, 255:red, 245; green, 166; blue, 35 }  ,draw opacity=1 ] (317,142) -- (420,142) -- (420,193) -- (317,193) -- cycle ;
\draw  [color={rgb, 255:red, 245; green, 166; blue, 35 }  ,draw opacity=1 ] (190,50) -- (395,50) -- (395,110) -- (190,110) -- cycle ;
\draw  [color={rgb, 255:red, 245; green, 166; blue, 35 }  ,draw opacity=1 ] (90,240) -- (230,240) -- (230,300) -- (90,300) -- cycle ;
\draw  [color={rgb, 255:red, 245; green, 166; blue, 35 }  ,draw opacity=1 ] (30,140) -- (125,140) -- (125,193) -- (30,193) -- cycle ;
\draw  [color={rgb, 255:red, 245; green, 166; blue, 35 }  ,draw opacity=1 ] (435,11) -- (615,11) -- (615,51) -- (435,51) -- cycle ;
\draw  [color={rgb, 255:red, 245; green, 166; blue, 35 }  ,draw opacity=1 ] (20,20) -- (130,20) -- (130,110) -- (20,110) -- cycle ;
\draw   (10,340) -- (130,340) -- (130,380) -- (10,380) -- cycle ;
\draw    (160,420) -- (179.11,381.79) ;
\draw [shift={(180,380)}, rotate = 116.57] [color={rgb, 255:red, 0; green, 0; blue, 0 }  ][line width=0.75]    (10.93,-3.29) .. controls (6.95,-1.4) and (3.31,-0.3) .. (0,0) .. controls (3.31,0.3) and (6.95,1.4) .. (10.93,3.29)   ;
\draw    (340,420) -- (320.89,381.79) ;
\draw [shift={(320,380)}, rotate = 63.43] [color={rgb, 255:red, 0; green, 0; blue, 0 }  ][line width=0.75]    (10.93,-3.29) .. controls (6.95,-1.4) and (3.31,-0.3) .. (0,0) .. controls (3.31,0.3) and (6.95,1.4) .. (10.93,3.29)   ;
\draw    (250,424) -- (250,396) ;
\draw [shift={(250,394)}, rotate = 90] [color={rgb, 255:red, 0; green, 0; blue, 0 }  ][line width=0.75]    (10.93,-3.29) .. controls (6.95,-1.4) and (3.31,-0.3) .. (0,0) .. controls (3.31,0.3) and (6.95,1.4) .. (10.93,3.29)   ;
\draw    (278,343) -- (329.69,318.85) ;
\draw [shift={(331.5,318)}, rotate = 154.95] [color={rgb, 255:red, 0; green, 0; blue, 0 }  ][line width=0.75]    (10.93,-3.29) .. controls (6.95,-1.4) and (3.31,-0.3) .. (0,0) .. controls (3.31,0.3) and (6.95,1.4) .. (10.93,3.29)   ;
\draw    (261,342) -- (249.66,200.99) ;
\draw [shift={(249.5,199)}, rotate = 85.4] [color={rgb, 255:red, 0; green, 0; blue, 0 }  ][line width=0.75]    (10.93,-3.29) .. controls (6.95,-1.4) and (3.31,-0.3) .. (0,0) .. controls (3.31,0.3) and (6.95,1.4) .. (10.93,3.29)   ;
\draw    (150.5,230) -- (183.17,193.49) ;
\draw [shift={(184.5,192)}, rotate = 131.82] [color={rgb, 255:red, 0; green, 0; blue, 0 }  ][line width=0.75]    (10.93,-3.29) .. controls (6.95,-1.4) and (3.31,-0.3) .. (0,0) .. controls (3.31,0.3) and (6.95,1.4) .. (10.93,3.29)   ;
\draw    (143,230) -- (125.75,208.56) ;
\draw [shift={(124.5,207)}, rotate = 51.19] [color={rgb, 255:red, 0; green, 0; blue, 0 }  ][line width=0.75]    (10.93,-3.29) .. controls (6.95,-1.4) and (3.31,-0.3) .. (0,0) .. controls (3.31,0.3) and (6.95,1.4) .. (10.93,3.29)   ;
\draw    (235,138) -- (273.85,117.14) ;
\draw [shift={(275.5,116)}, rotate = 145.34] [color={rgb, 255:red, 0; green, 0; blue, 0 }  ][line width=0.75]    (10.93,-3.29) .. controls (6.95,-1.4) and (3.31,-0.3) .. (0,0) .. controls (3.31,0.3) and (6.95,1.4) .. (10.93,3.29)   ;
\draw    (367,138) -- (325.2,117.05) ;
\draw [shift={(323.5,116)}, rotate = 31.83] [color={rgb, 255:red, 0; green, 0; blue, 0 }  ][line width=0.75]    (10.93,-3.29) .. controls (6.95,-1.4) and (3.31,-0.3) .. (0,0) .. controls (3.31,0.3) and (6.95,1.4) .. (10.93,3.29)   ;
\draw    (415.5,295) -- (395.06,275.26) ;
\draw [shift={(392.5,274)}, rotate = 38.93] [color={rgb, 255:red, 0; green, 0; blue, 0 }  ][line width=0.75]    (10.93,-3.29) .. controls (6.95,-1.4) and (3.31,-0.3) .. (0,0) .. controls (3.31,0.3) and (6.95,1.4) .. (10.93,3.29)   ;
\draw    (485,396) -- (461.82,369.51) ;
\draw [shift={(460.5,368)}, rotate = 48.81] [color={rgb, 255:red, 0; green, 0; blue, 0 }  ][line width=0.75]    (10.93,-3.29) .. controls (6.95,-1.4) and (3.31,-0.3) .. (0,0) .. controls (3.31,0.3) and (6.95,1.4) .. (10.93,3.29)   ;
\draw    (485,305) -- (465.86,325.54) ;
\draw [shift={(464.5,327)}, rotate = 312.98] [color={rgb, 255:red, 0; green, 0; blue, 0 }  ][line width=0.75]    (10.93,-3.29) .. controls (6.95,-1.4) and (3.31,-0.3) .. (0,0) .. controls (3.31,0.3) and (6.95,1.4) .. (10.93,3.29)   ;
\draw    (481.5,212) -- (429.19,179.07) ;
\draw [shift={(427.5,178)}, rotate = 32.2] [color={rgb, 255:red, 0; green, 0; blue, 0 }  ][line width=0.75]    (10.93,-3.29) .. controls (6.95,-1.4) and (3.31,-0.3) .. (0,0) .. controls (3.31,0.3) and (6.95,1.4) .. (10.93,3.29)   ;
\draw    (544.5,186) -- (544.5,157) ;
\draw [shift={(544.5,155)}, rotate = 90] [color={rgb, 255:red, 0; green, 0; blue, 0 }  ][line width=0.75]    (10.93,-3.29) .. controls (6.95,-1.4) and (3.31,-0.3) .. (0,0) .. controls (3.31,0.3) and (6.95,1.4) .. (10.93,3.29)   ;
\draw    (397.5,83) -- (428.94,32.7) ;
\draw [shift={(430,31)}, rotate = 122.01] [color={rgb, 255:red, 0; green, 0; blue, 0 }  ][line width=0.75]    (10.93,-3.29) .. controls (6.95,-1.4) and (3.31,-0.3) .. (0,0) .. controls (3.31,0.3) and (6.95,1.4) .. (10.93,3.29)   ;
\draw    (546,85) -- (546,56) ;
\draw [shift={(546,54)}, rotate = 90] [color={rgb, 255:red, 0; green, 0; blue, 0 }  ][line width=0.75]    (10.93,-3.29) .. controls (6.95,-1.4) and (3.31,-0.3) .. (0,0) .. controls (3.31,0.3) and (6.95,1.4) .. (10.93,3.29)   ;
\draw [color={rgb, 255:red, 208; green, 2; blue, 27 }  ,draw opacity=1 ] [dash pattern={on 4.5pt off 4.5pt}]  (126.5,170) -- (179.5,170) ;
\draw [shift={(181.5,170)}, rotate = 180] [color={rgb, 255:red, 208; green, 2; blue, 27 }  ,draw opacity=1 ][line width=0.75]    (10.93,-3.29) .. controls (6.95,-1.4) and (3.31,-0.3) .. (0,0) .. controls (3.31,0.3) and (6.95,1.4) .. (10.93,3.29)   ;
\draw    (303.5,230) -- (274.89,200.44) ;
\draw [shift={(273.5,199)}, rotate = 45.94] [color={rgb, 255:red, 0; green, 0; blue, 0 }  ][line width=0.75]    (10.93,-3.29) .. controls (6.95,-1.4) and (3.31,-0.3) .. (0,0) .. controls (3.31,0.3) and (6.95,1.4) .. (10.93,3.29)   ;
\draw    (424.5,293) -- (483.61,132.79) ;
\draw [shift={(484.5,131)}, rotate = 108.28] [color={rgb, 255:red, 0; green, 0; blue, 0 }  ][line width=0.75]    (10.93,-3.29) .. controls (6.95,-1.4) and (3.31,-0.3) .. (0,0) .. controls (3.31,0.3) and (6.95,1.4) .. (10.93,3.29)   ;


\draw    (483.5,297) -- (414.07,242.24) ;
\draw [shift={(412.5,241)}, rotate = 38.26] [color={rgb, 255:red, 0; green, 0; blue, 0 }  ][line width=0.75]    (10.93,-3.29) .. controls (6.95,-1.4) and (3.31,-0.3) .. (0,0) .. controls (3.31,0.3) and (6.95,1.4) .. (10.93,3.29)   ;

\draw (195,53) node [anchor=north west][inner sep=0.75pt]   [align=left] {large deviation principle \\(SB \, / \, \textcolor[rgb]{0.82,0.01,0.11}{HB})\\Theorem \ref{thm:sb-ldp} / Theorem \ref{thm:hb-ldp}};
\draw (437,14) node [anchor=north west][inner sep=0.75pt]   [align=left] {concentration/limit shape\\Corollary \ref{cor:concentration}};
\draw (492,93) node [anchor=north west][inner sep=0.75pt]   [align=left] {Unique Ent\\maximizer\\Theorem \ref{thm: unique maximizer}};
\draw (492,372) node [anchor=north west][inner sep=0.75pt]   [align=left] {chain swapping\\machinery\\Section \ref{Subsection: Dimer Swapping}};
\draw (492,273) node [anchor=north west][inner sep=0.75pt]   [align=left] {measures with\\boundary mean current\\in 3D\\Section \ref{sec:extreme}};
\draw (492,193) node [anchor=north west][inner sep=0.75pt]   [align=left] {Ent upper\\semicontinuous\\Prop. \ref{proposition: Upper semicontinuous}};
\draw (191,353) node [anchor=north west][inner sep=0.75pt]   [align=left] {patching theorem\\Theorem \ref{patching}};
\draw (101,432) node [anchor=north west][inner sep=0.75pt]   [align=left] {ergodic\\theorem};
\draw (196,432) node [anchor=north west][inner sep=0.75pt]   [align=left] {Hall's matching\\theorem};
\draw (341,432) node [anchor=north west][inner sep=0.75pt]   [align=left] {isoperimetric\\inequality};
\draw (342,303) node [anchor=north west][inner sep=0.75pt]   [align=left] {strict concavity of\\ent on $\mc O\setminus \mc E$\\Theorem \ref{theorem: entropy is strictly concave}};
\draw (22,23) node [anchor=north west][inner sep=0.75pt]  [color={rgb, 255:red, 245; green, 166; blue, 35 }  ,opacity=1 ] [align=left] {definitions, \\basic properties \\of Wasserstein\\distance\\Section \ref{sec:asympflows}};
\draw (312,223) node [anchor=north west][inner sep=0.75pt]   [align=left] {EGMs exist for\\all $s\in \mc O$\\Corollary \ref{cor: EGMs exist!}};
\draw (12,343) node [anchor=north west][inner sep=0.75pt]   [align=left] {local moves in 3D\\Section \ref{sec:localmoves}};
\draw (32,143) node [anchor=north west][inner sep=0.75pt]   [align=left] {generalized\\patching\\Theorem \ref{thm:generalized_patching}};
\draw (92,243) node [anchor=north west][inner sep=0.75pt]   [align=left] {existence of\\tiling approximations\\Theorem \ref{thm:shininglight}};
\draw (192,145) node [anchor=north west][inner sep=0.75pt]   [align=left] {lower bounds\\Theorem \ref{thm:lower}\\ Theorem \ref{thm:lower-hb}};
\draw (320,145) node [anchor=north west][inner sep=0.75pt]   [align=left] {upper bounds\\Theorem \ref{thm:upper} \\ Theorem \ref{thm:upper-hb}};
\draw (128,147) node [anchor=north west][inner sep=0.75pt]  [color={rgb, 255:red, 208; green, 2; blue, 27 }  ,opacity=1 ] [align=left] {for HB};
\end{tikzpicture}
\normalsize

The results in orange boxes are stated using the Wasserstein metric for flows, and rely on many of its properties described in Section \ref{sec:asympflows}.

\bigskip
\noindent\textbf{Acknowledgments.} The authors have enjoyed and benefited from conversations with many dimer theory experts, including (but not limited to) Nathana\"el Berestycki, Richard Kenyon, and Marianna Russkikh. The authors were partially supported by NSF grants DMS 1712862, DMS 2153742 and thank the Institute for Advanced Study, where part of this work was completed. N.C.\ was in addition supported by an SERB Startup grant and the Department of Atomic Energy, Government of India, under project indentification number RTI 4014, and would like to thank Tom Meyerovitch for initiating his interest in the dimer model and introducing him to the flows associated with the model, and Kedar Damle and Piyush Srivastava for many helpful conversations regarding our simulations. C.W.\ was also partially supported by NSF grant DMS-2401750.

\section{Preliminaries} \label{sec::prelim}

As we mentioned earlier, it is sometimes convenient to represent a vertex of $\mathbb Z^3$ by the unit cube centered at that vertex, and to represent an edge $e=(a,b)$ of $\mathbb Z^3$ by the union of the two cubes centered at $a$ and $b$ (a domino). Both perspectives are useful for visualization, and we will use the terms {\em perfect matching} and {\em dimer tiling} somewhat interchangeably. We denote the space of dimer tilings of $\m Z^3$ by $\Omega$. \symindex{Chapter 2!$\Omega$ - the space of dimer tilings}

Recall that $\m Z^3$ is a bipartite lattice, with bipartition into even points (where the coordinate sum is even) and odd points (where the coordinate sum is odd). In a dimer tiling of $\m Z^3$, there are six possible types of tiles corresponding to the six possible unit coordinate vectors. We denote the unit coordinate vectors by $\eta_1 = (1,0,0)$, $\eta_2 = (0,1,0)$, and $\eta_3=(0,0,1)$. We denote the edge in $\m Z^3$ connecting the origin to $\eta_i$ by $e_i$ and the edge connecting the origin to $-\eta_i$ by $-e_i$. \symindex{Chapter 2!$\eta_i$ and $e_i$ - unit vectors and edges connecting the origin}

\subsection{Tilings and discrete vector fields}\label{section:tiling_flows}

Given a perfect matching of $\mathbb{Z}^3$, there is a natural way to associate a discrete, divergence-free vector field valued on oriented edges. We will call the flow corresponding to a tiling $\tau$ a \textit{tiling flow}, denoted $f_\tau$. Like height functions in two dimensions, tiling flows have well-defined scaling limits called \textit{asymptotic flows} (which we describe in Section \ref{sec:asympflows}). Asymptotic flows capture the broad statistics of dimer tilings in a given region. Since our main results (e.g.\ our large deviation principle, analogous to \cite{cohn2001variational}) are related to the large scale statistics of dimer tilings, they are naturally formulated in terms of tiling flows.

 Let $E$ denote the set of edges in $\m Z^3$.\symindex{Chapter 2!$E$ - edges in $\m Z^3$} A \textit{discrete vector field} or \textit{discrete flow} \termindex{Chapter 2!discrete vector field/discrete flow} is a function from oriented edges of $\m Z^3$ to the real numbers. Unless stated otherwise, we assume all edges are oriented from even to odd (flipping the orientation of the edge $e$ reverses the sign of the discrete vector field on $e$). For a dimer tiling $\tau$ of $\m Z^3$, we associate a discrete vector field $v_\tau$ valued on the edges $e\in E$ defined by \termindex{Chapter 2!pretiling flow}
\begin{equation}\label{eq:v_tau}
	v_{\tau}({e}) = \begin{cases}   +1 &\quad \text{ if } e\in \tau,\text{ oriented even to odd} \\ 0 &\quad \text{ if } e\not\in \tau \end{cases}
\end{equation}\symindex{Chapter 2!$v_{\tau}$ - flow associated to a tiling $\tau$}

We call $v_\tau$ the \textit{pretiling flow}. Recall that by our definition of discrete vector fields, if $e$ is oriented odd to even, we say that $v_\tau(e) = -1$. The divergence of a discrete vector field $v$ is given by 
\begin{equation}\label{eq: div free discrete}
    \text{div}\,v(x) = \sum_{e\ni x} v(e)
\end{equation}\symindex{Chapter 2!$\text{div}\,v$ - divergence of a discrete vector field $v$}

where the sum is over edges $e$ oriented away from $x$ (e.g. if $x$ is even, the edges in the sum are oriented from even to odd, and the opposite if $x$ is odd). From this definition, we compute that 
\begin{align*}
	\text{div}\, v_{\tau}(x) = \begin{cases} +1 &\quad \text{ if } x \text{ is even } \\ -1 &\quad \text{ if } x \text{ is odd } \end{cases}
\end{align*}
Therefore $v_{\tau}$ itself is not divergence-free, but the divergences don't depend on $\tau$, so we can construct a divergence-free flow corresponding to a tiling $\tau$ by subtracting a fixed reference flow $r$. There are lots of reasonable choices for the reference flow. For simplicity and symmetry we choose: \symindex{Chapter 2!$r$ - reference flow }
\begin{align*}
	r({e}) = \frac{1}{6} \qquad \text{ for all edges } {e} \in E \text{ oriented from even to odd}
\end{align*}
We can now define the \textit{tiling flow} \termindex{Chapter 2!tiling flow} corresponding to a tiling $\tau$ of a region $R\subseteq \mathbb{Z}^3$.\symindex{Chapter 2!$f_{\tau}$ - (divergence free) tiling flow associated with the tiling $\tau$}
\begin{definition}\label{def: tiling flow}
	Let $\tau$ be a dimer tiling of $\m Z^3$. The divergence-free, discrete vector field corresponding to $\tau$ is $f_{\tau} := v_{\tau} - r$. We call $f_\tau$ a \textit{tiling flow}. 
	\begin{align*}
		f_{\tau}({e}) = \begin{cases}  +5/6 &\quad \text{ if } e\in \tau \\ -1/6 &\quad \text{ if } e\not\in \tau \end{cases}
	\end{align*}
	If $\tau$ is a dimer tiling of a subgraph $G\subset \m Z^3$, we define the tiling flow by restriction.
\end{definition}

\begin{rem}
In dimension 2, the analogous definition of a tiling flow $f_\tau$ also works (in this case the reference flow is $1/4$ on all edges oriented from even to odd). For every discrete flow defined on edges (whose endpoints are vertices of $\mathbb Z^2$) there is a dual flow on dual edges (whose endpoints are faces of $\mathbb Z^2$) obtained by rotating each edge $90$ degrees clockwise. If the original flow is divergence-free, then the dual flow is curl-free and is hence equal to the gradient of a function (this function is called the \textit{height function} or \textit{scalar potential}).  It is also worth noting that there is an analog of the height function in three dimensions. Namely, since $f_\tau$ is a divergence-free flow it can be written as the curl of another flow, that is, ${f}_\tau = \nabla \times A$, where $A$ is a so-called {\em vector potential} which is defined modulo the addition of a curl-free flow. However the set of vector potentials $A$ is more complicated than the set of height functions (it does not have a similar lattice structure, the potentials are not uniquely defined, etc.) and is not as useful for our purposes as height functions are in two dimensions. Because of that, we do not work with the vector potential in this paper, and instead just work with the tiling flow $f_\tau$ itself.
\end{rem}

A pair of dimer tilings $(\tau_1,\tau_2)\in \Omega\times \Omega$ is called a \textit{double dimer tiling} or \textit{double dimer configuration}\termindex{Chapter 2!double dimer tiling/configuration}. The double dimer model is a model of independent interest, but we mention it because it will be a tool for comparing dimer tilings. This will be used in Section \ref{sec:localmoves}, Section \ref{sec:ldp}, and substantially in Section \ref{sec:entropy}.

There is a natural way to associate a divergence-free discrete flow to a double dimer configuration, namely for $e$ an edge oriented from even to odd, \symindex{Chapter 2!$f_{(\tau_1,\tau_2)}$ - discrete flow associated to a double dimer configuration }
\begin{equation}
    f_{(\tau_1,\tau_2)}(e) = f_{\tau_1}(e) - f_{\tau_2}(e) = v_{\tau_1}(e) - v_{\tau_2}(e) = \begin{cases}1&\text{ if }e\in \tau_1\setminus \tau_2\\
 -1&\text{ if }e\in \tau_2\setminus \tau_1\\
 0&\text{ if }e\in \tau_1\cap \tau_2\text{ or if }e\not\in \tau_1\cup\tau_2.
 \end{cases}.
\end{equation}
Unlike the tiling flow for a single tiling, the flow associated with a double dimer configuration $(\tau_1,\tau_2)$ does not determine $(\tau_1,\tau_2)$, since it does not specify the tiles in $\tau_1\cap \tau_2$. However, the collection of loops formed by $\tau_1\cup \tau_2$ (including the double tiles) and the flow $f_{(\tau_1,\tau_2)}$ together determine $(\tau_1,\tau_2)$. See Section \ref{subsection: flows for double dimer} for more about double dimer flows.

\subsection{Measures on tilings and mean currents}\label{section:measures-currents}

Recall that $\Omega$ denotes the set of dimer tilings of $\m Z^3$. The group $\m Z^3$ acts naturally on $\Omega$ by translations, namely given $x\in \m Z^3$ and $\tau\in \Omega$, $\tau + x$ is the tiling where $(a,b)\in \tau$ if and only if $(a+x,b+x)\in \tau + x$. There is natural topology on $\Omega$ induced by viewing it as a subset of $\{0,1\}^{E}$ and giving the latter the product topology over the discrete set $\{0,1\}$ (recall that $E$ denotes the edges of $\m Z^3$). This makes $\Omega$ a compact metrizable space and the translation action on it continuous.

Let $\threeeven$ denote the set of even vertices in $\Z^3$. We define $\mc P(\Omega) = \mc P$ to be the space of Borel probability measures on $\Omega$ invariant under the action of $\threeeven$. \symindex{Chapter 2!$\threeeven$}\symindex{Chapter 2!$\mc P$ - the space of $\threeeven$ invariant measures on the space of tilings}

To explain why we look at $\threeeven$-invariant measures instead of $\m Z^3$-invariant measures, recall that $\m Z^3$ is a bipartite lattice, with bipartition consisting of even points and odd points. In the interpretation of a dimer tiling as a flow from in Section \ref{section:tiling_flows}, the sign of the flow on an edge oriented parallel to $(1,0,0)$ (for example) depends on whether the edge starts at an even point or an odd point. E.g. consider the tiling
$$\tau=\{( x,  x+  (1,0,0))~:~ x \in \Z^3\text{ is even}\}.$$
The flow associated to $\tau$ moves current (on average) in the direction $(1,0,0)$, while the flow for $\tau+(1,0,0)$ moves current (on average) in the direction $(-1,0,0)$. We want to our measures to be invariant under an action that preserves the asymptotic direction of the flow associated to a tiling, and this is why we consider $\threeeven$-invariant measures instead of $\m Z^3$-invariant measures. 

The \emph{ergodic measures} $\mc P_e$ are the extreme points of the convex set $\Prob$. A good reference for basic ergodic theory suitable for our purposes is \cite{keller1998equilibrium}. Any invariant measure $\mu\in \mc P$ can be decomposed in terms of ergodic measures, i.e.\ there exists a measure $w_\mu$ on $\mc P_e$ such that \termindex{Chapter 2!ergodic measures}
\begin{align*}
    \mu = \int_{\mc P_e} \nu\, \dd w_\mu(\nu).
\end{align*}
The measures $\nu$ in the support of $w_\mu$ are called the \textit{ergodic components} of $\mu$. Sampling from $\mu$ can be viewed as first sampling an ergodic component $\nu$ from $w_\mu$ and then sampling from $\nu$.\termindex{Chapter 2!ergodic components/decomposition}

We will also frequently make use of the so-called \textit{uniform Gibbs measures} \termindex{Chapter 2!Gibbs measures}on $\Omega$ defined as follows: a measure $\mu\in \mc P$ is a \textit{uniform Gibbs measure} if for any finite set $R\subset \m Z^3$, we can say that {\em given} that $\tau$ contains no edges that cross the boundary of $R$, and {\em given} the tiling $\tau$ induces on $\m Z^3 \setminus R$, the $\mu$ {\em conditional law} of the restriction of $\tau$ to $R$ is the uniform measure on dimer tilings of $R$. We will see in the next section that the measures that maximize specific entropy are uniform Gibbs measures. Throughout the rest of the paper, we refer to uniform Gibbs measures simply as \textit{Gibbs measures}.

A useful reference for Gibbs measures is \cite{Georgii}. We denote the set of $\threeeven$-invariant Gibbs measures by $\Prob_G$ and the set of ergodic Gibbs measures (EGMs) by $\Prob_{G,e}$. A useful fact throughout is that the ergodic components of $\threeeven$-invariant Gibbs measures are themselves $\threeeven$-invariant Gibbs measures. \symindex{Chapter 2!$\mc P_e, \mathcal P_G, \mathcal P_{G,e}$ - ergodic, Gibbs and ergodic Gibbs measures respectively on the space of tilings}\termindex{Chapter 2!EGM/ergodic Gibbs measures}

\begin{prop}\cite[Theorem 14.15]{Georgii}\label{prop: ergodic deocomposition gibbs}
The ergodic components of an invariant Gibbs measure are ergodic Gibbs measures almost surely.
\end{prop}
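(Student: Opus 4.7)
The plan is to combine the ergodic decomposition $\mu = \int \nu_\omega\, d\mu(\omega)$---the disintegration of $\mu$ along the $\sigma$-algebra $\mathcal{I}$ of $\threeeven$-invariant events---with the DLR characterization of the Gibbs property. For $\mu$-a.e.\ $\omega$ the component $\nu_\omega$ is automatically $\threeeven$-invariant and ergodic (this is the content of the ergodic decomposition theorem), so the only nontrivial content is that $\nu_\omega$ is also Gibbs. The central measure-theoretic fact I would establish first is that for every finite $\Lambda \subset \m Z^3$, the invariant $\sigma$-algebra $\mathcal{I}$ lies inside the tail $\sigma$-algebra $\mathcal{F}_{\Lambda^c}$ modulo $\mu$-null sets. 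To see this, take $A \in \mathcal{I}$ and $\epsilon>0$, and approximate $A$ in $L^2(\mu)$ by an event $A' \in \mathcal{F}_S$ for some finite $S$. Choose $x \in \threeeven$ large enough that $S + x \subset \Lambda^c$; $\threeeven$-invariance of $\mu$ and of $A$ give $\mu(A \triangle \theta_x^{-1} A') = \mu(A \triangle A') < \epsilon$, and $\theta_x^{-1} A' \in \mathcal{F}_{S+x} \subset \mathcal{F}_{\Lambda^c}$. Letting $\epsilon \to 0$ places $A$ in the $\mu$-completion of $\mathcal{F}_{\Lambda^c}$.

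With the inclusion in hand, I would transfer the DLR equations from $\mu$ to $\nu_\omega$ by a tower argument. Fix a finite $\Lambda$, a bounded $\mathcal{F}_\Lambda$-measurable $f$, and any $B \in \mathcal{F}_{\Lambda^c}$. Since $\mathcal{I} \subset \mathcal{F}_{\Lambda^c}$, for $\mu$-a.e.\ $\omega$,
\begin{equation*}
\int_B f\, d\nu_\omega \;=\; E_\mu[1_B f \mid \mathcal{I}](\omega) \;=\; E_\mu\bigl[1_B\, E_\mu[f \mid \mathcal{F}_{\Lambda^c}] \bigm| \mathcal{I}\bigr](\omega) \;=\; \int_B E_\mu[f \mid \mathcal{F}_{\Lambda^c}]\, d\nu_\omega,
\end{equation*}
so $E_{\nu_\omega}[f \mid \mathcal{F}_{\Lambda^c}] = E_\mu[f \mid \mathcal{F}_{\Lambda^c}]$ $\nu_\omega$-a.s. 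The Gibbs property for $\mu$ identifies this common value with the uniform-on-$\Lambda$ resampling $\gamma_\Lambda(f \mid \cdot)$, so $\nu_\omega$ satisfies the DLR equation for the chosen $(\Lambda, f)$.

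To finish, I would use that $\Omega \subset \{0,1\}^{E}$ is countably generated and that there are countably many finite $\Lambda$ (e.g.\ boxes $\Lambda_n = [-n,n]^3 \cap \m Z^3$): it suffices to verify the previous step on a countable family of cylinder indicators $f$ in each such $\Lambda_n$. Intersecting the corresponding countably many full-$\mu$-measure sets yields a single full-$\mu$-measure set of $\omega$'s on which $\nu_\omega$ is ergodic, $\threeeven$-invariant, and simultaneously satisfies DLR for every finite $\Lambda$, i.e.\ $\nu_\omega \in \mathcal P_{G,e}$. The main obstacle is purely measure-theoretic---the inclusion $\mathcal{I} \subset \mathcal{F}_{\Lambda^c}$ modulo $\mu$-null sets and the bookkeeping to align $\mu$-null sets with $\nu_\omega$-null sets---and no dimer-specific input is required, since the Gibbs condition is a linear constraint on measures that transfers to the components of the ergodic decomposition as soon as the $\sigma$-algebra inclusion is in place.
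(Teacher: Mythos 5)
The paper gives no proof of this proposition; it is cited to Georgii's \emph{Gibbs Measures and Phase Transitions}, Theorem~14.15. Your argument is a correct, self-contained account of the essential content behind that reference, and it uses the same key idea that drives Georgii's treatment: the invariant $\sigma$-algebra $\mathcal I$ lies in the $\mu$-completion of every tail $\sigma$-algebra $\mathcal F_{\Lambda^c}$ (this is Georgii's Theorem~14.5-type statement), after which the DLR equations transfer to the disintegration components by the tower property. Georgii packages the conclusion through the theory of Choquet simplices and extreme decompositions, whereas you work directly with the disintegration of $\mu$ over $\mathcal I$; the direct route is a bit more elementary and avoids the simplex machinery, at the small cost of having to manage the null-set bookkeeping by hand.

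Two technical points you should make a little more explicit, though neither is a gap in the underlying idea. First, in the tower step you replace $E_\mu[f\mid\mathcal F_{\Lambda^c}]$ by the specification kernel $\gamma_\Lambda(f\mid\cdot)$, which is a genuine everywhere-defined $\mathcal F_{\Lambda^c}$-measurable function; this matters because $E_\mu[f\mid\mathcal F_{\Lambda^c}]$ is only defined up to $\mu$-null sets, and $\nu_\omega$ need not be absolutely continuous with respect to $\mu$. What you do have is that for each fixed $\mu$-null set $N$, $\nu_\omega(N)=0$ for $\mu$-a.e.\ $\omega$ (since $\int \nu_\omega(N)\,d\mu(\omega)=\mu(N)=0$), so the replacement is licit once you restrict to countably many $(\Lambda,f)$ pairs. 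Second, to promote ``for each fixed $B\in\mathcal F_{\Lambda^c}$, the DLR equation holds $\nu_\omega$-a.s.\ for $\mu$-a.e.\ $\omega$'' to ``for $\mu$-a.e.\ $\omega$, DLR holds for all $B$,'' you should also fix a countable generating $\pi$-system of cylinder events inside $\mathcal F_{\Lambda^c}$ and invoke a monotone class argument; your last paragraph fixes a countable family of $f$'s and $\Lambda$'s but should also mention the countable family of $B$'s. With these small additions the argument is complete.
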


We remark that the analogous constructions work for \textit{weighted Gibbs measures}. For example, one may assign weights $a_1,a_2,a_3,a_4,a_5,a_6$ to the six possible tile orientations. A $\threeeven$-invariant Gibbs measure $\mu$ with these weights is a measure where for any finite set $R$, the conditional law of $\mu$ given a tiling of ${\m Z^3\setminus R}$ is the one in which each tiling of $R$ has probability proportional to $\prod_{i=1}^6 a_i^{N_i}$, where $N_i$ is the number of tiles of weight $a_i$. We expect that our main results could be extended to weighted dimer models (and perhaps also models with weights that vary by location in a periodic way as in \cite{AST_2005__304__R1_0, kenyon2006dimers}) but for simplicity we focus on the unweighted case here.

A key quantity associated to a $\threeeven$-invariant measure is the \textit{mean current} which (as mentioned in the introduction) represents the expected current flow {\em per even vertex}. This is a generalization of the notion of \textit{height function slope} from two dimensions. Recall that $ \eta_1, \eta_2, \eta_3$ denote the standard basis for $\Z^3$, the edge connecting the origin with $\eta_i$ is denoted by $e_i$, and the edge connecting the origin with $-\eta_i$ is denoted by $-e_i$. 
\begin{definition}\label{def: mean current}
    The \emph{mean current} of a measure $\mu\in \Prob$, denoted $s(\mu)$, is an element of $\mathbb R^3$ such that its $i^{th}$-coordinate is\termindex{Chapter 2!mean current}\symindex{Chapter 2!$s(\mu)$ - the mean current of the measure $\mu$}
$$( s(\mu))_i= \mu(e_i\in \tau)-{\mu}(-e_i\in \tau).$$
\end{definition}
Note that the mean current is an affine and continuous function of the measure. The mean current is invariant under the action of $\threeeven$ and takes values in
$$\mathcal O= \{s \in \m R^3~:~ |s_1| + |s_2| + |s_3|\leq 1\}$$
which we call \emph{the mean current octahedron}.\termindex{Chapter 2!mean current octahedron}\symindex{Chapter 2!$\mathcal O$ - mean current octahedron/possible values of the mean current}

There are a few other useful formulations of the mean current. We define the function $s_0:\Omega\to \m R^3$ to be the direction of the tile at the origin in $\tau$. Then the mean current can be computed as an expected value of $s_0$: \symindex{Chapter 2!$s_0(\tau)$ - the direction of the tile in $\tau$ at the origin}
\begin{equation}\label{eq:mean current as expection}
    s(\mu) = \int_{\Omega} s_0(\tau) \, \dd \mu(\tau).
\end{equation}
Similarly let $\Lambda_n=[-n,n]^3$, and let $\text{even}(\Lambda_n)$ denote the even points in $\Lambda_n$. We define the function \symindex{Chapter 2!$s_n(\tau)$ - the average direction of tiles in $\tau$ in the box $\Lambda_n$}
\begin{equation}
    s_n(\tau) = \frac{1}{\text{even}(\Lambda_n)} \sum_{x\in \text{even}(\Lambda_n)} s_0(\tau + x).
\end{equation}
The function $s_n(\tau)$ measures the average tile direction of $\tau$ in the box $\Lambda_n$. By $\threeeven$-invariance,
\begin{equation}
    s(\mu) = \int_{\Omega} s_n(\tau) \, \dd \mu(\tau).
\end{equation}

We let $\Prob^{s}$\symindex{Chapter 2!$\Prob^s$ - the space of probability measures with mean current $s$} denote the space of $\threeeven$-invariant probability with mean current $s$. Adding the subscripts $G$ and $e$ will denote whether the measure is a Gibbs measure and whether it is ergodic with respect to the $\threeeven$ action. 

\subsection{Entropy}\label{sec: entropy prelim}

As is common in statistical physics models, entropy plays an important role in the large deviation principle for dimer tilings in 3D. There are a few different functions that we refer to as ``entropy" (of a probability measure with finite or infinite support, of a mean current, of an asymptotic flow). Here we give some definitions and explain how these notions of entropy are related to each other. The primary reference for this section is also \cite{Georgii}.

For a probability measure $\nu$ with finite support $S$, its \textit{Shannon entropy}, denoted $H(\nu)$, is \termindex{Chapter 2!Shannon entropy}\symindex{Chapter 2!$H(\nu)$ - Shannon entropy of the measure $\nu$}
\begin{align*}
    H(\nu) = -\sum_{\sigma \in S} \nu(\sigma) \log \nu(\sigma). 
\end{align*}
For a $\threeeven$-invariant probability measure $\mu$ with infinite support, we can define the \textit{specific entropy} \termindex{Chapter 2!specific entropy}\symindex{Chapter 2!$h(\mu)$ - specific entropy of the measure $\mu$} of $\mu$ as a limit of Shannon entropy per site. Given a finite region $\Lambda\subset \m Z^3$, let $\Omega (\Lambda)$ \symindex{Chapter 2!$\Omega (\Lambda)$ - dimer tilings of $\m Z^3$ restricted to a finite set $\Lambda$} denote the dimer tilings of $\Lambda$ (i.e.\ tilings of $\m Z^3$ restricted to $\Lambda$, so tiles are allowed to have one cube outside $\Lambda$). For $\sigma\in \Omega(\Lambda)$, define \symindex{Chapter 2!$X(\sigma)$}
\begin{align*}
    X(\sigma) = \{\tilde{\sigma} \in \Omega : \tilde{\sigma}\mid_{\Lambda} = \sigma\}
\end{align*}
and then \symindex{Chapter 2!$H_{\Lambda}(\mu)$}
\begin{align*}
    H_{\Lambda}(\mu) := -\sum_{\sigma \in \Omega(\Lambda)} \mu(X(\sigma)) \log \mu(X(\sigma)). 
\end{align*}
Let $\Lambda_n = [-n,n]^3$ be a sequence of growing cubes. If $\mu$ is a $\threeeven$-invariant probability measure on $\Omega$, the \textit{specific entropy} of $\mu$, denoted $h(\mu)$, is 
\begin{align*}
    h(\mu) := \lim_{n\to \infty} |\Lambda_n|^{-1} H_{\Lambda_n}(\mu).
\end{align*}
This limit exists because the terms form a subadditive sequence. In fact, one can also show that
\begin{align*}
    h(\mu) = \inf_{\Lambda\in \mc S} |\Lambda|^{-1} H_{\Lambda}(\mu),
\end{align*}
where $\mc S$ is the set of all possible finite regions in $\m Z^3$. See \cite[Theorem 15.12]{Georgii}. As a function of  the measure, $h(\cdot)$ is affine and upper semicontinuous \cite[Proposition 15.14]{Georgii}. 

The reason that Gibbs measures (introduced in the previous section) play a special role in our study is the \textit{variational principle} which says that a measure $\mu\in \mc P$ maximizes $h(\cdot)$ if and only if $\mu$ is a Gibbs measure. This is a classical result going back to \cite{LanfordRuelle}, see \cite[Theorem 15.39]{Georgii} for exposition. 

The \textit{local} or \textit{mean-current entropy function} $\ent: \mc O\to \m R$ is defined \symindex{Chapter 2!$\ent(s)$ - entropy of the mean current $s$} \termindex{Chapter 2!local/mean-current entropy function}
\begin{align*}
    \ent(s) = \max_{\mu\in \mc P^s}h(\mu).
\end{align*}
This function is the main focus of Section \ref{sec:entropy}, where we show it has a number of useful properties (continuity, concavity) and show that the maximum is always realized by an ergodic Gibbs measure of mean current $s$. In Theorem \ref{thm: extremal_entropy} we compute its restriction to $\partial \mc O$ by relating it to the analogous local entropy function for lozenge tilings in two dimensions.

We conclude this section with one more use of the term entropy. In Section \ref{sec:asympflows}, we will show that the ``fine-mesh limits" of rescaled tiling flows are precisely the measurable vector fields we call \textit{asymptotic flows}. Asymptotic flows are valued in $\mc O$ and supported on some compact region $R$. The \textit{entropy of an asymptotic flow} $g$ can then be defined as \symindex{Chapter 2!$\Ent(g)$ - entropy of an asymptotic flow $g$} \termindex{Chapter 2!entropy of an asymptotic flow}
\begin{align*}
    \Ent(g) = \frac{1}{\text{Vol}(R)} \int_{R} \ent(g(x)) \, \dd x.
\end{align*}
Properties of this function are studied in Section~\ref{subsection: properties of Ent}. Up to a sign and an additive constant depending on the boundary conditions, it is the rate function for the large deviation principles in Section~\ref{sec:ldp}. 

\section{Local moves}\label{sec:localmoves}

A number of the papers about the 3D dimer model are about local moves. Here we present some simple examples, briefly review the literature, and explain why local move connectedness fails for the torus in dimensions $d>2$. Most of the ideas in this section are already known, but we include a few elementary observations we have not seen articulated elsewhere.

This section can be skipped on a first read, since the results are not essential for the rest of the paper. However, it is useful for understanding some of the ways that the $d=2$ problem differs from the $d=3$ problem (e.g., why the Kasteleyn determinant approach to computing entropy does not work in the same way) and also what makes $d=3$ different from $d>3$ (e.g., the integer-valued twist function is indexed by $\mathbb Z$ when $d=3$ and by $\mathbb Z/2 \mathbb Z$ when $d > 3$). This section will also explain how the figures in the introduction were generated.

\subsection{Local moves in two dimensions}

In two dimensions, a \textit{local move} or \textit{flip}\termindex{Chapter 3!local move/flip} is the operation of choosing a pair of parallel dimers in the tiling, and switching them out for the other pair. See Figures \ref{fig:flip} and \ref{fig:flip sequence}.
\begin{figure}
    \centering
    \includegraphics[scale=0.34]{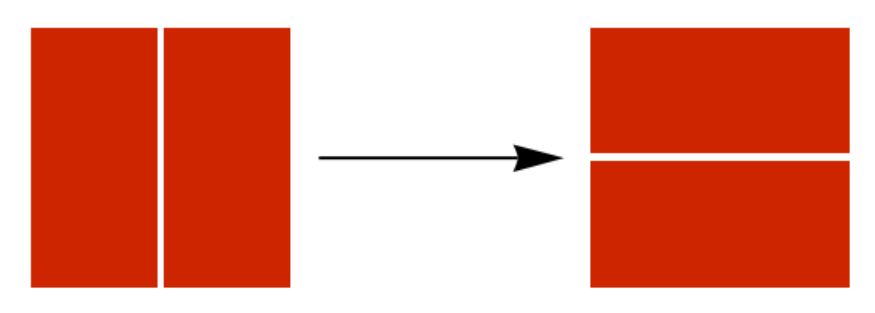}\\
    \caption{A local move or flip in two dimensions.}
    \label{fig:flip}
\end{figure}
Let $R$ be a subgraph of $\mathbb{Z}^2$ and let $\mathcal{T}(R)$ be a graph on the set of dimer tilings of $R$ where two tilings $\tau$ and $\tau'$ are connected by an edge if they differ by a single flip. It is shown using height functions in \cite{Thurston} that if $R\subset \mathbb{Z}^2$ is simply connected and finite, then any two dimer tilings of $R$ differ by a finite sequence of flips. In other words, $\mathcal{T}(R)$ is a connected graph. 
\begin{figure}
    \centering
    \hspace{1cm}\includegraphics[scale=0.55]{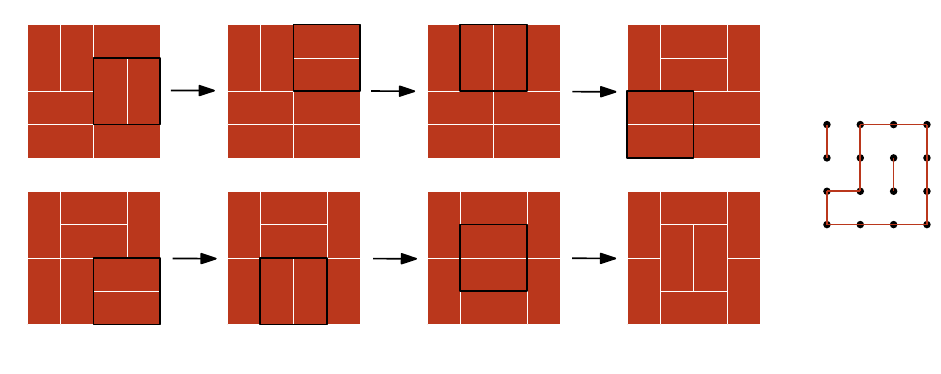}\\
    \caption{(1) an example of a sequence of local moves transforming one tiling into another and (2) the collection of cycles from overlaying the first and last tilings in this sequence.  }
    \label{fig:flip sequence}
\end{figure}

Local move connectedness in the 2D dimer model means that it is possible to probe all tilings of a region using simple local updates, and this is useful for both theoretical and practical purposes. It means that uniformly random dimer tilings in 2 dimensions can be simulated using Markov chain Monte Carlo methods called \textit{Glauber dynamics}. For the 2D dimer model, one can give an explicit polynomial bound on the mixing time of this algorithm \cite{randall2000mixing}.

\subsection{Local moves in three dimensions}

The same local moves (flips) make sense for the 3D dimer model, but local move connectedness with these manifestly fails, even for very small regions. There is a simple counterexample on the $3\times 3\times 2$ box which is called a \textit{hopfion} in the physics literature (see Figure \ref{fig: flip trit hopfion}). 
\begin{figure}
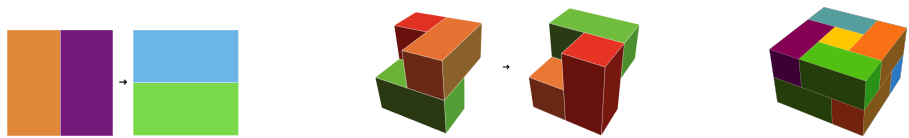

\centering         \includegraphics[scale=0.24]{flip3d.pdf}$\qquad\qquad$\includegraphics[scale=0.2]{trit.pdf}$\qquad\qquad$\includegraphics[scale=0.15]{brick_hopfion.pdf}\\
   \caption{A flip, a trit, and a flip-rigid configuration called a \textit{hopfion}\termindex{Chapter 3!hopfion}. The hopfion has no parallel pairs of tiles, so it is not connected under flips to any other tiling of the $3\times 3\times 2$ box.}
   \label{fig: flip trit hopfion}
\end{figure}

There is a series of papers by Fiere, Milet, Klivans, and Saldanha studying local move connectedness in dimension three under flips and \textit{trits}\termindex{Chapter 3!trit}, a new local move in three dimensions involving three tiles (see Figure \ref{fig: flip trit hopfion}). In \cite{MR3341585, milet2014twists} they show that any two tilings of a region of the form $D\times [0,1]$ where $D$ is simply connected and planar are connected under flips and trits. In subsequent works \cite{milet2014domino,saldanha2019domino,milet2015domino,freire2022connectivity,MR4245261} they introduce and study an invariant called the \textit{twist}\termindex{Chapter 3!twist invariant}, related to the linking number or writhing number. We will present below a brief and informal overview of the various ways the twist is defined and how it is related to a linking number. More detailed exposition is found e.g.\ in \cite{saldanha2020domino} or the references above.

Given two distinct smooth curves $\gamma_1,\gamma_2:S^1\to \m R^3$ embedded in $\m R^3$, one can compute their integer-valued \textit{linking number} $L(\gamma_1,\gamma_2)$ by projecting them to a generic plane and summing the signatures of the crossings. (Recall that the signature of a crossing of two oriented paths is $1$ or $-1$ depending on whether the upper curve crosses the lower curve from right to left or left to right, when the bottom curve is viewed as being oriented from down to up.) 
It is a standard result that this number is independent of the plane one projects onto, see e.g.\ \cite[pages 20-21]{adams1994knot}. (The idea is to show that any one projection can be transformed into another by a sequence of Reidemeister moves, and that these moves preserve the linking number.) The linking number can also be computed with an integral formula: if $ r_1, r_2$ are parametrizations of $\gamma_1,\gamma_2$, then
\begin{align*}
    L(\gamma_1,\gamma_2) = \frac{1}{4\pi} \oint_{\gamma_1}\oint_{\gamma_2} \frac{ r_1 -  r_2}{| r_1- r_2|^3} \dd  r_1 \times \dd  r_2.
\end{align*}
Informally, this represents the line integral along $\gamma_1$ of the magnetic field generated by a steady current through $\gamma_2$. One can analogously compute a ``linking number" of a pair of tilings in a box by summing crossings. Namely, imagine that each edge in the matching is extended $\epsilon>0$ units in either direction. Then the crossing number is obtained by flattening these extended edges to a horizontal plane and summing the signatures of the crossings.  To be more precise, we say two edges $(a,b)$ and $(c,d)$ constitute a crossing if their orientations are both orthogonal to the vertical (third-coordinate) direction and orthogonal to each other {\em and} one of the endpoints of $(a,b)$ differs from one of the endpoints of $(c,d)$ in the vertical coordinate and in no other coordinate. This is the same as an ordinary crossing if we assume each edge is extended $\epsilon$ units beyond its endpoints, and the sign of the crossing is defined in the usual way. We can define the linking of $\tau_1$ and $\tau_2$ to be the signed sum $L(\tau_1, \tau_2)$ of all crossings involving a tile in $\tau_1$ and a tile in $\tau_2$.  This is a quadratic form, and the twist of a tiling $\tau$ is defined by $T(\tau)=  \frac{1}{4}L(\tau, \tau)$. This decomposes as a sum over pairs of horizontal tiles in vertical columns. For reasonable regions (i.e., $D\times [1,N]$, where $D\subset \m Z^2$ is simply connected and $N$ is even), the twist is integer-valued despite the $\frac{1}{4}$ and is independent of the direction for the orthogonal projection \cite[Proposition 6.4]{milet2014domino}. Within a rectangular box, one can easily show that trits increment the twist and flips leave the twist unchanged (in fact this holds for any region of the form $D\times [1,N]$, \cite[Theorem 1]{milet2014domino}). There are also examples of tilings with twist $T(\tau)=0$ that are not connected under flips alone (\cite[Figure 7]{MR4245261}), meaning that $T(\tau)=T(\sigma)$ does not imply that $\tau,\sigma$ are connected under flips. 

Simple questions about local move connectedness under flips and trits still remain open, for example it is not known whether all tilings of an $M\times N\times L$ box are connected under flips and trits when $M,N,L>2$ (see Problem \ref{prob: local moves on box}). See \cite{444box} for an enumeration of all tilings of the $4\times 4\times 4$ box, which shows explicitly that all tilings of this region are connected under flips and trits.

In dimensions $d>3$, Klivans and Saldanha \cite{klivans2020domino} show that the twist is valued in $\m Z/2$. In dimension $d=4$, even tilings of the $2 \times 2 \times 2 \times 2$ box fail to be connected under flips (see \cite[Example 2.2]{klivans2020domino}). They also show that tilings within certain larger boxes are ``almost'' connected under flips, i.e. they can be connected if the boxes are extended in some way.

The works of Friere, Klivans, Milet and Saldanha rely mostly on geometric and algebraic constructions to study local move problems in dimensions $d\geq 3$, but the recent work \cite{localdimer} by Hartarsky, Lichev, and Toninelli (written concurrently with this paper) makes progress using purely combinatorial arguments. In particular it follows from their results that any tiling of a rectangular box in $\m Z^3$ which is tileable by dimers admits at least one flip or trit \cite[Theorem 3]{localdimer}, providing a partial answer to Problem \ref{prob: local moves on box} in Section~\ref{sec:open}. 

In fact, \cite[Theorem 3]{localdimer} is a statement that holds for any dimension $d\geq 2$. It states that any tiling $\tau$ of a rectangular box in $\m Z^d$ of dimensions $(n_1, \dots, n_d)$ which is tileable by dimers contains a copy of $[0,1]^d$ such that $\tau$ restricted to this copy of $[0,1]^d$ contains at least $2^{d-2}+1$ dimers. Specialized to the case $d=3$, this means that there is a copy of $[0,1]^3$ which completely contains at least three dimers from $\tau$, and the only way this can happen is if $[0,1]^3$ contains tiles which make up a flip or a trit in $\tau$. The main idea of the proof is a clever but simple counting argument. Following the ideas in \cite{localdimer}, we present a slight modification of their proof specialized to the $d=3$ case, with the aim of just showing the flip/trit result. 
\begin{prop}[\cite{localdimer}]
    Let $R=[1,n_1]\times[1,n_2]\times[1,n_3]\subset \m Z^3$ with $n_1,n_2,n_3\geq 2$ and $n_1n_2n_3$ even. Any tiling $\tau$ of $R$ admits at least one flip or trit. 
\end{prop}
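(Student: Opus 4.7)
The strategy is to exhibit a $2\times 2\times 2$ sub-box of $R$ completely containing at least three dimers of $\tau$, and then to verify by case analysis that any such sub-box must contain either a flip or a trit.

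Let $\mathcal{C}$ be the collection of axis-aligned $2\times 2\times 2$ sub-boxes of $R$ with integer minimum corner in $[1,n_1-1]\times[1,n_2-1]\times[1,n_3-1]$, so $|\mathcal{C}|=(n_1-1)(n_2-1)(n_3-1)$. For $C\in\mathcal{C}$ put $k_C:=|\{d\in\tau: d\subset C\}|\in\{0,1,2,3,4\}$. A double count gives $\sum_{C} k_C = \sum_{d\in\tau}\alpha(d)$, where $\alpha(d)$ is the number of sub-boxes containing $d$. For a dimer of direction $i$ at position $(x_1,x_2,x_3)$ a direct check yields
\[
\alpha(d) \;=\; \prod_{j\neq i}\bigl|\{x_j-1,\,x_j\}\cap[1,n_j-1]\bigr|;
\]
in particular $\alpha(d)=4$ whenever the two coordinates of $d$ perpendicular to its direction are both strictly interior, while $\alpha(d)\in\{1,2\}$ for dimers lying flat in a boundary face. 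Writing $\beta(d):=4-\alpha(d)\in\{0,2,3\}$, the conclusion $\max_C k_C\geq 3$ follows by pigeonhole once we establish
\[
\sum_{d\in\tau}\beta(d) \;<\; 4|\tau|-2|\mathcal{C}| \;=\; 2\bigl(n_1 n_2 + n_1 n_3 + n_2 n_3 - n_1 - n_2 - n_3 + 1\bigr).
\]

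Since $\beta(d)>0$ only for dimers lying entirely in a boundary face of $R$, the left-hand side is estimated face by face: each boundary face contains at most $n_i n_j/2$ parallel dimers, each contributing $\beta\leq 3$. The structural input needed to actually close the inequality is that no tiling can simultaneously pave all six boundary faces: every corner cube of $R$ is matched in exactly one of its three adjacent directions, so the resulting dimer lies entirely inside some boundary edge and the face perpendicular to that edge is short by one cube. Accounting for the eight corner-forced edge-dimers (each of which is double-counted in the naive face estimate) and the induced shortfall on the opposing faces produces the correction that tightens the face-by-face bound to the required threshold.

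Once some $C^\star\in\mathcal{C}$ with $k_{C^\star}\geq 3$ is located, a short case analysis on the configuration finishes the proof. A $2\times 2\times 2$ cube contains four cubes of each parity, and any set of internal dimers matches equal numbers of each. If $k_{C^\star}=4$ the restriction $\tau|_{C^\star}$ is one of the nine perfect matchings of $[0,1]^3$; an enumeration shows each contains two parallel dimers sharing a $2\times 2$ face, i.e., a flip. If $k_{C^\star}=3$ the two externally matched cubes form an even--odd pair in $C^\star$, which is either face-adjacent or body-diagonal. In the face-adjacent case the induced bipartite graph on the remaining six cubes admits three perfect matchings, each containing a flip. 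In the body-diagonal case the remaining six cubes form a six-cycle in the induced bipartite graph whose two perfect matchings each consist of three pairwise orthogonal dimers in the characteristic ``swirl'' pattern of a trit.

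The delicate step is the boundary estimate: a naive bound on $\sum_d\beta(d)$ using only the maximum face-parallel dimer count overshoots the right-hand side by $O(n_1+n_2+n_3)$, and the corner-edge structural correction described above is exactly what is needed to make pigeonhole work (small cases with some $n_i=2$ are handled separately, since there the single sub-box must already be fully tiled and hence contains a flip). The final case analysis is routine but requires careful enumeration of $2\times 2\times 2$ configurations.
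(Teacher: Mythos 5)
Your framework — double-counting incidences between dimers and axis-aligned $2\times 2\times 2$ sub-boxes directly inside $R$, without passing to the torus — is a genuine alternative to the paper's argument (which wraps $\tau$ on the torus, so that every unit cube translate is a $2\times 2\times 2$ box and each dimer sits in exactly four of them, then shows the ``edge'' unit cubes contain below-average tile counts). Your computations of $|\mathcal{C}|$, of $\alpha(d)$, and of the pigeonhole threshold $2(n_1n_2+n_1n_3+n_2n_3-n_1-n_2-n_3+1)$ are all correct, and your case analysis of a $2\times 2\times 2$ sub-box with $\geq 3$ internal dimers (nine matchings if $k=4$, all with flips; face-adjacent versus body-diagonal external pair if $k=3$, producing a flip or a trit respectively) is right and is essentially what the paper leaves implicit. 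So the skeleton works.

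However, there is a real gap in the boundary estimate, and the stated numerics are off. Bounding each face contribution by $3\cdot n_i n_j/2$ gives $\sum_d\beta(d)\leq 3(n_1n_2+n_1n_3+n_2n_3)$, which exceeds the target by $\sum_{i<j}n_in_j + O(\sum n_i)$ — a quadratic overshoot, not the $O(n_1+n_2+n_3)$ you claim. The ``corner-forced edge-dimer'' correction you sketch is at most $O(1)$, nowhere near enough to close a gap of this size, and as written the proof does not go through. The fix is to abandon the face-by-face estimate and instead bound cube by cube: writing $\sum_d\beta(d)=\tfrac12\sum_{v\in R}\beta(d(v))$ and observing that $\beta(d(v))\leq 3$ for a corner cube, $\leq 3$ for an edge cube (attained only when $v$ is matched in its unique interior direction), $\leq 2$ for a face cube, and $=0$ for an interior cube — regardless of $\tau$ — one gets
\[
\sum_{d}\beta(d)\;\leq\;\tfrac12\Bigl[3\cdot 8 + 3\cdot 4\!\sum_i(n_i-2) + 2\cdot 2\!\sum_{i<j}(n_i-2)(n_j-2)\Bigr]
\;=\;2\!\sum_{i<j}n_in_j - 2\!\sum_i n_i,
\]
which sits exactly $2$ below the required threshold. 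No structural tiling input is needed, the inequality is strict, and the ``small $n_i=2$'' escape hatch is unnecessary. With this replacement your route is a clean and correct alternative to the paper's torus argument; as written it is not.
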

\begin{proof}
    Fix a tiling $\tau$ of $R$. We view $\tau$ as a tiling of the torus with the same dimensions (i.e., $\tau$ is a tiling of the torus such that no dimers cross the identifications). On one hand, $\tau$ contains $n_1n_2n_3/2$ tiles, and each tile is contained in exactly four translates of $[0,1]^3$. On the other hand, there are $n_1n_2n_3$ possible choices of translates of the unit cube in the torus, so the average number of tiles per unit cube is $2$. 

    If a unit cube contains an above-average number of tiles from $\tau$, it contains at least three tiles. If this unit cube is in the interior of $R$, or is cut in half by only one face of $R$, then since the tiles in $\tau$ do not cross the identifications, this implies there is a flip or trit in $\tau$ as a tiling of $R$. The result then follows by showing that the unit cubes which are cut into four pieces along the edges (or eight pieces at the corner) by the identifications of the torus contain a below-average number of tiles from $\tau$.

    The number of such ``edge unit cubes" is $(n_1-1)+(n_2-1)+(n_3-1) + 1 = n_1 + n_2 + n_3 -2$. Any dimer contained in an edge unit cube must be contained along one of the edges around $R$. The number of vertices in the edges around $R$  is $4(n_1+n_2+n_3) - 16$ (there are $8$ corners, but each one is contained in three edges), hence the maximum number of dimers contained in this region is $2 n_1 + 2 n_2 + 2 n_3 - 8$. Given this, the average number of dimers in $\tau$ per edge unit cube is bounded by
    \begin{align*}
          \frac{2n_1+2n_2+2n_3-8}{n_1 + n_2 + n_3 -2} < 2.
    \end{align*}
    Therefore there must be a non-edge unit cube containing at least three tiles from $\tau$, which completes the proof.
\end{proof}

For the hypercube $[0,1]^d\subset \m Z^d$, Hartarsky, Lichev, and Toninelli also show that for $d\geq 3$, the connected components of the graph on dimer configurations of $[0,1]^d$ under local moves of length up to $d-1$ (here the trit is a move of length three and the flip is a move of length two) have size exponential in $d$ \cite[Theorem 5]{localdimer}, and that for $d\geq 2$, any two dimer tilings of $[0,1]^d$ are connected by a sequence of moves of length $\leq 2(d-1)$ \cite[Theorem 6]{localdimer}. For $[0,n]^d\subset \m Z^d$, $d\geq 2$, $n$ odd, they show that the diameter of the graph on dimer configurations of $[0,n]^d$ under local moves of length $\leq \ell$ is at least $n^{d-1}(n^2-1)/(6\ell^2)$ \cite[Theorem 7]{localdimer}.

Flip connectedness has also independently been studied in the physics literature, from the perspective of looking for ``topological invariants" preserved by flips. In \cite{freedman2011weakly} the authors define a ``Hopf number" for dimer tilings of $\m Z^d$ valued in $\pi_d(S^{d-1})$ which is invariant under flips. The hopfion (see Figure \ref{fig: flip trit hopfion}) has Hopf number $\pm 1$ (depending on its orientation). This construction works for any dimension $d\geq 2$. The fact that $\pi_2(S^1) = 0$ corresponds to no obstruction to connectedness under flips, and $\pi_3(S^2) = \m Z$ corresponds to there being at least countably many connected components under flips in dimension $3$. For all $d>3$, $\pi_d(S^{d-1}) = \m Z/2$, implying at least two connected components under flips. 

In \cite{bednik2019hopfions} it is shown in examples that the Hopf number from \cite{freedman2011weakly} can be computed using discrete versions of Cherns-Simon integral formulas for the Hopf number applied to a version of the tiling flow and its vector potential. See also \cite{bednik2019probing}.

\begin{rem}
The failure of local move connectedness in three dimensions is intimately related to the failure of (at least a straightforward generalization) of Kasteleyn theory. 

In two dimensions, the partition function for dimer tilings of a simply connected planar graph can be computed as the Pfaffian of an adjacency matrix of the directed graph with appropriate weights (this can also be done with a determinant when the graph is bipartite). Recall that if $M = (m_{ij})$ is an $2n\times 2n$ skew-symmetric matrix, 
\begin{align*}
    \text{Pf}(M) = \frac{1}{2^n n!}\sum_{\sigma\in S_{2n}} \text{sign}(\sigma) \prod_{i=1}^n m_{\sigma(2i-1),\sigma(2i)}.
\end{align*}
There are two key observations in two dimensions. First, the weights can be chosen so that the term is $\pm 1$ if and only if the pairing $\{\sigma(2i-1),\sigma(2i)\}_{1\leq i \leq n} $ corresponds to a dimer tiling and otherwise it is $0$. By this, it is clear that the partition function can be computed as a \textit{permanent} (i.e., like the above without the sign terms). The second key observation, which is why this reduces to a Pfaffian computation, is that the weights can be chosen so that applying a flip does not change the sign of the term. From here, flip connectedness in two dimensions shows that the Pfaffian is counting tilings. 

In three dimensions it is still possible to choose weights so that a term is $\pm 1$ if and only if it corresponds to a dimer tiling, and all other terms are $0$. Choosing certain weights such that flips do not change the sign, it is observed in \cite{freedman2011weakly} that the Hopf number invariant mod 2 is equal to the sign of the term in the Pfaffian (and one can check that the trit increments this number). From this they note that if $M$ is defined analogously to in two dimensions, then in 3D
\begin{align*}
    \text{Pf}(M) = A - B
\end{align*}
where $A + B$ would be the partition function. The term $A$ counts tilings with Hopf number $0\mod 2$ and $B$ counts tilings with Hopf number $1\mod 2$. 

In \cite{klivans2020domino}, the number $A-B$ is called the \textit{defect}. The definition of the \textit{twist invariant} discussed above is extended to dimensions $d>3$ as the sign of the appropriate Kasteleyn determinant \cite[Definition 3.1]{klivans2020domino}.

One can check by enumerating the equations for a single cube (i.e.\ 12 edges) that it is not possible to choose 12 nonzero weights so that the six flips (corresponding to $\sigma$ with sign $-1$) and four trits (corresponding to $\sigma$ with sign $+1$) contained in the cube all preserve the sign of the term in the Pfaffian. In fact the six flip equations plus one trit equation have no simultaneous solution with all positive weights. 

More generally, there is a complete characterization of which graphs admit \textit{Pfaffian weights} and thereby make it possible to compute the partition function (which is a priori a permanent) as a determinant or Pfaffian of a re-weighted matrix. It is shown that a bipartite graph $G$ admits Pfaffian weights if and only if it does not ``contain" $K_{3,3}$ \cite{LITTLE1975187}. Here ``contain" means $G$ can be modified (by replacing a collection of disjoint paths of edges containing an even number of vertices with a single edges) to a graph $H$ which has $K_{3,3}$ as a subgraph. One can see that in this sense $\m Z^3$ contains $K_{3,3}$, and hence does not have Pfaffian weights. The class of graphs that have Pfaffian weights can also be described in a way so that the Pfaffian is computable by a polynomial-time algorithm \cite{Pfaffians1999}.
\end{rem}

\subsection{Loop shift Markov chain for uniform sampling}\label{sec:uniform sampling}

In two dimensions, uniformly random dimer tilings of finite simply connected regions can be efficiently simulated by a Markov chain that generates random flips, see \cite{randall2000mixing}. As we have seen, dimer tilings of topologically trivial finite regions in dimensions $d>2$ are not connected under flips, and it is an open question even for very simple regions whether flips and trits are sufficient. Here we describe a different, non-local Markov chain method to generate uniform random dimer tilings. The algorithm works in any dimension and for regions that are not simply connected, and is how the simulations in the introduction are generated. The simple move executed at each step of our chain is to construct a ``random loop" in the given dimer tiling, and ``shift" the tiles along the loop. This is a well-known construction in computer science, see for instance \cite[Section 3]{marry_at_random}. In the physics literature, see also \cite{Huse2003} for Monte Carlo simulations of dimers in three dimensions based on algorithms from \cite{Krauth2003pocket,Dress1995}.

Given a dimer tiling $\tau$ of a finite region $R\subset \Z^3$, a \emph{loop} $\gamma$ in $\tau$ is a sequence of distinct edges $e_0, e_1, \ldots e_{k-1}\in \tau$ where the odd vertex of $e_i$ is adjacent to the even vertex of $e_{i+1}$ for all $i \in \Z/k\Z$ for some $k \geq 2$.\termindex{Chapter 3!loop}\termindex{Chapter 3!loop shift} A \emph{loop shift} of $\tau$ along $\gamma$ is a move which replaces edges along $\gamma$ by their complementary edges. Specifically the resulting tiling is
$$\tau'=(\tau\setminus \{e_0, e_1, \ldots e_{k-1}\})\cup \{f_0, f_1, \ldots, f_{k-1}\}$$
where  $ \{e_0, e_1, \ldots e_{k-1}\}\cup \{f_0, f_1, \ldots, f_{k-1}\}$ form a loop in $\Z^3$. Since $R$ is finite, given any two dimer tilings $\tau,\sigma$ of $R$ the double dimer tiling $(\tau,\sigma)$ is a finite collection of double edges and loops $\gamma_1,...\gamma_n$ of finite length. Loop shifting $\tau$ along $\gamma_i$ for each of these transforms $\tau$ into $\sigma$. In particular, we have shown that 

\begin{prop}\label{prop:loop_shift}
	Let $\tau$ and $\sigma$ be tilings of a finite set $R\subset \Z^3$. Then $\tau$ can be transformed into $\sigma$ by a finite sequence of loop shifts. 
\end{prop}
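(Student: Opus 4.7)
The plan is to analyze the symmetric difference $\tau \triangle \sigma$ using the standard double-dimer decomposition, which is essentially already hinted at in the paragraph preceding the statement. My argument would proceed in three steps.

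First, I would observe that since $\tau$ and $\sigma$ are perfect matchings of the same finite set $R$, every vertex $v \in R$ is incident to exactly one edge of $\tau$ and exactly one edge of $\sigma$. Therefore in the multigraph $\tau \cup \sigma$, every vertex has degree exactly $2$ (counting an edge in $\tau \cap \sigma$ with multiplicity $2$). A standard graph-theoretic argument (trace an edge, alternate colors, note that the walk cannot revisit a vertex except to close up, and use finiteness of $R$) then shows that $\tau \cup \sigma$ decomposes as a disjoint union of ``doubled edges'' (those in $\tau \cap \sigma$) together with a finite collection of simple cycles $\gamma_1,\dots,\gamma_n$ whose edges alternate between $\tau$ and $\sigma$.

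Second, I would check that each alternating cycle $\gamma_i$ corresponds to a loop in $\tau$ in the precise sense of the definition given above the proposition. Enumerate the $\tau$-edges of $\gamma_i$ as $e_0, e_1, \dots, e_{k-1}$ in the cyclic order inherited from $\gamma_i$. Consecutive $\tau$-edges $e_j$ and $e_{j+1}$ are joined by a single $\sigma$-edge $f_j$ of $\gamma_i$; because $\mathbb Z^3$ is bipartite and $f_j$ runs between an odd vertex (the odd endpoint of $e_j$) and an even vertex (the even endpoint of $e_{j+1}$), the adjacency condition in the definition of a loop is satisfied. The complementary edges along $\gamma_i$ are precisely the $\sigma$-edges $f_0,\dots,f_{k-1}$.

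Finally, I would perform loop shifts of $\tau$ along $\gamma_1, \gamma_2, \dots, \gamma_n$ in any order. After the $i$-th shift the tiling agrees with $\sigma$ on the edges of $\gamma_1 \cup \cdots \cup \gamma_i$ and with $\tau$ elsewhere; in particular it still coincides with $\sigma$ on $\tau \cap \sigma$, which is never touched, so it remains a perfect matching of $R$. After all $n$ shifts, the resulting tiling coincides with $\sigma$ on every edge of $\tau \cup \sigma$, and hence equals $\sigma$.

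There is no real obstacle here: the proof is essentially a combinatorial unpacking of the double-dimer picture, and the only thing that might be worth emphasizing is the trivial but crucial use of the finiteness of $R$ to ensure that each $\gamma_i$ has finite length and that there are only finitely many of them.
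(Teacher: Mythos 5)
Your argument is correct and is essentially the paper's own proof, merely written out in more detail: the paper likewise observes that the double-dimer configuration $(\tau,\sigma)$ decomposes into double edges and finitely many alternating loops $\gamma_1,\dots,\gamma_n$ and that shifting $\tau$ along each $\gamma_i$ produces $\sigma$. Your explicit verification that each $\gamma_i$ satisfies the definition of a loop is a worthwhile elaboration but does not change the approach.
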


\textbf{Loop shift Markov chain $M$.} Given that any two tilings of a finite region $R\subset \mathbb{Z}^3$ differ by a finite sequence of loop shifts, we define a Markov chain $M$ where one step proceeds as follows: 
\begin{itemize}
    \item Start with some dimer tiling $\tau$ of the region $R$. 
    \item Sample an odd vertex in $R$ uniformly at random. Start a path by following the tile from $\tau$ at this point. 
    \item Uniformly at random choose a direction (other than the one we came from), and move in that direction for the next step. 
    \item Repeat this (following the tile from $\tau$, then following a uniform random choice, etc) until the path hits itself to form a loop. Call the loop $\gamma$.
    \item Drop any initial segment of the path which is not part of the loop $\gamma$. Then shift along $\gamma$, switching the tiles from $\tau$ for the random choices that we made along the path, and replace $\tau$ with $\sigma$ which differs from $\tau$ only along $\gamma$. 
\end{itemize}
By Proposition \ref{prop:loop_shift}, $M$ is an irreducible Markov chain and hence has a stationary distribution $\pi$. A bound on the mixing time of $M$ is not known, see Problem \ref{prob: markov chain mixing time}. 
\begin{thm}
The stationary distribution $\pi$ of $M$ is the uniform distribution on dimer tilings of $R$. 
\end{thm}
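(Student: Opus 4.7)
The plan is to establish reversibility of $M$ with respect to the uniform measure $\pi$ on the finite set of dimer tilings of $R$. Combined with the irreducibility guaranteed by Proposition~\ref{prop:loop_shift}, this immediately identifies $\pi$ as the unique stationary distribution. So the goal is to prove detailed balance $P(\tau,\sigma) = P(\sigma,\tau)$ for the one-step transition kernel $P$ of $M$.

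Since a single step of $M$ executes exactly one loop shift, $P(\tau,\sigma)$ can be nonzero only when $\sigma$ is obtained from $\tau$ by shifting along some loop $L$, in which case $\sigma = \tau \oplus L$. For such a pair, the approach is to construct a probability-preserving involution $W \mapsto W'$ between the walks produced by the algorithm starting from $\tau$ that realize the shift to $\sigma$ and the walks starting from $\sigma$ that realize the shift to $\tau$. Specifically, given $W = (v_0, v_1, \ldots, v_m)$ with $v_j = v_m$ the first repeated vertex and loop $(v_j, v_{j+1}, \ldots, v_m) = L$, I would set $W' = (v_0, v_1, \ldots, v_j, v_{m-1}, v_{m-2}, \ldots, v_{j+1}, v_j)$, which shares the same tail but traverses $L$ in the reverse orientation.

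Verifying that $W'$ has the same probability under the law of the walk started from $\sigma$ as $W$ does under the law started from $\tau$ rests on three observations. First, the tail edges $v_0 v_1, \ldots, v_{j-1} v_j$ are vertex-disjoint from $L$ (the vertices $v_0, \ldots, v_{m-1}$ are all distinct, and only $v_j$ lies on both the tail and $L$), so $\tau$ and $\sigma$ agree on every tail edge and the tail contributes the same weight to both walks. Second, $L$ alternates between $\tau$-tiles and $\sigma$-tiles, so at each odd vertex of $L$ the $\sigma$-tile is precisely the reverse-direction edge of $L$ at that vertex; hence the deterministic ``follow-the-tile'' steps of $W'$ from $\sigma$ are forced to traverse $L$ backwards. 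Third, at each even vertex $v$ of $L$, both $W$ and $W'$ make a uniform random choice among the $n(v) - 1$ allowed neighbors, so pairing the forward-direction choice in $W$ with the reverse-direction choice in $W'$ preserves probability (both equal $1/(n(v)-1)$, and the number of such choices is the same in $W$ and $W'$).

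The main technical obstacle is the parity bookkeeping: depending on whether the loop-start vertex $v_j$ is odd or even, the first step into $L$ is either deterministic (follow-tile) or random, and the corresponding first step of $W'$ off of the tail must be of the same type with matching probability. One must also verify that $W'$ produces its first self-intersection precisely at the terminal $v_j$ rather than accidentally revisiting some tail vertex along the reversed loop, which follows from the distinctness of $v_0, \ldots, v_{m-1}$ in $W$. The edge case $j = 0$ (empty tail, walk starts on $L$) needs separate direct checking, but there the involution simply flips the loop's traversal direction at $v_0$ itself. Once $W \mapsto W'$ is confirmed to be a probability-preserving involution, summing over all walks gives $P(\tau,\sigma) = P(\sigma,\tau)$, which completes the proof.
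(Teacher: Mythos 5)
Your proof is correct and takes essentially the same approach as the paper: show the transition kernel $P$ is symmetric (equivalently, reversible with respect to the uniform measure) by pairing each walk from $\tau$ that realizes the loop shift to $\sigma$ with the walk from $\sigma$ obtained by keeping the tail fixed and reversing the traversal of the loop. The paper's proof is terser---it simply notes that the vertex multiset of $\lambda'$ equals that of $\lambda$ and that the probability is the product $\prod 1/(\deg(v)-1)$ over even vertices, which is therefore unchanged---while you spell out the supporting checks (that the reversed walk is actually realizable from $\sigma$ because $\sigma$-tiles on the loop point backward, that the tail is unaffected by the swap since it is vertex-disjoint from the loop interior, that the reversed walk closes for the first time exactly at $v_j$), but the underlying bijection and probability computation are identical.
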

\begin{proof}
 Let $P$ be its transition matrix. It is sufficient to prove that $P$ is symmetric. If $\tau,\sigma$ are tilings such that $P(\tau,\sigma) \neq 0$, then they differ along a single loop $\gamma$. 

Suppose that $\lambda$ is a connected path alternating between tiles of $\tau,\sigma$ which consists of an initial segment $\alpha$ plus the loop $\gamma$. $P(\tau,\sigma)$ is a sum of the probability of paths $\lambda$ of this form. We will show that the probability of generating $\lambda$ in $\tau$ is the same as the probability of generating $\lambda'$ in $\sigma$, where $\lambda'$ has the same initial segment as $\lambda$, then traverses $\gamma$ with the reverse orientation. 

Let $v_1,...,v_{2n}$ be the vertices along $\lambda$. Note that the vertices with odd index are odd, and out of these we follow a tile from $\tau$. The vertices with even index are even, and out of these we follow a random choice. Thus the probability of generating the path $\lambda$ in $\tau$ is $\prod_{k=1}^n \frac{1}{\text{deg}(v_{2k})-1}$. 

The sequence of vertices along $\lambda'$ is the same, just in a different order. However the even vertices are still the sites where we make a random choice of direction to follow, so the probability of generating the path $\lambda'$ in $\sigma$ is also $\prod_{k=1}^n \frac{1}{\text{deg}(v_{2k})-1}$. 

Hence $P(\tau,\sigma) = P(\sigma, \tau)$. 
\end{proof}

\subsection{Local move connectedness on the torus and k-Gibbs measures}

Here we discuss local move connectedness for dimer tilings of the torus, which is not simply connected. For any tiling $\tau$ of the $d$-dimensional torus $\mathbb T^d$, there is a standard, natural way to associate a homology class $[ a(\tau)]\in H_1(\mathbb T^d)$. For each $i=1,...,d$, let $P_i$ be any plane with normal vector ${\eta}_i$, the $i^{th}$ unit coordinate vector. Let $\mathbb T_{{n}}^d$ denote the $ n = n_1\times n_2\times ... 
\times n_d$ torus in dimension $d$. Without loss of generality, $n_1$ is even. Let $\tau_0$ be the tiling of $\mathbb T_{{n}}^d$ where all tiles $t\in \tau_0$ are of the form $t = ((2x,y,z),(2x+1,y,z))$. With slight abuse of notation, we let $v_\tau(p) = v_\tau(e) e$ for the edge $e$ incident to $p$ containing a dimer (in particular $v_\tau(p)$ is one of the $2d$ unit coordinate vectors). For $i=1,\dots, d$, we define
\begin{align*}
    a_i(\tau) = \sum_{p\in P_i\cap \mathbb T_{ n}^d}  \langle v_{\tau}(p),\eta_i\rangle - \langle v_{\tau_0}(p) ,\eta_i\rangle =\sum_{p\in P_i\cap \mathbb T_{ n}^d} \langle v_{\tau}(p), \eta_i\rangle.
\end{align*}
Since $v_{\tau}-v_{\tau_0}$ is divergence-free, this is independent of the choice of plane $P_i$ normal to $\eta_i$. The second equality follow from the fact that $v_{\tau_0}$ contributes $0$ to the overall sum. The homology class of $\tau$ is
$$[{a}(\tau)]:=[a_1(\tau),...,a_d(\tau)]\in H_1(\mathbb T^d) \simeq \mathbb Z^d.$$
Note that a parallel pair of tiles contributes $0$ total flow across any coordinate plane intersecting it. In particular, in any dimension $d>1$, flips cannot change the homology class of a tiling of $\mathbb T^d$. However, when $d=2$ the homology class is the only obstruction: if $\tau,\tau'$ are tilings of an $n_1\times n_2$ torus $\mathbb T_{n_1,n_2}^2$ and $[{a}(\tau)]=[{a}(\tau')]$, then $\tau,\tau'$ are connected by a finite sequence of flips. 

In dimension $d=3$, the story is very different. In fact:
\begin{prop}\label{prop:three_torus_no_finite}
    There is no finite collection of local moves that can connect all homologically equivalent dimer tilings of $\m T^3$.
\end{prop}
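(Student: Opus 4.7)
The plan is to argue by contradiction. Suppose a finite collection $\mathcal{M}$ of local moves, each supported in a ball of radius at most some fixed $K$, connects all homologically equivalent tilings of $\mathbb{T}^3$. I would exhibit, for sufficiently large $n$, two homologically equivalent tilings of $\mathbb{T}^3_n$ that cannot be joined by any sequence of moves in $\mathcal{M}$, contradicting the assumption. The central building block is the hopfion of Figure~\ref{fig: flip trit hopfion}, a flip-rigid tiling of the $3 \times 3 \times 2$ box: this is the canonical example of a topologically nontrivial ``locally frozen'' configuration, and I would use scaled or nested versions of it to defeat arbitrary finite move sets.

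The main construction would be an ``$\mathcal{M}$-rigid'' tiled box $P$: a tiling of a box of size $L \gg K$ such that no move in $\mathcal{M}$ can be applied anywhere inside $P$. Because $\mathcal{M}$ is finite and each move $m \in \mathcal{M}$ is determined by a finite ``trigger configuration'' of at most a constant number of tiles lying in a $K$-ball, only finitely many local sub-patterns need be avoided by $P$ (up to translation and rotation). The plan is to construct $P$ by iterating the hopfion construction at finitely many scales $\leq K$, so that moves of each scale are locally obstructed by the pattern at that scale.

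Given $P$, I would place a single copy of $P$ inside $\mathbb{T}^3_n$ with $n$ large, filling the rest with a simple background tiling (for instance brickwork); I would then produce a second tiling by replacing $P$ with a different tiling of the same box matching the same boundary conditions (e.g.\ replacing a hopfion with a ``straight'' local configuration), and adjusting background tiles elsewhere so that the two tilings share the same homology class in $H_1(\mathbb{T}^3) = \mathbb{Z}^3$. Because $P$ admits no $\mathcal{M}$-move, the first tiling is trapped in a connected component of the $\mathcal{M}$-graph that does not contain the second, giving the required contradiction.

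The main obstacle is the construction of the rigid pattern $P$ for an arbitrary finite $\mathcal{M}$: a priori, a sufficiently rich collection of moves could render every bounded tiling modifiable somewhere, so one must argue that the rigidity of the hopfion extends to configurations that simultaneously defeat all moves in $\mathcal{M}$. The natural strategy is a layered construction (hopfions at multiple scales up to $K$, with boundary matching) together with a counting or compactness argument, in the spirit of the Hall-type arguments used elsewhere in the paper, to verify that the forbidden-pattern list can be avoided. If this direct combinatorial route is difficult for a particular $\mathcal{M}$, an alternative would be to use a refined topological invariant in $[\mathbb{T}^3, S^2]$ (beyond the integer Hopf number in $\pi_3(S^2) = \mathbb{Z}$) which $\mathcal{M}$ changes only within a proper subgroup of its possible values, and to construct homologically equivalent tilings lying in different cosets.
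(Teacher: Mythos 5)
Your overall strategy is the right one: for every finite move collection $\mathcal{M}$ with moves supported in balls of radius at most $K$, exhibit a tiling of a large torus with trivial homology class on which no move in $\mathcal{M}$ can be applied. The problem is that you have not actually constructed the required rigid tiling, and the specific construction you propose (nested or rescaled hopfions) does not plausibly work. The hopfion is \emph{flip}-rigid because it has no pair of parallel tiles in a $2\times 2\times 1$ brick, but that says nothing about rigidity under larger cycle swaps. There is no natural notion of a hopfion ``at a larger scale'' in the dimer model, and even filling a region with copies of the hopfion will generically produce many alternating cycles of moderate length across the seams, which a large enough move in $\mathcal{M}$ could swap. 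You correctly flag this as ``the main obstacle,'' but the alternative suggestions (a Hall-type counting argument, a refined topological invariant in $[\mathbb{T}^3, S^2]$) are vague and do not close the gap.

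The key observation you are missing, which makes the proof almost trivial, is that \emph{every} local move on a dimer configuration consists of replacing the tiles along a finite alternating cycle by their complements. So a move supported in a $K$-ball necessarily swaps a cycle of length at most some fixed constant depending on $K$. It therefore suffices to build a homologically trivial tiling of a large torus in which the shortest alternating cycle is longer than that constant. The paper does exactly this with the stacked-brickwork tilings already introduced in Section 3: in $\mathbb{T}^3_{n_1,n_2,4n_3}$, take $n_3$ layers of $\eta_1$-brickwork, then $n_3$ layers of $\eta_2$-brickwork, then $n_3$ layers of $-\eta_1$-brickwork, then $n_3$ layers of $-\eta_2$-brickwork. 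Each layer is frozen internally; any alternating cycle must pass through at least three distinct brickwork types, so its length is at least $\min\{n_1,n_2,4n_3\}$, and the homology class is $(0,0,0)$. Letting $n_1,n_2,n_3\to\infty$ defeats any finite move set. This is both simpler and more robust than the hopfion-based plan, and it sidesteps any need to enumerate or avoid ``trigger configurations.''
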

\begin{rem} The authors of \cite{freire2022connectivity} exhibited a tiling of the $8\times 8\times 4$ torus with no flips or trits, obtained by stacking horizontal brickwork patterns of different orientations. We use similar stacked brickwork patterns (but with thicker layers) in our proof of Proposition~\ref{prop:three_torus_no_finite}. 
\end{rem}

\begin{proof}
    The fundamental example is the following. Let $\tau$ be a tiling of $\mathbb T_{n_1,n_2,4}^3$ where the first layer is an $\eta_1$ brickwork tiling, the second layer is an $\eta_2$ brickwork tiling, the third layer is a $-\eta_1$ brickwork tiling, and the fourth layer is a $-\eta_2$ brickwork tiling. By construction, $[ a(\tau)] = (0,0,0)$. On the other hand, $\tau_0$ also has $[{a}(\tau_0)] = (0,0,0)$, so $\tau$ and $\tau_0$ are homologically equivalent. On the other hand, the length of the shortest alternating-tile loop in $\tau$ is $\min\{n_1,n_2,4\}$. To see this, note that if the loop is homologically trivial, it must be long enough to visit at least three different horizontal layers. If it is homologically non-trivial, then its length must be at least $\min\{n_1,n_2,4\}$. 

More generally, for any $ n = (n_1,n_2,4 n_3)$, we can construct a tiling $\tau$ of $\mathbb T_{ n}^3$ which has $n_3$ layers of $\eta_1$ brickwork, followed by $n_3$ layers of $\eta_2$ brickwork, $n_3$ layers of $-\eta_1$ brickwork, and $n_3$ layers of $-\eta_2$ brickwork. Again $[ a(\tau)] = (0,0,0)$, however the shortest contractible loop in $\tau$ has length $4n_3$ (since, again, it has to visit at least three different brickwork patterns). Therefore to connect $\tau,\tau_0$ we need loops of length at least $\min\{n_1,n_2,4n_3\}$. These dimensions can be arbitrarily large, so this completes the proof.
    \end{proof}

By lifting this construction from $\m T^3$ to $\m R^3$, we get the following corollary: 
\begin{cor}
There is no finite collection of local moves which connects any two tilings of $\m Z^3$ which differ at only finitely many places. 
\end{cor}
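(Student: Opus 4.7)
The idea is to argue by contradiction: suppose some finite collection $\mathcal M$ of local moves connects any two finite-difference tilings of $\m Z^3$, and let $K$ denote an upper bound on the length of the alternating cycle associated to any move in $\mathcal M$. I plan to construct a tiling $\tau^*$ of $\m Z^3$ differing from the standard $\eta_1$-brickwork tiling $\tau_0$ at only finitely many edges that admits no alternating cycle of length at most $K$. Such a $\tau^*$ is $\mathcal M$-isolated (no move of $\mathcal M$ can even be applied to it), so it cannot be $\mathcal M$-connected to the distinct tiling $\tau_0$, giving the desired contradiction.

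To build $\tau^*$, I would take the stacked-brickwork tiling $\tau$ of $\m T^3_{(n_1,n_2,4n_3)}$ from the proof of Proposition~\ref{prop:three_torus_no_finite}, with $\min(n_1,n_2,4n_3)>K$, and let $\widetilde\tau$ be its periodic lift to $\m Z^3$. For a large integer $L$, set
\[
B_L=[0,Ln_1-1]\times[0,Ln_2-1]\times[0,4Ln_3-1]\subset\m Z^3.
\]
Since the sidelengths of $B_L$ are multiples of the (even) periods of both $\widetilde\tau$ and $\tau_0$, neither tiling has dimers crossing $\partial B_L$, and (after choosing the offset of $B_L$ appropriately) the outermost slab of $\widetilde\tau$ inside $B_L$ matches the $\tau_0$ pattern immediately outside. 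Define
\[
\tau^*=\begin{cases}\widetilde\tau &\text{inside }B_L,\\ \tau_0 &\text{outside }B_L.\end{cases}
\]
This is a valid dimer tiling of $\m Z^3$ that agrees with $\tau_0$ outside the finite set $B_L$.

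The main claim is that $\tau^*$ admits no alternating cycle of length at most $K$. Outside $B_L$, this holds because $\tau_0$ is rigid in the sense used in the proof of Proposition~\ref{prop:three_torus_no_finite} (the brickwork pattern has no short alternating cycles, which is exactly why the proposition's stacked construction works). Inside $B_L$, the same argument as in the proposition applies: an alternating cycle contained within a single brickwork slab has length at least $\min(n_1,n_2)>K$, while any cycle that transitions between slabs must visit at least three consecutive slabs and hence has length at least $4n_3>K$. The remaining step is to verify rigidity across the seams—the interior layer interfaces within $\widetilde\tau$ and the boundary $\partial B_L$ between $\widetilde\tau$ and $\tau_0$. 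This is the main technical obstacle, requiring careful bookkeeping of the brick-shift offsets so that no new short alternating cycle is introduced at any seam; once verified, $\tau^*$ is $\mathcal M$-isolated and the contradiction follows.
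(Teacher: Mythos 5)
Your overall strategy---fix $K$, exhibit a tiling with no alternating cycle of length $\leq K$ that nonetheless admits a finite-difference partner---is exactly the right one and is the idea behind the paper's proof. But the specific pair $(\tau^*,\tau_0)$ you propose cannot be built, and this is a real gap, not a bookkeeping detail. The pure $\eta_1$-brickwork tiling $\tau_0=\{(a,a+\eta_1): a\in\m Z^3\text{ even}\}$ is \emph{frozen}: in any alternating loop $e_0,\dots,e_{k-1}$ every tile-step contributes $+\eta_1$ to the displacement, and every connecting step has $\eta_1$-component in $\{0,+1\}$ (the value $-1$ would mean $e_{i+1}=e_i$), so a closed loop would have total $\eta_1$-displacement $\geq k>0$, a contradiction. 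Hence $\tau_0$ is the unique tiling of $\m Z^3$ agreeing with $\tau_0$ outside any finite set, and the $\tau^*$ you want does not exist. (If you instead take $\tau_0$ to be the non-offset pairing $((2x,y,z),(2x+1,y,z))$ used as a reference in the proof of Proposition~\ref{prop:three_torus_no_finite}, that pairing mixes $\eta_1$- and $-\eta_1$-tiles across rows and admits a flip at every site, so $\tau^*$ would inherit those flips outside $B_L$ and could not be $\mathcal M$-isolated; either reading of ``brickwork'' kills the construction.) The seam problem you flag is also unfixable on its own: the $\eta_1$- and $-\eta_1$-brickwork slabs of $\widetilde\tau$ are offset from one another by one step in $x$, so no choice of $B_L$ makes all slabs simultaneously dimer-free across $\partial B_L$.

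The paper avoids gluing entirely: it works directly with the full $\m Z^3$-periodic stacked-brickwork tiling $\tau_n$. Unlike the pure brickwork, $\tau_n$ is \emph{not} frozen---an alternating loop can step between slabs of different orientations---but every such loop has length at least $4n$. So the desired pair comes for free: shift $\tau_n$ along one of its length-$4n$ loops to obtain $\sigma$, which differs from $\tau_n$ at finitely many places, while no move of length $\leq K<4n$ changes $\tau_n$ at all. The missing idea in your proposal is that the isolated tiling should itself be the (non-frozen) stacked construction, paired with a loop-shift of itself, rather than being welded to an external brickwork reference.
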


\begin{proof}
    Fix an integer $n>0$. Tile all of $\m Z^3$ with alternating brickwork layers so that there are $n$ layers of $\eta_1$ brickwork, $n$ layers of $\eta_2$ brickwork, $n$ layers of $-\eta_1$ brickwork, and $n$ layers of $-\eta_2$ brickwork. We denote the resulting tiling of $\m Z^3$ by $\tau_n$. 
    
    The shortest length of a cycle in $\tau_n$ is $4n$. Since there are finite cycles in $\tau_n$, there exist tilings $\sigma$ which differ from $\tau_n$ at only finitely many places. On the other hand, we need a local move of length at least $4n$ to make any change to $\tau_n$. Since $n$ is arbitrary this completes the proof.
\end{proof}

Another interesting observation can be made from the example used in the proof of Proposition \ref{prop:three_torus_no_finite}. A measure $\mu$ is \textit{k-Gibbs} if for any box $B$ with side length $k$, it holds that conditioned on a tiling $\tau$ of $\m Z^3\setminus B$, $\mu$ is the uniform measure on tilings $\sigma$ of $B$ extending $\tau$. If a measure is $k$-Gibbs for all $k$, then it is Gibbs. 

In two dimensions, any two tilings of a $k\times k$ box (with the same boundary condition) are connected by some finite sequence of flips. Therefore if a measure on dimer tilings of $\m Z^2$ is $2$-Gibbs, then it is $k$-Gibbs for all $k$ and hence Gibbs. The analogous statement does not hold in three dimensions.

\begin{prop}\label{prop: k gibbs fails} For any integer $k \geq 2$ there exist $k$-Gibbs measures on $\Omega$ which are not Gibbs measures. 
\end{prop}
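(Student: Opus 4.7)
The plan is to exploit the failure of $k$-local move connectedness from Proposition~\ref{prop:three_torus_no_finite} to build, for each fixed $k \geq 2$, a $\mathbb{Z}^3_{\text{even}}$-invariant measure $\mu_k$ on $\Omega$ which is $k$-Gibbs but not Gibbs. The core idea is to consider the uniform measure on an orbit of tilings generated only by $k$-bounded local moves; this is automatically uniform on completions of $k$-boxes, but misses completions of larger regions that would require moves of diameter strictly greater than $k$ to produce.

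First I would work on a torus $T = T^3_{N,N,4N}$ with $N > k$. Let $\tau_0$ be the $\eta_1$-brickwork tiling, and define the $k$-local orbit $\mathcal{O}_k(\tau_0)$ to be the set of tilings of $T$ reachable from $\tau_0$ by a finite sequence of loop shifts along alternating cycles of diameter at most $k$. Let $\nu_k$ be the uniform probability measure on $\mathcal{O}_k(\tau_0)$. The two key finite-volume observations are: (a) $\nu_k$ is $k$-Gibbs on $T$, because for any $k$-box $B$, any two valid completions of $B$ with the same outside differ by an alternating cycle contained in $B$, hence of diameter at most $k$; flipping such a cycle is a $k$-local move and keeps us in $\mathcal{O}_k(\tau_0)$, so every valid completion of the box lies in the orbit, and the uniform-on-orbit conditional coincides with the uniform-on-completions conditional; (b) $\nu_k$ fails the full Gibbs property on $T$, because Proposition~\ref{prop:three_torus_no_finite} guarantees that the stacked-brickwork tiling with thick layers cannot be reached from $\tau_0$ by $k$-local moves once $N$ is large enough, so $\mathcal{O}_k(\tau_0)$ is a proper subset of all tilings of $T$.

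I would then obtain $\mu_k$ as a $\mathbb{Z}^3_{\text{even}}$-invariant weak subsequential limit (as $N \to \infty$) of the distributions on $\Omega$ gotten by lifting $\nu_k$ periodically to $\mathbb{Z}^3$ and applying a uniformly random shift within one period. The $k$-Gibbs property should transfer to $\mu_k$ because it is controlled by conditional laws at the fixed scale $k$, which are uniform on completions at each finite $N$ by observation (a), and this stability passes to the limit. The failure of Gibbs should transfer because, for $R$ sufficiently large, completions of $R$ requiring non-$k$-local modifications are absent from every approximating measure, and hence from $\mu_k$ as well.

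The main obstacle will be the careful handling of the thermodynamic limit. The subtlety is that at any finite $N$, conditioning on the entire infinite outside of a $k$-box in $\mathbb{Z}^3$ actually recovers the underlying torus tiling via the periodic structure (collapsing the conditional to a point mass), so one must show that as $N \to \infty$ this global information is washed out, leaving only the local boundary data of $B$ effective on the conditional law inside $B$. Verifying this stabilization rigorously, and simultaneously exhibiting a fixed finite region $R$ on which uniformity on completions fails in the limit, is where the essential work of the proof lies.
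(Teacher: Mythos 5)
Your approach has a substantive gap, and the root of it is the choice of starting tiling.

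You take $\tau_0$ to be the $\eta_1$-brickwork tiling, i.e.\ the tiling in which every dimer points from an even vertex to the odd vertex in the $+\eta_1$ direction, so the mean current is the vertex $(1,0,0)\in\mc O$. This tiling is \emph{frozen}: it contains no finite alternating cycles at all. Indeed, in any alternating cycle, each $\tau_0$-step advances the $x$-coordinate by exactly $+1$, and each non-$\tau_0$-step advances it by $0$ or $+1$ (the only excluded step, $-\eta_1$, would retrace the edge just traversed). Hence the $x$-coordinate is strictly increasing over the $\tau_0$-steps, and the walk can never close. Consequently $\mathcal{O}_k(\tau_0)=\{\tau_0\}$ for every $k$ smaller than the torus circumference, so your $\nu_k$ is a point mass, the lifted measure $\mu_k$ is supported on the even translates of the full-space brickwork, and this measure is $k'$-Gibbs for \emph{every} $k'$: any finite box has a unique completion compatible with the brickwork outside, so the conditional is trivially uniform. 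Your construction therefore produces an honest Gibbs measure and does not prove the proposition. (Note also that the stacked-brickwork tiling of Proposition~\ref{prop:three_torus_no_finite} has mean current $0$, not $(1,0,0)$, so it is not even homologically equivalent to your $\tau_0$ on the torus; the fact that it lies outside $\mathcal{O}_k(\tau_0)$ is vacuous and says nothing about failure of the Gibbs property.)

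The fix is exactly what the paper does: start from a tiling that is \emph{locally frozen but not globally frozen}, namely the periodic tiling $\tau^*$ alternating $n_1$ layers of $\eta_1$-bricks, $n_2$ of $\eta_2$-bricks, $n_3$ of $-\eta_1$-bricks, $n_4$ of $-\eta_2$-bricks, with all $n_i>k$. Such a $\tau^*$ has no alternating cycle fitting in a $k$-box (so the $k$-orbit is again a singleton, and the ``uniform on completions'' check for $k$-boxes is trivial), but it \emph{does} have finite alternating cycles at scale of order $\min_i n_i$, giving a larger box $B'$ with multiple completions of which $\mu$ deterministically picks one. With that change your whole orbit-plus-thermodynamic-limit apparatus collapses to the paper's argument: since the orbit is a single tiling, the uniform measure on the orbit is just the point mass, the periodic lift plus translation averaging is the paper's $\mu_m$, and both the $k$-Gibbs and not-Gibbs verifications become one-line checks rather than the delicate thermodynamic-limit and ``missed completion'' arguments you flagged as the essential work.

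A secondary issue, which would bite even with a better choice of $\tau_0$: the observation that $\mathcal{O}_k(\tau_0)$ is a proper subset of all torus tilings does not by itself imply failure of the Gibbs property for the limit measure. You would need to produce a fixed finite region $B'$, positive-probability boundary data, and a completion of $B'$ that is assigned conditional probability zero. Global invariants such as the twist could in principle supply this (a completion of a $3\times 3\times 2$ box that changes the twist while fixing everything outside cannot be reached by flips), but this is a separate argument that would need to be supplied; as written it is only asserted.
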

\begin{proof}
    Take $n = (n_1,n_2,n_3,n_4)$ and consider the tiling of $\m Z^3$ which alternates between $n_1$ layers of $\eta_1$ bricks, $n_2$ layers of $\eta_2$ bricks, $n_3$ layers of $-\eta_1$ bricks, and $n_4$ layers of $-\eta_2$ bricks. Define a measure $\mu_m$ by averaging over translations by $\threeeven$ in the $m\times m\times m$ box and let $\mu$ be a subsequential limit as $m\to \infty$. The measure $\mu$ is invariant under the action of $\threeeven$. For $k\leq \min\{n_1,n_2,n_3,n_4\}$, $\mu$ is $k$-Gibbs since within any size $k$ cube, a tiling sampled from $\mu$ is frozen for $k\leq \min\{n_1,n_2,n_3,n_4\}$. For $k > \min\{n_1,n_2,n_3,n_4\}$, $\mu$ still a.s.\ samples tilings which are brickwork patterns restricted to horizontal layers. However tilings of these larger boxes are not frozen, and are connected by shifting on finite loops to tilings which are not brickwork on every layer. Therefore $\mu$ is not $k$-Gibbs for $k > \min\{n_1,n_2,n_3,n_4\}$, hence $\mu$ is not Gibbs.
\end{proof}

The construction in the proof works to construct a $k$-Gibbs-but-not-Gibbs measure for any mean current $s=(s_1,s_2,0)$. A more complicated construction allows us to show that there exist $k$-Gibbs measures which are not Gibbs and correspond to an $s$ in the interior of $\mathcal O$ for which $s_1 s_2 s_3 \neq 0$. (Essentially one can arrange a periodic pattern of infinite non-intersecting taut paths like the ones shown in the introduction.) We have not found a construction that works for every $s \in \mathcal O$.

\section{Measures with boundary mean current}
\label{sec:extreme}

Recall from Section \ref{section:measures-currents} that $\threeeven$-invariant measures on dimer tilings of $\mathbb{Z}^3$ come with a parameter called the \textit{mean current}. This definition makes sense in any dimension $d$. When $d=2$, the mean current is a 90-degree rotation of the height function \textit{slope}, and in general it is valued in the convex polyhedron\symindex{Chapter 4!$\mathcal{O}_d$ the space of admissible mean currents in $d$ dimensions}
\begin{align*}
    \mathcal{O}_d = \{(s_1,...,s_d) : |s_1| + ... + |s_d| \leq 1\}.
\end{align*}
Recall that the mean current of a measure $\mu$ is defined in terms of tile densities (Definition \ref{def: mean current}). 
Given a standard basis $\eta_1, \eta_2, \ldots, \eta_d$ of $\Z^d$ denote by $e_i$ the edge connecting $ 0$ with $ \eta_i$ and $-e_i$ the edge connecting $ 0$ with $- \eta_i$. The \emph{mean current} of a measure $\mu\in \Prob(\Omega)$ is an element of $\mathbb R^d$ such that its $i^{th}$-coordinate is
$$( s(\mu))_i= \mu(e_i\in \tau)-{\mu}(-e_i\in \tau).$$

 If $s\in \partial \mathcal O_d$ we say that $s$ is a \textit{boundary mean current}. In terms of tiles, a measure $\mu$ has boundary mean current if and only if with probability $1$ it samples at most one of the two possible tile types in each coordinate direction. The purpose of this section is to describe ergodic Gibbs measures with boundary mean current in dimension three. Using this, we compute the entropy function $\ent(\cdot)$ in 3D restricted to $\partial \mc O = \partial \mc O_3$ (Theorem \ref{thm: extremal_entropy}).

 We will see that measures with boundary mean current in 2D and 3D are qualitatively very different. While the EGMs with boundary mean current in two dimensions all have zero entropy, EGMs with boundary mean current $s\in \partial \mc O$ in three dimensions can have positive entropy when $s$ is contained in the interior of a face of $\partial \mc O$. Further, in three dimensions for any value $a$ between $0$ and $\ent(s)$, there exists an EGM $\mu$ with specific entropy $h(\mu) = a$. 
 
Despite these differences, in 2D and 3D the general principle is that measures with boundary mean current in dimension $d$ correspond to sequences of measures on a $(d-1)$-dimensional lattice. This is easy to see in 2D, and we use it as a warm-up for the 3D version.

\subsection{Review: EGMs with boundary mean current in two dimensions}
 
Call the four possible tile directions in two dimensions (east, west) and (north, south). It is sufficient to describe measures with boundary mean current $(s_1,s_2)$ for which $s_1,s_2\geq 0$ and $s_1+s_2=1$, i.e.\ measures that sample only north and east tiles. The first step is to understand what tilings containing only north and east tiles look like. 

For an even point $(x_1,x_2)$, the north tile connects it to $(x_1,x_2+1)$ and the east tile connects it to $(x_1+1,x_2)$. In other words, north and east tiles always connect points along the line $x_1+x_2 = 2c$ to points along the line $x_1+x_2 = 2c+1$. Therefore a tiling consisting of only north and east tiles can be partitioned into an infinite sequence of complete dimer tilings of \textit{strips} $S_c = \{(x_1,x_2): x_1+x_2 = 2c \text{ or } 2c+1\}$. 

\begin{figure}
    \centering
        \includegraphics[scale=0.8]{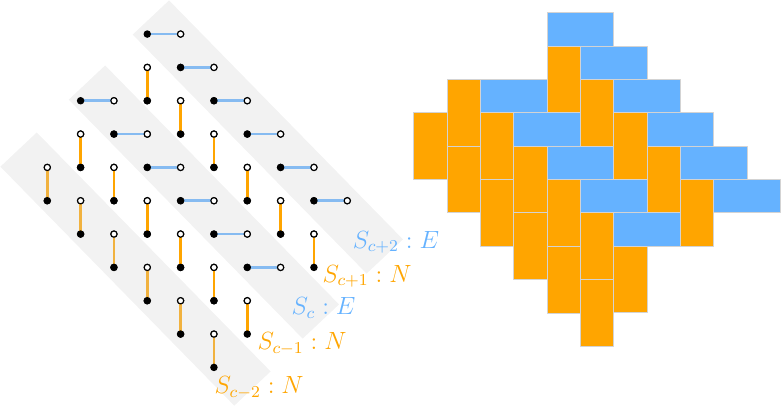}\\
    \caption{Parts of five strips drawn on the dual graph (left) and as a tiling (right)}
    \label{fig:extreme tilings 2d}
\end{figure}

Along each strip, there are only two complete dimer tilings: one where the tiles are all east, and one where the tiles are all north. As such, any tiling $\tau$ with only north and east tiles consists of a sequence of choices of north or east tiles along the strips. See Figure \ref{fig:extreme tilings 2d}. 

\termindex{Chapter 4!frozen tilings}All tilings $\tau$ of $\m Z^2$ containing only north and east tiles are \textit{frozen}, meaning they contain no finite cycles. To see this, note that if $\tau$ contains a finite cycle, then local move connectedness (see Section \ref{sec:localmoves}) implies it could be broken down into cycles of length 2. However a cycle of length 2 requires a north-south or east-west pair of tiles, which is not possible if the tiling contains only north and east tiles. Since tilings containing only north and east tiles are frozen, any measure $\mu$ which a.s.\ samples such tilings is automatically Gibbs. Three useful observations follow from this discussion: 
\begin{itemize}
    \item All ergodic Gibbs measures with boundary mean current in two dimensions have zero entropy. In other words, entropy is zero when restricted to $\partial \mc O_2$. 
 	\item There is a bijection between 1) Gibbs measures on dimer tilings of $\m Z^2$ that contain only $E$ (east) and $N$ (north) tiles and 2) measures on integer-indexed $\{N,E\}$ sequences. Any sample of a process taking value $E$ with proportion $s_1$ and $N$ with proportion $s_2$ corresponds to a sample of a Gibbs measure on dimer tilings (obtained by placing $N$ and $E$ tiles on consecutive strips) of $\m Z^2$ with mean current $(s_1,s_2)$ and vice versa.
 	\item There is also a bijection between 1) {ergodic} Gibbs measures on dimer tilings of $\m Z^2$ that contain only $E$ and $N$ titles and have mean current $(p, 1-p)$ and 2) {ergodic} measures on integer-indexed $\{N, E\}$ sequences where the origin has probability $p$ of being being assigned $E$.
\end{itemize}

\subsection{EGMs with boundary mean current in three dimensions}
Now we will consider the three dimensional case. Let the types of tiles be (east, west), (north, south), (up, down). Without loss of generality we consider measures with boundary mean current that almost surely sample only north, east, and up tiles, i.e.\ mean current $s = (s_1,s_2,s_3)$ with $s_1+s_2+s_3 = 1$, $s_1,s_2,s_3\geq 0$.

For an even point $(x_1,x_2,x_3)$, an east tile connects it to $(x_1+1,x_2,x_3)$, a north tile connects it to $(x_1,x_2+1,x_3)$, and an up tile connects it to $(x_1,x_2,x_3+1)$\termindex{Chapter 4!north, east and up tiles}. Therefore a tiling in 3D using only these three tile types corresponds to a sequence of tilings of two-dimensional \textit{slabs}, \termindex{Chapter 4!slabs}\symindex{Chapter 4!$L_c$ - a slab in $\Z^3$}
$$L_c = \{(x_1,x_2,x_3) : x_1+x_2+x_3 = 2c \text{ or } 2c+1\}.$$
These slabs turn out to be a familiar two-dimensional lattice, namely the hexagonal lattice (with dimers viewed as edges) or the dual triangular lattice (with each dimer is viewed as a ``lozenge'' obtained as the union of two adjacent triangles), see Figure \ref{figure: lozenge dimer}. 

\begin{figure}
	\includegraphics[scale=0.8]{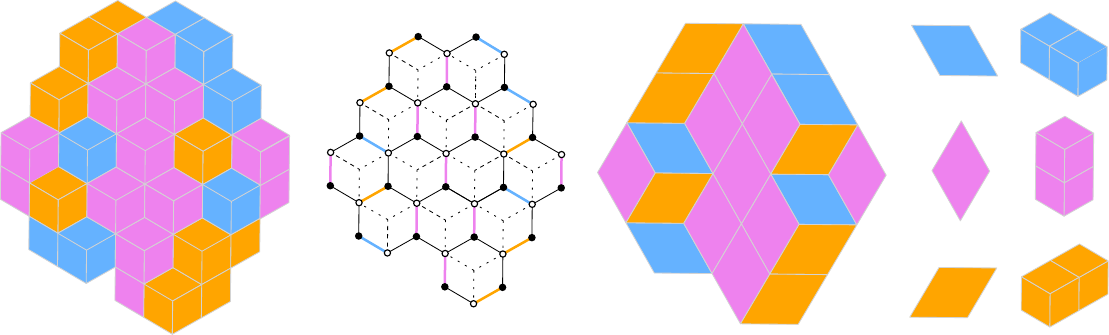}\\
	\caption{ (1) Cubes from a slab of $\mathbb{Z}^3$ visible from above the slab in a dimer tiling $\tau$ of $\mathbb{Z}^3$, (2) tiles from $\tau$ drawn on the hexagonal lattice as edges colored pink,blue and orange, (3) the same tiles drawn as lozenges obtained by taking the Voronoi cells containing these edges, and (4) a key giving the translation between lozenge tiles and 3D dimer bricks.}
	\label{figure: lozenge dimer}
\end{figure}

In the following, given a dimer tiling $\tau$ of a slab $L_c$, we will say that a particular tile type (north, east or up) has density $s_i$ if the proportion of tiles of that type in $\tau\cap[-n,n]^3$ converges to $s_i$ as $n\to \infty$. Similarly we can define the density for lozenge tilings. 

\begin{prop}\label{prop: Sc lattice description}
    For each $c\in \m Z$, the slab $L_c$ is a copy of the hexagonal lattice. There is a correspondence between tilings $\tau$ of $\m Z^3$ which use only north, east and up tiles restricted to $L_c$ and lozenge tilings. This correspondence takes a tiling of $L_c$ with density $(s_1,s_2,s_3)$ of the north, east and up tiles to a lozenge tiling where the density of the three lozenge tiles is also $(s_1,s_2,s_3)$.
\end{prop}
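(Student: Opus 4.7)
The plan is to exhibit an explicit geometric realization of $L_c$ as the hexagonal lattice via an orthogonal projection, and then invoke the classical identification of perfect matchings on the hexagonal lattice with lozenge tilings of its dual triangular lattice. All the real work here is bookkeeping; the underlying picture is the standard one suggested by Figure~\ref{figure: lozenge dimer}, so the main ``obstacle'' is just to write the correspondence down unambiguously and verify it preserves tile densities.

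First, let $P \subset \m R^3$ be the plane perpendicular to $(1,1,1)$ and let $\pi : \m R^3 \to P$ denote orthogonal projection. Restricted to either affine sublattice $\{x \in \m Z^3 : x \cdot (1,1,1) = 2c\}$ or $\{x \in \m Z^3 : x \cdot (1,1,1) = 2c+1\}$, the map $\pi$ is injective: two integer points with the same projection differ by a scalar multiple of $(1,1,1)$, which combined with an equal coordinate sum forces them to coincide. Hence the even points of $L_c$ project bijectively onto a triangular sublattice $T_e \subset P$, the odd points project bijectively onto a parallel triangular sublattice $T_o = T_e + \pi(\eta_1)$, and $T_e \cup T_o$ is the vertex set of a regular hexagonal lattice $H \subset P$. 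The three edges leaving an even vertex $x \in L_c$ inside $L_c$, namely $(x, x+\eta_i)$ for $i=1,2,3$, project to the three edges of $H$ incident to $\pi(x)$: the vectors $\pi(\eta_1), \pi(\eta_2), \pi(\eta_3)$ have equal length, sum to zero, and meet pairwise at $120^\circ$. This yields a graph isomorphism between $L_c$ (equipped with its N/E/U edges) and $H$, and hence a bijection between dimer tilings of $\m Z^3$ restricted to $L_c$ using only north, east, and up tiles and perfect matchings of $H$.

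Next, take the Voronoi cells in $P$ of the vertices of $H$: they are congruent equilateral triangles that tile $P$ (giving the dual triangular lattice), and the union of the two Voronoi cells at the endpoints of any edge of $H$ is a rhombus, i.e.\ a lozenge. A perfect matching of $H$ therefore determines a lozenge tiling of $P$ by assigning to each matched edge the lozenge formed by its two dual triangles, and conversely every lozenge tiling arises uniquely this way. Since edges of $H$ have exactly three possible directions and lozenges come in exactly three orientations, the tile types north, east, up match the three lozenge orientations one-to-one. Density preservation is then automatic from the tile-by-tile nature of the correspondence: each 3D tile of type $\eta_i$ corresponds to a single lozenge of the orientation determined by $\pi(\eta_i)$, and $\pi$ maps $L_c \cap [-n,n]^3$ onto a planar region whose area grows like $n^2$, so in the limit $n \to \infty$ the proportion of tiles of each type in the 3D picture agrees exactly with the proportion of lozenges of the corresponding orientation in the 2D picture.
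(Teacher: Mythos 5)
Your proof is correct, and it takes a genuinely different (and somewhat cleaner) geometric route than the paper's. The paper establishes that $L_c$ is a copy of the hexagonal lattice by working in the dual ``cube'' picture: it intersects the unit cubes representing the vertices of $L_c$ with the geometric slab bounded by the planes $x_1+x_2+x_3=2c$ and $x_1+x_2+x_3=2c+1$, observes that each cube--slab intersection is a hexagon, and then argues ``by observation'' that adjacent hexagons share an edge, any three meet at a vertex, and no four meet, so the hexagons tile the plane like the faces of the honeycomb. You instead stay in the primal vertex picture and make everything explicit: orthogonal projection $\pi$ onto the plane perpendicular to $(1,1,1)$ is injective on each affine sublattice, the images of the even and odd points are two interleaved triangular lattices, and the computation that $\pi(\eta_1),\pi(\eta_2),\pi(\eta_3)$ are equal-length, pairwise at $120^\circ$, and sum to zero certifies the honeycomb structure directly. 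This buys a graph isomorphism verified by arithmetic rather than by inspection of a figure, at the cost of a slightly longer write-up. The remaining steps (Voronoi-cell duality giving lozenges, and the tile-by-tile nature of the correspondence giving density preservation) coincide with what the paper does by pointing to Figure~\ref{figure: lozenge dimer}. The only place you might add a sentence is the jump from ``$T_e\cup T_o$ is the vertex set of a hexagonal lattice'' to the graph isomorphism: your $120^\circ$ computation shows that each even vertex has exactly three equidistant odd neighbors in $H$ and that the edges of $L_c$ map to precisely these hexagonal-lattice edges, so stating that explicitly (and noting there are no even--even or odd--odd edges within $L_c$) would close the argument fully.
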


\begin{rem}
There is a completely analogous correspondence for $s\in \partial \mc O$ when some of the components of $s$ are negative. If the signs of $s$ are $(\epsilon_1,\epsilon_2,\epsilon_3)$ then a tiling with boundary mean current $s$ would restrict to a lozenge tiling on $\{(x_1,x_2,x_3) : \epsilon_1 x_1 + \epsilon_2 x_2 + \epsilon_3 x_3 = 2c \text { or }2c+1\}$. To simplify the presentation, some of the results in this section are stated for $s_1,s_2,s_3\geq 0$, but the analogous statements hold for all $s\in \partial \mc O$.
\end{rem}

\begin{proof}
Here we view the tiling as a collection of edges. Since $\tau$ uses only north, east and up tiles, the restriction $\tau_c=\tau\mid_{L_c}$ is a complete tiling of $L_c$. A single cube $C$ in the $\mathbb{Z}^3$ lattice intersects four layers of the form $x_1+x_2+x_3=a$. Let $\mc C_a$ be the collection of cubes in $\m Z^3$ which intersect the layers $x_1+x_2+x_3 = a-1, a, a+1, a+2$. By construction, $L_c \subset \mc C_{2c}$.
\begin{figure}[H]
    \centering
     \includegraphics[scale=0.5]{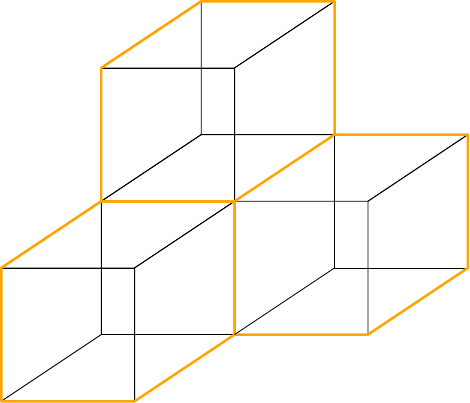}
    \caption{Three adjacent cubes in $\mc C_{2c}$, with intersection with $L_c$ in orange.}
    \label{fig:cube_hex}
\end{figure}

For each $C\in \mc C_{2c}$, $C\cap L_c$ is a hexagon, hence the faces of $L_c$ are hexagons. By observation we see that any two adjacent hexagons meet in an edge, any three adjacent hexagons meet at a vertex, and there are no collections of $>3$ adjacent hexagons. Hence $L_c$ is a copy of the hexagonal lattice. Finally Figure \ref{figure: lozenge dimer} gives the correspondence between the north, east and up tiles with the three kinds of lozenges which preserves their densities.
\end{proof}

Recall that $\Prob^{s}$ denotes the set of $\threeeven$-invariant probability measures on dimer tilings of $\Z^3$ of mean current $s=(s_1,s_2,s_3)$. We add subscripts $G$ and $e$ to denote Gibbs and ergodic measures respectively. Consider the group
$$\Z_{\text{loz}}=\threeeven\cap \{(x_1,x_2,x_3)~:~x_1+x_2+x_3=0\}.$$
Let $\mc P_{\text{loz}}$ denote the space of probability measures on dimer tilings of the slab $L_0$ (i.e.\ lozenge tilings) which are invariant under the $\m Z_{\text{loz}}$ action. The \textit{slope} of a measure $\rho$ on lozenge tilings is the triple $s=(s_1,s_2,s_3)$ of expected densities of the three types of lozenges with respect to $\rho$. A lozenge tiling slope satisfies $s_1,s_2,s_3\geq 0$ and $s_1+s_2+s_3=1$.\symindex{Chapter 4!$\Z_{\text{loz}},\mc P_{\text{loz}}$ - the space of invariant probability measures on the space of lozenge tilings/ dimer tilings of the hexagonal lattice} \symindex{Chapter 4!$s(\mu)$ - an abuse of notation specific to this section where it also represents the slope of lozenge tilings}\termindex{Chapter 4!slope for lozenge tilings}

We abuse notation slightly and write $s(\mu)$ to mean the mean current or slope depending on what space $\mu$ is a measure on. To reduce notation issues, for the rest of the subsection we denote measures on dimer tilings of $\m Z^3$ by $\mu$ or $\nu$ and measures on lozenge tilings by $\rho$ or $\lambda$. 

In this section, we use the notation $\tau_B$ to mean $\tau$ restricted to $B\subset \m Z^3$. \symindex{Chapter 4!$\tau_B$ - the restriction of a tiling $\tau$ to $B$}

\begin{prop}\label{prop: extreme mc gibbs slope}
    Suppose $\mu$ is an ergodic Gibbs measure on dimer tilings of $\m Z^3$ with mean current $s=(s_1,s_2,s_3)\in \partial \mc O$, $s_1,s_2,s_3\geq 0$. Let $\rho_0$ be the marginal measure of $\mu$ on the slab $L_0$. Then $\rho_0$ is a $\m Z_{\text{loz}}$-invariant Gibbs measure with lozenge tiling slope $s(\rho_0) = s$. \symindex{Chapter 4!$\rho_0$ - marginal of a measure $\mu$ to the slab $L_0$}
\end{prop}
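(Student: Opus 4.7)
The plan is to establish, in this order: (a) that $\mu$-almost surely every tile is of type north, east, or up; (b) $\mathbb{Z}_{\text{loz}}$-invariance of $\rho_0$; (c) the slope identity $s(\rho_0)=s$; and (d) the Gibbs property for $\rho_0$, which is the main content.

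For (a), I would combine the hypothesis $s_1+s_2+s_3 = 1$ and $s_i \geq 0$ with the identity $\sum_{i=1}^3 (\mu(e_i\in \tau) + \mu(-e_i \in\tau)) = 1$ (which holds because under any $\threeeven$-invariant measure the even vertex at the origin lies in exactly one tile). Since $s_i = \mu(e_i\in\tau) - \mu(-e_i\in\tau) \leq \mu(e_i\in\tau)+\mu(-e_i\in\tau)$ and summing gives $1 = \sum_i s_i \leq 1$, equality forces $\mu(-e_i\in\tau) = 0$ for each $i$. By $\threeeven$-invariance, no edge of the form $(x, x-\eta_i)$ with $x$ even appears $\mu$-a.s., so every tile goes from an even vertex $x$ to $x+\eta_i$ for some $i$. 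Steps (b) and (c) are then short: every $v \in \mathbb{Z}_{\text{loz}} \subset \threeeven$ preserves the slab $L_0$, so restriction to $L_0$ commutes with translation by $v$, and $\threeeven$-invariance of $\mu$ passes to $\mathbb{Z}_{\text{loz}}$-invariance of $\rho_0$. For (c), since $0 \in L_0$ and $\eta_i \in L_0$ for each $i=1,2,3$, the edge $e_i$ lies in $L_0$ and, via Proposition \ref{prop: Sc lattice description}, corresponds to a type-$i$ lozenge incident to the origin. Hence
\[
s(\rho_0)_i = \rho_0(\text{type-}i\text{ lozenge at }0) = \mu(e_i \in \tau) = \mu(e_i\in\tau) - \mu(-e_i\in\tau) = s_i,
\]
using (a) in the third equality.

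The main step is (d). Let $H$ be a finite subset of $L_0$. I want to show that conditioned on $\tau|_{L_0 \setminus H}$, the distribution of $\tau|_H$ under $\rho_0$ is uniform over lozenge tilings of $H$ compatible with the boundary condition. The key geometric consequence of (a) is that $\mu$-a.s.\ no tile crosses between $L_0$ and an adjacent slab, so every tile that meets $H$ lies entirely inside $L_0$. Consequently, the 3D Gibbs property of $\mu$ applied to the finite set $H \subset \mathbb{Z}^3$ yields that $\tau|_H$, conditioned on $\tau|_{\mathbb{Z}^3 \setminus H}$, is uniform on tilings of $H$ compatible with the boundary condition across $\partial H$; and since no tile crosses $\partial L_0$, that boundary condition is already determined by $\tau|_{L_0 \setminus H}$. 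Averaging over the data on the other slabs therefore produces the same uniform conditional law given only $\tau|_{L_0 \setminus H}$, which is precisely the Gibbs property for $\rho_0$ as a measure on lozenge tilings of the hexagonal lattice $L_0$.

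The subtlety to manage in (d) is that ``tilings of $H$ compatible with the boundary'' must mean the same thing from the 3D and the 2D (lozenge) perspectives. This is exactly what Proposition \ref{prop: Sc lattice description} provides: tiles in $L_0$ of type north, east, or up are in bijection with the three lozenge types, and because no 3D tile crosses $\partial L_0$, the set of 3D infillings of $H$ compatible with $\tau|_{L_0 \setminus H}$ is canonically identified with the set of lozenge infillings of $H$ with the induced 2D boundary data. Once this identification is in place, the decoupling argument above immediately gives the lozenge Gibbs property, completing the proof.
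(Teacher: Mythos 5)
Your proof is correct and follows essentially the same route as the paper's: use that no tile crosses between slabs to reduce the 3D Gibbs conditioning on $\tau|_{\mathbb{Z}^3\setminus H}$ to conditioning on $\tau|_{L_0\setminus H}$, then invoke uniformity, and identify the slope and the mean current via the tile-at-the-origin observable. The only genuine addition in your write-up is step (a), where you derive explicitly from $s_1+s_2+s_3=1$ and $s_i\geq 0$ that $\mu(-e_i\in\tau)=0$ for each $i$ and hence that $\mu$-a.s.\ only north, east, and up tiles appear; the paper states this equivalence as a known fact just before the proposition and then uses it without further comment, so your version is a bit more self-contained but not structurally different.
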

\begin{rem}
    Since $\mu$ is $\threeeven$-invariant, $\mu$ is invariant under the $\m Z$-action of translating by $(0,0,2c)$, which takes $L_0$ to $L_c$. Therefore $\rho_c$ and $\rho_0$ are identically distributed for all $c\in \m Z$. 
\end{rem}
\begin{proof}
    Since $\m Z_{\text{loz}}\subset \threeeven$, $\rho_0$ is $\m Z_{\text{loz}}$-invariant. Consider a finite connected set $B\subset L_0$ with boundary $\partial B$ in $\m Z^3$. Suppose $\tau\in \Omega$ is a tiling in the support of $\mu$, implying that it is a tiling using only north, east, and up tiles. Since $\mu$ is a Gibbs measure, and since there is $\mu$-a.s. no tile in $\tau$ connecting $L_0$ and $L_c$ for $c\neq 0$, we have for any tiling $\sigma\in \Omega$,
\begin{align*}
    \mu(\sigma_{B} ~|~ \tau_{\m Z^3 \setminus B} ) = \mu(\sigma_{B} ~|~ \tau_{\partial B \cap L_0}) = \rho_0(\sigma_{B} ~|~ \tau_{\partial B \cap L_0}).
\end{align*}
In the above we use the notation that for a tiling $\sigma\in \Omega$ and a set $A\subset \m Z^3$, $\sigma_A$ means $\sigma$ restricted to $A$. Since $\mu$ is a Gibbs measure, the left hand side is uniform. Therefore $\rho_0$ is also a Gibbs measure. 

Relating $s(\rho_0)$ to $s(\mu)$ is straightforward. Recall from Section \ref{section:measures-currents} that $s_0(\tau)$ denotes the direction of the tile at the origin in $\tau$, and that $s(\mu) = \m E_{\mu}[s_0(\tau)]$. The same function $s_0$ can be used to compute the slope of a lozenge tiling measure, and $s(\rho_0)= \m E_{\rho}[s_0(\sigma)]$ where $\sigma$ is a full-plane lozenge tiling.

Let $\tau_0=\tau_{L_0}$ be $\tau$ restricted to $L_0$. Since $\mu$ has boundary mean current, $\tau_0$ is a full-plane lozenge tiling $\mu$ a.s.\ and $s_0(\tau) = s_0(\tau_0)$. Thus 
\begin{align*}
    s(\mu) = \m E_{\mu}[s_0(\tau)] = \m E_{\rho_0}[s_0(\tau_0)] = s(\rho_0). 
\end{align*}
\end{proof}

To show that $h(\mu) = h(\rho_0)$ (Proposition \ref{prop: extreme mc entropy}), we use the fact that any Gibbs measure can be uniquely decomposed into {\textit{extreme} Gibbs measures} \cite[Theorem 7.26]{Georgii}. Extreme Gibbs measures are the extreme points of the convex set of Gibbs measures (analogous to how ergodic measures are the extreme points of the convex set of invariant measures). A Gibbs measure is extreme if and only if it is tail trivial \cite[Theorem 7.7]{Georgii}.\termindex{Chapter 4!extreme Gibbs measures}

If $\rho$ is a Gibbs measure, there is a unique weight function $g_\rho$ on the extreme Gibbs measures which gives its \textit{extreme Gibbs decomposition}, 
\begin{align*}
    \rho = \int \lambda \,\dd g_\rho(\lambda). 
\end{align*}
This decomposition \symindex{Chapter 4!$g_{\rho}$ - extreme Gibbs decomposition} means that sampling from a Gibbs measure $\rho$ can be thought of as a two step process: 1) sample an extreme Gibbs component $\lambda$ from $\dd g_\rho$, 2) sample a tiling $\tau$ from $\lambda$.  Given a tiling $\tau$ sampled from a Gibbs measure $\rho$, we can a.s.\ recover the extreme Gibbs component $\lambda$ that $\tau$ was sampled from \cite[Theorem 7.12]{Georgii}. If $\lambda$ is the extreme Gibbs component that $\tau$ is sampled from, we say that $\tau$ is \textit{generic} for $\lambda$.

\begin{prop}\label{prop: extreme mc entropy}
    Suppose $\mu$ is an EGM on $\Omega$ with mean current $s=(s_1,s_2,s_3)\in \partial \mc O$, $s_1,s_2,s_3\geq 0$. Let $\rho_c$ be the marginal measure of $\mu$ on the slab $L_c$ for $c\in \m Z$. Sampling a tiling $\tau$ from $\mu$ induces a choice of extreme Gibbs component $\lambda_c$ of $\rho_c$ for all $c\in \m Z$. For each $c\in \m Z$, let $\tau_c = \tau_{L_c}$.\symindex{Chapter 4!$\tau_c$ - the tiling $\tau$ on the slab $L_c$}

    \begin{enumerate}
        \item Conditional on the choice of extreme Gibbs component $\lambda_c$ of $\rho_c$ for each $c\in \m Z$, the samples $(\tau_c)_{c\in \m Z}$ are independent. 
        \item For any $c\in \m Z$, $h(\mu) = h(\rho_c)$. 
    \end{enumerate}
\end{prop}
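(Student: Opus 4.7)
The plan is to prove (1) by showing that the conditional law of $\mu$ given $(\lambda_c)_c$ factorizes as $\prod_c \lambda_c$, and then to use this product structure to derive (2) via matching upper and lower entropy bounds.

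For (1), since $\mu$ is supported on tilings whose dimers are all N, E, or U, no tile crosses between slabs and $\tau$ splits canonically as $\tau = \bigsqcup_c \tau_c$. Since the event $\{\lambda_c = \lambda_c^*\}$ lies in the tail $\sigma$-algebra $\mathcal{T}_c$ on $L_c$ (hence in $\sigma(\tau|_{\mathbb{Z}^3 \setminus K})$ for every finite $K \subset L_c$), conditioning $\mu$ on $(\lambda_c)_c = (\lambda_c^*)_c$ produces another Gibbs measure $\mu^*$ whose marginal on each $L_c$ is the extreme Gibbs measure $\lambda_c^*$. It then suffices to show that any Gibbs measure $\nu$ on $\Omega$ supported on N/E/U tilings whose slab marginals are extreme Gibbs satisfies $\nu = \prod_c \nu|_{L_c}$. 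For this I would prove $\tau_c \perp \tau_{\neq c}$ under $\nu$ for each $c$: the Gibbs property of $\nu$ applied to any finite $B \subset L_c$, together with the no-crossing property, gives $\tau_c|_B \perp \tau_{\neq c} \mid \tau_c|_{L_c \setminus B}$; intersecting these as $B$ exhausts $L_c$ yields $\tau_c \perp \tau_{\neq c} \mid \mathcal{T}_c$. Extremality of $\nu|_{L_c}$ makes $\mathcal{T}_c$ trivial under $\nu$, so $\tau_c \perp \tau_{\neq c}$, and iterating across $c$ gives joint independence.

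For (2), writing $\mu = \int \prod_c \lambda_c \, dg_*((\lambda_c))$, I would obtain two-sided bounds on $H_{\Lambda_n}(\mu)$. For the upper bound, Shannon subadditivity gives $H_{\Lambda_n}(\mu) \leq \sum_c H_{L_c \cap \Lambda_n}(\rho_c)$; combined with $\threeeven$-invariance (all $\rho_c$ are translates of $\rho_0$, so $H_{L_c \cap \Lambda_n}(\rho_c) \sim h(\rho_0) \cdot |L_c \cap \Lambda_n|$) and $\sum_c |L_c \cap \Lambda_n| = |\Lambda_n|$, this yields $h(\mu) \leq h(\rho_0)$. For the lower bound, conditional independence from (1) and $H(X) \geq H(X\mid Y)$ give $H_{\Lambda_n}(\mu) \geq H_{\Lambda_n}(\mu \mid (\lambda_c)) = \sum_c \mathbb{E}_{g_*}[H_{L_c \cap \Lambda_n}(\lambda_c)]$; dividing by $|\Lambda_n|$ and passing to the limit using dominated convergence (finite-volume entropy is uniformly bounded by a constant times the volume) produces $h(\mu) \geq \mathbb{E}_g[h(\lambda)] = h(\rho_0)$, with the last equality coming from affineness of $h$ applied to the extreme Gibbs decomposition $\rho_0 = \int \lambda \, dg(\lambda)$.

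The main obstacle is the factorization argument in (1): producing joint independence of slabs from only the slab-extremality hypothesis together with the global Gibbs property. The crux is that the Gibbs kernel on each finite box inside a slab is \emph{universal} (uniform on extensions, independent of which Gibbs measure one is sampling from), so the only possible channel for correlation between $\tau_c$ and $\tau_{\neq c}$ is through tail information on $L_c$; extremality of the slab marginal closes this channel by making $\mathcal{T}_c$ trivial. The rest of the proof of (2) is technical entropy and limit bookkeeping and becomes routine once the product structure from (1) is in hand.
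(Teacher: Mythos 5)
Your proof of (1) is correct but differs from the paper's route: the paper works directly with the extreme-Gibbs decomposition of $\mu$ on $\Omega$, writing $\mu = \int \nu\,\dd g_\mu(\nu)$ and showing $\tau_0\perp\tau_{\m Z^3\setminus L_0}$ conditional on $\lambda_0$ via a tail-triviality limit. You instead condition $\mu$ on the slab components $(\lambda_c)_c$ to produce a Gibbs measure $\mu^*$ and then prove the tidy lemma that any Gibbs measure on $\Omega$ supported on N/E/U tilings with extreme slab marginals is a slab product. The observation that the Gibbs kernel is universal, so the only channel for inter-slab correlation is slab tail information, is exactly the right idea, and your lemma is arguably a cleaner formulation. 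One step deserves to be spelled out, though: that the slab marginal of $\mu^*$ really is $\lambda_c^*$. This holds because conditioning on a tail event preserves the Gibbs property, the slab marginal of a Gibbs measure supported on N/E/U tilings is Gibbs on $L_c$ (as in Prop.~\ref{prop: extreme mc gibbs slope}), and that marginal is concentrated on $\lambda_c^*$-generic configurations so it must equal $\lambda_c^*$; but this chain grounds your lemma's hypothesis and should not be left implicit.

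For (2), your two-sided bound is genuinely different from the paper's one-shot computation with the fixed-boundary entropy $H_{\Delta}(\mu\mid\tau)$ on regions $A_{n,n}=\bigcup_{c=-n}^{n}A_n(c)$. Your lower bound is sound because $H_\Delta(\lambda)/|\Delta|\geq h(\lambda)$ holds for every finite $\Delta$, so non-uniformity across slabs is harmless. Your upper bound, however, has a real gap. You claim $H_{L_c\cap\Lambda_n}(\rho_c)\sim h(\rho_0)\cdot|L_c\cap\Lambda_n|$ ``by $\threeeven$-invariance,'' but invariance only makes the measures $\rho_c$ translates of $\rho_0$; the regions $L_c\cap\Lambda_n$ are \emph{not} translates of each other — they are slices of the cube $[-n,n]^3$ at $O(n)$ different depths $c$, with shapes ranging from hexagons in the bulk to degenerate triangles near the corners. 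Since you sum over $O(n)$ slabs, the termwise asymptotic needs to be uniform in $c$ for the limit to close, and it is not as stated: $H_\Delta(\rho_0)/|\Delta|\geq h(\rho_0)$ always, but the matching \emph{upper} estimate $H_\Delta(\rho_0)/|\Delta|\leq h(\rho_0)+o(1)$ requires a Følner-type argument with rate control. This is fixable — tile each slab by large 2D boxes $Q$, use subadditivity inside the slab to get $H_{L_c\cap\Lambda_n}(\rho_c)\leq N_c H_Q(\rho_0)+O(|R_c|)$, and show the total contribution of the remainders and of the small near-corner slabs is $o(n^3)$ — but the argument must be written. The paper avoids the issue entirely by defining $A_n(c)=A_n(0)+(0,0,2c)$, so every term in the slab sum is an exact translate of the $c=0$ term and $\threeeven$-invariance applies verbatim. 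Adopting that choice of exhausting region would streamline your proof.
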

\begin{rem}
    Since $\rho_0$ and $\rho_c$ are identically distributed, $h(\rho_c) = h(\rho_0)$ for all $c\in \m Z$. Thus it suffices to prove (2) for $c=0$. We also note that we could have used the ergodic decomposition instead of the extreme Gibbs decomposition to prove this theorem. The upshot of using the extreme Gibbs decomposition is that conditional on a choice of extreme Gibbs component $\lambda_c$ on each slab $L_c$, the samples $\tau_c$ from $\lambda_c$ for all $c\in \m Z$ are independent. Conditioned on a choice of ergodic component $\eta_c$ on each slab $L_c$, only the samples from $\eta_c$ with $s(\eta_c)=(l_1^c,l_2^c,l_3^c)$ and $l_1^c,l_2^c,l_3^c>0$ are necessarily independent \cite[Theorem 9.1.1]{AST_2005__304__R1_0}.
\end{rem}
\begin{proof}[Proof of Proposition \ref{prop: extreme mc entropy}] 
Since $\mu$ is Gibbs, it has an extreme Gibbs decomposition 
\begin{align*}
    \mu = \int \nu \, \dd g_\mu(\nu). 
\end{align*}
Sampling a tiling $\tau$ from $\mu$ is equivalent to sampling an extreme Gibbs component $\nu$ of $\mu$ (from $g_\mu$), and then sampling a tiling $\tau$ from $\nu$. Since $\nu$ is tail-trivial, its marginal $\lambda_c$ on $L_c$ is also tail-trivial. For all $c\in \m Z$, the extreme Gibbs decomposition of the marginal $\rho_c$ can be written as
\begin{align*}
    \rho_c = \int \lambda_c \, \dd g_{\rho_c}(\lambda_c)
\end{align*}
where $g_{\rho_c}$ is the extreme Gibbs decomposition of $\rho_c$.

Let $B_n = [-n,n]^3$. Since $\lambda_0$ is extreme Gibbs it is tail trivial, so
    $$\lim_{m\to \infty}\mu(\tau_{B_n\cap L_0}~|~\tau_{\Z^3\setminus (B_m\cap L_0)},\lambda_0)=\lambda_0(\tau_{B_n\cap L_0}).$$
    Therefore conditioned on $\lambda_0$, $\tau_0 = \tau_{L_0}$ is independent of $\tau_{(\m Z^3\setminus L_0)}$. In particular, conditioned on the sequence of measures $(\lambda_c)_{c\in \m Z}$ (equivalently, conditioned on choosing an extreme Gibbs component of $\mu$), the tilings on each slab $(\tau_c)_{c\in \m Z}$ are independent.

Now we relate the specific entropies. Recall from Section \ref{sec: entropy prelim} that for a region $\Delta\subset \m Z^3$ and an invariant measure $\mu$ on tilings of $\m Z^3$,
\begin{align*}
    H_{\Delta}(\mu) = -\sum_{\sigma\in \Omega(\Delta)} \mu(X(\sigma))\log \mu(X(\sigma)),
\end{align*}
where $\Omega(\Delta)$ is the free-boundary tilings of $\Delta$, and $X(\sigma)$ is the collection of tilings of $\m Z^3$ which extend $\sigma$.
Taking $A_n(0) = B_n\cap L_0$, let $A_n(c) = A_n(0) + (0,0,2c)$, and finally let $A_{n,m} = \cup_{c=-m}^m A_n(c)$.\symindex{Chapter 4!$A_n(c),A_{n,n}$}
 It is well known that the specific entropy can be computed as
\begin{align*}
    h(\mu)= \lim_{n\to \infty} |A_{n,n}|^{-1} H_{A_{n,n}}(\mu). 
\end{align*}
Instead of free-boundary tilings, we can choose $\tau\in \Omega$ and let $\Omega_\tau(\Delta)=\{\sigma \in \Omega(\Delta): \sigma\mid_{\partial \Delta} =\tau\}$ be the tilings of $\Delta$ with boundary condition agreeing with $\tau$. Then we define the entropy of $\mu$ given a fixed boundary condition $\tau$:
\begin{align*}
    H_{\Delta}(\mu|\tau) = -\sum_{\sigma \in \Omega_\tau(\Delta)} \mu(\sigma~|~\tau_{\m Z^3\setminus \Delta}) \log \mu(\sigma~|~\tau_{\m Z^3\setminus \Delta}). 
\end{align*}
Again for $A\subset \m Z^3$, $\tau_A$ means $\tau$ restricted to $A$. We remark that this is not the usual definition of conditional entropy where we condition on a random variable or a sigma algebra. Instead we are fixing the value of the random variable, namely, the boundary condition of the tiling in $\Delta$. Indeed, if $\tau$ is generic for an extreme Gibbs measure component $\nu$ of $\mu$ then we have that $H_{\Delta}(\mu|\tau)=H_{\Delta}(\nu|\tau)$. We will restrict this non-standard usage to this proof. It is standard that the specific entropy of $\mu$ can also be computed using this conditional definition as 
\begin{equation}\label{eq: ent boundary fixed}
    h(\mu) = \lim_{n\to \infty} |A_{n,n}|^{-1} (\int_{\Omega} H_{A_{n,n}}(\mu|\tau) \, \dd \mu(\tau) +H_{\partial A_{n,n}}(\mu))= \lim_{n\to \infty} |A_{n,n}|^{-1} \int_{\Omega} H_{A_{n,n}}(\mu|\tau) \, \dd \mu(\tau). 
\end{equation}
In the second equality, we use that the entropy term for $\partial A_{n,n}$ is of order $n^2$ so it does not contribute in the limit. Now we rewrite the argument of the limit using the extreme Gibbs decomposition. 
\begin{align*}
     \int_{\Omega} H_{A_{n,n}}(\mu|\tau) \, \dd \mu(\tau)  = \int \int_{\Omega} H_{A_{n,n}}(\nu|\tau)\, \dd \nu(\tau) \, \dd g_\mu(\nu). 
\end{align*}
Recall that $\tau_c = \tau_{L_c}$. Conditional on sampling the process $(\lambda_c)_{c\in \m Z}$ (equivalently, conditional on sampling $\nu$), the samples $(\tau_c)_{c\in \m Z}$ are independent. Thus 
\begin{align*}
    H_{A_{n,n}}(\nu|\tau) = \sum_{c=-n}^n H_{A_{n}(c)}(\lambda_c|\tau_c). 
\end{align*}
Recall that $\Omega_{\text{loz}}$ is the set of full-plane lozenge tilings. Thus
\begin{align}\label{eq: ent independence split}
     \int \int_{\Omega} H_{A_{n,n}}(\nu|\tau)\, \dd \nu(\tau) \, \dd g_\mu(\nu)  &= \sum_{c=-n}^n \int \int_{\Omega_{\text{loz}}} H_{A_{n}(c)}(\lambda_c|\tau_c) \, \dd \lambda_c(\tau_c)\,\dd g_{\rho_c}(\lambda_c) \\ &= \sum_{c=-n}^n \int_{\Omega_{\text{loz}}} H_{A_{n}(c)}(\rho_c|\tau_c) \, \dd {\rho_c}(\tau_c).
\end{align}
Since $\rho_c$ is equal in distribution to $\rho_0$, for all $c\in \m Z$, 
\begin{align*}
    |A_n(c)|^{-1}\int_{\Omega_{\text{loz}}} H_{A_{n}(c)}(\rho_c|\tau_c)\, \dd {\rho_c}(\tau_c) = |A_n(0)|^{-1} \int H_{A_{n}(0)}(\rho_0|\tau_0)\, \dd {\rho_0}(\tau_0).
\end{align*}
At the same time, 
\begin{equation}\label{eq: ent rho}
    \lim_{n\to \infty} |A_n(0)|^{-1} \int_{\Omega_{\text{loz}}} H_{A_{n}(0)}(\rho_0|\tau_0)\, \dd {\rho_0}(\tau_0) =  h(\rho_0). 
\end{equation}
Therefore 
\begin{align*}
    h(\mu) &= \lim_{n\to \infty} |A_{n,n}|^{-1} \int_{\Omega} H_{A_{n,n}}(\mu|\tau)\,\dd \mu(\tau) \\ &= \lim_{n\to \infty} \frac{1}{2n+1} (2n+1) |A_n(0)|^{-1} \int_{\Omega_{\text{loz}}} H_{A_{n}(0)}(\rho_0|\tau_0)\, \dd {\rho_0}(\tau_0) = h(\rho_0). 
\end{align*}
\end{proof}

Recall from Section \ref{sec: entropy prelim} that the \textit{mean-current entropy function} $\ent: \mc O\to [0,\infty)$ is defined by 
\begin{align*}
    \ent(s) = \max_{\mu\in \mc P^s} h(\mu). 
\end{align*}
This entropy function plays a central role in our work and will be studied extensively in Section  \ref{sec:entropy}. 

Let $\mc T_2 = \{ s_1,s_2,s_3\geq 0 : s_1 + s_2+s_3 = 1\}$ be the space of possible lozenge tiling slopes. The \textit{slope entropy function} $\ent_{\text{loz}}:\mc T_2\to [0,\infty)$ for lozenge tilings is defined by 
\begin{align*}
    \ent_{\text{loz}}(s) = \max_{\rho\in \mc P^s_{\text{loz}}} h(\rho). 
\end{align*}
It was shown in \cite[Theorem 9.2]{cohn2001variational} that $\ent_{\text{loz}}$ \symindex{Chapter 4!$ \ent_{\text{loz}}$ - the entropy function for ergodic Gibbs measures of a given slope on the space of lozenge tilings}
has the explicit form
\begin{align*}
    \ent_{\text{loz}}(s_1,s_2,s_3) = \frac{1}{\pi}(L(\pi s_1)+ L(\pi s_2)+ L(\pi s_3))
\end{align*}
where $L:[0,\pi]\to \m R$ is the Lobachevsky function given by 
\begin{align*}
    L(\theta)= -\int_0^\theta \ln (2\sin(x))dx.
\end{align*}
Using this two dimensional result, we can explicitly compute $\ent$ on $\partial \mc O$. Let $\mc E\subset \partial \mc O$ denote the edges of $\mc O$. 
\begin{thm}\label{thm: extremal_entropy}
    For $s=(s_1,s_2,s_3)\in \partial \mc O$, 
    \begin{align*}
        \ent(s) = \ent_{\text{loz}}(|s_1|,|s_2|,|s_3|) = \frac{1}{\pi}(L(\pi |s_1|)+ L(\pi |s_2|)+ L(\pi |s_3|)).
    \end{align*}
    Further, if $s\not\in \mc E$, then any measure $\mu$ realizing $h(\mu) = \ent(s)$ is an ergodic Gibbs measure on $\Omega$ with respect to the $\threeeven$ action. If $s\in \mc E$, then $\ent(s) = 0$.
\end{thm}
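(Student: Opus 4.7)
By the reflection symmetries of $\Z^3$ across coordinate planes, it suffices to treat $s=(s_1,s_2,s_3)$ with $s_1,s_2,s_3\geq 0$ and $s_1+s_2+s_3 = 1$, so that any $\mu\in \mc P^s$ almost surely samples tilings using only north, east, and up tiles and thus decomposes over slabs as in Proposition \ref{prop: Sc lattice description}. The edge case $s\in \mc E$ is immediate: two components vanish, $\mu$ is forced onto a single brickwork tiling, so $h(\mu)=0$, matching the formula since $L(0)=L(\pi)=0$.

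For the upper bound $\ent(s)\leq \ent_{\text{loz}}(s)$, take any $\mu\in\mc P^s$ and its ergodic decomposition $\mu = \int \nu\,dw_\mu(\nu)$. Each ergodic $\nu$ samples only north/east/up tilings, so no tile crosses between slabs; subadditivity of Shannon entropy across the slab partition of $\Lambda_n$, together with $\threeeven$-invariance, yields $h(\nu)\leq h(\rho_0(\nu))$, where $\rho_0(\nu)$ is the $\Z_{\text{loz}}$-invariant marginal on $L_0$ and has lozenge slope $s(\nu)$. The 2D variational principle of Cohn--Kenyon--Propp then gives $h(\rho_0(\nu))\leq \ent_{\text{loz}}(s(\nu))$. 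Combining with affineness of $h$ and of $s$, and with concavity of $\ent_{\text{loz}}$,
$$h(\mu)=\int h(\nu)\,dw_\mu(\nu) \leq \int \ent_{\text{loz}}(s(\nu))\,dw_\mu(\nu) \leq \ent_{\text{loz}}\!\left(\int s(\nu)\,dw_\mu(\nu)\right) = \ent_{\text{loz}}(s).$$

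For the lower bound $\ent(s)\geq \ent_{\text{loz}}(s)$, let $\rho^*$ be an ergodic $\Z_{\text{loz}}$-invariant Gibbs measure on lozenge tilings of slope $s$ with $h(\rho^*)=\ent_{\text{loz}}(s)$, which exists by the 2D theory \cite{cohn2001variational}. Define $\mu\in\mc P$ to sample the tiling on each slab $L_c$ independently from a copy of $\rho^*$, identified via a fixed $\threeeven$-translation from $L_0$ to $L_c$. Since $\Z_{\text{loz}}$ stabilizes each slab and $\rho^*$ is $\Z_{\text{loz}}$-invariant, while the quotient $\threeeven/\Z_{\text{loz}} \cong \Z$ permutes i.i.d.\ copies, $\mu$ is $\threeeven$-invariant; it is Gibbs because any boundary tiling decouples the slabs, on each of which the conditional distribution is uniform by the Gibbs property of $\rho^*$; and tile-type densities give $s(\mu)=s$. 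The proof of Proposition \ref{prop: extreme mc entropy}, applied with actual (not merely conditional) slab independence, yields $h(\mu)=h(\rho^*)=\ent_{\text{loz}}(s)$.

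For the ergodicity claim when $s\notin \mc E$, strict concavity of $\ent_{\text{loz}}$ at $s$ (immediate from the Lobachevsky formula, since $s_1,s_2,s_3>0$) forces equality throughout the upper-bound chain, giving $s(\nu)=s$ and $h(\rho_0(\nu))=\ent_{\text{loz}}(s)$ for $w_\mu$-a.e.\ $\nu$. Each $\rho_0(\nu)$ is therefore an ergodic Gibbs measure on lozenge tilings of slope $s$, and by the 2D uniqueness of such a measure (Kenyon--Okounkov--Sheffield), $\rho_0(\nu)=\rho^*$ almost surely. The remaining saturation $h(\nu)=h(\rho_0(\nu))$ combined with slab marginals all equal to $\rho^*$ forces the slabs under $\nu$ to be independent (equality in subadditivity of Shannon entropy per unit volume), so $\nu$ coincides with the product measure $\mu^*$ of the lower bound; hence $\mu=\mu^*$ is the unique EGM of mean current $s$. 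The main obstacle is this last step: invoking the 2D uniqueness of the lozenge EGM and deducing full slab independence in 3D from the limiting entropy saturation, the latter requiring care because we do not know \emph{a priori} that $\mu$ is Gibbs (so Proposition \ref{prop: extreme mc entropy}(1) cannot be applied directly to $\mu$).
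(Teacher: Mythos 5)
Your overall strategy matches the paper's (reduction to the non-negative face, slab decomposition onto lozenge tilings, 2D results for the entropy formula, strict concavity of $\ent_{\text{loz}}$ to force the ergodicity conclusion), but with a few deviations worth flagging.

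Your upper bound via subadditivity of Shannon entropy across slabs, $h(\nu)\leq h(\rho_0(\nu))$, is a legitimate alternative to the paper's route. The paper instead invokes Theorem~\ref{theorem: entropy maximizer gibbs} at the very first line to conclude that any entropy maximizer $\mu\in\mc P^s$ is a Gibbs measure; this, via Proposition~\ref{prop: ergodic deocomposition gibbs}, makes each ergodic component $\nu$ a Gibbs measure, and then Propositions~\ref{prop: extreme mc gibbs slope} and~\ref{prop: extreme mc entropy} apply directly and give the \emph{equality} $h(\nu)=h(\rho_0(\nu))$ together with the conditional slab-independence structure. Your subadditivity inequality is cheaper for the upper bound, but it is exactly this shortcut that creates the gap you correctly flag at the end: from asymptotic saturation of entropy rates you want to deduce slab independence, and for infinite-alphabet specific entropies this cannot be read off as ``equality in subadditivity'' at any finite scale. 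The paper avoids the issue entirely by establishing Gibbsness of the maximizer up front, so that the independence structure is a consequence of the extreme-Gibbs decomposition (Proposition~\ref{prop: extreme mc entropy}(1)) rather than of an entropy-saturation argument. Your proof needs to import that first step; without it, the ergodicity claim is incomplete.

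Two smaller points. First, the edge case is misstated: for $s\in\mc E$ exactly one coordinate of $s$ vanishes (two vanish only at the vertices $\mc V$), and $\mu$ is not forced onto a single brickwork tiling --- there are still many tilings, it is just that the slope-$(s_1,s_2,0)$ lozenge tiling entropy is zero, so $h(\rho_0)=0$ and hence $h(\mu)=0$; the conclusion is right but the reasoning is not. Second, your conclusion ``$\mu=\mu^\ast$ is the unique EGM of mean current $s$'' overclaims: Proposition~\ref{prop: same current different entropy} shows there are many EGMs of the same boundary mean current with strictly smaller specific entropy. What is unique is the \emph{entropy-maximizing} measure in $\mc P^s$, which is what the theorem asserts and what Corollary~\ref{Corollary: entropy maximiser} records.
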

It is well-known that $\ent_{\text{loz}}$ is strictly concave as a function of slope on the interior of allowed slopes \cite[Theorem 10.1]{cohn2001variational}. Thus as an immediate corollary, we get that 
\begin{cor}\label{cor: ent boundary}
	Let $\mc F$ be any face of $\partial \mc O$. The entropy function $\ent(\cdot)$ is strictly concave on the interior of $\mc F$.
\end{cor}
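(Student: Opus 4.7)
The plan is to derive the corollary directly from Theorem~\ref{thm: extremal_entropy} together with the known strict concavity of $\ent_{\text{loz}}$ on the interior of the lozenge slope simplex \cite[Theorem 10.1]{cohn2001variational}. The essential observation is that on a single face of $\partial\mc O$ the map $s\mapsto(|s_1|,|s_2|,|s_3|)$ is affine, so strict concavity transfers cleanly.

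First I would describe the faces explicitly. The octahedron $\mc O$ has eight triangular faces, one for each sign pattern $\epsilon=(\epsilon_1,\epsilon_2,\epsilon_3)\in\{\pm1\}^3$, given by
\[
\mc F_\epsilon=\{s\in\m R^3:\epsilon_i s_i\geq 0\text{ for all }i,\ \epsilon_1 s_1+\epsilon_2 s_2+\epsilon_3 s_3=1\}.
\]
The interior of $\mc F_\epsilon$ consists of those $s\in\mc F_\epsilon$ with $\epsilon_i s_i>0$ for each $i$; in particular each coordinate of $s$ has a definite, nonzero sign. Note that the edges of $\mc O$ lie exactly on the relative boundary of each face, so $\mc E\cap\text{Int}(\mc F_\epsilon)=\emptyset$.

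Next I would transfer the problem to the lozenge setting. Define $\Phi:\mc F_\epsilon\to\mc T_2$ by $\Phi(s)=(\epsilon_1 s_1,\epsilon_2 s_2,\epsilon_3 s_3)=(|s_1|,|s_2|,|s_3|)$. Since the signs are fixed on $\mc F_\epsilon$, the map $\Phi$ is the restriction of a linear isomorphism, hence affine, and it sends $\text{Int}(\mc F_\epsilon)$ bijectively onto $\text{Int}(\mc T_2)$. By Theorem~\ref{thm: extremal_entropy},
\[
\ent(s)=\ent_{\text{loz}}(\Phi(s))\quad\text{for all }s\in\mc F_\epsilon.
\]

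Finally, I would invoke strict concavity of $\ent_{\text{loz}}$ on $\text{Int}(\mc T_2)$. For distinct $s,s'\in\text{Int}(\mc F_\epsilon)$ and $\alpha\in(0,1)$, affinity of $\Phi$ gives $\Phi(\alpha s+(1-\alpha)s')=\alpha\Phi(s)+(1-\alpha)\Phi(s')$, and since $\Phi(s)\neq\Phi(s')$ both lie in $\text{Int}(\mc T_2)$, strict concavity of $\ent_{\text{loz}}$ yields
\[
\ent(\alpha s+(1-\alpha)s')=\ent_{\text{loz}}\bigl(\alpha\Phi(s)+(1-\alpha)\Phi(s')\bigr)>\alpha\ent_{\text{loz}}(\Phi(s))+(1-\alpha)\ent_{\text{loz}}(\Phi(s'))=\alpha\ent(s)+(1-\alpha)\ent(s'),
\]
which is the desired strict concavity on $\text{Int}(\mc F)$. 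The only subtlety here is confirming that convex combinations stay inside the same face (so that $\Phi$ remains affine on the combination) — but this is immediate because $\text{Int}(\mc F_\epsilon)$ is itself convex. There is no serious obstacle; once Theorem~\ref{thm: extremal_entropy} reduces the problem to the 2D lozenge entropy, the corollary follows from the affine structure of the face and the cited 2D result.
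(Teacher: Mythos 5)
Your proof is correct and follows essentially the same route as the paper: Theorem~\ref{thm: extremal_entropy} identifies $\ent$ on $\partial\mc O$ with $\ent_{\text{loz}}\circ\Phi$ for the coordinatewise absolute value map $\Phi$, which is affine on each face, so strict concavity is inherited from the 2D lozenge result. The paper states this as an immediate consequence; your write-up simply makes the affine-transfer step explicit.
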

\begin{proof}[Proof of Theorem \ref{thm: extremal_entropy}]

By Theorem \ref{theorem: entropy maximizer gibbs}, if $\mu\in \mc P^s$ satisfies $h(\mu) = \ent(s)$, then $\mu$ is a Gibbs measure. While we include this result later in the paper for organizational reasons, it follows easily from the classical variational principle for Gibbs measures \cite{LanfordRuelle} (the only adaptation is that we are looking at the maximizer with a fixed mean current).

Without loss of generality assume that $s_1,s_2,s_3\geq 0$. First suppose that $\mu\in \mc P^s$ is an EGM, and as usual let $\rho_c$ denote its marginal on $L_c$. By Proposition \ref{prop: extreme mc gibbs slope} and Proposition \ref{prop: extreme mc entropy}, 
\begin{align*}
    s = s(\mu) = s(\rho_0) \qquad \text{ and }\qquad
    h(\mu) = h(\rho_0).
\end{align*}
Combining the results of \cite{cohn2001variational} and \cite{AST_2005__304__R1_0},
\begin{itemize}
    \item If $s$ has $s_1,s_2,s_3>0$, then $\rho_0 \in \mc P_{\text{loz}}^s$ satisfies $h(\rho_0) = \ent_{\text{loz}}(s)$ if and only if $\rho_0$ is the unique ergodic Gibbs measure of slope $s$, which we denote by $\lambda_s$. 
    \item If $s$ has $s_i=0$ for some $i=1,2,3$, then $h(\rho_0) = \ent_{\text{loz}}(s) = 0$ for all $\rho_0\in \mc P_{\text{loz}}^s$. 
\end{itemize}

Therefore if $s_1,s_2,s_3>0$ and $h(\rho_0) = \ent_{\text{loz}}(s)$, then by strict concavity of $\ent_{\text{loz}}$ \cite[Theorem 10.1]{cohn2001variational}, $\rho_c$ are  identically distributed and equal to $\lambda_s$ a.s. By \cite[Theorem 9.1.1]{AST_2005__304__R1_0}, when $s_1,s_2,s_3>0$, the unique ergodic Gibbs measure $\lambda_s$ is an extreme Gibbs measure, and thus $\{\rho_c\}_{c\in \m Z}$ is i.i.d.\ by Proposition \ref{prop: extreme mc entropy}. Alternatively if $s_i=0$ for some $i$, then $h(\rho_0) = 0$, and hence $h(\mu) = 0$. 

If $\mu$ is not ergodic with respect to the $\threeeven$ action, then it can be decomposed 
\begin{align*}
    \mu = \int_{\mc P_e} \nu \, \dd w_\mu(\nu),
\end{align*}
where 
\begin{align*}
    s(\mu) = \int_{\mc P_e} s(\nu) \, \dd w_\mu(\nu).
\end{align*}
Note that $w_\mu$ almost surely, $s(\nu)$ is contained in the same face of $\partial \mc O$ as $s=s(\mu)$. By the analysis above for an ergodic measure, if $s(\nu)\not\in \mc E$ then $h(\nu) = \ent(s(\nu)) = \ent_{\text{loz}}(s(\nu))$ if and only if $\nu$ is an EGM of mean current $s(\nu)$ with marginals on each slab i.i.d.\ and equal to the unique lozenge tiling EGM $\lambda_{s(\nu)}$ of slope $s(\nu)$. If $s(\nu)\in \mc E$, then $h(\nu) =0$. Since $\ent_{\text{loz}}$ is strictly concave on the interior of allowed slopes, if $s$ is contained in the interior of a face of $\partial \mc O$, then $h(\mu) = \ent(s)$ if and only if $\mu$ is an ergodic Gibbs measure of mean current $s$. 
\end{proof}

As seen in the proof of Theorem \ref{thm: extremal_entropy}, we get an explicit description of the entropy maximizers for $s\in \partial \mc O$. In contrast to two dimensions, the maximum entropy is positive for mean currents in the interior of the faces of $\partial \mc O$.
\begin{cor} \label{Corollary: entropy maximiser}
    Suppose $s=(s_1,s_2,s_3)\in \partial \mc O$. 
    \begin{itemize}
        \item If $s\in \mc E$ (i.e.\ $s_i=0$ for some $i=1,2,3$), then $h(\mu) = 0$ for any $\mu \in \mc P^s$.
        \item If $s_1s_2s_3\neq 0$, then the entropy maximizer in $\mc P^s$ is an ergodic Gibbs measure such that for all $c\in \m Z$, $\rho_c=\lambda_s$ a.s., where $\lambda_s$ is the unique ergodic Gibbs measure on lozenge tilings with slope $(|s_1|,|s_2|,|s_3|)$. Here $\rho_c$ is the marginal on the slab $\{(x_1,x_2,x_3 : \epsilon_1 x_1 + \epsilon_2 x_2 + \epsilon_3 x_3 = 2c \text{ or }2c+1\}$, where $\epsilon_i$ is the sign of $s_i$.
    \end{itemize}
\end{cor}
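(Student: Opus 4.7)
The plan is to deduce both statements from Theorem \ref{thm: extremal_entropy}, its proof, and the slab-decomposition machinery of Propositions \ref{prop: extreme mc gibbs slope} and \ref{prop: extreme mc entropy}. Assume without loss of generality that $s_1, s_2, s_3 \geq 0$; the other sign patterns follow by replacing the slabs $L_c$ with $\{\epsilon_1 x_1 + \epsilon_2 x_2 + \epsilon_3 x_3 \in \{2c, 2c+1\}\}$ where $\epsilon_i$ is the sign of $s_i$. The first bullet is then immediate: Theorem \ref{thm: extremal_entropy} gives $\ent(s) = 0$ on $\mc E$, and combining $h(\mu) \leq \ent(s(\mu))$ with $h(\mu)\geq 0$ forces $h(\mu) = 0$ for every $\mu\in\mc P^s$.

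For the second bullet I would first treat an ergodic maximizer $\mu \in \mc P^s$. By Proposition \ref{prop: extreme mc gibbs slope}, the slab marginal $\rho_0$ is a $\Z_{\text{loz}}$-invariant Gibbs measure of slope $s$, and Proposition \ref{prop: extreme mc entropy} gives $h(\rho_0) = h(\mu) = \ent(s) = \ent_{\text{loz}}(s)$. Under the hypothesis $s_1 s_2 s_3 \neq 0$, strict concavity of $\ent_{\text{loz}}$ on the interior of the slope triangle \cite[Theorem 10.1]{cohn2001variational}, combined with \cite[Theorem 9.1.1]{AST_2005__304__R1_0}, identifies the unique such maximizer as the extreme ergodic Gibbs measure $\lambda_s$. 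Translation by $(0,0,2c)$ sends $L_0$ to $L_c$, so $\threeeven$-invariance of $\mu$ forces $\rho_c = \lambda_s$ a.s.\ for every $c\in\Z$; since $\lambda_s$ is extreme Gibbs, the conditional-independence part of Proposition \ref{prop: extreme mc entropy} then determines $\mu$ as i.i.d.\ copies of $\lambda_s$ across the slabs, giving the claimed description.

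For a general (possibly non-ergodic) maximizer $\mu \in \mc P^s$, I would apply the ergodic decomposition $\mu = \int_{\mc P_e} \nu \, dw_\mu(\nu)$. Since $h$ and $s(\cdot)$ are affine, the bound $h(\nu) \leq \ent(s(\nu))$ combined with concavity of $\ent$ yields the chain $\ent(s) = h(\mu) \leq \int \ent(s(\nu))\, dw_\mu \leq \ent(s)$, so both inequalities are equalities. The critical step is to promote this to $s(\nu) = s$ for $w_\mu$-a.e.\ $\nu$, which requires strict concavity of $\ent$ at $s$. This is supplied by Corollary \ref{cor: ent boundary}, precisely because $s_1 s_2 s_3 \neq 0$ places $s$ in the interior of a triangular face of $\partial\mc O$. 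Then $h(\nu) = \ent(s)$ a.s., reducing to the ergodic case, and $w_\mu$ concentrates at the unique EGM found there, so $\mu$ coincides with it. The main obstacle I anticipate is exactly this strict-concavity step: one must carefully exploit the location of $s$ in the interior of its face via Corollary \ref{cor: ent boundary}, after which the slab-decomposition machinery finishes the argument routinely.
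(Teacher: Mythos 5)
Your proposal is correct and follows the paper's own approach, which simply refers the reader to the argument carried out within the proof of Theorem~\ref{thm: extremal_entropy} (via Propositions~\ref{prop: extreme mc gibbs slope} and~\ref{prop: extreme mc entropy}, strict concavity of $\ent_{\text{loz}}$, and uniqueness/extremality of the lozenge EGM from \cite{AST_2005__304__R1_0}). One step worth making explicit in the non-ergodic case: before invoking Corollary~\ref{cor: ent boundary} you should note that the ergodic-component slopes $s(\nu)$ lie $w_\mu$-a.s.\ in the same closed face $\overline{\mc F}$ containing $s$ (automatic since $\mc F$ is a face of the convex body $\mc O$ and $\int s(\nu)\,\dd w_\mu(\nu) = s \in \mc F$), because Corollary~\ref{cor: ent boundary} gives strict concavity only of $\ent$ restricted to $\mc F$, not of $\ent$ as a function on all of $\mc O$ near the boundary point $s$.
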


It is also straightforward to show that there exist EGMs of a fixed boundary mean current with a range of different entropies. 
\begin{prop}\label{prop: same current different entropy}
	Suppose $s=(s_1,s_2,s_3)\in \partial \mc O$, $s_1,s_2,s_3>0$. Then for all $0\leq \theta\leq 1$ there is an ergodic Gibbs measure $\mu$ such that $h(\mu)=\theta\;\ent(s)$.
\end{prop}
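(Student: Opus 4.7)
The plan is to realize $\mu$ as an i.i.d.\ product across slabs of a carefully chosen (non-ergodic) 2D Gibbs lozenge measure $\rho$ of slope $s$ with $h(\rho)=\theta\,\ent(s)$, using the slab decomposition from Proposition~\ref{prop: Sc lattice description}. Since the statement concerns $s_1,s_2,s_3>0$ with $s_1+s_2+s_3=1$, the measures we build are supported on tilings with only north, east, and up tiles.

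\emph{Step 1 (construct the slab measure $\rho$).} Let $\lambda_s$ denote the unique 2D lozenge EGM of slope $s$ from \cite{AST_2005__304__R1_0}; it is Gibbs with $h(\lambda_s)=\ent_{\text{loz}}(s)=\ent(s)$ by Theorem~\ref{thm: extremal_entropy}. For each $i\in\{1,2,3\}$ let $\delta_i$ be the point mass on the frozen lozenge tiling whose lozenges are all of type $i$; this tiling is fixed by every element of $\m Z_{\text{loz}}$, so $\delta_i$ is an ergodic $\m Z_{\text{loz}}$-invariant Gibbs measure (any finite window has a unique valid completion) with $h(\delta_i)=0$. Because $s=s_1\eta_1+s_2\eta_2+s_3\eta_3$, the measure
$$\rho := \theta\,\lambda_s + (1-\theta)\,\big(s_1\delta_1+s_2\delta_2+s_3\delta_3\big)$$
has slope $\theta s+(1-\theta)\sum_i s_i\eta_i=s$ and, by affinity of specific entropy, $h(\rho)=\theta\,\ent(s)$. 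Finite mixtures of Gibbs measures are Gibbs (the uniform-completion property is linear in the measure), so $\rho$ is a 2D lozenge Gibbs measure of slope $s$.

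\emph{Step 2 (lift $\rho$ to a 3D measure).} Via Proposition~\ref{prop: Sc lattice description}, identify tilings with only north/east/up tiles with sequences $(\sigma_c)_{c\in\m Z}$ of lozenge tilings on the slabs $L_c$, and set $\mu := \rho^{\otimes\m Z}$.

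\emph{Step 3 (verify the required properties).}
\begin{itemize}
\item \emph{Invariance}: each component of $\rho$ is $\m Z_{\text{loz}}$-invariant, so $\mu$ is invariant under slab-preserving shifts; the i.i.d.\ product structure gives invariance under slab shifts, and these two subgroups together generate $\threeeven$.
\item \emph{Ergodicity}: any $\threeeven$-invariant event is in particular invariant under the $\m Z$-action of slab shifts, and under this action $\mu$ is Bernoulli, so every such event has $\mu$-measure $0$ or $1$.
\item \emph{Gibbs property}: for a finite $R\subset \m Z^3$, the independence of slabs factors $\mu(\cdot\mid\tau_{R^c})$ as a product over slabs of the 2D conditionals $\rho(\cdot\mid\tau_{L_c\setminus R})$, each uniform over valid 2D completions since $\rho$ is 2D Gibbs; hence the 3D conditional is uniform over valid 3D completions.
\item \emph{Mean current and entropy}: the mean current of $\mu$ equals the slope of its slab marginal $\rho$, which is $s$; and since $\mu$ is an EGM of boundary mean current with all $s_i>0$, Proposition~\ref{prop: extreme mc entropy}(2) gives $h(\mu)=h(\rho)=\theta\,\ent(s)$.
\end{itemize}

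\emph{Main subtlety.} The slab measure $\rho$ is deliberately chosen to be \emph{non-ergodic} under $\m Z_{\text{loz}}$. Indeed, the only $\m Z_{\text{loz}}$-EGM of slope $s$ is $\lambda_s$, whose lift would only realize $h(\mu)=\ent(s)$; to interpolate down to $0$ we must combine $\lambda_s$ with zero-entropy ergodic pieces whose slopes still convex-combine to $s$, and the identity $s=\sum s_i\eta_i$ is precisely what lets us do so with extremal (hence Gibbs and zero-entropy) building blocks $\delta_i$. The resulting 3D measure is nevertheless ergodic because $\threeeven$-ergodicity only requires trivializing slab-shift-invariant events, where the i.i.d.\ Bernoulli structure of $\rho^{\otimes\m Z}$ does the work — ergodicity of $\rho$ under $\m Z_{\text{loz}}$ is not needed.
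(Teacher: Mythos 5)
Your construction is the same as the paper's: the paper's two-stage description (independently pick one of $\{\lambda_s,\delta_1,\delta_2,\delta_3\}$ with probabilities $\theta,(1-\theta)s_i$ for each slab, then sample from it) has marginal $\rho=\theta\lambda_s+(1-\theta)\sum_i s_i\delta_i$ on each slab with independence across slabs, so it is exactly $\rho^{\otimes\m Z}$. The paper invokes Propositions~\ref{prop: extreme mc gibbs slope} and~\ref{prop: extreme mc entropy} and asserts the Gibbs and ergodicity claims without elaboration; you spell them out a bit more (convexity of Gibbs measures, Bernoulli shift under $\threeeven/\m Z_{\text{loz}}$), but the proof is essentially identical.
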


\begin{proof}
Let $\rho_{\max}, \rho_1, \rho_2, \rho_3\in \Prob_{\text{loz}}$ be EGMs on lozenge tilings of slopes $(s_1,s_2,s_3)$, $(1,0,0)$, $(0,1,0)$, $(0,0,1)$ respectively. Now consider an i.i.d.\ process $(\eta_c)_{c\in \Z}$ with state space 
$$\{\rho_{\max}, \rho_1, \rho_2, \rho_3\}$$
such that the probability of $\rho_{\max}$ is $\theta$, and the probability of $\rho_i$ is $(1-\theta) s_i$ for $i=1,2,3$. 

Let $\mu$ be a measure on $\Omega$ given by taking a sample from $(\eta_c)_{c\in \m Z}$, this gives a tiling of $\m Z^3$ such that the restriction to each slab $L_c$ is an independent sample from $\eta_c$. Clearly $\mu$ is a Gibbs measure on $\Omega$. Since $(\eta_c)_{c\in \m Z}$ is an i.i.d. process it is ergodic so $\mu$ is ergodic with respect to $\threeeven$. By Proposition \ref{prop: extreme mc gibbs slope} $s(\mu) = s$ and by Proposition \ref{prop: extreme mc entropy} $h(\mu) = \theta \, \ent(s)$. 
\end{proof}

We now summarize the results from this section to illustrate the similarities and differences with the two dimensional case.

\begin{itemize}
    \item In three dimensions, EGMs of the same boundary mean current $s$ can have different specific entropy values (Proposition \ref{prop: same current different entropy}).
 	\item Every EGM $\mu$ on dimer tilings that contains only east, north, and up  tiles gives rise to a Gibbs measure on integer-indexed stationary sequences of extreme Gibbs measures $(\lambda_c)_{c\in \Z}$ on lozenge tilings (Proposition \ref{prop: extreme mc entropy}). 
 	\item If $s=(s_1, s_2, s_3)\in \partial \mc O$ is such that $s_1, s_2, s_3\neq 0$ then the entropy-maximizing measure with mean current $s$ is an EGM such that $(\lambda_c)_{c\in \m Z}$ is an i.i.d. sequence of copies of the unique EGM on lozenge tilings with slope $(|s_1|,|s_2|,|s_3|)$ (Corollary \ref{Corollary: entropy maximiser}).
\end{itemize}

\section{Patching}\label{sec:patching plus hall}

The main goal of this section is to prove a \textit{patching theorem} (Theorem \ref{patching}) which will be an essential tool throughout this paper. We show that if the flows associated to tilings $\tau_1,\tau_2$ of $\m Z^3$ are \textit{nearly-constant} (Definition \ref{def:nearly constant} below) with value $s\in \text{Int}(\mc O)$ (which loosely speaking means the flows associated with $\tau_1,\tau_2$ both approximate the constant flow equal to $s$), then we can remove a bounded piece from $\tau_1$, and patch it to $\tau_2$ by tiling a thin (cubic) annulus. 
\begin{figure}[H]
    \centering
   \includegraphics[scale=0.6]{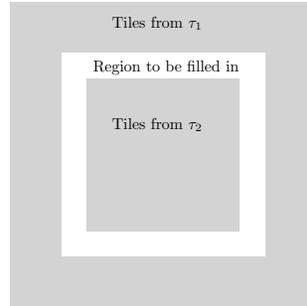}
    \caption{Two dimensional schematic for patching.}
    \label{fig:scheme}
\end{figure}
Equivalently, we want to show that this annular region can be tiled by dimers exactly so that it agrees with $\tau_1$ on one boundary and with $\tau_2$ on the other boundary. To do this, we need a condition to show that a region is tileable. 

A general condition for tileability, which works in any dimension, is given by the classical \textit{Hall's matching theorem} (\cite{hallmarriage}, stated here as Theorem \ref{thm:halls matching}), which says a region $R$ is tileable if and only if every $U \subset R$ of a particular form has certain properties; a $U$ that does not have such properties is a ``counterexample'' to the condition that implies tileability, as we explain in Section \ref{sec: hall}. In two dimensions, Hall's matching theorem implies that a simple condition on height function differences along the boundary of the region is equivalent to tileability \cite{Fournier}. 
In three dimensions, we show in Section \ref{sec: minimal} that it is sufficient to show that the region $R$ has no counterexample set $U$ whose boundary is a certain type of \textit{minimal surface}, built out of squares from the $\m Z^3$ lattice (Corollary \ref{cor:minimal_tileable}). We call surfaces built out of lattice squares \textit{discrete surfaces}. 

In Section \ref{sec: ideas for patching} we give the statement of the patching theorem, and explain the main ideas of the proof accompanied by a series of two dimensional figures. 

The main new difficulty in higher dimensions is that the counterexample sets $U$ can have more complicated geometry. In two dimensions, the boundary of the counterexample region is a union of curves. In three dimensions it is a union of surfaces. However the fact that these surfaces can be assumed to be in some sense \textit{minimal} gives us some control on their geometry. In Section \ref{sec: isoperimetric}, we prove some straightforward adaptations of the isoperimetric inequality for discrete surfaces. In Section \ref{sec: area growth}, we apply these to get useful bounds on the area growth for minimal discrete surface (Proposition \ref{prop:quadratic growth}), and show that they ``spread out" (Lemma \ref{lem: indenting}).

In Section \ref{sec: ergodic}, we prove an ergodic theorem for the flow of a tiling through a coordinate plane (Theorem \ref{thm: subaction ergodic}), and note that tilings sampled from ergodic measures satisfy the conditions of the patching theorem with probability going to $1$ as $n\to \infty$ (Corollary \ref{cor: ergodic implies nearly constant}). We show that ergodic measures of any mean current $s\in \mc O$ exist (Lemma \ref{lem: ergodic measures exist}), and prove some bounds for their expected flow through discrete surfaces (Lemma \ref{lem: flow bound}). One of the ideas in the proof of the patching theorem is to use a tiling sampled from an ergodic measure as a ``measuring stick" that we compare with the tilings we want to patch. 

Equipped with the lemmas from the previous sections, in Section \ref{sec: patching proof} we give the proof of the patching theorem (Theorem \ref{patching}). Finally in Section \ref{sec: patching ergodic} we give some immediate corollaries of patching for ergodic Gibbs measures (EGMs). 

We use the same tools and ideas developed in this section again in Section \ref{sec:generalized_patching} to prove a \textit{generalized patching theorem} (Theorem \ref{thm:generalized_patching}) where the flow the tilings approximate is not required to be constant, and where the annular region is allowed to have a more general shape. The main results of this paper are two versions of a large deviation principle: one with \textit{soft} boundary conditions (Theorem \ref{thm:sb-ldp}) and one with \textit{hard} boundary conditions (Theorem \ref{thm:hb-ldp}). The regular patching theorem proved here (Theorem \ref{patching}) is sufficient to prove the LDP with soft boundary conditions, but the generalized version (Theorem \ref{thm:generalized_patching}) is needed in the final steps to prove the version with hard boundary conditions. 

\subsection{Hall's matching theorem and non-tileability}\label{sec: hall}

When can a finite region $R\subset \mathbb{Z}^3$ be exactly tiled by dimers, i.e.\ without any tiles crossing the boundary, and with all cubes covered? This is equivalent to asking: when does a finite subgraph $G\subset \mathbb{Z}^3$ have a perfect matching? A straightforward observation is that for any bipartite graph $G$ with bipartition $(\mc A,\mc B)$, a necessary condition for $G$ to have a perfect matching is that it is \textit{balanced}\termindex{Chapter 6!balanced}, i.e.\ that $|\mc A| = |\mc B|$. The balanced condition is not sufficient though, see Figure \ref{fig:region_must_be_balanced}. 
\begin{figure}
    \centering
    \includegraphics[scale=0.4]{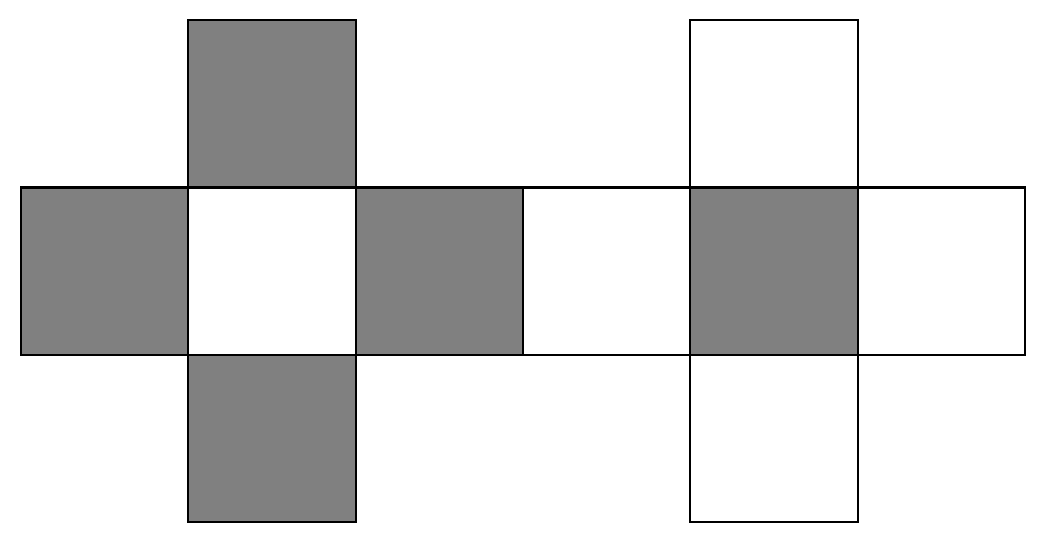}
    \caption{A region that is balanced (i.e.\ the number of black squares is equal to the number of white squares) but has no dimer tilings.} 
    \label{fig:region_must_be_balanced}
\end{figure}
Nonetheless, it turns out there is a very general necessary and sufficient condition which characterizes whether or not a finite bipartite graph $G$ has a perfect matching. In this section we explain these results from the graph point of view where dimers are edges and dimer tilings are perfect matchings. There are two perspectives, both of which can be useful: 
\begin{itemize}
    \item the min-cut, max-flow principle 
    \item Hall's matching theorem
\end{itemize}
We first describe the classical min-cut, max-flow principle. Let $G = (\mc A,\mc B)$ be the bipartition of the graph ($\mc A$ is ``even" and $\mc B$ is ``odd"). If $G$ has a perfect matching $\tau$, then there is a flow $v_\tau$ (the ``pretiling flow") which sends a unit of current from each even vertex $a\in \mc A$ to the odd vertex $b\in \mc B$ it is paired to. Note that $v_\tau$ has a source of $+1$ at each $a \in \mc A$ and a sink of $+1$ (or source of $-1$) at each $b \in \mc B$.  The existence of a perfect matching is equivalent to the existence of a flow $v_\tau$ with the desired source/sink values and a flow of $0$ or $1$ on each even-to-odd edge.

\termindex{Chapter 6!a cut}A \textit{cut} is a collection of edges in $G$ which, if deleted, separates $G$ into two pieces $G_1$ and $G_2$. Let $F_1$ be the net total flow sourced in $G_1$ (i.e.\ the number of even vertices in $G_1$ minus the number of odd vertices), let $F_2$ be the net total flow sourced in $G_2$, and let $c$ be the number of cut edges. If $G$ has a perfect matching, we must have $F_1 + F_2 = 0$. 

The value $F_1$ measures the amount of flow that would have to travel across the cut if $G$ has a perfect matching, so we must have $F_1 \leq c$. In other words, if $G$ has a perfect matching, then it must be the case that for \textit{every} cut, the excess flow on either side must be less than the size of the cut. It turns out that this is a sufficient condition too, so if $G$ does \textit{not} have a perfect matching then there is a cut of $c$ edges partitioning $G$ into $G_1$ and $G_2$ such that the excess flow $F_1$ that needs to cross the cut is more than $c$. In summary:

\begin{thm}[Min-cut, max-flow principle \cite{ford_fulkerson_1956}]\label{mincut}
A finite bipartite graph $G$ has a perfect matching (a.k.a.\ dimer tiling) if and only if there is no cut consisting of $c$ edges partitioning $G$ into two sets $G_1$ and $G_2$ such that $F_1 > c$. 
\end{thm}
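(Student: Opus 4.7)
The statement is Ford--Fulkerson's max-flow min-cut theorem specialized to bipartite perfect matchings. The quantity $c$ is best read as the number of \emph{directed} cut edges from $G_1\cap A$ to $G_2\cap B$ under the even-to-odd orientation --- this is the maximum possible outflow from $G_1$ in the pretiling flow $v_\tau\in\{0,1\}$, and it matches the paper's informal description of $F_1$ as ``the amount of flow that would have to travel across the cut''. Since perfect matchings force $|A|=|B|$, I restrict to this balanced case.

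For necessity, let $\tau$ be a perfect matching. Since $\operatorname{div} v_\tau=+1$ on $A$ and $-1$ on $B$, summing over $v\in G_1$ and rearranging by edges gives
\[
F_1 \;=\; \sum_{v\in G_1}\operatorname{div} v_\tau(v) \;=\; F_{\mathrm{out}} - F_{\mathrm{in}},
\]
where $F_{\mathrm{out}}$ is the sum of $v_\tau$ over the $c$ directed cut edges from $A\cap G_1$ to $B\cap G_2$, and $F_{\mathrm{in}}$ is the analogous sum over edges from $A\cap G_2$ to $B\cap G_1$. Each $v_\tau\in\{0,1\}$ and $F_{\mathrm{in}}\ge 0$, so $F_1\le F_{\mathrm{out}}\le c$.

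For sufficiency, assume $G$ has no perfect matching and $|A|=|B|$. By Hall's marriage theorem, Hall's condition fails on at least one of the two bipartition sides. If it fails on the $A$-side, pick $S\subseteq A$ with $|N_G(S)|<|S|$ and put $G_1:=S\cup N_G(S)$; if it fails on the $B$-side, pick $T\subseteq B$ with $|N_G(T)|<|T|$ and put $G_1:=(A\setminus N_G(T))\cup(B\setminus T)$. In both constructions one verifies directly that every edge incident to $A\cap G_1$ lands inside $G_1$, so no directed cut edge runs from $A\cap G_1$ to $B\cap G_2$; hence $c=0$, while $F_1>0$ by the Hall deficiency, yielding the required bad cut.

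The only real subtlety is the interpretation of $c$; once fixed, both directions reduce to short classical arguments --- a one-line divergence calculation for necessity and a direct application of Hall's theorem for sufficiency --- and I anticipate no substantive obstacle.
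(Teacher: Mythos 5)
The paper states this result with a citation to Ford--Fulkerson and does not prove it, so there is no paper proof to compare against line by line; what the paper does supply is the informal necessity argument in the preceding paragraph, which your proof formalizes with the divergence computation.

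Your handling of the interpretation of $c$ is the most important part of the proposal, and you are right that it is \emph{forced}, not optional. With the paper's literal reading ($c$ is the total number of crossing edges, regardless of direction) the theorem is false: take $A=\{a_1,a_2,a_3\}$, $B=\{b_1,b_2,b_3\}$, with edges $a_1b_1,\,a_2b_1,\,a_3b_1,\,a_3b_2,\,a_3b_3$. This balanced graph has no perfect matching, but a short check over all proper partitions shows $F_1\le c$ for each (e.g.\ $G_1=\{a_1,a_2,b_1\}$ has $F_1=1$ and one crossing edge $b_1a_3$, so $c=1$). With $c$ read as the number of edges from $A\cap G_1$ to $B\cap G_2$ the same $G_1$ gives $c=0<1=F_1$. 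So the directed reading is what makes both directions true, and it is the one implicit in the paper's sentence ``$F_1$ measures the amount of flow that would have to travel across the cut.'' Good catch.

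Your route through Hall's theorem is sound but worth flagging as expositionally inverted relative to the paper: the paper presents the min-cut/max-flow statement \emph{first} and then introduces Hall's theorem (Theorems \ref{thm:halls matching} and \ref{thm:halls matching 2}) as the reformulation it will actually use, so within the paper's logical scaffolding one would prove this statement directly (e.g.\ via augmenting paths, or by specializing the general Ford--Fulkerson min-cut theorem on the standard augmented network) and then derive Hall from it. Deriving it from Hall is of course mathematically legitimate since both are classical, and your $A$-side and $B$-side constructions both verify cleanly (the $B$-side case is redundant when $|A|=|B|$, since Hall then fails on both sides, but including it does no harm).

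One genuine, if minor, gap: you restrict to $|A|=|B|$ and do not address the unbalanced case in the backward direction. The theorem as stated is for arbitrary finite bipartite $G$, so when $|A|\ne|B|$ you must still exhibit a cut with $F_1>c$. This is not automatic: for $K_{2,1}$ with $A=\{a_1,a_2\}$, $B=\{b_1\}$, every \emph{proper} bipartition has $F_1\le c$ even under your directed reading, so the only ``bad cut'' is the degenerate one $G_1=G$, $G_2=\emptyset$. Thus either the statement should be read as allowing $G_2=\emptyset$, or it should be understood as implicitly restricted to balanced $G$; you should say which convention you are adopting rather than silently restricting, since as it stands the backward implication is unproved (and in fact false without one of these fixes) when $|A|\ne|B|$.
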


In Hall's matching theorem, we shift our perspective from the cut to the sets in the partition. Instead of looking at sets that are a mixture of even and odd vertices, we consider a set $C$ of only even (resp.\ only odd) vertices, plus their neighbors 
$$N(C) = \{ b\in \mc B :  a\in C, (a,b) \in E\}$$
which are only odd (resp.\ only even). The set $C \cup N(C)$ is analogous to either $G_1$ or $G_2$ (without loss of generality $G_1$) plus some of the endpoints of the edges in the cut. The excess flow is now the number of even vertices (i.e.\ $|C|$) minus the number of odd vertices (i.e.\ $|N(C)|$).\symindex{Chapter 6!set $C$, its neighbors $N(C)$}  Hall's matching theorem is an analog of Theorem \ref{mincut} formulated in these terms:
\begin{thm}[Hall's matching theorem \cite{hallmarriage}]\label{thm:halls matching}
Suppose that $G = (V,E)$ is a finite bipartite graph with bipartition $G = \mc A\cup \mc B$. The graph $G$ admits a perfect matching consisting of $|\mc A|$ edges if and only if for all $C \subset \mc A$, 
\begin{align*}
    N(C) = \{ b\in \mc B : a\in C, (a,b) \in E\}
\end{align*}
satisfies $|N(C)| \geq |C|$.
\end{thm}
An analogous result holds with $\mc A$ and $\mc B$ switched. If $G$ is balanced (i.e.\ $|\mc A| = |\mc B|$), then the existence of a perfect matching with $|\mc A|$ edges is equivalent to the existence of a perfect matching of the whole graph $G$. If $G$ is not balanced (i.e.\ $|\mc A|\neq |\mc B|$) then $G$ does not have a perfect matching of the whole graph.

Note that if $C \subset \mc A$ satisfies $|N(C)| < |C|$, then the set $U:= C \cup N(C)$ has more even than odd vertices, despite having only odd vertices on its boundary within $G$. Therefore when $G$ is balanced, Theorem \ref{thm:halls matching} is equivalent to the following:
\begin{thm}\label{thm:halls matching 2}
Suppose that $G = (V,E)$ is a finite bipartite graph with bipartition $G = \mc A\cup \mc B$ and $|\mc A| = |\mc B|$. Then $G$ {\em fails to have} a perfect matching if and only if there exists a connected set $U \subset V$ such that $|U\cap \mc A| > |U \cap \mc B|$ but all boundary elements of $U$ (i.e., elements of $U$ that are adjacent to some point in $V \setminus U$) belong to $\mc B$.
\end{thm}

\termindex{Chapter 6!imbalance}\symindex{Chapter 6!imbalance}\termindex{Chapter 6!counterexample to tileability}We call the $U$ from Theorem~\ref{thm:halls matching 2} a \textit{counterexample to (the condition equivalent to) tileability} or just a \textit{counterexample}. In our context, $\mc A$ and $\mc B$ will always be sets of even and odd vertices in $\mathbb Z^3$, so for us a {\em counterexample} to tileability for $R \subset \m Z^3$ is any set $U$ that has more even than odd vertices, despite having only odd vertices on its {\em interior boundary}, which we define to be the set of $x \in U$ that are incident to some $y \in R \setminus U$. 

To show that a graph $R$ has a dimer tiling (a.k.a.\ a perfect matching), we check that it is balanced, and if it is, we have to show that there are no counterexamples. We call the excess flow of a counterexample its \textit{imbalance}, given by 
\begin{equation}\label{eq:imbalance}
	\text{imbalance}(U) = \text{even}(U) -\text{odd}(U).
\end{equation}
Note that if $U \subset R\subset \m Z^3$ has only odd vertices on its interior boundary (within $R$) then $\text{imbalance}(U)>0$ if and only if $U$ is a counterexample for $R$.

\subsection{Discrete surfaces and minimal counterexamples}\label{sec: minimal}

As mentioned earlier, it is often intuitively useful to think about perfect matchings as tilings by $2 \times 1 \times 1$ blocks. In this picture, a counterexample set $U$ is a collection of unit cubes, each centered at a point in $\mathbb Z^3$, and the edges out of it, i.e.\ its boundary $\partial U$, is a collection of squares in the translated lattice $(\frac12, \frac12, \frac12) + \mathbb{Z}^3$. In other words the boundary region is a surface built out of squares from the lattice. 

\begin{definition}\termindex{Chapter 6!discrete surface}
	A \textit{discrete surface} in $\mathbb{Z}^3$ is a collection of squares from the $(\frac12,\frac12,\frac12) + \m Z^3$ lattice. 
\end{definition}
A discrete surface in $\m Z^3$ is \textit{orientable} if there is a well-defined outward pointing normal vector to the surface. An orientable discrete surface $S$ with a choice of outward pointing normal vector is called \textit{oriented}. For a square $s\subset S$, we call the side of $s$ that the outward normal vector points toward the \textit{outside}. If the outward pointing normal vector to a square in an oriented surface is from even to odd, we color the outside of the square white. Otherwise we color it black.\termindex{Chapter 6!black and white surface}
\begin{definition}
	An oriented discrete surface $S$ in $\mathbb{Z}^3$ is \textit{monochromatic} if all the outsides of all the squares in $S$ are black (resp.\ all are white). \termindex{Chapter 6!monochromatic surface}
\end{definition}
We can rewrite Equation \eqref{eq:imbalance} for the imbalance of a counterexample $U$ in terms of the black and white surface area of $\partial U$. Let $(\mc A,\mc B)$ be the bipartition of $\m Z^3$ into even and odd vertices respectively. 
\begin{prop}\label{prop: white black imbalance}
	Suppose that $R$ is balanced but not tileable, and that $U\subset R\subset \m Z^3$ is a counterexample to tileability. Then 
	\begin{equation}\label{eq:imbalance_area}
		0 < \text{imbalance}(U) = \text{even}(U) - \text{odd}(U) = \frac{1}{6} \bigg( \text{white}(\partial U) - \text{black}(\partial U) \bigg).
	\end{equation}
\end{prop}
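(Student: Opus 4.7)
The first equality is just the definition of imbalance, and positivity follows from the assumption that $U$ is a counterexample to tileability. The content of the proposition is therefore the second equality, which I claim actually holds for \emph{any} finite $U \subset \m Z^3$, not just counterexamples. The plan is to prove this general identity by a double-counting argument that classifies boundary faces by the parity of their interior vertex.

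For each $x \in U$, let $d_{\text{in}}(x)$ be the number of neighbors of $x$ in $U$ and $d_{\text{out}}(x) = 6 - d_{\text{in}}(x)$. Each of the $d_{\text{out}}(x)$ edges from $x$ to $\m Z^3\setminus U$ contributes one square to $\partial U$, with outward normal at $x$. Because $\m Z^3$ is bipartite, a neighbor of $x$ has parity opposite to $x$, so if $x$ is even each such boundary face at $x$ is white, and if $x$ is odd each is black. Writing $\epsilon(x) = +1$ for even $x$ and $\epsilon(x) = -1$ for odd $x$, this gives
\[
\text{white}(\partial U) - \text{black}(\partial U) \;=\; \sum_{x \in U} \epsilon(x)\, d_{\text{out}}(x) \;=\; 6\sum_{x \in U} \epsilon(x) \;-\; \sum_{x \in U} \epsilon(x)\, d_{\text{in}}(x).
\]
The first sum is $6(\text{even}(U) - \text{odd}(U))$, so it remains to show that the second sum vanishes. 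Rewriting by summing over pairs of adjacent vertices in $U$ yields
\[
\sum_{x \in U} \epsilon(x)\, d_{\text{in}}(x) \;=\; \sum_{\{x,y\}\subset U,\, x \sim y} \bigl(\epsilon(x) + \epsilon(y)\bigr),
\]
and each term is zero since adjacent vertices in $\m Z^3$ have opposite parity.

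There is no real obstacle here: the identity is a pure parity/double-counting fact, and the hypothesis on $U$ being a counterexample enters only through the strict inequality $\text{imbalance}(U) > 0$. I would conclude by noting that in particular any counterexample $U$ to tileability must have $\text{white}(\partial U) > \text{black}(\partial U)$, which is the observation that will be exploited in the subsequent sections on minimal surfaces (Corollary \ref{cor:minimal_tileable}) and on isoperimetric bounds.
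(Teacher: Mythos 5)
Your proof is correct, and it is essentially the same argument as the paper's: the paper defines the reference flow $r(e)=1/6$ on even-to-odd edges, computes $\operatorname{div} r(v)=\pm 1$ by parity, and applies the discrete divergence theorem to $U$, which is exactly your double-count unwound (the cancellation you observe over interior edges $\{x,y\}\subset U$ is precisely the proof of that divergence identity). Your remark that the equality holds for any finite $U$, with the counterexample hypothesis used only for the strict inequality, is accurate and matches the paper's usage in what follows.
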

\begin{proof}
	 Define a flow $r$ on $\mathbb{Z}^3$ such that $r({e}) = \frac{1}{6}$ for every dual edge (a.k.a.\ face) ${e}$ oriented from even to odd. Then
\begin{align*}
    \text{div}\, r(v) = \begin{cases} -1 \quad &v \text{ is a odd cube} \\ +1 \quad &v \text{ is a even cube}\end{cases} 
\end{align*}
By the divergence theorem, with all edges $e\in \partial U$ oriented out of $U$, 
\begin{align*}
    \text{imbalance}(U) &= \text{even}(U) - \text{odd}(U) = \sum_{v\in U} \text{div}\, r(v) = \sum_{e\in \partial U} r({e}) \\
    &= \frac{1}{6} \bigg(\text{white}(\partial U) - \text{black}(\partial U)\bigg).
\end{align*}
\end{proof}
By Proposition \ref{prop: white black imbalance}, if $U$ is a counterexample then it must have more white surface area than black surface area. By the definitions in Section \ref{sec: hall}, $U$ must have only odd cubes along its \textit{interior boundary}, i.e.\ cubes $x\in U$ which are adjacent to $y\in R\setminus U$. However $U$ also has an \textit{exterior boundary} consisting of cubes which are adjacent to $y\in \m Z^3\setminus R$. Exterior boundary cubes can be even or odd. 

Correspondingly, the boundary $\partial U$ can be split into two pieces: the \textit{exterior boundary surface} $T = \partial R\cap \partial U$ and the \textit{interior boundary surface} $S = \partial U\setminus T$. The interior boundary surface $S$ must be built out of only black squares, while $T$ can be built out of a mixture of white and black squares. Given this, only squares in $T$ contribute positively to the imbalance of $U$. Intuitively to increase the imbalance of $U$, one should \textit{minimize} the area of $S$. 

A surface $P$ embedded in $\m R^3$ is said to \textit{locally minimize area} if given any point $p\in P$, there is a neighborhood $V\subset \m R^3$ containing $p$ such that $P\cap V$ has the minimal area of any surface with boundary $\partial (P\cap V)$. Surfaces that locally minimize area are called \textit{minimal surfaces}. We will be interested in certain discrete analogs of minimal surfaces.

\begin{definition}
A \textit{minimal discrete (monochromatic) surface with boundary $X$} is a surface $S$ that minimizes area subject to the constraint that it is discrete, (monochromatic), and has $\partial S = X$. In particular, there is no way to ``tighten the surface locally'' by changing a few faces in a way that maintains the overall boundary conditions and reduces the overall area.\termindex{Chapter 6!minimal (monochromatic) surface}
\end{definition}

\begin{prop}\label{prop:minimalsurface}
Let $R\subset \mathbb{Z}^3$ be a finite balanced region which is not tileable, and suppose that $U$ is a counterexample to tileability in $R$. Let $T = \partial U \cap \partial R$ and let $S = \partial U\setminus T$ be the interior boundary surface. Then there exists another counterexample $U'$ in $R$ such that $\partial U' \cap \partial R = T$, and $S':= \partial U' \setminus T$ is a minimal monochromatic black discrete surface.
\end{prop}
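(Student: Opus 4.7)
The plan is to minimize over an appropriately restricted class of counterexamples and argue that the minimizer automatically has a minimal interior boundary.

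Let $\mathcal{U}_T$ denote the set of counterexamples $U' \subset R$ satisfying $\partial U' \cap \partial R = T$. This set is a subset of the (finite) power set of cubes in $R$, and it is non-empty since $U \in \mathcal{U}_T$. My first step is to choose $U' \in \mathcal{U}_T$ minimizing $|\partial U' \setminus T|$, and call $S' := \partial U' \setminus T$. I then claim $S'$ is a minimal monochromatic black discrete surface with boundary $\partial T$.

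First I would verify that every $U' \in \mathcal{U}_T$ automatically has monochromatic black interior boundary. Indeed, by the definition of counterexample, every cube in the interior boundary of $U'$ is odd; since adjacent cubes in $\mathbb{Z}^3$ have opposite parity, each square of $S'$ has an odd cube (in $U'$) on one side and an even cube (in $R \setminus U'$) on the other, so the outward normal (from $U'$) points odd$\to$even and the outside is colored black. By Proposition \ref{prop: white black imbalance},
$$\text{imbalance}(U') = \tfrac{1}{6}\bigl(\text{white}(T) - \text{black}(T) - |S'|\bigr),$$
so membership in $\mathcal{U}_T$ is exactly the numerical condition $|S'| < \text{white}(T) - \text{black}(T)$.

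For the claim of minimality of $S'$, I would argue by contradiction: suppose there exists a monochromatic black discrete surface $S''$ inside $R$ with $\partial S'' = \partial S'\; (= -\partial T)$ and $|S''| < |S'|$. Then $T \cup S''$ is a closed discrete surface (no boundary). By the discrete Jordan--Brouwer separation property for closed surfaces built from unit squares in $\mathbb{Z}^3$, $T \cup S''$ bounds a region $U''$ in $R$; I pick $U''$ so that $S''$ is oriented outward from $U''$, i.e.\ $U''$ lies on the white/odd side of $S''$. Since $S'' \subset \text{Int}(R)$, we have $\partial U'' \cap \partial R = T$, and because $S''$ is monochromatic black (outside), the cubes of $U''$ adjacent to $S''$ are odd, so $U''$ has only odd cubes on its interior boundary. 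Its imbalance is
$$\text{imbalance}(U'') = \tfrac{1}{6}\bigl(\text{white}(T) - \text{black}(T) - |S''|\bigr) > \text{imbalance}(U') > 0,$$
so $U'' \in \mathcal{U}_T$, contradicting the minimality of $|S'|$. Hence no such $S''$ exists, and $S'$ is minimal.

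The main subtlety I anticipate is the discrete Jordan--Brouwer step together with the correct choice of the bounded side: I need to know that a closed orientable monochromatic discrete surface in $\mathbb{Z}^3$ bounds a well-defined region of cubes, and that choosing the region according to the ``white'' side of $S''$ (so that its interior boundary is exactly $S''$, monochromatic black as seen from outside) produces a valid candidate counterexample without introducing spurious new boundary faces on $\partial R$. Both points are straightforward for surfaces built from faces of $\mathbb{Z}^3$ contained in $R$, but they are where the argument must be set up carefully.
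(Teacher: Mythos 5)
Your proof is correct, and it is essentially the paper's argument rearranged with an unnecessary optimization wrapper. The paper proceeds directly: choose $S'$ to be a minimal monochromatic surface with boundary $\partial T$, define $U'$ by $\partial U' = T \cup S'$, and observe from Proposition~\ref{prop: white black imbalance} (using that $S$ is all black) that $6\,\text{imbalance}(U) = \text{white}(T) - \text{black}(T) - \text{area}(S)$, so replacing $S$ by the no-larger $S'$ only increases the imbalance. You instead minimize $|\partial U' \setminus T|$ over all counterexamples sharing the trace $T$ and then derive minimality of $S'$ by contradiction; but that contradiction step is exactly the paper's direct construction applied to a hypothetical smaller surface $S''$, so your proof contains the paper's as a subroutine inside the extra minimization layer. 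Both arguments silently invoke that the closed discrete surface $T\cup S'$ (or $T\cup S''$) bounds a region of cubes contained in $R$; you flag this discrete Jordan--Brouwer step honestly, while the paper glosses over it when it simply asserts $U'$ is ``defined so that $\partial U' = T\cup S'$.'' Your explicit verification that every counterexample's interior boundary is monochromatic black is a useful check that the paper leaves implicit.
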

\begin{proof}
	The new set $U'$ is defined so that $\partial U' = T \cup S'$, where $S'$ is a minimal monochromatic discrete surface. Since $S$ is all black, by Proposition \ref{prop: white black imbalance},
	\begin{equation}\label{eq:imbalance_area_2}
		6\cdot \text{imbalance}(U) =  \text{white}(\partial U) - \text{black}(\partial U) = \text{white}(T) - \text{black}(T) - \text{area}(S)
	\end{equation}
	replacing $S$ by $S'$ only makes the imbalance larger, so $U'$ is still a counterexample.
\end{proof}

We call counterexamples where the internal boundary surface is a minimal surface \textit{minimal counterexamples}. We immediately get the following corollary.
\begin{cor}\label{cor:minimal_tileable}
    A finite balanced region $R\subset \m Z^3$ is tileable if and only if it has no minimal counterexamples. 
\end{cor}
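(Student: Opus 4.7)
The plan is to derive this corollary as a direct consequence of Hall's matching theorem (Theorem~\ref{thm:halls matching 2}) combined with Proposition~\ref{prop:minimalsurface}. Since the statement is a biconditional, I would prove each direction separately, and both directions are very short given the machinery already assembled. There is no real technical obstacle here; the work has already been done in the preceding results.

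For the forward direction, I would argue the contrapositive. Suppose $R$ admits a minimal counterexample $U$. In particular $U$ is a counterexample to tileability in the sense of Theorem~\ref{thm:halls matching 2}, i.e.\ a subset of $R$ whose interior boundary within $R$ consists only of odd vertices but which nevertheless contains more even than odd cubes. Theorem~\ref{thm:halls matching 2} (applied to the bipartite graph $R$ with parts the even and odd vertices) then directly implies that $R$ fails to have a perfect matching, so $R$ is not tileable.

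For the converse, again I would argue by contrapositive. Suppose $R$ is not tileable. Since $R$ is balanced, Theorem~\ref{thm:halls matching 2} produces a counterexample $U \subset R$, i.e.\ a set whose interior boundary (inside $R$) consists of only odd cubes and with $\mathrm{imbalance}(U)>0$. Now apply Proposition~\ref{prop:minimalsurface}: writing $T = \partial U \cap \partial R$ and $S = \partial U \setminus T$, there exists a new counterexample $U' \subset R$ with $\partial U' \cap \partial R = T$ and whose interior boundary surface $S' = \partial U' \setminus T$ is a minimal monochromatic (black) discrete surface. By construction $U'$ is a minimal counterexample in $R$.

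Combining the two directions gives the equivalence. The only step that required genuine content was Proposition~\ref{prop:minimalsurface}, which used the identity from Proposition~\ref{prop: white black imbalance} to show that replacing the interior boundary surface by one of smaller area can only \emph{increase} the imbalance (since only the contribution $-\mathrm{area}(S)$ in Equation~\eqref{eq:imbalance_area_2} is affected, and it becomes less negative). Given that, the corollary is essentially a restatement: tileability is equivalent to the absence of any counterexamples (Hall), and the absence of any counterexamples is equivalent to the absence of minimal ones (Proposition~\ref{prop:minimalsurface}).
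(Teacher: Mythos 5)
Your proposal is correct and follows exactly the route the paper has in mind: the corollary is the immediate combination of Theorem~\ref{thm:halls matching 2} (a balanced region is non-tileable iff it has a counterexample) with Proposition~\ref{prop:minimalsurface} (any counterexample yields a minimal one with the same exterior boundary trace). The paper states the corollary without written proof precisely because it is this two-line assembly, and your explanation of why minimality can only increase the imbalance matches the argument in Proposition~\ref{prop:minimalsurface}.
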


\subsection{Statement of patching theorem and outline of proof}\label{sec: ideas for patching}

We now give the statement of the patching theorem mentioned at the beginning of the section. We also provide illustrations of the analogous constructions in 2D (because they are easier to draw) in order to explain the 3D concepts. Let $B_n=[-n,n]^3$, and for any $\delta>0$ define \symindex{Chapter 6!$B_n$ - the box $[-n,n]^3$ and the annulus $A_n=B_n \setminus B_{(1-\delta)n}$}
\begin{align*}
    A_n = B_n \setminus B_{(1-\delta)n}.
\end{align*}
Given two tilings $\tau_1,\tau_2\in \Omega$, we look at the region between $\tau_1 \mid_{\m Z^3\setminus B_n}$ and $\tau_2\mid_{B_{(1-\delta)n}}$. This will be the annulus $A_n$, with some cubes removed along its boundary (see Figure \ref{fig:squareannulus}). We call this \textit{$A_n$ with boundary conditions $\tau_1$ and $\tau_2$.}

\begin{figure}[H]
    \centering
        \includegraphics[scale=0.39]{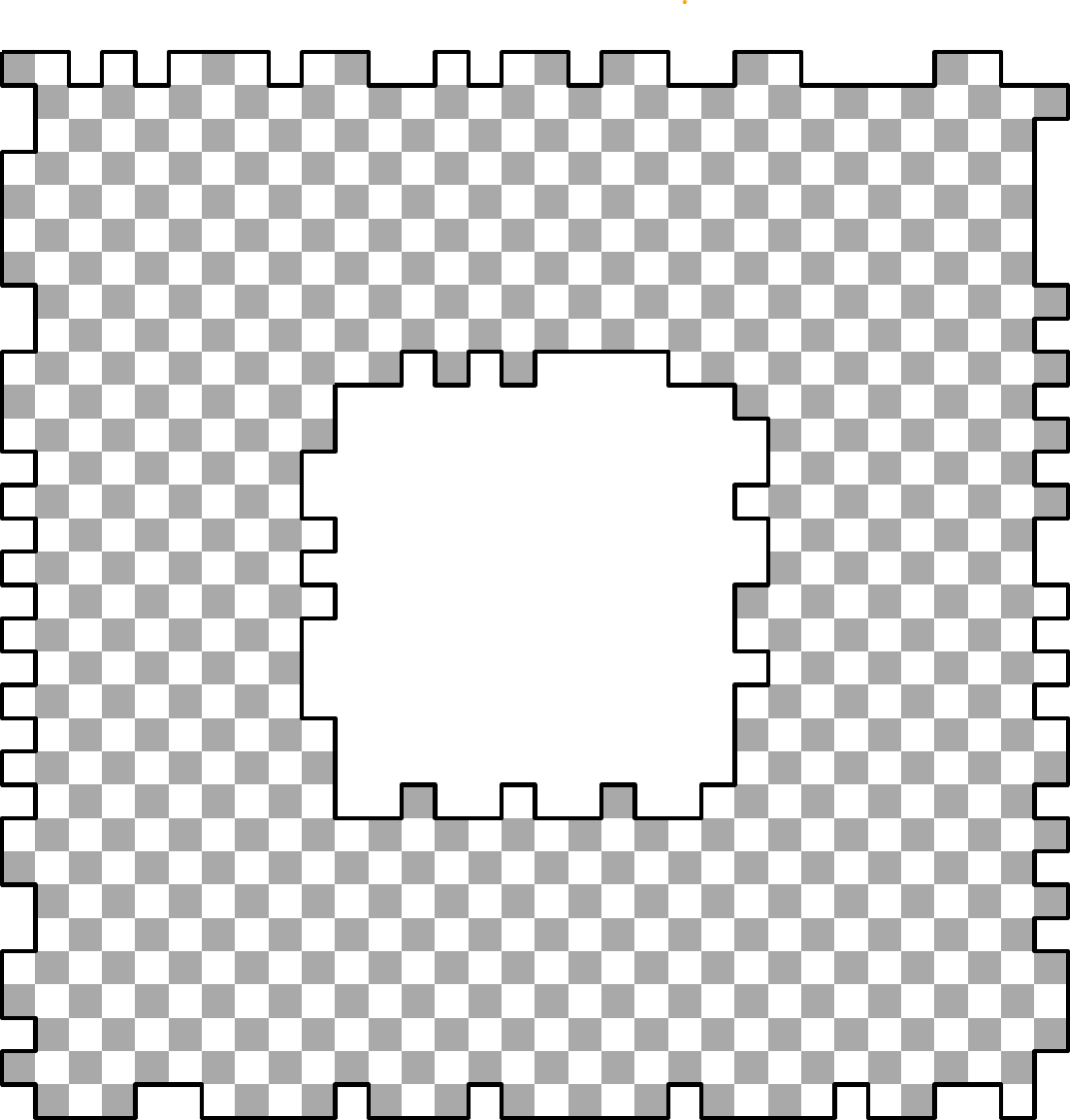}\\
    \caption{Here is an example of a region of the form ``$A_n$ with boundary conditions $\tau_1$ and $\tau_2$" in two dimensions, which we call $A$. The region shown is a subset of the ``square annulus'' bounded between the boundary of a $12 \times 12$ box and the boundary of a $32 \times 32$ box. It is obtained from the square annulus by removing some of the squares along the outer boundary and some of the squares along the inner boundary. Given any dimer tiling $\tau$ of a region containing the $32 \times 32$ box, the union of the dimers of $\tau$ that are {\em strictly contained} in the square annulus would also be a region of this form.}
    \label{fig:squareannulus}
\end{figure}

The main question is: given a tiling $\tau_1$ restricted to $\m Z^3\setminus B_n$ and a tiling $\tau_2$ restricted to $B_{(1-\delta)n}$, under what conditions can we patch them together, i.e.\ find a tiling of $A_n$ with inner boundary condition $\tau_2$ and outer boundary condition $\tau_1$? We are interested in showing that this is possible when $n$ is large enough, when $\tau_1,\tau_2$ satisfy a consistency condition that they are \textit{nearly constant} for the same $s\in \text{Int}(\mc O)$. 

To specify the nearly constant condition, we give a few definitions. 

\begin{definition}
    An $\epsilon$ \textit{patch} $\alpha$ on $\partial B_n$ is an $\epsilon n\times \epsilon n$ square contained in a face of $\partial B_n$. \termindex{Chapter 6!$\epsilon$ patch}
\end{definition}

We can then measure the \textit{flux} of a discrete flow through a patch.

\begin{definition}\label{def: flux}
Let $S$ be an oriented discrete surface with outward normal vector $\xi$. We define the \textit{flux} of a discrete vector field $v$ through $S$ by \termindex{Chapter 6!flux}\symindex{Chapter 6!$\text{flux}$ - flux of a discrete vector field}
\begin{align*}
    \text{flux}(v,S) = \sum_{e\in E(\m Z^3), e\cap S \neq \emptyset} \text{sign}\langle \xi(e\cap S), e\rangle v(e).
\end{align*}
Here $E(\m Z^3)$ denotes the edges of $\m Z^3$ oriented from even to odd.
\end{definition}

We now use the definition of \textit{nearly-constant} in terms of the flux of the pre-tiling flow $v_\tau$ through patches. 

\begin{definition}\label{def:nearly constant}
    Fix $s\in \mc O$, let $B_n=[-n,n]^3$. A tiling $\tau\in \Omega$ is \textit{$\epsilon$-nearly-constant with value $s$} if there exists $M=M(\epsilon)$ such that for all $n>M$ and all $\epsilon$ patches $\alpha$ on $\partial B_n$, 
    \begin{align*}
        \text{flux}(v_\tau, \alpha) = \frac{1}{2}\langle \xi_\alpha, s\rangle \text{area}(\alpha) + o(\text{area}(\alpha)) = \frac{1}{2}\langle \xi_\alpha, s\rangle \epsilon^2 n^2 + o(\epsilon^2 n^2),
    \end{align*}
    where $\xi_\alpha$ is the outward pointing unit normal vector to $\alpha$ (where outward means away from $B_n$). \termindex{Chapter 6!$\epsilon$-nearly-constant}
\end{definition}
\begin{rem}
    Any patch $\alpha$ is contained in a flat coordinate plane, so its area is simply $\epsilon^2n^2$. The $\frac{1}{2}$ comes from the fact that the mean current is actually measuring the average flow per \textit{even} vertex. 

    Tilings satisfying the $\epsilon$-nearly-constant condition with value $s$ mimic the behavior of tilings sampled from ergodic measures of mean current $s$. (This is made precise in Corollary \ref{cor: ergodic implies nearly constant} after Theorem \ref{thm: subaction ergodic}.)
\end{rem}
With the conditions defined, we can now state the patching theorem. 
\begin{thm}[patching theorem]\label{patching}
    Fix $\delta>0$ and a mean current $s\in \text{Int}(\mc O)$. Let $B_n=[-n,n]^3$ be the cube of radius $n$, and let $A_n = B_n\setminus B_{(1-\delta) n}$ be the cube annulus of width $\delta n$. For $\epsilon>0$ small enough, if $\tau_1,\tau_2\in \Omega$ are $\epsilon$-nearly-constant with value $s$, then for $n$ large enough $A_n$ can be tiled with outer boundary condition $\tau_1$ and inner boundary condition $\tau_2$. 
\end{thm}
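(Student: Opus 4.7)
The plan is to proceed by contradiction using the non-tileability criterion of Corollary~\ref{cor:minimal_tileable}. Let $A_n^\ast$ denote $A_n$ with the cubes already covered by straddling dominoes of $\tau_1$ and $\tau_2$ removed, so that tiling $A_n$ with the prescribed boundary conditions is the same as tiling $A_n^\ast$. Assume for contradiction that $A_n^\ast$ is not tileable; then there exists a minimal counterexample $U \subset A_n^\ast$ whose interior boundary surface $S$ is a minimal, monochromatic, black (without loss of generality) discrete surface, and whose exterior boundary decomposes as $T = T_{\mathrm{out}} \sqcup T_{\mathrm{in}}$ with $T_{\mathrm{out}} \subset \partial B_n$ and $T_{\mathrm{in}} \subset \partial B_{(1-\delta)n}$. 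The goal is to show $\text{imbalance}(U) < 0$ for $n$ large, contradicting the counterexample property.

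The core identity is that for \emph{any} tiling $\sigma$ of $\mathbb Z^3$,
\[
\text{imbalance}(U) = \text{flux}(v_\sigma, \partial U) = \text{flux}(v_\sigma, T_{\mathrm{out}}) + \text{flux}(v_\sigma, T_{\mathrm{in}}) + \text{flux}(v_\sigma, S).
\]
I will take $\sigma$ to be a sample from an ergodic Gibbs measure $\mu_s$ (existing by Lemma~\ref{lem: ergodic measures exist}), so that by Theorem~\ref{thm: subaction ergodic} and Corollary~\ref{cor: ergodic implies nearly constant}, $\sigma$ is itself $\epsilon$-nearly-constant with value $s$ with high probability. The crucial observation for the boundary fluxes is: by the very definition of $A_n^\ast$, no $\tau_1$-domino can cross a square in $T_{\mathrm{out}}$, for otherwise the cube of $U$ adjacent to that square would have been deleted from $A_n^\ast$. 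Hence $\text{flux}(v_{\tau_1}, T_{\mathrm{out}})=0$ exactly. Since $\sigma$ and $\tau_1$ are $\epsilon$-nearly-constant with the same value $s$, covering $T_{\mathrm{out}}$ by $\epsilon$-patches on the faces of $\partial B_n$ and summing the patch-level identity yields
\[
\text{flux}(v_\sigma, T_{\mathrm{out}}) = \text{flux}(v_{\tau_1}, T_{\mathrm{out}}) + o(n^2) = o(n^2),
\]
and an identical argument with $\tau_2$ gives $\text{flux}(v_\sigma, T_{\mathrm{in}}) = o(n^2)$.

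For the remaining term, since $S$ is all black every $\sigma$-domino crossing $S$ contributes $-1$ to outward flux, so $\text{flux}(v_\sigma, S) = -|\{\sigma\text{-dominoes crossing }S\}|$. In the main case I would use the isoperimetric/area-growth estimates of Section~\ref{sec: area growth} (Proposition~\ref{prop:quadratic growth} and Lemma~\ref{lem: indenting}) together with the minimality of $S$ to show that $S$ must ``span'' the annulus---i.e., it necessarily reaches from $\partial B_{(1-\delta)n}$ to $\partial B_n$---forcing $\text{area}(S) \geq c(\delta)\,n^2$ for a constant $c(\delta) > 0$. Lemma~\ref{lem: flow bound}, applied to the ergodic sample $\sigma$, then gives at least $c_s\,\text{area}(S)$ $\sigma$-dominoes crossing $S$ (up to a lower-order error), where $c_s > 0$ because the marginal matching probabilities are all strictly positive when $s \in \text{Int}(\mc O)$. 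Combining the three flux bounds,
\[
\text{imbalance}(U) \leq o(n^2) - c_s\,c(\delta)\,n^2 + o(n^2) < 0
\]
for $n$ sufficiently large, contradicting $\text{imbalance}(U) > 0$.

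The main obstacle I anticipate is the secondary case where $S$ does \emph{not} span the annulus, so the uniform lower bound $\text{area}(S) \geq c(\delta)n^2$ is unavailable. In this regime the counterexample $U$ must be ``localized'' near one of the two boundary spheres of $A_n$, and ruling out such bounded counterexamples requires a more delicate local argument: one probes $U$ with $\tau_1$ (or $\tau_2$) itself on the appropriate scale and exploits that near-constant boundary conditions cannot support positive imbalance in any sufficiently large bounded neighborhood, again via an ergodic/Lemma~\ref{lem: flow bound}-style lower bound on crossings. A secondary technical point is making the $\epsilon$-patch approximation of $T_{\mathrm{out}}$ and $T_{\mathrm{in}}$ uniform in the shape of $U$---i.e., bounding flux errors by $o(n^2)$ rather than $O(\epsilon n^2)$---which should follow from the controlled perimeter that $T$ inherits from the minimality of $S$.
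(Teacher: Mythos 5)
Your high-level framework is on the right track — you correctly identify that $\text{imbalance}(U) = \text{flux}(v_\sigma,\partial U)$ for any tiling $\sigma$, that a test tiling from an ergodic measure is the right comparator, that $S$ spans the annulus, and that Lemma~\ref{lem: flow bound} gives the negative $O(n^2)$ contribution from $S$. But the step controlling the boundary fluxes is where the proof breaks, and it is precisely the part the paper has to work hardest to handle.

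The problem is your claim that $\text{flux}(v_\sigma, T_{\mathrm{out}}) = o(n^2)$. The $\epsilon$-nearly-constant condition only constrains fluxes through \emph{whole} $\epsilon$-patches on $\partial B_n$; it says nothing about fluxes through arbitrary subsets of a patch. The set $T_{\mathrm{out}}$ is determined by the counterexample $U$ and can intersect each $\epsilon$-patch partially. For a patch $\alpha$ that is partially covered, there is no control whatsoever on $\text{flux}(v_\sigma, T_{\mathrm{out}}\cap\alpha) - \text{flux}(v_{\tau_1}, T_{\mathrm{out}}\cap\alpha)$ — each partial patch can contribute an error up to $O(\epsilon^2 n^2)$ regardless of the fact that the full-patch fluxes agree. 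The number of partial patches is governed by $\text{length}(\partial T_{\mathrm{out}})$, i.e.\ by the trace of $S$ on the outer boundary sphere, and minimality of $S$ gives you no bound there: a minimal surface of area $O(n^2)$ can meet $\partial B_n$ in a curve of length order $n^2$. So in the worst case all $\sim \epsilon^{-2}$ patches are partial and the total error is $O(n^2)$, which swamps the $-c_s c(\delta) n^2$ gain from $S$. You gesture at this at the end as a ``secondary technical point,'' but it is actually the central difficulty; it is not patched by minimality of $S$.

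This is exactly why the paper does not work with $T_{\mathrm{out}}$ and $T_{\mathrm{in}}$ directly. Instead it \emph{indents} to interior layers $l_+\in ((1-\epsilon^{1/2})n,n)$ and $l_-\in ((1-\delta)n,(1-\delta+\epsilon^{1/2})n)$ chosen via the pigeonhole/area-growth arguments of Lemma~\ref{lem: indenting} so that $S\cap\partial B_{l}$ has $\epsilon$-covering area $O(\epsilon^{1/2}n^2)$ and boundary length $O(\epsilon^{-1/2}n)$. It then decomposes $A_n$ into a middle annulus, thin column regions connecting $\partial A_{\text{mid}}$ to $\partial A_n$, and corner regions, and bounds the imbalance in the shadow region column-by-column: each column either has both ends in $\epsilon$-patches fully inside $U$ or $U^c$ (where the nearly-constant conditions on $\tau_1,\tau_2$ and the test tiling $\tau$ do apply, because you compare fluxes through \emph{whole} patches at the two ends), or one end on $S$ (contributing non-positively since $S$ is monochromatic black), with the ambiguous patches contributing only $O(\epsilon^{1/2}n^2)$ by the covering-area estimate. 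Without the indenting and the column bookkeeping, there is no way to convert patch-level flux control into control of the boundary flux through the irregular set $T_{\mathrm{out}}$.

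One minor correction in your favor: your ``secondary case'' where $S$ fails to span the annulus does not actually arise. Since $\tau_1$ (resp.\ $\tau_2$) extends to a tiling of all of $\mathbb Z^3$, the annulus is tileable with just the outer (resp.\ just the inner) boundary condition, so any counterexample's interior boundary surface must connect both boundary spheres — this is a one-line observation in the paper, not a case requiring a separate argument.
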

The main tool in the proof is Hall's matching theorem (Theorem \ref{thm:halls matching}). In this section we explain the main steps of the proof, guided by a series of two dimensional figures, and comment on differences between the two and three dimensional versions of this story. After this, in the remaining subsections we prove a series of lemmas (needed to control the more complicated geometric situations that can occur in three dimensions) before giving the formal proof of Theorem \ref{patching} in Section \ref{sec: patching proof}. 

\textbf{Steps of proof}

Given two tilings $\tau_1,\tau_2$ of $\m Z^2$, we want to know whether they can be patched together. In other words, we want to know whether a region $A$, which is a square annulus with some squares removed along the outer boundary if they are connected by $\tau_1$ to the outside of the annulus and some squares removed along the inner boundary if they are connected by $\tau_2$ to the outside of the annulus, is tileable. See Figure \ref{fig:squareannulus} for an example of a region $A$ of this form. If $A$ is not tileable, then there exists a \textit{counterexample set} $U$ as in Figure \ref{fig:squareannuluswithred}.
\begin{figure}[H]
    \centering
        \includegraphics[scale=0.39]{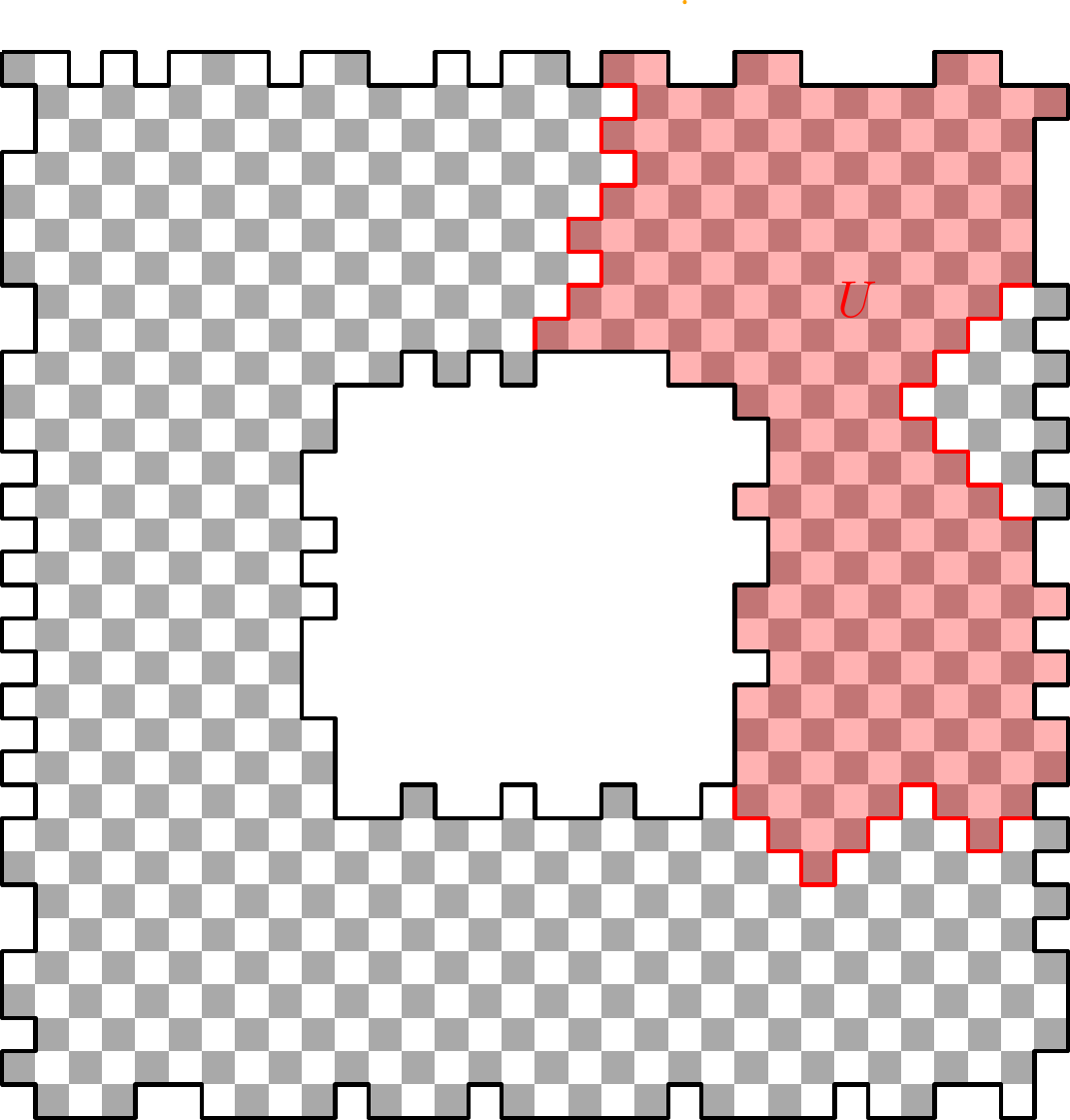}\\
    \caption{A potential counterexample set $U\subset A$.}
    \label{fig:squareannuluswithred}
\end{figure}
The red set $U \subset A$ in Figure \ref{fig:squareannuluswithred} has the property that every square on its {\em inner boundary} (i.e., every square of $U$ that is incident to a square in $A \setminus U$) is black. 

By Hall's matching theorem (Theorem \ref{thm:halls matching 2}), there exists a dimer tiling of $A$ if and only if every $U$ of this form is \textit{not} a counterexample. In other words, every set $U$ of this form has $\text{imbalance}(U) = \text{white}(U) - \text{black}(U) \leq 0$. We remark that the colors white and black used here stand for even and odd vertices and not for the colors we give to surfaces in 3 dimensions in the previous section. We do this because it becomes easier to illustrate the main ideas using the figures.

Given this, our strategy to show that $A$ is tileable for $n$ large enough proceeds by contradiction. We suppose that for all $n$ there is a set $U$ of the form above that is a counterexample, but then show that for $n$ large enough, it must have $\text{white}(U)\leq \text{black}(U)$ and hence not be a counterexample. To do this, we cut up $U$ into various smaller pieces, and bound the white minus black in each piece. 

First we divide the annular $A$ into regions as depicted in Figure \ref{fig:squareannuluswithmiddle}. We call these the ``middle region'' (blue), the ``thin region'' (orange) and the ``corner region'' (green). The middle region is a centered square annulus whose size will have to be appropriately tuned. The thin region is the union of the ``columns'' obtained as straight-line paths of squares that go from the middle square annulus boundary to the boundary of $A$. The corner region is the part leftover.

We then define $U'$ to be $U$ intersected with the middle layer, depicted in Figure \ref{fig:squareannuluswithblue}.
\begin{figure}[H]
    \centering
        \includegraphics[scale=0.39]{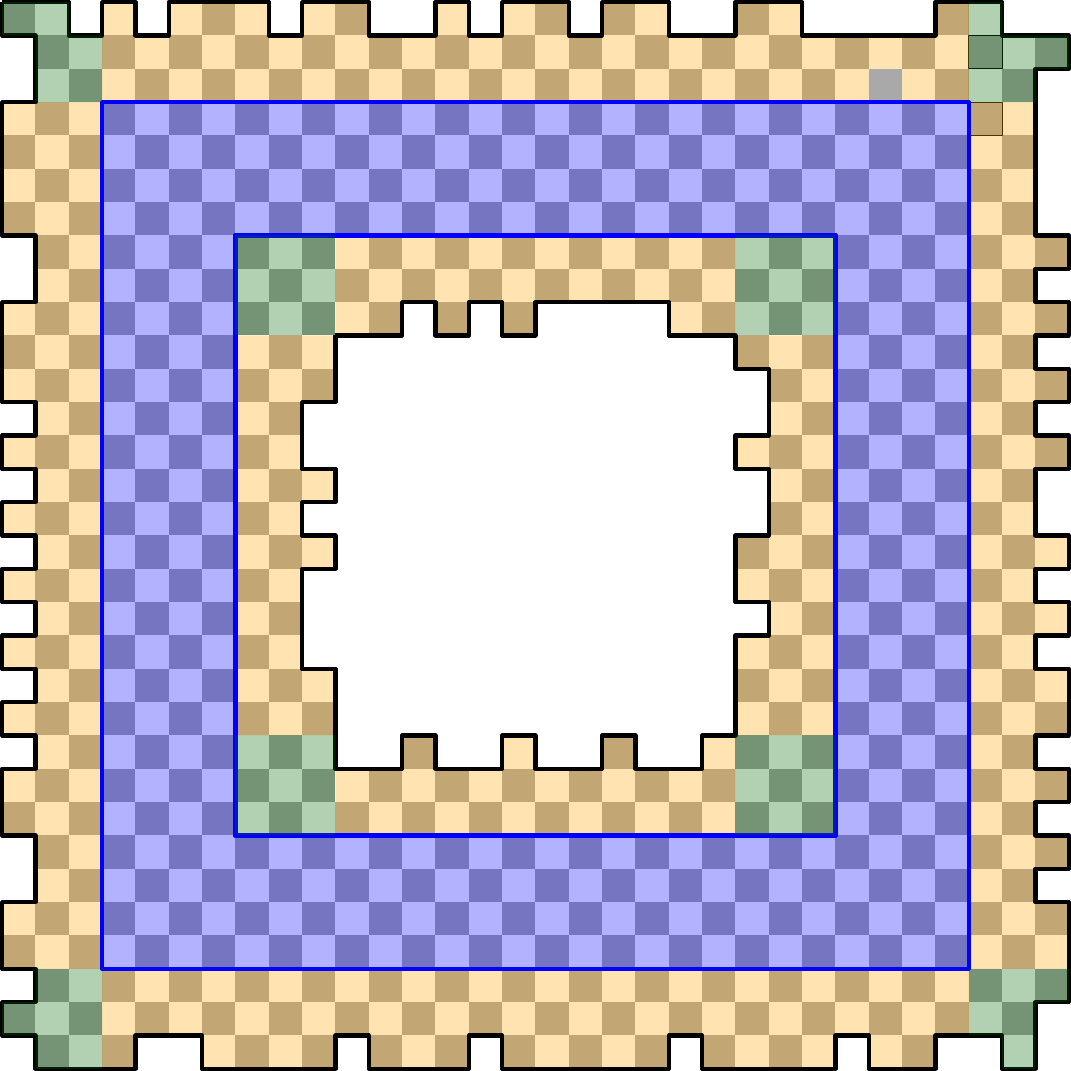}\\
    \caption{The middle (blue), thin (orange), and corner (green) regions of $A$. 
    }
    \label{fig:squareannuluswithmiddle}
\end{figure}
\begin{figure}[H]
    \centering
        \includegraphics[scale=0.39]{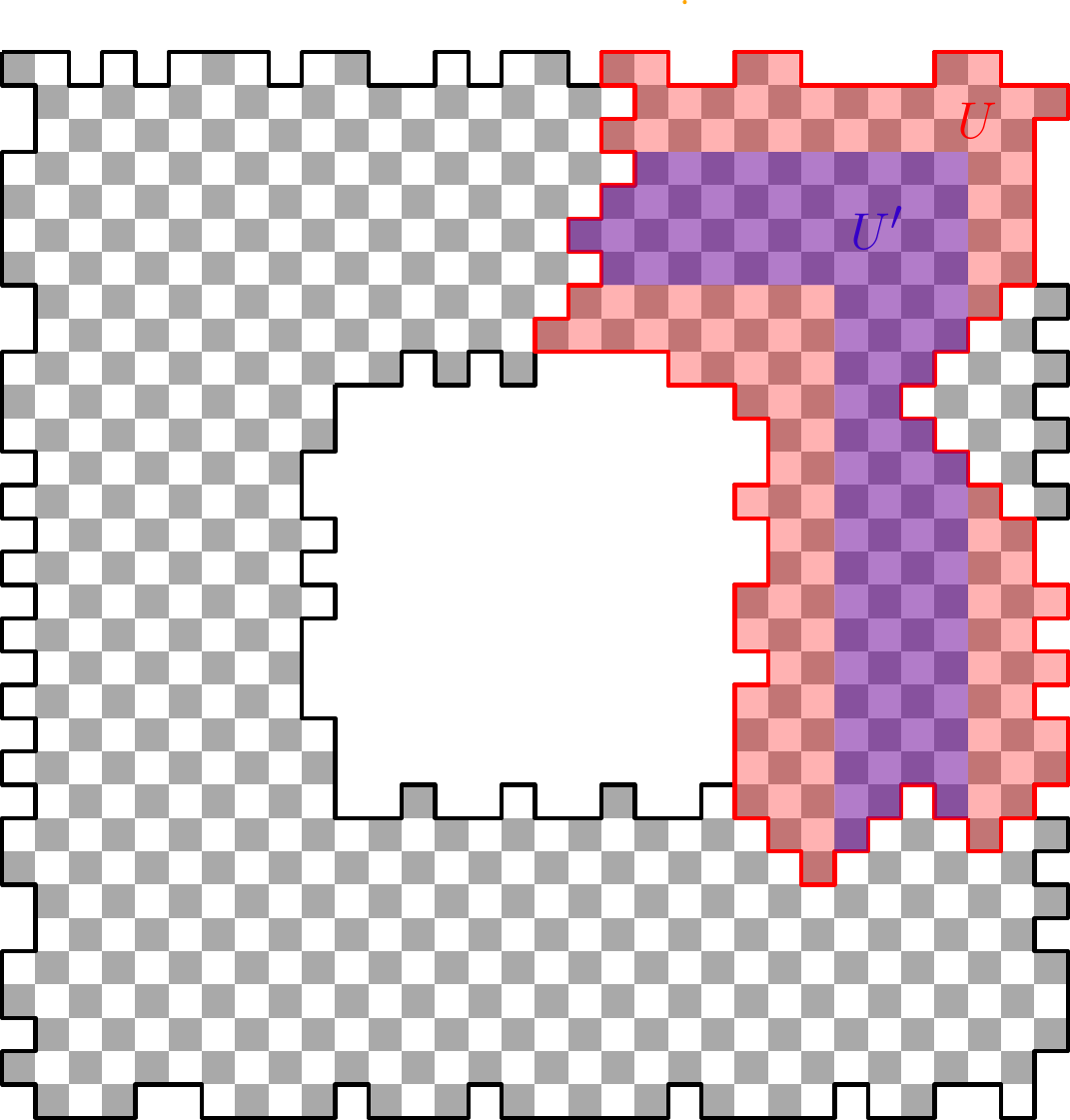}\\
    \caption{We can define $U'$ to be the intersection of $U$ with the middle region from Figure~\ref{fig:squareannuluswithmiddle}.}
    \label{fig:squareannuluswithblue}
\end{figure}
Given a tiling $\tau$ of a region containing $A$, we can define $U_\tau$ to be the region covered by tiles from $\tau$ intersecting $U'$.  The set $U''$ shown in Figure \ref{fig:squareannuluswithtiledblue} is the subset of $U_\tau$ that consists of the union of $U'$ together with all of the squares covered by tiles from $\tau$ that are contained in $U$ but in the complement of the middle region. Note that $U_\tau \setminus U''$ consists of only white squares and that $U_\tau$ is by construction evenly balanced between black and white squares. If we can show that $|U_\tau \setminus U''|$ is large then we know that $U''$ has many more black squares than white.

Indeed, we show that we can choose this \textit{test tiling} $\tau$ so that $|U_\tau \setminus U''|$ is large and hence $U''$ has many more black than white squares. Since $U''\subset U$, it remains to show that there cannot be enough white squares in $U\setminus U''$ for $U$ to be a counterexample. 

\begin{figure}[H]

    \centering
        \includegraphics[scale=0.39]{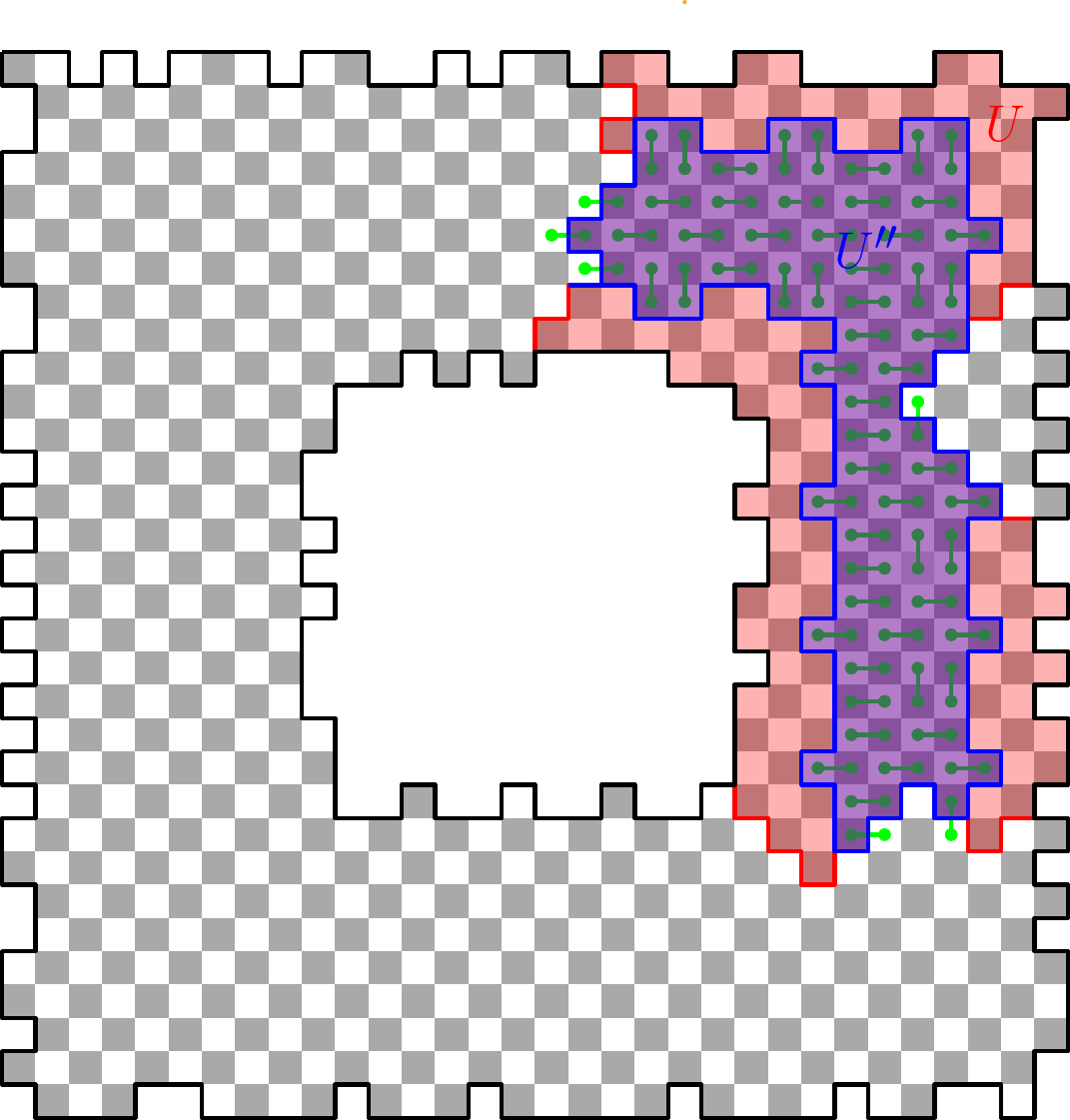}\\
    \caption{The region $U''$ (blue) and the tiles from $\tau$ that intersect $U'$ (green).}
    \label{fig:squareannuluswithtiledblue}
\end{figure}

In order to prove that $U$ itself has more black than white squares, we will divide the rest of $U$ into multiple pieces to treat separately, depicted in Figure \ref{fig:squareannuluswithshadow}. Here $U''$ is as given in Figure~\ref{fig:squareannuluswithtiledblue}. The ``shadow region'' is $U\setminus U''$ restriced to the ``thin region'' from Figure~\ref{fig:squareannuluswithmiddle}. It consists of the union of the columns that can be extended all the way from $\partial A$ to $\partial U''$. The corner region here is the intersection of $U$ with the corner region from Figure~\ref{fig:squareannuluswithmiddle}. The ``leftover pieces'' are the parts of $U$ that do not belong to one of the other three regions. Roughly speaking, we aim to show that $U$ has more black than white by showing that (i) $U''$ has a lot more black than white, (ii) the shadow region can only have a little more white than black (because of the nearly-constant condition), (iii) the corner region can only have a little excess white (since it has small volume), and (iv) the leftover pieces have at least as much black as white.
\begin{figure}[H]
    \centering
        \includegraphics[scale=0.39]{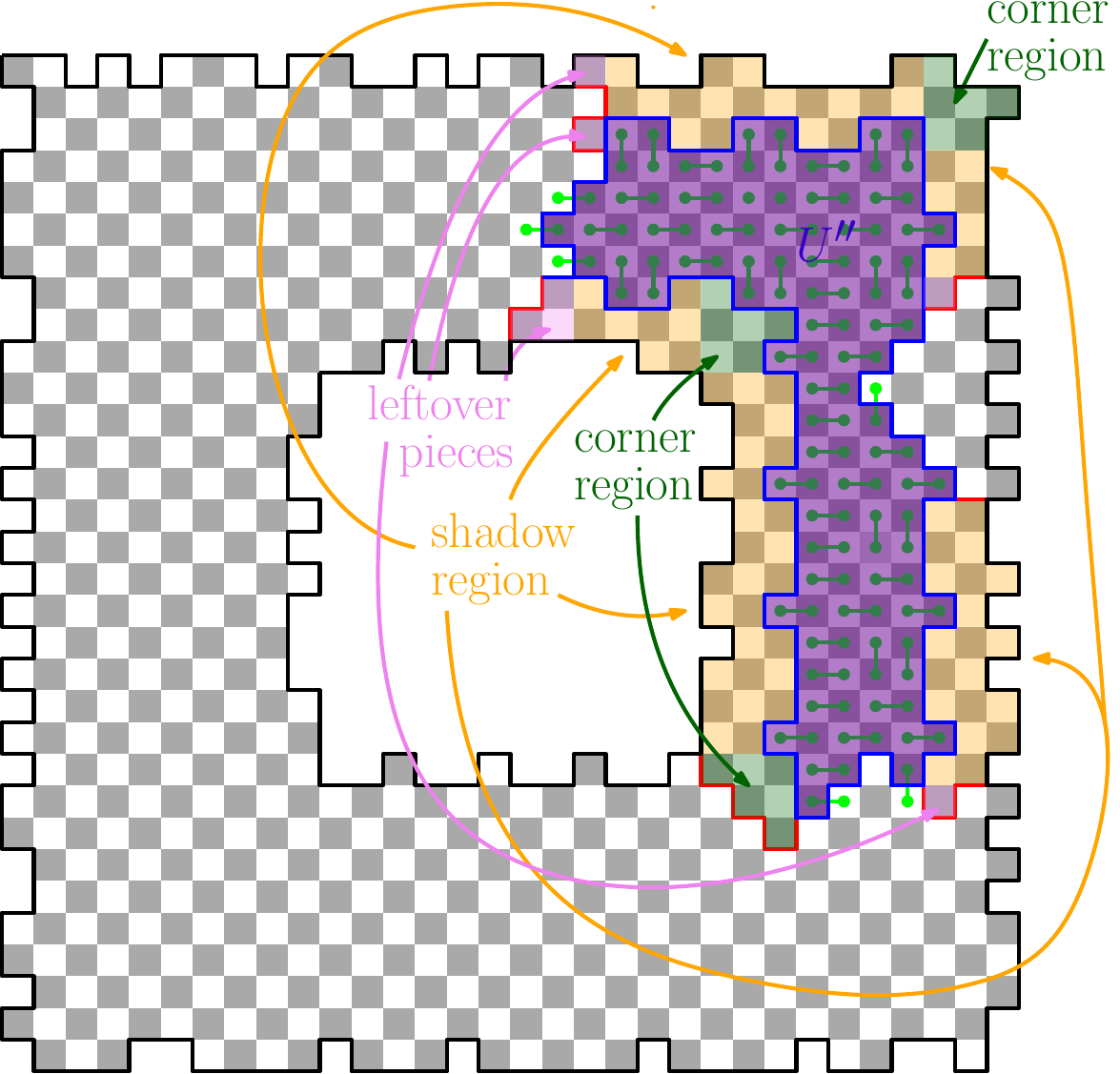}\\
    \caption{Depiction of all the regions that we divide $U$ into: (i) $U''$, (ii) shadow region, (iii) corner region, and (iv) leftover pieces.}
    \label{fig:squareannuluswithshadow}
\end{figure}

Ultimately to achieve (i), (ii), (iii), and (iv), we need to understand more about the possible shapes of counterexample sets $U$. The main difference between two dimensions and three dimensions is that the geometry of $U$ can be more complicated. 

In two dimensions, since $A$ is balanced, if $U$ is a counterexample then so is $U^c$ (with even/odd switched). As a consequence, without loss of generality $U$ and $U^c$ are both connected, and (since $U$ and $U^c$ must both cross the annulus) the interior boundary curve $S$ between them consists of two minimal monochromatic paths $S_1,S_2$ from the inner boundary to the outer boundary. There are then four points $a_1,b_1$ and $a_2,b_2$ which are the endpoints of $S_1,S_2$ on the inner and outer boundaries respectively. From here, one can argue as in \cite{Fournier} that $A$ is tileable as long as the boundary height differences (given by the boundary condition tilings $\tau_1,\tau_2$) at $a_1$ and $b_1$ and at $a_2$ and $b_2$ are in some sense compatible. 

In three dimensions, since $A$ is balanced, it again follows that both $U$ and $U^c$ are counterexamples, and that they are without loss of generality connected and have interior boundary surface $S$ between them which is a minimal monochromatic discrete surface. However the geometry (and topology) of 2D surfaces and connected sets $S$ in 3D can be much more complicated than that of 1D curves in 2D. For example, the set $U$ in 3D need not be simply connected, and instead of $S\cap \partial A$ consisting of four points, it consists of a collection of curves. 

To control the more complex \textit{geometric possibilities} for counterexamples in three dimensions, we prove that the interior boundary surface $S$ has area of order $n^2$ (Lemma \ref{annulusbound}), and that there is a choice for the inward blue layer where $S$ restricted to the layer (which is some union of curves) has good properties (Lemma \ref{lem: indenting}). This is the content of Section \ref{sec: isoperimetric} and Section \ref{sec: area growth}. 

The other key tool is the notion of a \textit{test tiling}. We show that we can use a tiling $\tau$ sampled from an ergodic measure of mean current $s\in \text{Int}(\mc O)$ to define $U''$ so that $U''$ will have order $n^2$ more black cubes than white cubes by showing that $\tau$ on expectation has an order $n^2$ number of tiles crossing $S$ (Lemma \ref{lem: flow bound}). This quantity is somewhat analogous to the height difference in two dimensions. Further, a tiling sampled from ergodic measure of mean current $s\in \text{Int}(\mc O)$ is nearly constant with high probability (Corollary \ref{cor: ergodic implies nearly constant}). This is the content of Section \ref{sec: ergodic}.

\subsection{Discrete isoperimetric inequalities}\label{sec: isoperimetric}

The classical isoperimetric inequality says that the minimal area of a region $D$ bounded by a smooth closed curve $\gamma$ in $\mathbb R^2$ of length $l$ is $\frac{1}{4\pi}(l^2)$. The equality case is achieved when $\gamma$ is a circle and $D$ is a disk. A lesser known fact is that this bound also holds for curves in $\mathbb R^3$ \cite{almgren1986optimal}. In this section we prove a discrete version of the isoperimetric inequality.

\begin{prop}\label{iso1}
Given any simple closed curve $\gamma$ in $\mathbb Z^3$, there is a surface $S$ with $\partial S = \gamma$ and 
$$\text{area}(S)\leq \frac{1}{8} (\text{length}(\gamma))^2.$$
\end{prop}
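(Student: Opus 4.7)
The strategy is a \emph{projection-and-lift} construction that reduces the claim to a two-dimensional isoperimetric bound. Orient $\gamma$ as a sequence of unit edges and, for $i=1,2,3$, let $L_i$ be the number of edges in the $\pm \eta_i$ direction; each $L_i$ is even since $\gamma$ is closed, and $L_1+L_2+L_3 = L$. Let $c$ denote the $z$-extent of $\gamma$; since any closed walk must traverse distance $c$ in both $+\eta_3$ and $-\eta_3$, we have $c \le L_3/2$.

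First, I would project $\gamma$ onto the $xy$-plane to obtain a closed lattice walk $\gamma'$ in $\m Z^2$ of length $L' := L_1+L_2$ (the $\pm\eta_3$ edges collapse to points). For each unit square $\square \subset \m R^2$ let $w(\square)\in \m Z$ denote the winding number of $\gamma'$ around the center of $\square$, and let $z_0 \in \m Z$ be a median $z$-coordinate among the non-$\eta_3$ edges of $\gamma$. Place the planar $2$-chain $S' := \sum_\square w(\square)\,\square$ at height $z=z_0$; by definition of winding number, its boundary is $\gamma'$ realized in the plane $\{z=z_0\}$. A key preliminary, which follows from the classical two-dimensional discrete isoperimetric inequality (for winding-weighted sums) is
\begin{equation*}
\sum_\square |w(\square)| \le (L')^2/16,
\end{equation*}
with equality in the case of a simple square traced once.

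Next, lift $S'$ to a $3$-chain $S = S' + C$ by attaching, for each horizontal (non-$\eta_3$) edge $e$ of $\gamma$ at height $z_e$, a vertical strip of $|z_e - z_0|$ unit squares in the coordinate plane through $e$ joining $e$ to its projected copy in $\{z=z_0\}$. The $\pm\eta_3$ edges of $\gamma$ project to points and require no lifting, while the boundaries of the strips exactly cancel the projected copies in $\{z=z_0\}$ and leave $\gamma$; hence $\partial S = \gamma$. The cylinder area satisfies $|C| = \sum_{e} |z_e - z_0| \le L'c/2$ by the optimality of the median (the worst case has the horizontal edges split evenly between $z=0$ and $z=c$), and $c \le L_3/2$ yields $|C| \le L' L_3/4$.

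Combining,
\begin{equation*}
\mathrm{area}(S) \le \frac{(L')^2}{16} + \frac{L' L_3}{4} = \frac{u(4L-3u)}{16},\qquad u := L_1+L_2 = L - L_3.
\end{equation*}
A one-variable optimization over $u \in [0,L]$ gives the maximum $L^2/12$ at $u = 2L/3$, which is $\le L^2/8$, yielding the stated bound. The main obstacle in this plan is establishing the planar winding-weighted inequality cleanly; this is classical, with a proof by bounding-box plus AM--GM on the three standard decompositions of the signed area $\oint x\, dy$. Everything else is essentially bookkeeping, and taking the best of the three coordinate projections would sharpen the constant beyond $1/12$, though this is not needed to establish $1/8$.
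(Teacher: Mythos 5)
Your argument is correct in outline but is a genuinely different route from the paper's. The paper proves the bound by induction on $\mathrm{length}(\gamma)$: choose two parallel, oppositely oriented edges $e_1,e_2$, delete them and translate one arc onto the other to get a shorter closed walk $\beta'$, decompose $\beta'$ into simple curves and double edges, apply the inductive hypothesis to each piece, and re-insert $e_1,e_2$ by laying down at most $n$ squares along a connecting path. Your projection-and-lift argument instead reduces everything to a planar winding-number isoperimetric bound plus a cylinder of vertical strips. This avoids the recursive cut-and-paste bookkeeping, works for any closed $1$-cycle without needing simplicity, and yields the sharper constant $1/12$.

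Two points need tightening before this is airtight. First, both $S'$ (where $|w(\square)|\geq 2$) and $C$ (when two horizontal edges have the same projection and lie on the same side of $z_0$) are $\m Z$-chains whose coefficients may exceed $1$, whereas a discrete surface in this paper is a \emph{set} of squares with $\partial S=\gamma$ understood mod $2$ --- and the downstream use (replacing a piece of a minimal surface in Proposition~\ref{prop:quadratic growth}) genuinely requires a set of squares. You should reduce your chain mod $2$, keeping each square appearing with odd multiplicity; this preserves the mod-$2$ boundary and only decreases the area, so the bound survives. Second, the inequality $\sum_\square |w(\square)|\le (L')^2/16$ is stronger than the classical simple-curve isoperimetric inequality, since $\gamma'$ is generally a non-simple closed walk, and the ``bounding-box plus AM--GM on $\oint x\,dy$'' sketch controls only the signed area $\bigl|\sum_\square w(\square)\bigr|$, not the $L^1$ quantity. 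The $L^1$ version does hold and should be argued directly: on each horizontal unit strip the running winding number starts and ends at $0$, so $\max|w|$ on that strip is at most half the number of vertical crossings; the support of $w$ has width at most half the number of horizontal edges of $\gamma'$; multiplying, summing over strips, and applying AM--GM gives $(L')^2/16$. With these patched, your proof is sound.
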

\begin{rem}The constant $\frac{1}{8}$ that we get from the proof is not optimal but our argument is simple and the result is sufficient for our purposes. Also note that the boundary curve $\gamma$ can be replaced by a multicurve $\Gamma = \gamma_1 \cup\dots \cup \gamma_k$ and the same result holds. 
\end{rem}
\begin{proof}
We proceed by induction. Suppose that for all simple closed curves $\gamma$ in $\m Z^3$ with $\text{length}(\gamma)=2m\leq 2n$, there exists a surface $S$ with $\partial S = \gamma$ and
$$\text{area}(S)\leq \frac{(m-1)m}{2}.$$
This is sufficient because it implies that
$$\text{area}(S)\leq \frac{1}{8}(\text{length}(\gamma))^2.$$
Now suppose that $\beta$ is a simple closed curve of length $2n+2$, and equip $\beta$ with an orientation. Choose two parallel edges $e_1=(a_1,b_1),e_2=(a_2,b_2)$ in $\beta$ with opposite orientations. Removing $e_1,e_2$ and identifying $a_i$ with $b_i$ for $i=1,2$ results in a new curve $\beta'$ of length $2n$. The identification means that $\beta'$ is a union simple curves $\gamma_1,...,\gamma_k$ of length $\leq 2n$ and double edges (note that each double edge contributes $2$ to the length of $\beta'$). By the inductive hypothesis, there exist surfaces $S_i$ with boundary $\gamma_i$ satisfying the bound for each $i=1,...,k$. To get a surface with boundary $\beta$, we find a path in $\m Z^3$ from $a_1$ to $a_2$ along $\cup_{i=1}^k S_i\cup D$, where $D$ is the double edges in $\beta'$. Since $\text{length}(\beta')\leq 2n$, we can find a path of length $\leq n$. We add back the edges $e_1, e_2$, splitting the path into two parallel paths. We then add squares along the path from $e_1$ to $e_2$ to construct a surface $S$ with boundary $\beta$ satisfying 
\begin{align*}
    \text{area}(S) \leq \text{area}(\cup_{i=0}^k S_i) + n \leq \frac{n(n+1)}{2}.
\end{align*}
\end{proof}

This can easily be extended to the monochromatic case; the only thing that changes is the constant.

\begin{cor}\label{iso2}
    Given any collection of simple closed curves $\gamma_1,...,\gamma_k$ in $\mathbb{Z}^3$, there is a monochromatic surface $S$ with $\Gamma=\gamma_1\cup \dots\cup\gamma_k$ as its boundary and 
    \begin{align*}
        \text{area}(S) \leq \frac{5}{8} \text{length}(\Gamma)^2
    \end{align*}
\end{cor}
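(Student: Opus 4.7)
The plan is to start from the (possibly mixed-color) surface produced by Proposition~\ref{iso1} and push each white-outward square out through its adjacent odd cube, paying only a factor of $5$ in area. Concretely, apply Proposition~\ref{iso1}, extended to the multicurve $\Gamma=\gamma_1\cup\cdots\cup\gamma_k$ via the remark following it, to obtain an oriented discrete surface $S_0$ with $\partial S_0=\Gamma$ and $\text{area}(S_0)\leq \tfrac{1}{8}(\text{length}(\Gamma))^2$. Partition its faces into the set $B$ of \emph{black-outward} faces (outward normal from odd to even) and the set $W$ of \emph{white-outward} faces (the reverse); without loss of generality we aim for an all-black monochromatic surface. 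For each $w\in W$ let $C_w$ be the unique odd unit cube that $w$'s outward normal points into, let $\mc C:=\{C_w:w\in W\}$, and set
\[
S_1 \;:=\; S_0\,\triangle\, \Big(\bigcup_{C\in\mc C}\partial C\Big),
\]
treating surfaces as formal $\m F_2$-sums of squares, with $\triangle$ denoting symmetric difference. Since each $\partial C$ is closed, $\partial S_1=\partial S_0=\Gamma$.

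A face $f\in S_1$ arises in one of two ways: either (a) $f\in S_0$ and $f$ does not lie on any $C\in\mc C$, which forces $f$ to be black-outward (any white-outward face of $S_0$ would sit on its own $C_f\in\mc C$ and cancel), or (b) $f$ lies on some odd $C\in\mc C$ with $f\notin S_0$, in which case orienting $f$ outward from $C$ is automatically the black-outward orientation. So $S_1$ is monochromatic black with boundary $\Gamma$. For the area, note that two distinct odd cubes in $\m Z^3$ share no faces, so the $\{\partial C\}_{C\in\mc C}$ are pairwise disjoint and $|T|=6|\mc C|$, where $T:=\bigcup_{C\in\mc C}\partial C$. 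Writing $k_C$ for the number of faces of $W$ lying on $\partial C$, we have $\sum_C k_C=|W|$ and $|\mc C|\leq |W|$, hence
\[
|S_1| \;=\; |S_0|+|T|-2|S_0\cap T| \;\leq\; |S_0|+6|W|-2|W| \;\leq\; 5\,|S_0|,
\]
which, combined with Proposition~\ref{iso1}, yields $\text{area}(S_1)\leq \tfrac{5}{8}(\text{length}(\Gamma))^2$.

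The main point requiring care---and the only real ``obstacle''---is confirming that this $\m F_2$-bookkeeping genuinely produces a monochromatic discrete surface with the prescribed boundary rather than just a formal chain. Everything rests on two elementary facts already invoked: $\partial(\partial C)=0$ for any unit cube $C$ (so the XOR does not alter the overall boundary) and every face of $\partial C$ with $C$ odd, oriented outward from $C$, is black-outward (so all newly introduced squares carry the desired color). The analogous all-white case is symmetric, using even cubes in place of odd ones.
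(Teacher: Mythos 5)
Your proposal is correct and matches the paper's proof in substance: the paper says ``replace every white square by at most 5 black squares,'' which is exactly your operation of pushing each white-outward face through its adjacent odd cube, and your $\m F_2$ symmetric-difference formalism is a careful rigorization of that step, verifying both boundary preservation and that the resulting orientation is consistently black-outward. The only thing to note is that you have filled in details the paper leaves implicit (pairwise-disjointness of the odd-cube boundaries, and the accounting $|S_1|\le|S_0|+4|W|\le 5|S_0|$), but the underlying idea is identical.
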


\begin{proof}
    Using Proposition \ref{iso1}, we can find a surface $T$ with boundary $\gamma_1\cup\dots\cup\gamma_k$ satisfying $\text{area}(T) \leq \frac{1}{8}\text{length}(\Gamma)^2$. Replacing every white square in $T$ by at most 5 black squares, we get a new surface $S$ which is monochromatic (it is all black) satisfying the same bound with the constant $\frac{5}{8}$. 
\end{proof}

\subsection{Area growth of minimal monochromatic discrete surfaces}\label{sec: area growth}

We now use the discrete isoperimetric inequalities from the previous section to show that minimal monochromatic discrete surfaces have quadratic area growth (Proposition \ref{prop:quadratic growth}). We then apply this to the cube annulus $A_n = B_n\setminus B_{(1-\delta)n}$ to get two results (Lemma \ref{annulusbound} and  Lemma \ref{lem: indenting}) which will serve as lemmas for the patching theorem (Theorem \ref{patching}). 

\begin{prop}\label{prop:quadratic growth}
	Let $S$ be a minimal monochromatic discrete surface. Let $p\in \m Z^3 + (\frac{1}{2},\frac{1}{2},\frac{1}{2})$ be a vertex on $S$ and let $B_n(p) = p + [-n,n]^3$ be such that $S$ is not contained in $B_n(p)$. Then there is a universal constant $\kappa>0$ (i.e., independent of $S$ and $n$) such that 
	\begin{align*}
	\text{area}(S) \geq \kappa n^2.
	\end{align*}
\end{prop}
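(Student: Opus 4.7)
The plan is to run a discrete ``coarea plus isoperimetric bootstrap'' around the point $p$. For each integer $1 \le k \le n$, set $a(k) = \mathrm{area}(S \cap B_k(p))$ and let $\ell_k$ denote the total discrete length of the multicurve $\Gamma_k = S \cap \partial B_k(p)$. Since $p$ is a vertex of $S$ but $S \not\subset B_n(p)$, by passing (if necessary) to the connected component of $S$ containing $p$---which is itself minimal monochromatic with its own boundary---we may assume $\ell_k \ge 1$ for every $1 \le k \le n$.

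First, I would combine minimality of $S$ with the isoperimetric inequality to obtain the upper bound
\[
 a(k) \le \tfrac{5}{8}\,\ell_k^{\,2}.
\]
Apply Corollary \ref{iso2} to the multicurve $\Gamma_k$ (together with $\partial S \cap B_k(p)$ if the latter is nonempty, so as to get a true boundary of a valid replacement surface) to produce a monochromatic discrete surface $T_k$ with the same boundary curves as $S \cap B_k(p)$ and area at most $\tfrac{5}{8}\,\ell_k^{\,2}$. Substituting $T_k$ for $S \cap B_k(p)$ inside $S$ gives another monochromatic discrete surface with the same outer boundary as $S$, so minimality of $S$ forces $a(k) \le \mathrm{area}(T_k) \le \tfrac{5}{8}\,\ell_k^{\,2}$.

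Second, I would prove a discrete coarea lower bound
\[
 a(k+1) - a(k-1) \ge c_0\,\ell_k
\]
for some universal $c_0 > 0$. The point is that every edge of $\Gamma_k$ lies on at least one face of $S$ all of whose vertices are within distance one of $\partial B_k(p)$, and hence lies in the shell $B_{k+1}(p) \setminus B_{k-1}(p)$. Since each face of $S$ has only four edges (and so can contribute at most four edges to $\Gamma_k$), there must be at least $\ell_k/4$ faces of $S$ in this shell, yielding the claim with $c_0 = 1/4$.

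Combining the two bounds gives the discrete differential inequality $a(k+1) - a(k-1) \ge c_0 \sqrt{8\,a(k)/5}$, a finite-difference analogue of $(\sqrt{a})' \gtrsim \mathrm{const}$. A short induction on $k$ (handling even and odd $k$ separately and telescoping) then yields $\sqrt{a(k)} \ge c\,k$ for a universal $c > 0$, so $a(k) \ge c^{2} k^{2}$. Setting $k = n$ gives $\mathrm{area}(S) \ge a(n) \ge \kappa n^{2}$ with $\kappa = c^{2}$. The main obstacle will be carrying out the coarea step cleanly: one must verify the combinatorial constant by enumerating how the possible orientations of a square face of $S$ can meet $\partial B_k(p)$, and simultaneously arrange that $\Gamma_k$ (possibly augmented by $\partial S \cap B_k(p)$) is a legitimate multicurve to which Corollary \ref{iso2} applies. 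The conversion of the second-order discrete inequality into the quadratic lower bound on $a(k)$ is then a routine telescoping argument.
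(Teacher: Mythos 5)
Your proof is correct and takes essentially the same approach as the paper's: combine the monochromatic discrete isoperimetric inequality (Corollary \ref{iso2}) with a discrete coarea estimate of the form ``new area across a shell $\geq$ $\tfrac14$ times boundary length'' to obtain a discrete differential inequality that forces quadratic area growth. The only cosmetic difference is that you grow cubes $B_k(p)$ outward from $p$ for $k = 1,\dots,n$, while the paper grows annuli $A_k = B_{m+k}(p)\setminus B_{m-k}(p)$ (with $m = \lfloor n/2\rfloor$) outward and inward simultaneously; this does not change the argument.
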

\begin{proof}
    To do this, we show that $\text{area}(S\cap B_n(p))$ grows quadratically in $n$. Let $m = \lfloor{\frac{n}{2}}\rfloor$. For $k\leq m$, define annular regions
    $$A_k = B_{m+k}(p)\setminus B_{m-k}(p)$$
    Let $S_k = S\cap A_k$ be $S$ restricted to $A_k$ and let $\Gamma_k=\partial S_k$. By Corollary \ref{iso2}, 
    \begin{align*}
        \text{area}(S_k) \leq \frac{5}{8} \text{length}(\Gamma_k)^2.
    \end{align*}
    Note that $S_k\cap \partial A_k$ might be larger than $\Gamma_k$, since there might be squares from $S_k$ contained in $\partial A_k$. 

    Any face in $S_{k+1}\setminus S_k$ corresponds to at most $4$ edges along $\Gamma_k$. Therefore 
    \begin{align*}
        \text{area}(S_{k+1})-\text{area}(S_k) \geq \frac{1}{4} \text{length}(\Gamma_k) \geq \frac{1}{4}\sqrt{\frac{5}{8}} \sqrt{\text{area}(S_k)}.
    \end{align*}
    Therefore the function $f(k):=\text{area}(S_k)$ satisfies the inequality $f(k+1)-f(k)\apprge \sqrt{f(k)}$. Extending $f$ linearly to a continuous function that is differentiable from the left, this becomes $f'(k) \apprge \sqrt{f(k)}$, which implies that $f$ grows at least quadratically in $k$. Applying this to $S$ itself we get that $\text{area}(S) \geq \kappa n^2$, where the universal constant $\kappa$ comes from the isoperimetric inequalities. 
\end{proof}

\begin{lemma}\label{annulusbound}
	Let $B_n=[-n,n]^3$ and suppose that $A = B_n\setminus B_{(1-\delta)n}$ for some $\delta\in (0,1)$. Suppose that $S$ is a minimal monochromatic discrete surface in $A$ which connects the inner and outer boundaries of $A$. Then there exist constants $c_1,c_2$ independent of $S$ and $n$ such that 
\begin{align*}
	c_1 n^2 \geq \text{area}(S) \geq c_2 n^2
\end{align*}
where $c_2 \sim \delta^2$. 
\end{lemma}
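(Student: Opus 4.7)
The plan is to prove the two inequalities separately. The lower bound is a clean application of Proposition~\ref{prop:quadratic growth}, while the upper bound comes from the minimality of $S$ against an explicit competitor built from pieces of $\partial A$.

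For the lower bound $\text{area}(S) \geq c_2 n^2$ with $c_2 \sim \delta^2$, I would find a point $p$ of $S$ sitting well inside the annulus. Since $S$ joins the inner boundary $\partial B_{(1-\delta)n}$ and the outer boundary $\partial B_n$, it must intersect the intermediate sphere $\partial B_{(1-\delta/2)n}$; any such intersection point $p$ has lattice distance at least roughly $\delta n/2$ from both boundary components of $A$, so $B_{\delta n/4}(p) \subset A$. Because $S$ extends all the way to both boundary components of $A$, it is certainly not contained in this ball. Proposition~\ref{prop:quadratic growth}, applied at $p$ with radius $\delta n / 4$, then gives
\[
\text{area}(S) \,\geq\, \kappa (\delta n/4)^2 \,=\, \tfrac{\kappa}{16}\,\delta^2 n^2,
\]
so one can take $c_2 = \kappa/16$.

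For the upper bound $\text{area}(S) \leq c_1 n^2$, I would construct a monochromatic discrete competitor $S' \subset A$ with $\partial S' = \partial S$ and $\text{area}(S') = O(n^2)$, and conclude by minimality of $S$. The boundary $\partial S$ is a disjoint union of closed lattice curves contained in $\partial A = \partial B_n \sqcup \partial B_{(1-\delta)n}$; since each of these components is a topological sphere of surface area $O(n^2)$, choosing for each sphere a subset of the regions cut out by $\partial S$ on that sphere gives a discrete surface $R \subset \partial A$ with $\partial R = \partial S$ and $\text{area}(R) \leq \text{area}(\partial A) = O(n^2)$. To upgrade $R$ to a monochromatic competitor of the same color as $S$, each face of the ``wrong'' color is replaced by a bounded-size monochromatic patch sitting just inside $A$ with the same boundary edge set. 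This local surgery multiplies the total area by at most a universal factor, yielding a valid monochromatic $S'$ inside $A$ with $\text{area}(S') \leq C n^2$; minimality of $S$ then gives $\text{area}(S) \leq C n^2$, and $c_1 = C$ works.

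The main obstacle is this monochromatization step in the upper bound. The bipartite structure of $\m Z^3$ forces all six faces of any single unit cube to carry the \emph{same} monochromatic color when oriented consistently outward from that cube, so one cannot simply replace a single wrongly-colored face by the remaining five faces of the same cube (that operation preserves color rather than flipping it). The replacement therefore has to reach slightly further into $A$, and one must verify that (i) each local replacement is itself a valid monochromatic discrete patch with matching boundary, (ii) distinct replacements are compatible along shared boundary edges, and (iii) the resulting $S'$ lies in $A$ and remains an embedded discrete surface. This is the only delicate piece of the argument; the lower bound, by contrast, is immediate from Proposition~\ref{prop:quadratic growth} and requires no additional machinery.
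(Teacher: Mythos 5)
Your lower bound is essentially the paper's argument: pick a point $p$ of $S$ roughly midway across the annulus, observe that $S$ is not contained in a ball of radius about $\delta n/4$ around $p$ (since $S$ reaches both boundary components of $A$), and apply Proposition~\ref{prop:quadratic growth}. One small slip: the constant should be $c_2 = \kappa\delta^2/16$, not $\kappa/16$; the $\delta^2$ factor is exactly what the statement asks you to track.

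The upper bound has the right skeleton (build a monochromatic competitor from $\partial A$ and invoke minimality), but the ``main obstacle'' you flag is illusory, and your reasoning behind it contains an orientation error. It is true that the six faces of a cube all carry the same color when oriented \emph{outward from that cube}. But when you cap a face $f$ of $\partial A$ by detouring around an adjacent cube $c$, the replacement faces are oriented outward from the \emph{new enclosed region}, not outward from $c$ in the same sense in which $f$ was. On $\partial A$ the face $f$ points from inside $A$ to outside $A$, i.e.\ \emph{into} the outside cube $c = b$; the five new faces of $\partial(A \cup \{b\})$ point \emph{out of} $b$. These orientations are opposite relative to $b$, so the color flips: if $f$ is white (even $a$ inside $A$ to odd $b$ outside), the five replacement faces go odd-to-even and are therefore black. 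The paper's proof uses precisely this one-cube cap at every white square of $\partial A$, producing a monochromatic surface $T$ with $\text{area}(T) \leq 5\,\text{area}(\partial A) = O(n^2)$, then extracts $S' \subset T$ with $\partial S' = \partial S$ and concludes by minimality of $S$. (Capping instead with the even cube $a$ inside $A$ works too, by the same orientation argument, if you want the competitor to stay inside $A$.) So the three verification items you left open, and the proposed surgery that ``reaches further into $A$,'' are not needed; the single-cube cap you dismissed is the entire monochromatization.
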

\begin{proof}
	For the upper bound, notice that $\partial S \subset \partial A$. From $\partial A$, we construct a monochromatic surface $T$ by capping every white square on $\partial A$ with a odd cube. There is a surface $S'\subset T$ with the same boundary as $S$. Since $\text{area}(T) \leq 5\, \text{area}(\partial A)$, there exists a constant $c_1$ such that $\text{area}(T)<c_1 n^2$. Since $S$ is a minimal monochromatic surface, 
	\begin{align*}
		c_1 n^2 \geq \text{area}(S') \geq \text{area}(S).
	\end{align*}
	Now we prove the lower bound. Since $S$ connects the inner and outer boundaries, there is a point $p\in S$ where we can apply Proposition \ref{prop:quadratic growth} to $B_{(\delta/3) n}(p)$. Hence
	\begin{align*}
		\text{area}(S) \geq \kappa ((\delta/3) n)^2 =: c_2 n^2
	\end{align*}
	and $c_2$ is of order $\delta^2$. 
\end{proof}

The next application of the area growth results is loosely that minimal surfaces ``spread out." If $X\subset \partial B_n$ for some $n$ is a surface, we define the \textit{$\epsilon$ covering area} of $X$ to be the total area in disjoint $\epsilon n \times \epsilon n$ size squares needed to contain $X$. We denote this by $\text{Cov}_{\epsilon}(X)$.\termindex{Chapter 6!$\epsilon$ covering area}\symindex{Chapter 6!$\text{Cov}_{\epsilon}(X)$ - $\epsilon$ covering area}

\begin{lemma}[Indenting lemma]\label{lem: indenting}
Fix $\delta>0$. Let $A= B_n\setminus B_{(1-\delta)n}$, and suppose that $S$ is a minimal monochromatic discrete surface connecting the inner and outer boundaries of $A$. Let $\Gamma_l=\partial (S\cap \partial B_l)$. There exist constants $c,c'>0$ independent of $S$ and $n$ such that for any $\epsilon>0$ small enough, there exists $l\in ((1-\epsilon^{1/2})n , n)$ such that 
\begin{align*}
    \text{Cov}_{\epsilon}(S \cap \partial B_l) \leq c \epsilon^{1/2} n^2 
\end{align*}
and
\begin{align*}
    \text{length}(\Gamma_l) \leq c' \epsilon^{-1/2} n.
\end{align*}

The analogous statements also hold for $l\in ((1-\delta)n, (1-\delta + \epsilon^{1/2})n)$.
\end{lemma}
\begin{proof}
    Cut the region between $\partial B_n$ and $\partial B_{(1-\epsilon^{1/2})n}$ into $M = \lfloor \frac{1}{3\epsilon^{1/2}}\rfloor$ layers $L_1,...,L_M$ of width $\frac{\epsilon^{1/2} n}{M}\geq 3\epsilon n$. By Lemma \ref{annulusbound} plus the pigeonhole principle, there exists $j$ such that 
    \begin{equation}\label{eq: area S cap L upper bound}
        \text{area}(S\cap L_j) \leq \frac{c_1 n^2}{M} \leq 3 c_1 \epsilon^{1/2} n^2.
    \end{equation}
    We further subdivide $L_j$ into three layers $L_j^{(1)},L_j^{(2)}, L_j^{(3)}$, where $L_j^{(2)}$ is the middle one. These each have width at least $\epsilon n$. The $l$ we find satisfying the conditions will have $\partial B_l \subset L_j^{(2)}$. 

    Let $\mc J$ be the collection of $(\epsilon n)^3$ sized boxes needed to cover $S\cap L_j^{(2)}$ and let $J = |\mc J|$. Then there are at least $J/9$ disjoint cubes of size $(3\epsilon n)^3$ in $L_j$, centered on an $(\epsilon n)^3$ cube in $L_j^{(2)}$, such that the central cube is in $\mc J$ (i.e., $S$ intersects the $(\epsilon n)^3$ central cube of this $(3\epsilon n)^3$ cube). Given this, we can apply Proposition \ref{prop:quadratic growth} to a point in each of the $J/9$ boxes with radius $\epsilon n$ to get the bound 
     \begin{equation}\label{eq: area S cap L lower bound}
        \text{area}(S\cap L_j) \geq \kappa \frac{J}{9} (\epsilon n)^2.
    \end{equation}
    Combining Equations \eqref{eq: area S cap L upper bound} and \eqref{eq: area S cap L lower bound} and solving for $J$ gives 
    \begin{align*}
        J \leq \frac{27 c_1}{\kappa} \epsilon^{-3/2}.
    \end{align*}
    Therefore for any $l$ such that $\partial B_l \subset L_j^{(2)}$, 
    \begin{align*}
        \text{Cov}_\epsilon(S\cap \partial B_l) \leq J (\epsilon n)^2 \leq \frac{27 c_1}{\kappa} \epsilon^{1/2} n^2.
    \end{align*}
    so the first part holds with $c = 27 c_1/\kappa$. 
        
    For the second part, we note that $L_j^{(2)}$ has width $\epsilon n$. Any square in $S\cap L_j^{(2)}$ contributes length at most four to the curves $\Gamma_l$ for $l$ such that $\partial B_l\subset L_j^{(2)}$, thus $\sum_{l: \partial B_l\subset L_j^{(2)}}\text{length}(\Gamma_l)\leq 4\text{area}(S\cap L_j^{(2)})$. Therefore by Equation \eqref{eq: area S cap L upper bound} and the pigeonhole principle again, we can find $l$ with $\partial B_l\subset L_j^{(2)}$ such that 
    \begin{align*}
        \text{length}(\Gamma_l) \leq \frac{12 c_1 \epsilon^{1/2} n^2}{\epsilon n} =  12 c_1 \epsilon^{-1/2} n,
    \end{align*}
    so the second part holds with $c' = 12 c_1$. 
\end{proof}

\subsection{Tilings sampled from ergodic measures}\label{sec: ergodic}

Recall that $\mc P_e$ denotes the measures on $\Omega$ which are ergodic with respect to the action of $\threeeven$, and that $\{\eta_i\}_{i=1}^3$ denote the standard unit basic vectors. In this section we prove a few results for tilings sampled from ergodic measure of mean current $s\in \mc O$. In the proof of the patching theorem, we use a ``test tiling" sampled from an ergodic measure  which we compare with the two other tilings we want to patch. First we note that there exist ergodic measures of mean current $s$ for all $s\in \mc O$.

\begin{lemma}\label{lem: ergodic measures exist}
    For every $s\in \mc O$, there exists an ergodic measure on dimer tilings of $\m Z^3$ of mean current $s$. 
\end{lemma}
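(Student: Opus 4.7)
The strategy splits according to whether $s$ lies on $\partial \mc O$ or in $\text{Int}(\mc O)$. For $s \in \partial \mc O$, the desired measure has already been constructed in Section~\ref{sec:extreme}: when $s$ lies in the interior of a face of $\partial\mc O$, Corollary~\ref{Corollary: entropy maximiser} produces an ergodic measure as an i.i.d.\ stack of the unique lozenge EGM of slope $(|s_1|,|s_2|,|s_3|)$ across the relevant slabs; the edge and vertex cases reduce similarly to one-dimensional extreme constructions (most degenerately, the Dirac measures on the six brickwork tilings at the vertices of $\mc O$, which are trivially $\threeeven$-ergodic since their $\threeeven$-orbits are singletons).

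For $s \in \text{Int}(\mc O)$ with rational coordinates, my plan is to construct a periodic dimer tiling $\tau$ of $\m Z^3$, with a fundamental cube of side $2q$ (where $q$ is a common denominator), whose tile-orientation densities realize $s$ exactly. Existence of such a $\tau$ would follow from an application of Hall's matching theorem on the torus $\m T^3_{2q}$ combined with a density-counting argument on the six tile types. The uniform measure $\mu_\tau$ on the (finite) $\threeeven$-orbit of such a $\tau$ is then automatically $\threeeven$-ergodic---it is supported on a single closed $\threeeven$-invariant finite set---and by construction $s(\mu_\tau) = s$.

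For irrational $s \in \text{Int}(\mc O)$, the plan is to approximate by rationals $s_k \to s$, produce ergodic measures $\mu_k \in \mc P^{s_k}$ as in the previous paragraph, and extract a weakly convergent subsequence $\mu_k \to \mu \in \mc P^s$ using compactness of $\mc P$. The main obstacle is that $\mu$ need not itself be ergodic, and its ergodic components could \emph{a priori} have mean currents that merely average to $s$. To guarantee an ergodic component with mean current exactly $s$, I would enlarge each $\mu_k$ to the $\threeeven$-translate-average of the uniform measure on \emph{all} tilings of $\m T^3_{n_k}$ with mean current $s_k$, and establish a concentration estimate of the form: under $\mu_k$, the local mean current $s_N(\tau)$ (averaged over a box of side $N$) differs from $s_k$ by $o(1)$ in probability as $N\to \infty$, uniformly in $k$. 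Combined with the pointwise ergodic theorem for the $\threeeven$-action, this would force $s_{\mathrm{loc}}(\tau) := \lim_N s_N(\tau)$ to equal $s$ at $\mu$-almost every $\tau$, and hence $s(\nu) = s$ for $\pi_\mu$-almost every ergodic component $\nu$ in the ergodic decomposition $\mu = \int \nu\, \dd \pi_\mu(\nu)$.

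The technical heart of the argument, and the part I expect to be the main obstacle, is this uniform concentration estimate for local tile densities under torus tilings with fixed mean current. Such a bound is familiar in statistical mechanics but requires some care at this early stage of the paper, since the full entropy/LDP machinery has not yet been set up. A cleaner alternative, which would bypass weak limits entirely, is to give a fully explicit construction for arbitrary $s \in \text{Int}(\mc O)$---for instance, by modifying a brickwork tiling with a stationary, $\threeeven$-ergodic pattern of controlled-density defects arranged so that their mean current equals $s$---but constructing such a defect pattern realizing an arbitrary interior mean current in three dimensions is itself nontrivial and is precisely the sort of step the paper's later chain-swapping machinery is designed to handle.
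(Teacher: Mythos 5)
Your approach is genuinely different from the paper's and has a real gap. The paper's proof uses chain swapping (Section~\ref{Subsection: Dimer Swapping}): starting from the Dirac measures on the six brickwork tilings at the vertices $\mc V$ of $\mc O$, it writes any target $s$ as $(1-p)s_1 + p s_2$ with $s_1, s_2$ in a lower-dimensional stratum of $\partial \mc O$, takes an ergodic coupling of ergodic measures of those mean currents, chain-swaps with probability $p$, and applies Propositions~\ref{prop:preserve_ergodicity} and~\ref{prop: coupling swap mean current} (both elementary, self-contained computations that do not depend on this lemma) to conclude the marginal is ergodic with mean current $s$. This handles edges, then faces, then interior points in one uniform step, with no rationality hypothesis and no weak limits. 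Your boundary case is fine (the paper's remark explicitly notes this alternative), and your rational interior case is plausible, though Hall's theorem on the torus only gives tileability and not density control, so it would actually need an explicit construction along the lines of the periodic tilings $\tau_v$ built in Section~\ref{sec: shining light}.

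The genuine gap is the irrational interior case. You correctly identify that a weak limit $\mu = \lim_k \mu_k$ of ergodic measures of mean currents $s_k \to s$ need not be ergodic, and that its ergodic components may have mean currents that merely \emph{average} to $s$; your fix is a uniform-in-$k$ concentration estimate for local empirical mean currents under the torus-tiling measures $\mu_k$. But you do not prove that estimate, and I do not see how to do so at this point in the paper: controlling such fluctuations uniformly in $k$ looks like it requires exactly the entropy and Gibbs-measure machinery developed in Section~\ref{sec:entropy}, parts of which rely (directly or indirectly) on this very lemma, so there is a circularity risk. Chain swapping avoids weak limits entirely by transforming ergodicity and mean current algebraically, which is precisely why the paper chose it. Unless you can supply the concentration estimate by a genuinely elementary argument, the irrational case remains open.
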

\begin{rem}
   We use methods called \textit{chain swapping} described in Section \ref{Subsection: Dimer Swapping} to construct ergodic measures of any edge, then face, then interior mean current from the ones for $s\in \mc V$. The only results that we use about chain swapping here (Propositions \ref{prop:preserve_ergodicity} and \ref{prop: coupling swap mean current}) are essentially computations, and do not rely on any results presented in this section. 

   Note that by Theorem \ref{thm: extremal_entropy}, there exist ergodic measures for all mean currents $s\in \partial \mc O$, so we only need to use chain swapping to show existence for $s\in \text{Int}(\mc O)$. However we choose not to rely on this here, since the chain swapping techniques allow us to show existence easily just from existence of ergodic measures at the vertices of $\partial \mc O$. 
\end{rem}
\begin{proof}[Proof of Lemma \ref{lem: ergodic measures exist}]
   Let $\mc V\subset \partial \mc O$ denote its vertices. For each $s\in \mc V$, the atomic measure which samples the corresponding brickwork pattern is an ergodic measure of mean current $s$. 
        
   Given any $s\in \text{Int}(\mc O)$ (resp.\ $s$ contained in a face of $\partial \mc O$, resp.\ an edge of $\partial \mc O$), there exists $p\in(0,1)$ such that 
    \begin{align*}
        s = (1-p) s_1 + p s_2
    \end{align*}
    for $s_1, s_2\in \partial \mc O$ (resp.\ contained in the edges of $\partial \mc O$, resp.\ contained in $\mc V\subset \partial \mc O$). Let $\mu_1$ and $\mu_2$ be ergodic measures of mean current $s_1,s_2$ respectively. Let $\mu$ be a measure on $\Omega\times \Omega$ which is an ergodic coupling of $\mu_1$ and $\mu_2$, let $\mu'$ be obtained from $\mu$ by chain swapping with swap probability $p$, and let $\mu_1'$ and $\mu_2'$ denote its marginals. By Proposition \ref{prop: coupling swap mean current}, the mean current of $\mu_1'$ is
    \begin{align*}
        s(\mu_1') = (1-p) s_1 + p s_2 = s.
    \end{align*}
    By Proposition \ref{prop:preserve_ergodicity}, $\mu'$ is an ergodic measure on $\Omega\times \Omega$. Therefore $\mu_1'$ is an ergodic measure of mean current $s$. 
\end{proof}

Recall that the \textit{pretiling flow} $v_\tau$ is defined for $e$ oriented from even to odd by
\begin{align*}
    v_\tau(e) = \begin{cases} +1 &\quad e\in \tau \\ 0 &\quad e\not\in \tau.\end{cases}
\end{align*}
Let $S$ be an oriented discrete surface with outward normal vector $\xi$. Applying Definition \ref{def: flux} to $v_\tau$, the \textit{flux} of $v_\tau$ through $S$ is \termindex{Chapter 6!flux}\symindex{Chapter 6!$\text{flux}(v_\tau,S)$ - flux of the flow $v_{\tau}$ through the surface $S$}
\begin{align*}
    \text{flux}(v_\tau,S) = \sum_{e\in E(\m Z^3), e\cap S \neq \emptyset} \text{sign}\langle \xi(e\cap S), e\rangle v_\tau(e).
\end{align*}
As in the definition, $E(\m Z^3)$ denotes the edges of $\m Z^3$ oriented from even to odd. Flux of $v_\tau$ has a simple combinatorial interpretation. It counts the number of tiles in $\tau$ which cross $S$, with sign corresponding to the parity of the tile. If $S$ is monochromatic black, then $\text{flux}(v_\tau,S)$ is minus the number of tiles in $\tau$ which cross $S$. 

\begin{thm}\label{thm: subaction ergodic}
    Let $P$ be a coordinate plane with normal vector $\eta_i$ for some $i=1,2,3$, and let $P_n = P\cap B_n$ (recall $B_n=[-n,n]^3$). If $\mu\in \mc P_e$ has mean current $s\in \mc O$, then 
    \begin{align*}
        \lim_{n\to \infty} \frac{1}{|P_n|} \text{flux}(v_\tau,P_n) = \frac{1}{2}\langle s,\eta_i\rangle 
    \end{align*}
    where this limit converges almost surely and in probability.
\end{thm}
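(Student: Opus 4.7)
The plan is to combine the multidimensional Birkhoff ergodic theorem for the $\Z^3_{\text{even}}$-action of $\mu$ (applied along three-dimensional slabs) with the divergence-free property of $f_\tau=v_\tau-r$ to pass from a Cesaro average of fluxes through many parallel planes to the flux through the single plane $P_n$. Without loss of generality take $P=\{x_i=1/2\}$, and set $P^{(j)}=P+j\eta_i$, $P_n^{(j)}=P^{(j)}\cap B_n$, $F(j)=\text{flux}(v_\tau,P_n^{(j)})$, and $\phi(v,\tau)=\mathbbm{1}(v\sim_\tau v+\eta_i)-\mathbbm{1}(v\sim_\tau v-\eta_i)$ for even $v$. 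Definition~\ref{def: mean current} gives $\int \phi(0,\tau)\,d\mu = s_i := \langle s,\eta_i\rangle$.

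First I would establish a telescoping identity. Each edge parallel to $\eta_i$ crossing $P^{(j)}$ contributes $\pm\mathbbm{1}(e\in\tau)$ to $F(j)$, with sign $+1$ (resp.\ $-1$) when its endpoint at height $x_i=j$ is even (resp.\ odd); reorganising these contributions by the even endpoint yields
$$\sum_{j=-m}^{m-1}F(j) \;=\; \sum_{\substack{v\in B_n\cap\Z^3_{\text{even}}\\ -m+1\leq v_i\leq m-1}} \phi(v,\tau) \;+\; E_{n,m}(\tau),$$
where $E_{n,m}(\tau)$ involves only even $v$ with $v_i\in\{-m,m\}$, so $|E_{n,m}|\leq C|P_n|$ uniformly in $\tau$. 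Choose $m=m(n)=\lfloor n^{1/2}\rfloor$. The slabs $S_n := B_n\cap\{|v_i|\leq m(n)-1\}$ are boxes in $\Z^3_{\text{even}}$ with all three side-lengths diverging, hence form a tempered van Hove (Folner) sequence. The Wiener multidimensional pointwise ergodic theorem applied to the $\Z^3_{\text{even}}$-ergodic measure $\mu$ gives $|S_n\cap\Z^3_{\text{even}}|^{-1}\sum_v \phi(v,\tau) \to s_i$ a.s.\ and in $L^1$; since $|S_n\cap\Z^3_{\text{even}}|\sim m(n)|P_n|$ and the boundary correction becomes $O(1/m(n))$ after division, I obtain
$$\frac{1}{2m(n)}\sum_{j=-m(n)}^{m(n)-1}\frac{F(j)}{|P_n|} \;\xrightarrow[n\to\infty]{\text{a.s.}}\; \frac{s_i}{2}.$$

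To pass from this Cesaro average to the flux through the single plane $P_n=P_n^{(0)}$, I would invoke the divergence-free property of $f_\tau$: applying the discrete divergence theorem to the sub-box of $B_n$ bounded by $P^{(0)}$ and $P^{(j)}$ expresses $\text{flux}(f_\tau,P_n^{(j)}) - \text{flux}(f_\tau,P_n^{(0)})$ as the flux of $f_\tau$ through the lateral faces of that sub-box, of total area at most $8n|j|$; since $|f_\tau|\leq 1$ edgewise, $|\text{flux}(f_\tau,P_n^{(j)}) - \text{flux}(f_\tau,P_n^{(0)})|\leq Cn|j|$. A direct count shows $|\text{flux}(r,P_n^{(j)})|\leq Cn$ (the $\pm\frac{1}{6}$ contributions from edges of the two parities cancel up to an $O(n)$ imbalance from the boundary of $P_n^{(j)}$), hence $|F(j)-F(0)|\leq Cn(|j|+1)$. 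Averaging over $|j|\leq m(n)$,
$$\left|\frac{F(0)}{|P_n|} - \frac{1}{2m(n)}\sum_{j=-m(n)}^{m(n)-1}\frac{F(j)}{|P_n|}\right| \;=\; O\!\left(\frac{m(n)}{n}\right) \;=\; O(n^{-1/2}) \;\to\; 0,$$
so combining with the previous display yields $F(0)/|P_n|\to s_i/2$ almost surely; convergence in probability is immediate from the uniform bound $|F(0)/|P_n||\leq 1$.

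The main obstacle is the balancing of scales: $m(n)\to\infty$ is required for the Wiener ergodic theorem to kick in on the slabs $S_n$, while $m(n)=o(n)$ is required for the divergence-free estimate to bridge $F(0)$ and the Cesaro average of $F(j)$. A crucial feature of this approach is that one does \emph{not} need ergodicity of $\mu$ under the two-dimensional subaction of $\Z^3_{\text{even}}$ stabilising $P$, which can fail in general; the divergence-free property of $f_\tau$ is precisely what enables trading an individual two-dimensional flux for a three-dimensional ergodic average taken in the perpendicular direction.
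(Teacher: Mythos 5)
Your proof is correct, and it takes a genuinely different route from the paper. The paper fixes the plane $P$, applies the pointwise ergodic theorem for the two-dimensional subaction $\Z^2_{\text{even}}=(\Z^2\times\{0\})\cap\threeeven$ acting along $P$ to obtain the existence of the a.s.\ limit $\text{flux}^\star(v_\tau,P)$, and then uses the divergence-free property of $f_\tau$ to upgrade the $\Z^2_{\text{even}}$-invariance of $\text{flux}^\star$ to $\threeeven$-invariance; ergodicity of $\mu$ under the full $\threeeven$-action then forces $\text{flux}^\star$ to be a.s.\ constant, equal to $\frac{1}{2}\langle s,\eta_i\rangle$ by an integral computation. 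Your argument never invokes the two-dimensional subaction ergodic theorem. Instead you telescope the fluxes $F(j)$ through a stack of parallel planes into a slab sum of the local observable $\phi$, apply the three-dimensional tempered Følner pointwise ergodic theorem on slabs of thickness $m(n)=\lfloor n^{1/2}\rfloor$ (this is a legitimate nested Følner sequence of rectangles with all side lengths diverging, so temperedness holds), and use the divergence-free property quantitatively to bound $|F(j)-F(0)|\leq Cn(|j|+1)$ and thereby transfer the Cesàro average back to $F(0)$. The scale balancing $m(n)\to\infty$ with $m(n)=o(n)$ is the one new ingredient you introduce. Both proofs put the divergence-free estimate at the heart of the argument, but you use it to interpolate between planes while the paper uses it to establish invariance of the limit under a larger group. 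The paper's version is softer and avoids choosing a scale; yours is more explicit and has the pedagogical merit of making clear exactly why ergodicity of the two-dimensional subaction (which can fail) is not needed.
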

\begin{rem}
    The completely equivalent statement holds with $v_\tau$ replaced by $f_\tau$. In fact since $P_n$ is contained in a coordinate plane 
$\text{flux}(f_\tau,P_n) = \text{flux}(v_\tau,P_n)+o(1)$. The reason for the $\frac{1}{2}$ factor is that the mean current is the average current per \textit{even} vertex. The number of even vertices in $P_n$ is $|P_n|/2$.
\end{rem}
\begin{proof}
Without loss of generality let $P$ denote the $(x,y)$ coordinate plane, so the normal vector is $\eta_3 =(0,0,1)$. Recall that discrete surfaces consist of squares in $\Z^3+(\frac{1}{2},\frac{1}{2},\frac{1}{2})$, so $P=\{(x,y,1/2)~:~x,y\in \R\}$. Let $\Z^2_{\text{even}}=(\Z^2\times\{0\})\cap \threeeven$. Since $\mu$ is invariant under the $\Z^2_{\text{even}}$ action, we can apply the ergodic theorem for this subaction. Let $$T=P\cap([-1/2,3/2]\times[-1/2,1/2]\times[0,1])$$ and $\text{even}(B_n)=\Z^3_{\text{even}}\cap B_n$. The set $T$ is defined so that it contains the intersection points of two adjacent edges, namely $(0,0,0)$ to $(0,0,1)$ and $(1,0,0)$ to $(1,0,1)$. Consider the function $F:\Omega\to \R$ given by $F(\tau)=\frac{1}{2}\text{flux}(v_{\tau},T)$. We have that
$$\left\vert\frac{1}{|\text{even}(B_n)|}\sum_{\eta\in B_n}F(\tau+\eta)-\frac{1}{|P_n|} \text{flux}(v_\tau,P_n)\right\vert=o(n^2)$$
and by the ergodic theorem 
$$\lim_{n\to \infty} \frac{1}{|P_n|} \text{flux}(v_\tau,P_n)$$
exists in probability and almost surely. Temporarily we call the limit $\text{flux}^\star(v_{\tau},P)$. We know that this is $\Z^2_{\text{even}}$-invariant. By integrating the flux across $T$ we get that 
$$\int_{\Omega}\text{flux}^\star(v_{\tau},P)\;\dd\mu(\tau)=\int_{\Omega}F(\tau)\:\dd\mu(\tau)=\frac{1}{2}\langle s,\eta_3\rangle.$$

If we can now prove that the average $\text{flux}^\star(v_{\tau},P)$ is not just invariant under the two-dimensional action of $\Z^2_{\text{even}}$-invariant, but invariant under the full $\threeeven$ action, it will follow that it is constant almost surely and equal to $\frac{1}{2}\langle s,\eta_3\rangle$. To show this, we use  the fact that $v_\tau$ is essentially ``divergence free''. Indeed the tiling flow $f_{\tau}=v_{\tau}-r$ where $r$ is a reference flow and the flux of $r$ across $T$ is zero. It follows that 
$$|\text{flux}(f_\tau,P_n) - \text{flux}(v_\tau,P_n)|=o(n^2)$$
and therefore we have that
$$\lim_{n\to \infty} \frac{1}{|P_n|} \text{flux}(v_\tau,P_n)=\lim_{n\to \infty} \frac{1}{|P_n|} \text{flux}(f_\tau,P_n).$$
Since $f_{\tau}$ is divergence free it follows that for all $\gamma\in \threeeven\setminus \Z^2_{\text{even}}$,
$$|\text{flux}(f_\tau,P_n)-\text{flux}(f_{\tau+\gamma},P_n)|$$
is given by the flux through the sides of parallelopiped formed by $P_n$ and $P_n+\gamma$. Thus for a fixed $\gamma\in \threeeven$
$$|\text{flux}(f_\tau,P_n)-\text{flux}(f_{\tau+\gamma},P_n)|=o(n^2)$$
and hence the same holds for $v_{\tau}$ in place of $f_{\tau}$. Thus $\text{flux}^\star(v_{\tau},P)$ is $\threeeven$-invariant, which completes the proof.
\end{proof}

As a straightforward corollary of Theorem \ref{thm: subaction ergodic}, we see that tilings sampled from ergodic measures satisfy the $\epsilon$-nearly-constant condition with high probability. 
\begin{cor}\label{cor: ergodic implies nearly constant}
    Fix $\epsilon>1$. If $\mu$ is an ergodic measure of mean current $s$, then a tiling $\tau$ sampled from $\mu$ is $\epsilon$-nearly-constant on $B_n$ with value $s$ with probability arbitrarily close to $1$ for $n$ large enough. 
\end{cor}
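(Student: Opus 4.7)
The plan is to derive this as a uniform strengthening of Theorem \ref{thm: subaction ergodic}. The theorem gives a.s.\ convergence of the normalized flux through a single growing sequence of square patches; the corollary demands control uniformly over every $\epsilon n \times \epsilon n$ patch on each face of $\partial B_n$ simultaneously. (I read the condition ``$\epsilon > 1$'' in the statement as a typo for $\epsilon \in (0,1)$.)

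First, I would fix one face of $\partial B_n$---say the top face $F_n = \{(x_1,x_2,n) : |x_1|, |x_2| \le n\}$ with outward normal $\eta_3$---and observe that the proof of Theorem \ref{thm: subaction ergodic} extends verbatim to arbitrary rectangles in $F_n$. Indeed, the proof used only (i) the ergodic theorem for the $\Z_{\text{even}}^2$-subaction of translations parallel to $F_n$, applied to the local ``flux through one square'' function, and (ii) a divergence-free estimate showing that translating the averaging window contributes only $o(\text{perimeter})$ error. Both ingredients apply unchanged when $P_n$ is replaced by any Følner sequence of rectangles whose smaller side grows at least like $\epsilon n$, giving
\begin{equation*}
    |R_n|^{-1}\,\text{flux}(v_\tau, R_n) \xrightarrow[n\to\infty]{\mathrm{a.s.}} \tfrac12 \langle s, \eta_3\rangle.
\end{equation*}

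Next, I would upgrade pointwise convergence to uniform convergence over all $\epsilon$-patches via a standard corner-rectangle / inclusion-exclusion device. For $(u,v) \in [-1,1]^2$ let $R_n(u,v) := \{(x_1,x_2,n) : -n \le x_1 \le un,\ -n \le x_2 \le vn\}$. Applying the previous step to each $(u,v)$ in a countable dense set $D \subset [-1,1]^2$ and taking a countable intersection of $\mu$-full-measure events, we obtain simultaneous convergence of $|R_n(u,v)|^{-1}\text{flux}(v_\tau, R_n(u,v))$ to $\tfrac12\langle s,\eta_3\rangle$ for every $(u,v)\in D$, with uniform speed on $D$ by Egorov. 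Any $\epsilon$-patch $\alpha \subset F_n$ equals an inclusion-exclusion of four corner rectangles $R_n(u,v)$; rounding each corner of $\alpha$ to the nearest grid point of $\tfrac{1}{N}\Z^2 \cap [-1,1]^2$ distorts $\alpha$ by rectangles of width at most $n/N$, and since the flux per unit area is bounded by $1$ the total discrepancy is $O(\epsilon n \cdot n/N)$. Choosing $N = N(\epsilon)$ large but fixed makes this $o(\epsilon^2 n^2)$, and the corner-rectangle convergence on the finite set $\tfrac{1}{N}\Z^2 \cap [-1,1]^2 \subset D$ gives the desired estimate uniformly in $\alpha$, with $\mu$-probability at least $1-\delta$ for $n \ge N_0(\delta,\epsilon)$.

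Finally, running the same argument on each of the six faces of $\partial B_n$ (each has its own $\Z_{\text{even}}^2$-subaction) and union-bounding produces the statement of the corollary. The main obstacle here is the uniformity: Birkhoff's theorem gives only pointwise convergence along a prescribed averaging sequence, whereas the $\epsilon$-nearly-constant condition ranges over a continuum of patches. The corner-rectangle trick resolves this because corner rectangles are parameterized by the compact set $[-1,1]^2$, so a dense-set argument plus the crude a priori $L^\infty$ bound $|\text{flux}|/\text{area} \le 1$ suffices; no maximal inequality or finer ergodic theorem is needed.
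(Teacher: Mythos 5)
Your overall strategy---parameterize by corner rectangles, reduce to a fixed finite grid, and absorb the rounding discrepancy with the trivial $L^\infty$ bound on the flux density---is the right way to pass from a single Birkhoff average to uniformity over all $\epsilon$-patches, and the grid discretization and error bookkeeping are sound. But there is a gap in your first step. You assert that for each fixed $(u,v)$, $|R_n(u,v)|^{-1}\,\text{flux}(v_\tau, R_n(u,v))$ converges $\mu$-almost surely to $\tfrac12\langle s,\eta_3\rangle$, with $R_n(u,v)\subset F_n\subset\{x_3=n\}$. These rectangles live in planes that recede as $n$ grows. The proof of Theorem~\ref{thm: subaction ergodic} applies the pointwise ergodic theorem for the $\Z^2_{\text{even}}$-subaction along a \emph{fixed} coordinate plane $P$: the Følner sequence $P_n=P\cap B_n$ lies inside $P$, and the divergence-free estimate in that proof only absorbs a \emph{fixed} shift $\gamma\in\threeeven$, contributing an $o(n)$ error. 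Neither mechanism handles a shift of magnitude $\sim n$. Translating $R_n(u,v)$ down by $(0,0,-n)$ into $\{x_3=0\}$ and invoking $\threeeven$-invariance of $\mu$ gives equality \emph{in distribution}, not almost-sure equality; so what actually follows from the theorem's argument is convergence \emph{in probability} for each fixed $(u,v)$, not a.s.\ convergence. Without the a.s.\ statement, the countable intersection of full-measure events and the subsequent appeal to Egorov are both unjustified.

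Fortunately the repair is cheap, precisely because the corollary is a ``probability at least $1-\delta$ for $n\geq N_0$'' statement and because your final estimate only needs the corner-rectangle control on the finite grid $\tfrac1N\Z^2\cap[-1,1]^2$ with $N=N(\epsilon)$ fixed: drop Egorov and instead union-bound over the finitely many grid points, each controlled by the convergence-in-probability statement above. The rest of your argument then goes through unchanged. (One further observation: if you trace how Corollary~\ref{cor: ergodic implies nearly constant} is actually used inside the proof of Theorem~\ref{patching}, the flux is only ever needed on a \emph{fixed partition} of $\partial A_{\text{mid}}$ into $O(\epsilon^{-2})$ lattice-aligned patches, not on all translates. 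For that weaker requirement the corner-rectangle device is unnecessary: one may translate each partition patch to the fixed plane by $\threeeven$-invariance, invoke Theorem~\ref{thm: subaction ergodic} in probability, and union-bound over the $O(\epsilon^{-2})$ patches directly. This is presumably what the authors meant by calling the corollary straightforward.)
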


The final goal of this section is to get an estimate on the expected flux of a pretiling flow across a monochromatic discrete surface (e.g.\ the boundary of a counterexample). To do this, we use the following combinatorial result. Here a \textit{closed surface} means one where every edge on the surface is contained in two or four squares of the surface.

\begin{lemma}\label{lem: counting squares}
    Suppose that $S$ is a monochromatic discrete surface. Let $X_1,X_2,X_3\subset S$ be the sets of squares with normal vectors $\eta_1,\eta_2,\eta_3$ respectively. 

    If $S$ is a closed surface, then $|X_1|=|X_2|=|X_3|$. If $S$ has boundary $\partial S$, then for all pairs $i\neq j$, $$|X_i|=|X_j|+O(\text{length}(\partial S)).$$
\end{lemma}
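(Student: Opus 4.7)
The plan is to introduce, for each $i \in \{1,2,3\}$, an auxiliary discrete vector field $\phi_i$ on edges of $\m Z^3$: set $\phi_i(e) = +1$ if $e$ is parallel to $\eta_i$ and oriented from even to odd, $\phi_i(e) = -1$ if $e$ is parallel to $\eta_i$ and oriented from odd to even, and $\phi_i(e) = 0$ otherwise. Using the paper's divergence convention from \eqref{eq: div free discrete}, one computes directly that $\text{div}(\phi_i)(v) = +2$ at every even $v$ and $-2$ at every odd $v$, and crucially this value does not depend on $i$, so $\phi_i - \phi_j$ is divergence-free for every pair $i \neq j$.

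The next key point is an exact identity for the flux of $\phi_i$ through a monochromatic surface. For monochromatic black $S$, a face $f \in X_i$ has outward normal $\xi$ going from odd to even, while the $\eta_i$-edge $e$ crossing $f$ is oriented from even to odd by convention; hence $\text{sign}\langle \xi, e\rangle = -1$ and $\phi_i(e) = +1$, contributing $-1$ to the flux. Faces in $X_j$ for $j \neq i$ contribute $0$ because $\phi_i$ vanishes on non-$\eta_i$-edges. Summing gives $\text{flux}(\phi_i, S) = -|X_i|$ (and $+|X_i|$ in the monochromatic white case).

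For the closed case I would then apply the discrete divergence theorem to the region $U$ enclosed by $S$:
\[
-|X_i| \;=\; \text{flux}(\phi_i, S) \;=\; \sum_{v \in U} \text{div}(\phi_i)(v) \;=\; 2\bigl(|U_{\text{even}}| - |U_{\text{odd}}|\bigr).
\]
Since the right-hand side is independent of $i$, we conclude $|X_1| = |X_2| = |X_3|$.

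For the surface-with-boundary case, one rewrites $|X_j| - |X_i| = \text{flux}(\phi_j - \phi_i, S)$ and uses that $\phi_j - \phi_i$ is divergence-free, bounded, and $2$-periodic in each coordinate. I plan to produce an explicit bounded discrete vector potential: for $\{i,j,k\} = \{1,2,3\}$, define a $1$-form $A_{ij}$ on edges of $\m Z^3$ which vanishes outside $\eta_k$-edges and takes the value $\pm\tfrac{1}{2}(-1)^{v_1+v_2+v_3}$ on the $\eta_k$-edge from $v$ to $v+\eta_k$ (with sign determined by the orientation of $(i,j,k)$); this is the natural discretization of the continuum potential $A = \tfrac{1}{\pi}\sin(\pi(x_1+x_2+x_3))\,\eta_k$. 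A face-by-face check shows that the discrete curl of $A_{ij}$ equals $\phi_i - \phi_j$, so the discrete Stokes theorem yields $|\text{flux}(\phi_j - \phi_i, S)| \leq \|A_{ij}\|_\infty \cdot \text{length}(\partial S) = O(\text{length}(\partial S))$. The main obstacle is this last step: one has to set up the discrete Stokes theorem in the correct cellular framework and verify the curl identity carefully on each face orientation. A tempting shortcut---capping $S$ off via a spanning surface from Corollary~\ref{iso2} and applying the closed-case identity to $S \cup C$---fails because the isoperimetric bound only gives $|C| = O(\text{length}(\partial S)^2)$, producing a quadratic rather than linear error term, so the vector-potential route (or some equivalent exploitation of the periodic structure of $\phi_i - \phi_j$) seems necessary for the sharp bound.
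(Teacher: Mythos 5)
Your closed-case argument is correct and genuinely different from the paper's. The paper proves $|X_1|=|X_2|=|X_3|$ by a purely combinatorial adjacency argument: in a closed monochromatic surface every square has exactly four neighbors (after pairing non-manifold edges appropriately), and a square with normal $\eta_i$ necessarily has two neighbors in each of the other two normal classes, so double-counting edges between $X_1$ and $X_2$ gives $2|X_1|=2|X_2|$. Your flux/divergence argument is a clean alternative; it is worth noting that your identity $\text{flux}(\phi_i, S)=-|X_i|$ combined with the divergence theorem is essentially Proposition~\ref{prop: white black imbalance} applied componentwise, so the ingredients are already in the paper.

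The boundary case is where you have a real gap, and it is not only the ``verify the curl identity carefully'' caveat you flag at the end. There is a type mismatch in the vector-potential setup: if $A_{ij}$ is a $1$-form on edges of $\m Z^3$, then its discrete curl is a $2$-form on plaquettes of $\m Z^3$ (unit-square cycles through integer vertices), whereas $\phi_i-\phi_j$ is a discrete vector field on edges of $\m Z^3$, equivalently a $2$-form on faces of the \emph{dual} lattice $(\frac12,\frac12,\frac12)+\m Z^3$ (which is where the squares of the discrete surface $S$ live, and where the boundary curve $\partial S$ lives). For the discrete Stokes computation
\[
\text{flux}(\phi_i-\phi_j, S) = \sum_{\ell \in \partial S}\pm A_{ij}(\ell)
\]
to make sense, $A_{ij}$ must be a $1$-form on edges of the dual lattice, so that its curl lives on dual-lattice faces. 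Your formula $\pm\tfrac12(-1)^{v_1+v_2+v_3}$ then does not parse as written, since dual-lattice vertices have half-integer coordinates. These problems are fixable---the field $\phi_i-\phi_j$ is bounded, divergence-free, $2$-periodic, and has zero average flux through coordinate planes, so a bounded dual-edge potential exists and an explicit alternating formula does work once the shift is tracked correctly---but as submitted the construction is on the wrong complex. In contrast, the paper dispatches the boundary case in one sentence: squares incident to $\partial S$ may have fewer than four neighbors, but there are at most $O(\text{length}(\partial S))$ of them, so the edge-count identity $2|X_i|\approx|E(X_i,X_j)|\approx 2|X_j|$ holds up to that error. Your instinct that the isoperimetric-capping shortcut only gives a quadratic error is correct, but the paper's approach avoids both the cap and the vector potential entirely.
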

\begin{proof}
    If $S$ is a closed discrete surface, then every edge of $S$ is contained in either two or four squares from $S$ (four can happen if the edge is an edge of non-manifold points). For an edge contained in two squares from the surface, we say those squares are \textit{neighbors}. For an edge contained in four squares, we arbitrarily split the four into pairs, and say that the paired squares are neighbors. With this definition, every square $f\in S$ has exactly four neighbors. Since $S$ is monochromatic, if $f\in X_1$, then two of its neighbors are in $X_2$ and two of its neighbors are in $X_3$ (and similarly for any permutation of $1,2,3$). 

    View the set of all squares as a graph, where each square corresponds to a vertex, and two vertices are connected by an edge if the corresponding squares are neighbors. The number of edges connecting $X_1$ to $X_2$ must be equal to $2|X_1|$ (since every $f\in X_1$ has two neighbors in $X_2$), and analogously must be equal to $2|X_2|$ (since every $f\in X_2$ has two neighbors in $X_1$). Therefore $|X_1| = |X_2|$. An analogous argument shows that $|X_3| = |X_1|$ and completes the proof in the closed surface case. 

    If $S$ is not closed, then squares $f\in S$ which contain an edge along $\partial S$ do not have exactly four neighbors. Therefore the result holds up to an error of $\text{length}(\partial S)$. 
\end{proof}

\begin{lemma}\label{lem: flow bound}
    Let $S$ be a monochromatic black surface with boundary $\partial S$, and let $\Theta$ be the collection of odd cubes adjacent to $S$. For any tiling $\tau$, 
    \begin{align*}
        |\text{flux}(v_\tau,S)| \leq |\Theta|.
    \end{align*}
    If $\mu$ is an ergodic measure of mean current $s\in\text{Int}(\mc O)$, then there is a constant $K_\mu\in (0,1)$ independent of $S$ such that
    \begin{align*}
        \m E_{\mu}[|\text{flux}(v_\tau,S)|] \geq K_\mu |\Theta| + O(\text{length}(\partial S)).
    \end{align*}
\end{lemma}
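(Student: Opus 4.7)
The upper bound is direct from the monochromatic-black structure of $S$. Every outward normal of $S$ points from the odd side to the even side, so any edge $e$ crossing $S$, oriented from even to odd, contributes $\text{sign}\langle \xi_f, e\rangle v_\tau(e) = -v_\tau(e)$ to the flux. Thus $\text{flux}(v_\tau, S) = -\alpha(\tau)$, where $\alpha(\tau)$ is the number of tiles of $\tau$ crossing $S$. Each crossing tile has its odd endpoint adjacent to $S$, hence in $\Theta$, and distinct tiles give distinct endpoints because $\tau$ is a matching; this yields $\alpha(\tau) \leq |\Theta|$, giving $|\text{flux}(v_\tau,S)| \leq |\Theta|$.

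For the lower bound, I take the $\mu$-expectation face by face. By $\threeeven$-invariance alone (ergodicity is not actually required in this step), the probability $p_d := \mu(e \in \tau)$ for an edge $e$ with even-to-odd direction $d$ depends only on $d$; these six numbers satisfy $\sum_d p_d = 1$ and $s_i = p_{\eta_i} - p_{-\eta_i}$. For a face $f$ of $S$ with outward normal $\xi_f$, the edge crossing $f$ has even-to-odd direction $-\xi_f$, so appears in $\tau$ with probability $p_{-\xi_f}$. Summing, $\m E_\mu[\alpha(\tau)] = \sum_{f \subset S} p_{-\xi_f}$.

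The key step is to decompose this sum by axis. Partition each $X_i$ into $X_i^+ \sqcup X_i^-$ according to whether the normal is $+\eta_i$ or $-\eta_i$, and set $q_i := p_{\eta_i} + p_{-\eta_i}$ so that $\sum_i q_i = 1$. A short algebraic rearrangement gives
$$\m E_\mu[\alpha(\tau)] \;=\; \tfrac{1}{2}\sum_{i=1}^{3} |X_i|\,q_i \;-\; \tfrac{1}{2}\sum_{i=1}^{3} \bigl(|X_i^+|-|X_i^-|\bigr)\, s_i.$$
Applying Lemma \ref{lem: counting squares} (which gives $|X_i| = |S|/3 + O(\text{length}(\partial S))$) and the trivial bound $\bigl||X_i^+|-|X_i^-|\bigr|\leq |X_i|$, the first sum equals $|S|/6 + O(\text{length}(\partial S))$ and the second is bounded in absolute value by $\tfrac{|S|\,|s|_1}{6} + O(\text{length}(\partial S))$, producing
$$\m E_\mu[\alpha(\tau)] \;\geq\; \tfrac{|S|\,(1-|s|_1)}{6} + O(\text{length}(\partial S)).$$

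To finish, every face of $S$ is adjacent to exactly one odd cube in $\Theta$, so $|S| = \sum_{c \in \Theta}|D(c)| \geq |\Theta|$, where $D(c)$ is the set of directions from $c$ in which a face on $S$ lies. The hypothesis $s \in \text{Int}(\mc O)$ means $|s|_1 < 1$, so $K_\mu := (1-|s|_1)/6 \in (0,1/6]$ is strictly positive as required. The only subtlety worth highlighting is that the ``drift'' term $\tfrac{1}{2}\sum_i (|X_i^+|-|X_i^-|) s_i$ must not swallow the ``volume'' term $|S|/6$, and this is precisely what the strict interiority condition $s \in \text{Int}(\mc O)$ guarantees.
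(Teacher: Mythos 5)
Your proof is correct, and the lower-bound step takes a genuinely different route than the paper. Both proofs start from the same identity $\m E_\mu[|\text{flux}(v_\tau,S)|] = \sum_{f\subset S} p_{-\xi_f}$ and both invoke Lemma \ref{lem: counting squares} to say the three axis-counts $|X_i|$ are each $|S|/3 + O(\text{length}(\partial S))$. The paper then lower-bounds $\sum_{i=1}^6 p_i N_i$ by keeping only the single axis $j$ maximizing $\min\{p_{2j-1},p_{2j}\}$, producing $K_\mu = \tfrac13\max_j\min\{p_{2j-1},p_{2j}\}$, which depends on the full tile-type probabilities $p_1,\dots,p_6$, not just on $s$. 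You instead do an exact rearrangement into a ``volume'' term $\tfrac12\sum_i |X_i|q_i$ and a ``drift'' term $\tfrac12\sum_i(|X_i^+|-|X_i^-|)s_i$, keep all three axes, and bound the drift by the triangle inequality. This gives the constant $K_\mu = (1-|s|_1)/6$, which is a function of the mean current alone, is never worse than the paper's constant (one has $\sum_j\min\{p_{2j-1},p_{2j}\} = (1-|s|_1)/2 \geq \max_j\min\{p_{2j-1},p_{2j}\}$), and makes the role of the hypothesis $s\in\text{Int}(\mc O)$ completely transparent: it is exactly the statement that the volume term strictly dominates the drift term. The observation that only $\threeeven$-invariance (not ergodicity) is needed for this lemma is also accurate and consistent with the paper's proof. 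The only stylistic remark is that your identity $|S| = \sum_{c\in\Theta}|D(c)|$ silently uses that each face of a black surface has its unique odd neighbor on the inside, which is worth stating since it is the same fact underpinning your upper bound; the paper just quotes $|\Theta|\leq\text{area}(S)\leq 6|\Theta|$ directly.
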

\begin{rem}
    If $\mu$ has mean current $s\in \partial \mc O$, then $K_\mu=0$. 
\end{rem}
\begin{rem}
    Since $S$ is monochromatic black, $\text{flux}(v_\tau,S)$ is minus the number of tiles from $\tau$ crossing $S$. This is why we add the absolute value.
\end{rem}
\begin{proof}
    Any tile $e\in \tau$ crossing $S$ contains a cube from $\Theta$. From this it follows immediately that $|\text{flux}(v_\tau,S)| \leq |\Theta|$.
    
    Let $p_1,\dots,p_6$ be the probabilities under $\mu$ for the six types of tiles, ordered so that $s_1 = p_1 - p_2$, $s_2 = p_3 - p_4$ and $s_3 = p_5 - p_6$. Similarly let $N_1,\dots,N_6$ be the six types of squares on $S$, where the tile type parallel to the outward pointing normal vector at a square $f\in N_i$ has probability $p_i$. The random variable $|\text{flux}(v_\tau,S)|$ can also be written as $\sum_{f\in S} \mathbbm{1}_f(\tau)$, where $\mathbbm{1}_f(\tau)$ is the indicator variable which is $1$ if there is a tile in $\tau$ crossing $f$ and $0$ otherwise. From this, we see that
    \begin{align*}
        \m E_{\mu}[|\text{flux}(v_\tau,S)|] = \sum_{i=1}^6 p_i N_i. 
    \end{align*}
    We minimize the right hand side to get a positive lower bound (clearly $0$ is a lower bound). Let $\text{area}(S)=N= \sum_{i=1}^6 N_i$. By Lemma \ref{lem: counting squares}, $N_1+N_2$, $N_3+N_4$ and $N_5+N_6$ are equal to $N/3$ up to an error of $O(\text{length}(\partial S))$. Thus
    \begin{align*}
        \sum_{i=1}^6 p_i N_i &\geq \max\{p_1 N_1 + p_2 N_2,p_3 N_3 + p_4 N_4, p_5 N_5+p_6 N_6\} \\
        &\geq \max\{\min\{p_1,p_2\},\min\{p_3,p_4\},\min\{p_5,p_6\}\} (N/3 + O(\text{length}(\partial S)). 
    \end{align*}
    Since $s\in \text{Int}(\mc O)$, at least four of $\{p_i\}_{i=1}^6$ are nonzero, including one from each pair. Noting that $N = \text{area}(S)$, and that
    \begin{align*}
    |\Theta| \leq \text{area}(S) \leq 6|\Theta|,
    \end{align*}
    this proves the result with a constant of the form $K_\mu = p_i/3$ for some $i$ such that $p_i\neq 0$.
\end{proof}

\subsection{Proof of the patching theorem}\label{sec: patching proof}

We now give the proof of the {patching theorem} (Theorem \ref{patching}), as described in Section \ref{sec: ideas for patching}. We refer throughout to the corresponding figures from the outline there.

\begin{proof}[Proof of Theorem \ref{patching}]
    Since $\tau_1,\tau_2\in \Omega$, $A_n$ is balanced for all $n$. Thus by Corollary \ref{cor:minimal_tileable}, if $A_n$ is not tileable, it has a minimal counterexample $U$, i.e. a set with 
    \begin{align*}
        \text{imbalance}(U)=\text{even}(U)-\text{odd}(U)>0,
    \end{align*}
    despite $U$ having only odd cubes on its interior boundary. Let $S$ denote the interior boundary surface of $U$. Since $A_n$ is tileable with just the $\tau_1$ boundary condition (resp.\ with just the $\tau_2$ booundary condition), $S$ must connect the inner and outer boundaries of $A_n$. Thus by Lemma \ref{lem: indenting}, we can find $l_+\in ((1-\epsilon^{1/2})n,n)$ and $l_-\in ((1-\delta)n, (1-\delta+\epsilon^{1/2})n)$ such that for $l=l_+$ or $l=l_-$,
    \begin{equation}\label{eq: covering area}
        \text{Cov}_{\epsilon}(S \cap \partial B_{l}) \leq c \epsilon^{1/2} n^2 
    \end{equation}
    \begin{equation}\label{eq: length bound}
    \text{length}(\partial(S\cap \partial B_l))\leq c' \epsilon^{-1/2} n
    \end{equation}
    where $c,c'$ are constants. We define the ``middle region" $A_{\text{mid}} = (B_{l_+}\setminus B_{l_-})$, see Figure \ref{fig:squareannuluswithmiddle}. Then we let 
    \begin{align*}
        U' = U \cap A_{\text{mid}}.
    \end{align*}
    See Figure \ref{fig:squareannuluswithblue}. Let $\mu$ be an ergodic measure of mean current $s$ (this exists by Lemma \ref{lem: ergodic measures exist}). Let $\Theta$ be the collection of odd cubes adjacent to $S'=S \cap U'$. Note that 
    \begin{align*}
        \text{area}(S')/6 \leq |\Theta| \leq \text{area}(S').
    \end{align*}
    By Lemma \ref{annulusbound}, $|\Theta|\geq \text{area}(S')/6 \geq c_2' n^2$, where $c_2'\sim (\delta-2\epsilon^{1/2})^2$. Thus by Lemma \ref{lem: flow bound} and Equation \eqref{eq: length bound}, there is a constant $K_\mu\in(0,1)$ such that
    \begin{align*}
        \m E_{\mu}[|\text{flux}(v_\tau,S')|]\geq K_{\mu}c_2' n^2 + O(\epsilon^{-1/2} n). 
    \end{align*}
    By Theorem \ref{thm: subaction ergodic}, for any coordinate plane $P$, a tiling $\tau$ sampled from $\mu$ is $\epsilon$-nearly-constant on $P$ with value $s$ with probability approaching $1$ as $n$ goes to $\infty$. Therefore we can sample a tiling $\tau$ from $\mu$ which is $\epsilon$-nearly-constant with value $s$ on $A_{\text{mid}}$ for $n$ large enough and satisfies 
    \begin{equation}\label{eq: flux bound in proof}
        |\text{flux}(v_\tau,S')| \geq K_{\mu}c_2'n^2 + O(\epsilon^{-1/2} n). 
    \end{equation}
    We fix this choice of $\tau$ for the rest of the proof. Define $U_\tau$ to be the region covered by the tiles from $\tau$ which intersect $U'$, see Figure \ref{fig:squareannuluswithtiledblue}. Since $U_\tau$ is tileable, 
    \begin{align*}
        \text{imbalance}(U_\tau) = \text{even}(U_\tau) - \text{odd}(U_\tau) = 0. 
    \end{align*}
    We next define a new region $U''$, which is $U_\tau$ minus the even cubes adjacent to $S'$ (see Figure \ref{fig:squareannuluswithtiledblue} again). Note that the region $U_\tau\setminus U''$ consists of only even cubes. By Equation \eqref{eq: flux bound in proof}, 
    \begin{align*}
        |U_\tau\setminus U''| \geq K_{\mu} c_2' n^2 + O(\epsilon^{-1/2}n).
    \end{align*}
    Therefore 
    \begin{align*}
        \text{imbalance}(U'') \leq -K_{\mu} c_2' n^2 + O(\epsilon^{-1/2}n).
    \end{align*}
    It remains to show that $\text{imbalance}(U)$ is very close to $\text{imbalance}(U'')$, and this is where we use the $\epsilon$-nearly-constant condition. We split $A_n\setminus A_{\text{mid}}$ into two regions. First we define the ``thin region" $A_{\text{thin}}$ to be the union of columns parallel to one of the coordinate directions which connect $\partial A_n$ and $\partial A_{\text{mid}}$. See Figures  \ref{fig:squareannuluswithmiddle} and \ref{fig:squareannuluswithshadow}. The complement of $A_{\text{mid}}\cup A_{\text{thin}}$ we call the ``corner region" $A_{\text{corner}}$, and consists of a neighborhood of the edges of the outer boundary cube of $\partial A_n$, and the inner boundary cube of $\partial A_{\text{mid}}$. We note that 
    \begin{align*}
        \text{area}(A_{\text{corner}}\cap \partial A_n) \leq 24 \epsilon^{1/2} n^2.
    \end{align*}
    The constant factor comes from the fact that the cube has 12 edges. Therefore
    \begin{align*}
        \text{imbalance}(U\cap A_{\text{corner}}) \leq 24 \epsilon^{1/2} n^2.
    \end{align*}
    We define $U_{\text{shadow}} = (U\setminus U'')\cap A_{\text{thin}}$. We bound the imbalance of $U_{\text{shadow}}$ column-by-column, where each column $C$ consists of a straight line path of single cubes from $\partial A_{\text{mid}}$ to $\partial A_n$. Recall $U_{\text{shadow}}$ is defined with boundary condition $\tau$ on $\partial A_{\text{mid}}$ and boundary condition $\tau_1$ or $\tau_2$ on $\partial A_n$. We also note that for any column $C$, since $S$ is monochromatic black, $\text{imbalance}(U\cap C)\leq +1$.

    We cut $\partial A_{\text{mid}}$ into $(\epsilon n)\times (\epsilon n)$ patches $\alpha$. For each $\alpha$, there is a corresponding patch $\beta$ on $\partial A_n$ matched to $\alpha$ by columns. For any patch $\alpha\subset \partial A_{\text{mid}}$, since $\tau$ is sampled from an ergodic measure, it is $\epsilon$-nearly-constant with value $s$ on $\partial A_{\text{mid}}$ (Corollary \ref{cor: ergodic implies nearly constant}). Thus we have that 
\begin{equation}\label{eq: tau nc}
    \text{flux}(v_\tau,\alpha) = \frac{1}{2} \langle s, \xi_\alpha \rangle (\epsilon n)^2 + o(\epsilon^2 n^2).
\end{equation}
Let $v_\ast$ be equal to $v_{\tau_1}$ on the outer boundary of $\partial A_n$ and $v_{\tau_2}$ on the inner boundary. For $\beta\subset \partial A_n$ the patch connected by a column to $\alpha$, since $\tau_1,\tau_2$ are $\epsilon$-nearly-constant with value $s$, 
\begin{equation}\label{eq: tau_i nc}
    \text{flux}(v_{\ast},\beta) = \frac{1}{2} \langle s, \xi_\alpha \rangle (\epsilon n)^2 + o(\epsilon^2 n^2).
\end{equation}
(Note that $\xi_\alpha = \xi_\beta$.) For a patch $\alpha\subset \partial A_{\text{mid}}$, let $C(\alpha)$ be the union of columns incident to $\alpha$. The set $U_{\text{shadow}}$ is covered by these column sets, so it remains to control imbalance of $U_{\text{shadow}}\cap C(\alpha)$ for each patch $\alpha$.

By Equation \eqref{eq: covering area}, at most $c \epsilon^{1/2} n^2$ of the area of $\partial A_{\text{mid}}$ is in patches $\alpha$ which intersect $U$ and $U^c$. The total imbalance contribution from these is bounded by $c \epsilon^{1/2} n^2$. If $\alpha \subset U^c$ but $C(\alpha)$ still intersects $U$, then all the columns in $C(\alpha)$ have at least one end on $S$, and the imbalance in $C(\alpha)\cap U_{\text{shadow}}$ is at most $0$.

Now we look at the cases where $\alpha\subset U$. If $C(\alpha)\subset U$, then the total imbalance in $C(\alpha)\cap U$ is $o(\epsilon^2 n^2)$ by Equations \eqref{eq: tau nc} and \eqref{eq: tau_i nc}. If $\alpha\subset U$ but $C(\alpha)\not\subset U$, then some of the columns starting from $\alpha$ hit $S$ before hitting $\beta$. This means they end on an odd cube. Extending the column all the way to $\beta$ would only make the imbalance larger, however then the imbalance in $C(\alpha) \cap U$ is bounded above by $o(\epsilon^2 n^2)$ by Equations \eqref{eq: tau nc} and \eqref{eq: tau_i nc}. The number of columns is a bounded by a constant independent of $n$ times $\epsilon^{-2}$, hence in total the imbalance in $U_{\text{shadow}}$ is bounded by $c \epsilon^{1/2} n^2 + o(n^2)$.

Putting everything together, we have that 
    \begin{align*}
        \text{imbalance}(U) &\leq \text{imbalance}(U'') + \text{imbalance}(U_{\text{shadow}}) + \text{imbalance}(U\cap A_{\text{corner}})\\
        &\leq- K_\mu c_2' n^2 + 24\epsilon^{1/2} n^2 + c \epsilon^{1/2} n^2 + O(\epsilon^{-1/2} n) + o(n^2). 
    \end{align*}
    Recall that $c_2'\sim (\delta-2\epsilon^{1/2})^2$, so it gets larger as $\epsilon$ gets smaller. Fixing $\epsilon$ small enough as a function of the constants, for $n$ large enough $\text{imbalance}(U)\leq 0$ and hence $U$ is not a counterexample. Therefore by Proposition \ref{cor:minimal_tileable}, for $n$ large enough, $A_n$ is tileable with boundary conditions $\tau_1,\tau_2$.
\end{proof}

\subsection{Corollaries for ergodic Gibbs measures}\label{sec: patching ergodic}

\begin{cor}\label{cor:entsame}
If $\mu_1,\mu_2$ are EGMs of the same mean current $s\in \text{Int}(\mc O)$, then $h(\mu_1) = h(\mu_2)$.
\end{cor}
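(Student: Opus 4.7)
The plan is to prove $h(\mu_1) \geq h(\mu_2)$ by a patching-based counting argument; interchanging the roles of $\mu_1$ and $\mu_2$ then gives equality. Since $\mu_1,\mu_2$ are EGMs and in particular Gibbs, the conditional-entropy representation used in the proof of Proposition~\ref{prop: extreme mc entropy} gives, for each $i=1,2$,
$$h(\mu_i) \;=\; \lim_{n\to\infty} |B_n|^{-1} \int_\Omega \log |\Omega_\tau(B_n)|\, d\mu_i(\tau),$$
where $\Omega_\tau(B_n)$ denotes the set of tilings of $B_n$ compatible with the external configuration inherited from $\tau$ (the boundary Shannon contribution $H_{\partial B_n}(\mu_i)$ is $O(n^2)$ and is absorbed into the limit). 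The strategy is to lower-bound $|\Omega_{\tau_1}(B_n)|$ for typical $\tau_1 \sim \mu_1$ by counting tilings built by patching in typical inner configurations sampled from $\mu_2$.

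Fix small $\delta,\epsilon>0$ and set $A_n = B_n \setminus B_{(1-\delta)n}$. By Corollary~\ref{cor: ergodic implies nearly constant}, with probability tending to $1$ as $n \to \infty$ a sample $\tau_i \sim \mu_i$ is $\epsilon$-nearly-constant with value $s$. For such a pair $(\tau_1,\tau_2)$, Theorem~\ref{patching} supplies a tiling of the cube annulus $A_n$ with outer boundary from $\tau_1$ and inner boundary from $\tau_2$, producing a tiling of $B_n$ in $\Omega_{\tau_1}(B_n)$. Distinct inner restrictions $\tau_2|_{B_{(1-\delta)n}}$ yield distinct tilings of $B_n$, so for $\tau_1$ in a $\mu_1$-typical set,
$$|\Omega_{\tau_1}(B_n)| \;\geq\; |E_n|,$$
where $E_n \subset \Omega(B_{(1-\delta)n})$ is the collection of restrictions $\tau_2|_{B_{(1-\delta)n}}$ arising from $\epsilon$-nearly-constant $\tau_2 \sim \mu_2$.

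To lower-bound $|E_n|$, observe that $\mu_2(\{\tau : \tau \text{ is } \epsilon\text{-nearly-constant}\}) \to 1$, so the $\mu_2$-marginal on $B_{(1-\delta)n}$ places mass $1-o(1)$ on $E_n$. Since $H_{B_{(1-\delta)n}}(\mu_2) = (h(\mu_2)+o(1))|B_{(1-\delta)n}|$ and the trivial alphabet bound $\log |\Omega(B_{(1-\delta)n})| \leq C|B_{(1-\delta)n}|$ holds (at most six tile types per even vertex), the standard Shannon inequality
$$H(X) \;\leq\; \log 2 \,+\, \mu(A) \log|A| \,+\, \mu(A^c) \log|A^c|,$$
applied with $A = E_n$ and $\mu(A^c) = o(1)$, yields $\log|E_n| \geq (h(\mu_2)-o(1))|B_{(1-\delta)n}|$. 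Combining, for $\tau_1$ in a set of $\mu_1$-mass $1-o(1)$ we would obtain
$$\log |\Omega_{\tau_1}(B_n)| \;\geq\; (h(\mu_2)-o(1))(1-\delta)^3 |B_n|.$$
Since $\log|\Omega_\tau(B_n)| \geq 0$ everywhere, averaging over $\tau_1 \sim \mu_1$ and dividing by $|B_n|$ gives $h(\mu_1) \geq h(\mu_2)(1-\delta)^3$. Letting $\delta \to 0$ and swapping $\mu_1 \leftrightarrow \mu_2$ yields $h(\mu_1) = h(\mu_2)$. The only genuinely delicate point is the Shannon lower bound on $|E_n|$: one must check that restricting the $\mu_2$-marginal to its $\epsilon$-nearly-constant sub-support loses only $o(|B_n|)$ in log-cardinality. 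This is routine bookkeeping given the inequality above and the fact that the exceptional probability vanishes; no new ideas beyond the patching theorem, the ergodic theorem via Corollary~\ref{cor: ergodic implies nearly constant}, and the Gibbs formula for specific entropy are required.
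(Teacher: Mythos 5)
Your proof is correct and follows essentially the same strategy as the paper's own (very terse) proof, which simply invokes the patching theorem with outer boundary from $\mu_1$ and inner boundary from $\mu_2$ and swaps roles; you have filled in exactly the counting and Shannon-type details that the paper later spells out in a closely related form in Proposition~\ref{thm:epsilon_nearly_constant_vs_EGM}. The one remark worth making: both your write-up and the paper implicitly treat the ``$n$ large enough'' in the patching theorem as uniform over the high-probability $\epsilon$-nearly-constant sets, even though the error in Definition~\ref{def:nearly constant} is in principle $\tau$-dependent; this is the standard ergodic-theorem uniformity point and is handled at the same level of rigor in both proofs.
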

\begin{rem}
The proof of the patching theorem (Theorem \ref{patching}) uses that $ {s}\not \in \partial \mc O$ since this is a condition of Lemma \ref{lem: flow bound}. This corollary shows that $s\not\in \partial \mc O$ is a necessary condition and not just an artifact of the proof, since if $s\in \mc \partial \mc O$ then not all ergodic Gibbs measures of mean current $s$ have the same specific entropy (see Proposition \ref{prop: same current different entropy}). 
\end{rem}
\begin{proof}
Fix $\delta>0$, and let $B_n = [-n,n]^3$ and $A_n = B_n\setminus B_{(1-\delta)n}$. By the patching theorem (Theorem \ref{patching}) with outer boundary condition sampled from $\mu_1$ and inner boundary condition sampled from $\mu_2$, we get that 
\begin{align*}
    h(\mu_2) \leq (1+O(\delta)) h(\mu_1).
\end{align*}
Switching them, we find that 
\begin{align*}
    h(\mu_1)\leq (1+O(\delta)) h(\mu_2). 
\end{align*}
Therefore $h(\mu_1) = h(\mu_2)$. 
\end{proof}

Another useful result comes from applying patching to a sequence of $\epsilon$-nearly-constant tilings and a sample from an EGM. This relates the number of tilings of a region with fixed $\epsilon$-nearly-constant boundary conditions to the specific entropy of an EGM. This serves as a lemma in the proof of the lower bound in the large deviation principle (Theorem \ref{thm:lower}). Recall that $\Omega$ denotes the set of dimer tilings of $\m Z^3$.

\begin{prop}\label{thm:epsilon_nearly_constant_vs_EGM}
Fix $\delta>0$, $s\in \text{Int}(\mc O)$,  $B_n = [-n,n]^3$, and let $A_n = B_n\setminus B_{(1-\delta)n}$. For $\epsilon>0$ small enough given $\delta,s$, suppose that $(\tau_n)_{n\geq 1}\subset \Omega$ is such that $\tau_n$ is $\epsilon$-nearly-constant on $\partial B_n$ with value $s$ for $n$ large enough. Let $\pi_n$ be the uniform probability measure on tilings $\sigma$ of $B_n$ such that $\sigma\mid_{\partial B_n} = \tau_n$. Then for any EGM $\mu$ of mean current $s$ and $n$ large enough, 
\begin{align*}
    |B_n|^{-1} H(\pi_n) \geq h(\mu) (1+ O(\delta)). 
\end{align*}
\end{prop}
\begin{rem}
Since $\pi_n$ is a uniform measure, $H(\pi_n) = \log Z_n$, where $Z_n$ is the partition function of $\pi_n$. 
\end{rem}
\begin{proof}
We apply the patching theorem (Theorem \ref{patching}) to $A_n = B_n \setminus B_{(1-\delta)n}$ with $\tau_n$ on the outer boundary and a sample from $\mu$ on the inner boundary. For $n$ large enough, patching is possible with $\mu$-probability $(1-\epsilon)$ on an annulus of width $\delta$. 

For $\Lambda\subset \m Z^3$, let $\Omega(\Lambda)$ denote the free-boundary dimer tilings of $\Lambda$. For $\sigma\in \Omega(\Lambda)$, recall that $X(\sigma)$ is the set of \textit{extensions} of $\sigma$, i.e.\
$$X(\sigma) = \{\tilde{\sigma}\in \Omega : \tilde \sigma\mid_{\Lambda} = \sigma\}.$$ 
Then we compute
\begin{align*}
    H_{B_{(1-\delta)n}}(\mu) &= -\sum_{\sigma\in \Omega(B_{(1-\delta)n})} \mu(X(\sigma)) \log \mu(X(\sigma))\\
    &= -\sum_{\substack{\sigma\in \Omega(B_{(1-\delta)n})\\\sigma, \tau_n \text{ patchable}}} \mu(X(\sigma)) \log \mu(X(\sigma)) + O(\epsilon \log \epsilon)
\end{align*}
Let $U_{n}$ denote the uniform probability measure on 
\begin{align*}
    \{\sigma \in \Omega(B_{(1-\delta)n}) : \sigma, \tau_n \text{ patchable}\}. 
\end{align*}
Since uniform measure maximizes entropy, 
\begin{align*}
    H_{B_{(1-\delta)n}}(\mu) \leq H(U_{n}) + O(\epsilon \log \epsilon) \leq H(\pi_n) + O(\epsilon \log \epsilon). 
\end{align*}
Thus for $n$ large enough such that the patching theorem applies for $\tau_n$ and a sample from $\mu$ on $A_n = B_n\setminus B_{(1-\delta)n}$, and for $\epsilon$ small enough given $\delta$ and $s$, we have that
\begin{align*}
    |B_n|^{-1} H(\pi_n) \geq (1+ O(\delta)) |B_{(1-\delta)n}|^{-1} H_{B_{(1-\delta)n}}(\mu) \geq (1+O(\delta)) h(\mu). 
\end{align*}
\end{proof}

\section{Properties of entropy }\label{sec:entropy}

In this section we prove results about the local entropy function $\ent$ introduced in Section \ref{sec: entropy prelim}. Recall that $\Omega$ is the set of dimer tilings of $\m Z^3$, and that $\mc P$ denotes the space of $\m Z^3_{\text{even}}$-invariant probability measures on $\Omega$. Further recall that for any $s\in \mc O$, we define $\mc P^{s}\subset \mc P$ to be the set of measures which also have mean current $s$, $\mc P_e\subset \mc P$ to be the set of ergodic measures, and $\mc P_e^s$ to be the set of ergodic measures with mean current $s$. The mean-current entropy function $\ent:\mc O\to [0,\infty)$ is defined by 
\begin{align*}
    \ent(s) = \sup_{\mu \in \mc P^s} h(\mu),
\end{align*}
where $h(\cdot)$ denotes specific entropy (see Section \ref{sec: entropy prelim}). 

We saw in Section \ref{sec:extreme} that $\ent$ is equal to $\ent_{\text{loz}}$ when restricted to any face of $\partial \mc O$ (Theorem \ref{thm: extremal_entropy}). In particular this implies that $\ent$ is strictly concave when restricted to any face of $\partial \mc O$ (Corollary \ref{cor: ent boundary}). The main result of this section is that $\ent$ is {strictly concave} on all of $\mc O\setminus \mc E$, where $\mc E$ denotes the edges of $\partial \mc O$ (Theorem \ref{theorem: entropy is strictly concave}). \symindex{Chapter 7!$\mc E$ the edges of the mean-current octahedron $\mathcal O$}

In Section \ref{subsection: entropy maximizers are Gibbs measures} we show that the supremum of $\{h(\mu):\mu\in \mc P^s\}$ is realized by a Gibbs measure for all $s\in \mc O$ (Theorem \ref{theorem: entropy maximizer gibbs}). It is a classical result going back to Lanford and Ruelle \cite{LanfordRuelle}  that entropy maximizers in $\mc P$ are Gibbs measures. We extend this to show that the entropy maximizer in $\mc P^s$, where the mean current is fixed, is also a Gibbs measure. In Section \ref{subsection: easy observations} we show using elementary methods that $\ent$ is concave and continuous on $\mc O$. 

The proof that $\ent$ is strictly concave on $\mc O\setminus \mc E$ (Theorem \ref{theorem: entropy is strictly concave}) requires some new tools and ideas. We use a version of a technique called \textit{cluster swapping} used in \cite[Chapter 8]{AST_2005__304__R1_0}, which we call \textit{chain swapping}. As set up for the proof of strict concavity in Section \ref{subsection: strict concavity} we prove some preliminary results about flows in the double dimer model (Section \ref{subsection: flows for double dimer}) and introduce the chain swapping technique (Section \ref{Subsection: Dimer Swapping}). Combining chain swapping with the results for $\ent$ on $\partial \mc O$ from Section \ref{sec:extreme} we show that $\ent$ is strictly concave on $\mc O\setminus \mc E$. 

Strict concavity has a number of important consequences. We saw in Corollary \ref{cor:entsame} that if $\mu_1,\mu_2$ are EGMs with mean current $s\in \text{Int}(\mc O)$, then $h(\mu_1)= h(\mu_2)$. Combining this with strict concavity, we show that if $s\in \text{Int}(\mc O)$ and $\mu\in \mc P^s$, then $h(\mu) = \ent(s)$ if and only if $\mu$ is an EGM or weighted average of EGMs all of mean current $s$ (Theorem \ref{theorem: existence of gibbs}), and therefore that there exists an EGM of every mean-current $s\in \mc O$ (Corollary \ref{cor: EGMs exist!}). Heuristically, the main goal of this section is to show that the mean current $s$ captures broad statistics of dimer tilings sampled from $\mu \in \mc P^s_e$ when $s\in \text{Int}(\mc O)$. 

We use these results again in Section~\ref{subsection: properties of Ent}, where after introducing the topology, we use the properties of $\ent$ derived here to study $\Ent(g) = \frac{1}{\text{Vol}(R)} \int_R g(x)\, \dd x$.

\subsection{Entropy maximizers of a given mean current are Gibbs measures}\label{subsection: entropy maximizers are Gibbs measures}

We first study the maximizers of specific entropy $h(\cdot)$ in $\mc P^{s}$ for $s\in \mc O$ fixed. It is straightforward to show that there exists a measure $\mu \in \mc P^s$ which achieves $\sup\{h(\mu):\mu \in \mc P^s\}$ for any $s\in \mc O$. 

\begin{lemma}\label{lemma: entropy maximiser}
	Let $s\in \mathcal O$. There exists $\mu \in\Prob^{s}$ such that $h(\mu)=\ent(s)$.
\end{lemma}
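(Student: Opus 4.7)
The plan is a standard compactness-plus-upper-semicontinuity argument. First I would note that $\mathcal{P}^s$ is nonempty: by Lemma \ref{lem: ergodic measures exist} there exists an ergodic (hence $\threeeven$-invariant) measure with mean current $s$ for every $s \in \mathcal{O}$, so $\ent(s) = \sup_{\mu \in \mathcal{P}^s} h(\mu)$ is a supremum over a nonempty set. Since $h(\mu) \in [0, \log 6]$ for every $\mu \in \mathcal{P}$, the supremum is finite, so we may pick a maximizing sequence $(\mu_n)_{n \geq 1} \subset \mathcal{P}^s$ with $h(\mu_n) \to \ent(s)$.

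Next I would extract a convergent subsequence. Because $\Omega \subset \{0,1\}^E$ is compact metrizable in the product topology, the space of Borel probability measures on $\Omega$ is weak-$*$ compact. Hence some subsequence $\mu_{n_k}$ converges weakly to a probability measure $\mu$ on $\Omega$. Weak limits of $\threeeven$-invariant probability measures are $\threeeven$-invariant (invariance is characterized by $\int \phi \circ T_\gamma \,\dd\mu = \int \phi \,\dd\mu$ for continuous $\phi$ and $\gamma \in \threeeven$, a condition preserved under weak convergence), so $\mu \in \mathcal{P}$.

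I would then verify $\mu \in \mathcal{P}^s$. The mean current $\nu \mapsto s(\nu)$ is defined coordinatewise via $s(\nu)_i = \nu(e_i \in \tau) - \nu(-e_i \in \tau)$, and each indicator $\mathbbm{1}_{\{e_i \in \tau\}}$ is a continuous function on $\Omega$ (it depends on a single edge coordinate). Consequently $s(\cdot)$ is continuous in the weak-$*$ topology, so $s(\mu) = \lim_k s(\mu_{n_k}) = s$, giving $\mu \in \mathcal{P}^s$.

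Finally I would apply upper semicontinuity of the specific entropy, stated after the definition of $h(\cdot)$ in Section \ref{sec: entropy prelim} (see also \cite[Proposition 15.14]{Georgii}):
$$h(\mu) \geq \limsup_{k \to \infty} h(\mu_{n_k}) = \ent(s).$$
Combined with the trivial bound $h(\mu) \leq \ent(s)$ from the definition of $\ent$, this yields $h(\mu) = \ent(s)$, completing the proof. No step is a serious obstacle; the only point that needs a sentence is the continuity of the mean current, which is immediate because it is defined by expectations of cylinder-set indicators.
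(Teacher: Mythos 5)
Your argument is correct and takes essentially the same route as the paper's proof, which simply invokes weak-$*$ compactness of $\mathcal{P}^s$ together with upper semicontinuity of $h$ (citing Keller rather than Georgii); your version spells out the closedness of $\mathcal{P}^s$ and the extraction of a maximizing subsequence that the paper leaves implicit. One minor overkill: for nonemptiness of $\mathcal{P}^s$ you don't need Lemma \ref{lem: ergodic measures exist} — a convex combination of the six atomic brickwork measures already gives a $\threeeven$-invariant measure of any prescribed mean current $s\in\mathcal{O}$.
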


\begin{proof}
The space $\Prob^{s}$ is compact with respect to the weak star topology. Since $h$ is an upper semicontinuous function of the measure \cite[Theorem 4.2.4]{keller1998equilibrium} it must achieve its maxima in $\Prob^{s}$. 
\end{proof}

\begin{thm}\label{theorem: entropy maximizer gibbs}
	Fix $s\in \mc O$. If $\mu\in \mc P^s$ has $h(\mu) = \ent(s)$ then $\mu$ is a Gibbs measure.
\end{thm}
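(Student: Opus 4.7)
The plan is to prove the theorem by contradiction, adapting the classical Lanford--Ruelle variational principle to the constrained setting $\mc P^s$. Suppose that $\mu \in \mc P^s$ satisfies $h(\mu) = \ent(s)$ but is not a uniform Gibbs measure. Then there exists a finite region $\Lambda \subset \m Z^3$ such that, on a set of positive $\mu$-measure of boundary traces $\tau|_{\Lambda^c}$, the conditional distribution $\mu(\cdot\mid\tau|_{\Lambda^c})$ is not the uniform measure on tilings of $\Lambda$ compatible with the boundary. First I would define the local Gibbsification $\mu^\Lambda$ by leaving the $\mu$-marginal on $\mathcal{F}_{\Lambda^c}$ unchanged and replacing the conditional on $\Lambda$ by the uniform measure on compatible tilings. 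The elementary fact that uniform distributions maximize Shannon entropy on a finite support then produces a strictly positive gap $\delta>0$ in conditional entropy on the non-Gibbs event.

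To convert this conditional gap into a strict improvement in specific entropy while retaining $\threeeven$-invariance, I would periodize. For $N$ large enough that the translates of $\Lambda$ along the sublattice $2N\m Z^3$ are pairwise disjoint inside each fundamental domain, I Gibbsify simultaneously on all $\Lambda + 2N\m Z^3$. Additivity of Shannon entropy on disjoint regions yields per-fundamental-domain entropy gain $\delta$, i.e.\ per-site gain $\delta/(2N)^3$. Then I symmetrize over the finite quotient $\threeeven/2N\threeeven$; by concavity of specific entropy under convex combinations, the resulting $\threeeven$-invariant measure $\bar\mu^{\Lambda,N}$ satisfies $h(\bar\mu^{\Lambda,N}) \geq h(\mu) + \delta/(2N)^3$.

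The genuinely new issue (compared to the classical Lanford--Ruelle argument) is preservation of the mean current. Because the Gibbsification alters tile marginals only at the $O(|\Lambda|)$ sites in each fundamental domain of volume $(2N)^3$, the symmetrized measure has mean current $s(\bar\mu^{\Lambda,N}) = s + v_N$ with $|v_N| = O(|\Lambda|/(2N)^3)$. I would restore the constraint by forming the convex combination $(1-\varepsilon_N)\bar\mu^{\Lambda,N} + \varepsilon_N\mu_{\mathrm{corr}}$, where $\mu_{\mathrm{corr}} \in \mc P$ has mean current on the opposite side of $s$ from $v_N$ at a fixed $O(1)$ distance from $s$ (for instance a convex combination of the six deterministic brickwork measures, which has specific entropy zero). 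The required weight is $\varepsilon_N = O(|v_N|) = O(|\Lambda|/(2N)^3)$, and the corresponding loss in specific entropy is at most $\varepsilon_N\,(h(\mu) - h(\mu_{\mathrm{corr}})) = O(|\Lambda|/(2N)^3)$.

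The main obstacle will be arranging that the entropy gain $\delta/(2N)^3$ strictly dominates the correction cost for a suitable choice of $\Lambda$ and $N$. This reduces to the bound $\delta > C|\Lambda|$ for a constant $C$ depending on $h(\mu)$ and the geometry of $\mc O$. Such a $\Lambda$ can be obtained by taking a bounded finite region carrying a bounded-below conditional entropy defect for $\mu$; since $\mu$ is assumed not to be Gibbs, shrinking to the scale on which the Gibbs condition is witnessed to fail produces a fixed positive $\delta$ against a fixed $|\Lambda|$. Letting $N \to \infty$ then makes the dominant contribution $\delta/(2N)^3$ exceed the correction cost $O(|\Lambda|/(2N)^3)$, producing a measure in $\mc P^s$ with specific entropy strictly greater than $\ent(s)$, which contradicts maximality and completes the proof.
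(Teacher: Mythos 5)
Your proposal follows the same overall Lanford--Ruelle strategy as the paper, but it contains a genuine gap at the step where you try to restore the mean-current constraint. You observe that the periodic Gibbsification alters the tile distribution on a fraction $O(|\Lambda|/(2N)^3)$ of sites and conclude $|v_N| = O(|\Lambda|/(2N)^3)$; you then patch the constraint by mixing in a small amount $\varepsilon_N = O(|\Lambda|/(2N)^3)$ of a low-entropy corrector measure. But both the per-site entropy gain $\delta/(2N)^3$ and the per-site entropy loss from the correction scale as $1/(2N)^3$, so the $N$-dependence cancels and the proposed conclusion ``letting $N\to\infty$ makes the gain dominate'' does not hold. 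What would actually be needed is a bound of the form $\delta > C|\Lambda|$ with constants coming from $h(\mu)$ and from the geometry of $\mc O$, and nothing in the hypothesis ``$\mu$ is not Gibbs'' guarantees this: the conditional entropy defect $\delta$ for a witnessing box $\Lambda$ could be arbitrarily small while $|\Lambda|$ stays fixed, so the inequality can fail.

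The gap disappears once you notice that the mean current is in fact preserved \emph{exactly}, not merely up to $O(|\Lambda|/(2N)^3)$: when a tiling $\tau$ is resampled only inside a finite box conditioned on its boundary, the symmetric difference $\tau \triangle \tau'$ is a union of finite alternating cycles contained in the box, and along each such cycle the sum of $\tau$-tile vectors (even to odd) equals the sum of $\tau'$-tile vectors, since their difference telescopes around the closed loop. Thus $s(\tau') - s(\tau)$ averages to zero and no correction term is needed. This is precisely what the paper extracts, via the lemma that a $\threeeven$-invariant coupling whose sample pairs have no infinite paths in their union must have marginals of equal mean current. Proving that lemma (or the direct loop-telescoping argument sketched above for the specific coupling at hand) is the essential missing ingredient in your write-up; once it is in place, the periodization and symmetrization you describe go through without the corrector measure and the proof closes.
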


If the mean current is not fixed, then this is a standard result originally shown by Landford and Ruelle in \cite{LanfordRuelle}. The main idea of the proof is a variational argument which says that if a measure $\mu$ is not  Gibbs, then there is a ``perturbation'' of $\mu$ which has more entropy. We need to show that this perturbation does not change the mean current, and this is the purpose of Lemma \ref{lem:no_infinite_paths}. After that, our proof of Theorem \ref{theorem: entropy maximizer gibbs} is inspired by the exposition in \cite{burton1994non}. 

To show the mean current does not change, we use double dimers to compare the mean currents of the two measures. Double dimers will be a tool throughout Section \ref{sec:entropy}. There is a natural action of the group $\threeeven$ on the product $\Omega\times \Omega$ acting coordinate wise. Superimposing two dimer tilings $\tau_1$ and $\tau_2$ gives us a \emph{double dimer configuration} $(\tau_1,\tau_2)$. The union $\tau_1 \cup \tau_2$ consists of finite cycles, infinite paths and isolated double edges. Each cycle or infinite path in $(\tau_1,\tau_2)$ is {\em oriented} in a way that agrees with the direction of the $\tau_1$ flow (for edges in $\tau_1$) or opposite the direction of the $\tau_2$ flow (for edges in $\tau_2$).

\begin{lemma}\label{lem:no_infinite_paths}
Let $\mathfrak m$ be a $\threeeven$-invariant measure on $\Omega\times \Omega$ such that for $\mathfrak m$ almost every $(\tau_1, \tau_2)$, the union $\tau_1\cup \tau_2$ does not contain infinite paths. Then
$$s(\pi_1(\mathfrak m))=s(\pi_2(\mathfrak m))$$
where $\pi_i:\Omega\times \Omega\to \Omega$ is projection onto the $i^{th}$ coordinate for $i=1,2$. 
\end{lemma}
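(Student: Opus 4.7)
The plan is to express the mean current difference as a spatial average via $\threeeven$-invariance, then exploit the finite-cycle structure of $\tau_1\triangle\tau_2$ to show that almost all contributions cancel in a telescoping fashion, leaving only a small boundary/long-cycle error that tends to zero. Throughout, let $\Lambda_n=[-n,n]^3$ and recall that by $\threeeven$-invariance,
\begin{equation*}
    |\text{even}(\Lambda_n)|\cdot\bigl(s(\pi_1\mathfrak m)-s(\pi_2\mathfrak m)\bigr) \;=\; \mathbb E_\mathfrak m\!\left[\sum_{x\in\text{even}(\Lambda_n)}\bigl(s_x(\tau_1)-s_x(\tau_2)\bigr)\right],
\end{equation*}
where $s_x(\tau)$ is the direction of the tile at $x$ in $\tau$. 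Our goal is therefore to show the inner sum is $o(|\Lambda_n|)$ in expectation.

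First I would record the telescoping identity on a finite cycle. Given $(\tau_1,\tau_2)$ in the full-measure event where $\tau_1\cup\tau_2$ has no infinite paths, $\tau_1\triangle\tau_2$ decomposes into finite alternating cycles. If $C$ has vertices $v_1,v_2,\ldots,v_{2k}$ in cyclic order (with $v_{2j-1}$ even and edges alternating between $\tau_1$ and $\tau_2$), then the $\tau_1$-partner of $v_{2j-1}$ is $v_{2j}$ and its $\tau_2$-partner is $v_{2j-2}$, so
\begin{equation*}
    s_{v_{2j-1}}(\tau_1)-s_{v_{2j-1}}(\tau_2) \;=\; v_{2j}-v_{2j-2},
\end{equation*}
and summing cyclically in $j$ yields $0$. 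Thus $R_n(C):=\sum_{x\in\text{even}(C)\cap\Lambda_n}(s_x(\tau_1)-s_x(\tau_2))$ vanishes for any cycle contained entirely in $\Lambda_n$, and the only nonzero contributions come from cycles that either straddle $\partial\Lambda_n$ or have very large length.

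Next I would control these two types of cycles separately. Fix a truncation parameter $R$ and split $\sum_C R_n(C)$ into (i) short straddling cycles (length $\le R$, intersecting both $\Lambda_n$ and its complement) and (ii) long cycles (length $> R$). A short straddling cycle lies entirely within the $R$-neighborhood of $\partial\Lambda_n$; since that neighborhood contains $O(Rn^2)$ edges of $\mathbb Z^3$ and cycles of $\tau_1\triangle\tau_2$ are edge-disjoint, the total length of such cycles is at most $O(Rn^2)$, giving the deterministic bound
\begin{equation*}
    \sum_{C \text{ short, straddling}} |R_n(C)| \;\le\; 2\sum_{C} |\text{even}(C)| \;\le\; O(Rn^2).
\end{equation*}
For long cycles, $|R_n(C)|\le 2|\text{even}(C)\cap\Lambda_n|$, and setting $p(R):=\mathfrak m\bigl(0 \text{ lies on a cycle of } \tau_1\triangle\tau_2 \text{ of length} >R\bigr)$, translation invariance gives
\begin{equation*}
    \mathbb E_\mathfrak m\!\left[\sum_{C \text{ long}} |R_n(C)|\right] \;\le\; 2|\text{even}(\Lambda_n)|\,p(R).
\end{equation*}
The hypothesis that $\tau_1\cup\tau_2$ has no infinite paths $\mathfrak m$-a.s.\ forces the cycle through the origin (when it exists) to have finite length almost surely, so $p(R)\to 0$ as $R\to\infty$ by dominated convergence.

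Combining these two bounds and dividing by $|\text{even}(\Lambda_n)|$ gives
\begin{equation*}
    \bigl|s(\pi_1\mathfrak m)-s(\pi_2\mathfrak m)\bigr| \;\le\; O\!\left(\frac{R}{n}\right) + 4p(R).
\end{equation*}
Sending $n\to\infty$ for fixed $R$ kills the first term, leaving $|s(\pi_1\mathfrak m)-s(\pi_2\mathfrak m)|\le 4p(R)$; sending $R\to\infty$ then forces the two mean currents to agree. The only real subtlety is verifying that a short straddling cycle is genuinely trapped in an $O(R)$-neighborhood of $\partial\Lambda_n$ (which follows from its diameter being at most half its length), and that the long-cycle contribution is controlled pointwise by a translation-invariant indicator so that invariance can be applied; neither is serious, so no step should be a real obstacle.
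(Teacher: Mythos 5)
Your proof is correct, and it takes a genuinely different (and in some ways cleaner) route from the paper's. Both arguments rest on the same core observation: for a finite alternating cycle $C$ with even vertices $v_1,v_3,\dots$ in cyclic order, the sum $\sum_{x\in\text{even}(C)}(s_x(\tau_1)-s_x(\tau_2))$ telescopes to zero, so that only cycles straddling the boundary of a large box can contribute. The difference is in how the boundary/rarity estimates are organized. The paper first reduces to the case of ergodic $\mathfrak m$ (using that mean current is affine in the measure) and then invokes the mean ergodic theorem to produce a high-probability event on which a single typical configuration $(\tau_1,\tau_2)$ has both a small proportion of long loops in $B_n$ and spatial averages of tile directions close to the mean currents; the cancellation is then applied to the set $C_n$ of sites whose loop is contained in $B_n$, and the error is bounded by $|B_n\setminus C_n|$. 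Your proof stays entirely at the level of expectations: $\threeeven$-invariance immediately converts the mean-current difference into an average over $\text{even}(\Lambda_n)$ inside the expectation, the telescoping identity kills cycles contained in $\Lambda_n$, the short-straddling contribution is bounded deterministically by the volume of an $O(R)$-collar of $\partial\Lambda_n$, and the long-cycle contribution is bounded in expectation by $2|\text{even}(\Lambda_n)|\,p(R)$ via translation invariance of the event that the cycle through a given site is long. This bypasses both the ergodic decomposition and the ergodic theorem, replacing them with the double truncation (cycle length $R$, then box size $n$); the resulting $O(R/n)+4p(R)$ bound and iterated limits $n\to\infty$, $R\to\infty$ close the argument cleanly. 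Both approaches are valid; yours is a bit more elementary, while the paper's ergodic-theorem formulation foreshadows the pointwise ergodic statements it needs elsewhere (e.g.\ Theorem~\ref{thm: subaction ergodic}).
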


\begin{rem}
    We remark that non-existence of infinite paths in a sample from a coupling like this has been used in other related but different ways in statistical mechanics, e.g.\ to show that two Gibbs measures are the same if there are no infinite paths in a sample from the coupling, or to compute covariances. See for example \cite{van_den_Berg}, \cite{van_den_Berg-Steif}.
\end{rem}

\begin{proof}
Since the mean current is an affine function of measure, by the ergodic decomposition theorem it is sufficient to prove this lemma for $\mathfrak m$ ergodic. For the rest of the proof we assume that $\mathfrak m$ is an ergodic measure. 

Recall that $v_\tau$ is the \textit{pretiling flow} of $\tau$ (for the definition see Equation \eqref{eq:v_tau} in Section \ref{section:tiling_flows}), and let $(\tau_1,\tau_2)$ be a sample from $\mathfrak m$. By assumption all paths $\gamma\subset (\tau_1,\tau_2)$ are finite loops or double edges (which are loops with just one edge from each tiling). Let $E(\gamma)$ denote the edges along $\gamma$ oriented from even to odd. (If $e\in E(\gamma)$, $-e$ is $e$ with orientation reversed.) With a slight abuse of notation, we also view $e$ as a vector oriented from even to odd. Since $\gamma$ is a loop,
\begin{equation}\label{eq: shift equal''}
     \sum_{e\in E(\gamma)} v_{\tau_1}(e) { e} = \sum_{e\in E(\gamma)} v_{\tau_2}(e) {e}.
\end{equation}
Given $x\in \Z^3$ let $\gamma_x$ denote the loop in $(\tau_1,\tau_2)$ containing $x$. We denote the number of edges in a loop $\gamma$ by $\text{length}(\gamma)$.  For any $\epsilon>0$, there exists $k$ such that 
\begin{align*}
    \mathfrak m(\text{length}(\gamma_0) >k ) < \epsilon.
\end{align*}
Let $B_n:=[1,n]^3$. By the mean ergodic theorem, there is $n$ large enough such that with $\mathfrak m$-probability $1-\epsilon$, the double dimer configuration $(\tau_1,\tau_2)$ satisfies the following.
\begin{enumerate}
    \item We have
    \begin{equation}\label{eq: points on long paths''}
        |\{ x\in B_n~:~\text{length}(\gamma_x) > k \}| < 2\epsilon n^3.
    \end{equation}
    \item Let $E(B_n)$ denote the edges in $B_n$ oriented from even to odd. For $i=1,2$ and any unit coordinate vector $\eta$, 
    \begin{equation}\label{eq: mean current approx i12''}
        \bigg\lvert \langle s(\pi_i(\mathfrak m)),\eta\rangle - \frac{2}{n^3} \sum_{e\in E(B_n)} \langle v_{\tau_i}(e) e, \eta\rangle \bigg\rvert < \epsilon.
    \end{equation}
\end{enumerate}
Let $C_n:=\{x\in B_n~:~ \gamma_x\subset B_n\}$ denote the $\mathfrak m$-random set of points on loops in $(\tau_1,\tau_2)$ contained in $B_n$. By Equation \eqref{eq: points on long paths''}, with $\mathfrak m$-probability $1-\epsilon$,
\begin{equation}\label{eq: C_n bound''}
    |C_n| \geq (n-k)^3 - 2\epsilon n^3.
\end{equation}
Clearly $\{\gamma_x: x\in C_n\}$ is a union of loops. Let $E(C_n)$ denote the edges of loops in this collection oriented from even to odd. By Equation \eqref{eq: shift equal''},
\begin{equation}\label{eq: zero on C_n''}
    \sum_{e\in E(C_n)} \bigg(v_{\tau_1}(e) e- v_{\tau_2}(e) e\bigg) = 0.
\end{equation}
There are at most $\frac{n^3 - |C_n|}{2}$ tiles in $\tau_i$ in $E(B_n)\setminus E(C_n)$ for $i=1,2$. Therefore by Equation \eqref{eq: C_n bound''}, with $\mathfrak m$ probability $1-2\epsilon$ 
\begin{equation}\label{eq: C_n vs B_n''}
    \bigg\lvert \frac{2}{n^3}\sum_{e\in E(B_n)} v_{\tau_i}(e)e - \frac{2}{n^3}\sum_{e\in E(C_n)} v_{\tau_i}(e)e \bigg\rvert \leq \frac{n^3 - (n-k)^3+2\epsilon n^3}{n^3} = 1 + 2\epsilon - \frac{(n-k)^3}{n^3}.
\end{equation}
Combining Equations \eqref{eq: mean current approx i12''}, \eqref{eq: zero on C_n''}, \eqref{eq: C_n vs B_n''} gives that for any unit coordinate vectors $\eta$,
\begin{equation}
    \bigg\lvert \langle s(\pi_1(\mathfrak m)), \eta\rangle - \langle s(\pi_2(\mathfrak m)), \eta\rangle \bigg\rvert < 6 \epsilon + \frac{2n^3 - 2(n-k)^3}{n^3}.
\end{equation}
Taking $n\to \infty$ and then $\epsilon\to 0$ completes the proof.

\end{proof}

To prove Theorem \ref{theorem: entropy maximizer gibbs}, we mimic the perturbative argument of \cite[Proposition 1.19]{burton1994non}, applying Lemma \ref{lem:no_infinite_paths} to show that this does not change the mean current. 

\begin{proof}[Proof of Theorem \ref{theorem: entropy maximizer gibbs}]
It suffices to show that if $\mu\in \mc P^s$ is not a  Gibbs measure, then there exists $\nu\in \mc P^s$ such that $h(\nu) > h(\mu)$. Under the assumption that $\mu$ is not a Gibbs measure, there exists a finite set $R\subset \Z^3$ and a positive measure set $\Omega'\subset \Omega$ such that for all $\tau\in \Omega'$, the conditional distribution of possible extensions of $\tau|_{\Z^3\setminus R}$ to a tiling of $\Z^3$ is not uniform. We can assume using stationarity that $R$ is contained in the positive quadrant. Let $n\in \mathbb N$ be such that $R\subset[1,n-1]^3$.  Since the number of tilings of $R$ depends only on the tiling restricted to $S:=[0,n]^3\setminus R$ there exists a tiling $\tau_0$ in the support of $\mu$ such that the conditional distribution on the possible extensions of $\tau_0|_{S}$ to $R$ is not uniform. Since entropy is maximized by the uniform measure we have that there is $\delta>0$ such that
$$H(\text{uniform distribution on extensions of $\tau_0|_{S}$ to $R$})-H(\mu|_{R}\text{ conditioned on }\tau_0|_S)>\delta.$$

We now construct a modification of $\mu$ and show that it has the same mean current but more entropy. For this take a sample $\tau$ from $\mu$. Let $n$ be an odd integer and divide $\Z^3$ into translates of $B=[0,n]^3$ by $(n+1)\Z^3$. For each such translated box $B$, we resample the tiling in $B$ from the uniform measure on tilings of $B$ with boundary condition $\tau\mid_{\partial^\circ B}$, where $\partial^\circ B = B\setminus [1,n-1]^3\subset B$ is the inner boundary of $B$.
This gives us a new measure $\nu$ on $\Omega$, which is invariant with respect to the $(n+1)\Z^3$ subaction. By averaging $\nu$ with respect to translations by elements of $[0,n]^3\cap \threeeven$ we get a $\threeeven$-invariant measure which we denote $\nu'$. 

Let $\mathfrak m$ be a measure on $\Omega\times \Omega$ which is a coupling of $\mu$ and $\nu'$, where the sample from $\nu'$ is derived by the construction above from the $\mu$ sample. 
If $(\tau_1,\tau_2)$ is sampled from $\mathfrak m$, $\tau_1$ and $\tau_2$ differ only on the interiors of copies of $B$. Therefore $(\tau_1,\tau_2)$ has no infinite paths $\mathfrak m$-a.s., so by Lemma~\ref{lem:no_infinite_paths}, 
\begin{align*}
    s(\nu') = s(\mu). 
\end{align*} 
On the other hand by the ergodic theorem, there exists $\epsilon>0$ for which there is a $(n+1)\Z^3$-invariant set $A \subset \Omega$ with the following properties:
\begin{enumerate}
	\item $\mu(A)>\epsilon$.
	\item For all $\tau\in A$, $\tau_0|_S$ appears in translated boxes $B$ with density greater than $\epsilon$. 
\end{enumerate}
Therefore
$$h(\nu')-h(\mu)> \frac{1}{(n+1)^3}\epsilon^2\delta.$$
\end{proof}

The proof of Theorem \ref{theorem: entropy maximizer gibbs} also has a useful consequence for the double dimer model. Recall the maps $\pi_1, \pi_2: \Omega\times\Omega\to \Omega$ given by $\pi_i(\tau_1, \tau_2)=\tau_i$. Let $\Prob^{s_1, s_2}$ be the space of invariant probability measures $\mu$ on $\Omega\times \Omega$ such that $\pi_i(\mu)\in \Prob^{s_i}$ for $i=1,2$. \symindex{Chapter 7!$\Prob^{s_1, s_2}$, - the space of probability measures on the double dimer model with mean-currents $s_1$ and $s_2$ and the two projections $\pi_1, \pi_2$}

\begin{cor}\label{Corollary: entropy maximizer double dimer}
	Let $s_1, s_2\in \mathcal O$. Then
	$$\sup_{\mu\in \Prob^{s_1, s_2}}h({\mu})= \ent(s_1)+\ent(s_2).$$ Further, the measures $\mu\in \Prob^{s_1, s_2}$ which maximize specific entropy on $\mc P^{s_1, s_2}$ are Gibbs measures on $\Omega\times \Omega$ and satisfy $h({\pi_i(\mu)})=\ent(s_i)$ for $i=1,2$. 
	\end{cor}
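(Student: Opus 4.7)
The plan is to split the proof into three stages: the upper bound on the supremum, the lower bound (which also establishes attainment), and finally the characterization of the maximizers as Gibbs measures.

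For the upper bound, I would use subadditivity of Shannon entropy under marginalization: for any finite region $\Lambda \subset \Z^3$ and any $\mu \in \Prob^{s_1,s_2}$, the inequality $H_\Lambda(\mu) \leq H_\Lambda(\pi_1(\mu)) + H_\Lambda(\pi_2(\mu))$ holds at the level of Shannon entropy of finite-dimensional marginals. Dividing by $|\Lambda_n|$ and sending $n\to\infty$ along $\Lambda_n=[-n,n]^3$ gives $h(\mu) \leq h(\pi_1(\mu)) + h(\pi_2(\mu)) \leq \ent(s_1)+\ent(s_2)$. For the lower bound, take $\mu_i \in \Prob^{s_i}$ realizing $\ent(s_i)$ (which exists by Lemma \ref{lemma: entropy maximiser}). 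The product measure $\mu_1 \otimes \mu_2$ lies in $\Prob^{s_1,s_2}$, and by the standard formula for specific entropy of products has $h(\mu_1 \otimes \mu_2) = h(\mu_1) + h(\mu_2) = \ent(s_1)+\ent(s_2)$. Together this yields the equality in the statement and shows the supremum is attained.

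For the characterization, suppose $\mu \in \Prob^{s_1,s_2}$ is a maximizer. Then the chain
\[
\ent(s_1)+\ent(s_2) = h(\mu) \leq h(\pi_1(\mu))+h(\pi_2(\mu)) \leq \ent(s_1)+\ent(s_2)
\]
forces equality throughout, so $h(\pi_i(\mu)) = \ent(s_i)$ for $i=1,2$. To prove $\mu$ is a Gibbs measure on $\Omega\times\Omega$, I would adapt the perturbative argument in the proof of Theorem \ref{theorem: entropy maximizer gibbs}. If $\mu$ were not Gibbs, then there would exist finite regions $R_1,R_2 \subset \Z^3$ and a positive-measure set of boundary conditions outside a containing box $S$ on which the conditional law of $(\tau_1|_{R_1},\tau_2|_{R_2})$ differs in Shannon entropy from the uniform distribution on the pairs of allowed local extensions by a definite amount $\delta>0$. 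Partitioning $\Z^3$ into translates of a large box $B=[0,n]^3$ by $(n+1)\Z^3$ and resampling $(\tau_1,\tau_2)$ inside each copy of $B$ (jointly, conditioned on the inner boundary of $B$) produces a measure $\nu$, which after averaging over translates by $[0,n]^3\cap\threeeven$ yields a $\threeeven$-invariant measure $\nu'$ with $h(\nu') > h(\mu) + \epsilon^2\delta/(n+1)^3$ for suitable $\epsilon>0$ coming from the ergodic theorem, exactly as in the single-dimer argument.

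The only genuinely new point, and the potential obstacle, is showing $\nu' \in \Prob^{s_1,s_2}$, i.e.\ that both mean currents are preserved. For this I would couple $\mu$ and $\nu'$ through the resampling procedure itself: the coupled configuration $(\tau,\tau')$ satisfies $\tau_i$ and $\tau'_i$ differ only inside the finite boxes $B$, so the union $\tau_i \cup \tau_i'$ has no infinite paths for each $i=1,2$. Applying Lemma \ref{lem:no_infinite_paths} to each projection separately gives $s(\pi_i(\nu')) = s(\pi_i(\mu)) = s_i$, so $\nu'\in\Prob^{s_1,s_2}$ strictly improves entropy, contradicting maximality. Thus $\mu$ must be Gibbs on $\Omega\times\Omega$.
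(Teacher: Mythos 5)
Your proof is correct and takes essentially the same approach as the paper: the upper bound via subadditivity of Shannon entropy, the lower bound via the product measure $\nu_1\times\nu_2$, and the Gibbs characterization via the same perturbative resampling argument as Theorem~\ref{theorem: entropy maximizer gibbs}, with Lemma~\ref{lem:no_infinite_paths} applied to each projection separately to preserve both mean currents. The paper's proof is terser (it simply asserts the chain of inequalities and that the Gibbs step is ``exactly the same'' as the single-dimer case), but the details you supply match what is intended.
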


\begin{proof}
For any measure $\mu\in \mc P^{s_1,s_2}$ we have that 
\begin{equation}\label{eq:h_product}
    h(\mu)\leq h({\pi_1(\mu)})+h({\pi_2(\mu)})\leq \ent(s_1)+\ent(s_2).
\end{equation}
For $i=1,2$, let $\nu_{i}\in \Prob^{s_i}$ be such that $h({\nu_i})=\ent(s_i)$. The product measure $\nu=\nu_1\times \nu_2$ has $h(\nu) = \ent(s_1) + \ent(s_2)$, so by Equation \eqref{eq:h_product} $\nu$ maximizes specific entropy among measures in $\Prob^{s_1, s_2}$. Therefore if $\mu$ is a maximizer it must be in the equality case in Equation \eqref{eq:h_product}, which implies that $h(\pi_1(\mu)) = \ent(s_1)$ and $h(\pi_2(\mu)) = \ent(s_2)$. 

Finally the proof that the entropy maximizer must be a Gibbs measure is exactly the same as the proof of Theorem \ref{theorem: entropy maximizer gibbs}; if a measure in $\Prob^{s_1, s_2}$ is not a Gibbs measure then we can increase its entropy by locally modifying it.
\end{proof}

\subsection{Basic properties of ent}\label{subsection: easy observations}

In this section we prove some straightforward properties of the mean current entropy function $\ent$. The only tools here are basic real analysis and properties of $h(\cdot)$. 

\begin{lemma}\label{lemma: entropy_concave}
	$\text{ent}$ is a concave function on $\mc O$. 
\end{lemma}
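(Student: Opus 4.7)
The plan is to prove concavity by directly exhibiting, for any two mean currents $s_1, s_2 \in \mc O$ and any $\lambda \in [0,1]$, a measure in $\mc P^{\lambda s_1 + (1-\lambda) s_2}$ whose specific entropy is at least $\lambda\,\ent(s_1) + (1-\lambda)\,\ent(s_2)$. By Lemma \ref{lemma: entropy maximiser}, there exist $\mu_1 \in \mc P^{s_1}$ and $\mu_2 \in \mc P^{s_2}$ with $h(\mu_i) = \ent(s_i)$ for $i=1,2$. Form the convex combination $\mu := \lambda \mu_1 + (1-\lambda)\mu_2$, which is again a $\threeeven$-invariant Borel probability measure on $\Omega$.

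Next I would use the two linearity facts already recorded in the paper. First, the mean current $s(\cdot)$ is an affine function of the measure (see Section \ref{section:measures-currents}, where it is computed as the expectation of $s_0$ under the measure), so
\[
s(\mu) \;=\; \lambda\, s(\mu_1) + (1-\lambda)\, s(\mu_2) \;=\; \lambda s_1 + (1-\lambda) s_2,
\]
which places $\mu$ in $\mc P^{\lambda s_1 + (1-\lambda) s_2}$. Second, the specific entropy $h(\cdot)$ is affine in the measure (this is quoted in Section \ref{sec: entropy prelim} from \cite[Proposition 15.14]{Georgii}), hence
\[
h(\mu) \;=\; \lambda\, h(\mu_1) + (1-\lambda)\, h(\mu_2) \;=\; \lambda\, \ent(s_1) + (1-\lambda)\, \ent(s_2).
\]
Combining these with the definition of $\ent$ as a supremum gives
\[
\ent\bigl(\lambda s_1 + (1-\lambda) s_2\bigr) \;\geq\; h(\mu) \;=\; \lambda\, \ent(s_1) + (1-\lambda)\, \ent(s_2),
\]
which is exactly the concavity statement. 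There is no real obstacle here; the entire argument rests on the affineness of $s(\cdot)$ and $h(\cdot)$, both of which are immediate from their definitions (and the latter is standard). Strict concavity, of course, is a separate and much harder matter and will be established later in Section \ref{subsection: strict concavity} via chain swapping.
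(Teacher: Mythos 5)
Your proof is correct and is essentially identical to the paper's: both form the convex combination $\lambda\mu_1 + (1-\lambda)\mu_2$ of the two entropy maximizers from Lemma~\ref{lemma: entropy maximiser} and invoke the affineness of the mean current $s(\cdot)$ and of the specific entropy $h(\cdot)$ to conclude. You are slightly more explicit about citing the affineness facts, but the argument is the same.
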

\begin{proof}
	Fix $u, v\in \mathcal O$ and $\alpha\in (0,1)$. By Lemma \ref{lemma: entropy maximiser} we know that the entropy function $h(\cdot)$ on $\mathcal P^{u}$ (resp. on $\mathcal P^{v}$) achieves a maximum say at $\mu$ (resp. at $\nu$). Given this we have that $\alpha \mu +(1-\alpha) \nu \in \mathcal P^{\alpha u+ (1-\alpha)v}$ and 
	$$h({\alpha \mu +(1-\alpha) \nu}) =\alpha h({\mu}) + (1-\alpha)h({\nu})=\alpha \; \text{ent}(u) + (1-\alpha) \text{ent}(v) .$$
	Thus 
	$$\text{ent}(\alpha u +(1-\alpha) v)\geq \alpha\; \text{ent}(u) + (1-\alpha)\; \text{ent}(v)$$
	which shows that $\text{ent}$ is concave.
\end{proof}

\begin{lemma}\label{lem:ent_upper_semi_cont}
	$\text{ent}$ is an upper semi-continuous function on $\mc O$.
\end{lemma}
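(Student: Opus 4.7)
The plan is a standard compactness argument leveraging the known upper semi-continuity of the specific entropy $h(\cdot)$ on $\mc P$ (cited as \cite[Proposition 15.14]{Georgii} in Section~\ref{sec: entropy prelim}) together with continuity of the mean current functional $s:\mc P\to\mc O$.

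First I would fix $s\in\mc O$ and a sequence $(s_n)_{n\geq 1}\subset \mc O$ with $s_n\to s$, and set $\alpha:=\limsup_{n\to\infty}\ent(s_n)$. By extracting a subsequence (still indexed by $n$) I may assume $\ent(s_n)\to\alpha$. By Lemma~\ref{lemma: entropy maximiser}, for each $n$ there exists $\mu_n\in\mc P^{s_n}$ with $h(\mu_n)=\ent(s_n)$. Since $\mc P$ is compact in the weak-$*$ topology (it is a weak-$*$ closed subset of $\mc P(\Omega)$, which is compact as $\Omega$ is compact), I can pass to a further subsequence so that $\mu_n\to\mu^*$ weakly for some $\mu^*\in\mc P$.

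Next I would use the fact that the mean current $s(\cdot)$ is continuous and affine on $\mc P$ (noted just after Definition~\ref{def: mean current}; indeed each coordinate of $s(\mu)$ is the expectation under $\mu$ of a continuous, bounded function on $\Omega$ via \eqref{eq:mean current as expection}). Continuity gives $s(\mu^*)=\lim_n s(\mu_n)=\lim_n s_n=s$, so $\mu^*\in\mc P^s$. Then upper semi-continuity of $h$ as a function of the measure yields
\[
\alpha=\lim_{n\to\infty}h(\mu_n)=\limsup_{n\to\infty}h(\mu_n)\leq h(\mu^*)\leq \ent(s),
\]
where the last inequality is the definition of $\ent(s)$ as a supremum over $\mc P^s$. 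This gives $\limsup_{n\to\infty}\ent(s_n)\leq \ent(s)$, which is exactly upper semi-continuity of $\ent$.

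No step is really an obstacle here: the proof is a direct application of compactness of $\mc P$, continuity of $s(\cdot)$, and the already-cited upper semi-continuity of $h(\cdot)$, combined with existence of maximizers from Lemma~\ref{lemma: entropy maximiser}. The only thing to be careful about is making sure the weak-$*$ limit actually lies in $\mc P^s$, which is why continuity (and not just measurability) of the mean current functional on $\mc P$ matters.
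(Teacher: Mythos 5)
Your proof is correct and follows essentially the same argument as the paper: take entropy maximizers $\mu_n\in\mc P^{s_n}$, extract a weak-$*$ convergent subsequence, use continuity of the mean-current functional to place the limit in $\mc P^s$, and apply upper semi-continuity of $h(\cdot)$. The only cosmetic difference is that you spell out the double subsequence extraction explicitly, which the paper leaves implicit.
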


\begin{proof}
	Let $u_n\in \mathcal O$ be a sequence such that $u_n\to  u$. By Lemma \ref{lemma: entropy maximiser} there exist a measure $\mu_n$ maximizing the entropy function $h(\cdot)$ on $\Prob^{u_n}$. Since the mean current is a continuous function of the measure (Definition \ref{def: mean current}), we have that any subsequential limit $\mu$ of $\mu_n$ must lie in $\Prob^{u}$. Since $h(\cdot)$ is upper semicontinuous as a function on $\mc P$,
	$$\text{ent}(u)\geq h(\mu)\geq \limsup_{n \to \infty} h({\mu_n})= \limsup_{n\to \infty} \text{ent}(u_n),$$
which completes the proof.
\end{proof}

We put these together to show that $\ent$ is continuous.

\begin{lemma}\label{lemma: entropy_continuous}
	$\text{ent}$ is a continuous function on $\mc O$.
\end{lemma}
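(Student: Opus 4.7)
The plan is to combine the two properties already proved — concavity (Lemma \ref{lemma: entropy_concave}) and upper semi-continuity (Lemma \ref{lem:ent_upper_semi_cont}) — together with the explicit description of $\ent$ on $\partial \mc O$ from Theorem \ref{thm: extremal_entropy}. A standard fact from convex analysis is that a concave function on a convex subset of $\mathbb{R}^d$ is automatically continuous on the interior of its domain, so $\ent$ is continuous on $\text{Int}(\mc O)$ for free, and it remains only to establish continuity at boundary points. Since upper semi-continuity already holds everywhere, the only thing left to verify at any $s\in \partial \mc O$ is the matching lower semi-continuity: $\liminf_{n\to\infty} \ent(s_n) \geq \ent(s)$ whenever $s_n \to s$ in $\mc O$.

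I would split the remaining work according to whether $s$ lies on an edge of $\mc O$ or in the interior of a face. If $s\in \mc E$, then Theorem \ref{thm: extremal_entropy} gives $\ent(s)=0$, and since $\ent\geq 0$ everywhere, lower semi-continuity is immediate. If instead $s$ lies in the interior of a face of $\mc O$, the plan is to use a rescaling trick: set $\beta_n = 1 - \sqrt{\|s_n - s\|}$ and $s_n' = s + (s_n - s)/\sqrt{\|s_n - s\|}$, so that $s_n = \beta_n s + (1-\beta_n) s_n'$ with $\beta_n \to 1$ and $\|s_n' - s\| = \sqrt{\|s_n - s\|} \to 0$. Once $s_n' \in \mc O$ is established, concavity gives
$$\ent(s_n)\geq \beta_n\,\ent(s) + (1-\beta_n)\,\ent(s_n'),$$
and since $\ent$ is bounded on $\mc O$ and $\beta_n\to 1$, passing to $\liminf$ yields $\liminf \ent(s_n) \geq \ent(s)$.

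The main obstacle is verifying $s_n' \in \mc O$, which is why the edge case had to be treated separately. Near an interior point $s$ of a face of $\mc O$, all three coordinates $s_1, s_2, s_3$ are nonzero with fixed signs $\epsilon_i\in\{\pm 1\}$, and in a neighborhood of $s$ the defining inequality $|x_1|+|x_2|+|x_3|\leq 1$ collapses to the single affine inequality $\epsilon_1 x_1 + \epsilon_2 x_2 + \epsilon_3 x_3 \leq 1$. This linear inequality is stable under positive rescaling: from the hypothesis that $s_n\in \mc O$ we get $\sum_i \epsilon_i (s_n^{(i)}-s_i) \leq 0$, and dividing by $\sqrt{\|s_n-s\|}$ gives the same inequality with $s_n^{(i)}$ replaced by $s_n'^{(i)}$. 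Since in addition $s_n'\to s$, for large $n$ the point $s_n'$ lies in the neighborhood of $s$ where the local linearization is exact, and hence $s_n' \in \mc O$. The reason this fails on $\mc E$ is precisely that at edge points the coordinate signs are no longer all determined, so $\partial \mc O$ is not locally a single hyperplane; that is exactly why the edge points must be handled via Theorem \ref{thm: extremal_entropy} rather than via the rescaling argument.
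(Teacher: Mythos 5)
Your argument is correct, but it takes a different route than the paper, and the comparison is instructive. Both proofs hinge on the same core maneuver: to bound $\ent(v)$ from below when $v$ is near $u$, write $v$ as a convex combination of $u$ and a stretched point $u + L(v-u)$ and invoke concavity plus boundedness of $\ent$. The paper does this \emph{uniformly} at every $u\in\mathcal O$, with a \emph{fixed} $L>1+2M/\epsilon$: since $\mathcal O$ is a convex polytope, near any $u$ it coincides with its tangent cone at $u$ (which is scale-invariant), so $u+L(v-u)\in\mathcal O$ as long as $\|v-u\|_1$ is below a threshold $\delta_2(L)$. This single argument needs only USC and concavity, and in particular does \emph{not} invoke Theorem~\ref{thm: extremal_entropy}.

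You instead split into three regimes (interior of $\mathcal O$, face interiors, edges $\mathcal E$) and use a stretch factor $L_n=\|s_n-s\|^{-1/2}$ that varies with $n$. For face interiors you verify $s_n'\in\mathcal O$ by hand via the local linearization of the single active constraint, which is correct; for $\mathcal E$ you appeal to the computed value $\ent\big|_{\mathcal E}\equiv 0$ from Theorem~\ref{thm: extremal_entropy}. This is a genuinely valid alternative, but it imports a dependency the paper avoids (Theorem~\ref{thm: extremal_entropy}, which sits in a later-feeling part of the development and whose only role here is to dispatch the edge case). The case split is also unnecessary: the same tangent-cone observation the paper uses shows $s_n'\in\mathcal O$ for large $n$ at \emph{every} boundary point, since $s_n'-s$ is a positive scalar multiple of $s_n-s$ (hence lies in the same tangent cone) and $\|s_n'-s\|=\sqrt{\|s_n-s\|}\to 0$. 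If you replace your sign-by-sign analysis with that one geometric remark, the edge case evaporates and you recover an argument essentially identical in content to the paper's, without needing the boundary entropy formula.
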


\begin{proof}
	Fix $u\in \mathcal O$ and $\epsilon>0$. Let $M:=\sup_{v \in \mathcal O} \text{ent}(v)$; $M$ is finite since $\mc O$ is compact and $\ent$ is upper semicontinuous on $\mc O$ (Lemma \ref{lem:ent_upper_semi_cont}). Let $\norm{\cdot}_1$ denote the $L^1$ norm. Again using Lemma \ref{lem:ent_upper_semi_cont}, there exists $\delta_1>0$ such that if $v\in \mathcal O$ is such that $\|v-u\|_1<\delta_1$, then
	\begin{equation*}
	\text{ent}(u)-\text{ent}(v)> -\epsilon.    
	\end{equation*}
	Choose $L>1+ (2M/\epsilon)$ and $\delta_2>0$ small enough such that if $\|v- u\|_1<\delta_2$ then 
	$u+ L(v -u)\in \mathcal O$ (note that this is possible even when $u\in \partial \mc O$ since $\mc O$ is convex).
	By Lemma \ref{lemma: entropy_concave}, 
	$$\frac{L-1}{L}\text{ent}(u)+\frac{1}{L}\text{ent}(u+ L(v -u))\leq \text{ent}(v).$$
	Rearranging the equation we get that 
	\begin{equation*}
	\text{ent}(u)-\text{ent}(v)\leq \frac{1}{L}\text{ent}(u)-\frac{1}{L}\text{ent}(u+ L(v -u))\leq 2M/L\leq \epsilon.
	\end{equation*}
	Taking $\delta<\delta_1\delta_2$, if $\|v -u\|_1<\delta$ then $|\text{ent}(u)-\text{ent}(v)|<\epsilon$, which proves that $\text{ent}$ is continuous.
\end{proof}

\subsection{Flows for the double dimer model}\label{subsection: flows for double dimer}

To prove that $\ent$ is strictly concave we use a double dimer model construction called \textit{chain swapping}, which is an operation on infinite paths in a double dimer configuration related to the cluster swapping technique used in \cite{AST_2005__304__R1_0}. In this section we give some of the necessary background results for the double dimer model, and in the next section we explain what chain swapping is.

Here we look at $\threeeven$-invariant couplings of $\threeeven$-invariant measures $\mu_1, \mu_2\in \Prob_e$ and study properties of the sample $(\tau_1, \tau_2)$. We distinguish between the pair of dimer tilings $(\tau_1,\tau_2)$ and the union of tilings $\tau_1\cup \tau_2$, where we forget the information of which tiling each edge $e\in \tau_1\cup \tau_2$ belongs to. As we observed earlier, $\tau_1\cup \tau_2$ is a set of isolated double edges, finite cycles, and infinite paths. We saw in Lemma \ref{lem:no_infinite_paths} that if $\tau_1\cup \tau_2$ consists of only finite cycles and double edges, then the marginals have the same mean current. This suggests that the infinite paths in a double dimer configuration carry a lot of information about the difference between the  mean currents of the measures involved. The main results of this section are Proposition \ref{prop: line slope integrates to mean current difference} and Corollary \ref{cor:new 7.15} which make this precise.

Recall from Section \ref{section:tiling_flows} that the \textit{flow associated with a double dimer configuration} $(\tau_1,\tau_2)$ is
$$f_{(\tau_1, \tau_2)}=v_{\tau_1}-v_{\tau_2},$$
where $v_\tau$ is the pretiling flow defined in Section \ref{section:tiling_flows}, Equation \eqref{eq:v_tau}. (Equivalently, $f_{(\tau_1, \tau_2)} = f_{\tau_1} - f_{\tau_2}$ where $f_\tau$ is the tiling flow, since the reference flows cancel.) Explicitly, for each edge $e$ oriented from even to odd,
 $$f_{(\tau_1, \tau_2)}(e)=\begin{cases}1&\text{ if }e\in \tau_1\setminus \tau_2\\
 -1&\text{ if }e\in \tau_2\setminus \tau_1\\
 0&\text{ if }e\in \tau_1\cap \tau_2\text{ or if }e\not\in \tau_1\cup\tau_2.
 \end{cases}$$
The vector field $f_{(\tau_1,\tau_2)}$ is divergence free, and its flow lines are the cycles and paths of the double dimer configuration $\tau_1\cup \tau_2$. In particular, each $x\in \m Z^3$ is in one of two cases:
\begin{enumerate}
    \item $f_{(\tau_1,\tau_2)}$ is equal to $1$ on exactly two edges $e_1,e_2$ incident to $x$, with one of the edges oriented into $x$ and the other oriented out of $x$.  
    \item $f_{(\tau_1,\tau_2)}$ is zero on all edges $e$ incident to $x$. 
\end{enumerate}
The set of vertices $x\in \m Z^3$ in Case 2 is the collection of vertices covered by $\tau_1\cap \tau_2$. In particular, we note that it is tileable by dimers. 

Conversely, if a discrete vector field $g$ satisfies these properties (i.e.\ all vertices are in Case 1 or Case 2, and the set of Case 2 vertices is tileable by dimers), then there exist tilings $\tau_1,\tau_2$ such that $g = f_{(\tau_1,\tau_2)}$. In fact we can explicitly construct a pair of tilings with double dimer flow $g$. Given any tiling $\tau$ of the Case 2 vertices $\{v\in \m Z^3 : g(e) = 0 \text{ for all } e \text{ incident to }v\}$, we define
\begin{eqnarray*}
\tau_1&=& \tau\cup\{e~:~g(e)=1\text{ where }e\text{ is an edge directed from an even to an odd vertex}\}\\
\tau_2&=& \tau\cup\{e~:~g(e)=-1\text{ where }e\text{ is an edge directed from an even to an odd vertex}\}.
\end{eqnarray*}
From this we see that that the flow $f_{(\tau_1,\tau_2)}$ determines the double dimer configuration $(\tau_1,\tau_2)$ up to the choice of tiling $\tau$ on the Case 2 vertices. On other other hand, the union of tilings $\tau_1\cup \tau_2$ determines $(\tau_1,\tau_2)$ on the set where $\tau_1 = \tau_2$, meaning it determines the tiling $\tau$ of the Case 2 vertices. Therefore together these are enough to recover $(\tau_1,\tau_2)$. In summary, we have shown:
\begin{prop}\label{prop: injection_tiling_to_flow}
	The pair $(\tau_1 \cup \tau_2, f_{(\tau_1,\tau_2)})$ uniquely determines the double dimer configuration $(\tau_1,\tau_2)$ and vice versa.
\end{prop}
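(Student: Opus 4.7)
The ``vice versa'' direction of the proposition is essentially by definition: the map $(\tau_1,\tau_2) \mapsto (\tau_1 \cup \tau_2, f_{(\tau_1,\tau_2)})$ is well-defined. The interesting direction is the injectivity: recovering $(\tau_1,\tau_2)$ from the pair. My plan is to show how each piece of information --- the symmetric difference and the intersection --- can be extracted.

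First I would use the flow $f = f_{(\tau_1,\tau_2)}$ to read off the symmetric difference $\tau_1 \triangle \tau_2$ together with the labels telling us which side each edge belongs to. Indeed, by the definition of $f$ recalled just above the statement, an edge $e$ (oriented even to odd) has $f(e) = 1$ precisely when $e \in \tau_1 \setminus \tau_2$, has $f(e) = -1$ precisely when $e \in \tau_2 \setminus \tau_1$, and has $f(e) = 0$ precisely when $e$ is either in $\tau_1 \cap \tau_2$ or in neither. So from $f$ alone we recover the labeled symmetric difference $\tau_1 \triangle \tau_2$.

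Next I would recover $\tau_1 \cap \tau_2$. The union $\tau_1 \cup \tau_2$ tells us exactly which edges belong to at least one of the tilings, and by the previous step the flow distinguishes edges in $\tau_1 \cap \tau_2$ (where $f(e) = 0$) from those in $\tau_1 \triangle \tau_2$ (where $f(e) = \pm 1$). Hence the intersection is
\[
\tau_1 \cap \tau_2 \;=\; \{\, e \in \tau_1 \cup \tau_2 \,:\, f(e) = 0 \,\}.
\]
Having both $\tau_1 \cap \tau_2$ and the labeled symmetric difference, the reconstruction is immediate:
\[
\tau_1 \;=\; (\tau_1 \cap \tau_2) \;\cup\; \{\, e \,:\, f(e) = 1 \,\}, \qquad \tau_2 \;=\; (\tau_1 \cap \tau_2) \;\cup\; \{\, e \,:\, f(e) = -1 \,\},
\]
where the edges in the second union are oriented from even to odd.

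There is no real obstacle here; the proposition is essentially a bookkeeping fact, and the only subtlety is the orientation convention on $f$. It is worth remarking that the union $\tau_1 \cup \tau_2$ by itself is \emph{not} enough: on a doubled edge one cannot tell whether it came from $\tau_1 \cap \tau_2$ or from a length-two cycle of $\tau_1 \triangle \tau_2$, and on a finite cycle of $\tau_1 \triangle \tau_2$ there are two ways to assign alternating labels. The role of $f$ is precisely to break both ambiguities. Conversely, $f$ alone does not determine the tiling on the Case 2 vertices (where $f \equiv 0$ on all incident edges), as observed in the discussion preceding the statement; this is exactly the information carried by $\tau_1 \cup \tau_2$ on its zero-flow portion.
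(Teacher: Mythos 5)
Your proof is correct and takes essentially the same approach as the paper: the paper's discussion preceding the proposition recovers the symmetric difference (with labels) from the flow and the intersection from the zero-flow part of $\tau_1 \cup \tau_2$, which is exactly your decomposition. (A minor nitpick in your closing remark: $\tau_1 \triangle \tau_2$ never contains a length-two cycle since $\mathbb Z^3$ has no multi-edges --- the ambiguity you mean is simply whether an edge in the union is covered once or twice, which is resolved by the combinatorial data of $\tau_1\cup\tau_2$ as a collection of paths/cycles/double edges together with the flow.)
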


\textbf{Shifting along flow lines.} We define a $\m Z$-action on $\Omega\times \Omega$ by translating in the direction of the double dimer flow $f_{(\tau_1,\tau_2)}$. Given $(\tau_1,\tau_2)\in \Omega\times \Omega$, let $b_1\in \tau_1$ be the edge incident to the origin, and suppose that $b_1 = (0,a_1)$, $a_1\in \m Z^3$ a neighbor of the origin. Following that, let $b_2\in \tau_2$ be the edge incident to $a_1$, and suppose that $b_2 = (a_1,a_1+a_2)$, where $a_2\in \m Z^3$ a neighbor of the origin. These are the first two edges of a path in $(\tau_1, \tau_2)$. We define $\alpha_1(\tau_1,\tau_2)$ to be the directed vector $a_1$ and $\alpha_2(\tau_1,\tau_2)$ to be the directed vector $a_2$. When the pair of tilings $(\tau_1,\tau_2)$ is understood, we drop them from the notation. We then define the function $F: \Omega\times \Omega \to [-2,2]^3$ by \symindex{Chapter 7!$F: \Omega\times \Omega \to [-2,2]^3$}
\begin{align*}
    F((\tau_1,\tau_2)) = \alpha_1 + \alpha_2.
\end{align*}
We define $F$ to be translation by two edges so that the parity of the even/odd vertices is preserved. This can be viewed as tracking the slope and speed of the flow $f_{(\tau_1,\tau_2)}$ (when there is a double edge at the origin in $(\tau_1,\tau_2)$, $F$ is $0$). Finally we define a transformation $T:\Omega\times \Omega\to \Omega\times\Omega$ given by translating the double dimer tiling by $\alpha_1+\alpha_2$. If $(\tau_1,\tau_2)$ has a double edge at the origin, then $T((\tau_1,\tau_2)) = (\tau_1,\tau_2)$. Otherwise, $T$ shifts $(\tau_1,\tau_2)$ along the path through the origin. The corresponding flow $T(f_{(\tau_1,\tau_2}))(e) = f_{(\tau_1,\tau_2)}(e+\alpha_1 + \alpha_2)$. 
If $\mu$ is a $\threeeven$-invariant measure on $\Omega\times \Omega$, then it is also $T$-invariant. Thus by the ergodic theorem we have that for $\mu$ almost every $(\tau_1,\tau_2)\in \Omega\times \Omega$,
$$\lim_{N\to \infty} \frac{1}{N}\sum_{i=1}^NF(T^i((\tau_1,\tau_2)))=: F^{\star}((\tau_1,\tau_2))$$
exists. Further, $F^{\star}$ is invariant under $T$ and 
$$\int_{\Omega\times\Omega}F^{\star}((\tau_1,\tau_2))\,\dd\mu((\tau_1,\tau_2))= \int_{\Omega\times\Omega}F((\tau_1,\tau_2))\,\dd\mu((\tau_1,\tau_2)).$$
By construction, $F^\star$ measures the slope or asymptotic direction of the 
path $\gamma_0$ through the origin in $(\tau_1,\tau_2)$. We call $F^{\star}$ the \textit{slope function}. If $\gamma_0$ is a double edge or finite cycle, then $F^{\star}$ is $0$. If $\gamma_0$ is an infinite path, then it can have nonzero slope. \symindex{Chapter 7!$F^{\star}$ - ``slope function''}

For any infinite path $\ell \subset (\tau_1,\tau_2)$ we can compute its slope by translating so that $\ell$ goes through the origin. We say that $\ell$ has \textit{nonzero slope} if $F^\star((\widetilde\tau_1,\widetilde\tau_2))\neq 0$, where $(\widetilde\tau_1,\widetilde\tau_2)$ is a translation of $(\tau_1,\tau_2)$ so that $\ell$ contains the origin (this is well-defined since $F^\star$ is $T$-invariant). With this we can prove the main result of this section. 

\begin{prop}\label{prop: line slope integrates to mean current difference}
Let $\mu$ be a measure on $\Omega\times \Omega$ which is a $\threeeven$-invariant coupling of $\threeeven$-invariant measures $\mu_1$ and $\mu_2$ on $\Omega$. Then  
 \begin{align*}
     \int_{\Omega\times \Omega} F^{\star}((\tau_1, \tau_2))\,\dd\mu((\tau_1, \tau_2))= s(\mu_1)-s(\mu_2).
 \end{align*}
\end{prop}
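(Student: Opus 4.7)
The strategy has two stages. In the first stage, I will use the ergodic theorem applied to $T$ to reduce the statement to a computation about the single function $F$; in the second, I will evaluate $\int F\,\dd\mu$ by exploiting $\threeeven$-invariance via a translation argument.

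Stage one is short. Since $\mu$ is $\threeeven$-invariant it is $T$-invariant (as already observed in the discussion preceding the statement), and $F$ is bounded (taking values in $[-2,2]^3$), so by the $L^1$ Birkhoff ergodic theorem the Birkhoff averages defining $F^\star$ converge in $L^1(\mu)$ and satisfy
\[
  \int_{\Omega\times\Omega} F^\star\,\dd\mu \;=\; \int_{\Omega\times\Omega} F\,\dd\mu \;=\; \m E_\mu[\alpha_1] + \m E_\mu[\alpha_2].
\]
So it remains to prove $\m E_\mu[\alpha_1]=s(\mu_1)$ and $\m E_\mu[\alpha_2]=-s(\mu_2)$.

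The first identity is immediate: $\alpha_1=s_0(\tau_1)$ is precisely the direction of the $\tau_1$-tile at the origin, so by \eqref{eq:mean current as expection}, $\m E_\mu[\alpha_1]=\m E_{\mu_1}[s_0]=s(\mu_1)$.

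The identity $\m E_\mu[\alpha_2]=-s(\mu_2)$ is the crux of the argument, and it is subtle because $\alpha_2$ depends jointly on $\tau_1$ (through the base point $\alpha_1$, which is odd) and on $\tau_2$. I will compute the $i$-th coordinate by partitioning on $\alpha_1=u$ and on the value of $\alpha_2=w$, writing
\[
  \m E_\mu[\alpha_2\cdot \eta_i] \;=\; \sum_{u,w\in\{\pm\eta_1,\pm\eta_2,\pm\eta_3\}} w_i\,\mu\bigl((0,u)\in\tau_1,\;(u,u+w)\in\tau_2\bigr).
\]
For each pair $(u,w)$, the vector $u+w$ is even (since $u,w$ are both odd), so translation by $-(u+w)\in\threeeven$ preserves $\mu$ and moves the above joint event to
\[
  \mu\bigl((-(u+w),-w)\in\tau_1,\;(0,-w)\in\tau_2\bigr).
\]
For a fixed $w$, as $u$ ranges over the six unit vectors $\{\pm\eta_j\}$ the vectors $-(u+w)$ range over the six even neighbors of $-w$, so the events $\{\tau_1\text{-partner of }-w=-(u+w)\}$ partition the sample space. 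Summing over $u$ therefore collapses the $\tau_1$-constraint and gives
\[
  \sum_u \mu\bigl((0,u)\in\tau_1,(u,u+w)\in\tau_2\bigr) \;=\; \mu_2\bigl((0,-w)\in\tau_2\bigr).
\]
Plugging back and substituting $w'=-w$ yields
\[
  \m E_\mu[\alpha_2\cdot\eta_i] \;=\; \sum_w w_i\,\mu_2((0,-w)\in\tau_2) \;=\; -\sum_{w'} w'_i\,\mu_2((0,w')\in\tau_2) \;=\; -s(\mu_2)_i,
\]
as desired. Combining the three pieces completes the proof.

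The main obstacle is precisely the coupling issue in the computation of $\m E_\mu[\alpha_2]$: one cannot simply factor the expectation as $\sum_u \mu(\alpha_1=u)\,\m E_{\mu_2}[\psi(\tau_2,u)]$, because $\alpha_1$ is a function of $\tau_1$ and $\mu$ is a possibly non-product coupling. The translation-and-sum trick above is what replaces the missing product structure: after translating by the even vector $-(u+w)$ and then summing over $u$, the $\tau_1$-dependence integrates out tautologically and leaves a genuine marginal probability under $\mu_2$.
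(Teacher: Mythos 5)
Your proof is correct and follows the same route as the paper's: reduce $\int F^\star\,\dd\mu$ to $\int F\,\dd\mu$ by $T$-invariance, identify $\m E_\mu[\alpha_1]=s(\mu_1)$ directly, and evaluate $\m E_\mu[\alpha_2]$ by translating the joint event by the even vector $-(\alpha_1+\alpha_2)$ and then summing out the $\tau_1$-constraint. The coordinate-wise bookkeeping and the change of variable $w\mapsto -w$ are cosmetic differences only; the key translation-and-sum trick is exactly what the paper uses.
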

\begin{proof} 
Since $\mu$ is $T$-invariant, 
$$\int_{\Omega \times\Omega}F^{\star}((\tau_1, \tau_2))\,\dd\mu((\tau_1, \tau_2))= \int_{\Omega\times\Omega}(\alpha_1(\tau_1, \tau_2)+\alpha_2(\tau_1, \tau_2))\,\dd\mu((\tau_1, \tau_2)).$$
Since $\alpha_1(\tau_1, \tau_2)$ is the vector along the edge adjacent to $0$ in $\tau_1$ pointing away from it, it depends only on $\mu_1$. Hence
$$\int_{\Omega\times\Omega}\alpha_1(\tau_1, \tau_2) \dd\mu((\tau_1, \tau_2))= \int_{\Omega}\alpha_1(\tau_1, \tau_2) \dd\mu_1(\tau_1)=s(\mu_1).$$
The vector $\alpha_2(\tau_1,\tau_2)$ is defined similarly, but first we have to sum over the possible values of $\alpha_1$. 
$$\int_{\Omega\times\Omega}\alpha_2(\tau_1, \tau_2) \,\dd\mu ((\tau_1, \tau_2))= \sum_{a_1,a_2\in \star}a_2 \,\mu((0,a_1)\in \tau_1,(a_1,a_1+a_2)\in \tau_2)$$
where $\star$ is the six neighbors of the origin. By the $\threeeven$-invariance of $\mu$ we get that
\begin{eqnarray*}
	\int_{\Omega\times\Omega}\alpha_2(\tau_1, \tau_2) \,\dd\mu((\tau_1, \tau_2))&=& \sum_{a_1,a_2\in \star} a_2\,\mu((-a_1-a_2,-a_2)\in \tau_1, (-a_2,0)\in \tau_2)\\
	&=&\sum_{a_2\in \star}a_2 \mu_2((-a_2, 0)\in \tau_2)=-s(\mu_2).
	\end{eqnarray*}
This completes the proof.
	\end{proof}
 
As a corollary, we show that the mean current difference of a pair of measures $(\mu_1,\mu_2)$ can be computed by looking only at the tiles on infinite paths of nonzero slope. As a consequence, note also that if $s(\mu_1)\neq s(\mu_2)$ then an invariant coupling must have order $n^3$ tiles along infinite paths of nonzero slope. Here recall that $s_0(\tau)$ denotes the tile direction at the origin in $\tau$ and that for a measure $\mu_1$ on $\Omega$, $s(\mu_1) = \m E_{\mu}[s_0(\tau)]$.
 \begin{cor}\label{cor:new 7.15}
     Let $\mu$ be a measure on $\Omega\times \Omega$ which is a $\threeeven$-invariant coupling of $\threeeven$-invariant measures $\mu_1$ and $\mu_2$ on $\Omega$. Let $I_0$ be the event that the origin is contained in an infinite path of nonzero slope in $(\tau_1,\tau_2)$, and let $$s(\mu_1,I_0)-s(\mu_2,I_0) = \m E_{\mu}[(s_0(\tau_1)-s_0(\tau_2))\mathbbm{1}_{I_0}((\tau_1,\tau_2))].$$ Then 
     \begin{align*}
         s(\mu_1)-s(\mu_2) = s(\mu_1,I_0)-s(\mu_2,I_0).
     \end{align*}
 \end{cor}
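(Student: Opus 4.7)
The plan is to revisit the proof of Proposition \ref{prop: line slope integrates to mean current difference} with the indicator $\mathbbm{1}_{I_0}$ inserted throughout. By the paper's own definition of ``nonzero slope'', the path through the origin in $(\tau_1,\tau_2)$ has nonzero slope iff $F^{\star}(\tau_1,\tau_2) \neq 0$; finite cycles, double edges, and zero-slope infinite paths all give $F^{\star} = 0$. Hence $F^{\star}\cdot \mathbbm{1}_{I_0^c} \equiv 0$, and Proposition \ref{prop: line slope integrates to mean current difference} immediately yields
$$s(\mu_1) - s(\mu_2) \;=\; \int F^{\star}\, \dd\mu \;=\; \int F^{\star}\, \mathbbm{1}_{I_0}\, \dd\mu.$$

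Next, I would replace $F^{\star}$ by $F$ inside the $I_0$-integral. The map $T$ translates along the path through the origin, so the event $I_0$ is $T$-invariant (translating along a path preserves both membership on the path and the asymptotic slope of the path). Writing $F^{\star}$ as the Birkhoff average of $F$ under $T$ and applying dominated convergence together with $T$-invariance of $\mu$ and of $\mathbbm{1}_{I_0}$,
$$\int F^{\star}\, \mathbbm{1}_{I_0}\, \dd\mu \;=\; \lim_{N \to \infty} \frac{1}{N} \sum_{i=1}^{N} \int (F \circ T^i)\cdot \mathbbm{1}_{I_0}\, \dd\mu \;=\; \int F \, \mathbbm{1}_{I_0}\, \dd\mu.$$

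Finally I must show $\int F\, \mathbbm{1}_{I_0}\, \dd\mu = s(\mu_1, I_0) - s(\mu_2, I_0)$. Decomposing $F = \alpha_1 + \alpha_2$ and using that $0\in \threeeven$ so that $\alpha_1 = s_0(\tau_1)$ deterministically, the $\alpha_1$ integral is immediately $\m E_\mu[s_0(\tau_1)\mathbbm{1}_{I_0}] = s(\mu_1, I_0)$. For $\alpha_2$ I would copy the summation and $\threeeven$-translation trick used in Proposition \ref{prop: line slope integrates to mean current difference}, starting from
$$\int \alpha_2\, \mathbbm{1}_{I_0}\, \dd\mu \;=\; \sum_{a_1, a_2 \in \star} a_2\, \mu\bigl( (0, a_1) \in \tau_1,\; (a_1, a_1 + a_2) \in \tau_2,\; I_0 \bigr)$$
and shifting each summand by $-(a_1 + a_2) \in \threeeven$. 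The main obstacle, and really the only new ingredient beyond Proposition \ref{prop: line slope integrates to mean current difference}, is confirming that $\mathbbm{1}_{I_0}$ is matched with itself by this shift. A priori the shift converts ``$0$ lies on an infinite nonzero-slope path'' into ``$a_1+a_2$ lies on such a path''; however, on the support of the tile constraints $(-a_1-a_2,-a_2)\in \tau_1$ and $(-a_2,0)\in \tau_2$ that result from the shift, the vertices $-(a_1+a_2),\, -a_2,\, 0$ lie consecutively on a single path of $(\tau_1,\tau_2)$, so those two conditions define the same event. The telescoping sum over $a_1$ then collapses as in Proposition \ref{prop: line slope integrates to mean current difference}, yielding $\int \alpha_2\, \mathbbm{1}_{I_0}\, \dd\mu = -s(\mu_2, I_0)$, and combining the three displays gives the corollary.
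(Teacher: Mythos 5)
Your proposal is correct and matches the paper's approach; the paper's own proof is essentially a one-line assertion that $s(\mu_1)-s(\mu_2)=\int F^\star\mathbbm{1}_{I_0}\,\dd\mu = s(\mu_1,I_0)-s(\mu_2,I_0)$, and you have supplied the $T$-invariance and $\threeeven$-shift details that justify the second equality. One minor sign slip: under the change of variables the event $I_0$ becomes ``$-(a_1+a_2)$ lies on a nonzero-slope path,'' not ``$a_1+a_2$''; this is consistent with the vertex triple $-(a_1+a_2),\ -a_2,\ 0$ you then discuss, so the argument stands.
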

 \begin{proof}
     Note that 
     \begin{align*}
            \int_{\Omega\times \Omega} F^{\star}((\tau_1, \tau_2))\,\dd\mu((\tau_1, \tau_2)) = \int_{\Omega\times \Omega} F^{\star}((\tau_1, \tau_2))\mathbbm{1}_{\{F^\star\neq 0\}}((\tau_1,\tau_2))\,\dd\mu((\tau_1, \tau_2))
     \end{align*}
     By Proposition \ref{prop: line slope integrates to mean current difference}, the left hand side is equal to $s(\mu_1)-s(\mu_2)$. Since $I_0=\{F^{\star}\neq 0\}$, the right hand side is equal to $s(\mu_1,I_0)-s(\mu_2,I_0)$. 
 \end{proof}

Finally we observe that the number of infinite paths of nonzero slope in $(\tau_1,\tau_2)$ that intersect two far away boxes is $0$ with probability $1$ as the distance between the boxes goes to $\infty$. This serves as a lemma for Proposition \ref{prop:preserve_ergodicity}. 

\begin{lemma}\label{prop: infinite lines zero measure}
Let $\mu$ be a $\threeeven$-invariant probability measure on $\Omega\times \Omega$, and fix $m\in \mathbb N$. Given a sample $(\tau_1, \tau_2)$ from $\mu$ and $x,y\in \threeeven$, let $L_{x,y}$ denote the number of infinite paths of nonzero slope in $(\tau_1, \tau_2)$ which intersect both $x+ [1,m]^3$ and $y+[1,m]^3$. Then 
\begin{align*}
    \lim_{n\to \infty} \frac{1}{n^3} \sum_{x\in [1,n^3]} \mu( L_{0,x} = 0 ) = 1.
\end{align*}
\end{lemma}

\begin{proof} 
There are at most $m^3$ infinite paths of nonzero slope in $(\tau_1,\tau_2)$ passing through $[1,m]^3$. On the other hand, for any infinite path $\ell$ with nonzero slope, 
$$|\ell \cap [1,n]^3| = O(n).$$ 
Therefore each infinite path $\ell$ of nonzero slope intersecting $[1,m]^3$ intersects $x + [1,m]^3$ for at most $O(n)$ points $x\in [1,n]^3$. Since $m^3$ is a constant, this implies that the number of $x\in [1,n]^3$ such that $L_{0,x} \neq 0$ is also $O(n)$. We can rewrite 
$$\frac{1}{n^3}\sum_{x\in [1,n^3]}  \mu (L_{0, x}\neq 0)=\m E_{\mu}\left(\frac{1}{n^3}\sum_{x\in [1,n^3]}  \mathbbm{1}_{(L_{0, x}\neq 0)}(\tau_1,\tau_2)\right).$$
By the dominated convergence theorem, the right hand side tends to $0$ as $n\to \infty$. This completes the proof.
\end{proof}

\subsection{Chain swapping}\label{Subsection: Dimer Swapping}

We can now introduce the main tool of this section, namely \textit{chain swapping}, which is an operation on double dimer configurations similar to the \textit{cluster swapping} technique used in \cite[Chapter 8]{AST_2005__304__R1_0}. 

Let $(\tau_1,\tau_2)\in \Omega\times \Omega$ be a pair of dimer tilings. Corresponding to this are the collection of (unoriented) loops $\tau_1\cup \tau_2$ and the double dimer flow $f_{(\tau_1,\tau_2)}$. The flow $f_{(\tau_1,\tau_2)}$ determines the orientation of each loop or infinite path in $\tau_1\cup \tau_2$. 

For a fixed $p\in(0,1)$, from a random configuration $(\tau_1,\tau_2)$ we define a new random pair $(\tau_1',\tau_2')$ by ``shifting" the tiles along each infinite path of nonzero slope $\ell\subset (\tau_1,\tau_2)$ with independent probability $p$. In terms of the flow $f_{(\tau_1,\tau_2)}$, shifting on the infinite path $\ell$ corresponds to flipping the sign of $f_{(\tau_1,\tau_2)}$ along $\ell$. The new tilings $\tau_1',\tau_2'$ have the following properties:
\begin{enumerate}
    \item $\tau_1\cup \tau_2 = \tau_1'\cup \tau_2'$, i.e.\ they correspond to the same collection of double edges, finite loops, and infinite paths. 
    \item Let $\ell_1,\ell_2,...$ be the infinite paths of nonzero slope in $(\tau_1, \tau_2)$. Independently for each $i$, either with probability $1-p$ the tiles on $\ell$ were not swapped, in which case
    $$\tau_1'\cap \ell_i = \tau_1 \cap \ell_i, \qquad \tau_2' \cap \ell_i = \tau_2 \cap \ell_i$$
    or with probability $p$ the tiles were swapped, in which case 
    $$\tau_2'\cap \ell_i = \tau_1 \cap \ell_i, \qquad \tau_1' \cap \ell_i = \tau_2 \cap \ell_i.$$
    \item On the complement of the infinite paths with nonzero slope in $(\tau_1,\tau_2)$, $\tau_1'$ is equal to $\tau_1$ and $\tau_2'$ is equal to $\tau_2$. 
\end{enumerate}
\begin{figure}
    \centering
    \includegraphics[scale=0.5]{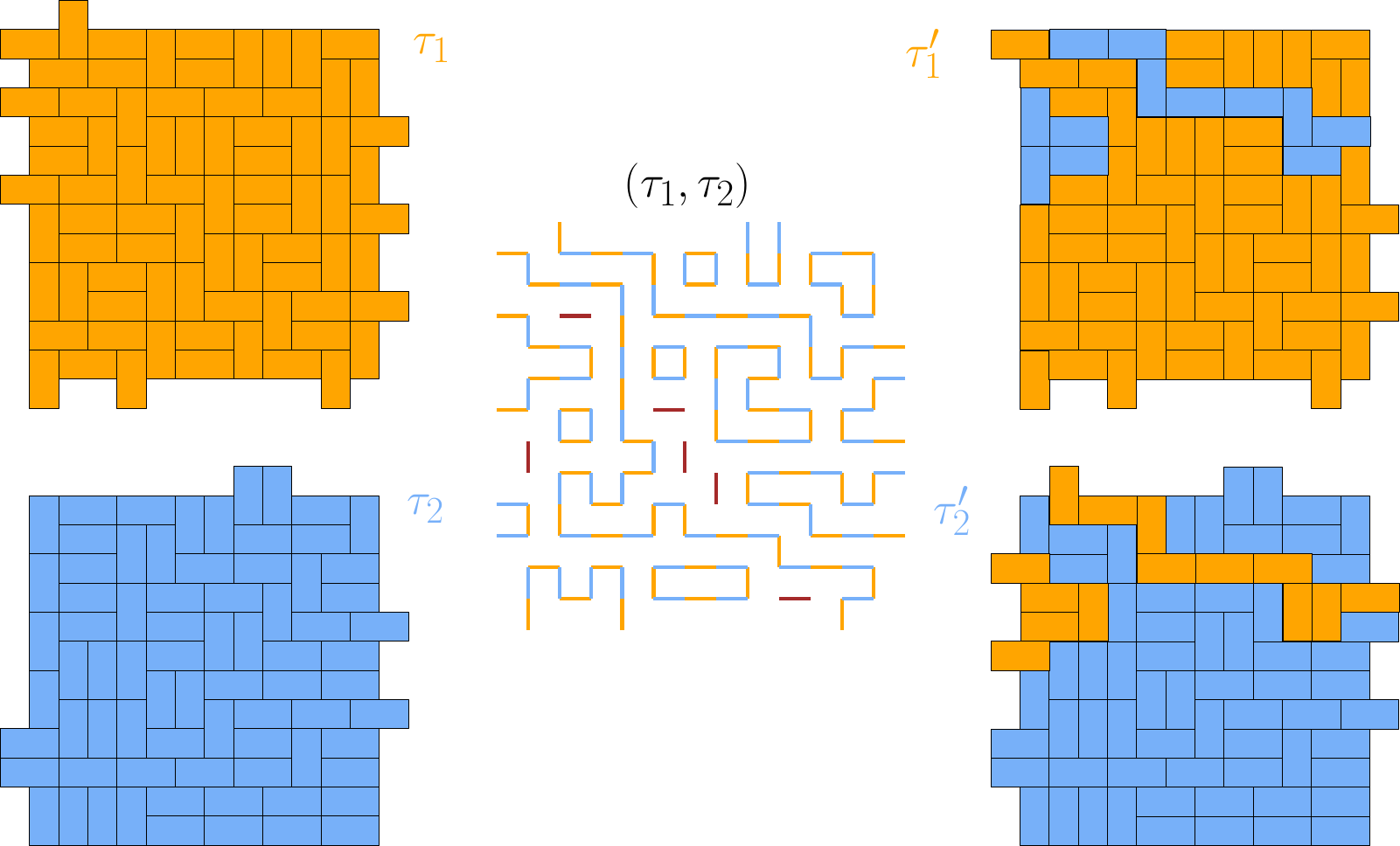}
    \caption{An example of tilings $\tau_1,\tau_2$, the loops in $(\tau_1,\tau_2)$, and chain swapped tilings $\tau_1',\tau_2'$.}
    \label{fig:chain swap}
\end{figure}\symindex{Chapter 7! given a pair of tilings $(\tau_1, \tau_2)$, the swapped tiles are usually represented by $(\tau_1', \tau_2')$}\symindex{Chapter 7! given a measure $\mu$ with marginals $\mu_1, \mu_2$ on double dimer configurations, the swapped measure is usually denoted by  $\mu'$ with marginals $\mu_1'$, $\mu_2'$}

We call this procedure \emph{chain swapping with probability $p$}. See Figure \ref{fig:chain swap}. Chain swapping transforms a measure $\mu$ on $\Omega\times \Omega$ into a new measure $\mu'$ on $\Omega\times \Omega$ which we call the \textit{swapped measure}.\termindex{Chapter 7!chain swapping, swapped measure}

\begin{rem}
    Note that we only swap on the infinite paths of \textit{nonzero slope}. This is a technical point. We do not know if the ``asymptotic independence" result of Lemma \ref{prop: infinite lines zero measure} holds for infinite paths of zero slope, but we need asymptotic independence to show that the swapped measure is still ergodic (Proposition \ref{prop:preserve_ergodicity}). 
\end{rem}

For the rest of this section, we study whether or not certain properties (ergodicity, the Gibbs property) are preserved under chain swapping, and how certain quantities (entropy, mean current) transform under chain swapping. 

The first result is that chain swapping preserves ergodicity. 

\begin{prop}\label{prop:preserve_ergodicity}
If $\mu$ is a ergodic measure on $\Omega\times \Omega$ with respect to the $\threeeven$ action and $\mu'$ is obtained from $\mu$ by chain swapping with probability $p$, then $\mu'$ is also ergodic.
\end{prop}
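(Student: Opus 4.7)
The strategy is to realise $\mu'$ as a measure-theoretic factor of an enlarged $\threeeven$-invariant system that is visibly ergodic; since any measurable equivariant factor of an ergodic system is ergodic, this yields the proposition. Concretely, I would introduce the extended sample space $\hat\Omega$ consisting of triples $((\tau_1,\tau_2),g)$ where $g \in \{0,1,\star\}^{\threeeven}$ satisfies the constraint that $g(y) = g(y')$ whenever $y$ and $y'$ lie on a common infinite nonzero-slope path of $(\tau_1,\tau_2)$, and $g(y) = \star$ for every $y$ not on such a path. Let $\hat\mu$ be the law on $\hat\Omega$ obtained by sampling $(\tau_1,\tau_2) \sim \mu$ and then, conditional on this sample, independently flipping a Bernoulli$(p)$ coin for each infinite nonzero-slope path and propagating its outcome to every vertex of that path. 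The diagonal $\threeeven$-action by translation preserves $\hat\mu$, and the chain swap is encoded by a translation-equivariant measurable map $\Phi : \hat\Omega \to \Omega\times\Omega$ which swaps the tiles on exactly those paths whose label equals $1$; by construction $\Phi_*\hat\mu = \mu'$. Thus it suffices to prove that $\hat\mu$ is $\threeeven$-ergodic.

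For the ergodicity of $\hat\mu$ I would verify the standard Ces\`aro criterion: for all cylinder events $A, B\subset\hat\Omega$,
$$\frac{1}{|V_n|}\sum_{x\in V_n} \hat\mu(A\cap T_x^{-1}B) \;\longrightarrow\; \hat\mu(A)\,\hat\mu(B), \qquad V_n := [1,n]^3 \cap \threeeven.$$
Without loss of generality $A$ and $B$ are determined by $((\tau_1,\tau_2),g)\rvert_{[1,m]^3}$. Under $\hat\mu$, conditionally on $(\tau_1,\tau_2)$ the coins attached to distinct infinite nonzero-slope paths are independent; hence whenever no such path visits both $[1,m]^3$ and its translate (i.e.\ when the count $L_{0,x}$ from Lemma \ref{prop: infinite lines zero measure} vanishes), one has the key factorisation
$$\hat\mu(A\cap T_x^{-1}B\mid (\tau_1,\tau_2)) = \hat\mu(A\mid (\tau_1,\tau_2))\cdot\hat\mu(T_x^{-1}B\mid (\tau_1,\tau_2)).$$
Setting $\phi_A(\tau):=\hat\mu(A\mid\tau)$ and $\phi_B(\tau):=\hat\mu(B\mid\tau)$ --- bounded measurable functions of $\tau$ --- translation invariance of $\hat\mu$ gives $\hat\mu(T_x^{-1}B\mid\tau) = \phi_B(T_x\tau)$. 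Lemma \ref{prop: infinite lines zero measure} says $\frac{1}{|V_n|}\sum_{x\in V_n}\mu(L_{0,x}>0)\to 0$, so the contribution of the ``bad'' $x$ to the Ces\`aro average is $o(1)$, leaving
$$\frac{1}{|V_n|}\sum_{x\in V_n}\hat\mu(A\cap T_x^{-1}B) = \int \phi_A(\tau)\,\Bigl(\frac{1}{|V_n|}\sum_{x\in V_n}\phi_B(T_x\tau)\Bigr)\,\dd\mu(\tau) + o(1).$$
By the mean ergodic theorem applied to the ergodic measure $\mu$, the inner average converges in $L^2(\mu)$ to the constant $\int\phi_B\,\dd\mu = \hat\mu(B)$, and the outer integral then tends to $\hat\mu(A)\,\hat\mu(B)$ in the limit.

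The main technical obstacle is precisely the constraint that $g$ be constant along each infinite path: this couples the Bernoulli coins over arbitrarily long distances and could a priori transmit correlation between two far-apart boxes. The role of Lemma \ref{prop: infinite lines zero measure} is to show that this long-range coupling is negligible on a density-one set of translates, thereby restoring the ``effective independence'' of the coins that the product-measure intuition suggests. A secondary point requiring care is that the swap is performed only on infinite paths of \emph{nonzero} slope, so $\phi_A$ and $\phi_B$ must be defined as conditional probabilities that depend on how the edges in $[1,m]^3$ are linked by such paths inside $(\tau_1,\tau_2)$; measurability of this dependence in $\tau$ is routine, since the slope of the path through any given vertex is a measurable function, e.g.\ by the ergodic-theorem construction of $F^{\star}$ given in Section \ref{Subsection: Dimer Swapping}.
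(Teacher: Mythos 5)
Your proof is correct and follows essentially the same route as the paper's: both verify the Ces\`aro mixing criterion by conditioning on $(\tau_1,\tau_2)$, exploiting the conditional independence of the per-path Bernoulli coins, invoking Lemma~\ref{prop: infinite lines zero measure} to discard the density-zero set of translates where a nonzero-slope path links the two boxes, and then applying the ergodic theorem for $\mu$. The only difference is presentational — you package the coins into an explicit enlarged system $\hat\Omega$ and pass through a factor map $\Phi$, whereas the paper carries out the same computation directly on $\mu'$ by summing over swap counts $(k_1,k_2,k_3)$ and $(l_1,l_2,l_3)$; your formulation makes the conditional independence more transparent but is not a substantively different argument.
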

\begin{proof}
    In this proof, we have two different parameters $n$ (parameterizing possible translations of boxes) and $m$ (the size of the boxes). Let $B_n = [1,n]^3$ and $\text{even}(B_n) = B_n\cap \threeeven$. Let $B_m+x$ denote $B_m$ translated by $x\in \threeeven$. It is enough to show that for any two double dimer patterns restricted to $B_m$, denoted $\Pi_1,\Pi_2$,
    \begin{align*}
        &\lim_{n\to \infty} \frac{1}{\text{even}(B_n)} \sum_{x\in \text{even}(B_n)} \mu'\bigg((\tau_1',\tau_2')\mid_{B_m} = \Pi_1, (\tau_1',\tau_2')\mid_{B_m+x} = \Pi_2\bigg) \\
        &= \mu'\bigg((\tau_1',\tau_2')\mid_{B_m} = \Pi_1\bigg) \mu'\bigg((\tau_1',\tau_2')\mid_{B_m+x} = \Pi_2\bigg).
    \end{align*}
    Define the random variable $L_x$ to be the number of infinite paths of nonzero slope in $(\tau_1, \tau_2)$ sampled from $\mu$ which intersect $B_m + x$. Similarly define $L_{0,x}$ to be the number of infinite paths of nonzero slope which intersect $B_m$ and $B_m+x$. Since the collection of tiles on infinite paths of nonzero slope is the same for $\mu$ and $\mu'$, the quantities $L_0$ and $L_{0,x}$ are preserved by chain swapping. Let $(\tau_1,\tau_2)$ have law $\mu$ and $(\tau_1',\tau_2')$ have law $\mu'$. Finally let $\mathfrak m$ be the coupling of $\mu,\mu'$ given by chain swapping. Then
    $$
        \frac{1}{\text{even}(B_n)} \sum_{x\in \text{even}(B_n)} \mu'((\tau_1',\tau_2')\mid_{B_m} = \Pi_1, (\tau_1',\tau_2')\mid_{B_m+x} = \Pi_2)\\
        $$ 
        \begin{align*}
            = \frac{1}{\text{even}(B_n)} \sum_{\substack{k_1,k_2,k_3 \geq 0 \\ \Sigma_1,\Sigma_2 \text{ double dimer} \\ \text{tilings of }B_m}}  \sum_{x\in \text{even}(B_n)} \mathfrak m\bigg(&(\tau_1,\tau_2)\mid_{B_m} = \Sigma_1, (\tau_1,\tau_2)\mid_{B_m+x} = \Sigma_2, \\
        &(\tau_1',\tau_2')\mid_{B_m} = \Pi_1, (\tau_1',\tau_2')\mid_{B_m+x} =  \Pi_2, \\
        &L_0=k_1, L_x=k_2,
        L_{0,x} = k_3\bigg).
        \end{align*}
       
    For each infinite path of nonzero slope in $(\tau_1, \tau_2)$ we have an independent probability $p$ of reversing its direction. For any triple $l =(l_1,l_2,l_3)$ with $l_i\leq k_i$ for each $i$, we define the notation
    \begin{align*}
        q_{k,l} = p^{l_1+l_2-l_3}(1-p)^{k_1+k_2-k_3-l_1-l_2+l_3}.
    \end{align*}
    This is the probability of switching $(l_1,l_2,l_3)$ of the $(k_1,k_2,k_3)$ paths. With this notation, for each $x\in B_n$, the $x$ term in the sum above is equal to
    \begin{equation}\label{eq:x_term}
      \sum_{\substack{k_1,k_2,k_3 \geq 0 \\ \Sigma_1,\Sigma_2 \text{ double dimer tilings} \\ \text{ of } B_m
        \text{which can swap to} \\ \Pi_1,\Pi_2 \text{with }(l_1,l_2,l_3) \text{ swaps}}}  \!\!\!\!\!\!  \mu\bigg((\tau_1,\tau_2)\mid_{B_m} = \Sigma_1, (\tau_1,\tau_2)\mid_{B_m+x} = \Sigma_2, L_0=k_1, L_x = k_2, L_{0,x} = k_3\bigg) q_{k,l}.
    \end{equation}
    For any $K> 0$, 
    \begin{align*}
        \frac{1}{\text{even}(B_n)} \sum_{x\in \text{even}(B_n)} \bigg( \text{$k_3 = K$ term in Equation \eqref{eq:x_term}} \bigg) \leq \frac{1}{\text{even}(B_n)}\sum_{x\in \text{even}(B_n)} \mu(L_{0,x} = K). 
    \end{align*}
    By Proposition \ref{prop: infinite lines zero measure}, the right hand side goes to $0$ as $n\to \infty$. Therefore in the limit as $n\to \infty$, it suffices to consider the terms where $k_3 = 0$ (corresponding to the set of lines hitting $B_n$ and the set of infinite paths hitting $B_n + x$ being disjoint). 
    Therefore 
$$
        \frac{1}{\text{even}(B_n)} \sum_{x\in \text{even}(B_n)} \mu'\bigg((\tau_1',\tau_2')\mid_{B_m} = \Pi_1, (\tau_1',\tau_2')\mid_{B_m+x} = \Pi_2\bigg) $$ $$
        =\!\!\!\! \sum_{\substack{k_1,k_2 \geq 0 \\ (\Sigma_1,\Sigma_2) \text{ double dimer on }B_m 
        \text{which can swap to $\Pi_1,\Pi_2$}\\\text{with }(l_1, l_2)\text{ swaps}}}   \\ $$ $$\frac{1}{\text{even}(B_n)} \sum_{x\in \text{even}(B_n)} \mu\bigg((\tau_1,\tau_2)\mid_{B_m} = \Sigma_1, (\tau_1,\tau_2)\mid_{B_m+x} = \Sigma_2, L_0 = k_1, L_x = k_2\bigg) r_{k,l} + o(1),
$$
    where $r_{k,l} = p^{l_1+l_2} (1-p)^{k_1+k_2-l_1-l_2}$ (i.e.\ $q_{k,l}$ when $k_3 =0$). Since $\mu$ is ergodic, for each $\Sigma_1,\Sigma_2,k_1,k_2$, 
    \begin{align*}
        &\lim_{n\to \infty}\,\frac{1}{\text{even}(B_n)} \sum_{x\in \text{even}(B_n)} \mu\bigg((\tau_1,\tau_2)\mid_{B_m} = \Sigma_1, (\tau_1,\tau_2)\mid_{B_m+x} = \Sigma_2, L_0 = k_1, L_x = k_2\bigg) r_{k,l} \\
        &= \mu\bigg((\tau_1,\tau_2)\mid_{B_m} = \Sigma_1, L_0 = k_1\bigg) \mu \bigg( (\tau_1,\tau_2)\mid_{B_m+x} = \Sigma_2, L_x = k_2\bigg) r_{k,l}
    \end{align*}
    Therefore 
    \begin{align*}
        &\lim_{n\to \infty}\, \frac{1}{\text{even}(B_n)} \sum_{x\in \text{even}(B_n)} \mu'\bigg((\tau_1',\tau_2')\mid_{B_m} = \Pi_1, (\tau_1',\tau_2')\mid_{B_m+x} = \Pi_2\bigg) \\
        &= \!\!\! \sum_{\substack{k_1,k_2 \geq 0 \\ \Sigma_1,\Sigma_2 \text{ double dimer tilings of }B_m\\
        \text{which can swap to $\Pi_1,\Pi_2$}\\\text{with }(l_1,l_2) \text{ swaps}}}  \!\! \mu\bigg((\tau_1,\tau_2)\mid_{B_m} = \Sigma_1, L_0 = k_1\bigg) \mu \bigg( (\tau_1,\tau_2)\mid_{B_m+x} = \Sigma_2, L_x = k_2\bigg) r_{k,l} \\
        &=\mu'\bigg((\tau_1',\tau_2')\mid_{B_m} = \Pi_1\bigg) \mu'\bigg((\tau_1',\tau_2')\mid_{B_m+x} = \Pi_2\bigg).
    \end{align*}
\end{proof}

We now see how chain swapping affects the entropy and mean current of the marginal distributions. 

\begin{prop}\label{prop: coupling swap entropy}
	Let $\mu$ be a measure on $\Omega\times \Omega$ which is an ergodic coupling of ergodic measures $\mu_1\in \Prob_e^{s_1}$ and $\mu_2\in \Prob_e^{s_2}$. If $\mu'$ is the measure obtained from $\mu$ by chain swapping with probability $p\in (0,1)$, then $h(\mu')= h(\mu)$.
	\end{prop}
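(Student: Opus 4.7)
The strategy is to construct a joint coupling of $\mu$ and $\mu'$ and show that its two marginals, when restricted to a large finite box $\Lambda$, can be reconstructed from each other modulo only a boundary amount of extra information. This boundary term will then vanish when divided by $|\Lambda|$ in the definition of specific entropy.

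Concretely, let $\tilde{\mu}$ be the joint law on $\Omega \times \Omega \times \Omega \times \Omega$ of the quadruple $((\tau_1,\tau_2),(\tau_1',\tau_2'))$ obtained by first sampling $(\tau_1,\tau_2)$ from $\mu$ and then building $(\tau_1',\tau_2')$ by independently tossing biased coins on each infinite path of nonzero slope in $\tau_1\cup\tau_2$, as in the definition of chain swapping. Its marginals are $\mu$ and $\mu'$, and $\tilde{\mu}$ is $\threeeven$-invariant since both $\mu$ and the swap procedure are. For a finite region $\Lambda\subset\Z^3$, write $H_\Lambda$ for the Shannon-entropy functional on the $\sigma$-algebra generated by the restriction to $\Lambda$. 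The plan is to prove the two-sided inequality
\begin{equation*}
\bigl|H_\Lambda(\mu)-H_\Lambda(\mu')\bigr|\leq C\,\mathbb E_{\tilde\mu}\bigl[N_\Lambda\bigr],
\end{equation*}
where $N_\Lambda$ denotes the number of distinct infinite paths of nonzero slope in $\tau_1\cup\tau_2$ that meet $\Lambda$, and $C>0$ is a universal constant.

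The proof of this inequality will use the identity $H_\Lambda(\tilde\mu) = H_\Lambda(\mu) + H_\Lambda(\tilde\mu\mid \mu|_\Lambda) = H_\Lambda(\mu') + H_\Lambda(\tilde\mu\mid \mu'|_\Lambda)$. Given $(\tau_1,\tau_2)|_\Lambda$ together with the at most $N_\Lambda$ independent swap bits attached to the infinite paths of nonzero slope meeting $\Lambda$, one can reconstruct $(\tau_1',\tau_2')|_\Lambda$ deterministically, because within $\Lambda$ a swap simply exchanges the assignments of edges on the portion of the path that lies in $\Lambda$. Hence $H_\Lambda(\tilde\mu\mid \mu|_\Lambda)\leq (\log 2)\,\mathbb E_{\tilde\mu}[N_\Lambda]$. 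Since $\tau_1\cup\tau_2 = \tau_1'\cup\tau_2'$ as unoriented edge sets, the set of infinite paths of nonzero slope is unchanged by swapping, so the same bound holds for $H_\Lambda(\tilde\mu\mid \mu'|_\Lambda)$. Combining these two gives the asserted inequality.

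To convert this into a specific-entropy statement, take $\Lambda=\Lambda_n=[-n,n]^3$. Every infinite path of nonzero slope meeting $\Lambda_n$ must exit $\Lambda_n$ in both directions and so must cross $\partial\Lambda_n$; therefore $N_{\Lambda_n}$ is at most the number of edges of $\Z^3$ crossing $\partial\Lambda_n$, which is $O(n^2)$. Hence $\mathbb E_{\tilde\mu}[N_{\Lambda_n}]=O(n^2)=o(|\Lambda_n|)$, and dividing by $|\Lambda_n|$ and sending $n\to\infty$ yields $h(\mu)=h(\mu')$ by the standard definition of specific entropy.

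The only real subtlety I foresee is making the above ``reconstruction given swap bits'' argument rigorous at the level of Shannon entropy: one has to make the finite data (the swap bits for the $N_\Lambda$ relevant paths) into a measurable function of $(\tau_1,\tau_2)|_\Lambda$ and the enlargement needed to identify which paths are infinite of nonzero slope. This can be handled by first proving the bound on $H_{\Lambda_n}(\mu) - H_{\Lambda_n'}(\mu')$ with $\Lambda_n'$ slightly larger than $\Lambda_n$ (to determine the path structure near the boundary with good probability) and then using the $\threeeven$-invariance together with the surface-order bound on path counts; the small-probability event that some long local loop appears infinite-in-$\Lambda_n$ but is actually finite contributes only a lower-order correction by the standard $x\log x$ argument, since its probability tends to zero. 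This is the main technical step, but it does not require any new ingredient beyond the counting bound $N_\Lambda = O(|\partial\Lambda|)$ and the $\threeeven$-ergodic theorem applied to the double dimer configuration.
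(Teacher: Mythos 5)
Your approach is essentially the same as the paper's at a structural level: both compare $\mu$ and $\mu'$ through a coupling and argue that, restricted to a box $\Lambda_n$, the extra information needed to pass from one to the other is of boundary order $O(n^2)=o(|\Lambda_n|)$. The paper packages this via the bijection $(\tau_1,\tau_2)\leftrightarrow(\tau_1\cup\tau_2,\,f_{(\tau_1,\tau_2)})$ (Proposition 7.12) and the chain rule for conditional specific entropy, first observing $h(\tau_1\cup\tau_2)=h(\tau_1'\cup\tau_2')$ and then handling the flow part, whereas you work directly with Shannon entropies of the joint law $\tilde\mu$; these are equivalent routes to the same bound.

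However, the ``subtlety'' you flag is real and your proposed repair is not the right one. You cannot hope to identify from $(\tau_1,\tau_2)|_{\Lambda_n}$, even after enlarging to some $\Lambda_n'$, which local alternating-path segments belong to infinite paths of nonzero slope: a segment can leave any bounded enlargement and return, so no finite amount of extra data determines the global path structure, and there is no uniform way to make that event negligible. The correct fix is simpler and avoids the issue entirely: instead of attaching a swap bit to each infinite path of nonzero slope meeting $\Lambda_n$, attach a bit to each \emph{open segment} of $(\tau_1\cup\tau_2)|_{\Lambda_n}$ (declaring it swapped or not; segments lying on finite cycles or zero-slope paths are simply assigned the bit $0$). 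These segments are measurable with respect to $(\tau_1,\tau_2)|_{\Lambda_n}$ alone, and every open segment has its two endpoints at the boundary, so there are at most $C|\partial\Lambda_n|=O(n^2)$ of them. This immediately gives $H_{\Lambda_n}(\tilde\mu\mid \mu|_{\Lambda_n})\leq (\log 2)\,\mathbb E[M_{\Lambda_n}]=O(n^2)$ (and symmetrically with $\mu'$), with no enlargement, no $x\log x$ correction, and no appeal to the ergodic theorem. This is also, in effect, what the paper's proof does when it bounds $h(f_{(\tau_1',\tau_2')}\mid\tau_1\cup\tau_2, f_{(\tau_1,\tau_2)})$ by $\lim_n |\partial B_n|/|B_n|$: the orientation data is read off at the crossings of $\partial B_n$, one per segment, not one per global path.
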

\begin{proof}
In this proof, for a stationary random field $X$ we let $h(X)$ denote the specific entropy of the law of $X$. Let $(\tau_1,\tau_2)$ be a sample from $\mu$ and $(\tau_1',\tau_2')$ be obtained by chain swapping. By Proposition \ref{prop: injection_tiling_to_flow},
    \begin{align*}
        h(\mu') = h((\tau_1',\tau_2')) = h(\tau_1'\cup \tau_2') + h(f_{(\tau_1',\tau_2')}\mid \tau_1'\cup \tau_2').
    \end{align*}
  Since chain swapping preserves the set of tiles, $\tau_1'\cup \tau_2' = \tau_1 \cup \tau_2$, $h(\tau_1'\cup \tau_2') = h(\tau_1\cup\tau_2)$. On the other hand note that
\begin{align*}
h(f_{(\tau_1',\tau_2')}, f_{(\tau_1, \tau_2)}\mid \tau_1'\cup \tau_2') &= h(f_{(\tau_1,\tau_2)} \mid \tau_1\cup \tau_2) + h(f_{(\tau_1',\tau_2')} \mid \tau_1\cup \tau_2,
\,f_{(\tau_1,\tau_2)})\\&=
h(f_{(\tau_1',\tau_2')} \mid \tau_1\cup \tau_2) + h(f_{(\tau_1,\tau_2)} \mid \tau_1\cup \tau_2,
\,f_{(\tau_1',\tau_2')})
\end{align*}    
Conditioned on $\tau_1\cup \tau_2$ and $f_{(\tau_1,\tau_2)}$, the distribution of the flow $f_{(\tau_1',\tau_2')}$ is determined by independent random choices for the orientation of each infinite path of nonzero slope in $\tau_1\cup \tau_2$. Let $B_n = [1,n]^3$, and let $\ell\subset \tau_1\cup \tau_2$ be an infinite path of nonzero slope. If $\ell \cap B_n$ is nonempty, then the orientation of $\ell$ is determined by its direction when it intersects $\partial B_n$. Therefore there exists a constant $c>0$ such that 
    \begin{align*}
        h(f_{(\tau_1',\tau_2')} \mid \tau_1\cup \tau_2,
    \,f_{(\tau_1,\tau_2)}) \leq \lim_{n\to \infty}\frac{|\partial B_n|}{|B_n|}\leq \lim_{n\to \infty} \frac{c n^2}{n^3} = 0.
    \end{align*}
 We can analogously show that $ h(f_{(\tau_1,\tau_2)} \mid \tau_1\cup \tau_2,
\,f_{(\tau_1',\tau_2')}) = 0$. Therefore 
$$h(f_{(\tau_1',\tau_2')}\mid \tau_1'\cup \tau_2')= h(f_{(\tau_1,\tau_2)}\mid \tau_1\cup \tau_2)$$
so
    \begin{align*}
        h(\mu') = h(\tau_1\cup \tau_2) + h(f_{(\tau_1,\tau_2)} \mid \tau_1\cup \tau_2) = h(\mu).
    \end{align*}
 \end{proof}
 
\begin{prop}\label{prop: coupling swap mean current}
	Let $\mu$ be a measure on $\Omega\times \Omega$ which is an ergodic coupling of ergodic measures $\mu_1,\mu_2$ with mean currents $s(\mu_1),s(\mu_2)$. If $\mu'$ is the measure obtained from $\mu$ by chain swapping with probability $p\in(0,1)$, then the marginal measures $\mu_1'=\pi_1(\mu')$ and $\mu_2'=\pi_2(\mu')$ have mean currents
	\begin{align*}
	    s(\mu_1') &= (1-p)s(\mu_1) + p s(\mu_2)\\
     s(\mu_2') &= p s(\mu_1) + (1-p)s(\mu_2).
	\end{align*}
	\end{prop}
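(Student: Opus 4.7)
The plan is to compute $s(\mu_1')$ directly by exploiting the fact that chain swapping only modifies tiles lying on infinite paths of nonzero slope in $(\tau_1,\tau_2)$. Let $\mathfrak{m}$ be the coupling on $\Omega^4$ that records the pair $(\tau_1,\tau_2)$ together with the swapped pair $(\tau_1',\tau_2')$, and let $I_0 \subset \Omega \times \Omega$ be the event that the origin lies on an infinite path of nonzero slope in $(\tau_1,\tau_2)$.

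First I would split $s(\mu_1') = \mathbb{E}_{\mathfrak{m}}[s_0(\tau_1')]$ according to whether $I_0$ occurs. On the event $I_0^c$, the origin is either a double edge, a finite cycle, or an infinite path of zero slope in $(\tau_1,\tau_2)$, and in all three cases chain swapping leaves $\tau_1$ unchanged at the origin, so $s_0(\tau_1') = s_0(\tau_1)$. On $I_0$, by construction of chain swapping the path through the origin is swapped with probability $p$ independently of the $\mu$-sample, so conditionally on the $\mu$-sample we have $s_0(\tau_1') = s_0(\tau_2)$ with probability $p$ and $s_0(\tau_1') = s_0(\tau_1)$ with probability $1-p$. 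Combining these,
\begin{align*}
s(\mu_1') &= \mathbb{E}_{\mu}[s_0(\tau_1) \mathbbm{1}_{I_0^c}] + (1-p)\mathbb{E}_{\mu}[s_0(\tau_1) \mathbbm{1}_{I_0}] + p\, \mathbb{E}_{\mu}[s_0(\tau_2)\mathbbm{1}_{I_0}] \\
&= s(\mu_1) - p\bigl(s(\mu_1,I_0) - s(\mu_2,I_0)\bigr),
\end{align*}
using the $s(\mu_i,I_0)$ notation of Corollary~\ref{cor:new 7.15}.

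The key step is then to invoke Corollary~\ref{cor:new 7.15}, which identifies $s(\mu_1,I_0) - s(\mu_2,I_0)$ with $s(\mu_1) - s(\mu_2)$. Substituting gives $s(\mu_1') = s(\mu_1) - p(s(\mu_1)-s(\mu_2)) = (1-p)s(\mu_1) + p\,s(\mu_2)$, as claimed. The formula for $s(\mu_2')$ follows by the same argument with the roles of the two coordinates exchanged (alternatively one can observe that swapping with probability $p$ from the viewpoint of $\tau_2$ is the same as swapping with probability $1-p$ from the viewpoint of $\tau_1$).

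There is no genuine obstacle here beyond bookkeeping. The only subtle point is confirming that the Bernoulli($p$) coin flip for the infinite path through the origin is independent of $s_0(\tau_1)$ and $s_0(\tau_2)$ conditional on $I_0$, which follows directly from the definition of chain swapping (the coins are indexed by infinite paths and are independent of the configuration $(\tau_1,\tau_2)$). The ergodicity hypothesis on $\mu$ and $\mu_1,\mu_2$ is not actually needed for this statement, only $\threeeven$-invariance so that Corollary~\ref{cor:new 7.15} applies.
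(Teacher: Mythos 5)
Your proposal is correct and takes essentially the same route as the paper's proof: both decompose the mean current according to the event $I_0$ that the origin lies on an infinite path of nonzero slope, use that chain swapping only alters tiles on such paths with independent Bernoulli($p$) coins, and finish by invoking Corollary~\ref{cor:new 7.15} to replace $s(\mu_1,I_0)-s(\mu_2,I_0)$ with $s(\mu_1)-s(\mu_2)$. Your closing observation that $\threeeven$-invariance (rather than ergodicity) suffices for this particular statement is accurate and worth noting, since Corollary~\ref{cor:new 7.15} itself only requires invariance.
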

\begin{proof}
As in the previous section let $I_0$ be the event that the origin is contained on an infinite path of nonzero slope in $(\tau_1,\tau_2)$. By Corollary \ref{cor:new 7.15},
    \begin{align*}
        s(\mu_1) - s(\mu_2) = s(\mu_1,I_0) - s(\mu_2,I_0),
    \end{align*}
    where $s(\mu,I_0)$ is shorthand for the mean current computed as an average over only tilings where the origin is along an infinite path of nonzero slope. Since chain swapping only changes tiles that are contained on infinite paths of nonzero slope,
   \begin{align*}
       s(\mu_1') - s(\mu_1) &= s(\mu_1', I_0) - s(\mu_1, I_0)\\
       s(\mu_2') - s(\mu_2) &= s(\mu_2', I_0) - s(\mu_2, I_0).
   \end{align*}
    On the other hand, since each infinite path of nonzero slope is swapped with independent probability $p$, 
    \begin{align*}
        s(\mu_1', I_0) &= (1-p) s(\mu_1,I_0) + p s(\mu_2,I_0)\\
        s(\mu_2',I_0) &= p s(\mu_1,I_0) + (1-p) s(\mu_2,I_0).
    \end{align*}
    Combining gives
    \begin{align*}
        s(\mu_1') - s(\mu_1) = -p s(\mu_1,I_0) + p s(\mu_2,I_0) = -p s(\mu_1) + ps(\mu_2). 
    \end{align*}
    Therefore $s(\mu_1') = (1-p) s(\mu_1) +p s(\mu_2)$. An analogous calculation gives the result for $s(\mu_2')$.
\end{proof}

Finally we will show that chain swapping does {\textbf{not}} preserve the Gibbs property. To do this, we need two technical lemmas about double dimer configurations. This result is more involved than the other chain swapping results, so for simplicity we only prove this in the $p=1/2$ case. 

Let $\mu$ be a measure on $\Omega\times \Omega$ which is an ergodic coupling of ergodic measures $\mu_1,\mu_2$ on $\Omega$ such that $s(\mu_1) \neq s(\mu_2)$. Let $P$ be a plane with normal vector $\xi$ such that $\langle s(\mu_1)-s(\mu_2),\xi\rangle \neq 0$. Given a sample $(\tau_1,\tau_2)$ from $\mu$, we define the random set of ``last cross points" $C_P$ by
\begin{align*}
    C_P =\{ x\in P : \text{ there is an infinite path of slope $s$, $\langle s, \xi\rangle\neq 0$, in $(\tau_1,\tau_2)$ } \\ \text{ which hits $P$ for the \textit{last} time at $x$}\}.
\end{align*}
We analogously define the random set of ``first cross points" $A_P$ by \symindex{Chapter 7!$C_P, A_P$}
\begin{align*}
    A_P = \{ x\in P : \text{ there is an infinite path of slope $s$, $\langle s, \xi\rangle\neq 0$, in $(\tau_1,\tau_2)$} \\ \text{ which hits $P$ for the \textit{first} time at $x$}\}.
\end{align*}
\begin{lemma}\label{lemma: lines in coupling} 
    With the set up above, for $\mu$-almost every $(\tau_1,\tau_2)$, both
    \begin{align*}
        \lim_{n\to \infty} \frac{|C_P \cap [1,n]^3|}{n^2} \qquad \text{ and } \qquad  \lim_{n\to \infty} \frac{|A_P \cap [1,n]^3|}{n^2}
    \end{align*}
    exist and are greater than $0$.
\end{lemma}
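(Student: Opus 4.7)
The plan is to combine a planar ergodic theorem with the flux identity of Corollary~\ref{cor:new 7.15}. Let $G_P \subset \threeeven$ denote the rank-$2$ sublattice of translations preserving $P$, and define indicator functions $\chi_\ell, \chi_f : \Omega \times \Omega \to \{0,1\}$ so that $\chi_\ell(\tau_1,\tau_2) = 1$ iff a distinguished edge straddling $P$ near the origin is the last crossing of some infinite path of nonzero $\xi$-slope in $(\tau_1,\tau_2)$, and $\chi_f$ analogously for first crossings. Then
\[
|C_P \cap [1,n]^3| \;=\; \sum_{v \in G_P \cap [1,n]^3} \chi_\ell\bigl((\tau_1,\tau_2) + v\bigr) \;+\; O(n),
\]
with an analogous expression for $|A_P \cap [1,n]^3|$. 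The pointwise ergodic theorem for the $\Z^2$-subaction of $G_P$ on $(\Omega \times \Omega, \mu)$ then gives that the two limits in the statement exist $\mu$-a.s.\ and equal $\m E_\mu[\chi_\ell \mid \mathcal I_{G_P}]$ and $\m E_\mu[\chi_f \mid \mathcal I_{G_P}]$, where $\mathcal I_{G_P}$ is the $G_P$-invariant $\sigma$-algebra.

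Second, I would upgrade the $G_P$-invariance of these limits to full $\threeeven$-invariance, so that they are $\mu$-a.s.\ constant. For a translation $w$ perpendicular to $P$, shifting $(\tau_1,\tau_2)$ by $w$ replaces the last-cross density on $P$ by the last-cross density on $P - w$ of the unshifted configuration. Each bi-infinite path of nonzero $\xi$-slope has exactly one last-cross on every plane parallel to $P$, so the map ``path $\mapsto$ last-cross'' gives a bijection between such paths and their last-cross points on any parallel plane; the resulting path counts per unit area agree on $P$ and on $P - w$ up to boundary corrections of order $n$, which vanish after dividing by $n^2$. Combined with the $G_P$-invariance from the first step, this forces the limits to be $\threeeven$-invariant, and hence equal $\mu$-a.s.\ to their expected values $\rho_\ell := \m E_\mu[\chi_\ell]$ and $\rho_f := \m E_\mu[\chi_f]$ by ergodicity of $\mu$.

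Finally, and this is the main obstacle, I would show $\rho_\ell, \rho_f > 0$ using the divergence-free property of $f_{(\tau_1,\tau_2)} = v_{\tau_1} - v_{\tau_2}$. Adapting the argument of Theorem~\ref{thm: subaction ergodic} to this double-dimer flow, the per-area flux of $f_{(\tau_1,\tau_2)}$ through $P \cap [1,n]^3$ converges $\mu$-a.s.\ to $\tfrac{1}{2}\langle s(\mu_1) - s(\mu_2), \xi\rangle$, which by hypothesis is a nonzero constant. Finite cycles carry zero net flux through any surface, and by Corollary~\ref{cor:new 7.15} the mean-current difference is accounted for entirely by infinite paths of nonzero slope; the same localization argument applied to the flux through $P$ shows that the nonzero flux density through $P$ must be carried by infinite paths of nonzero $\xi$-slope. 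Each such path contributes exactly $\pm 1$ to the total signed flux through $P$ (being bi-infinite with ends in opposite $\xi$-half-spaces) and has exactly one last-cross point and one first-cross point on $P$. Combining these observations yields
\[
\rho_\ell,\, \rho_f \;\geq\; \tfrac{1}{2}\bigl|\langle s(\mu_1) - s(\mu_2), \xi\rangle\bigr| \;>\; 0,
\]
completing the proof.
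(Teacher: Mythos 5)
Your Step 1 (a.s.\ existence via the $\Z^2$-subaction ergodic theorem along $P$) matches the paper. Step 2 (upgrading $G_P$-invariance to $\threeeven$-invariance so the limits are deterministic) is not needed and is not in the paper: the paper proves only that the limit is positive $\mu$-a.s., not that it is constant, and in fact your quantitative Step 3 would already give a pointwise lower bound without knowing the limit is constant.

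Step 3, the positivity argument, is where you take a genuinely different route, and it is also where the gap lies. The paper instead works with the \emph{unsigned} count: by Proposition~\ref{prop: line slope integrates to mean current difference} the slope of the component through the origin has expectation $s(\mu_1)-s(\mu_2)$, and since finite cycles and double edges have slope zero, the event $S$ that the origin lies on an infinite path of nonzero $\xi$-slope has $\mu(S) \geq p > 0$. The $\Z^2$-ergodic theorem gives density $\geq p$ of such points on $P$, and since such paths cross $P$ at most $M$ times off an event of probability $\delta$, the density of distinct paths (last-crossings) is $\geq p/M$. Nothing here requires controlling cancellation or boundary terms.

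Your argument works with the \emph{signed} flux of $f_{(\tau_1,\tau_2)}$ through $P_n = P \cap [1,n]^3$. You need the identity
\[
\frac{1}{n^2}\,\mathrm{flux}\bigl(f_{(\tau_1,\tau_2)}, P_n\bigr) \;=\; \rho_+ - \rho_- + o(1),
\]
where $\rho_\pm$ are the densities of last-crossings of paths with $\langle s(\ell),\xi\rangle \gtrless 0$. This decomposition requires justifying that (i) finite cycles contribute $o(n^2)$ to the flux through $P_n$, (ii) infinite paths with $\langle s(\ell),\xi\rangle = 0$ contribute $o(n^2)$, and (iii) paths whose $P$-crossings straddle $\partial P_n$ contribute $o(n^2)$. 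Point (i) is \emph{not} automatic: a finite cycle contributes zero net flux only if all of its $P$-crossings fall inside $P_n$, and cycles meeting $\partial([1,n]^3)$ near $\partial P_n$ can contribute a nonzero net amount. Point (ii) is more delicate than you allow: a path with $\langle s(\ell),\xi\rangle = 0$ can a priori cross $P$ infinitely often, so ``net contribution zero'' is not even well-defined without further argument. Point (iii) requires a quantitative version of the ``paths cross $P$ finitely often'' statement (this is exactly what Lemma~\ref{lem: path segment length} is designed to supply in the paper, and you would need an analogue here). You invoke ``the same localization argument'' as in Corollary~\ref{cor:new 7.15}, but that argument is specifically about the expected slope of the component through the origin; translating it into control of signed crossings of a half-open window of the plane $P$ is a real step, not a rerun of the same estimate. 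If those three terms were controlled, your conclusion $\rho_\ell \geq \rho_+ + \rho_- \geq |\rho_+ - \rho_-| = \tfrac12|\langle s(\mu_1)-s(\mu_2),\xi\rangle|$ would in fact be a stronger, explicit lower bound than what the paper proves, so the approach is interesting; but as written it has a genuine gap where the cancellation and boundary effects are waved away.
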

\begin{proof}
    The fact that the limits exist follows from the $\m Z^2$ ergodic theorem applied along $P$. 
    
    The proofs are analogous, so we just present the proof for $C_P$. By Proposition \ref{prop: line slope integrates to mean current difference}, the $\mu$-expected value of the slope along the component $\gamma$ containing the origin in $(\tau_1,\tau_2)$ is $s(\mu_1) - s(\mu_2)$. If $\gamma$ is a double edge or finite cycle then the slope along $\gamma$ is $0$, so the set $S$ of pairs of tilings $(\tau_1,\tau_2)$ such that there is an infinite path with slope in the set $\{ s : \langle s, \xi\rangle \neq 0\}$ through the origin has $\mu(S) > p$ for some $p>0$. 
    
    Since $\mu$ is ergodic with respect to the $\threeeven$ action, it follows that along any $\threeeven$-orbit, the proportion of the orbit in $S$ is $>p$. 
    On the other hand, an infinite path with slope in $\{s :\langle s, \xi \rangle \neq 0\}$ only crosses $P$ finitely many times almost surely. In particular, for any $\delta>0$, there exists $M$ such that
    \begin{align*}
        \mu ( \ell \text{ is an infinite path passing through the}&\text{ origin with slope $\langle s(\ell), \xi\rangle \neq 0$} \\
         &\text{and hits $P$ more than $M$ times} ) < \delta.
    \end{align*}
    Therefore
    \begin{align*}
        \mu\bigg( \lim_{n\to \infty} \frac{|C_P\cap [1,n]^3|}{n^2} > \frac{p}{M} \bigg) \geq 1-\delta,
    \end{align*}
    which completes the proof. 
\end{proof}

The next technical lemma is about the distribution of the distance between \textit{hit points}. Given a plane $P$, let $\alpha\subset (\tau_1,\tau_2)$ be an arc of a path (finite or infinite) between two points in $P$, such that $\alpha$ is disjoint from $P$ except its endpoints $x_\alpha,y_\alpha\in P$. We define the \textit{distance between hits} by 
\begin{align*}
    d_P(\alpha) = \text{dist}(x_\alpha,y_\alpha)
\end{align*}
where $\text{dist}$ denotes $L^1$ distance on $P$. \symindex{Chapter 7!$d_P(\alpha)$}
\begin{lemma}\label{lem: path segment length}
    Let $\mu$ be an ergodic coupling of ergodic measures $\mu_1,\mu_2$ on $\Omega$. Let $B_n=[1,n]^3$. For any $\beta>0$, there exists $M$ such that for all $\theta>0$, there exists $N$ such that if $n\geq N$, then 
    \begin{align*}
        \mu\bigg(\#\{ \alpha \text{ arc of path hitting }P\cap B_n ~:~ d_P(\alpha)>M\} \leq \beta n^2\bigg) > 1 - \theta. 
    \end{align*}
\end{lemma}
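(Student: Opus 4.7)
The plan is to convert the combinatorial count of long arcs into an ergodic average of a local indicator on $P$, and then show that the expectation of that indicator vanishes as $M\to\infty$ by dominated convergence, using only that each finite-length arc has finite $d_P$.

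For each midpoint $p$ on $P$ where some edge of $\m Z^3$ crosses $P$, note that $p$ is an endpoint of at most one arc $\alpha\subset(\tau_1,\tau_2)$ whose two endpoints both lie on $P$. Define
$$g_M(p,(\tau_1,\tau_2)):=\mathbbm{1}\bigl\{p\text{ is an endpoint of such an arc }\alpha\text{ and }d_P(\alpha)>M\bigr\}.$$
An arc counted in the lemma contributes $g_M=1$ at each of its (at most two) endpoints in $P\cap B_n$, so
$$\#\{\alpha\text{ arc of path hitting }P\cap B_n~:~d_P(\alpha)>M\}\;\leq\;\sum_{p\in P\cap B_n}g_M(p,(\tau_1,\tau_2)),$$
up to an $O(n)$ boundary correction coming from arcs with only one endpoint inside $P\cap B_n$, which lie in a constant-width frame and are negligible compared to $\beta n^2$.

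The measure $\mu$ is invariant under the $\m Z^2$-subaction of $\threeeven$ that preserves $P$. Applying the pointwise (Birkhoff) ergodic theorem for this subaction gives, $\mu$-a.s.,
$$\frac{1}{n^2}\sum_{p\in P\cap B_n}g_M(p,(\tau_1,\tau_2))\;\xrightarrow[n\to\infty]{}\;X_M,$$
where $X_M\geq 0$ is measurable with respect to the $\m Z^2$-invariant $\sigma$-algebra $\mathcal I$ and satisfies $\m E_{\mu}[X_M]=c\,\m E_{\mu}[g_M(p_0,\cdot)]$ for a geometric constant $c>0$ and a fixed reference midpoint $p_0\in P$. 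The essential ingredient is that $\m E_{\mu}[g_M(p_0,\cdot)]\to 0$ as $M\to\infty$: for $\mu$-a.e.\ $(\tau_1,\tau_2)$, either $p_0$ is not an endpoint of any arc with both endpoints in $P$ (so $g_M(p_0)=0$ for every $M$), or $p_0$ is the endpoint of a unique such arc $\alpha$ of finite $d_P(\alpha)<\infty$ (so $g_M(p_0)=0$ once $M\geq d_P(\alpha)$). Thus $g_M(p_0,\cdot)\to 0$ pointwise $\mu$-a.s., and dominated convergence by the constant $1$ gives $\m E_{\mu}[g_M(p_0,\cdot)]\to 0$; dominated convergence for conditional expectations then yields $X_M\to 0$ $\mu$-a.s.

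To conclude, given $\beta>0$ pick $M$ large enough that $X_M\leq\beta/2$ with probability as close to $1$ as required; then for $n\geq N$ large enough the a.s.\ convergence above implies $\mu\bigl(|\tfrac{1}{n^2}\sum g_M-X_M|>\beta/2\bigr)<\theta/2$, which combined with the boundary estimate gives $\mu(\#\{\cdots\}>\beta n^2)<\theta$. The main technical subtlety is that the $\m Z^2$-subaction parallel to $P$ need not itself be ergodic (even though $\mu$ is $\threeeven$-ergodic), so $X_M$ is genuinely a random variable rather than a deterministic constant. This is precisely why the full strength of dominated convergence for conditional expectations is needed: to transfer the pointwise statement about a single reference point $p_0$ into a control on the random density $X_M$ that governs the whole count on $P\cap B_n$.
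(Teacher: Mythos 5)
Your proof is correct and follows essentially the same route as the paper: convert the arc count into an ergodic average (along the $\m Z^2$-subaction preserving $P$) of a local indicator at a reference point, and observe that the expectation of that indicator vanishes as $M\to\infty$ because $d_P(\alpha)$ is a.s.\ finite for any arc through a fixed point; the paper bounds $\m E[S^{\text{even}}]$ directly and then uses a Markov-type step, whereas you route through conditional dominated convergence to get $X_M\to 0$ a.s., but these are two phrasings of the same idea. A couple of small points to tidy: a lattice point $p\in P$ can be the endpoint of \emph{two} arcs with both endpoints on $P$ (one on each side of $P$), not ``at most one,'' and the $\m Z^2$-subaction of $\threeeven$ is only transitive on the even (or odd) points of $P$, so the ergodic averaging must be done separately on the two sublattices as the paper does, rather than absorbed into a single geometric constant $c$.
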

\begin{proof}
As there is some probability distribution on the distance between hit points, by $\threeeven$-invariance given $\epsilon>0$ there exists $M$ large enough such that for all $v\in P$,
\begin{equation}\label{eq:length bound through v}
    \mu(x_\alpha = v, d_P(\alpha) > M) < \epsilon.
\end{equation}
For a set of points $A\subset \m Z^3$ let $\text{even}(A), \text{odd}(A)$ denote the subset of even, odd points respectively, and define 
\begin{align*}
    S_n^{\text{even}} &= \frac{2}{n^2}\bigg[\# \{\alpha\subset (\tau_1,\tau_2) ~:~ x_\alpha\in \text{even}(B_n\cap P),\, d_P(\alpha)>M\}\bigg],\\
    S_n^{\text{odd}} &= \frac{2}{n^2}\bigg[\# \{\alpha\subset (\tau_1,\tau_2) ~:~ x_\alpha\in \text{odd}(B_n\cap P),\, d_P(\alpha)>M\}\bigg].
\end{align*}
By the $\m Z_{\text{even}}^2$ ergodic theorem applied along $P$, $\mu$-almost everywhere $S_n^{\text{even}}$ converges to a limit $S^{\text{even}}$ as $n\to \infty$ (and similarly for $S_n^{\text{odd}}$). Further, we get that 
\begin{equation}\label{eq:length through 0 as integral}
        \mu(x_\alpha=0,\, d_P(\alpha)>M) =  \int_{\Omega\times \Omega} S^{\text{even}}(\tau_1,\tau_2)\, \dd \mu(\tau_1,\tau_2)
\end{equation}
and if $v$ is an odd point,
\begin{equation}\label{eq: length through v as integral}
     \mu(x_\alpha=v,\, d_P(\alpha)>M)=\int_{\Omega\times \Omega} S^{\text{odd}}(\tau_1,\tau_2)\, \dd \mu(\tau_1,\tau_2).
\end{equation}
Since $S_n^{\text{even}},S^{\text{even}}\geq 0$, Equation \eqref{eq:length bound through v} and Equation \eqref{eq:length through 0 as integral} (and analogously Equations \eqref{eq:length bound through v}, \eqref{eq: length through v as integral} for the odd case) combine to show that for $n$ large enough,
\begin{align*}
    \mu(S_n^{\text{even}} \leq 3 \epsilon) \geq 1-2\epsilon\qquad \text{ and } \qquad \mu(S_n^{\text{odd}} \leq 3 \epsilon) \geq 1-2\epsilon.
\end{align*}
Putting together the even and odd cases, for $n$ large enough,
\begin{align*}
      \mu\bigg(\#\{ \alpha \text{ arc of path hitting }P\cap B_n ~:~ d_P(\alpha)>M\} \leq 3\epsilon n^2/2\bigg) \geq 1-2\epsilon.
\end{align*}
Choosing $\epsilon$ appropriately given $\beta,\theta$ completes the proof.
\end{proof}

We can now state and prove the theorem about the effect of chain swapping (with probability $p=1/2$) on the Gibbs property.

\begin{thm}\label{theorem: Preservation of the Gibbs prop}
	Let $\nu$ be a Gibbs measure on $\Omega\times \Omega$ which is an ergodic coupling of ergodic measures $\nu_1\in \Prob^{s_1}_e$ and $\nu_2\in \Prob^{s_2}_e$ with $s_1\neq s_2$ and $(s_1+s_2)/2\in \text{Int}(\mc O)$. The measure $\nu'$ obtained from $\nu$ by chain swapping with probability $p=1/2$ is not a Gibbs measure on $\Omega\times \Omega$. 
\end{thm}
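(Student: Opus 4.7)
The plan is to argue by contradiction. Suppose $\nu'$ is a Gibbs measure on $\Omega\times\Omega$. I would first observe that this forces the marginal $\nu_1'=\pi_1(\nu')$ to be a Gibbs measure on $\Omega$. Indeed, the Gibbs property of $\nu'$ says that conditional on $(\tau_1',\tau_2')$ on $\mathbb Z^3\setminus R$, the joint distribution on $R$ is uniform over valid pairs. Integrating out $\tau_2'$ on $R$ (whose number of valid completions depends only on the fixed boundary values of $\tau_2'$, not on $\tau_1'$ on $R$) shows that $\nu_1'(\cdot\mid \tau_1'\text{ on }\mathbb Z^3\setminus R)$ is uniform over valid completions. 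Combined with Proposition \ref{prop:preserve_ergodicity} and Proposition \ref{prop: coupling swap mean current}, this makes $\nu_1'$ an ergodic Gibbs measure of mean current $s=(s_1+s_2)/2\in\mathrm{Int}(\mathcal O)$.

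Next I would fix a plane $P$ with unit normal $\xi$ satisfying $\langle s_1-s_2,\xi\rangle\neq 0$. By Lemma \ref{lemma: lines in coupling}, under $\nu$ both the ``first cross'' set $A_P$ and the ``last cross'' set $C_P$ of infinite paths of nonzero slope have strictly positive density in $P$. Chain swapping preserves $\tau_1\cup\tau_2$ and hence the underlying path topology, so the same holds for $\nu'$. In particular, with positive $\nu'$-probability, a typical point on $P$ is a last-cross endpoint of an infinite path of nonzero slope, and these endpoints occur at positive density in any large planar window $P\cap B_n$.

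The crux is to exhibit a finite region $R$ and two valid local configurations $\omega'_R, \tilde\omega'_R$ that agree on $\partial R$ but have distinct $\nu'$-conditional probabilities, contradicting the Gibbs hypothesis. The construction uses Lemma \ref{lem: path segment length} to find $R$ intersecting a short segment of a $P$-crossing infinite path of nonzero slope. One then compares a configuration $\omega'_R$ containing a short segment of such a path together with a separate short finite cycle inside $R$, against a configuration $\tilde\omega'_R$ in which the finite cycle is topologically absorbed into the infinite-path segment (both arranged to be compatible with the same boundary on $\partial R$). Under the putative Gibbs property these two configurations must have equal conditional weight, but under the chain-swap construction their weights differ: chain swapping treats infinite-nonzero-slope segments and finite cycles asymmetrically (it randomizes only the former with Bernoulli($1/2$) orientation flips, while cycle orientations are inherited from $\nu$). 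Consequently the chain-swap pre-images of $\omega'_R$ and $\tilde\omega'_R$ accumulate $\nu$-probability through different combinatorial factors.

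The hard part will be making this probability-ratio computation rigorous. One must enumerate the chain-swap pre-images of each local configuration, paired with all compatible extensions to $\mathbb Z^3\setminus R$, and show that the resulting conditional ratio is bounded away from $1$. The main obstacle is that the chain-swap factor for each pre-image depends on the \emph{global} path structure (how many far-away infinite paths of nonzero slope are reversed), so controlling the averaging requires that far-away swap events are asymptotically independent of the local configuration in $R$; this is precisely the content of Lemma \ref{prop: infinite lines zero measure}. This asymptotic independence keeps the far-field contributions of $\omega'_R$ and $\tilde\omega'_R$ comparable and forces the local combinatorial discrepancy between ``infinite-path segment'' and ``finite cycle'' to survive in the limit, producing the desired contradiction.
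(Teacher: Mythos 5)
Your proposal takes a genuinely different route from the paper, and I believe the central step has a gap that cannot be fixed by local considerations.

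Your opening observation (that the Gibbs property of $\nu'$ would pass to the marginal $\nu_1'$, giving an ergodic Gibbs measure of mean current $(s_1+s_2)/2$) is correct, but you do not actually use it in what follows, and the paper does not go this way either: the paper works directly with the double dimer configuration $(\tau_1',\tau_2')$ and constructs a competing extension on $\Omega\times\Omega$ itself. The deeper problem lies in your ``cycle absorption'' argument. You want to find a finite region $R$ and two local completions $\omega'_R$, $\tilde\omega'_R$ (one with a finite cycle beside an infinite-path segment, one with the cycle absorbed into the segment) that receive unequal $\nu'$-conditional weight. But chain swapping is a bijection on (configuration, swap-decision) pairs, and the swap decision for any infinite path that crosses $\partial R$ is fully observable from the configuration outside $R$ — you can see on each side of $\partial R$ whether the $\tau_1$/$\tau_2$ colors along that path have been exchanged. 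Finite cycles contained in $R$ are never touched by the swap. Consequently, conditional on what is visible outside $R$, the map that undoes the chain swap is deterministic; since $\nu$ is Gibbs, the $\nu$-conditional law of the pre-image inside $R$ is uniform; and the per-path Bernoulli($1/2$) weights factor out identically for both local completions. The claimed ``asymmetry'' between infinite-path segments and cycles disappears once you condition on the exterior, so the intended probability ratio comes out equal to $1$ and the contradiction never materializes. There is also a symptom of this: your argument never invokes the hypothesis $(s_1+s_2)/2\in\text{Int}(\mathcal O)$, yet the theorem is false without it (for mean currents on an edge of $\partial\mathcal O$ one can have Gibbs measures that chain-swap into Gibbs measures).

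The paper's proof is necessarily global. It uses the patching theorem (Theorem \ref{patching}) — which is exactly where the hypothesis $(s_1+s_2)/2\in\text{Int}(\mathcal O)$ enters — to produce a competing completion $(\tau,\tau_2')$ inside a large cube $B_n$ that agrees with $\tau_2'$ on $(1-\epsilon)B_n$ and with $\tau_1'$ outside $B_n$. Since $\tau$ and $\tau_2'$ agree on the inner region, the doubled configuration $(\tau,\tau_2')$ has no infinite paths passing through $(1-\epsilon)B_n$, so it re-pairs the $\sim c n^2$ left rays that entered through the face $F$ (and exited elsewhere) with completely different right rays. The punchline is a counting argument (Cases 1--3 in the paper, using the thin region $T$ and Lemma \ref{lem: path segment length}) showing that only $o(n^2)$ of these re-pairings can yield infinite paths with a well-defined slope, which violates the $\threeeven$-invariance of $\nu'$. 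Because the Gibbs property would require $(\tau,\tau_2')$ and $(\tau_1',\tau_2')$ to have the same conditional weight given the configuration outside $B_n$, this forces $\nu'$ not to be Gibbs. This drastic, box-scale reconfiguration via patching is the missing ingredient in your proposal; a purely local (bounded $R$) comparison of completions cannot see it.
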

\begin{rem}
    The condition $(s_1 + s_2)/2\in \text{Int}(\mc O)$ is necessary in the proof so that we can use the patching theorem (Theorem \ref{patching}).
\end{rem}

\begin{proof}

    Let $B_n = [1,n]^3$. Let $P$ be a coordinate plane with normal vector denoted $\xi$ such that $P\cap B_n$ is a face of $\partial B_n$ (denoted $F$) and such that $\langle s(\nu_1) - s(\nu_2), \xi \rangle \neq 0$. By Lemma \ref{lemma: lines in coupling}, 
    \begin{align*}
        \nu \bigg( \lim_{n\to \infty} \frac{|A_P\cap B_n|}{n^2} > 0 \bigg) = 1.
    \end{align*}
    Recall that $A_P$ is the collection of points $x\in P$ on infinite paths $\ell \subset (\tau_1,\tau_2)$ with $\langle s(\ell),\xi\rangle \neq 0$ such that $x$ is the \textit{first} time that $\ell$ intersects $P$. Given a sample $(\tau_1,\tau_2)$ from $\nu$, we look at the collection of infinite paths $\ell$ satisfying $\langle s(\ell),\xi\rangle \neq 0$. 
    
    The part of $\ell$ outside $B_n$, $\ell\setminus B_n$, always has exactly two infinite components, a \textit{left ray} (half-infinite path entering $B_n$) and a \textit{right ray} (half-infinite path exiting $B_n$). We define \textit{first entrance points} of $B_n$ by
     \begin{align*}
        S_{\text{first}}(B_n)=\{x\in \partial B_n : \text{ there is an infinite path }\ell \subset (\tau_1,\tau_2) \text{ with }\langle s(\ell),{\xi}\rangle \neq 0, \\ \,\ell \text{ enters $B_n$ for the \textit{first} time at $x$}\}.
    \end{align*}
   Note that left rays hit $\partial B_n$ at first entrance points. Similarly define \textit{last exit points} of $B_n$ by 
        \begin{align*}
        X_{\text{last}}(B_n)=\{x\in \partial B_n : \text{ there is an infinite path }\ell \subset (\tau_1,\tau_2) \text{ with }\langle s(\ell),\xi\rangle \neq 0,\\ \,\ell \text{ exits $B_n$ for the \textit{last} time at $x$}\}.
    \end{align*}
    Right rays hit $\partial B_n$ at last exit points. \symindex{Chapter 7!$S_{\text{first}},X_{\text{last}}$} See Figure \ref{fig:entrance_exit} for an illustration. 
    \begin{figure}
       \centering
      \includegraphics[scale=0.7]{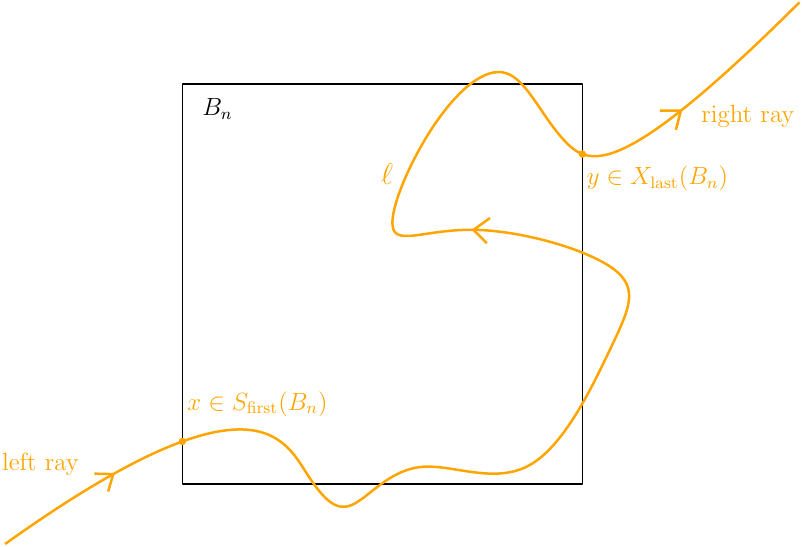}
        \caption{Example of an infinite path $\ell\subset (\tau_1,\tau_2)$ hitting $B_n$, with first entrance, last exit, and left and right rays labeled.}
        \label{fig:entrance_exit}
    \end{figure}
    We show that without loss of generality (i.e.\ up to translating $P$) there are many left rays incident to the face $F = P\cap \partial B_n$, in particular that it contains many points in $S_{\text{first}}(B_n)$. To do this, let $\tilde{B}_n$ be $B_n$ reflected over $P$ and notice that
    \begin{align*}
        A_P\cap B_n\subset S_{\text{first}}(B_n) \cup S_{\text{first}}(\tilde B_n).
    \end{align*}
    Therefore at least one of $A_P \cap S_{\text{first}}(B_n)$ and $A_P\cap S_{\text{first}}(\tilde{B}_n)$ has size of order $n^2$. (It is possible for only one to have order $n^2$ points, for example if all paths in $(\tau_1,\tau_2)$ are in the same direction.) Without loss of generality (by translating and possibly changing the choice of face $F$), there exists $c\in(0,1)$ such that given $\delta>0$, for $n$ large enough
    \begin{equation}\label{eq:AP_lower}
        |A_P\cap S_{\text{first}}(B_n)|>cn^2
    \end{equation}
    with $\nu$-probability $1-\delta$. 

    Given $x\in A_P\cap S_{\text{first}}(B_n)$, there exists a unique infinite path $\ell\subset (\tau_1,\tau_2)$ with slope denoted $s(\ell)$ containing $x$. Since $x\in A_P$ this path will have $\langle s(\ell),\xi\rangle \neq 0$, so $\ell$ hits $P$ finitely many times almost surely. We define the function $D_P(\ell)$ to be the distance along $P$ from $\ell \cap A_P$ to $\ell \cap C_P$ (note that this is different from $d_P(\cdot)$ defined in Lemma \ref{lem: path segment length}). \symindex{Chapter 7!$D_P$}
    
    Without loss of generality assume that the origin is contained in $P$. Let $\ell_0$ be the path through the origin in $(\tau_1,\tau_2)$. Then for any $\theta>0$ there exists $M$ such that
    \begin{equation}\label{eq: distance bigger than M at 0}
       \nu ( D_P(\ell_0) > M ~|~ \langle s(\ell), \xi\rangle \neq 0 ) < \theta.
    \end{equation}
   By $\threeeven$-invariance, this holds for any $\ell$ through an even point on $P$ with $\langle s(\ell),\xi\rangle \neq 0$. An analogous statement to Equation \eqref{eq: distance bigger than M at 0} holds if we look at an odd point $v\in P$, and $\threeeven$-invariance again implies that it holds for any $\ell$ through an odd point on $P$ with $\langle s(\ell),\xi\rangle \neq 0$. Putting these together, we have that for $n$ large
   \begin{align*}
        \nu(\#\{\ell ~:~ \ell\cap C_P\not\in B_n, \ell \cap A_P\in B_n\} \leq 4 M n + \theta n^2) > 1 - \theta.
   \end{align*}
   Taking $\theta$ sufficiently small and $n$ large, we can set $M = \epsilon n$, with $\epsilon>0$ small to be specified below. Then for $n$ large this becomes 
   \begin{align*}
        \nu(\#\{\ell ~:~ \ell\cap C_P\not\in B_n, \ell \cap A_P\in B_n\} \leq (4 \epsilon +\theta)n^2) > 1 - \theta.
   \end{align*}
    As any infinite path $\ell\subset (\tau_1,\tau_2)$ has well-defined slope, if $\langle s(\ell),\xi\rangle\neq 0$ then $\ell$ must be on opposite sides of $P$ before $A_P$ and after $C_P$. 
    Hence by the above and Equation \eqref{eq:AP_lower}, for $n$ large enough,
    \begin{equation}\label{eq: exit not in P}
        \nu(\#\{\ell~:~ \ell \cap S_{\text{first}}(B_n)\in P, \ell\cap X_{\text{last}}(B_n)\not\in P\} > (c-4\epsilon - \theta)n^2 ) > 1-\theta-\delta. 
    \end{equation}
    Therefore with $\nu$-probability $1-\theta-\delta$, at least $c'n^2=(c-4\epsilon-\theta)n^2$ infinite paths $\ell$ entering at $x\in A_P\cap B_n\subset F$ exit $B_n$ at $y\not\in F$.
    
    On the other hand, since $s_1\neq s_2$, we can apply chain swapping with $p=1/2$ to get a new measure $\nu'$ distinct from $\nu$. By Proposition \ref{prop:preserve_ergodicity}, the marginals $\nu_1',\nu_2'$ of $\nu$ are ergodic. By Proposition \ref{prop: coupling swap mean current}, they satisfy
    \begin{align*}
        s(\nu_1') = s(\nu_2') = \frac{s_1+s_2}{2}.
    \end{align*}
    Together this means that $\nu_1',\nu_2'$ satisfy the conditions of the patching theorem (Theorem \ref{patching}). Fixing $\epsilon\in(0,1)$, let $A_n$ be the cubic annulus between $B_n$ and $(1-\epsilon)B_n$. By Theorem \ref{patching} applied to $\nu_1',\nu_2'$ on $A_n$, for $(\tau_1',\tau_2')$ sampled from $\nu'$, for $n$ large enough we can with $\nu'$-probability $1-\epsilon$ find a tiling $\tau$ such that 
    \begin{itemize}
        \item $\tau\mid_{(1-\epsilon)B_n} = \tau_2'$
        \item $\tau\mid_{\m Z^3\setminus B_n} = \tau_1'$.
    \end{itemize}
    Let $Z_n\subset A_P \cap S_{\text{first}}(B_n)$ be the subset of points $x$ such that the infinite path $\ell$ through $x$ in $(\tau_1,\tau_2)$ satisfies:
    \begin{itemize}
        \item $\ell$ has $C_P\cap \ell\in B_n$ (so that $\ell$ exits $B_n$ through $\partial B_n\setminus P$);
        \item  $\ell$ did not have its orientation reversed by the chain swapping (in other words, $\ell \subset (\tau_1',\tau_2')\cap (\tau_1,\tau_2)$.)
    \end{itemize}
    By Equation \eqref{eq: exit not in P} and since chain swapping reverses the orientation of each infinite path with independent probability $1/2$, given $\delta'>0$, for $n$ large enough, setting $c'=c-4\epsilon -\theta$ we have
    \begin{equation}\label{eq:Zn size bound}
        \nu' ({|Z_n|} > c'n^2/2 ) > \frac{1}{2}-\delta' > 0.
    \end{equation}
    Conditional on the double dimer configuration $(\tau_1',\tau_2')$ on $\m Z^3\setminus B_n$, if $\nu'$ is a Gibbs measure then it must assign the same probability to $(\tau_1',\tau_2')$ and $(\tau,\tau_2')$. However since $\tau,\tau_2'$ agree on $(1-\epsilon)B_n$, there are no infinite paths in $(\tau,\tau_2')$ through $(1-\epsilon)B_n$. 
        
    Let $S'_{\text{first}}(B_n)$ and $X'_{\text{last}}(B_n)$ denote the first entrance and last exit points in $(\tau_1',\tau_2')$. We note that 
    \begin{align*}
        S'_{\text{first}}(B_n)\cup X'_{\text{last}}(B_n) = S_{\text{first}}(B_n)\cup X_{\text{last}}(B_n)
    \end{align*}
    because on infinite paths where the orientation was swapped, the first entrance and last exit points are swapped.
    
    On the other hand, since $(\tau_1',\tau_2')$ and $(\tau,\tau_2')$ agree on $\m Z^3\setminus B_n$, they have the same first entrance and last exit points and the same left and right rays. If $x\in S'_{\text{first}}(B_n)$ and $y\in X'_{\text{last}}(B_n)$, we denote the left and right rays incident to them by $\ell_-(x)$ and $\ell_+(y)$ respectively. The tiling $(\tau,\tau_2')\mid_{B_n}$ pairs up all the left rays with right rays in a new way to make full infinite paths.
    
    However recall that 
    an infinite path in a double dimer configuration sampled from $\nu'$ has well-defined slope almost surely. We show that $\nu'$ is \textit{not} Gibbs by showing that it is \textit{not} possible to pair order $n^2$ of the left rays entering at $x\in Z_n$ with right rays of the same slope. For $x\in Z_n$, let $\gamma(x)\subset (\tau,\tau_2')$ denote the path that connects $\ell_-(x)$ to an exit point $y$. Then the infinite path in $(\tau,\tau_2')$ through $x$ is 
    \begin{align*}
        \ell_-(x)\cup \gamma(x) \cup \ell_+(y). 
    \end{align*}
    The remainder of the proof is casework to show there only a small number of these infinite paths can have well-defined slope. Recall that $F = B_n \cap P$ and let $F^\circ$ denote the points in $F$ which are distance $\geq \epsilon n$ from $\partial F$.
    \begin{enumerate}
       \begin{figure}
        \centering
        \includegraphics[scale=0.5]{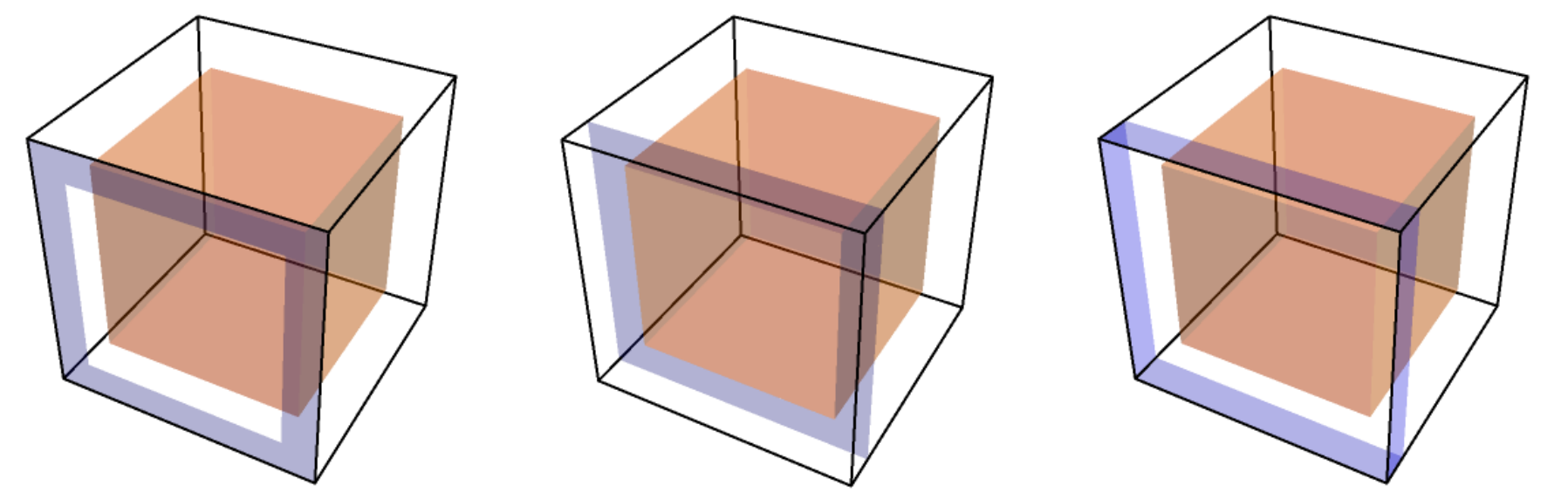}
        \caption{In all three pictures, the transparent cube is $B_n$ and smaller orange cube inside it is $(1-\epsilon)B_n$. The front left face is $F$. In $(\tau,\tau_2')$, all tiles in $(1-\epsilon)B_n$ are double edges, so infinite paths can't enter the orange cube. The region $T$, corresponding to Case \ref{it:bounded by area}, is the union of the three blue regions.}
        \label{fig:pieces of T}
    \end{figure}
    \item\label{it:bounded by area} \textbf{Bounded by area:} We define the \textit{thin region} $T$, which is a union of three things: i) $F\setminus F^\circ$, ii) $F\setminus F^\circ$ translated $\epsilon n$ inward, iii) the part of $\partial B_n$ between i) and ii). See Figure \ref{fig:pieces of T} for an illustration. Since
    \begin{align*}
        \text{area}(T) \leq 12 \epsilon n^2,
    \end{align*}
    the number of infinite paths in $(\tau,\tau_2')$ which intersect $T$ is bounded by $12\epsilon n^2$. 
    \item \label{it: connecting paths}\textbf{Bounded by number of possible connecting paths:} choose $x\in Z_n$, and suppose that $\ell_-(x)\cup \gamma(x)\cup \ell_+(y)$ does not intersect $T$. To have well-defined slope, $\ell$ must still cross $P$ some time after $x$, and to avoid $T$ it must at some point cross $P$ in $P\setminus F$. See Figure \ref{fig:Gibbs_cases}.
    
    Therefore the rest of the path $\gamma(x)\cup \ell_+(y)$ must use part of at least one finite cycle or infinite path in $$(\tau,\tau_2')\mid_{\m Z^3 \setminus B_n} =(\tau_1',\tau_2')\mid_{\m Z^3 \setminus B_n}$$
    to connect a point in $F^\circ$ to $P\setminus F$. This path will be an arc on $P$ in the $P$-half-space on the opposite side of $P$ from $B_n$. Chain swapping only changes the directions of paths, so the collection of arcs and their lengths are the same in $(\tau_1,\tau_2)$ and $(\tau_1',\tau_2')$. Since the arcs are outside $B_n$, they are also the same in $(\tau,\tau_2')$. Thus by Lemma \ref{lem: path segment length} applied to $M = \epsilon n$, we can find $\beta,\theta>0$ small enough so that for $n$ large enough, 
    \begin{align*}
        \nu'\bigg(\# \{\alpha \text{ arc of path hitting } F ~:~ d_P(\alpha) > \epsilon n\} < \beta n^2  \bigg) > 1-\theta,
    \end{align*}
    where recall that $d_P(\alpha)$ is the distance along $P$ between the two intersection points of the arc $\alpha$ with $P$. Therefore with $\nu'$-probability $1-\theta$, the number of $x\in Z_n$ such that the path
    \begin{align*}
        \ell_-(x)\cup \gamma(x) \cup \ell_+(y)
    \end{align*}
    is disjoint from $T$ but crosses $P\setminus F^\circ$ is at most $\beta n^2$. 
    \begin{figure}
        \centering
        \includegraphics[scale=0.4]{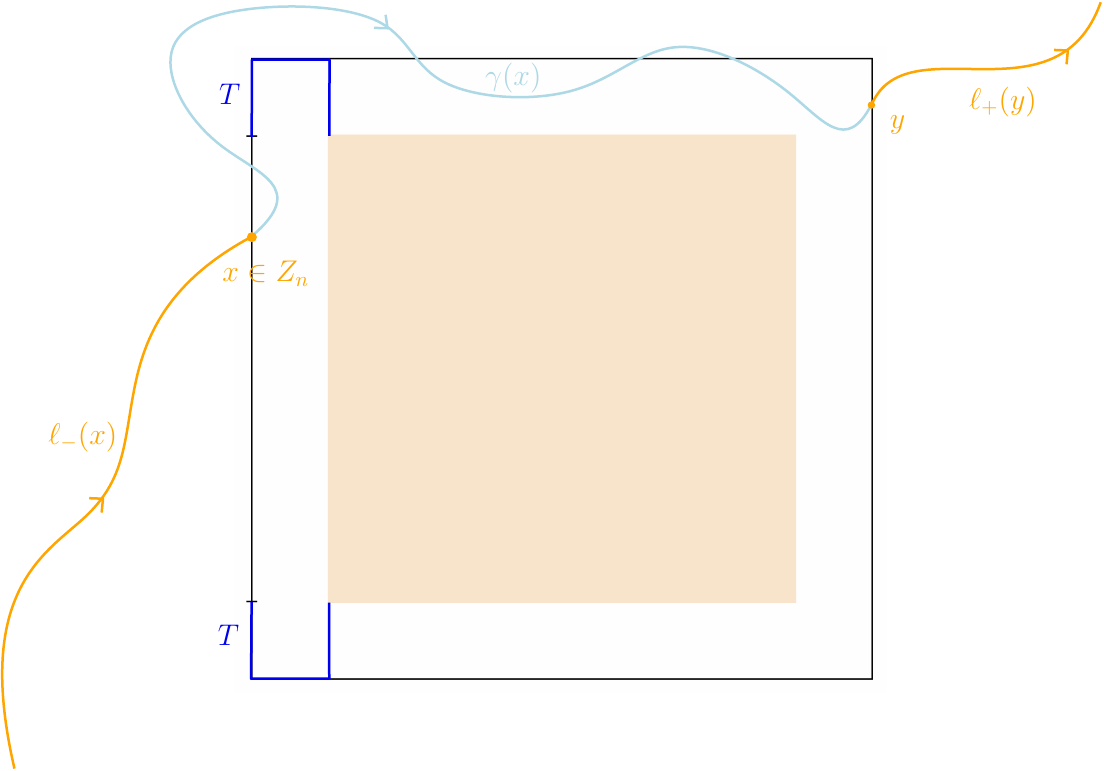}\includegraphics[scale=0.4]{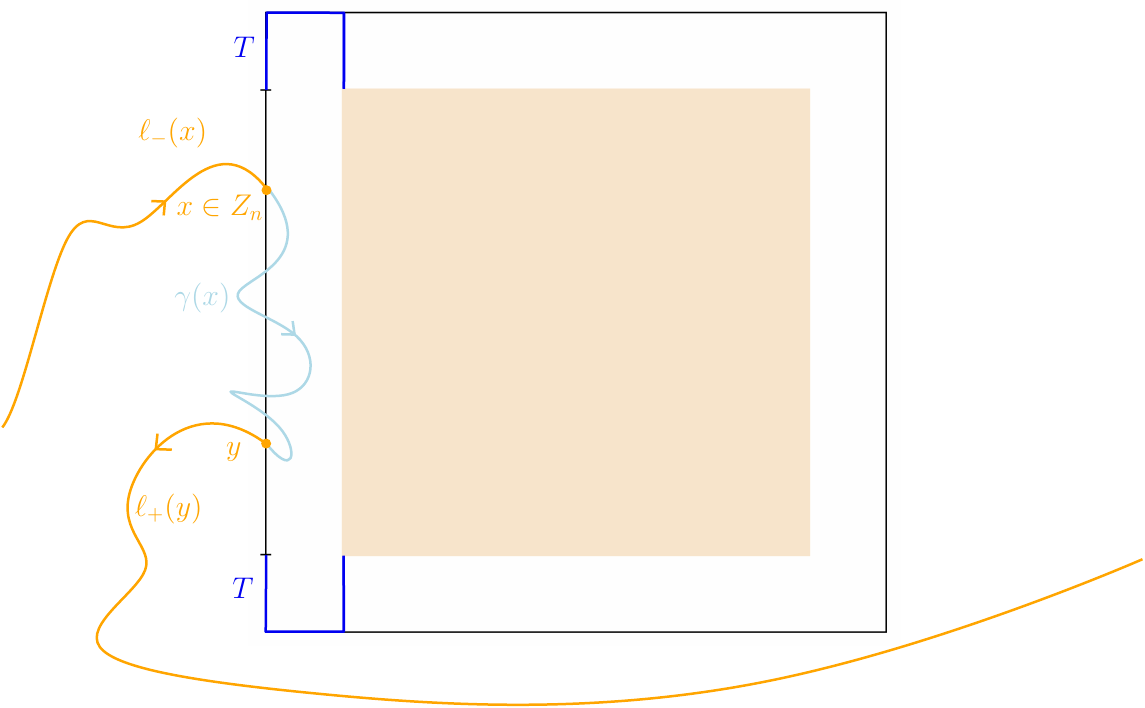}
        \caption{Two examples corresponding to Case \ref{it: connecting paths}. In this case the infinite path does not intersect $T$, so this can happen either if the final exit point $y\in \partial B_n\setminus F$ (left) or if the final exit point $y\in F$, but the right ray $\ell_+(y)$ crosses $P$ again outside $B_n$ (right).}
        \label{fig:Gibbs_cases}
    \end{figure}
    \item \label{it: bad case}\textbf{Remaining paths forced to have no well-defined slope:} if $x\in Z_n$ is not in Case \ref{it:bounded by area} or Case \ref{it: connecting paths}, then the path $\ell:=\ell_-(x) \cup \gamma(x) \cup \ell_+(y)$ does not intersect $T$ and does not cross $P\setminus F^\circ$ at any time after going through $x$. This implies that $\ell_-(x)$ and $\ell_+(y)$ are contained in the same $P$ half-space, in which case $\ell$ cannot have well-defined slope. See Figure \ref{fig:final_Gibbs_case}.
    \begin{figure}
        \centering
    \hspace{-2.5cm}\includegraphics[scale=0.4]{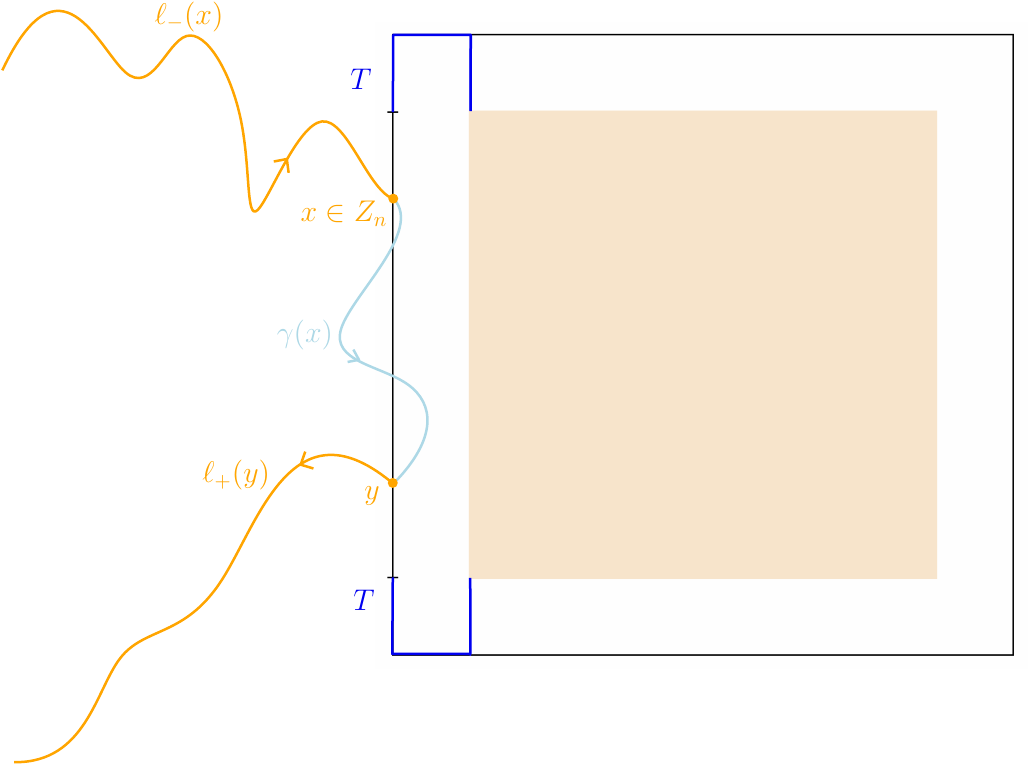}    \caption{Corresponding to Case \ref{it: bad case}, if $\gamma(x)\cup \ell_+(y)$ never crosses $P\setminus F$, then the resulting infinite path cannot have well-defined slope.}
        \label{fig:final_Gibbs_case}
    \end{figure}
    \end{enumerate}
    
    In summary, with probability $1-\theta$, there are at most 
    \begin{align*}
        (12 \epsilon + \beta)n^2
    \end{align*}
    points $x\in Z_n$ such that we can connect $\ell_-(x)$ to have well-defined slope. However by Equation \eqref{eq:Zn size bound}, with positive $\nu$-probability $|Z_n| > c'n^2/2$. We can take $\epsilon,\beta,\theta$ to be arbitrarily small compared to $c$, and thus since $\nu'$ is $\threeeven$-invariant (meaning infinite paths must have well-defined slope a.s.), $\nu'(\cdot~|~(\tau_1',\tau_2')\mid_{\m Z^3\setminus B_n})$ cannot assign the same probability to $(\tau_1',\tau_2')$ and $(\tau,\tau_2')$. Therefore $\nu'$ is not a Gibbs measure. 
\end{proof}

Theorem \ref{theorem: Preservation of the Gibbs prop} and Corollary \ref{Corollary: entropy maximizer double dimer} combine to give the following corollary. 
\begin{cor}
If $\mu$ is a Gibbs measure on $\Omega\times \Omega$ which is an ergodic coupling of $\mu_1 \in \mc P_e^{s_1}$ and $\mu_2\in \mc P_e^{s_2}$ for $s_1 \neq s_2$ and $\frac{s_1 + s_2}{2}\in \text{Int}(\mc O)$, then the measure $\mu'$ obtained by chain swapping with probability $p=1/2$ does not maximize entropy in $\mc P^{\frac{s_1+s_2}{2},\frac{s_1+s_2}{2}}$.
\end{cor}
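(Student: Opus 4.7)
The plan is to derive this corollary directly by combining three results established earlier in the section: the effect of chain swapping on mean currents (Proposition \ref{prop: coupling swap mean current}), the fact that entropy maximizers in $\mc P^{s_1,s_2}$ are Gibbs measures (Corollary \ref{Corollary: entropy maximizer double dimer}), and the failure of the Gibbs property after chain swapping (Theorem \ref{theorem: Preservation of the Gibbs prop}). There is no new content to prove here; the corollary is essentially a syllogism packaging the preceding theorem.

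First I would verify that $\mu'$ lies in the space $\mc P^{\frac{s_1+s_2}{2},\frac{s_1+s_2}{2}}$. By Proposition \ref{prop: coupling swap mean current} applied with $p = 1/2$, the marginals of $\mu'$ satisfy
\[
s(\mu_1') = \tfrac{1}{2}s_1 + \tfrac{1}{2}s_2 = s(\mu_2'),
\]
so indeed $\mu' \in \mc P^{\frac{s_1+s_2}{2},\frac{s_1+s_2}{2}}$. (One also uses that chain swapping preserves $\threeeven$-invariance, which is immediate from the construction.) This makes the statement ``$\mu'$ does not maximize entropy in $\mc P^{\frac{s_1+s_2}{2},\frac{s_1+s_2}{2}}$'' well-posed.

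Next I would argue by contradiction. Suppose $\mu'$ does maximize entropy over $\mc P^{\frac{s_1+s_2}{2},\frac{s_1+s_2}{2}}$. Then by Corollary \ref{Corollary: entropy maximizer double dimer}, $\mu'$ must be a Gibbs measure on $\Omega \times \Omega$. On the other hand, the hypotheses of the corollary match exactly those of Theorem \ref{theorem: Preservation of the Gibbs prop}: $\mu$ is a Gibbs measure and an ergodic coupling of $\mu_1 \in \mc P_e^{s_1}$ and $\mu_2 \in \mc P_e^{s_2}$ with $s_1 \neq s_2$ and $\frac{s_1+s_2}{2} \in \mathrm{Int}(\mc O)$. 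Applying Theorem \ref{theorem: Preservation of the Gibbs prop} gives that the chain-swapped measure $\mu'$ (with $p=1/2$) is \emph{not} a Gibbs measure on $\Omega \times \Omega$, contradicting what the maximizing assumption would force. Hence $\mu'$ cannot be an entropy maximizer, as claimed.

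Since every ingredient is already proved, there is no genuine obstacle; the only thing to be careful about is confirming the hypotheses of Theorem \ref{theorem: Preservation of the Gibbs prop} and Corollary \ref{Corollary: entropy maximizer double dimer} transfer verbatim, in particular that ergodicity of $\mu$ and the strict inequality $s_1 \neq s_2$ carry over to invoke Theorem \ref{theorem: Preservation of the Gibbs prop}. The proof should be written in three short lines and can appear immediately after the theorem without further machinery.
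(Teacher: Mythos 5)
Your proof is correct and matches the paper's intent exactly: the paper states only that Theorem \ref{theorem: Preservation of the Gibbs prop} and Corollary \ref{Corollary: entropy maximizer double dimer} ``combine to give'' this corollary, and your three-step argument (use Proposition \ref{prop: coupling swap mean current} to place $\mu'$ in the right space, then note it cannot be Gibbs by Theorem \ref{theorem: Preservation of the Gibbs prop}, hence cannot maximize entropy by Corollary \ref{Corollary: entropy maximizer double dimer}) is precisely that combination spelled out. Nothing to change.
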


\subsection{Strict concavity of ent and existence of EGMs of every mean current}\label{subsection: strict concavity}

With the chain swapping machinery developed in Section \ref{Subsection: Dimer Swapping}, we can now prove one of the main results of this section, namely that that $\ent$ is \textit{strictly} concave on $\mc O\setminus \mc E$ (recall that $\mc E$ denotes the edges of $\mc O$). We already showed that $\ent$ is concave on $\mc O$ in Lemma \ref{lemma: entropy_concave}. We also already showed in Section \ref{sec:extreme} that $\ent$ restricted to the interior of any face of $\partial \mc O$ is strictly concave (Corollary \ref{cor: ent boundary}) by relating $\ent$ restricted to a face of $\partial \mc O$ to $\ent_{\text{loz}}$, the slope entropy function for two-dimensional lozenge tilings (Theorem \ref{thm: extremal_entropy}). 

\begin{thm}\label{theorem: entropy is strictly concave}
The entropy function $\ent$ is strictly concave on $\mc O \setminus \mc E$.
\end{thm}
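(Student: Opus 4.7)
The plan is to argue by contradiction using the chain swapping machinery from Section \ref{Subsection: Dimer Swapping}, combining Theorem \ref{theorem: Preservation of the Gibbs prop} (chain swapping breaks the Gibbs property when marginals have distinct interior mean currents) with Corollary \ref{Corollary: entropy maximizer double dimer} (entropy maximizers on the double dimer model are Gibbs). The principal subtlety is that we do not yet know that EGMs of mean current $s$ exist for every $s \in \mathcal O$, so the argument must proceed via ergodic decompositions.

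\textit{Reduction.} Fix distinct $s_1, s_2 \in \mathcal O$ with $s := (s_1+s_2)/2 \in \mathcal O \setminus \mathcal E$; we must show $\ent(s) > \tfrac12(\ent(s_1)+\ent(s_2))$. If $s$ lies in the relative interior of a (non-edge) face of $\partial \mathcal O$, then by the face property of convex polytopes both $s_1$ and $s_2$ lie in that face; the strict inequality is then immediate from Corollary \ref{cor: ent boundary} combined with Theorem \ref{thm: extremal_entropy}. The main case, to which the rest of the plan applies, is $s \in \text{Int}(\mathcal O)$.

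\textit{Main argument.} Suppose for contradiction $\ent(s_1)+\ent(s_2)=2\ent(s)$. By Lemma \ref{lemma: entropy maximiser} and Theorem \ref{theorem: entropy maximizer gibbs} pick Gibbs entropy maximizers $\mu_i \in \mathcal P^{s_i}$, and let $\mathfrak m := \mu_1 \times \mu_2$. Then $\mathfrak m$ is a Gibbs coupling in $\mathcal P^{s_1,s_2}$ with $h(\mathfrak m) = \ent(s_1)+\ent(s_2) = 2\ent(s)$, and by Corollary \ref{Corollary: entropy maximizer double dimer} it maximizes entropy there. Write the ergodic decomposition $\mathfrak m = \int \mathfrak n\, dW(\mathfrak n)$; by Proposition \ref{prop: ergodic deocomposition gibbs} each $\mathfrak n$ is ergodic Gibbs with ergodic Gibbs marginals $\pi_i(\mathfrak n)$ of mean currents $s_i(\mathfrak n)$, and $s_i = \int s_i(\mathfrak n)\,dW$. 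Equality throughout the chain
$$\ent(s_1)+\ent(s_2) = \int h(\mathfrak n)\,dW \leq \int [\ent(s_1(\mathfrak n))+\ent(s_2(\mathfrak n))]\,dW \leq \ent(s_1)+\ent(s_2)$$
(the last step by concavity of $\ent$) forces $h(\mathfrak n) = \ent(s_1(\mathfrak n))+\ent(s_2(\mathfrak n))$ for $W$-a.e. $\mathfrak n$.

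Chain swap each $\mathfrak n$ with probability $p=1/2$ to produce $\mathfrak n'$. By Propositions \ref{prop:preserve_ergodicity}, \ref{prop: coupling swap entropy}, and \ref{prop: coupling swap mean current}, $\mathfrak n'$ is ergodic, satisfies $h(\mathfrak n')=h(\mathfrak n)$, and both its marginals have mean current $t(\mathfrak n):=(s_1(\mathfrak n)+s_2(\mathfrak n))/2$. Setting $\mathfrak m' := \int \mathfrak n'\,dW$, we get $\mathfrak m' \in \mathcal P^{s,s}$ with $h(\mathfrak m')=h(\mathfrak m)=2\ent(s)$; thus $\mathfrak m'$ maximizes entropy in $\mathcal P^{s,s}$, is Gibbs by Corollary \ref{Corollary: entropy maximizer double dimer}, and so (Proposition \ref{prop: ergodic deocomposition gibbs}) $\mathfrak n'$ is Gibbs for $W$-a.e. $\mathfrak n$. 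Theorem \ref{theorem: Preservation of the Gibbs prop} then forces, $W$-a.s., either $s_1(\mathfrak n)=s_2(\mathfrak n)$ or $t(\mathfrak n)\in\partial\mathcal O$.

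\textit{Closing and main obstacle.} The hard part is the residual case: if $W(\{s_1(\mathfrak n)=s_2(\mathfrak n)\})<1$, then a positive fraction $W(B)>0$ of components has $s_1(\mathfrak n)\neq s_2(\mathfrak n)$ but $t(\mathfrak n)\in\partial\mathcal O$, whence $s_1(\mathfrak n), s_2(\mathfrak n)$ lie on a common face of $\partial\mathcal O$. Split $B=B_1\sqcup B_2$ according to whether $t(\mathfrak n)$ lies on an edge of $\mathcal O$ or in the relative interior of a non-edge face. Theorem \ref{thm: extremal_entropy} gives $\ent(s_1(\mathfrak n))=\ent(s_2(\mathfrak n))=\ent(t(\mathfrak n))=0$ on $B_1$, so this part contributes zero to both sides of the target inequality. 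Corollary \ref{cor: ent boundary} gives a strict gain $\ent(s_1(\mathfrak n))+\ent(s_2(\mathfrak n))<2\ent(t(\mathfrak n))$ on $B_2$. Combining these bounds with $s=\int t(\mathfrak n)\,dW$ and Jensen's inequality applied to the concave function $\ent$ produces a strict inequality $\ent(s_1)+\ent(s_2)<2\ent(s)$, contradicting our assumption, whenever $W(B_2)>0$. The delicate remaining subcase $W(B_2)=0$, $W(B_1)>0$ is eliminated by exploiting the rigid slab structure of EGMs with edge mean current described in Section \ref{sec:extreme} (each $\pi_i(\mathfrak n)$ then a.s.\ samples a 1D-brickwork decomposition with zero specific entropy): the Gibbs property of $\mathfrak n'$ together with this rigidity forces $s_1(\mathfrak n)=s_2(\mathfrak n)$ $W$-a.s.\ on $B_1$, contradicting $W(B_1)>0$.
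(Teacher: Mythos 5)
Your overall architecture is the same as the paper's proof: suppose equality for contradiction, form the independent product coupling $\mathfrak m = \mu_1 \times \mu_2$, decompose into ergodic components, chain swap with $p=1/2$, and leverage the Gibbs-breaking theorem and the boundary-face entropy formulas. Your route to the dichotomy ``$W$-a.s.\ either $s_1(\mathfrak n)=s_2(\mathfrak n)$ or $t(\mathfrak n)\in\partial\mathcal O$'' (via: $\mathfrak m'$ is an entropy maximizer in $\mathcal P^{s,s}$, hence Gibbs by Corollary~\ref{Corollary: entropy maximizer double dimer}, hence its ergodic components $\mathfrak n'$ are a.s.\ Gibbs, hence Theorem~\ref{theorem: Preservation of the Gibbs prop} applies in the contrapositive) is a clean streamlining of the paper's more laborious case analysis showing $w_\mu(A\cap B)>0$ and $w_\mu(C\cap B)>0$ directly. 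Your treatment of the $W(B_2)>0$ case via Jensen and Corollary~\ref{cor: ent boundary} is also fine.

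However, the closing argument for the residual subcase $W(B_2)=0$, $W(B_1)>0$ is wrong. You appeal to ``rigid slab structure'' plus ``the Gibbs property of $\mathfrak n'$'' to force $s_1(\mathfrak n)=s_2(\mathfrak n)$ on $B_1$, but Theorem~\ref{theorem: Preservation of the Gibbs prop} requires $t(\mathfrak n)\in\mathrm{Int}(\mathcal O)$ and is silent when $t(\mathfrak n)\in\mathcal E$; and nothing in Section~\ref{sec:extreme} forces two EGMs with distinct edge mean currents to coincide (they manifestly need not: the stacked-brickwork EGMs with distinct edge currents are distinct). The step that actually closes this case in the paper's proof is the \emph{independence} of the random mean currents $\mathfrak s_1, \mathfrak s_2$ inherited from the product coupling $\mathfrak m$ --- a structural feature your closing argument abandons. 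Independence, combined with the dichotomy (now reading ``a.s.\ $\mathfrak s_1=\mathfrak s_2$ or both lie on a common edge'' once $A\cap B$ and $B_2$ are ruled out), forces $\mathfrak s_1,\mathfrak s_2\in\mathcal E$ a.s.\ when $s_1\neq s_2$: if $\mathfrak s_1$ had positive mass near a point $a\notin\mathcal E$, independence would produce positive-probability pairs $(\mathfrak s_1,\mathfrak s_2)\approx(a,b)$ with $b\neq a$ that violate the dichotomy unless $\mathfrak s_2\equiv a$, which then forces $\mathfrak s_1\equiv a$ and contradicts $s_1\neq s_2$. Once $\mathfrak s_1,\mathfrak s_2\in\mathcal E$ a.s., one has $\ent(s_i)=h(\mu_i)=0$ and then directly $\ent(s_1)+\ent(s_2)=0<2\ent(s)$ since $s\in\mathrm{Int}(\mathcal O)$ and $\ent>0$ on $\mathcal O\setminus\mathcal E$ --- no claim that $W(B_1)=0$ is needed or, in fact, true.
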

\begin{proof}
By Lemmas \ref{lemma: entropy_concave} and \ref{lemma: entropy_continuous}, $\ent$ is concave and continuous on $\mathcal O$. To show strict concavity on $\mc O\setminus \mc E$, it suffices to show that if $s_1, s_2\in \mathcal O$ and $(s_1+s_2)/2\in \mc O\setminus \mc E$, then
$$\ent((s_1+s_2)/2)>(\ent(s_1)+\ent(s_2))/2.$$
If $(s_1+s_2)/2$ is contained in the interior of a face $\mc F \subset \partial \mc O$, then we are done by Corollary \ref{cor: ent boundary}. The remaining case is that $(s_1+s_2)/2\in \text{Int}(\mc O)$. In this case let $\mu_1$ and $\mu_2$ be entropy maximizers in $\mathcal P^{s_1}$ and $\mathcal P^{s_2}$ respectively (these exist by Lemma \ref{lemma: entropy maximiser}) and let $\mu$ be the independent coupling of $\mu_1$ and $\mu_2$. Then 
\begin{align*}
    \ent(s_1) + \ent(s_2) = h(\mu). 
\end{align*}
Consider the ergodic decomposition 
$$\mu=\int \nu \,\dd w_\mu(\nu)$$
where $w_\mu$ is a probability measure on the space of ergodic couplings of ergodic Gibbs measures (see Proposition \ref{prop: ergodic deocomposition gibbs}, which says that the ergodic components of a Gibbs measure are Gibbs a.s.). Let $\mu'$ be the measure obtained by applying chain swapping with probability $p=1/2$ to $\mu$. By Proposition \ref{prop:preserve_ergodicity},
$$\mu'=\int \nu' \,\dd w_\mu(\nu)$$
where $\nu'$ is obtained from $\nu$ by chain swapping also with $p=1/2$. By Proposition \ref{prop: coupling swap entropy}, $h(\nu) = h(\nu')$. Since $h(\cdot)$ is an affine function, we get that
$$h(\mu)=\int h(\nu) \,\dd w_\mu(\nu)=\int h(\nu') \,\dd w_\mu (\nu)=h({\mu'}).$$
By Proposition \ref{prop: coupling swap mean current}, $s(\pi_1(\nu')) = s(\pi_2(\nu')) = (s(\nu_1)+s(\nu_2))/2$. Since $s(\cdot)$ is an affine function,
\begin{align*}
    s(\pi_1(\mu')) = s(\pi_2(\mu')) = (s_1+s_2)/2.
\end{align*}
Let $(\mathfrak s_1, \mathfrak s_2)$ denote the random pair of mean currents for a double dimer configuration sampled from $\mu$. To complete the proof, we proceed with cases based on $w_\mu$. Consider the sets
\begin{align*}
      A = \{ \nu : \frac{s(\pi_1(\nu)) + s(\pi_2(\nu))}{2} \in \text{Int}(\mc O)\}, \qquad B = \{ \nu : s(\pi_1(\nu)) \neq s(\pi_2(\nu)) \}.
\end{align*}
If $\nu\in A\cap B$ is an ergodic coupling of ergodic measures, then $\nu$ satisfies the conditions of Theorem \ref{theorem: Preservation of the Gibbs prop}. Since $\m E[\mathfrak s_1 - \mathfrak s_2] = s_1 - s_2 \neq 0$, $w_\mu(B)>0$. If $w_\mu(A)>0$, then since $\mu$ is an independent coupling, we can argue in a few cases that $w_\mu(A\cap B)>0$: 
  \begin{itemize}
        \item If $\mathfrak s_1, \mathfrak s_2$ are both atomic, then $w_\mu(A\cap B)>0$. For the next cases we assume without loss of generality that $\mathfrak s_2$ is not atomic. 
        \item If $\{\mathfrak s_1\in \text{Int}(\mc O)\}$ has positive probability, then given any value of $\mathfrak{s}_1$ in $\text{Int}(\mc O)$, $\mathfrak{s}_2$ has positive probability to be different from it. Since $\mathfrak s_1 \in \text{Int}(\mc O)$, the average is in $\text{Int}(\mc O)$.
        \item If $\mathfrak s_1$ has positive probability to be contained in $\partial \mc O$, then given any value of $\mathfrak{s}_1$ in $\partial \mc O$, $w_\mu(A)>0$ implies that $\mathfrak{s}_2$ has positive probability to not be contained in the same face as $\mathfrak s_1$ (since on $A$, their average must be in $\text{Int}(\mc O)$). On the other hand if $\mathfrak{s}_2$ is not contained in the same face of $\partial \mc O$ as $\mathfrak s_1$, then it must be different from $\mathfrak s_1$.
    \end{itemize}
Applying Theorem \ref{theorem: Preservation of the Gibbs prop} shows that $\nu'$ is not a Gibbs measure for $\nu\in A\cap B$. Since the ergodic components of Gibbs measures are Gibbs a.s., if $w_\mu(A\cap B)> 0$ then $\mu'$ is not a Gibbs measure. By Corollary \ref{Corollary: entropy maximizer double dimer}, $\mu'$ is not an entropy maximizer in $\mathcal P^{\frac{s_1+s_2}{2}, \frac{s_1+s_2}{2}}$, and hence
\begin{align*}
 \ent(s_1) + \ent(s_2) = h(\mu)= h(\mu') < 2 \ent((s_1+s_2)/2).
\end{align*}
This completes the proof if $w_\mu(A) >0$.

However it can happen that $w_\mu(A) = 0$ (for example, if $\mathfrak s_1$ is supported at a corner vertex $v\in \partial \mc O$, and $\mathfrak s_2$ is supported on a square on $\partial \mc O$ around $v$). Since $\mu$ is an independent coupling, $w_\mu(A)=0$ implies that $\mathfrak s_1, \mathfrak s_2$ are supported in $\partial \mc O$. There are two remaining cases. 

First suppose there is a face $\mc F\subset \partial \mc O$ such that 
\begin{align*}
    C = \{ \nu : \frac{s(\pi_1(\nu)) + s(\pi_2(\nu))}{2} \in \text{Int}(\mc F)\}
\end{align*}
has $w_\mu(C)>0$. Since $\mu$ is an independent coupling, $w_\mu(C\cap B)>0$ by arguments analogous to those above for $A,B$. Let $\mu'$ be obtained from $\mu$ by chain swapping. By Proposition \ref{prop:preserve_ergodicity}, 
\begin{align*}
    \mu' = \int \nu'\, \dd w_\mu(\nu)
\end{align*}
where $\nu'$ is obtained from $\nu$ by chain swapping. Let $\nu_1,\nu_2$ denote the marginals of $\nu$ and let $\nu_1',\nu_2'$ denote the marginals of $\nu'$. If $\nu \in B$, then $s(\nu_1)\neq s(\nu_2)$ and $\nu'$ is distinct from $\nu$. By Proposition \ref{prop: coupling swap entropy} and Proposition \ref{prop: coupling swap mean current},
\begin{align*}
    h(\nu') = h(\nu), \qquad  s(\nu_1') = s(\nu_2') = \frac{s(\nu_1)+s(\nu_2)}{2}.
\end{align*}
By Theorem \ref{thm: extremal_entropy}, $\ent\mid_{\mc F} = \ent_{\text{loz}}$. Since $\ent_{\text{loz}}$ is strictly concave on $\text{Int}(\mc F)$, we have that for each $\nu\in C\cap B$, 
\begin{align*}
    h(\nu') = h(\nu_1')+h(\nu_2') < 2\, \ent(({s(\nu_1)+s(\nu_2))}/{2})
\end{align*}
Since $w_\mu(C\cap B)>0$, Lemma \ref{lemma: entropy_concave} and the affine property of $h$ implies that 
\begin{align*}
    \ent(s_1) + \ent(s_2) = h(\mu) = h(\mu') = \int h(\nu) \, \dd w_\mu(\nu) < 2\,\ent( (s_1+s_2)/2).
\end{align*}
This completes the proof in the case that $w_\mu(C)>0$. 

Finally if $w_\mu(A) =0$ and $w_\mu(C)= 0$ for all faces of $\partial \mc O$, then $\mathfrak s_1, \mathfrak s_2$ must be supported in $\mc E$ (for example, $\mathfrak s_1$ could be supported at one vertex $v\in \partial \mc O$, and $\mathfrak s_2$ could be supported on the four edges of $\partial \mc O$ incident to $v$). Since $\ent\mid_{\mc E}\equiv 0$, this implies that $h(\mu_1) + h(\mu_2) = h(\mu) = 0$ and hence that $h(\mu_1)= h(\mu_2) = 0$. 

However by Lemma \ref{lemma: entropy_concave} and Theorem \ref{thm: extremal_entropy}, $\ent\mid_{\mc O\setminus \mc E}>0$. Therefore if $s_1,s_2\in \mc O\setminus \mc E$, then $\mu_1,\mu_2$ cannot be entropy maximizers in $\mc P^{s_1}$, $\mc P^{s_2}$. This completes the proof.
\end{proof}

With this we can strengthen Theorem \ref{theorem: entropy maximizer gibbs}. 

\begin{thm}\label{theorem: existence of gibbs}
	For every $s\in \text{Int}(\mathcal O)$, a measure $\mu\in \mathcal P^{s}$ satisfies $h(\mu) = \ent(s)$ if and only if $\mu$ is a convex combination of ergodic Gibbs measures of mean current $s$. In particular, if $\nu\in \mc P^s$ is an ergodic Gibbs measure, then $h(\nu) = \ent(s)$.
\end{thm}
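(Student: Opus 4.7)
The plan is to prove both implications simultaneously by analyzing the ergodic decomposition of an entropy maximizer in $\mc P^s$. First, I would take any $\mu\in\mc P^s$ with $h(\mu)=\ent(s)$ and invoke Theorem \ref{theorem: entropy maximizer gibbs} to conclude that $\mu$ is a Gibbs measure. Then by Proposition \ref{prop: ergodic deocomposition gibbs}, its ergodic decomposition $\mu=\int \nu\,\dd w_\mu(\nu)$ is supported on ergodic Gibbs measures. Since both the mean current $s(\cdot)$ and the specific entropy $h(\cdot)$ are affine in the measure, we have
\begin{align*}
s=\int s(\nu)\,\dd w_\mu(\nu),\qquad \ent(s)=h(\mu)=\int h(\nu)\,\dd w_\mu(\nu).
\end{align*}

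The crux is to show that $s(\nu)=s$ for $w_\mu$-almost every $\nu$. Using $h(\nu)\leq\ent(s(\nu))$ together with concavity of $\ent$ (Lemma \ref{lemma: entropy_concave}) and Jensen's inequality, one gets the chain
\begin{align*}
\ent(s)=\int h(\nu)\,\dd w_\mu(\nu)\leq\int \ent(s(\nu))\,\dd w_\mu(\nu)\leq\ent\!\left(\int s(\nu)\,\dd w_\mu(\nu)\right)=\ent(s),
\end{align*}
so every inequality is an equality. If the pushforward $\rho:=(s)_*w_\mu$ on $\mc O$ were not a point mass, I would split it as $\rho=\alpha\rho_1+(1-\alpha)\rho_2$ with $\alpha\in(0,1)$ and distinct barycenters $\bar s_1\neq\bar s_2$ satisfying $\alpha\bar s_1+(1-\alpha)\bar s_2=s$ (this is possible by choosing a coordinate direction in which $\rho$ has positive variance and splitting across a hyperplane through $s$). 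Concavity on each piece gives $\int\ent\,\dd\rho\leq\alpha\ent(\bar s_1)+(1-\alpha)\ent(\bar s_2)$, but since $s\in\text{Int}(\mc O)\subset\mc O\setminus\mc E$, strict concavity of $\ent$ at $s$ (Theorem \ref{theorem: entropy is strictly concave}) yields $\alpha\ent(\bar s_1)+(1-\alpha)\ent(\bar s_2)<\ent(s)$, contradicting Jensen equality. Hence $s(\nu)=s$ for $w_\mu$-a.e.~$\nu$, and then $h(\nu)=\ent(s)$ for $w_\mu$-a.e.~$\nu$ because $h(\nu)\leq\ent(s(\nu))=\ent(s)$ has $w_\mu$-average equal to $\ent(s)$.

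This argument produces at least one ergodic Gibbs measure of mean current $s$ achieving $h(\nu)=\ent(s)$, so by Corollary \ref{cor:entsame} \emph{every} EGM of mean current $s$ achieves $h=\ent(s)$; this proves the ``in particular'' clause. The forward direction is exactly what was shown above. For the reverse direction, suppose $\mu=\int\nu\,\dd w(\nu)$ with $w$ supported on EGMs of mean current $s$; then each such $\nu$ satisfies $h(\nu)=\ent(s)$ by the previous sentence, and by affineness of $h$ we get $h(\mu)=\int h(\nu)\,\dd w(\nu)=\ent(s)$. The main obstacle I anticipate is the Jensen step, where one must rule out the possibility that $w_\mu$ smears $s(\nu)$ across the boundary $\partial\mc O$; the splitting trick sidesteps this because the two resulting barycenters are merely points in $\mc O$, while the point $s$ at which strict concavity is applied lies safely in $\text{Int}(\mc O)$.
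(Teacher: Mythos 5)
Your proposal is correct and follows essentially the same route as the paper's proof: invoke Theorem \ref{theorem: entropy maximizer gibbs} to get the Gibbs property, pass to the ergodic decomposition (Proposition \ref{prop: ergodic deocomposition gibbs}), apply affineness of $h$ and $s$ together with Jensen and strict concavity of $\ent$ at $s\in\text{Int}(\mc O)$ to force $s(\nu)=s$ almost everywhere, and then use Corollary \ref{cor:entsame} to deduce that every EGM of mean current $s$ realizes $\ent(s)$. The one place you go beyond the paper is the ``splitting trick'' justifying strictness in Jensen's inequality when the pushforward $(s)_*w_\mu$ is not a point mass; the paper simply asserts that strict inequality holds when $s(\nu)$ is nonconstant, and your argument (split $\rho$ across a hyperplane through $s$, apply concavity on each half, then two-point strict concavity at $s$) is a clean way to make that assertion rigorous.
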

\begin{rem}
    In contrast, an EGM of mean current $s\in \partial \mc O$ can have any specific entropy between $0$ and $\ent(s)$, see Proposition \ref{prop: same current different entropy}. (Note however that for $s\in \mc E$, $\ent(s) = 0$.) All EGMs of mean current $s\in \text{Int}(\mc O)$ have the same specific entropy by Corollary \ref{cor:entsame}.
\end{rem}
\begin{proof}
Suppose $\mu\in \mathcal P^{s}$ maximizes entropy ($\mu$ exists by Lemma \ref{lemma: entropy maximiser}). By Theorem \ref{theorem: entropy maximizer gibbs}, $\mu$ is a Gibbs measure. Consider its ergodic decomposition
$$\mu=\int \nu \, \dd w_\mu(\nu).$$
Since ergodic components of Gibbs measures are Gibbs a.s., $w_\mu$ is a probability measure on ergodic Gibbs measures. Since $h(\cdot)$ is an affine function, it follows that
$$\ent(s) = h({\mu})=\int h({\nu})\,\dd w_\mu(\nu)\leq \int \ent({s(\nu)})\,\dd w_\mu(\nu).$$
Since $s\in \text{Int}(\mc O)$, by Theorem \ref{theorem: entropy is strictly concave} if $s(\nu)$ is not constant then the middle inequality below is strict:
$$ \ent( s) \leq \int \ent(s (\nu)) \, \dd w_\mu(\nu) < \ent \bigg( \int s (\nu) \, \dd w_\mu(\nu) \bigg) = \ent(s).$$ 
Therefore all ergodic components $\nu$ of $\mu$ must have $s(\nu) = s$ a.s., i.e., the support of $w_\mu$ is contained in the set of ergodic Gibbs measures of mean current $s$. 

By Corollary \ref{cor:entsame}, if $\nu_1,\nu_2$ are EGMs of the same mean current $s\in \text{Int}(\mc O)$, then $h(\nu_1)= h(\nu_2)$. Therefore if $\nu\in \mc P^s$ is an ergodic Gibbs measure, $\ent(s) = h(\nu)$. 
\end{proof}
 
From Theorem \ref{theorem: existence of gibbs} and Lemma \ref{lemma: entropy maximiser} for interior mean currents and the results of Section \ref{sec:extreme} for boundary ones, there exist ergodic Gibbs measures of all mean currents. 

\begin{cor}\label{cor: EGMs exist!}
For all $s\in\mathcal O$, there exists an ergodic Gibbs measure of mean current $s$.
\end{cor}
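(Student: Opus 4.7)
The plan is to partition $\mathcal O$ into three regions---the interior, the non-edge boundary, and the edges $\mathcal E$---and handle each separately using the main theorems of the preceding sections together with an explicit construction on $\mathcal E$.

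For $s\in\text{Int}(\mathcal O)$, Lemma \ref{lemma: entropy maximiser} produces some $\mu\in\mathcal P^{s}$ with $h(\mu)=\ent(s)$, and Theorem \ref{theorem: existence of gibbs} then shows that $\mu$ decomposes via its ergodic decomposition as a convex combination of ergodic Gibbs measures all of mean current $s$. The ergodic decomposition has nonempty support, so at least one such EGM exists.

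For $s\in\partial\mathcal O\setminus\mathcal E$, Corollary \ref{Corollary: entropy maximiser} already produces the desired EGM explicitly: it is the $\threeeven$-invariant measure whose marginal on each slab $L_c$ is i.i.d.\ distributed as the unique ergodic Gibbs lozenge-tiling measure of slope $(|s_1|,|s_2|,|s_3|)$, whose existence is classical 2D input.

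The remaining case is $s\in\mathcal E$. If $s$ is a vertex of $\mathcal O$, the atomic measure on the corresponding brickwork tiling is trivially an EGM. Otherwise, after relabeling coordinates, $s=(a,1-a,0)$ with $a\in(0,1)$, and tilings of this mean current use only $+\eta_1$ and $+\eta_2$ tiles. These decompose iteratively---first into $2$D slabs $L_c$ (as in Section \ref{sec:extreme}) and then, within each slab, into non-intersecting infinite $1$D strips, each of which admits only two perfect matchings (all $+\eta_1$ or all $+\eta_2$). Assigning each strip independently to be all $+\eta_1$ with probability $a$ and all $+\eta_2$ with probability $1-a$ gives a $\threeeven$-invariant ergodic measure of mean current $s$. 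The main obstacle is to check that this Bernoulli-on-strips measure is Gibbs in the uniform sense of Section \ref{sec::prelim}; this reduces to showing that tilings of $\m Z^3$ using only $+\eta_1$ and $+\eta_2$ tiles are globally frozen, i.e.\ admit no finite-support perturbation to any other dimer tiling of $\m Z^3$. I plan to deduce this from cycle-closure: the symmetric difference of two such tilings is a disjoint union of alternating cycles, and on each cycle the identity $\sum \alpha_i = \sum \beta_j$ (with $\alpha_i$ the $\tau$-edges, which lie in $\{+\eta_1,+\eta_2\}$, and $\beta_j$ the $\tau'$-edges, together with the $L^1$ triangle inequality forcing each $\beta_j$ also into $\{+\eta_1,+\eta_2\}$) projects to a walk on $\m Z$ with $\pm 1$ steps that must simultaneously return to $0$ and visit $k$ distinct intermediate sites---impossible for any $k\geq 1$. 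Hence no such finite perturbation exists, so any $\threeeven$-invariant measure on such tilings is automatically uniform Gibbs, completing the construction.
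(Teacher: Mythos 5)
Your proof is correct and follows the same overall decomposition as the paper's one-sentence proof: interior mean currents via Lemma \ref{lemma: entropy maximiser} and Theorem \ref{theorem: existence of gibbs}, and boundary mean currents via Section \ref{sec:extreme}. What you add is a careful treatment of $s\in\mathcal E$, which the paper's proof leaves implicit (the cited Corollary \ref{Corollary: entropy maximiser} only explicitly constructs an EGM when $s_1 s_2 s_3\neq 0$, and for $s\in\mathcal E$ merely records that $h(\mu)=0$). The Bernoulli-on-strips construction you give is the natural one, and your frozenness argument supplies a detail that is genuinely needed: the paper's reasoning in Section \ref{sec:extreme} for why N/E-only tilings of $\m Z^2$ are frozen invokes two-dimensional local move connectedness, which is not available in $\m Z^3$. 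Your $L^1$-equality step correctly forces every $\tau'$-edge of a hypothetical alternating cycle into $\{+\eta_1,+\eta_2\}$ (equality in the coordinatewise triangle inequality pins each coordinate's sign and kills the third coordinate), after which the whole cycle is confined to a single one-dimensional strip $\{x: x_1+x_2\in\{c,c+1\},\ x_3=h\}$, an induced path graph of $\m Z^3$ in the $\{\eta_1,\eta_2\}$ edges; as a tree it admits no nontrivial cycle, giving the contradiction. The final ``walk on $\m Z$'' phrasing is a little terse---the clean statement is that the strip is a path graph, hence acyclic---but the idea is sound, and frozen tilings trivially support Gibbs measures (the conditional law given the exterior is a point mass, hence uniform). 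One small point worth verifying explicitly in a writeup: ergodicity of the i.i.d.\ strip assignment under the $\threeeven$-action follows because the strips are indexed by a copy of $\m Z^2$ on which an index-two subgroup of $\threeeven$ acts by translations, and any Bernoulli system is ergodic under any infinite subgroup action.
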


\begin{rem}
    In two dimensions, there exists a \textit{unique} ergodic Gibbs measure of every interior slope. Uniqueness of EGMs for interior mean currents is open problem, see Problem \ref{prob: unique EGM} and the related Problem \ref{prob: infinite paths}. 
\end{rem}

\section{Free-boundary tilings, asymptotic flows, and Wasserstein distance}\label{sec:asympflows}

This section sets up some of the key function-theoretic preliminaries for the large deviation principle in Section \ref{sec:ldp}. Some parts of this section are technical and can be skipped on a first reading, e.g.\ the sections about boundary values of flows. Section~\ref{subsection: properties of Ent}, where we study the entropy function $\Ent$, primarily requires just the definition of asymptotic flows, and one could skip to Section~\ref{subsection: properties of Ent} after reading the definition of asymptotic flows.

\termindex{Chapter 5!domain}\termindex{Chapter 5!domain}A domain is an open subset of $\R^3$. Let $R\subset \m R^3$ be a compact region which is the closure of a connected domain and has piecewise smooth boundary $\partial R$\symindex{Chapter 5!$R$ - domain}\symindex{Chapter 5!$\partial R$ - boundary of the domain}. We say that a grid region $G$ is \textit{scale $n$}\termindex{Chapter 5!grid region of scale $n$} if $G\subseteq \frac{1}{n} \m Z^3$. If $R_n$ is a scale $n$ grid region, then with a slight abuse of notation we say that $R_n\supseteq R$ if the collection of $\frac{1}{n}$-width cubes centered at points in $R_n$ contains $R$. If $\tau$ is a tiling of $R_n$\termindex{Chapter 5!scale $n$ grid regions contained in domains: $R\subset R_n$}, we define the restriction of $\tau$ to $R$, denoted $\tau_R$, to be the collection of tiles from $\tau$ which intersect $R$.\symindex{Chapter 5!$\tau_R$ - collection of tiles intersecting $R$}
\begin{definition}\label{def:free_boundary_tilings}
    The \textit{free-boundary tilings of $R$ at scale $n$}\termindex{Chapter 5!free boundary tilings of $R$ at scale $n$}\symindex{Chapter 5!$T_n(R)$ - free boundary tilings at scale $n$} are
    \begin{align*}
        T_n(R) := \bigcup_{R_n \supseteq R} \{\tau_R : \tau \text{ is a tiling of }R_n \}.
    \end{align*}
    The \textit{free-boundary tiling flows on $R$ at scale $n$} are\termindex{Chapter 5!free boundary tiling flows on $R$ at scale $n$}\symindex{Chapter 5!$TF_n(R)$ - free-boundary tiling flows on $R$ at scale $n$}
    \begin{align*}
        TF_n(R) := \{f_\tau : \tau\in T_n(R)\}.
    \end{align*}
    Finally, we define \termindex{Chapter 5!free boundary tilings of $R$} the space of all free-boundary tiling flows on $R$ to be $TF(R) := \bigcup_{n\geq 1} TF_n(R)$. 
    
    The edges in $\frac{1}{n} \m Z^3$ have length $\frac1n$. To ensure that the total flow of a tiling flow is roughly constant in $n$, we need the flow per edge of $f_\tau\in TF_n(R)$ to be of order $\frac{1}{n^3}$. We can achieve that by rescaling the flow by a factor of $n^3$ so that it has magnitude $\frac{5}{6n^3}$ on each matched edge and $\frac{1}{6n^3}$ on each unmatched edge.
\end{definition}
\begin{rem}
Note that $TF_n(R)$ may contain elements that do not arise as restrictions of tilings of all of $\frac{1}{n} \m Z^3$ to $R$. That is, there may be free-boundary tilings of $R$ that cover $R$ but do not extend to tilings of all of $\m Z^3$. (These might exist, for example, if $R$ is a concave region.)
\end{rem}

We define a metric on flows (Section \ref{section: wasserstein for tiling flows}), denoted $d_W$, which is an adaptation of \textit{generalized Wasserstein distance} from signed measures to flows. Intuitively we want to consider two flows $f,g$ ``close" if we don't have to change the flow of $f$ too much---either by moving flow over, or by adding or deleting it---to transform it into $g$. This is what $d_W(f, g)$ will measure. In terms of this metric, the main question of this section is: if $f_n \in TF_n(R)$ for all $n\in \m N$, what are the possible limits of the form $\lim_{n\to \infty} f_n$? 

We show (Theorem \ref{thm:formal_fine_mesh}) that any fine-mesh limit of free-boundary tiling flows on $R$ is an \textit{asymptotic flow on $R$}, defined by:
\begin{definition}\termindex{Chapter 5!asymptotic flow}
	Let $R$ be a compact region which is the closure of a connected domain and has piecewise smooth boundary. We say that $f$ is an \textit{asymptotic flow on $R$} if it satisfies the following properties:
\begin{itemize}
    \item $f$ is a Borel-measureable vector field with support contained in $R$; 
    \item $f$ is valued in $\mathcal{O}$ (since $f$ is measurable, this means that $f$ is valued in $\mathcal{O}$ Lebesgue-a.e.);
    \item $f$ is divergence-free in the interior of $R$, i.e.\ $\text{div}\, f = 0$ as a distribution (so for any smooth function $\phi$ compactly supported in the interior of $R$, $\int_R \phi\, \text{div}\,f := \int_R \nabla \phi\, \cdot\, f = 0$.) \termindex{Chapter 5!measurable divergence-free vector field}
\end{itemize}
We denote the set of asymptotic flows on $R$ \symindex{Chapter 5!$AF(R)$ - the space of asymptotic flows} by $AF(R)$.
\end{definition}
In Theorem \ref{thm:AF_compact} we will show that $(AF(R),d_W)$ is a compact metric space. In Sections \ref{sec: boundary values of asymptotic flows} and \ref{sec: boundary values of tiling flows}, we define a boundary value operator $T$ (\textit{trace operator}) which takes a flow to its boundary value on $\partial R$. In fact we do something more general, and define the trace of a flow for any compact, piecewise smooth surface contained in $R$. After defining $T$ for asymptotic flows, we define the space of asymptotic flows with boundary value $b$, denoted $AF(R,b)$, and show that it is compact with respect to $d_W$ (Corollary \ref{cor: AF(R,b) compact}).

The boundary value operator is defined in different but analogous ways for asymptotic flows (Section \ref{sec: boundary values of asymptotic flows}) and tiling flows (Section \ref{sec: boundary values of tiling flows}). The main essential result about $T$ is that these definitions are compatible and that $T$ is uniformly continuous (Theorem \ref{thm: boundary_value_uniformly_continuous}). 

We remark that the main important property of Wasserstein distance in our analysis is that it metrizes weak convergence, and that it therefore formalizes the intuitive notion that the scaling limits of tiling flows are asymptotic flows, and that boundary values depend continuously on the flow. While the Wasserstein metric and other transportation metrics have a number of additional special properties, we do not use this theory here. All the properties of the Wasserstein metric that we use are described in Section \ref{sec:wass_background}.

Finally in Section \ref{subsection: properties of Ent}, we leverage properties of $\ent$ from the previous section to study the entropy functional $\Ent: AF(R) \to [0,\infty)$ on asymptotic flows given by 
\begin{align*}
    \Ent(f) = \frac{1}{\Vol(R)} \int_{R} \ent(f(x)) \, \dd x.
\end{align*}
The rate function for the large deviation principle in Section \ref{sec:ldp} will be $-\Ent$ (up to an additive constant). Using the properties of $\ent$ from the previous section, we show that $\Ent$ is upper semicontinuous in the Wasserstein topology (Proposition \ref{proposition: Upper semicontinuous}) and strictly concave on the subspace of flows which never take values in the edges $\mc E\subset \partial \mc O$ (Corollary \ref{cor: Ent_concave}). After that, we adapt an argument of V.~Gorin \cite{gorin2021lectures} to show that there is a unique $\Ent$ maximizer in $AF(R,b)$ for any boundary asymptotic flow $b$ under the mild condition that $(R,b)$ is \textit{semi-flexible} (Definition \ref{def: flexible}, Theorem \ref{thm: unique maximizer}).

\subsection{Background on (generalized) Wasserstein distance}\label{sec:wass_background}

The original \textit{Wasserstein distance} or \textit{earth-movers distance} is a metric on probability measures on a fixed metric space. Suppose that $(X,d)$ is a compact, separable metric space. The $L^1$ Wasserstein distance $W_1$ is a metric on $\mathcal{P}(X)$, the space of probability measures on $X$ and is given by
\begin{align*}
    W_1(\mu,\nu) := \inf_{\gamma\in \Gamma(\mu,\nu)} \int_{X\times X} d(x,y) d\gamma(x,y) 
\end{align*}
where $\Gamma(\mu,\nu)$ is the collection of all couplings of $\mu$ and $\nu$. Intuitively $W_1$ measures the cost---i.e.\ how much mass and how far it has to be moved---required to transform $\mu$ into $\nu$ by redistributing the mass of $\mu$. This metric was developed in the theory of optimal transport and has been applied in many different contexts including probability, Riemannian geometry, and image processing. For more see \cite{Villani2009}. 

We will define and use versions of Wasserstein distance for flows and their boundary values. In the next section, we define a mapping between flows and measures, where flow (with direction) corresponds to mass (with sign). The measures corresponding to flows do not necessarily have the same total mass and can be signed. Given this, our Wasserstein distance on flows will be based on a version of \textit{generalized Wasserstein distance}.\termindex{Chapter 5!generalized Wasserstein distance}

Let $\mc M(\m R^d)$ denote the space of Radon measures on $\m R^d$ with finite total mass. In \cite{Piccoli2014-vy} and \cite{Piccoli2016-fe}, they define a generalized Wasserstein distance on $\mc M(\m R^d)$ by introducing an $L^1$ cost for adding and deleting mass.  It is denoted $W_1^{1,1}$ and defined as 
 \begin{align*}
        W_1^{1,1}(\mu,\nu) = \inf_{\tilde{\mu},\tilde{\nu}} |\mu - \tilde{\mu}| + |\nu-\tilde{\nu}| + W_1(\tilde{\mu},\tilde{\nu})
\end{align*}
where the infimum is taken over $\mc M(\m R^d)$. 

In \cite{Ambrosio} the $L^1$ Wasserstein distance was generalized to signed probability measures. This metric is denoted $\m W_1$. If $\mu,\nu$ are signed measures with Jordan decompositions $\mu = \mu_+-\mu_-$ and $\nu = \nu_+-\nu_-$, then \symindex{Chapter 5!$\m W_1^{1,1}$ - generalized Wasserstein distance between signed measures}
\begin{align*}
    \m W_1(\mu,\nu) = W_1(\mu_+ + \nu_-,  \nu_+ + \mu_- ).
\end{align*}
In fact, note that this definition does not depend on the decomposition of the measures $\mu,\nu$. 

In \cite{piccoli_signed}, they combine these to give a definition of Wasserstein distance for signed measures of different masses. Let $\mc M^s(\m R^d)$ denote the space of signed Radon measures on $\m R^d$, i.e.\ measures $\mu$ that can be written $\mu_+-\mu_-$ for $\mu\pm\in \mc M(\m R^d)$. Denoted $\m W_1^{1,1}$, the generalized Wasserstein distance on $\mc M^s(\m R^d)$ is defined
\begin{align*}
    \m W_1^{1,1}(\mu,\nu) = W_1^{1,1}(\mu_+ + \nu_-,  \nu_+ + \mu_- ).
\end{align*}
This is the definition of Wasserstein distance that we will use in this paper. We note a few important facts about $\m W_1^{1,1}$ that we will use. 

\begin{lemma}[See {\cite[Lemma 18]{piccoli_signed}}]\label{lemma: wasserstein mass shift}
    If $\mu,\nu,\rho\in \mc M^s(\m R^d)$, then 
    \begin{align*}
        \m W_1^{1,1}(\mu,\nu) = \m W_1^{1,1}(\mu+\rho,\nu+\rho). 
    \end{align*}
\end{lemma}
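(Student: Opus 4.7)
My plan is to deduce the identity from the Kantorovich--Rubinstein duality for $\m W_1^{1,1}$, after which the translation invariance becomes essentially tautological. The key ingredient is the dual representation
\begin{equation*}
\m W_1^{1,1}(\mu, \nu) = \sup \left\{ \int \phi \, d(\mu - \nu) \, : \, \phi : \m R^d \to \m R, \ \phi \text{ is } 1\text{-Lipschitz and } \|\phi\|_\infty \leq 1 \right\},
\end{equation*}
valid for signed Radon measures of finite total variation. This is the signed-measure extension of the Piccoli--Rossi dual characterization of $W_1^{1,1}$ on nonnegative measures, obtained by applying the nonnegative dual to the identity $\m W_1^{1,1}(\mu, \nu) = W_1^{1,1}(\mu_+ + \nu_-, \mu_- + \nu_+)$ together with the observation that $(\mu_+ + \nu_-) - (\mu_- + \nu_+) = \mu - \nu$.

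Granting this dual characterization, the lemma is immediate: since $(\mu + \rho) - (\nu + \rho) = \mu - \nu$ as signed measures, the functional $\phi \mapsto \int \phi \, d\bigl((\mu + \rho) - (\nu + \rho)\bigr)$ coincides with $\phi \mapsto \int \phi \, d(\mu - \nu)$, so their suprema over the same class of admissible $\phi$ agree.

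The main work lies in establishing the dual representation itself; Piccoli and Rossi prove this for $W_1^{1,1}$ on nonnegative measures, and the extension to signed measures follows by manipulation of Jordan decompositions. A purely primal proof of the lemma---directly comparing the two sides using their defining infima over nonnegative competitors---is possible but considerably more intricate, because the Jordan decomposition of $\mu + \rho$ can differ from the naive non-Jordan decomposition $(\mu_+ + \rho_+) - (\mu_- + \rho_-)$ whenever $\mu$ and $\rho$ have overlapping positive and negative parts. One would then need to extract a common $\rho$-contribution from a near-optimal transport plan for $(\mu + \rho, \nu + \rho)$ and realign it to the diagonal without increasing the transport cost, which is exactly the structural manipulation of optimal plans that the dual approach lets us sidestep. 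The dual representation bypasses this bookkeeping entirely by reducing the lemma to the trivial identity $(\mu + \rho) - (\nu + \rho) = \mu - \nu$, which is why I would organize the argument around duality.
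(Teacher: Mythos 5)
The paper does not prove this lemma; it is imported wholesale from Piccoli and Rossi (the citation \texttt{[Lemma 18]\{piccoli\_signed\}} is the entire content), so there is no ``paper's own proof'' to compare against. Your argument is mathematically correct as far as it goes: Proposition~\ref{prop: dual definition wasserstein} in the paper (their Proposition~23) gives exactly the Kantorovich--Rubinstein dual form you invoke, and since $(\mu+\rho)-(\nu+\rho)=\mu-\nu$ as signed measures, the supremum defining $\m W_1^{1,1}(\mu+\rho,\nu+\rho)$ is term-by-term identical to the one defining $\m W_1^{1,1}(\mu,\nu)$. Your remarks about the primal route are also accurate: after using the decomposition-independence of the definition to reduce to $W_1^{1,1}(a+\sigma,b+\sigma)=W_1^{1,1}(a,b)$ for nonnegative $a,b,\sigma$, the inequality $\leq$ is a one-liner (pad the optimal plan with the diagonal coupling of $\sigma$), but $\geq$ requires peeling $\sigma$ out of a near-optimal transport for $(a+\sigma,b+\sigma)$ and realigning it to the diagonal, which is the bookkeeping you correctly flag.

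One caveat worth noting explicitly: in the source reference the numbering runs Lemma~18 before Proposition~23, which strongly suggests that Piccoli and Rossi use the translation-invariance lemma as a stepping stone toward the dual characterization, not the reverse. So your argument is a valid derivation within the logical scope of the present paper (which takes both as black boxes), but it would likely be circular if offered as a from-scratch proof in the original reference's development. If the goal were a self-contained proof here, the primal route you sketch and dismiss is probably the honest one; if the goal is merely to see why the lemma is forced given the other imported result, your duality observation is clean and correct.
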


\begin{prop}[See {\cite[Proposition 23]{piccoli_signed}}]\label{prop: dual definition wasserstein} 
Let 
    \begin{align*}
        \mc C_b^{0,Lip} = \{f : \m R^d\to \m R ~:~ f\text{ continuous, bounded, Lipschitz}\}.
    \end{align*} 
    Then
        \begin{align*}
            \m W_1^{1,1}(\mu,\nu) = \sup \bigg\{ \int_{\m R^d} \varphi\, \dd (\mu-\nu) ~:~ \varphi\in \mc C_b^{0,Lip}, \norm{\varphi}_{\infty} \leq 1, \norm{\varphi}_{Lip}\leq 1\bigg\}.
        \end{align*}
\end{prop}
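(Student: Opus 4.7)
The plan is to reduce the statement to the classical Kantorovich-Rubinstein duality for $W_1$ via the layered definitions of $\m W_1^{1,1}$ and $W_1^{1,1}$. By the definitions recalled just before the proposition, $\m W_1^{1,1}(\mu,\nu) = W_1^{1,1}(\mu_+ + \nu_-, \nu_+ + \mu_-)$, so setting $\alpha := \mu_+ + \nu_-$ and $\beta := \nu_+ + \mu_-$ (both positive measures in $\mc M(\m R^d)$) gives $\alpha - \beta = \mu - \nu$ as signed measures; it therefore suffices to establish the duality formula for $W_1^{1,1}(\alpha,\beta)$ on positive measures, and then transport it back through the identity $\mu - \nu = \alpha - \beta$.

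For the inequality $\sup_\varphi \int \varphi \, d(\alpha-\beta) \leq W_1^{1,1}(\alpha,\beta)$, fix any admissible $\varphi$ with $\norm{\varphi}_\infty \leq 1$ and $\norm{\varphi}_{Lip} \leq 1$, and any admissible pair $(\tilde\alpha,\tilde\beta)$ with $\tilde\alpha \leq \alpha$, $\tilde\beta \leq \beta$, and $|\tilde\alpha|=|\tilde\beta|$. Decompose
\[
\int \varphi \, d(\alpha-\beta) = \int \varphi \, d(\tilde\alpha-\tilde\beta) + \int \varphi \, d(\alpha-\tilde\alpha) - \int \varphi \, d(\beta-\tilde\beta).
\]
The classical Kantorovich-Rubinstein identity bounds the first term by $W_1(\tilde\alpha,\tilde\beta)$ (using equal mass), and $\norm{\varphi}_\infty \leq 1$ bounds the other two by $|\alpha-\tilde\alpha|$ and $|\beta-\tilde\beta|$ respectively. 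Taking infimum over $(\tilde\alpha,\tilde\beta)$ and then supremum over $\varphi$ finishes this direction.

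The reverse inequality is the main obstacle, because the Kantorovich dual potential $\psi^*$ attaining $W_1(\tilde\alpha^*,\tilde\beta^*)$ for an optimal pair $(\tilde\alpha^*,\tilde\beta^*)$ is only $1$-Lipschitz; its sup-norm is not automatically controlled, so it is not a priori admissible. The key structural fact I would exploit is that any optimizer of $W_1^{1,1}$ transports mass at most distance $2$: moving a unit of mass a distance greater than $2$ at unit cost per unit length (contributing to $W_1(\tilde\alpha,\tilde\beta)$) is strictly worse than deleting it at the source (cost $1$, contributing to $|\alpha-\tilde\alpha|$) and recreating it at the target (cost $1$, contributing to $|\beta-\tilde\beta|$). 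Consequently, the relevant optimal potential can be chosen with oscillation at most $2$ on each connected component of the region where transport actually occurs. Shifting by a constant $c$ on each such component to center its image in $[-1,1]$ and defining $\varphi := \max(-1,\min(1,\psi^* - c))$ produces an admissible function that recovers $W_1^{1,1}(\alpha,\beta)$ when integrated against $\alpha-\beta$: the truncation preserves the Lipschitz constant, the shift does not affect $\int \varphi \, d(\tilde\alpha^* - \tilde\beta^*)$ since $\tilde\alpha^*,\tilde\beta^*$ have equal mass, and on the complementary ``deleted/created'' support, $|\varphi| = 1$ matches the creation/deletion cost exactly.

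The hard part is making this truncation and patching rigorous when the supports decompose into many disjoint components, which requires careful matching of the constants $c$ on each component and verification that $\varphi$ remains globally $1$-Lipschitz. An alternative that sidesteps truncation entirely is to apply Fenchel-Rockafellar duality directly to the convex program defining $W_1^{1,1}$, choosing the dual space to be $\mc C_b^{0,Lip}$ with the norm $\max(\norm{\cdot}_\infty, \norm{\cdot}_{Lip})$, whose unit ball is exactly the admissible class of test functions; this route avoids truncation at the cost of checking the requisite topological and conjugate-duality hypotheses. Either way, the bounded-displacement property of optimal partial transport plans is the essential analytic input.
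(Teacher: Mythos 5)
The paper does not supply its own proof of this proposition---it is cited verbatim from Piccoli, Rossi, and Tournus \cite{piccoli_signed}---so there is no internal argument to compare against. I will therefore evaluate your proposal on its own terms.

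Your easy direction is correct and complete. The decomposition of $\int\varphi\,\dd(\alpha-\beta)$ into a balanced partial-transport term and two deletion/creation terms is exactly the right move, and the three estimates (Kantorovich--Rubinstein for the balanced pair, $\norm{\varphi}_\infty\leq 1$ for the other two) close that half of the argument cleanly.

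The gap is in the reverse inequality, and you have correctly named it yourself. The bounded-displacement observation---an optimal plan never transports a unit of mass farther than $2$, since deletion plus creation costs exactly $2$---is correct and is indeed the essential structural input. But the step from ``each transport ray has length $\leq 2$'' to ``a Kantorovich potential for the partial-transport subproblem can be centered and truncated into an admissible test function attaining the value'' does not follow without more work. Individual rays are short, but the potential is pinned down only along rays and modulo constants; gluing local shifts across nearby components can break global $1$-Lipschitzness, and on a connected piece of the transport support the oscillation is controlled only by a further argument showing chains of rays cannot accumulate length beyond $2$ (itself a rerouting argument that you would need to prove, not a consequence of the single-ray bound). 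The clean repair, which is essentially what the cited source does, is to replace the cost $|x-y|$ in the primal by $\min(|x-y|,2)$---justified exactly by your bounded-displacement fact---and to handle unequal total masses by adjoining one auxiliary point at distance $1$ from every point of $\m R^d$. Ordinary Kantorovich duality for the resulting balanced problem then produces a dual potential that automatically satisfies $\norm{\varphi}_{Lip}\leq 1$ and has global oscillation at most $2$; a single centering shift gives $\norm{\varphi}_\infty\leq 1$. Your Fenchel--Rockafellar alternative is also viable but still requires verifying the inf-compactness and continuity hypotheses, which is not lighter than the truncated-cost route. As written, the proposal is a correct outline identifying the right mechanism, but it is not yet a complete proof.
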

From this it clearly follows that
\begin{cor}\label{cor: weak convergence wasserstein}
If $\lim_{n\to \infty} \m W_1^{1,1}(\mu_n,\mu) = 0$, then $\mu_n$ converges weakly to $\mu$. 
\end{cor}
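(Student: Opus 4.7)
The statement will follow almost directly from the dual formula in Proposition \ref{prop: dual definition wasserstein} together with a scaling argument. The first step: for any nonzero $\varphi \in \mc C_b^{0,Lip}(\m R^d)$, let $M = \max(\|\varphi\|_\infty, \|\varphi\|_{Lip})$. Then $\psi := \varphi/M$ satisfies $\|\psi\|_\infty \le 1$ and $\|\psi\|_{Lip}\le 1$, so applying Proposition \ref{prop: dual definition wasserstein} to both $\psi$ and $-\psi$ gives
\[
\left|\int_{\m R^d} \varphi\, d(\mu_n-\mu)\right| \le M\cdot \m W_1^{1,1}(\mu_n,\mu) \xrightarrow{n\to\infty} 0.
\]
Hence $\int \varphi\, d\mu_n \to \int \varphi\, d\mu$ for every bounded Lipschitz test function $\varphi$, which is already weak convergence in the sense of the bounded-Lipschitz (Fortet--Mourier) metric.

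If the stronger sense of weak convergence against all $\varphi \in C_b(\m R^d)$ is intended, I would combine the step above with a standard tightness-plus-density argument. Taking $\varphi\equiv 1$ (which has $\|\cdot\|_\infty=1$ and Lipschitz constant $0$) in the dual formula controls $|\mu_n(\m R^d)-\mu(\m R^d)|$; testing against smooth Lipschitz cutoffs supported outside a ball of radius $R$ then yields uniform tightness of the Jordan parts $\mu_n^{\pm}$ together with a uniform bound on the total variations $\|\mu_n\|_{TV}$. Since bounded Lipschitz functions are uniformly dense in $C_b$ on any compact set (e.g.\ by convolution with a smooth mollifier followed by a truncation), for each $\varphi \in C_b$ and each $\varepsilon>0$ one chooses a compact $K$ capturing all but $\varepsilon$ of $|\mu_n|$ and $|\mu|$ and a bounded Lipschitz $\tilde\varphi$ with $\|\varphi-\tilde\varphi\|_{L^\infty(K)}<\varepsilon$, and uses the splitting
\[
\left|\int \varphi\, d(\mu_n-\mu)\right| \le \left|\int \tilde\varphi\, d(\mu_n-\mu)\right| + \|\varphi-\tilde\varphi\|_{L^\infty(K)}\bigl(\|\mu_n\|_{TV}+\|\mu\|_{TV}\bigr) + 2\|\varphi\|_\infty\cdot 2\varepsilon.
\]
The first term tends to $0$ by the first paragraph, and the other two are $O(\varepsilon)$.

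No step in this argument is delicate; the only real content is the one-line scaling step, which is presumably why the authors wrote that the result ``clearly follows.'' In the applications of this corollary throughout the paper, the measures correspond to tiling or asymptotic flows supported in a fixed compact region $R\subset\m R^3$, so tightness is automatic, total variations are trivially uniformly bounded, and no approximation step is needed at all.
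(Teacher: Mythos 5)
Your first paragraph is exactly the content the paper intends: the dual formula of Proposition \ref{prop: dual definition wasserstein}, applied to $\pm\varphi/M$ with $M=\max(\|\varphi\|_\infty,\|\varphi\|_{Lip})$, gives $\bigl|\int\varphi\,\dd(\mu_n-\mu)\bigr|\le M\,\m W_1^{1,1}(\mu_n,\mu)\to 0$ for every bounded Lipschitz $\varphi$. The paper's proof is literally just ``clearly follows,'' and this is what is being asserted; it is also all that is ever used (testing against $C^\infty$ or Lipschitz functions on a compact region, e.g.\ in Lemma \ref{lem:boundedlimits}, Proposition \ref{prop:div_free}, Theorem \ref{thm:formal_fine_mesh}).

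Your middle paragraph, however, contains a genuine error. For signed measures, $\m W_1^{1,1}$ convergence does \emph{not} control tightness of the Jordan parts $\mu_n^\pm$, nor the total variations $\|\mu_n\|_{TV}$. Testing against a Lipschitz cutoff $\chi_R$ supported outside $B_R$ only bounds $\int \chi_R\,\dd\mu_n = \int\chi_R\,\dd\mu_n^+ - \int\chi_R\,\dd\mu_n^-$, and this difference being small tells you nothing about either summand; there can be arbitrarily large cancelling masses at nearby points. Concretely, take $\mu_n = n\delta_0 - n\delta_{1/n^2}$ (even supported in $[0,1]$, so compactness does not save you): then $\m W_1^{1,1}(\mu_n,0)\le 1/n\to 0$ while $\|\mu_n\|_{TV}=2n\to\infty$, and for $\varphi(x)=\sqrt{x}\in C_b([0,1])$ one gets $\int\varphi\,\dd\mu_n = -1\not\to 0$. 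So $C_b$-weak convergence genuinely fails for general signed measures under $\m W_1^{1,1}$; this is exactly the ``slightly stranger behavior'' the paper flags via \cite[Remark 26]{piccoli_signed} immediately after stating the corollary. The corollary should therefore be read as asserting convergence against bounded Lipschitz test functions (equivalently, in the distributional sense), not against all of $C_b$.

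Your closing remark that the extra machinery is unnecessary in the applications is the right instinct but overstated as written: it is not merely compact support that makes things work, but the fact that the measures in question (component measures of tiling or asymptotic flows, or traces on surfaces) have densities uniformly bounded by $O(1)$ with respect to a fixed reference measure, which \emph{does} force uniform total-variation bounds; with that in hand the density argument you sketch is fine.
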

The non-signed generalized Wasserstein distance $W^{1,1}_1$ metrizes weak convergence for tight sequences of measures in $\mc M(\m R^d)$ \cite[Theorem 13]{Piccoli2014-vy}. The original $L^1$ Wasserstein distance for probability measures metrizes weak convergence under an additional moment condition \cite[Theorem 6.9]{Villani2009}. With signed measures, slightly stranger behavior can occur in general, see e.g.\ \cite[Remark 26]{piccoli_signed}. However we show that the Wasserstein distance for flows defined below does metrize weak convergence, see Remark  \ref{rem:weak convergence for signed +}.

For $R\subset \m R^d$, we define $\mc M (R)$\symindex{Chapter 5!$\mc M (R),\mc M^s(R),M_{\text{ac}}^s(R,a,b)$,$\mc M^s_{\text{ac}}(R)$ - Various spaces of measures on $R$} to be the Radon measures supported in $R$, and $\mc M^s(R)$ to be signed Radon measures supported in $R$. We let $\mc M_{\text{ac}}(R)$ (resp.\ $\mc M^s_{\text{ac}}(R)$) denote the Radon measures (resp.\ signed Radon measures) supported in $R$ and absolutely continuous with respect to Lebesgue measure on $\m R^d$. We say that $\mc M_{\text{ac}}^s(R,a,b)$ denotes absolutely continuous signed measures with densities valued between $a$ and $b$. By Lemma \ref{lemma: wasserstein mass shift}, for any $a<0$ and $b>0$,
\begin{equation}\label{eq: mass shift isomorphism}
    (\mc M_{\text{ac}}^s(R,a,b), \m W_1^{1,1}) \cong     (\mc M_{\text{ac}}(R,0,b-a), W_1^{1,1})
\end{equation}
as metric spaces. This identification has some useful consequences. From Equation \eqref{eq: mass shift isomorphism} and {\cite[Proposition 15]{Piccoli2014-vy}} it follows that 
\begin{prop}\label{prop: wass compactness bounded ac measures}
    If $R$ is compact, then $ (\mc M_{\text{ac}}^s(R,a,b), \m W_1^{1,1})$ is a compact metric space for $a,b\in \m R$.
\end{prop}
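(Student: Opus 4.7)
The plan is to reduce the signed statement directly to the already-cited unsigned compactness result via the mass-shift isometry \eqref{eq: mass shift isomorphism}. Fix $a,b\in\mathbb R$ (without loss of generality $a<b$), let $\lambda_R$ denote Lebesgue measure restricted to $R$, and consider the map
\begin{align*}
    \Phi: \mc M_{\text{ac}}^s(R,a,b) \longrightarrow \mc M_{\text{ac}}(R,0,b-a), \qquad \Phi(\mu) := \mu - a\,\lambda_R.
\end{align*}
Since $R$ is compact, $\lambda_R\in\mc M(\mathbb R^d)$, so $\Phi$ is well-defined: if the density of $\mu$ takes values in $[a,b]$ Lebesgue-a.e., the density of $\Phi(\mu)$ takes values in $[0,b-a]$ Lebesgue-a.e. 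The map $\Phi$ is clearly a bijection, with inverse $\nu\mapsto \nu+a\lambda_R$. By Lemma \ref{lemma: wasserstein mass shift} applied with $\rho=-a\lambda_R$, for all $\mu,\nu\in\mc M_{\text{ac}}^s(R,a,b)$,
\begin{align*}
    \mathbb W_1^{1,1}(\mu,\nu) = \mathbb W_1^{1,1}(\mu-a\lambda_R,\, \nu-a\lambda_R) = W_1^{1,1}(\Phi(\mu),\Phi(\nu)),
\end{align*}
where the last equality uses that $\Phi(\mu),\Phi(\nu)$ are nonnegative, so $\mathbb W_1^{1,1}$ coincides with $W_1^{1,1}$ on them. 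Thus $\Phi$ is an isometry.

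Next, I would invoke the cited Piccoli--Rossi result \cite[Proposition 15]{Piccoli2014-vy}, which asserts that $(\mc M_{\text{ac}}(R,0,b-a),W_1^{1,1})$ is compact. (The essence is that measures in this set have total mass bounded by $(b-a)\,\mathrm{Vol}(R)$ and are all supported in the compact set $R$, hence tight with uniformly bounded mass; Prokhorov gives weak sequential compactness, and on such a set $W_1^{1,1}$ metrizes weak convergence.) Since compactness is preserved under isometric bijection, $(\mc M_{\text{ac}}^s(R,a,b),\mathbb W_1^{1,1})$ is compact as well.

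The only step that is not entirely mechanical is verifying that the mass-shift lemma applies in this signed setting with Lebesgue measure on $R$ serving as the shifting measure $\rho$, and that once both measures are translated to be nonnegative the generalized signed distance $\mathbb W_1^{1,1}$ genuinely reduces to the unsigned $W_1^{1,1}$; both follow immediately from the definitions via the Jordan decomposition recalled above Proposition \ref{prop: dual definition wasserstein}. No further obstacle is expected.
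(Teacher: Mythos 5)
Your proof is correct and takes exactly the route the paper indicates: translate by $-a\lambda_R$ via the mass-shift Lemma \ref{lemma: wasserstein mass shift} to land isometrically in $(\mc M_{\text{ac}}(R,0,b-a),W_1^{1,1})$, then invoke \cite[Proposition 15]{Piccoli2014-vy}. The details you supply (nonnegativity of the translated measures making $\m W_1^{1,1}$ collapse to $W_1^{1,1}$, preservation of compactness under isometry) are precisely what the paper's terse derivation from \eqref{eq: mass shift isomorphism} leaves implicit.
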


\subsection{Wasserstein distance for flows}\label{section: wasserstein for tiling flows}

Let $R$ be a compact region which is the closure of a connected domain and has piecewise smooth boundary. If $f_\tau\in TF_n(R)$, then $f_\tau$ is supported in $B_{2/n}(R) = \{x : d(x,R) \leq 2/n\}$. We will define a correspondence between 
\begin{enumerate}
	\item vector fields $f$ on $R\subset \mathbb{R}^3$ or $f_\tau\in TF_n(R)$, and
	\item triples of signed measures $(\mu_1, \mu_2, \mu_3)$ supported in $B_{2/n}(R)$.
\end{enumerate}
The idea is that the flow of the vector field in coordinate direction $i$ corresponds to mass of the $i^{th}$ measure, with sign coming from the direction of the flow. We define Wasserstein distance on vector fields through this correspondence: 
\begin{definition}\symindex{Chapter 5!$d_W$ -Wasserstein distance on flows}\termindex{Chapter 5!Wasserstein distance on flows}
	The \textit{Wasserstein distance on flows}, denoted $d_W$, is the sum of the generalized Wasserstein distances between the component measures. For any two vector fields $f,g$ with corresponding triples of measures $(\mu_1,\mu_2,\mu_3)$ and $(\nu_1,\nu_2,\nu_3)$, we define 
	\begin{align*}
		d_W(f, g) := \m W_1^{1,1}(\mu_1,\nu_1) +  \m W_1^{1,1}(\mu_2,\nu_2) +  \m W_1^{1,1}(\mu_3,\nu_3).
	\end{align*}
\end{definition}

To complete the definition of the metric, we need to define the correspondences between vector fields and triples of measures. There will be two definitions, one for a measurable vector field on $R$ and one for a discrete vector field $f_\tau\in TF_n(R)$. Let $x = (x_1,x_2,x_3)$ denote a point in $\mathbb{R}^3$. 

\begin{definition}\termindex{Chapter 5!measures corresponding to a measurable vector field}
	(Measures corresponding to a measurable vector field.) Let $f$ be a measurable vector field supported in $R$. The components of $f(x) = (f_1(x), f_2(x), f_3(x))$ are measurable functions, and we define the corresponding triple of measures $(\mu_1,\mu_2,\mu_3)$ by 
	\begin{align*}
		\dd\mu_i(x) = f_i(x) \,\dd x_1 \dd x_2 \dd x_3 \qquad \qquad i = 1,2,3
	\end{align*}
	where $\dd x_1 \dd x_2 \dd x_3$ denotes Lebesgue measure on $\mathbb{R}^3$. 
\end{definition}

\begin{definition}\termindex{Chapter 5!measures corresponding to a free-boundary tiling flow}
	(Measures corresponding to a free-boundary tiling flow on $R$.) Suppose that $f= f_\tau\in TF_n(R)$ for some $n$. Let $\eta_1,\eta_2,\eta_3$ be the positively-oriented unit basis vectors. Orient all the edges $e$ of $\frac{1}{n}\mathbb{Z}^3$ to be parallel to $\eta_i$, which we denote by $e\parallel \eta_i$. (Recall that changing the orientation of $e$ changes the sign of $f(e)$.) The triple of measures $(\mu_1,\mu_2,\mu_3)$ corresponding to $f$ is given by
	\begin{align*}
		\dd \mu_i(x) = \sum_{{e} \parallel \eta_i} \frac{2}{n^2} f({e}) \mathbbm{1}_{e}(x) \dd x_i  \qquad \qquad i=1,2,3
	\end{align*}
	where $\mathbbm{1}_{e}$ denotes the indicator of the edge ${e}\in \frac{1}{n} \m Z^3$, and $\dd x_i$ is 1-dimensional Lebesgue measure in the direction of $\eta_i$. Note that $\mu_i$ is supported in $B_{2/n}(R)$.
\end{definition}
\begin{rem}
    The scaling factor $\frac{2}{n^2}$ ensures that each edge $e$ such that $e\parallel \eta_i$ contributes $\frac{2f(e)}{n^3}$ total mass to $\mu_i$. The normalization is justified by looking at the extreme examples corresponding to the brickwork tilings (i.e.\ tilings where all tiles are the same type). Each cube in the $\frac{1}{n} \m Z^3$ mesh can be viewed as corresponding to its lower left edge. In the brickwork pattern, exactly half of these cubes will have a dimer in the lower left edge. 
\end{rem}

\begin{rem}\label{rem:weak convergence for signed +}
    Now that Wasserstein distance on flows is defined, we can explain why it metrizes weak convergence of the component measures. We do this by explaining how we could ``shift" everything to have positive mass and use Equation \eqref{eq: mass shift isomorphism}. For asymptotic flows, we can just add a copy of the 3-dimensional Lebesgue measure $\dd x_1 \dd x_2 \dd x_3$. For tiling flows, we note that we could have defined the corresponding measures to be positive by translating the mean-current octahedron $\mc O$ by $\eta_1 + \eta_2 + \eta_3$. After the translation, a scale $n$ tiling flow measure would take values in $\{1/(3n^2),5/(3n^2),7/(3n^2),11/(3n^2)\}$ instead of $\{-5/(3n^2),-1/(3n^2),1/(3n^2),5/(3n^2)\}$. 
    
    In terms of adding measures, translating $\mc O$ corresponds to adding a copy of 1-dimensional Lebesgue $\frac{2}{n^2} \dd x_i$ along each edge $e\parallel \eta_i$ in $\frac{1}{n} \m Z^3$ to the scale $n$ tiling flow measure $\dd \mu_i$. In the scaling limit as $n\to\infty$, this sum of 1-dimensional Lebesgue measures converges in $\m W_1^{1,1}$ to $\dd x_1 \dd x_2 \dd x_3$. By Equation \eqref{eq: mass shift isomorphism}, this implies the ``translated" tiling flows measures converge (i.e.\ ones defined on the translated $\mc O$) to the ``translated" asymptotic flow measures (i.e.\ ones shifted by adding $\dd x_1 \dd x_2 \dd x_3)$ in $W_1^{1,1}$ if and only if the tiling flow measures converge to the asymptotic flow measures in $\m W_1^{1,1}$. Since $W_1^{1,1}$ metrizes weak convergence for tight sequences \cite[Theorem 13]{Piccoli2014-vy}, $d_W$ metrizes weak convergence of the component measures corresponding to tiling and asymptotic flows.
\end{rem}

\begin{prop}\label{prop:div_free}
    The measures corresponding to tiling flows are divergence-free on the interior of $R$ in the sense of distributions, i.e.\ if $f$ is a tiling flow with corresponding measures $(\mu_1,\mu_2,\mu_3)$, then for any $\phi$ smooth and supported in a compact set $C$ contained in the interior of $R$,
    \begin{align*}
        \int_{R} \frac{\partial \phi}{\partial x_1} \, \dd \mu_1 + \int_{R} \frac{\partial \phi}{\partial x_2} \, \dd \mu_2 + \int_{R} \frac{\partial \phi}{\partial x_3} \, \dd \mu_3 = 0.
    \end{align*}
\end{prop}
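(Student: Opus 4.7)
The plan is to reduce the distributional identity to the fact that $f_\tau$ is discretely divergence-free at every interior lattice vertex, by integrating by parts on each edge. Fix $f = f_\tau \in TF_n(R)$ arising from a tiling $\sigma$ of some $R_n \supseteq R$, and fix a smooth test function $\phi$ with compact support $C$ contained in the interior of $R$.

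First I would unfold the definition of $\mu_i$: each edge $e \parallel \eta_i$ contributes a copy of $1$-dimensional Lebesgue measure weighted by $\frac{2}{n^2} f(e)$ along $e$. On such an edge, running from $a_e$ to $b_e = a_e + \frac{1}{n}\eta_i$, the fundamental theorem of calculus gives $\int_e \partial_i \phi \, \dd x_i = \phi(b_e) - \phi(a_e)$, so
$$
\int_R \frac{\partial \phi}{\partial x_i}\,\dd \mu_i \;=\; \frac{2}{n^2}\sum_{e \parallel \eta_i} f(e)\bigl(\phi(b_e) - \phi(a_e)\bigr).
$$
Summing over $i = 1, 2, 3$ and regrouping the resulting telescoping sum by vertex $v \in \frac{1}{n}\m{Z}^3$, the coefficient of $\phi(v)$ equals $\frac{2}{n^2}\bigl(\sum_{e:\,b_e=v} f(e) - \sum_{e:\,a_e=v} f(e)\bigr)$, which after a short sign check is (up to an overall sign) exactly the discrete divergence of $f_\tau$ at $v$ in the convention of \eqref{eq: div free discrete}.

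The only remaining step is to show $(\text{div}\, f_\tau)(v) = 0$ whenever $\phi(v) \neq 0$. Any such $v$ lies in $C$, hence at positive distance from $\partial R$, so all six of its lattice neighbors lie in $R_n$ (and we may enlarge $R_n$ at no cost, since this does not alter $f$ on edges incident to $C$). Thus $\sigma$ matches $v$ to exactly one neighbor, and the definition $f_\tau = v_\sigma - r$ with the symmetric reference flow $r$ immediately yields $(\text{div}\, f_\tau)(v) = 0$. I do not anticipate any serious obstacle here; the only mildly delicate point is this boundary bookkeeping, which is harmless because $\text{supp}\,\phi$ sits in the open interior of $R$.
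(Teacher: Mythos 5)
Your proof is correct and takes essentially the same route as the paper's: integrate by parts edge by edge via the fundamental theorem of calculus, regroup the telescoping sum by lattice vertex, identify the coefficient of $\phi(v)$ with the discrete divergence of $f_\tau$ at $v$, and conclude it vanishes because $f_\tau$ is discretely divergence-free. The only cosmetic difference is that the paper simply invokes the already-established divergence-free property from Section~\ref{section:tiling_flows}, whereas you re-derive it from the matching structure and the symmetric reference flow; your ``enlarge $R_n$'' parenthetical is unnecessary (and not obviously legitimate, since tilings need not extend)---the point is just that $1/n < d(C,\partial R)$ forces all six neighbors of each $v\in C$ into $R\subseteq R_n$.
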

\begin{proof}
For $i=1,2,3$, let $e_1^i,...,e_{k_i}^i$ be the edges from $\frac{1}{n} \m Z^3$ such that $e_j^i\parallel \eta_i$, is oriented parallel to $\eta_i$, and which intersect $R$. Let $(a_j^i,b_j^i)$ be the endpoints of $e_j^i$ such that $b_j^i-a_j^i=\eta_i$. By the fundamental theorem of calculus, 
\begin{align*}
    \sum_{i=1}^3 \int_{R} \frac{\partial \phi}{\partial x_i} \, \dd \mu_i = \frac{2}{n^2} \sum_{i=1}^3 \sum_{j=1}^{k_i} (\phi(b_j^i)-\phi(a_j^i)) f(e_j^i) 
\end{align*}
If $v= a_j^i$ or $b_j^i$ is not contained in the interior of $R$, then $\phi(v) = 0$. Therefore we can rewrite the above as a sum over vertices $v\in \frac{1}{n}\m Z^3$ contained in the interior of $R$:
\begin{align*}
    \sum_{i=1}^3 \int_{R} \frac{\partial \phi}{\partial x_i} \, \dd \mu_i = \frac{2}{n^2} \sum_{v} \phi(v) F(v),
\end{align*}
where $F(v)$ is a sum (with appropriate signs) of the six $f(e)$ terms for $e$ incident to $v$. We show $F(v) = 0$. 

Let $e_i^-,e_i^+$ denote the edges incident to $v$ and oriented parallel to $\eta_i$. Let $e_i^+$ be the one for which the orientation parallel to $\eta_i$ coincides with the orientation even to odd. Then 
\begin{align*}
    F(v) = \sum_{i=1}^3 f(e_i^+) - f(e_i^-).
\end{align*}
But this is equal to $\sum_{\tilde{e}\ni v} f(\tilde{e})$, where the edges $\tilde{e}$ incident to $v$ are all oriented even to odd. Therefore $F(v) = 0$ since $f$ is divergence-free as a discrete vector field, see Equation \eqref{eq: div free discrete}. 
\end{proof}

Next we prove a lemma about generalized Wasserstein distance for signed measures, in the case that both signed measures correspond to either tiling or asymptotic flows. This is an elementary result that we will use repeatedly.
\begin{lemma}\label{lem:covered}
	Suppose that $\mu$ and $\nu$ are measures supported on a common compact set $K$ corresponding to components of tiling or asymptotic flows. Suppose there is a partition of $K$ into sets $\mathcal{B} = \{B_1,...,B_M\}$ of diameter at most $\epsilon$ such that $\bigg|\mu(B)-\nu(B) \bigg| < \delta$ for all $B\in \mathcal{B}$. If one of the measures corresponds to a scale $n$ tiling flow, then we require that $\frac{1}{n}\leq \epsilon$. Then 
	\begin{align*}
		\m W^{1,1}_1(\mu,\nu) \leq M(10\epsilon^4 +\delta).
	\end{align*}
\end{lemma}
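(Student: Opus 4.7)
My plan is to apply the Kantorovich-style dual characterization of generalized Wasserstein distance (Proposition \ref{prop: dual definition wasserstein}):
\[
\m W_1^{1,1}(\mu,\nu) \;=\; \sup\Bigl\{\int \phi\,\dd(\mu-\nu)\;:\;\phi\in\mathcal C_b^{0,Lip},\ \|\phi\|_\infty\leq 1,\ \|\phi\|_{Lip}\leq 1\Bigr\}.
\]
So it suffices to bound $\int\phi\,\dd(\mu-\nu)$ uniformly in such test functions $\phi$.

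First I would split the integral according to the partition $\mathcal B$ and, on each block $B_i$, pick a reference point $x_i\in B_i$ and subtract the constant $\phi(x_i)$:
\[
\int_{B_i}\phi\,\dd(\mu-\nu) \;=\; \int_{B_i}\bigl(\phi-\phi(x_i)\bigr)\dd(\mu-\nu)\;+\;\phi(x_i)\bigl(\mu(B_i)-\nu(B_i)\bigr).
\]
The second summand has absolute value at most $\delta$ by hypothesis (since $\|\phi\|_\infty\leq 1$). For the first summand, the $1$-Lipschitz bound on $\phi$ together with $\mathrm{diam}(B_i)\leq \epsilon$ forces $|\phi-\phi(x_i)|\leq \epsilon$ on $B_i$, so its absolute value is at most $\epsilon\bigl(|\mu|(B_i)+|\nu|(B_i)\bigr)$. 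Summing over the $M$ blocks and using $\sum_i|\mu|(B_i)=|\mu|(K)$ yields
\[
\Bigl|\int\phi\,\dd(\mu-\nu)\Bigr| \;\leq\; \epsilon\bigl(|\mu|(K)+|\nu|(K)\bigr) + M\delta.
\]

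The last ingredient is a bound on the total variations. For an asymptotic flow component, $|f_i|\leq 1$ (since $f$ is valued in $\mathcal O$) gives $|\mu|(K)\leq \mathrm{Vol}(K)$. For a scale-$n$ tiling flow component, I would bound $|\mu|(K)$ by a direct edge count: each edge of $\tfrac1n\Z^3$ parallel to $\eta_i$ contributes mass at most $O(1/n^3)$ to $\mu_i$, and the number of such edges intersecting $K$ is at most $O(n^3\,\mathrm{Vol}(K))$. The hypothesis $1/n\leq \epsilon$ is exactly what is needed to absorb the boundary corrections (edges straddling $\partial K$) into the bulk estimate, rather than seeing them dominate. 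Since each $B_i$ fits in an $\epsilon$-box, $\mathrm{Vol}(K)\leq M\epsilon^3$, so in both cases $|\mu|(K)\leq cM\epsilon^3$ for a universal constant; the value $c$ can be chosen so that the worst case (both measures tiling flows) gives $|\mu|(K)+|\nu|(K)\leq 10M\epsilon^3$. Combining with the previous display finishes the proof.

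The only step requiring genuine care is the edge-counting bound in the tiling-flow case; the rest is Kantorovich duality plus per-block linearization, and the exponent $\epsilon^4$ is simply the product of the diameter bound ($\epsilon$) and the total-variation bound ($O(M\epsilon^3)$). The constant $10$ is a matter of bookkeeping.
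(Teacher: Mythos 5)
Your proof is correct but takes a genuinely different route from the paper's. The paper works with the \emph{primal} description: after Jordan-decomposing and writing $\m W^{1,1}_1(\mu,\nu) = W^{1,1}_1(\mu_+ + \nu_-,\,\mu_- + \nu_+)$, it constructs an explicit block-by-block transport, spending at most $\delta$ per block to add/delete the mass discrepancy and at most $10\epsilon^4$ per block to redistribute the remaining mass (which is $O(\epsilon^3)$ and moves distance at most $\epsilon$). You instead invoke the Kantorovich-style \emph{dual} characterization (Proposition~\ref{prop: dual definition wasserstein}) and bound $\int\phi\,\dd(\mu-\nu)$ by linearizing $\phi$ on each block and splitting off the constant term; the block-discrepancy hypothesis handles the constant piece and the Lipschitz bound handles the oscillation piece. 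Both arguments yield the same estimate and use the same bookkeeping inputs; the dual route outsources the transport-plan construction entirely to the cited duality proposition, which is arguably cleaner.

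One minor imprecision worth flagging: the intermediate claim that the number of edges of $\tfrac1n\Z^3$ meeting $K$ is $O(n^3\,\mathrm{Vol}(K))$ is false for general compact $K$ (take $K$ to be a single edge, which has measure zero but still meets one lattice edge). The hypothesis $1/n\le\epsilon$ does not rescue a purely volumetric bound for thin $K$; what it does buy you is a clean \emph{per-block} bound, which is what you actually need. Concretely: each $B_i$ has diameter at most $\epsilon$, hence sits in a box of side $\epsilon$, which meets $O\bigl((n\epsilon+1)^3\bigr)=O\bigl((n\epsilon)^3\bigr)$ edges parallel to $\eta_j$ once $n\epsilon\ge 1$, giving $|\mu|(B_i)=O(\epsilon^3)$; summing over the partition yields $|\mu|(K)\le cM\epsilon^3$ directly, without passing through $\mathrm{Vol}(K)$. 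This is also how the paper organizes its mass bound. Substituting the per-block estimate for your global edge count closes the gap and changes nothing downstream; the final inequality $\m W^{1,1}_1(\mu,\nu)\le M(10\epsilon^4+\delta)$ follows as you describe.
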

\begin{proof}
Let $\mu = \mu_+ - \mu_-$ and $\nu = \nu_+ - \nu_-$ be decompositions into positive measures and recall that
\begin{align*}
    \m W_1^{1,1}(\mu,\nu) = W_1^{1,1}(\mu_+ + \nu_-, \mu_- + \nu_+).
\end{align*}
Let $\tilde{\mu} = \mu_+ + \nu_-$ and $\tilde{\nu} = \mu_- + \nu_+$. To get an upper bound for the distance, it suffices to give a method for redistributing and deleting mass to transform $\tilde{\mu}$ into $\tilde{\nu}$.  

We proceed as follows: transform $\tilde{\mu}\mid_{B_1}$ into $\tilde{\nu}\mid_{B_1}$, the cost of this is at most $W^{1,1}_1(\tilde{\mu}\mid_{B_1}, \tilde{\nu}\mid_{B_1})$. Denote the new version of $\tilde{\mu}$ by $\tilde{\mu}'$. $\tilde{\mu}'$ will agree with $\tilde{\mu}$ on $R\setminus B_1$ and with $\tilde{\nu}$ on $B_1$. Next transform $\tilde{\mu}'$ into $\tilde{\nu}$ on $B_2$. This will cost at most $W^{1,1}_1(\tilde{\mu}'\mid_{B_2},\tilde{\nu}\mid_{B_2}) \leq W^{1,1}_1(\tilde{\mu}\mid_{B_2}, \tilde{\nu}\mid_{B_2})$ with equality if $B_2$ is disjoint from $B_1$. Iterating this we get that
	\begin{align*}
		\m W^{1,1}_1({\mu},{\nu}) \leq \sum_{j=1}^{k} W^{1,1}_1(\tilde{\mu}\mid_{B_j}, \tilde{\nu}\mid_{B_j})
	\end{align*}
	Now we just have to compute the distance for a single $B_j$. First spend $\delta>0$ to delete the difference in mass on $B_j$. The total mass of $\mu, \nu$ on any $B\in \mc B$ is bounded by $10 \epsilon^3$, and the furthest it would need to move is $\epsilon$. Therefore
		\begin{align*}
		W^{1,1}_1(\tilde{\mu}\mid_{B_j}, \tilde{\nu}\mid_{B_j}) \leq 10 \epsilon^4 + \delta.
	\end{align*}
	Summing over $j$ gives the result. 
\end{proof}

An inverse version of the bound in Lemma \ref{lem:covered} also holds, but with a constant depending on the small region $B$.
\begin{lemma}\label{lem:constant_order_box_Wass_bound}
Suppose $B\subset R$ is a connected region with piecewise smooth boundary. If $\mu, \nu$ are component measures of tiling or asymptotic flows and $\m W_1^{1,1}(\mu,\nu)<\delta$, then there is a constant $C(B)$ depending only on $B$ such that
\begin{align*}
    \m W_1^{1,1}(\mu\mid_B, \nu\mid_B) < \delta + (C(B)+1) \delta^{1/2}.
\end{align*}
\end{lemma}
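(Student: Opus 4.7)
The plan is to apply the dual characterization of $\m W_1^{1,1}$ from Proposition \ref{prop: dual definition wasserstein} to the restricted measures, and to convert any admissible test function for $(\mu|_B, \nu|_B)$ into a globally admissible test function (at the cost of inflating its Lipschitz constant) whose integral against $\mu - \nu$ captures $\int_B \varphi\, d(\mu - \nu)$ up to an error localized near $\partial B$.

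First I would fix $\epsilon > 0$ and introduce the cutoff $\psi_\epsilon(x) = \max(0, 1 - d(x,B)/\epsilon)$, which equals $1$ on $B$, vanishes outside the $\epsilon$-neighborhood $B_\epsilon$ of $B$, is bounded by $1$, and is $(1/\epsilon)$-Lipschitz. For any $\varphi \in \mathcal C_b^{0,\mathrm{Lip}}(\m R^3)$ with $\|\varphi\|_\infty \leq 1$ and $\|\varphi\|_{\mathrm{Lip}} \leq 1$, the product $\varphi \psi_\epsilon$ is bounded by $1$ and is $(1 + 1/\epsilon)$-Lipschitz (since $|\varphi \psi_\epsilon(x) - \varphi \psi_\epsilon(y)| \leq |\psi_\epsilon(x)|\,|x-y| + |\varphi(y)|\,|x-y|/\epsilon$). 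Applying Proposition \ref{prop: dual definition wasserstein} to the rescaled test function $\varphi \psi_\epsilon /(1+1/\epsilon)$ gives
\[
\int_{\m R^3} \varphi \psi_\epsilon\, d(\mu - \nu) \leq (1 + 1/\epsilon)\,\m W_1^{1,1}(\mu,\nu) < (1 + 1/\epsilon)\,\delta.
\]
Splitting the integral via $\psi_\epsilon \equiv 1$ on $B$ and $\psi_\epsilon \equiv 0$ outside $B_\epsilon$ yields
\[
\int_B \varphi\, d(\mu - \nu) = \int_{\m R^3} \varphi \psi_\epsilon\, d(\mu - \nu) - \int_{B_\epsilon \setminus B} \varphi \psi_\epsilon\, d(\mu - \nu).
\]

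Next I would bound the boundary-layer term by the total variation on $B_\epsilon \setminus B$. The key input is a uniform density bound for the component measures: there is a universal constant $K$ such that $|\mu|(U) \leq K\,\mathrm{Vol}(U)$ and $|\nu|(U) \leq K\,\mathrm{Vol}(U)$ for any measurable $U \subset \m R^3$. For asymptotic flows this is immediate from $|f_i| \leq 1$; for scale-$n$ tiling flows, one checks directly that the sum of 1D Lebesgue measures on the $\eta_i$-edges produces density at most $5/3$ after intersecting with any rectangular slab, uniformly in $n$ and in the scale $\epsilon$. Since $\partial B$ is piecewise smooth, $\mathrm{Vol}(B_\epsilon \setminus B) \leq C_1(B)\,\epsilon$ for all small $\epsilon$, with $C_1(B)$ proportional to the surface area of $\partial B$. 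Hence
\[
\left|\int_{B_\epsilon \setminus B} \varphi \psi_\epsilon\, d(\mu - \nu)\right| \leq |\mu - \nu|(B_\epsilon \setminus B) \leq 2 K C_1(B)\,\epsilon.
\]

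Combining the two estimates gives, for every admissible $\varphi$,
\[
\int_B \varphi\, d(\mu - \nu) \leq \delta + \delta/\epsilon + 2 K C_1(B)\,\epsilon.
\]
Optimizing by choosing $\epsilon = \sqrt{\delta/(2KC_1(B))}$ produces
\[
\int_B \varphi\, d(\mu - \nu) \leq \delta + 2\sqrt{2 K C_1(B)}\,\sqrt{\delta}.
\]
Taking the supremum over admissible $\varphi$ and applying Proposition \ref{prop: dual definition wasserstein} once more yields $\m W_1^{1,1}(\mu|_B, \nu|_B) \leq \delta + (C(B)+1)\sqrt{\delta}$ for $C(B) := \max\bigl(0,\, 2\sqrt{2 K C_1(B)} - 1\bigr)$, which depends only on $B$. (For $\epsilon$ so large that the optimization is invalid, the bound is trivial since $|\mu|+|\nu|$ is uniformly bounded on $R$.)

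The main technical point is the uniform density bound $|\mu|(U) \leq K\,\mathrm{Vol}(U)$ for tiling flow measures at all scales, including for $\epsilon$ smaller than the mesh size $1/n$; this requires checking that a single edge intersected with a thin slab contributes mass proportional to the length of the intersection, not to the full mass of the edge, which is immediate from the definition of the measure as a 1D Lebesgue density along each edge. The rest of the argument is a straightforward interplay between the dual formulation of Wasserstein distance and the Lipschitz cost of a cutoff.
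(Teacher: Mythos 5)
Your proof is correct and takes a genuinely different route from the paper's. The paper works on the primal side: it takes a near-optimal transport/addition/deletion plan for $\mu\to\nu$, restricts it to $B$, and observes that any transport crossing $\partial B$ can be converted to an add/delete at the extra cost $\int_0^\infty f(r)\,dr$, where $f(r)$ is the mass moved distance $r$ across $\partial B$. The integral is split at $r=\delta^{1/2}$, with the near piece bounded by the mass of $\mu,\nu$ in a $\delta^{1/2}$-annulus around $\partial B$, and the far piece bounded by $\delta^{1/2}$ via Markov's inequality applied to $\int_0^\infty r f(r)\,dr < \delta$. You instead argue on the dual side (Proposition~\ref{prop: dual definition wasserstein}): multiplying an admissible test function for $(\mu|_B,\nu|_B)$ by a cutoff $\psi_\epsilon$ yields a globally admissible test function after rescaling by $(1+1/\epsilon)$, and the resulting error is again localized to an $\epsilon$-annulus around $\partial B$. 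Optimizing $\epsilon\sim\delta^{1/2}$ recovers the same bound. The duality argument is cleaner in that it avoids the informal language of "converting moves to deletions" and makes the $\epsilon$-dependence fully explicit; the primal argument has the advantage that the $(C(B)+1)$ splitting of the constant falls out naturally from the two pieces of the split integral, whereas you have to re-package your constant to match.

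One caveat worth flagging, though it applies equally to the paper's proof: both arguments need the annulus-mass bound $|\mu|(B_\epsilon\setminus B)+|\nu|(B_\epsilon\setminus B) \lesssim \mathrm{Vol}(B_\epsilon\setminus B)$ at the critical scale $\epsilon\sim\delta^{1/2}$. Your stated claim that the tiling-flow component measures have density at most $5/3$ on rectangular slabs ``uniformly in $n$ and in the scale $\epsilon$'' is not literally true. The tiling-flow measure $\mu_i$ is concentrated on $1$-dimensional edges spaced $1/n$ apart, so a slab of width $\epsilon \ll 1/n$ that happens to contain a full lattice plane of tangential edges carries mass of order $1/n$, which can far exceed the slab's volume $\sim\epsilon$. (For $\epsilon \gtrsim 1/n$ the averaging over many lattice layers does restore the density bound.) The paper's proof uses the same estimate when it asserts that $\int_0^{\delta^{1/2}} f(r)\,dr$ is ``proportional to the volume of the $\delta^{1/2}$ annulus around $B$'' and Remark~\ref{rem:constant_explained} makes the same claim, so you are not introducing a new weakness. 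If you want to be fully rigorous you would either need to argue separately in the sub-mesh regime $\delta^{1/2}<1/n$ (e.g., using that distinct scale-$n$ tiling-flow measures cannot be too close in $\m W_1^{1,1}$), or restrict attention to $\epsilon\geq 1/n$ and absorb the resulting error; but this is a shared issue with the source, not a defect specific to your approach.
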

\begin{rem}\label{rem:constant_explained}
    The constant $C(B)$ is not hard to understand and control. The term $C(B)\delta^{1/2}$ is bounded by $2$ times the volume of the $\delta^{1/2}$ annulus with inner boundary $\partial B$. 
\end{rem}
\begin{proof}
    The redistribution, addition, and deletion of mass $\mu \to \nu$ gives a redistribution $\mu\mid_B\to \nu\mid_B$, except any mass moved into or out of $B$ now becomes an $L^1$ cost rather than a cost proportional to distance moved. Let $f(r)$ be the amount of flow moved distance $r$ into or out of $B$ by the $\mu\to \nu$ redistribution. Then 
    \begin{align*}
        \m W_1^{1,1}(\mu\mid_B,\nu\mid_B) \leq \delta + \int_0^\infty f(r) dr. 
    \end{align*}
    On the other hand, 
    \begin{align*}
        \int_{0}^\infty r f(r) dr < \delta. 
    \end{align*}
    We split the integral we want to bound into two pieces: 
    \begin{align*}
        \int_{0}^\infty f(r) dr = \int_{0}^{\delta^{1/2}} f(r) dr + \int_{\delta^{1/2}}^\infty f(r) dr. 
    \end{align*}
    Since $\mu,\nu$ are measures corresponding to components of tiling or asymptotic flows, we have $$\int_{0}^{\delta^{1/2}} f(r) dr \leq C(B) \delta^{1/2}$$ (this quantity is proportional to the volume of the $\delta^{1/2}$ annulus around $B$) and
    \begin{align*}
       \delta^{1/2} \int_{\delta^{1/2}}^\infty f(r) dr \leq \int_{\delta^{1/2}}^\infty r f(r) dr < \delta. 
    \end{align*}
    Combining these gives the desired bound. 
\end{proof}

\begin{lemma}\label{lem:boundedlimits}
	Let $\nu_n$ be a sequence of signed measures supported in $R$ which converges in $\m W_1^{1,1}$ to another measure $\nu$. Further suppose the $\nu_n$ are absolutely continuous with respect to 3-dimensional Lebesgue measure, and their densities $g_n(x)$ take values in $[-m,M]$. Then $\nu$ is also absolutely continuous with respect to 3-dimensional Lebesgue measure. 
\end{lemma}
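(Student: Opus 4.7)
The plan is to identify the limit $\nu$ with a measure whose density is a weak-$*$ $L^\infty$ cluster point of the $g_n$'s.

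First, by the dual characterization of $\mathbb{W}_1^{1,1}$ (Proposition \ref{prop: dual definition wasserstein}), the hypothesis $\mathbb{W}_1^{1,1}(\nu_n,\nu)\to 0$ implies
\[
\int_{\mathbb{R}^3}\varphi\, d\nu_n \;\longrightarrow\; \int_{\mathbb{R}^3}\varphi\, d\nu
\]
for every bounded $1$-Lipschitz $\varphi$, hence (by rescaling) for every bounded Lipschitz $\varphi$.

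Second, since $R$ is compact and $|g_n|\le C:=\max(m,M)$ pointwise, the sequence $(g_n)$ is bounded in $L^{\infty}(R)$. By the Banach--Alaoglu theorem applied to $L^\infty(R)=(L^1(R))^*$, there is a subsequence $(g_{n_k})$ and a function $g\in L^\infty(R)$ with $-m\le g\le M$ a.e.\ such that $g_{n_k}\rightharpoonup^* g$. Thus for every $\psi\in L^1(R)$,
\[
\int_R \psi\, g_{n_k}\, dx \;\longrightarrow\; \int_R \psi\, g\, dx.
\]

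Third, take any bounded Lipschitz $\varphi$ on $\mathbb{R}^3$. Its restriction to $R$ is in $L^1(R)$, so combining the two previous displays along the subsequence $n_k$ yields
\[
\int_{\mathbb{R}^3}\varphi\, d\nu \;=\; \lim_{k\to\infty}\int_{\mathbb{R}^3}\varphi\, d\nu_{n_k} \;=\; \lim_{k\to\infty}\int_R \varphi\, g_{n_k}\, dx \;=\; \int_R \varphi\, g\, dx.
\]
Bounded Lipschitz functions are dense in $C_c(\mathbb{R}^3)$ in the supremum norm (by mollification and truncation), and both sides above are continuous in $\varphi\in C_c(\mathbb{R}^3)$ with respect to the $\sup$-norm because $\nu$ has finite total variation (as a $\mathbb{W}_1^{1,1}$-limit in $R$) and $g\in L^\infty(R)$ with $R$ bounded. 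Consequently the identity extends to all $\varphi\in C_c(\mathbb{R}^3)$, and the Riesz representation theorem identifies $\nu$ with the signed measure $g\,dx_1 dx_2 dx_3$. In particular, $\nu$ is absolutely continuous with respect to $3$-dimensional Lebesgue measure (and, as a bonus, its density satisfies the same pointwise bounds $-m\le g\le M$).

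There is no single hard step; the only mild subtlety is making sure that convergence in $\mathbb{W}_1^{1,1}$ gives integration against bounded Lipschitz test functions, which is exactly what Proposition \ref{prop: dual definition wasserstein} provides, and then matching the Banach--Alaoglu limit against the $\mathbb{W}_1^{1,1}$ limit on a test-function class that determines the measure.
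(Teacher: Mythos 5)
Your proof is correct, but it takes a genuinely different route from the one in the paper. The paper defines the functional $Q(h)=\int h\,\dd\nu$ on $C^\infty(R)$, bounds $|Q(h)|\le \Vol(R)^{1/2}(M+m)\norm{h}_{L^2}$ via Cauchy--Schwarz (using $\norm{g_n}_{L^2}\le\Vol(R)^{1/2}(M+m)$), extends $Q$ to $L^2(R)$, and invokes the Riesz representation theorem for Hilbert spaces to produce a density $g\in L^2(R)$. You instead extract the density directly by applying Banach--Alaoglu to the bounded sequence $(g_n)\subset L^\infty(R)=(L^1(R))^*$, obtain a weak-$*$ cluster point $g$, match the weak-$*$ limit against the $\mathbb W_1^{1,1}$ limit on bounded Lipschitz test functions (which Proposition~\ref{prop: dual definition wasserstein} makes available), and then push the equality $\int\varphi\,\dd\nu=\int\varphi\, g\,\dd x$ from Lipschitz $\varphi$ to all of $C_c(\mathbb R^3)$. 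Both arguments are clean; the Hilbert-space argument is slightly shorter to write, whereas yours has the small advantage of giving the sharper conclusion that the limiting density again takes values in $[-m,M]$ a.e.\ (since the set $\{f\in L^\infty(R):-m\le f\le M\text{ a.e.}\}$ is convex and weak-$*$ closed), something the $L^2$ argument does not deliver without an extra step.
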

\begin{proof}
    By Corollary \ref{cor: weak convergence wasserstein}, if $\m W_1^{1,1}(\nu_n,\nu)\to 0$ as $n\to \infty$, then $\nu_n$ converges to $\nu$ in the weak topology. 
    
    Define an operator $Q: C^\infty(R)\to \mathbb{R}$ by integrating against $\nu$: 
    \begin{align*}
        Q(h) := \int h \, \dd \nu. 
    \end{align*}
    Using Cauchy-Schwarz and the fact that $d\nu_n = g_n(x)\, \dd x$ we have that 
    \begin{align*}
        Q(h) = \lim_{n\to \infty} \int h \, g_n \, \dd x \leq \limsup_{n\to \infty} \norm{h}_{L^2(R)} \norm{g_n}_{L^2(R)} \leq \text{Vol}(R)^{1/2} (M+m)\norm{h}_{L^2}
    \end{align*}
    Therefore $Q$ extends to an operator on $L^2(R)$. By the Riesz representation theorem this means there exists an $L^2$ function $g$ such that $Q(h) = \langle h,g\rangle = \int h\,g \,\dd x$. Therefore $\dd\nu(x) = g(x) \dd x$, and $\nu$ is absolutely continuous with respect to Lebesgue measure on $\mathbb{R}^3$.
\end{proof}

\begin{prop}\label{prop: wass vs sup}
	Suppose that $f,g \in AF(R)$ are continuous and satisfy $|f(x)-g(x)| < \epsilon$ for all $x\in R$. Then $d_W(f,g) < \epsilon \,\text{vol}(R)$.
\end{prop}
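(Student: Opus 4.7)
The plan is to unwrap the definition of $d_W$ and bound each of the three component generalized Wasserstein distances by the $L^1$ norm of the corresponding density difference. If $(\mu_1,\mu_2,\mu_3)$ and $(\nu_1,\nu_2,\nu_3)$ are the signed measures associated with $f$ and $g$, then each $\mu_i-\nu_i$ is absolutely continuous with density $f_i-g_i$. By Lemma~\ref{lemma: wasserstein mass shift} (translation invariance of generalized Wasserstein distance), $\mathbb{W}_1^{1,1}(\mu_i,\nu_i) = \mathbb{W}_1^{1,1}(\mu_i-\nu_i,0)$.

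Applying the infimum defining $W_1^{1,1}$ to the Jordan decomposition of $\mu_i-\nu_i$ with the trivial choice $\tilde\mu = \tilde\nu = 0$ (the ``delete everything'' admissible scheme), one obtains
$$\mathbb{W}_1^{1,1}(\mu_i,\nu_i) \;\leq\; |\mu_i-\nu_i|(R) \;=\; \int_R |f_i(x)-g_i(x)|\,dx.$$
Summing over $i=1,2,3$ then gives
$$d_W(f,g) \;\leq\; \int_R \sum_{i=1}^{3}|f_i(x)-g_i(x)|\,dx.$$
(If one prefers to avoid invoking the definition directly, the same per-component bound follows from the dual representation in Proposition~\ref{prop: dual definition wasserstein}: for any $1$-Lipschitz, $1$-bounded test function $\varphi$, $\int\varphi\,d(\mu_i-\nu_i) = \int\varphi(f_i-g_i)\,dx \leq \int|f_i-g_i|\,dx$.)

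Finally, interpreting $|\cdot|$ on $\mathbb{R}^3$ as the $\ell^1$ norm (the natural norm for flow values, in keeping with the mean-current octahedron $\mathcal{O}=\{s:|s_1|+|s_2|+|s_3|\leq 1\}$), the pointwise hypothesis reads $\sum_i|f_i(x)-g_i(x)| = |f(x)-g(x)| < \epsilon$ for every $x\in R$, and integrating gives $d_W(f,g) < \epsilon\,\mathrm{vol}(R)$.

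There is essentially no obstacle here. Continuity of $f$ and $g$ makes the pointwise hypothesis literally meaningful but is not otherwise used; the argument works verbatim for measurable flows satisfying the same $L^\infty$ bound on $f-g$. The only minor subtlety is keeping track of which norm on $\mathbb{R}^3$ is intended in the hypothesis (with any other choice the same proof yields the bound with an extra absolute dimensional constant).
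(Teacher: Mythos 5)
Your proof is correct and takes essentially the same approach as the paper's, which in two sentences bounds $d_W(f,g)$ by the cost of adding and deleting, for each component $i$, the absolutely continuous measure with density $f_i - g_i$; you have simply filled in the appeal to Lemma~\ref{lemma: wasserstein mass shift} and the ``delete-everything'' upper bound that the paper leaves implicit. Your remark about the norm on $\mathbb{R}^3$ is a fair observation: the paper is silent on whether $|f(x)-g(x)|$ means the $\ell^1$, $\ell^2$, or $\ell^\infty$ norm, and the stated constant $\mathrm{vol}(R)$ is exact only for $\ell^1$, with an extra dimensional factor (at most $3$) otherwise; since the proposition is only ever used as an order-of-magnitude estimate, this does not affect anything downstream.
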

\begin{proof}
	The $d_W$-distance from $f$ to $g$ is bounded by adding and subtracting mass for each of the component functions. Since pointwise they differ by at most $\epsilon$,  $d_W(f,g)\leq \epsilon\, \text{vol}(R)$. 
\end{proof}

\subsection{Main theorems}

Here we prove two of the theorems mentioned at the beginning of the section. First we show that fine-mesh limits of tiling flows are asymptotic flows. 

\begin{thm}\label{thm:formal_fine_mesh}
Let $R\subset \m R^3$ be a compact region which is the closure of a connected domain and has piecewise-smooth boundary. Let $f_n\in TF_{m_n}(R)$ be a free-boundary tiling flow on $R$ at scale $m_n$ with $m_n$ going to infinity with $n$. Any $d_W$-subsequential limit of tiling flows $f_* = \lim_{k\to \infty} f_{{n_k}}$ is in $AF(R)$. 
\end{thm}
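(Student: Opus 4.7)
Writing $f_{n_k}$ as the triple of signed measures $(\mu_1^{(k)},\mu_2^{(k)},\mu_3^{(k)})$ associated to it in Section~\ref{section: wasserstein for tiling flows}, convergence $f_{n_k}\to f_*$ in $d_W$ means componentwise convergence in the generalized Wasserstein metric $\m W_1^{1,1}$ to signed Radon measures $(\mu_1^*,\mu_2^*,\mu_3^*)$ on $\m R^3$. I will verify that this triple comes from a Borel-measurable vector field $f_*=(f_1^*,f_2^*,f_3^*)$ satisfying the three defining conditions of $AF(R)$: support in $R$, $\mc O$-valued a.e., and divergence-free as a distribution.

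\textbf{Support and absolute continuity.} Since $\m W_1^{1,1}$-convergence implies weak convergence (Corollary~\ref{cor: weak convergence wasserstein}) and the support of $\mu_i^{(k)}$ is contained in $B_{2/m_{n_k}}(R)$, any continuous $\phi$ supported off $R$ is eventually zero on $\mathrm{supp}(\mu_i^{(k)})$, so $\int\phi\,d\mu_i^*=0$ and $\mathrm{supp}(\mu_i^*)\subseteq R$. For absolute continuity, define a mollified measure $\widetilde\mu_i^{(k)}$ by replacing, for each edge $e\parallel\eta_i$ of $\tfrac{1}{m_{n_k}}\m Z^3$, the one-dimensional contribution $\tfrac{2}{m_{n_k}^2}f_{\tau_{n_k}}(e)\mathbbm 1_e\,dx_i$ by a uniform three-dimensional density of equal total mass on the cube of side $1/m_{n_k}$ with axis $e$. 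Each unit of mass is transported a distance at most $1/m_{n_k}$, so $\m W_1^{1,1}(\mu_i^{(k)},\widetilde\mu_i^{(k)})\to 0$, and by the triangle inequality $\widetilde\mu_i^{(k)}\to\mu_i^*$ in $\m W_1^{1,1}$. The densities of $\widetilde\mu_i^{(k)}$ are uniformly bounded in $k$ (since $f_\tau$ takes only two values on each edge in the chosen scaling), so Lemma~\ref{lem:boundedlimits} gives that $\mu_i^*$ is absolutely continuous with respect to Lebesgue measure with some density $f_i^*\in L^\infty(R)$.

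\textbf{$\mc O$-valuedness and divergence-free condition.} For $\mc O$-valuedness I exploit the per-vertex constraint that each even vertex of a tiling is matched to exactly one neighbor: for any axis-aligned cube $C\subset R$ of side $L\gg 1/m_{n_k}$, the vector $\bigl(\widetilde\mu_1^{(k)}(C),\widetilde\mu_2^{(k)}(C),\widetilde\mu_3^{(k)}(C)\bigr)/\mathrm{Vol}(C)$ differs by an $O(1/(Lm_{n_k}))$ boundary error from an average of tile-direction vectors $\pm\eta_i\in\mc O$, and hence lies in an $O(1/(Lm_{n_k}))$-neighborhood of the closed set $\mc O$. Passing to the limit in $k$ and then applying the Lebesgue differentiation theorem as $L\downarrow 0$ yields $(f_1^*(x),f_2^*(x),f_3^*(x))\in\mc O$ for a.e.\ $x\in R$. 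Finally, for any $\phi\in C_c^\infty(\mathrm{int}(R))$, Proposition~\ref{prop:div_free} gives $\sum_{i=1}^3\int\partial_i\phi\,d\mu_i^{(k)}=0$ for every $k$; each $\partial_i\phi$ is bounded and Lipschitz, so the dual formulation of $\m W_1^{1,1}$-convergence in Proposition~\ref{prop: dual definition wasserstein} forces $\int\partial_i\phi\,d\mu_i^{(k)}\to\int\partial_i\phi\,d\mu_i^*$, and hence $\sum_i\int\partial_i\phi\,d\mu_i^*=0$, i.e.\ $f_*$ is divergence-free in the distributional sense.

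\textbf{Main obstacle.} The principal subtlety is the $\mc O$-valuedness: at the discrete level this is a pointwise-per-vertex combinatorial constraint on matchings, and transferring it to a pointwise-a.e.\ bound on $f_*$ requires chaining together mollification (to pass from 1D edge measures to absolutely continuous ones), averaging on mesoscopic cubes (to reinterpret the discrete constraint as an averaged statement), and finally Lebesgue differentiation. Carefully quantifying the boundary errors so that they vanish in the two relevant limits $k\to\infty$ and then $L\downarrow 0$ is the most delicate part; the remaining ingredients (support, divergence-freeness, measurability) follow routinely from the $\m W_1^{1,1}$-convergence and the properties established in Section~\ref{section: wasserstein for tiling flows}.
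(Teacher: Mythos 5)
Your proof is correct and follows essentially the same strategy as the paper's: mollify the discrete measures to an absolutely continuous sequence, apply Lemma~\ref{lem:boundedlimits} to get a density for the limit, obtain the $\mc O$-constraint by averaging over mesoscopic cubes and invoking the Lebesgue differentiation theorem, and transfer the divergence-free property via weak convergence using Proposition~\ref{prop:div_free}. The only minor organizational difference is that the paper's mollification (Lemma~\ref{lem:spreadflows}) averages over mesoscopic $N\times N\times N$ boxes so that its densities land in $(1+O(\epsilon))\mc O$ directly, whereas you use a per-edge uniform spreading for absolute continuity and then re-derive the $\mc O$-bound separately at the level of the limit — but the counting argument you sketch for the mesoscopic averages is exactly the content of the paper's Lemma~\ref{lem:niceboxes}, so the two proofs are the same in substance.
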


Let $\mu_k = (\mu_k^1,\mu_k^2, \mu_k^3)$ be the measures corresponding to $f_{{n_k}}$ and let $\mu_* = (\mu_*^1,\mu_*^2,\mu_*^3)$ be the measures corresponding to $f_*$. The main idea of the proof is to smoothen the measures $(\mu_k^1,\mu_k^2, \mu_k^3)$ corresponding to the tiling flow in an especially nice way, then apply Lemma \ref{lem:boundedlimits} to say that their limits $(\mu_*^1,\mu_*^2,\mu_*^3)$ are absolutely continuous with respect to 3-dimensional Lebesgue measure. This shows that $f_*$ is a measurable flow on $R$. Using our well-chosen smoothings, we will show that $f_*$ has the other properties that an asymptotic flow must have. To make the argument easier to digest, we break down the construction of the smoothing into two lemmas.

\begin{lemma}\label{lem:niceboxes}
Let $S_N = [0,N-1]^3$ (note that there are $N$ vertices from $\m Z^3$ on each edge of $S_N$). Let $\tau$ be a tiling of $\mathbb{Z}^3$ with tiling flow $f_\tau$, and let $(\mu_1,\mu_2,\mu_3)$ be the corresponding measures. Then
\begin{align*}
    \frac{1}{\text{Vol}(S_N)} \bigg(\int_{S_N} \dd\mu_1, \int_{S_N} \dd\mu_2, \int_{S_N} \dd\mu_3 \bigg)
\end{align*}
is valued in $(1+ O(1/N)) \mathcal{O}$.
\end{lemma}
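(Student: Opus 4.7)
The strategy is to interpret each integral $\int_{S_N} d\mu_i$ as, up to a boundary correction, a signed count of matching edges at even vertices of $S_N$, and then to exploit the combinatorial fact that every even vertex has exactly one matching partner.

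First, since $S_N$ has integer vertex coordinates, an edge $e\parallel \eta_i$ of $\mathbb{Z}^3$ meets $S_N$ in a set of positive length if and only if both endpoints lie in the integer box $\{0,1,\ldots, N-1\}^3$, in which case $\ell(e\cap S_N)=1$ (edges with only one endpoint in $S_N$ intersect it in a single boundary point). Splitting the $\eta_i$-parallel edges by the parity of the base vertex $v$ (with $e=(v,v+\eta_i)$), and using that orienting parallel to $\eta_i$ reverses the even-to-odd orientation exactly when $v$ is odd, an elementary computation gives
\begin{align*}
    \int_{S_N} d\mu_i \;=\; 2\bigl(N_+^i - N_-^i\bigr) + R_i,
\end{align*}
where
\begin{align*}
    N_\pm^i \;:=\; \#\{v\in S_N^{\mathrm{even}} \colon v \text{ is matched to } v\pm\eta_i\},
\end{align*}
and $R_i$ collects two kinds of $O(N^2)$ boundary corrections: (a) even vertices $v\in S_N$ with $v_i\in\{0,N-1\}$ whose matching partner lies outside $\{0,\ldots,N-1\}^3$, and (b) the near-cancellation of the reference-flow contribution along each of the $O(N^2)$ rows of $\eta_i$-parallel edges, where the alternation between $+1/6$ (even $v$) and $-1/6$ (odd $v$) leaves only an $O(1)$ residual per row.

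Second, I would apply the key accounting identity that every $v\in S_N^{\mathrm{even}}$ is matched to exactly one of $v\pm\eta_j$ for some $j\in\{1,2,3\}$, so
\begin{align*}
    \sum_{i=1}^3 (N_+^i + N_-^i) \;=\; |S_N^{\mathrm{even}}| \;=\; \tfrac{(N-1)^3}{2} + O(N^2).
\end{align*}
Consequently,
\begin{align*}
    \sum_{i=1}^3 \Bigl|\int_{S_N} d\mu_i\Bigr|
    \;\leq\; 2\sum_{i=1}^3 |N_+^i - N_-^i| + \sum_{i=1}^3 |R_i|
    \;\leq\; 2\,|S_N^{\mathrm{even}}| + O(N^2)
    \;=\; (N-1)^3 + O(N^2),
\end{align*}
which equals $\text{Vol}(S_N)\bigl(1+O(1/N)\bigr)$. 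Dividing through by $\text{Vol}(S_N)$ gives the $\ell^1$-norm bound defining $(1+O(1/N))\mathcal{O}$, as required.

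Conceptually, the statement is just that each even vertex carries an $\ell^1$-unit matching direction $s_0(v)\in\{\pm\eta_1,\pm\eta_2,\pm\eta_3\}$, and the scaled vector we are bounding is essentially $|S_N^{\mathrm{even}}|^{-1}\sum_{v\in S_N^{\mathrm{even}}} s_0(v)$, which trivially lies in the convex hull $\mathcal{O}$ of the $\pm\eta_j$; the factor of $2$ in the definition of $\mu_i$ exactly cancels the density $|S_N^{\mathrm{even}}|/\text{Vol}(S_N)=1/2$. The only step requiring care---and the reason the statement carries an $O(1/N)$ slack rather than lying in $\mathcal{O}$ exactly---is the $O(N^2)$ boundary bookkeeping; this is more of a nuisance than a genuine obstacle.
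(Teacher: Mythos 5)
Your proof is correct and follows essentially the same approach as the paper's: you count signed matching directions on $S_N$ (indexed by even vertices, where the paper instead counts by tiles fully contained in $S_N$), use the fact that each even vertex contributes exactly one unit of $\ell^1$ mass to the triple, and absorb the discrepancy into an $O(N^2)$ boundary term before dividing by $\mathrm{Vol}(S_N) = (N-1)^3$. The only cosmetic difference is that the paper observes the reference-flow contribution cancels \emph{exactly} (because exactly half of the $\eta_i$-parallel edges fully inside $S_N$ are oriented even-to-odd, since $N^2(N-1)$ is always even), so the per-row residual you invoke in item (b) is in fact zero; this changes nothing since you correctly bound it by $O(N^2)$ anyway.
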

\begin{proof}
    Let $E(S_N)$ denote the edges intersecting in $S_N$. All edges intersecting $S_N$ in more than one point are contained in it and have length $1$. 
    
    The measure $\mu_1$ is supported on the edges parallel to $\eta_1$, and thus 
    $$\int_{S_N}\dd\mu_1 = \sum_{E(S_N)\ni e\parallel \eta_1} 2f(e),$$
    for edges $e$ oriented parallel to $\eta_1$. The results for $\mu_2,\mu_3$ are analogous. 
    
    For each $i=1,2,3$, there are $N^2(N-1)$ edges from $\m Z^3$ contained in $S_N$ parallel to $\eta_i$. This number is always even, so half of these edges from ($N^2(N-1)/2$) have even-to-odd orientation parallel to $\eta_i$ and half have the opposite orientation. Let $\alpha_+$ be the number of even-to-odd oriented tiles in $\tau\cap E(S_N)$ and $\alpha_-$ be the number of odd-to-even oriented tiles. Then
    \begin{align*}
        \int_{S_N} \dd\mu_1 = \sum_{E(S_N)\ni e\parallel \eta_i} 2 f(e) = 2(\alpha_+ - \alpha_-).
    \end{align*}
    Let $s_i$ denote the fraction of tiles in the $+\eta_i$ direction in $S_N$ minus the fraction of tiles in the $-\eta_i$ direction in $S_N$. Note that irrespective of the tiling we have that number of tiles in $S_N$ is $N^2(N-1)/2 +O(N^2)$. Thus it follows that
    \begin{align*}
        s_1 = \frac{2(\alpha_+-\alpha_-)}{N^2(N-1)+O(N^2)} = \frac{1}{N^2(N-1)+O(N^2)} \int_{S_N} \dd\mu_1.
    \end{align*}
    A similar equation holds for $s_2,s_3$. We have that $(s_1,s_2,s_3)\in \mathcal{O}$. Thus 
    \begin{align*}
        \frac{1}{\text{Vol}(S_N)} \bigg(\int_{S_N} \dd\mu_1, \int_{S_N} \dd\mu_2, \int_{S_N} \dd\mu_3 \bigg) &= \frac{N^2(N-1)+O(N^2)}{(N-1)^3}(s_1, s_2, s_3) \\
        &= (1 + \frac{O(N^2)}{(N-1)^3})(s_1,s_2,s_3),
    \end{align*}
    for some constant $C$ which is in $(1+ O(1/N)) \mathcal{O}$.
\end{proof}
If we smoothen the measures corresponding to the tiling flow over a partition consisting of boxes of the form in Lemma \ref{lem:niceboxes}, we can construct smoothings that satisfy a very nice list of properties. 
\begin{lemma}\label{lem:spreadflows}
	Let $(\mu_1,\mu_2,\mu_3)$ be measures corresponding to a tiling flow $f\in TF_n(R)$. For any $\epsilon>0$, there exists $\nu= (\nu_1,\nu_2,\nu_3)$ satisfying the following properties: 
	\begin{enumerate}
	    \item $\nu_i$ is supported in $R$ for all $i=1,2,3$;
	    \item $\nu_i$ has a density $g_i(x)$ with respect to Lebesgue measure on $\mathbb{R}^3$ for $i=1,2,3$;
	    \item $g = (g_1,g_2,g_3)$ is valued in $(1+O(\epsilon)) \mathcal{O}$ as a distribution;
	    \item $d_W(\mu,\nu) < C(R) (\epsilon n)^{-1}$ where $C(R)$ is a constant depending only on $R$. 
	\end{enumerate}
\end{lemma}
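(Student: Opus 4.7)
The plan is to construct $\nu$ by averaging $\mu$ over a grid of cubes of side length $L/n$ with $L = \lceil 1/\epsilon\rceil$, and then restricting to $R$. Specifically, partition $\mathbb{R}^3$ into axis-aligned cubes $\{C_\alpha\}_{\alpha \in \mathbb{Z}^3}$ of the form
$$C_{(i,j,k)} = \tfrac{1}{n}\bigl([iL,(i+1)L]\times[jL,(j+1)L]\times[kL,(k+1)L]\bigr).$$
For each $\alpha$ with $C_\alpha \cap B_{2/n}(R) \neq \emptyset$, set
$$\bar g_\alpha := \operatorname{Vol}(C_\alpha)^{-1}\bigl(\mu_1(C_\alpha), \mu_2(C_\alpha), \mu_3(C_\alpha)\bigr),$$
and $\bar g_\alpha = 0$ for all other $\alpha$. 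Let $\tilde\nu$ be the vector-valued measure with $i$-th component of constant density $(\bar g_\alpha)_i$ on $C_\alpha$, and set $\nu := \tilde\nu \cdot \mathbbm{1}_R$. Conditions (1) and (2) of the lemma follow immediately; for condition (3), Lemma~\ref{lem:niceboxes} (translated to $C_\alpha$) gives $\bar g_\alpha \in (1+O(1/L))\mathcal{O} = (1+O(\epsilon))\mathcal{O}$. Cubes $C_\alpha$ only partially covered by $\operatorname{supp}(\mu)$ still produce $\bar g_\alpha$ that is a convex combination of an element of $(1+O(1/L))\mathcal{O}$ and $0$, which remains in $(1+O(\epsilon))\mathcal{O}$ by convexity; restricting to $R$ does not alter the density within each cell.

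For condition (4), I would use the triangle inequality $d_W(\mu,\nu) \leq d_W(\mu,\tilde\nu) + d_W(\tilde\nu,\nu)$ and bound each term separately. The first term is handled by applying Lemma~\ref{lem:covered} componentwise to the cover by cubes $C_\alpha$ meeting $B_{2/n}(R)$: by construction the total $\mu_i$-mass and $\tilde\nu_i$-mass on each $C_\alpha$ coincide, so $\delta = 0$; the diameter is $\sqrt 3\, L/n$ (playing the role of $\epsilon$ in Lemma~\ref{lem:covered}); and the number $M$ of such cubes is $O(\operatorname{Vol}(R)(n/L)^3)$. This yields
$$d_W(\mu,\tilde\nu) \leq 3 \cdot M \cdot 10(L/n)^4 = O\!\bigl(\operatorname{Vol}(R)\, L/n\bigr) = O\!\bigl(\operatorname{Vol}(R)/(\epsilon n)\bigr).$$
The second term $d_W(\tilde\nu,\nu)$ is the $L^1$ cost of deleting the mass of $\tilde\nu$ outside $R$. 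Since $\tilde\nu$ has density bounded by $1+O(\epsilon)$ on each component and is supported on cubes intersecting $B_{2/n}(R)$, this excess mass lives in a $O(L/n)$-neighborhood of $\partial R$; as $\partial R$ is piecewise smooth, the volume of that neighborhood is $O(\operatorname{Area}(\partial R)\, L/n)$, so $d_W(\tilde\nu,\nu) = O(\operatorname{Area}(\partial R)/(\epsilon n))$. Combining the two estimates gives $d_W(\mu,\nu) \leq C(R)/(\epsilon n)$ for a constant depending only on $\operatorname{Vol}(R)$ and $\operatorname{Area}(\partial R)$.

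The main obstacle is the boundary contribution. On the one hand, one must verify that the density bound is preserved when a cube $C_\alpha$ is only partially covered by $\operatorname{supp}(\mu)$ or is trimmed by restriction to $R$; this is handled by the convexity remark above. On the other hand, the $L^1$ deletion near $\partial R$ must be comparable with the interior cost $O(\operatorname{Vol}(R)/(\epsilon n))$, which is exactly why the piecewise-smooth assumption on $\partial R$ appears: it gives a finite surface area and hence a boundary-layer volume of the right order. No other step requires more than straightforward application of Lemmas~\ref{lem:niceboxes} and \ref{lem:covered}.
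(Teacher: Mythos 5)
Your construction is essentially the paper's: partition a neighborhood of $R$ into cubes of side $\sim 1/(\epsilon n)$, take the cell-averaged density, restrict to $R$, and deduce conditions (3) and (4) from Lemma~\ref{lem:niceboxes} and Lemma~\ref{lem:covered} respectively. The one small refinement is that the paper applies Lemma~\ref{lem:covered} directly to $\mu$ and the restricted $\nu$ as if $\delta=0$, which is slightly careless for cells straddling $\partial R$; your triangle-inequality split through the unrestricted $\tilde\nu$ (where the per-cell masses genuinely agree) together with a separate $L^1$ deletion estimate over the $O(1/(\epsilon n))$-width boundary shell is a cleaner way to arrive at the same $O(1/(\epsilon n))$ bound.
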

\begin{proof}
Choose $N$ such $N=\lfloor\frac1\epsilon\rfloor$ and a partition of cubes $\mc B = \{B_1,...,B_M\}$ that cover $R$, where each $B_i$ is an $N\times N\times N$ cube in $\frac{1}{n} \m Z^3$ (we define the flow $f$ to be $0$ outside $R$). This can be done so that $M\sim n^3/N^3$. For $i=1,2,3$ and all $B\in \mathcal{B}$, define 
\begin{align*}
    C_B^i := \frac{1}{\text{Vol}(B)} \int_{B} \dd \mu_i = \frac{(N-1)^3}{n^3}\int_{B} \dd \mu_i.
\end{align*}
Define the densities of $\nu_i$ by
\begin{align*}
    g_i(x) =  C_B^{i} \qquad \forall \, x\in B\cap R.
\end{align*}
This satisfies properties 1 and 2. By Lemma \ref{lem:niceboxes}, $(g_1,g_2,g_3)$ is valued in $(1+O(1/N)) \mc O= (1+O(\epsilon))\mc O$ as a distribution which completes property 3. Finally by Lemma \ref{lem:covered} applied to the partition $\mc B$, we have that 
\begin{align*}
    d_W(\mu,\nu) \leq M (N/n)^4\leq C(R)(n/N)^3 (N/n)^4\leq C(R)\frac{1}{\epsilon n}
\end{align*}
where $C(R)$ is a constant depending only on $R$.
\end{proof}

We now return to the proof of the theorem.

\begin{proof}[Proof of Theorem \ref{thm:formal_fine_mesh}] Fix $\epsilon>0$. Recall that $\mu_k = (\mu_k^1,\mu_k^2, \mu_k^3)$ is the triple of measures corresponding to $f_{{n_k}}$. Choose $\epsilon_k =  n_k^{-1/2}$ and let $\nu_k = (\nu_k^1,\nu_k^2, \nu_k^3)$ be the measures constructed in Lemma \ref{lem:spreadflows} for $\epsilon_k>0$. For $k$ large enough so that $d_W(\mu_*,\mu_k)<\epsilon$, by the triangle inequality
\begin{align*}
    d_W(\mu_*, \nu_k) \leq d_W(\mu_*,\mu_k) + d_W(\mu_k, \nu_k) \leq \epsilon + C(R) n_k^{-1/2}.
\end{align*}
Therefore the triple of measures $\nu_k$ also converges to $\mu_*$ in $d_W$. By Lemma \ref{lem:boundedlimits}, there are functions $f_*^i$ such that  $\mu_*^i = f_*^i(x) \dd x$ for $i=1,2,3$, so $f_*$ is a measurable vector field. It remains for us to check the additional properties to show that $f_*$ is an asymptotic flow. 

Since the $\nu_k$ are supported in $R$ for all $k$, so is $f_*$. By Lemma \ref{lem:spreadflows}, the densities of $\nu_k$ are valued in $(1+ O(\epsilon_k)) \mathcal{O}$, so the densities of $\mu_*$ are valued in $\mathcal{O}$ (any open neighborhood is a continuity set, so we get that the averages of $(f_*^1,f_*^2,f_*^3)$ are valued in $(1+ O(\epsilon_k)) \mathcal{O}$ for all $\epsilon_k$. This plus the Lebesgue differentiation theorem imply that $f_*$ is valued in $\mathcal{O}$). On the other hand, convergence in $d_W$ implies weak convergence of the component measures (Corollary \ref{cor: weak convergence wasserstein}). Since $\mu_k$ is divergence-free in the sense of distributions on the interior of $R$ (Proposition \ref{prop:div_free}) for all $k$, $\mu_*$ is also divergence-free in the sense of distributions on the interior of $R$. Thus $f_*\in AF(R)$.
\end{proof}

\begin{thm}\label{thm:AF_compact}
    The metric space $(AF(R), d_W)$ is compact.
\end{thm}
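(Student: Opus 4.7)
The plan is to exhibit $AF(R)$ as a closed subset of a known compact metric space, and thereby deduce compactness. The correspondence $f \mapsto (\mu_1,\mu_2,\mu_3)$ from a measurable vector field $f = (f_1,f_2,f_3)$ on $R$ to its triple of component measures $d\mu_i(x) = f_i(x)\,dx_1 dx_2 dx_3$ is, by construction of $d_W$, an isometric embedding of $AF(R)$ into the product space
\[
X := \mc M_{\text{ac}}^s(R,-1,1)^3, \qquad d\bigl((\mu_i),(\nu_i)\bigr) := \sum_{i=1}^3 \m W_1^{1,1}(\mu_i,\nu_i).
\]
Here the bound $[-1,1]$ on the densities is legitimate because asymptotic flows take values in $\mc O \subset [-1,1]^3$. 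Each factor is compact by Proposition~\ref{prop: wass compactness bounded ac measures}, so $X$ is compact, and it suffices to show the image of $AF(R)$ is closed in $X$.

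So let $f_n \in AF(R)$ with $f_n \to f_*$ in $d_W$. The limiting triple $(\mu_1^*,\mu_2^*,\mu_3^*)$ lies in $X$, so $f_*$ is automatically a measurable vector field supported in $R$ with each component in $[-1,1]$ a.e. Two further conditions must be checked: $f_*$ is divergence-free in the distributional sense on $\text{int}(R)$, and $f_*(x) \in \mc O$ for a.e. $x$. The first is immediate from Corollary~\ref{cor: weak convergence wasserstein}: for any $\phi \in C_c^\infty(\text{int}(R))$, the three weak-convergence statements $\int \partial_{x_i}\phi\,d\mu_i^n \to \int \partial_{x_i}\phi\,d\mu_i^*$ combine (since each $f_n$ is divergence-free) to give $\sum_i \int \partial_{x_i}\phi \cdot f_*^i = 0$.

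For the $\mc O$-valuedness, I will use an averaging and Lebesgue-differentiation argument. Fix an open ball $B \subset \text{int}(R)$. The average
\[
v_B(f) := \frac{1}{|B|}\Bigl(\int_B f^1\,dx,\ \int_B f^2\,dx,\ \int_B f^3\,dx\Bigr)
\]
lies in $\mc O$ whenever $f(x) \in \mc O$ a.e., since $\mc O$ is convex. Using the dual characterization of $\m W_1^{1,1}$ (Proposition~\ref{prop: dual definition wasserstein}) and approximating $\mathbf{1}_B$ uniformly by bounded Lipschitz functions (which is possible because each $\mu_i^n$ and $\mu_i^*$ is absolutely continuous, so $\partial B$ is null for all of them), $d_W$-convergence of $f_n$ to $f_*$ yields $v_B(f_n) \to v_B(f_*)$. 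Since $\mc O$ is closed and $v_B(f_n)\in\mc O$ for every $n$, we conclude $v_B(f_*) \in \mc O$ for every such ball $B$. The Lebesgue differentiation theorem applied to each component of $f_*$ then implies $f_*(x) \in \mc O$ for a.e. $x \in R$. This places $f_*$ in $AF(R)$ and completes the proof.

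The only nonroutine step is this last one: preserving the $\mc O$-valued constraint requires slightly more than the abstract weak-convergence closure, which is why the averaging-plus-Lebesgue-differentiation argument is the main technical ingredient; everything else is a direct application of the preceding lemmas and the compactness of $\mc M_{\text{ac}}^s(R,-1,1)$.
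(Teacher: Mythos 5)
Your proof takes essentially the same route as the paper's: embed $AF(R)$ into the compact ambient space of triples of absolutely continuous signed measures via Proposition~\ref{prop: wass compactness bounded ac measures}, and check closedness by verifying that a $d_W$-limit retains the divergence-free property (weak convergence of the component measures) and $\mc O$-valuedness (averaging over balls plus Lebesgue differentiation). The only cosmetic difference is that you deduce absolute continuity of the limit directly from compactness of $\mc M^s_{\text{ac}}(R,-1,1)$, whereas the paper invokes Lemma~\ref{lem:boundedlimits}; both are valid.
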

\begin{proof}

By Proposition \ref{prop: wass compactness bounded ac measures}, the space of triples of measures absolutely continuous with respect to Lebesgue measure, supported in $R$, and valued in $\mc O$ is compact. Since $(AF(R),d_W)$ is a subspace of this, it suffices to show that it is closed. Suppose that $\mu_n = (\mu_n^1, \mu_n^2,\mu_n^3)$ is a sequence in $AF(R)$ with $d_W$-limit $\mu_* = (\mu_*^1,\mu_*^2,\mu_*^3)$. By Lemma \ref{lem:boundedlimits}, $\mu_*^i$ has a density $g_*^i(x)$ for each $i=1,2,3$. Since convergence in $\m W_1^{1,1}$ implies weak convergence (Corollary \ref{cor: weak convergence wasserstein}), $g_* = (g_*^1,g_*^2, g_*^3)$ is divergence-free. To show that $g_*$ is valued in $\mathcal{O}$, note that any open ball $B\subset R$ is a continuity set, so since $\frac{1}{\text{Vol}(B)}(\int_B \dd \mu_n^1, \int_B \dd \mu_n^2, \int_B \dd\mu_n^3)\in \mc O$, the average of $g_*$ over $B$ is also valued in $\mathcal{O}$. Thus $g_*$ is valued in $\mathcal{O}$ by the Lebesgue differentiation theorem. Thus $(AF(R),d_W)$ is compact.  
\end{proof}

Now we will prove that any asymptotic flow can be approximated by a smooth flow which is divergence-free on a slightly smaller region. This is a standard construction which we provide for completeness as it will be used in the next subsection. Essentially all we need to do is to convolve the asymptotic flow with an appropriate smooth bump function. For this, given a region $R$ and $\epsilon>0$ we define\symindex{Chapter 5!$R_{\epsilon}$ - the domain $R$ with an $\epsilon$ boundary removed}
\begin{equation}\label{eq:R_epsilon}
    R_{\epsilon}:=\{x \in R~:~d(x, \partial R)\geq\epsilon\}.
\end{equation}
We will denote the smooth asymptotic flows on a region $R$ by $AF^\infty(R)\subset AF(R)$\symindex{Chapter 5!$AF^\infty(R)$ - the space of smooth asymptotic flows on $R$}. Given $f\in AF(R),g\in AF(R_{\epsilon})$, we say that $d_W(f, g)$ is the distance between $f$ and $g$, where $g$ is extended to be $0$ on
$R\setminus R_{\epsilon}$.
\begin{prop}\label{prop:bump_function}
Fix $f\in AF(R)$. For all $\epsilon>0$ small enough, there is a smooth asymptotic flow $g\in AF^\infty(R_{\epsilon})$ such that
$$d_W(f, g)< K \sqrt{\epsilon}$$
where $K$ is a constant depending only on $R$.
\end{prop}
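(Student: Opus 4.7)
The plan is to construct $g$ by mollifying $f$ and then restricting to $R_\epsilon$. Extend $f$ by zero outside $R$, and fix a standard smooth nonnegative bump function $\phi_\delta$ on $\m R^3$ with support in $B_\delta(0)$ and $\int \phi_\delta = 1$. Here $\delta = \delta(\epsilon)$ is a parameter to be chosen; it will suffice to take $\delta = \epsilon/2$. Set $\tilde g := f * \phi_\delta$ (convolving each component). Standard mollification properties give that $\tilde g$ is smooth on all of $\m R^3$, and convexity of $\mc O$ together with $0 \in \mc O$ ensures $\tilde g$ is valued in $\mc O$ pointwise: each $\tilde g(x)$ is an average of values of $f$ (some possibly $0$ for points outside $R$), and such a sub-convex combination of values in $\mc O$ still lies in $\mc O$.

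The key observation is that $\tilde g$ is divergence-free on $U_\delta := \{x \in \m R^3 : d(x, \partial R) > \delta\}$, which contains the interior of $R_\epsilon$ whenever $\epsilon > \delta$. To verify this distributionally, let $\psi$ be a scalar test function compactly supported in $U_\delta$. Unraveling the convolution,
\[
\int_{\m R^3} \tilde g \cdot \nabla \psi \,\dd x \;=\; \int_{\m R^3} f \cdot \nabla(\psi * \check\phi_\delta)\,\dd x,
\]
where $\check\phi_\delta(y) := \phi_\delta(-y)$. The function $\psi * \check\phi_\delta$ is smooth with support in $\mathrm{supp}(\psi) + B_\delta(0) \subset \mathrm{Int}(R)$, so by the distributional divergence-freeness of $f$ on $\mathrm{Int}(R)$, this integral vanishes.

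Now define $g := \tilde g \cdot \mathbf{1}_{R_\epsilon}$. Then $g$ is supported in $R_\epsilon$, takes values in $\mc O$, is smooth on $\mathrm{Int}(R_\epsilon)$ (where it equals $\tilde g$), and is divergence-free there as a distribution: for any scalar test $\psi$ supported in $\mathrm{Int}(R_\epsilon)$, one has $\int g \cdot \nabla \psi = \int \tilde g \cdot \nabla \psi = 0$ by the previous step. Thus $g \in AF^\infty(R_\epsilon)$, with smoothness understood in the sense of being smooth on the interior of its support. For the Wasserstein estimate, apply the triangle inequality $d_W(f,g) \leq d_W(f,\tilde g) + d_W(\tilde g, g)$. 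The first term is bounded by $C_1 \delta$ since each bit of mass in $f$ is displaced by at most $\delta$ under mollification and the total mass of each component is at most $\text{Vol}(R)$. The second term equals the mass of $\tilde g$ discarded by the restriction, which lies in the tubular region $(R + B_\delta) \setminus R_\epsilon$ of width at most $\delta + \epsilon$ about $\partial R$; since $\partial R$ is piecewise smooth with finite area, this region has volume at most $C_2(\delta + \epsilon)$, so $d_W(\tilde g, g) \leq C_3(\delta + \epsilon)$. Choosing $\delta = \epsilon/2$ yields $d_W(f,g) \leq K' \epsilon \leq K\sqrt\epsilon$ for $\epsilon$ small.

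The main obstacle is balancing the three requirements on $g$: smoothness, support in $R_\epsilon$, and divergence-freeness. Plain convolution produces a smooth divergence-free flow whose support extends slightly outside $R$; a hard restriction to $R_\epsilon$ corrects the support and preserves distributional divergence-freeness on $\mathrm{Int}(R_\epsilon)$ but introduces a jump at $\partial R_\epsilon$. Conversely, a smooth cutoff would introduce $(\nabla \chi)\cdot \tilde g$ in the divergence and spoil divergence-freeness in an annulus inside $R_\epsilon$, which we cannot tolerate. We resolve the tension by accepting a discontinuity at $\partial R_\epsilon$, interpreting $AF^\infty$ as requiring smoothness only in the interior of the support. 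The final Wasserstein cost is then dominated by the thin-tube contribution near $\partial R$, bounded by the piecewise smoothness of $\partial R$, and comes out comfortably below the claimed $K\sqrt\epsilon$ threshold.
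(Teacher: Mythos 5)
Your construction — mollify $f$ against a small bump and restrict to $R_\epsilon$ — is exactly the construction in the paper's proof, but your Wasserstein estimate is handled differently and is in fact sharper. The paper bounds $d_W(f,f*\phi)$ by partitioning $R_\epsilon$ into boxes of side length $\delta$, estimating $|\nu_i(B)-\mu_i(B)| \lesssim \epsilon\delta^2$ on each box, and feeding this into the covering lemma (Lemma~\ref{lem:covered}); optimizing over $\delta\sim\sqrt{\epsilon}$ then gives the stated $O(\sqrt{\epsilon})$. You instead observe that mollification by a bump of radius $\delta$ displaces mass by at most $\delta$; made rigorous via the dual characterization in Proposition~\ref{prop: dual definition wasserstein} (for $1$-Lipschitz $\varphi$, $\|\varphi - \varphi*\check\phi_\delta\|_\infty\le\delta$, so $\m W_1^{1,1}(\mu_i,\mu_i*\phi_\delta)\le \delta\,|\mu_i|_{\mathrm{TV}}$), or equivalently by writing $\mu_i*\phi_\delta=\int \mu_i(\cdot-y)\,\phi_\delta(y)\,\dd y$ and using convexity, this gives $O(\delta)$ with no box-tuning. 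Combined with the $O(\delta+\epsilon)$ cost of deleting the collar, you obtain $d_W(f,g)=O(\epsilon)$, which is strictly stronger than the $O(\sqrt\epsilon)$ the statement asks for (and trivially implies it for $\epsilon$ small). So your argument is correct and in fact cleaner; the paper's $\sqrt{\epsilon}$ is an artifact of its box-decomposition method, not an intrinsic obstruction. Your remaining steps (divergence-freeness of the mollification on $U_\delta$ via pushing the test function through the convolution, $\mc O$-valuedness via convexity and $0\in\mc O$, and the reading of $AF^\infty(R_\epsilon)$ as smoothness in the interior) all match the paper's treatment.
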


\begin{proof} 
Consider a bump function $\phi\in C^{\infty}(B_{\epsilon}(0))$, that is, it is a non-negative smooth function such that $\phi|_{\partial B_{\epsilon}(0)}=0$ and $\int_{B_{\epsilon}(0)}\phi(x)\,\dd x=1$.

Let $g= f\ast \phi|_{R_{\epsilon}}$ and suppose that $\psi$ is a smooth test function with compact support in the interior of $R_\epsilon$. To check that $g$ is divergence-free in the interior of $R_\epsilon$ we look at the integral
\begin{eqnarray*}
\int_{R_{\epsilon}}(\nabla\psi\cdot g)(x)\,\dd x&=&
\int_{R_{\epsilon}}\int_{B_\epsilon( 0)}(\nabla\psi\cdot f)(x-y)\phi(y)\,\dd y\,\dd x\\
&=&\int_{B_\epsilon( 0)}\int_{R_{\epsilon}}(\nabla\psi\cdot f)(x-y)\phi(y)\,\dd x\,\dd y=0.
\end{eqnarray*}
Here the last equality uses that $f$ is divergence-free in the interior of $R$. We have that $g(x)=f\ast \phi(x)$ is an average of elements in $\mathcal O$. Since $\mathcal O$ is convex it follows that $g$ takes values in $\mathcal O$. Finally we estimate $d_W(f,g)$. The amount of mass which we might have to delete or add from $R\setminus R_{\epsilon}$ is bounded by
$6(\text{Vol}(R\setminus R_{\epsilon}))\leq c\epsilon^2$ where $c$ depends only on $\partial R$. Now let $(\mu_1, \mu_2, \mu_3)$ and $(\nu_1, \nu_2, \nu_3)$ be the measures corresponding to $f$ and $g$ respectively. Let $\delta>0$. We have that if $B$ is a box of side length $\delta$ contained in $R_{\epsilon}$ then
$$\nu_i(B)=\int_{B}\int_{B_{\epsilon}(0)} f_i(x- y)\phi(y)\,\dd y\,\dd x= \int_{B_{\epsilon}(0)}\mu_i(B-y)\phi(y)\,\dd y.$$
It follows that $|\nu_i(B)-\mu_i(B)|$ is less than the volume of the annular region around $B$ of radius $\epsilon$. Thus we have that
$$|\nu_i(B)-\mu_i(B)|< C\epsilon\delta^2$$ where $C$ is independent of $\epsilon$ and $\delta$. Partition $R_{\epsilon}$ into boxes $B_1, B_2, \ldots B_M$; where $M\sim \delta^{-3}$. By Lemma \ref{lem:covered} we have that
$$\m W^{1,1}_1(\mu_i, \nu_i)< M(10\delta^4 + C \epsilon \delta^2) + c\epsilon^2\sim \delta^{-3}(10\delta^4 + C \epsilon \delta^2) + c\epsilon^2$$
Since $\delta$ is a free parameter, we can take $\delta \sim \sqrt{\epsilon}$ to complete the proof.
\end{proof}

\subsection{Boundary values of asymptotic flows}\label{sec: boundary values of asymptotic flows}

In this section we define the boundary values of asymptotic flows on $\partial R$. In fact we do something slightly more general, and define the restriction of an asymptotic flow (or tiling flow in the next subsection) on a whole class of surfaces, namely \symindex{Chapter 5!$\m S(R)$ - compact piecewise smooth surfaces contained in $R$}
\begin{align*}
    \m S(R) = \{\text{compact piecewise smooth surfaces contained in }R\}
\end{align*}
Note that $R$ is closed, so $\partial R \in \m S(R)$. This general set up will make things easier to prove. We also use the trace operator for other surfaces in Section \ref{sec:generalized_patching}. Recall that $AF^\infty(R)\subset AF(R)$ is the smooth asymptotic flows. \termindex{Chapter 5!trace operator smooth asymptotic flows}\symindex{Chapter 5!$T : AF^\infty(R)\times \m S(R) \to \mc M^s(R)$ - the trace operator on smooth asymptotic flows}

\begin{definition}\label{def: trace on smooth flows}
We define the \textit{trace operator} on smooth asymptotic flows 
\begin{align*}
    T : AF^\infty(R)\times \m S(R) \to \mc M^s(R).
\end{align*}
by
 \begin{align*}
     T(f,S)(x) = \langle f(x), \xi_S(x)\rangle \dd \sigma_S(x),\qquad x\in S,
 \end{align*}
 where $\dd \sigma_S$ denotes surface area measure on $S$ and $\xi_S(x)$ denotes the $L^2$ unit normal vector to $S$ at $x$.
\end{definition}
We show that $T(\cdot,S)$ extends to a uniformly continuous map $(AF(R),d_W)\to (\mc M^s(R), \m W_1^{1,1})$ for all $S\in \m S(R)$. We do this in three main steps: 
\begin{itemize}
    \item Show that $T(\cdot,S)$ is uniformly continuous on $AF^\infty(R)$ when $S$ is a small patch. 
    \item Extend this result to $AF(R)$ by approximation and compatibility results. Since we don't know if $AF^\infty(R)$ is dense in $AF(R)$, this requires slightly more care. 
    \item Extend the uniform continuity result to general $S\in \m S(R)$ by putting together the patches. 
\end{itemize}

\begin{prop}\label{prop: uniform continuity of boundary operator_smooth_1}
	Suppose $S\in \m S(R)$ and is such that there exists a nonzero vector $v$ and a parameter $\theta>0$ for which $S+tv$ is contained in $\text{Int}(R)$ and disjoint for all $0\leq t \leq \theta$. Then for all $\epsilon>0$ there exists $\delta>0$ such that for all  $f, g\in AF^\infty(R)$ with $d_W(f, g)<\delta$, we have that $$\m W^{1,1}_1(T(f, S), T(g, S))<\epsilon.$$
\end{prop}

\begin{proof}
Fix two parameters $\gamma_1, \gamma_2>0$ which we will specify at the end of the proof. Partition $S$ into patches $\alpha_1,\ldots \alpha_M$ such that 
\begin{itemize}
	\item $\alpha_i$ is a smooth surface with piecewise smooth boundary for all $i=1,...,M$.
	\item $\alpha_i$ has diameter at most $\gamma_1$ for all $i$, and $M\leq C \gamma_1^{-2}$ for some constant $C$ depending on $S$.
	\item Let $\alpha_i(t):=\alpha_i+ tv$ for $0\leq t\leq \theta$. For all $i=1,...,M$, $\alpha_i(t)\cap \alpha_i(s) = \emptyset$ for $s\neq t$.
\end{itemize}
Let 
$\mu_f=T(f, S)$, $\mu_g=T(g, S)$ and define $\Delta>0$ by
\begin{equation}\label{eq:Delta_1}
	\Delta := \sup_{1\leq i\leq M} \bigg|\mu_f(\alpha_i)-\mu_g(\alpha_i)\bigg|.
\end{equation}
By the two-dimensional version of Lemma \ref{lem:covered},
\begin{equation}\label{eq:boundary_dist_Delta_bound_1}
	\m W^{1,1}_1(T(f, S),T(g, S)) \leq M(10\gamma_1^3 + \Delta) \leq 10 C \gamma_1 + C \gamma_1^{-2} \Delta. 
\end{equation}
Note that the power of $\gamma_1$ is $3$ instead of $4$ because $S$ is two-dimensional. It remains to find a bound for $\Delta$ in terms of $d_W(f,g)$. 

If $h\in AF^\infty(R)$, then $h$ is divergence-free and hence its flux through any closed surface is zero. This implies that there exists a threshold $\theta_{\gamma_2}\in (0,1)$ such that for all $0\leq t\leq \theta_{\gamma_2}$ and $h\in AF^\infty(R)$,
\begin{align}
	\sup_{1\leq i\leq M}\bigg|\int_{\alpha_i} \langle h(x), \xi(x)\rangle\, \dd \sigma_{\alpha_i}(x) - \int_{\alpha_i(t)} \langle h (x), \xi(x) \rangle\, \dd \sigma_{\alpha_i(t)}(x)\bigg| \leq \gamma_2.\label{equation: surface measures close_1}
\end{align}
Here $\xi(x)$ is the normal vector on the surfaces $\alpha_i(t)$ with appropriate orientation and we are applying the divergence theorem to the boundary of the region $U_i=\cup_{s=0}^t\alpha_i(t)$. Since $h$ takes values in the compact set $\mc O$, the threshold $\theta_{\gamma_2}$ can be taken independent of $h$. Applying this to $f,g\in AF^\infty(R)$ it follows that
\begin{align*}
	\sup_{1\leq i\leq M}\bigg|\int_{\alpha_i} \langle f(x)-g(x), \xi(x)\rangle\, \dd \sigma_{\alpha_i}(x) - \int_{\alpha_i(t)} \langle f (x)- g(x), \xi(x) \rangle\, \dd \sigma_{\alpha_i(t)}(x)\bigg| \leq 2\gamma_2
\end{align*}
for all $0\leq t\leq \theta_{\gamma_2}$. Observe that the first term in the inequality is $\mu_{ f}(\alpha_i)- \mu_{ g}(\alpha_i)$. Integrating over $t\in (0,\theta_{\gamma_2})$, 
\begin{align*}
	\bigg| \bigg| \int_{0}^{\theta_{\gamma_2}}\bigg(\int_{\alpha_i(t)} \langle f (x)-g(x), \xi(x)\rangle\, \dd \sigma_{\alpha_i(t)} (x)\bigg)\, \dd t\bigg| - \theta_{\gamma_2} (\mu_f(\alpha_i)-\mu_g(\alpha_i))\bigg| \leq 2\theta_{\gamma_2} \gamma_2. 
\end{align*}
Taking the supremum over $i=1,...,M$ we get that
\begin{align*}
	\Delta\leq 2\gamma_2 + \frac{1}{\theta_{\gamma_2}}\sup_{1\leq i \leq M} \bigg| \int_{0}^{\theta_{\gamma_2}}\bigg(\int_{\alpha_i(t)} \langle f (x)-g(x), \xi(x)\rangle\, \dd \sigma_{\alpha_i(t)} (x)\bigg)\, \dd t\bigg|.
\end{align*}
Suppose the supremum on the right hand side of the equation is achieved by the index $i$, and let $\alpha(t):=\alpha_i(t)$ to simplify notation. Then plugging this into \eqref{eq:boundary_dist_Delta_bound_1} gives
\begin{equation}\label{eq:trace_distance_bound_1}
	\m W^{1,1}_1(T(f, S), T(g, S)) 
 \end{equation}
$$\leq 10 C \gamma_1 + 2C\gamma_1^{-2} \gamma_2+ C \gamma_1^{-2} \theta_{\gamma_2}^{-1} \bigg| \int_{0}^{\theta_{\gamma_2}}\bigg(\int_{\alpha(t)} \langle f (x)-g(x), \xi(x) \rangle\, \dd \sigma_{\alpha(t)} (x)\bigg)\, \dd t\bigg|.$$
Since $\alpha_i$ is smooth, by an appropriate change of variables we can rewrite the integral above as an integral over $U = \cup_{t=0}^{\theta_{\gamma_2}} \alpha(t)$.
\begin{align*}
    \int_{0}^{\theta_{\gamma_2}}\bigg(\int_{\alpha(t)} \langle f (x)-g(x), \xi(x) \rangle\, \dd \sigma_{\alpha(t)} (x)\bigg)\, \dd t &= \int_{U} \langle f(x)-g(x), \xi(x)\rangle \varphi(x) \, \dd x_1 \dd x_2 \dd x_3\\&= \sum_{k=1}^3 \int_{U} (f_k(x)-g_k(x)) \xi_k(x) \varphi(x) \, \dd x_1 \, \dd x_2 \, \dd x_3. 
\end{align*}
Here $\varphi(x)$ is the factor coming from the Jacobian in the change of variables. Since $\alpha(t)$ is smooth for all $t\in[0,\theta_{\gamma_2}]$, $\varphi(x)$ and $\xi(x)$ are both smooth functions on $U$, so $\psi_k(x):=\xi_k(x) \varphi(x)$ is a smooth and therefore Lipschitz function on $U$. Let $\lambda$ denote the maximum of Lipchitz constants of $\psi_k$, $k=1,2,3$. Then by the dual definition of the Wasserstein metric (Proposition \ref{prop: dual definition wasserstein}), 
\begin{align*}
    \int_{0}^{\theta_{\gamma_2}}\bigg(\int_{\alpha(t)} \langle f (x)-g(x), \xi(x) \rangle\, \dd \sigma_{\alpha(t)} (x)\bigg)\, \dd t \leq \lambda\, d_W(f\mid_U, g\mid_U).
\end{align*}
By Lemma \ref{lem:constant_order_box_Wass_bound}, there is a constant $C(U)$ such that if $d_W(f,g)<\delta$ then $d_W(f\mid_U,g\mid_U) < \delta + C(U) \delta^{1/2}$. Therefore substituting this in to Equation \eqref{eq:trace_distance_bound_1} gives that if $d_W(f,g)<\delta$ then
\begin{align*}
	\m W^{1,1}_1(T(f, S), T(g, S)) \leq 10 C \gamma_1 + 2C\gamma_1^{-2} \gamma_2+ C \gamma_1^{-2} \theta_{\gamma_2}^{-1} \lambda(\delta+ C(U) \, \delta^{1/2}).
\end{align*}
Taking $\gamma_2=\gamma_1^3$ completes the proof.
\end{proof}

We will now prove that perturbing $S$ by a small amount does not change the trace very much.

\begin{prop}\label{prop: uniform continuity of boundary operator_smooth_2}
	Let $S\in \m S(R)$ be a surface satisfying the conditions of Proposition \ref{prop: uniform continuity of boundary operator_smooth_1} for some vector $v$ and threshold $\theta>0$.
	For all $\epsilon>0$ there exists $\delta>0$ such that for all  $t<\delta$ and $f\in AF^\infty(R)$, we have that $$\m W^{1,1}_1(T(f, S), T(f, S+tv))<\epsilon.$$
\end{prop}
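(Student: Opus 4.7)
The plan is to bound $\m W^{1,1}_1(T(f,S), T(f, S+tv))$ by splitting it through a pushforward. Let $\Phi: S \to S+tv$ denote translation by $tv$. By the triangle inequality,
\[ \m W^{1,1}_1(T(f,S), T(f, S+tv)) \leq \m W^{1,1}_1(T(f,S), \Phi_* T(f,S)) + \m W^{1,1}_1(\Phi_* T(f,S), T(f, S+tv)). \]
The first term is bounded by $t|v|$ times the total variation of $T(f,S)$: transporting every point $x \in S$ to $x+tv$ implements the pushforward, at a cost of $|v|t$ per unit of absolute mass. Because $f$ takes values in $\mathcal O$, we have $|\langle f, \xi_S\rangle| \leq 1$, so the total variation of $T(f,S)$ is at most the surface area of $S$, yielding a bound $C_S \cdot t$.

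The second term compares two signed measures that both live on $S+tv$. I would partition $S$ into patches $\alpha_1, \ldots, \alpha_M$, each of diameter at most $\gamma$ and perimeter at most $C_S\gamma$, with $M \leq C_S/\gamma^2$. For each $i$, let $U_i = \bigcup_{0 \leq s \leq t}(\alpha_i + sv)$; the disjointness assumption on the translates in Proposition \ref{prop: uniform continuity of boundary operator_smooth_1} makes $U_i$ an honest region (for $t < \theta$) with boundary consisting of $\alpha_i$, $\alpha_i + tv$, and the side walls swept out by $\partial \alpha_i$. Since $f$ is smooth and divergence-free on $R$, the divergence theorem on $U_i$ gives
\[ \Big| \int_{\alpha_i+tv} \langle f, \xi\rangle\, d\sigma - \int_{\alpha_i} \langle f, \xi\rangle\, d\sigma \Big| \leq \text{(flux through side walls)} \leq C_S\, t\, \gamma, \]
again using $|f| \leq 1$ on $\mathcal O$. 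The left-hand quantity is exactly $\big|T(f,S+tv)(\alpha_i+tv) - \Phi_* T(f,S)(\alpha_i+tv)\big|$. Applying the two-dimensional analogue of Lemma \ref{lem:covered} to the partition $\{\alpha_i + tv\}$ of $S+tv$ (diameter $\leq \gamma$, per-patch mass discrepancy $\leq C_S t \gamma$) then gives
\[ \m W^{1,1}_1(\Phi_* T(f,S), T(f, S+tv)) \leq M\,(10\gamma^3 + C_S t\gamma) \leq C_1 \gamma + C_2\, t/\gamma. \]

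Combining the two bounds and optimizing (choose $\gamma \sim \sqrt{t}$) yields
\[ \m W^{1,1}_1(T(f,S), T(f, S+tv)) \leq C_S\, t + C_3 \sqrt{t}, \]
which tends to zero as $t \to 0$ uniformly in $f \in AF^\infty(R)$. The uniformity in $f$ is the whole point, and what makes it work is that the only properties of $f$ used are (i) the pointwise bound $|f| \leq 1$ coming from $f \in \mathcal O$, and (ii) the vanishing distributional divergence. No regularity estimate on $f$ itself enters, which is fortunate since we have none uniform over $AF^\infty(R)$. There is no serious obstacle here: the argument is essentially the same divergence-theorem-plus-patching strategy already used in Proposition \ref{prop: uniform continuity of boundary operator_smooth_1}, adapted to compare one flow on two nearby surfaces instead of two flows on the same surface.
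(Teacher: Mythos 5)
Your proof is correct, and it rests on the same two ingredients as the paper's: the divergence theorem (exploiting $\text{div}\,f=0$ together with the pointwise bound $|\langle f,\xi\rangle|\leq 1$ coming from $f$ valued in $\mathcal O$) to control the per-patch flux discrepancy, and the partition-and-transport argument of Lemma~\ref{lem:covered}. The organizational difference is real but minor. The paper builds the transport directly on each patch: it deletes a discrepancy of size $\gamma_2$ (controlled implicitly via a threshold $\theta_{\gamma_2}$) and moves the remaining mass from $\alpha_i$ to $\alpha_i(t)$ at cost $\leq 4C\gamma_1^2(t+\gamma_1)$, then sums over patches and fixes $\gamma_2=\gamma_1^3$. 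You instead factor through the rigid pushforward $\Phi_*T(f,S)$, so the first leg is a pure translation costing $t|v|$ times the total variation of $T(f,S)$, which is $O(t)$, and the second leg compares $\Phi_*T(f,S)$ with $T(f,S+tv)$ on the common surface $S+tv$, where the divergence theorem on the prisms $U_i$ gives the explicit discrepancy bound $C_S\,t\gamma$ rather than an implicit threshold. Optimizing $\gamma\sim\sqrt{t}$ then yields a modulus of continuity $O(\sqrt{t})$, uniform over $f\in AF^\infty(R)$. Both presentations are valid; yours has the modest benefit of making the $t$-dependence fully quantitative.
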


\begin{proof} This proof is much simpler than that of Proposition \ref{prop: uniform continuity of boundary operator_smooth_1}. As in that proof, we take parameters $\gamma_1, \gamma_2>0$ to be fixed later and choose a partition of $\alpha_1\ldots, \alpha_M$ of $S$ such that $\alpha_i$ has diameter less than $\gamma_1$ for all $i$, and $M\leq C\gamma_1^{-2}$ for some $C>0$ independent of $\gamma_1$. In addition, by choosing a larger constant $C$ if necessary we can assume that 
$$\sigma_{S}(\alpha_i)\leq C\gamma_1^2.$$
As in \eqref{equation: surface measures close_1}, there exists a threshold $\theta_{\gamma_2}\in (0,1)$ such that for all $0\leq t\leq \theta_{\gamma_2}$ and $f\in AF^\infty(R)$,
\begin{align}\label{equation: surface measures close_3}
	\sup_{1\leq i\leq M}\bigg| T(f, S)(\alpha_i)- T(f, S+tv)(\alpha_i(t)) \bigg| \leq \gamma_2.
\end{align}
To get an upper bound for the distance between the traces on each patch, we give a method for redistributing, adding, and deleting mass to transform $T(f, S)\mid_{\alpha_i}=T(f,\alpha_i)$ into $T(f,\alpha_i(t))$. Note that these are both are signed measures absolutely continuous with respect to $\sigma_S$ and $\sigma_{S+tv}$ respectively, and both have densities bounded between $-1$ and $1$. We can transform one to the other by (1) adding $\gamma_2$ flow, (2) moving flow distance at most $t+\gamma_1$. There is at most $4 \sigma_S(\alpha_i)$ total flow from both measures. Hence
$$\m W^{1,1}_1(T(f, S)|_{\alpha_i}, T(f, S+tv)|_{\alpha_i(t)})\leq \gamma_2+ 4(t+\gamma_1)(\sigma_{S}(\alpha_i))\leq \gamma_2+ 4C(t+\gamma_1)\gamma_1^2.$$
As in the proof of Lemma \ref{lem:covered}, combining the contributions of the patches,
\begin{eqnarray*}
 \m W^{1,1}_1(T(f, S), T(f, S+tv))\leq M\left(\gamma_2+ 4C(t+\gamma_1)\gamma_1^2\right)\leq C\gamma_1^{-2}\gamma_2+ 4C^2(t+\gamma_1).
\end{eqnarray*}
As before, by setting $\gamma_2=\gamma_1^3$, the result follows.
\end{proof}

Using Proposition \ref{prop: uniform continuity of boundary operator_smooth_1} and Proposition \ref{prop: uniform continuity of boundary operator_smooth_2}, when $S\in \m S(R)$ satisfies the conditions of Proposition \ref{prop: uniform continuity of boundary operator_smooth_1} for a vector $v$ and threshold $\theta>0$, we can extend the definition of $T(S,\cdot)$ to all of $AF(R)$ as follows. Given $t>0$, there exists $K$ such that for all $k\geq K$, $S(t)\subset R_{1/k}$. Here recall that 
\begin{align*}
    R_{1/k} = \{x\in R ~:~ d(x,\partial R) \geq 1/k\}.
\end{align*}
By Proposition \ref{prop:bump_function}, for any $f\in AF(R)$ we can find a sequence $g_k\in AF^\infty(R_{1/k})$ such that $d_W(f, g_k)\to 0$ as $k\to \infty$. For $t>0$, we define
\begin{equation}\label{eq: AF(R) trace}
    T(f,S(t)) := \lim_{k\to \infty} T(g_k,S(t))
\end{equation}
where the limit is taken with respect to the metric $d_W$ on flows supported in $R$. Note that if $R'\subset R$, then the projection map $AF^\infty(R) \to AF^\infty(R')$ given by $f\mapsto f\mid_{R'}$ is continuous. By Proposition \ref{prop: uniform continuity of boundary operator_smooth_1}, $T$ is uniformly continuous on $AF^\infty(R_{1/k})$ for any $k>0$, so the limit in Equation \eqref{eq: AF(R) trace} is independent of the approximating sequence $(g_k)_{k\geq 1}$ and converges to $T(f,S+tv)$ if $f\in AF^\infty(R)$. Since $(AF(R),d_W)$ is compact (Theorem \ref{thm:AF_compact}), $T(\cdot, S(t)):AF(R)\to \mc M^s(R)$ is a uniformly continuous map for $t>0$. 

Further, by a variant of Lemma \ref{lem:boundedlimits} where 3-dimensional Lebesgue measure is replaced by the surface area measure $\sigma_{S(t)}$, for any $f\in AF(R)$, $T(f,S(t))$ is a signed measure absolutely continuous to $\sigma_{S(t)}$ with density bounded between $-1$ and $1$. By Proposition \ref{prop: uniform continuity of boundary operator_smooth_2}, for any $\epsilon>0$, there exists $\delta>0$ so that if $t,s>0$ and $|t-s|<\delta$ then
\begin{equation}\label{eq: prop 5 for AF(R)}
    \m W_1^{1,1}(T(f,S(s)), T(f,S(t))< \epsilon.
\end{equation}
Thus we take another limit in the $d_W$ topology to define 
\begin{align*}
    T(f,S):= \lim_{t\to 0} T(f,S(t)). 
\end{align*}
Since $AF(R)$ is compact and the extension above is continuous, we get analogs of Proposition \ref{prop: uniform continuity of boundary operator_smooth_1} and Proposition \ref{prop: uniform continuity of boundary operator_smooth_2} for $AF(R)$.

\begin{prop}\label{prop: uniform cont for patches on AF(R)}
    If $S\in \m S(R)$ is a surface satisfying the conditions of Proposition \ref{prop: uniform continuity of boundary operator_smooth_1} for a vector $v$ and threshold $\theta>0$, then 
    \begin{enumerate}
        \item Using the extension described above, 
        \begin{align*}
        T(\cdot,S):(AF(R),d_W)\to (\mc M^s(R), \m W_1^{1,1})
    \end{align*}
    is uniformly continuous. 
        \item Given $\epsilon >0$ there exists $\delta>0$ such that for all $t<\delta$ and $f\in AF(R)$, \begin{align*}
        \m W_{1}^{1,1}(T(f,S),T(f,S(t)) < \epsilon.
    \end{align*}
     \item  For any $f\in AF(R)$, $T(f,S)$ is a signed measure absolutely continuous with respect to the surface area measure $\sigma_S$ with density bounded between $-1$ and $1$.
    \end{enumerate}
\end{prop}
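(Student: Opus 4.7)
My plan is to bootstrap the three claims from the results already established on $AF^\infty(R)$ (Propositions~\ref{prop: uniform continuity of boundary operator_smooth_1} and~\ref{prop: uniform continuity of boundary operator_smooth_2}) using the density of smooth flows (Proposition~\ref{prop:bump_function}), compactness of $(AF(R), d_W)$ (Theorem~\ref{thm:AF_compact}), and the triangle inequality. The extensions $T(\cdot, S+tv)$ to $AF(R)$ for each fixed $t > 0$ are defined as continuous limits, so they automatically inherit a modulus of uniform continuity from the smooth case; the only subtlety is to control things uniformly in $t$ as well.

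For part (2), I would fix $\epsilon > 0$ and apply Proposition~\ref{prop: uniform continuity of boundary operator_smooth_2} to get $\delta_0 > 0$ so that $\m W_1^{1,1}(T(g, S), T(g, S + tv)) < \epsilon/3$ for every $g \in AF^\infty(R)$ and every $t < \delta_0$. Given $f \in AF(R)$, choose smooth $g_k \in AF^\infty(R_{1/k})$ with $d_W(f, g_k) \to 0$. Inspecting the proof of Proposition~\ref{prop: uniform continuity of boundary operator_smooth_1} applied to $S + tv$ in place of $S$, the modulus of continuity is governed by constants (the partition scale, the transverse parameter $\theta_{\gamma_2}$, the Lipschitz constant $\lambda$, and the constant $C(U)$) that depend only on the geometry of the tube $\bigcup_{s \in [0, \theta]} (S + sv)$, so a common modulus $\omega$ can be chosen to work uniformly for all $t \in [0, \theta/2]$. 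Taking $k$ large so $\omega(d_W(f, g_k)) < \epsilon/3$, the triangle inequality
\[ \m W_1^{1,1}(T(f, S), T(f, S + tv)) \leq \m W_1^{1,1}(T(f, S), T(g_k, S)) + \m W_1^{1,1}(T(g_k, S), T(g_k, S + tv)) + \m W_1^{1,1}(T(g_k, S + tv), T(f, S + tv)) \]
yields the desired bound uniformly in $f$ and in $t < \delta_0$.

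Part (1) then follows immediately: given $\epsilon > 0$, use part (2) to pick $t > 0$ small enough that $\sup_{f \in AF(R)} \m W_1^{1,1}(T(f, S), T(f, S + tv)) < \epsilon/3$; uniform continuity of $T(\cdot, S + tv)$ on $AF(R)$ for this fixed $t$ (established in the discussion preceding the proposition) provides $\delta > 0$ with $d_W(f_1, f_2) < \delta \implies \m W_1^{1,1}(T(f_1, S + tv), T(f_2, S + tv)) < \epsilon/3$; combining via the triangle inequality shows $T(\cdot, S)$ is uniformly continuous. For part (3), in the smooth case $T(g, S + tv)$ has density $\langle g, \xi \rangle$ with respect to $\sigma_{S + tv}$, and this is bounded in absolute value by $1$ because $g$ is $\mathcal O$-valued and hence satisfies $\|g(x)\|_2 \leq \|g(x)\|_1 \leq 1$. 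The excerpt already notes that for each $t > 0$ this bound passes to $T(f, S + tv)$ via the surface-measure analog of Lemma~\ref{lem:boundedlimits}. To carry it across the $t \to 0$ limit defining $T(f, S)$, I would pull all the measures $T(f, S + tv)$ back to $S$ via the diffeomorphism $x \mapsto x + tv$ (absorbing a Jacobian factor $1 + O(t)$), so that they lie in the common space of signed measures on $S$; the Wasserstein convergence together with the density bound and the Riesz representation argument of Lemma~\ref{lem:boundedlimits} then gives that $T(f, S)$ is absolutely continuous with respect to $\sigma_S$ with density bounded by $1$.

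The main obstacle will be the uniformity-in-$t$ claim about the modulus in Proposition~\ref{prop: uniform continuity of boundary operator_smooth_1}; this requires a careful rereading of that proof to confirm that every geometric constant can be chosen once and for all depending only on the ambient tube $\bigcup_{s \in [0, \theta]} (S + sv)$, and not on the particular leaf $S + tv$.
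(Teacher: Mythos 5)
Your proposal follows essentially the same route the paper takes — the paper states Proposition~\ref{prop: uniform cont for patches on AF(R)} as a direct consequence of the extension procedure just described (the iterated $d_W$-limits over $k\to\infty$ and $t\to 0$, using compactness of $AF(R)$) without a separate formal proof, and you are simply filling in the details of that implicit argument. You correctly flag the one genuinely delicate point, namely that the modulus in Propositions~\ref{prop: uniform continuity of boundary operator_smooth_1} and~\ref{prop: uniform continuity of boundary operator_smooth_2} must be taken uniform over the base leaf $S(t)$, $t \in [0,\theta/2]$; the paper asserts the uniformity (in equation~\eqref{eq: prop 5 for AF(R)} it slides from comparing $S$ with $S(t)$ to comparing $S(s)$ with $S(t)$) but does not explicitly check it, and your diagnosis that it comes down to the geometric constants in those proofs depending only on the tube $\bigcup_{s\in[0,\theta]} S(s)$ is exactly right.

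One small wrinkle worth fixing: in your triangle inequality for part (2) you use the term $\m W_1^{1,1}(T(f,S),T(g_k,S))$, but $g_k \in AF^\infty(R_{1/k})$ and if $S \subset \partial R$ then $S$ may lie outside $R_{1/k}$, so $T(g_k,S)$ need not be defined. The cleaner version compares two interior leaves $S(s)$, $S(t)$ with $s,t > 0$: first use the definition $T(f,S) = \lim_{s\to 0} T(f,S(s))$ to replace $T(f,S)$ by $T(f,S(s))$ for a small fixed $s$, then insert $T(g_k,S(s))$ and $T(g_k,S(t))$ (both defined for $k$ large) and apply the (uniform-in-leaf) Proposition~\ref{prop: uniform continuity of boundary operator_smooth_2} to the pair $S(s)$, $S(t)$. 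With that substitution the argument goes through as you describe. Your treatment of parts (1) and (3), including the observation that $\lvert\langle g,\xi\rangle\rvert \le \lVert g\rVert_2 \le \lVert g\rVert_1 \le 1$ for $g\in\mathcal O$ and the pullback via the diffeomorphism $x\mapsto x+tv$ to run the Riesz-representation argument on a common measure space, is correct and consistent with the paper.
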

We now prove a compatibility result for traces on overlapping surfaces. Using this, we can extend the continuity theorems to the trace operator for any $S\in \m S(R)$ by cutting $S$ in patches which satisfy the conditions of Proposition \ref{prop: uniform continuity of boundary operator_smooth_1}.

\begin{lemma}\label{lemma: compatibility of Wasserstein_1}
	Suppose that $S,S'\in \m S(R)$ are surfaces satisfying the conditions of Proposition \ref{prop: uniform continuity of boundary operator_smooth_1} such that $S'\subset S$. Then for any $f\in AF(R)$, then 
	$$T(f,S)\mid_{S'} = T(f, S').$$ 
\end{lemma}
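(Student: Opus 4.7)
The plan is to verify the claim first for smooth flows directly from Definition~\ref{def: trace on smooth flows}, and then extend to all of $AF(R)$ by a test-function argument that bypasses the fact that restriction to a subset is generally not continuous in $\m W_1^{1,1}$.

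First I would handle $g \in AF^\infty(R)$. Here $T(g,S)$ is by definition the signed measure with density $\langle g, \xi_S\rangle$ with respect to $\sigma_S$, and likewise $T(g,S')$ has density $\langle g, \xi_{S'}\rangle$ with respect to $\sigma_{S'}$. Since $S' \subseteq S$, the unit normals can be oriented consistently so that $\xi_{S'} \equiv \xi_S$ on $S'$ and $\sigma_{S'} = \sigma_S|_{S'}$, giving $T(g,S)|_{S'} = T(g,S')$ as signed measures.

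For general $f \in AF(R)$, Proposition~\ref{prop: uniform cont for patches on AF(R)}(3) ensures both $T(f,S)|_{S'}$ and $T(f,S')$ are absolutely continuous with respect to $\sigma_{S'}$ with densities bounded by $1$. Hence it suffices to show that for every smooth bounded Lipschitz $\varphi : R \to \m R$ whose support is compactly contained in the relative interior of $S'$ in $S$,
\begin{equation*}
    \int \varphi \, \dd T(f,S) = \int \varphi \, \dd T(f,S'),
\end{equation*}
since such $\varphi$ form a separating class and the relative boundary $\partial_S S'$ has $\sigma_S$-measure zero. Fix such a $\varphi$, let $v, \theta$ be sweep data valid for $S$ (hence also for $S'$), and pick $t \in (0, \theta)$ small. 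Set $\varphi_t(x) := \varphi(x - tv)$; this is still bounded Lipschitz with support in the relative interior of $S'(t)$ in $S(t)$. By Proposition~\ref{prop:bump_function} choose $g_k \in AF^\infty(R_{1/k})$ with $d_W(f, g_k) \to 0$, and take $k$ large enough that $S(t), S'(t) \subset R_{1/k}$. The smooth case (applied to $g_k$, $S(t)$, $S'(t)$) yields
\begin{equation*}
    \int \varphi_t \, \dd T(g_k, S(t)) = \int \varphi_t \, \dd T(g_k, S'(t)).
\end{equation*}
By Proposition~\ref{prop: uniform cont for patches on AF(R)}(1), $T(g_k, S(t)) \to T(f, S(t))$ (and similarly for $S'(t)$) in $\m W_1^{1,1}$; the dual formulation in Proposition~\ref{prop: dual definition wasserstein} upgrades this to convergence of the integral of the bounded Lipschitz function $\varphi_t$. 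Sending $k \to \infty$ gives the same identity with $f$ in place of $g_k$. Then Proposition~\ref{prop: uniform cont for patches on AF(R)}(2) together with the uniform convergence $\varphi_t \to \varphi$ lets us take $t \to 0$ to obtain the desired equality.

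The main subtlety is that restricting a signed measure to a closed subset is not $\m W_1^{1,1}$-continuous --- mass located just inside $\partial_S S'$ can ``jump sides'' under arbitrarily small perturbations --- so the naive strategy of taking the $\m W_1^{1,1}$-limits defining $T(f,\cdot)$ and then restricting does not immediately work. Routing through test functions compactly supported in the relative interior of $S'$ sidesteps this issue, and the density bound from Proposition~\ref{prop: uniform cont for patches on AF(R)}(3) guarantees that this restricted class of test functions is still rich enough to determine the two absolutely continuous signed measures uniquely.
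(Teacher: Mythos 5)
Your proof is correct and is a careful, fully-explicit version of what the paper leaves almost entirely implicit (its proof just cites Definition~\ref{def: trace on smooth flows} for the smooth case and Proposition~\ref{prop: uniform cont for patches on AF(R)} for the general case). The spirit is the same—handle the smooth case from the definition, then approximate and pass to the limit using the $T(\cdot,S(t))$ construction and parts (1)--(3) of Proposition~\ref{prop: uniform cont for patches on AF(R)}—but you route the limiting argument through test functions compactly supported in the relative interior of $S'$, which sidesteps the need to justify any restriction operation on measures.

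One point worth flagging: your concluding remark that restriction to $S'$ is not $\m W_1^{1,1}$-continuous is true for general signed measures, but in the setting of this lemma the relevant measures are all absolutely continuous with respect to $\sigma_S$ with density bounded by $1$ (Proposition~\ref{prop: uniform cont for patches on AF(R)}(3)), and the relative boundary $\partial_S S'$ is $\sigma_S$-null. Under these hypotheses restriction \emph{is} $\m W_1^{1,1}$-continuous: using the dual characterization (Proposition~\ref{prop: dual definition wasserstein}), split a test function as $\varphi = \varphi\chi_\epsilon + \varphi(1-\chi_\epsilon)$ where $\chi_\epsilon$ is an $O(1/\epsilon)$-Lipschitz cutoff supported in $S'$ and equal to $1$ outside an $\epsilon$-collar of $\partial_S S'$; the first piece is controlled by $(1+C/\epsilon)\m W_1^{1,1}(\mu_n,\mu)$, and the second is bounded by $2M\sigma_S(A_\epsilon)\to 0$ using the uniform density bound. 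So one could alternatively fill the gap in the paper's terse proof by proving that continuity directly and then taking the Wasserstein limits of $T(g_k,S)|_{S'}=T(g_k,S')$ without any test functions. Both routes work; your version has the advantage of making explicit exactly which parts of Proposition~\ref{prop: uniform cont for patches on AF(R)} are being used and avoids any further measure-theoretic lemma.
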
 
\begin{proof}
If $f\in AF^\infty(R)$, then the result follows immediately from the form given in Definition \ref{def: trace on smooth flows}. For general $f\in AF(R)$ this follows from Proposition \ref{prop: uniform cont for patches on AF(R)}.
\end{proof}

Finally we put together the pieces to prove uniform continuity of $T(\cdot,S)$ on $AF(R)$ for any $S\in \m S(R)$. \symindex{Chapter 5!$T: AF(R)\times \m S(R)\to \mc M^s(R)$ - the trace operator on asymptotic flows}\termindex{Chapter 5!trace operator for asymptotic flows}

\begin{prop}\label{prop:uniformly_continuous_AF(R)}
For all $S\in \m S(R)$, $T(\cdot,S)$ extends to a uniformly continuous map from $(AF(R),d_W)$ to $(\mc M^s(R),\m W_1^{1,1})$. Further, for all $f\in AF(R)$, $T(f, S)$ is a signed measure on $S$ absolutely continuous to the surface measure $\sigma_S$ with density bounded between $-1$ and $1$.
\end{prop}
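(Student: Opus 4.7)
The plan is to bootstrap from the patch-level result (Proposition~\ref{prop: uniform cont for patches on AF(R)}) to a general $S\in \mathbb S(R)$ by partitioning $S$ into finitely many pieces each satisfying the one-sided collar hypothesis of Proposition~\ref{prop: uniform continuity of boundary operator_smooth_1}. Once such a partition is produced, the trace on $S$ is defined patch-by-patch, and every conclusion (uniform continuity, absolute continuity, density bound) follows by summing finitely many copies of the corresponding assertion on each patch.

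The first step is the geometric decomposition. Since $S$ is compact and piecewise smooth, it is a finite union of smooth pieces $S^{(1)},\dots,S^{(m)}$ joined along a finite piecewise smooth $1$-skeleton of measure zero in $\sigma_S$. On each smooth piece, every point $x$ has a well-defined unit normal $\nu(x)$; because $x\in R$ and $R$ is the closure of a domain with piecewise smooth boundary, at least one of the half-lines $\{x+t\nu(x):0<t\leq \theta\}$ or $\{x-t\nu(x):0<t\leq \theta\}$ remains in $R$ for some $\theta>0$ depending on $x$. By continuity of $\nu$ and compactness of $S^{(k)}$, we can cover $S^{(k)}$ by finitely many relatively open patches on each of which the normals lie in an arbitrarily narrow cone around a fixed vector $v$, and on which the patch is small enough that $x\mapsto x+tv$ is injective for $0\leq t\leq \theta$ and stays in $R$; passing to a common refinement with the $1$-skeleton yields the desired partition $S=\bigsqcup_{i=1}^N S_i$ (up to sets of $\sigma_S$-measure zero), with each $S_i$ satisfying the collar condition for some pair $(v_i,\theta_i)$.

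Given such a partition, for any $f\in AF(R)$ define $T(f,S):=\sum_{i=1}^N T(f,S_i)$, where each summand is the uniformly continuous extension supplied by Proposition~\ref{prop: uniform cont for patches on AF(R)}. This agrees with Definition~\ref{def: trace on smooth flows} when $f\in AF^\infty(R)$ because that definition is manifestly additive over any $\sigma_S$-measurable partition, and independence from the choice of partition follows by taking a common refinement: on a common refinement each cell still admits a collar (inheriting $v_i$ from the $S_i$ that contains it), and Lemma~\ref{lemma: compatibility of Wasserstein_1} shows $T(f,S_i)$ decomposes accordingly; for non-smooth $f$ this passes to the limit via Proposition~\ref{prop:bump_function} and the uniform continuity on each patch. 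Uniform continuity of $T(\cdot,S)$ is then immediate from the triangle inequality for $\m W_1^{1,1}$: given $\epsilon>0$, choose $\delta_i$ so that $d_W(f,g)<\delta_i$ forces $\m W_1^{1,1}(T(f,S_i),T(g,S_i))<\epsilon/N$ for each $i$, and set $\delta=\min_i \delta_i$. Finally, by Proposition~\ref{prop: uniform cont for patches on AF(R)}(3) each $T(f,S_i)$ is absolutely continuous with respect to $\sigma_{S_i}=\sigma_S\!\mid_{S_i}$ with density in $[-1,1]$; since the $S_i$ partition $S$ up to a $\sigma_S$-null set, the same holds for $T(f,S)$ with respect to $\sigma_S$.

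The main (and really only) obstacle is the geometric decomposition in the first step, where one has to combine the piecewise smoothness of $S$ with that of $\partial R$ to produce patches on which the collar vector $v_i$ both keeps $S_i+tv_i$ inside $R$ and keeps the translates disjoint. Everything downstream is formal: uniform continuity and the density bound are finite sums of patch-level statements already established.
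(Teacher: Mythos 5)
Your argument is correct and follows essentially the same route as the paper's: decompose $S$ into finitely many pieces each admitting the one-sided collar hypothesis of Proposition~\ref{prop: uniform continuity of boundary operator_smooth_1}, apply the patch-level result (Proposition~\ref{prop: uniform cont for patches on AF(R)}), and combine via the compatibility Lemma~\ref{lemma: compatibility of Wasserstein_1}; the only cosmetic difference is that you use a disjoint partition and define $T(f,S)$ as a sum, whereas the paper uses an overlapping cover and defines $T(f,S)$ by restriction $T(f,S)\mid_{S_i}=T(f,S_i)$. One small caution on your geometric step: the claim that one of $\pm\nu(x)$ gives a collar can fail at points where $S$ meets $\partial R$ transversely (then $\nu(x)$ is tangent to $\partial R$ and neither half-line need stay in $R$), but this is harmless since the collar vector $v_i$ need not be the normal — any $v$ transverse to the patch and pointing into $R$ works, as your subsequent cone argument already allows.
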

\begin{rem}
    In particular this holds for $S=\partial R$.
\end{rem}
\begin{proof}
    Fix $S\in \m S(R)$. We can cover $S$ with finitely many open surfaces $S_1,...,S_k$ which all satisfy the conditions of Proposition \ref{prop: uniform continuity of boundary operator_smooth_1}. By Lemma \ref{lemma: compatibility of Wasserstein_1}, if $S_i\cap S_j\neq \emptyset$, then for all $f\in AF(R)$, 
    \begin{align*}
        T(f,S_i)\mid_{S_i\cap S_j} = T(f,S_j)\mid_{S_i\cap S_j}
    \end{align*}
    Therefore we can define the trace operator $T(S,\cdot)$ for any $S\in \m S(R)$ by 
    \begin{align*}
        T(f,S) \mid_{S_i} = T(f,S_i). 
    \end{align*}
    By Proposition \ref{prop: uniform cont for patches on AF(R)}, $T(f,S)$ is uniformly continuous as a function of $f\in AF(R)$ and has the desired form. 
\end{proof}

With this machinery, we can define the space of asymptotic flows with a fixed boundary value.

\begin{definition}\label{def:boundary_asymptotic_flow}
    We say that $b\in \mc M^s(R)$ is a \textit{boundary asmyptotic flow} if $b \in T(AF(R),\partial R)$. Further, we define $AF(R,b)$ to be the space of asymptotic flows on $R$ with boundary value $b$, i.e.\ $f\in AF(R)$ such that $T(f,\partial R) = b$.
\end{definition}

As a corollary of Proposition \ref{prop:uniformly_continuous_AF(R)} and Theorem \ref{thm:AF_compact} we get the following. 

\begin{cor}\label{cor: AF(R,b) compact}
    The metric space $(AF(R,b),d_W)$ is compact.
\end{cor}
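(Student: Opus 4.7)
The proof plan is straightforward, since the corollary is essentially a closed-subset-of-compact argument.

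The strategy is to exhibit $AF(R,b)$ as a closed subset of the compact metric space $(AF(R), d_W)$ and then invoke the fact that closed subsets of compact metric spaces are compact. Compactness of the ambient space is already provided by Theorem \ref{thm:AF_compact}, so the only work is to verify closedness.

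To show closedness, I would observe that by Definition \ref{def:boundary_asymptotic_flow} we have
\begin{equation*}
    AF(R,b) = \{f \in AF(R) : T(f,\partial R) = b\} = \bigl(T(\cdot,\partial R)\bigr)^{-1}(\{b\}).
\end{equation*}
Since $\partial R \in \mathbb{S}(R)$ (because $R$ has piecewise smooth boundary by assumption), Proposition \ref{prop:uniformly_continuous_AF(R)} applies with $S = \partial R$ and tells us that the map
\begin{equation*}
    T(\cdot,\partial R) : (AF(R), d_W) \longrightarrow (\mathcal{M}^s(R), \mathbb{W}_1^{1,1})
\end{equation*}
is (uniformly) continuous. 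The codomain $(\mathcal{M}^s(R), \mathbb{W}_1^{1,1})$ is a metric space, hence Hausdorff, so the singleton $\{b\}$ is closed. The preimage of a closed set under a continuous map is closed, so $AF(R,b)$ is a closed subset of $AF(R)$.

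Finally, combining closedness with the compactness of $(AF(R), d_W)$ given by Theorem \ref{thm:AF_compact} yields that $(AF(R,b), d_W)$ is compact, as desired. There is no real obstacle here; the only subtlety worth double-checking is that the uniform continuity of $T(\cdot,\partial R)$ on all of $AF(R)$ (not merely on $AF^\infty(R)$) genuinely applies, which is precisely the content of Proposition \ref{prop:uniformly_continuous_AF(R)}, and that $\partial R$ is an admissible surface in $\mathbb{S}(R)$, which is immediate from the standing hypothesis that $R$ has piecewise smooth boundary.
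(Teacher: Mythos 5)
Your proof is correct and matches the paper's (implicit) reasoning exactly: the paper states this corollary as an immediate consequence of Proposition \ref{prop:uniformly_continuous_AF(R)} (continuity of the trace map $T(\cdot,\partial R)$) together with Theorem \ref{thm:AF_compact} (compactness of $AF(R)$), which is precisely the closed-preimage-of-a-point-inside-a-compact-space argument you spell out. No issues.
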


\subsection{Boundary values of tiling flows}\label{sec: boundary values of tiling flows}

Next we define the trace operator on tiling flows, and show that it is compatible with the definition for asymptotic flows. Suppose $f\in TF_n(R)$ and that $S\in \m S(R)$ is a surface which intersects the lattice $\frac{1}{n}\m Z^3$ transversely, i.e.\ $S$ does not contain any vertices of $\frac{1}{n} \m Z^3$ (if $S$ contains a vertex, we translate the lattice slightly so that it does not). As usual let $e$ denote an edge from $\frac{1}{n}\m Z^3$ oriented from even to odd, and let $\xi(x)$ denote the normal vector to $S$ at $x$. \symindex{Chapter 5!$T: TF(R)\times \m S(R)\to \mc M^s(R)$ - trace operator for tiling flows}\termindex{Chapter 5!trace operator for tiling flows}
\begin{definition}\label{def: trace for tiling flows}
    If $f\in TF_n(R)$ and $S\in \m S(R)$ is a surface intersecting $\frac{1}{n}\m Z^3$ transversely, we define 
    \begin{align*}
            T(f,S) = 
            \sum_{e} \frac{2\,\text{sign}\langle \xi(x),e\rangle}{n^2} f(e) \delta(e\cap S).
    \end{align*}
    Note that since $S$ is transverse to $\frac{1}{n} \m Z^3$, if $e\cap S$ is nonempty it is a single point. 
\end{definition}

Using Definition \ref{def: trace for tiling flows} for tiling flows and Definition \ref{def: trace on smooth flows} for asymptotic flows, the final goal of this section is to show that 
\begin{align*}
    T(\cdot, S): (AF(R)\cup TF(R),d_W)\to (\mc M^s(R), \m W_1^{1,1})
\end{align*}
is uniformly continuous for any $S\in \m S(R)$ (see Theorem \ref{thm: boundary_value_uniformly_continuous}). The sequence of results in this section building up to this mirrors the sequence of results in the previous section. The discrete setting makes things slightly more complicated. The main new step is that we start by proving a result for the trace on planes, and extend to more  general surfaces by approximating them with planes. Throughout, we assume that any surface $S$ we consider intersects $\frac{1}{n}\m Z^3$ transversely. Any time it does not, the trace is defined by perturbing the lattice slightly so that it does and then using Definition \ref{def: trace for tiling flows}.

\begin{rem}
    With this definition of the boundary value of a flow, we place a Dirac mass at the intersection of an edge $e$ with the surface $\partial R$. Another definition, which would differ by $O(1/n)$ and therefore does not affect considerations in the scaling limit, would be to place a Dirac mass for an edge $e\in \tau$ on its endpoint vertex in $\partial R_n$. In this alternate definition, all tilings of the same region $R_n$ have exactly the same boundary value. In our definition, they might differ by $O(1/n)$ if different edges incident to a vertex $v\in \partial R_n$ are used.
\end{rem}

\begin{prop}\label{prop: tiling flow boundary on planes}
Suppose $P\in \m S(R)$ is a compact piece of a plane with normal vector $\xi$, and there exists a threshold $\theta>0$ such that $P(t) = P + t\xi$ is contained in $\text{Int}(R)$ for all $t\in [0,\theta]$. Let $f_n\in TF_n(R)$ be a sequence of tiling flows such that $d_W(f_n,f)\to 0$ as $n\to \infty$ for some $f\in AF^\infty(R)$. Then
\begin{align*}
   \lim_{n\to \infty} \m W_1^{1,1}(T(f_n,P),T(f,P)) = 0.
\end{align*}
\end{prop}
\begin{rem}
The conditions here could be rephrased as saying that $P\in \m S(R)$ is contained in a plane and satisfies the conditions of Proposition \ref{prop: uniform continuity of boundary operator_smooth_1} with $v=\xi$. 
\end{rem}
\begin{proof}
As in Proposition \ref{prop: uniform continuity of boundary operator_smooth_1}, fix two parameters $\gamma_1,\gamma_2>0$ and partition $P$ into patches $\alpha_1,...,\alpha_M$ such that 
\begin{itemize}
    \item $\alpha_i$ is a smooth surface with piecewise smooth boundary for all $i=1,\ldots,M$;
    \item $\alpha_i$ has diameter at most $\gamma_1$ for all $i=1,\ldots,M $, and $M\leq C \gamma_1^{-2}$ for some constant $C$ depending on $P$;
    \item Let $\alpha_i(t):= \alpha_i + t \xi$. For all $i=1,\ldots,M$, $\alpha_i(t)\cap \alpha_i(s) = \emptyset$ for $s\neq t$. 
\end{itemize}
We define 
\begin{align*}
    \Delta_n = \sup_{1\leq i \leq M} \bigg\lvert T(f_n,P)(\alpha_i) - T(f,P)(\alpha_i)\bigg\rvert.
\end{align*}
By the two-dimensional version of Lemma \ref{lem:covered},
\begin{align*}
    \m W_1^{1,1}(T(f_n,P),T(f,P)) \leq 10 C \gamma_1 + C\gamma_1^{-2} \Delta_n.
\end{align*}
Let $U_i(s) = \cup_{t=0}^{s} \alpha_i(t)$ be the parallelopiped region between $\alpha_i=\alpha_i(0)$ and $\alpha_i(s)$. Given $\gamma_2$, we can find $\theta_{\gamma_2}$ small enough so that the number of edges from $\frac{1}{n} \m Z^3$ hitting $\partial U_i(t) \setminus (\alpha_i(t)\cup \alpha_i)$ for any $i$ is less than $\gamma_2 n^2 + K' n$, with constant $K'$ depending on the length of $\partial \alpha_i$. 
Since the magnitude of $f_n$ is of order $1/n^2$, for $t<\theta_{\gamma_2}$ the flow of $f_n$ through $\partial U_i(t) \setminus (\alpha_i(t)\cup \alpha_i)$ is bounded by $O(n^{-1}) + \gamma_2$. Since any $f_n\in TF_n(R)$ is divergence-free as a discrete vector field, there exists a constant $K>0$ so that for any $f_n$ and $t\in (0,\theta_{\gamma_2})$,
\begin{equation}\label{eq: tiling flow div free bound}
    \sup_{1\leq i\leq M} \bigg| T(f_n,\alpha_i)(\alpha_i) - T(f_n,\alpha_i(t))(\alpha_i(t)) \bigg| \leq \gamma_2 + K/n.
\end{equation}
Possibly choosing a smaller $\theta_{\gamma_2}$, the same result holds for $f$ without the $K/n$ in the upper bound. By the triangle inequality,
\begin{align*}
    \sup_{1\leq i\leq M} \bigg| T(f_n,\alpha_i)(\alpha_i) - T(f,\alpha_i)(\alpha_i) - T(f_n,\alpha_i(t))(\alpha_i(t)) + T(f,\alpha_i(t))(\alpha_i(t))\bigg|\leq 2\gamma_2 + K/n.
\end{align*}
As in Proposition \ref{prop: uniform continuity of boundary operator_smooth_1}, integrating over $t\in(0,\theta_{\gamma_2})$ and solving for $\Delta_n$ gives 
\begin{align*}
    \Delta_n \leq 2 \gamma_2 + K/n + \frac{1}{\theta_{\gamma_2}} \sup_{1\leq i \leq M}\bigg|\int_{0}^{\theta_{\gamma_2}} (T(f_n,\alpha_i(t))(\alpha_i(t))- T(f,\alpha_i(t))(\alpha_i(t)))\, \dd t \bigg| .
\end{align*}
Let $\alpha_i$ be the patch where the supermum is achieved, and let $\alpha:=\alpha_i$ to simplify notation. Then \begin{align*}
    \m W_1^{1,1}(T(f_n,P),T(f,P)) \leq 10 C \gamma_1 + 2 C \gamma_1^{-2}\gamma_2 +  C \gamma_1^{-2} K n^{-1} \\ +  \frac{C \gamma_1^{-2}}{\theta_{\gamma_2}} \bigg|\int_{0}^{\theta_{\gamma_2}} T(f_n,\alpha(t))(\alpha(t))- T(f,\alpha(t))(\alpha(t))\, \dd t \bigg|.
\end{align*}
Finally we bound the integral in the last term. Let $U = U_i(\theta_{\gamma_2})$ to simplify notation. Recall that $\alpha(t) \subset P + t v$ is a piece of a plane, and has constant unit normal vector $\xi$. By Definition \ref{def: trace on smooth flows}, $T(f,\alpha(t))(x) = \langle f(x), \xi\rangle$. Therefore letting $\sigma_{\alpha(t)}$ denote the surface area measure on $\alpha(t)$, and applying change of variables, 
\begin{align*}
    \int_0^\theta T(f,\alpha(t))(\alpha(t)) \, \dd t &= \int_0^\theta \int_{\alpha(t)} \langle f(x),\xi\rangle \, \dd \sigma_{\alpha(t))}(x) \dd t = \int_U \langle f(x), \xi\rangle \dd x_1\, \dd x_2\, \dd x_3 \\
    &= \sum_{j=1}^3 \xi_j \mu_{j}(U),
\end{align*}
where $\xi = (\xi_1,\xi_2,\xi_3)$ and $(\mu_1,\mu_2,\mu_3)$ is the triple of measures corresponding to $f$. On the other hand, for the tiling flow $f_n$,
\begin{align*}
    \int_0^\theta T(f_n,\alpha(t))(\alpha(t)) \, \dd t = \sum_{j=1}^3 \xi_j \, \mu_j^n(U),
\end{align*}
where $(\mu_1^n,\mu_2^n,\mu_3^n)$ is the triple of measures corresponding to $f_n$. Therefore 
\begin{align*}
    &\m W_1^{1,1}(T(f_n,P),T(f,P)) \\
    \leq &10 C \gamma_1 + 2 C \gamma_1^{-2}\gamma_2 +  C \gamma_1^{-2} K n^{-1} + C \gamma_1^{-2} \theta_{\gamma_2}^{-1} \sum_{j=1}^3 |\xi_j| |\mu_j^n(U)-\mu_j(U)|.
\end{align*}
By Lemma \ref{lem:constant_order_box_Wass_bound}, $d_W(f_n,f)\to 0$ implies that $|\mu_j^n(U)-\mu_j(U)|\to 0$ as $n\to \infty$ for $j=1,2,3$. Taking $n\to \infty$ gives 
\begin{align*}
    \limsup_{n\to \infty} \m W_1^{1,1}(T(f_n,P),T(f,P)) \leq 10 C \gamma_1 + 2 C \gamma_1^{-2}\gamma_2.
\end{align*}
Setting $\gamma_2=\gamma_1^{3}$ and taking $\gamma_1\to 0$ completes the proof.
\end{proof}

Next we prove a version of Proposition \ref{prop: uniform continuity of boundary operator_smooth_2} for tiling flows and small patch surfaces as in Proposition \ref{prop: uniform continuity of boundary operator_smooth_1}.
\begin{prop}\label{prop: surface perturbation for tiling flows}
    Suppose that $S\in \m S(R)$ satisfies the conditions of Proposition \ref{prop: uniform continuity of boundary operator_smooth_1}. For all $\epsilon>0$ there exists $\delta>0$ and $N>0$ such that for all $t<\delta$, all $n\geq N$, and all $f\in TF_n(R)$, 
    \begin{align*}
        \m W_1^{1,1}(T(f,S), T(f,S(t)) ) < \epsilon.
    \end{align*}
\end{prop}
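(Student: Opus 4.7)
The plan is to mirror the proof of Proposition \ref{prop: uniform continuity of boundary operator_smooth_2}, which handled smooth asymptotic flows, while carefully accounting for the discreteness of tiling flows. As in that proof, I would begin by fixing parameters $\gamma_1,\gamma_2>0$ (to be chosen at the end) and partitioning $S$ into patches $\alpha_1,\ldots,\alpha_M$ such that each $\alpha_i$ has diameter at most $\gamma_1$ and surface area at most $C\gamma_1^2$, with $M\leq C\gamma_1^{-2}$ for a constant $C$ depending only on $S$. Without loss of generality the lattice $\frac1n\mathbb Z^3$ is oriented to intersect all the surfaces $\alpha_i(t):=\alpha_i+tv$ transversely for the relevant range of $t$.

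The next step is to bound $|T(f,\alpha_i)(\alpha_i)-T(f,\alpha_i(t))(\alpha_i(t))|$ using divergence-freeness, which is the discrete analog of \eqref{equation: surface measures close_3}. Applying discrete divergence-freeness of $f$ to the parallelepiped region $U_i(t)=\bigcup_{s=0}^t \alpha_i(s)$, the difference in total trace through $\alpha_i$ and $\alpha_i(t)$ equals the net flux of $f$ across the lateral surface $\partial U_i(t)\setminus(\alpha_i\cup\alpha_i(t))$. The number of edges of $\frac1n\mathbb Z^3$ crossing this lateral surface is bounded by $\gamma_2 n^2+K' n$ for $t$ sufficiently small (depending on $\gamma_2$), with $K'$ depending on the length of $\partial\alpha_i$ (this is exactly the discrete analog of the bound \eqref{eq: tiling flow div free bound} already used in Proposition \ref{prop: tiling flow boundary on planes}). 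Since $|f(e)|\leq 5/(6n^2)$ for each edge, the flux is bounded by $\gamma_2+K/n$ for some constant $K$ depending on $S$.

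With this in hand, I would construct an explicit transport plan between $T(f,\alpha_i)$ and $T(f,\alpha_i(t))$: first delete mass of total absolute value at most $\gamma_2+K/n$ to match the total masses, then move the remaining atoms from $\alpha_i\cap$(edges) to the corresponding atoms in $\alpha_i(t)\cap$(edges) at cost at most $(t+\gamma_1)$ per unit of mass moved. The total mass on each patch is bounded by $(\text{number of edges crossing }\alpha_i)\cdot\tfrac{2}{n^2}\cdot\tfrac{5}{6}\leq C'\gamma_1^2$, giving
\[
\m W_1^{1,1}\bigl(T(f,\alpha_i),T(f,\alpha_i(t))\bigr)\leq \gamma_2+\frac Kn + C'(t+\gamma_1)\gamma_1^2.
\]
Summing over the $M\leq C\gamma_1^{-2}$ patches (justified by the triangle inequality as in Lemma \ref{lem:covered}, using Lemma \ref{lemma: compatibility of Wasserstein_1} together with its obvious tiling-flow analog to glue the local traces) yields
\[
\m W_1^{1,1}\bigl(T(f,S),T(f,S(t))\bigr)\leq C\gamma_1^{-2}\gamma_2+\frac{CK}{n\gamma_1^2}+CC'(t+\gamma_1).
\]
Choosing $\gamma_2=\gamma_1^{3}$ and then $\gamma_1,\delta$ small enough and $N$ large enough that each of the three terms is less than $\epsilon/3$ finishes the proof.

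The main obstacle I anticipate is the discrete divergence-free bound (the step that yields the constant $K/n$): one must show that for $t<\delta$ and $n\geq N$, the lateral flux really is controlled by $\gamma_2+K/n$ uniformly over the patches $\alpha_i$ and over all $f\in TF_n(R)$. This is a geometric counting argument on the number of lattice edges piercing a thin curved neighborhood of $\partial\alpha_i$, and the piecewise-smoothness hypothesis on $S$ together with the uniform transversality provided by the hypotheses of Proposition \ref{prop: uniform continuity of boundary operator_smooth_1} is what makes it work. Once that bound is in place (and is clearly already present in the proof of Proposition \ref{prop: tiling flow boundary on planes}), the rest of the argument is bookkeeping with the generalized Wasserstein metric.
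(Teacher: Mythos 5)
Your proposal is correct and follows essentially the same route as the paper's proof: the same patch decomposition of $S$, the same discrete divergence-free flux bound giving the $\gamma_2 + K/n$ term, the same transport plan (delete the flux imbalance, then shift the surviving atoms distance $t+\gamma_1$), the same patch-mass bound of order $\gamma_1^2$, and the same final choice $\gamma_2=\gamma_1^3$ with $\gamma_1$, $\delta$, $N$ tuned to split $\epsilon$ among the three resulting terms. The only differences are cosmetic constants and your explicit invocation of the compatibility lemma for gluing local traces, which the paper leaves implicit.
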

\begin{proof}
    The proof is analogous to the proof of Proposition \ref{prop: uniform continuity of boundary operator_smooth_2}. Again we take parameters $\gamma_1,\gamma_2>0$ to be fixed later and a partition $\alpha_1,....,\alpha_M$ of $S$ into patches of diameter at most $\gamma_1$ for all $i$, and such that $M\leq C \gamma_1^{-2}$ for some constant $C$ independent of $\gamma_1$. Analogous to Equation \eqref{eq: tiling flow div free bound}, given $\gamma_2$ we can find a threshold $\theta_{\gamma_2}>0$ such that for all $0\leq t\leq \theta_{\gamma_2}$ and all $f\in TF_n(R)$,
    \begin{align}\label{eq:first t bound}
        \sup_{1\leq i\leq M}\bigg\lvert T(f,S)(\alpha_i) - T(f, S(t))(\alpha_i(t))\bigg\rvert \leq \gamma_2 + K/n.
    \end{align}
    Using this, we get an upper bound for the distance by giving a method for redistributing, adding, and deleting mass to transform $T(f,S)\mid_{\alpha_i}$ into $T(f,S(t))\mid_{\alpha_i(t)}$. Both measures are a sum of delta masses of weights $2/n^2(\pm 5/6)$ or $2/n^2(\pm 1/6)$. The number of delta masses in $\alpha_i$ or $\alpha_i(t)$ is bounded above by $\text{area}(\alpha_i) n^2$. Since $\alpha_i$ has diameter bounded by $\gamma_1$, there is a constant $A>0$ independent of $\alpha_i$ such that $\text{area}(\alpha_i)\leq A \gamma_1^{2}$. Hence the total mass in each patch is bounded between $-2A\gamma_1^2$ and $2A\gamma_1^2$. Hence adding $\gamma_2+ K/n$ mass plus moving at most $8 A \gamma_1^2$ mass distance at most $t+\gamma_1$, we get the bound
    \begin{align*}
        \m W_1^{1,1}(T(f,S)\mid_{\alpha_i}, T(f,S(t))\mid_{\alpha_i(t)}) \leq \gamma_2 + K/n + 8 A \gamma_1^2 (t+\gamma_1)
    \end{align*}
    Summing over $i$ we get that 
    \begin{align*}
        \m W_1^{1,1}(T(f,S), T(f,S(t))) \
        &\leq M(\gamma_2 + K/n + 8A\gamma_1^2 (t+\gamma_1)) \\
        &= C \gamma_1^{-2} \gamma_2 + CK \gamma_1^{-2} n^{-1} + 8AC (t+ \gamma_1).
    \end{align*}
    Take $\gamma_2=\gamma_1^{3}$ and $\gamma_1,t$ small enough so that \begin{align}\label{eq:second t bound}
        (C+8AC)\gamma_1 + 8ACt < \epsilon /2.
    \end{align}
    Then take $n$ large enough so that 
    \begin{align*}
        CK \gamma_1^{-2} n^{-1}< \epsilon/2,
    \end{align*}
    and the result follows with $\delta = \min\{\theta_{\gamma_1^3}, \frac{1}{8AC}(\epsilon/2-(C+8AC)\gamma_1)\}$. Here the first term in the minimum comes from \eqref{eq:first t bound} and the second comes from \eqref{eq:second t bound}.
\end{proof}

By approximation we can extend Proposition \ref{prop: tiling flow boundary on planes} to any surface $S\in S(\m R)$. For technical reasons, we first show this for $S$ contained strictly in the interior of $R$. Note that we also remove the condition that the limiting flow is smooth.
\begin{prop}\label{prop: trace for interior surfaces}
Suppose that $S\in \m S(R)$ is contained strictly in the interior of $R$. Let $f_n\in TF_n(R)$ be a sequence of tiling flows such that $d_W(f_n,f)\to0$ as $n\to \infty$ for some $f\in AF(R)$. Then 
\begin{align*}
    \lim_{n\to\infty} \m W_1^{1,1}(T(f_n,S),T(f,S)) = 0.
\end{align*}
\end{prop}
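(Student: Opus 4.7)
The plan is to extend the argument of Proposition \ref{prop: tiling flow boundary on planes} (which handles planar $S$ with smooth limit) to general piecewise-smooth $S\in\m S(R)$ strictly interior to $R$ and general $f\in AF(R)$. Two ingredients are added to the previous proof: a smoothing step to reduce to a smooth limit, and the patch decomposition from Proposition \ref{prop: uniform continuity of boundary operator_smooth_1} to replace a general surface by approximately planar sliding pieces.

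Since $S$ lies strictly in the interior of $R$, pick $\epsilon_0>0$ with $S\subset R_{\epsilon_0}$. Given $\delta>0$, apply Proposition \ref{prop:bump_function} to produce $\epsilon<\epsilon_0$ and $g\in AF^\infty(R_\epsilon)$ with $d_W(f,g)<\delta$. By uniform continuity of the trace operator on $(AF(R),d_W)$ (Proposition \ref{prop:uniformly_continuous_AF(R)}), $\m W_1^{1,1}(T(f,S),T(g,S))$ can be made as small as desired by shrinking $\delta$. So by the triangle inequality it suffices to bound $\m W_1^{1,1}(T(f_n,S),T(g,S))$, with the reference flow $g$ now smooth.

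For this, partition $S$ into smooth patches $\alpha_1,\ldots,\alpha_M$, each of diameter at most $\gamma_1$ and satisfying the sliding hypothesis of Proposition \ref{prop: uniform continuity of boundary operator_smooth_1} (direction $v_i$, threshold $\theta$), with $M\leq C\gamma_1^{-2}$. The two-dimensional version of Lemma \ref{lem:covered} reduces the task to bounding $\Delta_n=\sup_i|T(f_n,S)(\alpha_i)-T(g,S)(\alpha_i)|$, namely $\m W_1^{1,1}(T(f_n,S),T(g,S))\leq 10C\gamma_1+C\gamma_1^{-2}\Delta_n$. Averaging each side over the sliding family $\alpha_i(t)=\alpha_i+tv_i$ for $t\in(0,\theta_{\gamma_2})$: the discrete divergence-free property of $f_n$ gives $|T(f_n,\alpha_i)(\alpha_i)-T(f_n,\alpha_i(t))(\alpha_i(t))|\leq\gamma_2+K/n$ (as in Proposition \ref{prop: tiling flow boundary on planes}), and the smooth divergence theorem gives the analogous bound for $g$ without the $K/n$ error. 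The integrated difference $\int_0^{\theta_{\gamma_2}}[T(f_n,\alpha_i(t))(\alpha_i(t))-T(g,\alpha_i(t))(\alpha_i(t))]\,\dd t$ is then converted by change of variables into $\sum_{j=1}^3\int_{U_i}\psi_j(x)\,\dd(\mu_j^n-\mu_j^g)(x)$, where $U_i=\bigcup_t\alpha_i(t)$ and $\psi_j=\xi_j\varphi$ is the (Lipschitz) product of the smooth surface normal and Jacobian; the dual definition of Wasserstein distance (Proposition \ref{prop: dual definition wasserstein}) bounds this by a $\gamma_1$-dependent constant times $\m W_1^{1,1}(\mu_j^n|_{U_i},\mu_j^g|_{U_i})$, and Lemma \ref{lem:constant_order_box_Wass_bound} further bounds this by $d_W(f_n,g)+O(d_W(f_n,g)^{1/2})$.

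Since $d_W(f_n,g)\to d_W(f,g)<\delta$ as $n\to\infty$, combining everything yields
$$\limsup_{n\to\infty}\m W_1^{1,1}(T(f_n,S),T(f,S))\leq A\gamma_1+C(\gamma_1)(\delta+\sqrt{\delta})+\eta(\delta),$$
with $\eta(\delta)\to 0$ as $\delta\to 0$. Given any $\varepsilon>0$, first fix $\gamma_1$ so that $A\gamma_1<\varepsilon/2$, then shrink $\delta$ (hence $\epsilon$) so that the remaining two terms sum to less than $\varepsilon/2$. The main obstacle is that Proposition \ref{prop: tiling flow boundary on planes} cannot be applied directly (because $f$ need not be smooth), so one must transport the argument through the smooth approximant $g$ and absorb the resulting $d_W(f,g)$-gap at the level of restrictions to $U_i$; this forces the limits $n\to\infty$ and $\delta\to 0$ to be taken in the correct order, with $\gamma_1$ fixed before $\delta$ is shrunk so that the $\gamma_1$-dependent constant multiplying $\sqrt{\delta}$ does not blow up faster than $\delta$ shrinks.
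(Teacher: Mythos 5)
The proposal takes a genuinely different route from the paper and has one real gap at its central step.

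Your decomposition differs from the paper's in a structurally interesting way: you push the smoothing of $f$ to the \emph{outermost} layer (replace $f$ by $g\in AF^\infty(R_\epsilon)$ via Proposition \ref{prop:bump_function} and absorb $\m W_1^{1,1}(T(f,S),T(g,S))$ via Proposition \ref{prop:uniformly_continuous_AF(R)}), and then treat $T(f_n,S)$ versus $T(g,S)$ with a single sliding-patch argument. The paper instead keeps $f$ itself in the estimate and, inside each patch, uses a four-term decomposition through the tangent plane $\pi(t)$ at $q_i$ and a smooth approximant $g_m$ of $f$: curved-to-tangent-plane for $f_n$, tiling-to-smooth on the plane, smooth-to-$f$ on the plane, and plane-to-curved for $f$. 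The reason the paper goes to this trouble is precisely to avoid the step you take for granted.

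The gap: you convert $\int_0^{\theta_{\gamma_2}} T(f_n,\alpha_i(t))(\alpha_i(t))\,\dd t$ into $\sum_{j}\int_{U_i}\psi_j\,\dd\mu_j^n$, with $\psi_j=\xi_j\varphi$ the Jacobian-weighted normal. For the \emph{smooth} flow $g$ this is a genuine measure-theoretic change of variables (as in Proposition \ref{prop: uniform continuity of boundary operator_smooth_1}). For the \emph{tiling} flow $f_n$, whose component measures $\mu_j^n$ are singular, supported on lattice edges, this equality is not a consequence of the smooth change of variables and is never proved in the paper. The paper proves it only when the sliding surface is a \emph{plane} (inside Proposition \ref{prop: tiling flow boundary on planes}), where it is a trivial count: as the plane slides through $U$, the transit time past an edge $e\parallel\eta_j$ is exactly $|\xi_j|/n$, and the sign is constant, giving $\sum_j\xi_j\mu_j^n(U)$. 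For a curved patch the transit time past an edge depends on the local normal, the sliding direction, and whether $\xi_j$ changes sign along the edge; obtaining $\sum_j\int_{U_i}\psi_j\,\dd\mu_j^n$ requires a separate discrete argument (the transit time through a curved patch past $e$ is $\int_{e\cap U}\xi_j/\langle v,\xi\rangle\,\dd x_j$ only away from tangencies, and one must estimate the contribution of edges near the zero set of $\xi_j$ and near $\partial U_i$, and check these are $o(1)$). This is precisely what the paper's tangent-plane step avoids: the discrepancy between $T(f_n,\alpha(t))(\alpha(t))$ and $T(f_n,\pi(t))(\pi(t))$ is controlled using only the discrete divergence-free property and the $O(\gamma_1^2)$ curvature gap, after which only the planar identity is invoked. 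So your approach is morally sound, and the change-of-variables claim you need is very likely true, but as written the proposal asserts a nontrivial discrete fact as if it were the same as the smooth one, and that is a genuine hole.

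A minor additional remark: both proofs use the hypothesis that $S$ is strictly interior, but for different purposes -- you use it so the bump-function approximant $g\in AF^\infty(R_\epsilon)$ is defined near $S$; the paper also uses it to guarantee the tangent planes $\pi_i(t)$ stay inside $R$ and satisfy the sliding hypotheses.
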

\begin{proof}
    As usual, let $\gamma_1,\gamma_2>0$ be small parameters to be fixed later. Since $S$ is contained strictly in the interior of $R$, we can cover $S$ by very small patch surfaces $\alpha_1,...,\alpha_M$ so that:
    \begin{itemize}
        \item Each $\alpha_i$ is smooth with piecewise smooth boundary,
        \item The diameter of $\alpha_i$ is at most $\gamma_1$.
        \item There is a constant $C>0$ such that $M\leq C \gamma_1^{-2}$.
        \item For all $i$, $\alpha_i$ satisfies the conditions of Proposition \ref{prop: uniform continuity of boundary operator_smooth_1} for some threshold $\theta>0$ with vector $v= \xi(q_i)$, where $\xi(q_i)$ is the normal vector to the surface at $q_i$ for some $q_i\in \alpha_i$ with the property that the distance between $q_i$ and any other $x\in \alpha_i$ is at most $\gamma_1$.
        \item Let $P_i$ denote the tangent plane to $\alpha_i$ at $q_i$, and let $\pi_i\subset P_i$ be the patch of the plane corresponding to projecting $\alpha_i$ onto $P_i$. Potentially making $\theta>0$ or $\gamma_1$ smaller, we can assume that $\pi_i$ also satisfies the conditions of Proposition \ref{prop: uniform continuity of boundary operator_smooth_1} for $v=\xi(q_i)$. This is where we are using the fact that $S$ is contained in the interior of $R$.
    \end{itemize}
    As usual we denote $\alpha_i(t) = \alpha_i + t \xi(q_i)$ and $\pi_i(t) = \pi_i + t \xi(q_i)$.
    Note by Definition \ref{def: trace for tiling flows} and Lemma \ref{lemma: compatibility of Wasserstein_1} that 
    \begin{align*}
        T(f_n,S)\mid_{\alpha_i} = T(f_n,\alpha_i) \qquad \text{and} \qquad T(f,S)\mid_{\alpha_i} = T(f, \alpha_i) \qquad i=1,...,M.
    \end{align*}
    Define 
    \begin{align*}
        \Delta_n := \sup_{1\leq i \leq M} \bigg\lvert T(f_n,S)(\alpha_i) - T(f,S)(\alpha_i)\bigg\rvert.
    \end{align*}
    By the two-dimensional version of Lemma \ref{lem:covered},
    \begin{equation}\label{eq: tiling vs asymp from lemma 1}
        \m W_1^{1,1}(T(f_n,S),T(f,S)) \leq M(10\gamma_1^3+\Delta_n) \leq 10C \gamma_1 + C\gamma_1^{-2} \Delta_n.
    \end{equation}
    Since $f_n\in TF_n(R)$ is discrete divergence-free and $S$ is compact and piecewise smooth, given $\gamma_2$ there exists $K>0$ depending on $S$ and a threshold $\theta_{\gamma_2}$ such that for all $0\leq t\leq \theta_{\gamma_2}$,
    \begin{equation}\label{eq: difference bound on surface tiling flow}
        \sup_{1\leq i\leq M} \bigg| T(f_n,\alpha_i)(\alpha_i) - T(f_n, \alpha_i(t))(\alpha_i(t))\bigg| \leq \gamma_2 + K/n.
    \end{equation}
    By Proposition \ref{prop: uniform cont for patches on AF(R)}, up to making $\theta_{\gamma_2}$ smaller, for all $0\leq t\leq \theta_{\gamma_2}$,
    \begin{equation}\label{eq: difference bound on surface asymp flow}
        \sup_{1\leq i\leq M} \bigg| T(f,\alpha_i)(\alpha_i) - T(f, \alpha_i(t))(\alpha_i(t))\bigg| \leq \gamma_2.
    \end{equation}
    Combining Equations \eqref{eq: difference bound on surface tiling flow} and \eqref{eq: difference bound on surface asymp flow}, and as in Proposition \ref{prop: uniform continuity of boundary operator_smooth_1} integrating over $t\in (0,\theta_{\gamma_2})$ then solving for $\Delta_n$ gives 
    \begin{align*}
        \Delta_n \leq 2 \gamma_2 + K/n + \frac{1}{\theta_{\gamma_2}} \sup_{1\leq i\leq M} \bigg|\int_{0}^{\theta_{\gamma_2}} T(f_n,\alpha_i(t))(\alpha_i(t))- T(f,\alpha_i(t))(\alpha_i(t))\, \dd t \bigg|.
    \end{align*}
    Let $i$ be the index where the supremum is achieved, and let $\alpha(t):=\alpha_i(t)$ and $\pi(t):=\pi_i(t)$. We now bound 
    \begin{align*}
        T(f_n,\alpha(t))(\alpha(t))- T(f,\alpha(t))(\alpha(t))
    \end{align*}
    using four terms. Let $(g_m)\in AF^\infty(R_{\epsilon_m})$, $\epsilon_m\to0$ as $m\to \infty$, be a sequence of smooth asymptotic flows such that $\lim_{m\to \infty} d_W(g_m,f) = 0$. Since $\alpha(t),\pi(t)$ are contained strictly in the interior of $R$ for $0\leq t\leq \theta$, we can assume they are all contained in $R_\epsilon$ for some $\epsilon>0$. In particular, for $m$ large enough $\epsilon_m<\epsilon$ and hence $g_m$ is defined on $\pi(t)$. We have for any $0\leq t\leq \theta_{\gamma_2}$, 
    \begin{align*}
         T(f_n,\alpha(t))(\alpha(t))- T(f,\alpha(t))(\alpha(t)) = & T(f_n,\alpha(t))(\alpha(t))- T(f_n,\pi(t))(\pi(t))\\ + & T(f_n,\pi(t))(\pi(t))- T(g_m,\pi(t))(\pi(t)) \\+ & T(g_m,\pi(t))(\pi(t))- T(f,\pi(t))(\pi(t)) \\+&
         T(f,\pi(t))(\pi(t)) - T(f,\alpha(t))(\alpha(t)).
    \end{align*}
    Consider the region $V(t)$ with boundary $\alpha(t)\cup \pi(t)$ plus sides to enclose it. Since $\pi(t)$ is the tangent plane to $\alpha(t)$ at $q(t) = q_i+tv$, the height of the sides needed to enclose this region is bounded by $C_2\gamma_1^2$, where since $S$ is compact, $C_2>0$ is a constant depending only on $S$ ($C_2$ is basically the maximum curvature at a smooth point on $S$). On the other hand, the length of the boundary of $\alpha(t)$ is bounded by $C_3\gamma_1$ for some constant $C_3>0$. Therefore since $f_n$ is divergence-free, for some constant $K>0$,
    \begin{equation}\label{eq: approximation by plane}
        |T(f_n,\alpha(t))(\alpha(t))- T(f_n,\pi(t))(\pi(t))|\leq C_2 C_3 \gamma_1^3 +K/n.
    \end{equation}
    The analogous result holds for $f$ (without the $K/n$ term), controlling the fourth term above. By Proposition \ref{prop: uniform cont for patches on AF(R)}, 
    \begin{align*}
        \lim_{m\to \infty} |T(g_m,\pi(t))(\pi(t))- T(f,\pi(t))(\pi(t))| = 0.
    \end{align*}
    As in Proposition \ref{prop: tiling flow boundary on planes}, if $\mu_j^n$ denotes the component measures of $f_n$, $\nu_j^m$ denotes the component measures of $g_m$, and $\mu_j$ denotes the component measures of $f$, and letting $U= \cup_{t=0}^{\theta_{\gamma_2}} \pi(t)$, we have 
$$        \bigg|\int_{0}^{\theta_{\gamma_2}} T(f_n,\pi(t))(\pi(t))- T(g_m,\pi(t))(\pi(t)) \,\dd t\bigg| $$ $$\leq \sum_{j=1}^3 |\xi_j(q)| |\mu_j^n(U)-\nu_j^m(U)| 
        \leq \sum_{j=1}^3 |\xi_j(q)| (|\mu_j^n(U)-\mu_j(U)|+ |\mu_j(U)-\nu_j^m(U)|).
 $$
    Taking the limit as $m\to 0$ gives 
    \begin{align*}
       \limsup_{m\to \infty} \bigg|\int_{0}^{\theta_{\gamma_2}} T(f_n,\pi(t))(\pi(t))- T(g_m,\pi(t))(\pi(t)) \,\dd t \bigg|\leq \sum_{j=1}^3 |\xi_j(q)| |\mu_j^n(U)-\mu_j(U)|.
    \end{align*}
    Therefore 
    \begin{align*}
    \bigg|\int_{0}^{\theta_{\gamma_2}} T(f_n,\alpha(t))(\alpha(t))- T(f,\alpha(t))(\alpha(t))\, \dd t \bigg| \\ \leq 2C_2 C_3 \theta_{\gamma_2} \gamma_1^3 + \sum_{j=1}^3 |\xi_j(q)| |\mu_j^n(U)-\mu_j(U)| + K\theta_{\gamma_2}/n.
    \end{align*}
    Plugging back in to Equation \eqref{eq: tiling vs asymp from lemma 1}, we get 
    \begin{align*}
        \m W_{1}^{1,1}(T(f_n,S), T(f,S)) \leq &10 C \gamma_1 + 2C \gamma_1^{-2} \gamma_2 + C \gamma_1^{-2} K n^{-1} + 2C C_2 C_3 \gamma_1 + C \gamma_1^{-2} K n^{-1} + \\
        &C \gamma_1^{-2} \theta_{\gamma_2}^{-1} \sum_{j=1}^3 |\xi_j(q)||\mu_j^n(U)-\mu_j(U)|.
    \end{align*}
    Take $\gamma_2 = \gamma_1^3$. Then taking $\gamma_1$ small makes terms 1, 2, and 4 small. Taking $n$ large makes terms 3 and 5 small and by Lemma \ref{lem:constant_order_box_Wass_bound} also makes term 6 small. 
\end{proof}

Next we remove the condition that $S$ is contained in the interior of $R$.

\begin{prop}\label{prop: continuity of tiling flow boundary}
    For any $S\in \m S(R)$ and any sequence of tiling flows $f_n \in TF_n(R)$ such that $d_W(f_n,f)\to 0$ as $n\to \infty$ for some $f\in AF(R)$, 
    \begin{align*}
        \lim_{n\to \infty} \m W_1^{1,1}(T(f_n,S), T(f,S))= 0.
    \end{align*}
\end{prop}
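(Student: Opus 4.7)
The plan is to reduce to the already-proved Proposition \ref{prop: trace for interior surfaces} via a triangle-inequality push. The only new difficulty over that proposition is that $S$ may touch $\partial R$, so the tangent-plane approximation step breaks down at such points; the remedy is to work on a slightly pushed-in copy $S(t)$ of $S$ and absorb the push-error using the uniform-continuity estimates Proposition \ref{prop: surface perturbation for tiling flows} (for tiling flows) and Proposition \ref{prop: uniform cont for patches on AF(R)}(2) (for asymptotic flows).

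First I would cover $S$ by finitely many patches $\alpha_1,\ldots,\alpha_M$, each satisfying the hypotheses of Proposition \ref{prop: uniform continuity of boundary operator_smooth_1} for some vector $v_i$ and common threshold $\theta>0$, and \emph{additionally} chosen so that for every $t\in (0,\theta]$ the pushed patch $\alpha_i(t)=\alpha_i+tv_i$ lies strictly in the interior of $R$. Such a cover exists because $R$ has piecewise smooth boundary: at any $p\in S\cap\partial R$ the inward cone at $p$ is a nonempty open cone, so by compactness and continuity one can pick a small patch through $p$ together with an admissible $v_i$ lying in that cone; for $p$ already in the interior any direction will do. By Lemma \ref{lemma: compatibility of Wasserstein_1} for asymptotic flows, and the manifest additivity of Definition \ref{def: trace for tiling flows} over a disjoint patch decomposition for tiling flows, it suffices to prove $\m W_1^{1,1}(T(f_n,\alpha_i),T(f,\alpha_i))\to 0$ separately on each patch.

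Fix such a patch $\alpha=\alpha_i$ with vector $v$ and threshold $\theta$, and let $\alpha(t)=\alpha+tv$. For any $t\in(0,\theta)$, the triangle inequality yields
$$\m W_1^{1,1}(T(f_n,\alpha),T(f,\alpha))\leq \m W_1^{1,1}(T(f_n,\alpha),T(f_n,\alpha(t)))+\m W_1^{1,1}(T(f_n,\alpha(t)),T(f,\alpha(t)))+\m W_1^{1,1}(T(f,\alpha(t)),T(f,\alpha)).$$
Given $\epsilon>0$, Proposition \ref{prop: surface perturbation for tiling flows} supplies $\delta_1>0$ and $N_1$ making the first term at most $\epsilon/3$ for $t<\delta_1$, $n\geq N_1$, and any $f\in TF_n(R)$, while Proposition \ref{prop: uniform cont for patches on AF(R)}(2) supplies $\delta_2>0$ making the third term at most $\epsilon/3$ for $t<\delta_2$. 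Choose any $t\in(0,\min(\delta_1,\delta_2,\theta))$; by our construction $\alpha(t)$ is strictly interior to $R$, so Proposition \ref{prop: trace for interior surfaces} applies to $\alpha(t)$ and forces the middle term to $0$ as $n\to\infty$. Taking $n$ large enough to make the middle term $<\epsilon/3$ finishes the argument.

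The only place that demands care, and which I expect to be the main obstacle, is the covering in the first step: one must verify that the patches can be chosen so that the pushes $v_i$ land every $\alpha_i(t)$ in the \emph{interior} of $R$ rather than merely in $R$. This is where the piecewise smoothness of $\partial R$ is essential—it guarantees a positive inward cone at each point of $S\cap\partial R$—and where the compactness of $S$ enters, allowing a finite refinement of the local cover. Everything else is a clean triangle-inequality assembly of uniform-continuity estimates proved earlier in the section.
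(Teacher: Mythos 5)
Your proof is correct and follows essentially the same route as the paper: cover $S$ by patches $\alpha_i$ satisfying the hypotheses of Proposition~\ref{prop: uniform continuity of boundary operator_smooth_1}, push each patch inward by $tv_i$, apply Proposition~\ref{prop: trace for interior surfaces} on the pushed patch, and control the two push-errors via Proposition~\ref{prop: surface perturbation for tiling flows} and Proposition~\ref{prop: uniform cont for patches on AF(R)}(2). Two points are worth noting. First, you make explicit the requirement that $\alpha_i(t)$ be \emph{strictly} interior for $t>0$ — a hypothesis of Proposition~\ref{prop: trace for interior surfaces} that the paper's proof invokes silently when it applies that proposition to $S_j(t)$; your inward-cone argument (valid under the paper's piecewise-smoothness assumption on $\partial R$) is exactly the missing justification. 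Second, your combining step differs: you work with a \emph{fixed} finite disjoint decomposition of $S$ and conclude by summing the per-patch limits directly, whereas the paper takes a cover of patches of diameter $d \to 0$, applies the two-dimensional version of Lemma~\ref{lem:covered}, and then tunes $d = \epsilon^{1/3}$. Your version is cleaner; the only thing to flag is that the sub-additivity of $\m W_1^{1,1}$ over a disjoint partition — which you invoke under the heading of ``compatibility'' and ``manifest additivity'' — is not itself trivially manifest. It follows from the Jordan decomposition being local and the sum-of-transport-plans argument used inside the proof of Lemma~\ref{lem:covered}, but it deserves a sentence rather than an adjective. Assuming you supply that, your argument is a correct and somewhat tidier rendering of the paper's proof.
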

\begin{proof}
    We can cover $S$ with finitely many surfaces $S_1,...,S_M$ which satisfy the conditions of Proposition \ref{prop: uniform continuity of boundary operator_smooth_1} for vectors $v_1,...,v_k$ and a threshold $\theta>0$. We can do this so that $d = \max_{1\leq j\leq M} \text{diam}(S_j)$ and there is a constant $C$ independent of $d$ such that $M = C d^{-2}$. Fix $\epsilon>0$. There exists $\delta>0$, $N>0$ such that for all $j=1,...,M$, and all $0\leq t\leq \delta$, by Proposition \ref{prop: uniform cont for patches on AF(R)}, 
    \begin{align*}
        \m W_{1}^{1,1}(T(f,S_j), T(f,S_j(t))) < \epsilon 
    \end{align*}
    and by Proposition \ref{prop: surface perturbation for tiling flows}, for $n\geq N$ and $0\leq t\leq \delta$,
    \begin{align*}
        \m W_{1}^{1,1}(T(f_n,S_j), T(f_n,S_j(t))) < \epsilon. 
    \end{align*}
    On the other hand, by Proposition \ref{prop: trace for interior surfaces}, for all $j=1,..,M$ and all $t>0$, for $n$ large enough
    \begin{align*}
        \m W_{1}^{1,1}(T(f_n,S_j(t)),T(f,S_j(t)))< \epsilon.
    \end{align*}
    Hence by the triangle inequality, for all $j=1,...,M$
    \begin{align*}
        \m W_{1}^{1,1}(T(f,S_j), T(f_n,S_j)) < 3\epsilon.
    \end{align*}
    By the two-dimensional version of Lemma \ref{lem:covered}, for $n$ large enough, 
    \begin{align*}
        \m W_{1}^{1,1}(T(f,S), T(f_n,S))\leq M (10 d^3 + 3 \epsilon) \leq 10 C d + 3 d^{-2} \epsilon.
    \end{align*}
    Taking $d = \epsilon^{1/3}$ would complete the proof.
\end{proof}

Finally we can prove the main theorem about boundary values that we will refer to later in paper.
\begin{thm}\label{thm: boundary_value_uniformly_continuous}
For any $S\in \m S(R)$, the trace operator 
\begin{align*}
    T(\cdot,S): (AF(R)\cup TF(R),d_W) \to (\mc M^s(R), \m W_{1}^{1,1})
\end{align*}
is uniformly continuous. In particular this holds for $S = \partial R$.
\end{thm}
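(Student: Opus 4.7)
The plan is to prove uniform continuity by a contradiction-and-compactness argument, combining Proposition \ref{prop:uniformly_continuous_AF(R)} (uniform continuity of $T(\cdot, S)$ on $AF(R)$) with Proposition \ref{prop: continuity of tiling flow boundary} (sequential convergence of traces for tiling-flow sequences with an $AF(R)$-limit). Suppose the theorem fails; then there exist $\epsilon > 0$ and sequences $\{f_k\}, \{g_k\} \subset AF(R) \cup TF(R)$ with $d_W(f_k, g_k) \to 0$ while $\m W_1^{1,1}(T(f_k, S), T(g_k, S)) \geq \epsilon$ for all $k$. I would extract a common $d_W$-subsequential limit $f_\ast$ for both sequences by splitting each sequence into its $AF(R)$-entries and its $TF(R)$-entries: the former is relatively compact by Theorem \ref{thm:AF_compact}, and the latter, by further splitting into bounded-scale entries (which lie in the finite set $\bigcup_{n \leq N} TF_n(R)$ and hence admit a constant subsequence) and unbounded-scale entries (whose subsequential limits lie in $AF(R)$ by Theorem \ref{thm:formal_fine_mesh}), is also relatively compact. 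Thus $f_\ast \in AF(R) \cup TF(R)$, with the two sets disjoint since asymptotic flows are absolutely continuous while tiling flows concentrate on 1-dimensional edges.

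I then do a case analysis on $f_\ast$. In Case A ($f_\ast \in AF(R)$), any asymptotic-flow entries of $\{f_k\}$ give $T(f_k, S) \to T(f_\ast, S)$ by Proposition \ref{prop:uniformly_continuous_AF(R)}; tiling-flow entries at scales $n_k \to \infty$ give the same conclusion by Proposition \ref{prop: continuity of tiling flow boundary}; and tiling-flow entries at bounded scales are ruled out by the isolation observation discussed below. In Case B ($f_\ast \in TF(R)$), asymptotic-flow entries are excluded by the closedness of $AF(R)$ (a consequence of Lemma \ref{lem:boundedlimits} together with Theorem \ref{thm:AF_compact}, since limits of $AF(R)$-sequences remain absolutely continuous with component densities bounded by $1$), unbounded-scale tiling-flow entries are excluded by Theorem \ref{thm:formal_fine_mesh}, and the remaining bounded-scale tiling-flow entries are eventually constant equal to $f_\ast$. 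Applying the same analysis to $\{g_k\}$ yields $T(f_k, S), T(g_k, S) \to T(f_\ast, S)$ in $\m W_1^{1,1}$, contradicting the $\epsilon$ lower bound.

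The main technical ingredient is the isolation observation invoked in Case A: for any fixed $h \in TF_n(R)$, $\inf_{g \in AF(R)} d_W(h, g) > 0$. This follows from a density-versus-singular-measure dichotomy: every $g \in AF(R)$ has component densities bounded by $1$, whereas the measure associated to $h$ places mass of order $n^{-3}$ on edges of length $n^{-1}$; redistributing this concentrated mass into an $\mathcal{O}$-valued absolutely continuous density forces transport of order $n^{-1}$ per edge, yielding a strictly positive total $\m W_1^{1,1}$-cost. With this geometric input in hand, the compactness argument above concludes the proof, and the continuous extension from $AF(R) \cup TF(R)$ inherits the bounded-density description from Proposition \ref{prop:uniformly_continuous_AF(R)} (item 3).
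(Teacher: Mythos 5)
Your proposal is correct and takes essentially the same approach as the paper: establish compactness of $(AF(R)\cup TF(R),d_W)$ (via Theorem \ref{thm:AF_compact}, finiteness of each $TF_n(R)$, and Theorem \ref{thm:formal_fine_mesh}), establish continuity of $T(\cdot, S)$ on this space (combining Propositions \ref{prop:uniformly_continuous_AF(R)} and \ref{prop: continuity of tiling flow boundary}), and conclude by Heine--Cantor. Your version simply unfolds Heine--Cantor into the standard contradiction-and-subsequence form and makes the continuity case analysis explicit.

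One caveat: the ``isolation observation'' you flag as the main technical ingredient is both overkill and not fully justified as you state it. Where it is actually invoked (Case A, ruling out bounded-scale tiling-flow entries), all you need is that a fixed $h\in TF_m(R)$ satisfies $d_W(h,f_\ast)>0$, which is immediate because $d_W$ is a metric and $h\neq f_\ast$: $h$ is singular (supported on edges) and nonzero, while $f_\ast$ is absolutely continuous. The transport-cost heuristic you offer for the stronger uniform infimum $\inf_{g\in AF(R)}d_W(h,g)>0$ ignores both that $\m W_1^{1,1}$ also permits adding and deleting mass and that the signed masses on neighboring edges can partially cancel, so as written it does not constitute a proof of that claim. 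If you want the stronger statement, it follows cleanly from $h\notin AF(R)$ together with the closedness of $AF(R)$ (a consequence of Theorem \ref{thm:AF_compact}). None of this affects the correctness of the overall argument.
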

\begin{proof}
    By Theorem \ref{thm:AF_compact}, $(AF(R),d_W)$ is compact. Since $TF_n(R)$ is finite for each $n$, Theorem \ref{thm:formal_fine_mesh} implies that the $d_W$ limit points of $TF(R)$ are contained in $AF(R)$. Therefore $(AF(R)\cup TF(R),d_W)$ is compact. 
    
    On the other hand, for any $S\in \m S(R)$, Proposition \ref{prop:uniformly_continuous_AF(R)} and Proposition \ref{prop: continuity of tiling flow boundary} combine to show that $T(\cdot,S)$ is a continuous map from $(AF(R)\cup TF(R),d_W)$ to $(\mc M^s(R), \m W_1^{1,1})$. Therefore by compactness $T(\cdot, S)$ is uniformly continuous. 
\end{proof}

\subsection{Properties of Ent}\label{subsection: properties of Ent}

As above, $AF(R)$ denotes the space of asymptotic flows on $R$, and $AF(R,b)$ denotes the asymptotic flows on $R$ with boundary value $b$. Both are equipped with the Wasserstein metric on flows $d_W$ (see Section \ref{section: wasserstein for tiling flows}). Here we use the properties of the mean-current entropy function $\ent$ from Section~\ref{sec:entropy} to prove things about $\Ent$, the entropy functional on asymptotic flows given by 
\begin{align*}
    \Ent(f) = \frac{1}{\text{Vol}(R)} \int_{R} \ent(f(x)) \, \dd x.
\end{align*}
As Corollaries of Lemma \ref{lemma: entropy_concave}, Theorem \ref{theorem: entropy is strictly concave} and Lemma \ref{lemma: entropy_continuous} respectively we get that
\begin{cor}\label{cor: Ent_concave}
    The entropy functional $\Ent$ is concave on $AF(R)$. Further, $\Ent$ is strictly concave when restricted to the space of asymptotic flows which are valued in $\mc O\setminus \mc E$.
\end{cor}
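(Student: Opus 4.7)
The plan is to deduce both statements directly from the corresponding pointwise properties of $\ent$ established earlier: concavity (Lemma \ref{lemma: entropy_concave}) and strict concavity on $\mc O \setminus \mc E$ (Theorem \ref{theorem: entropy is strictly concave}). Since $\Ent$ is the spatial average of $\ent$ composed with a flow, these properties should pass to $\Ent$ by integration. Continuity of $\ent$ on $\mc O$ (Lemma \ref{lemma: entropy_continuous}) guarantees measurability of the integrands whenever the flows are measurable.

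For the concavity statement, I would first check that $AF(R)$ is a convex set: the three defining properties (support in $R$, $\mc O$-valued, and distributional divergence-free on the interior) are each preserved under convex combinations, the second using that $\mc O$ is itself convex. Then for $f, g \in AF(R)$ and $\alpha \in [0,1]$, Lemma \ref{lemma: entropy_concave} yields pointwise
$$\ent(\alpha f(x) + (1-\alpha) g(x)) \geq \alpha \ent(f(x)) + (1-\alpha) \ent(g(x)) \quad \text{for a.e. } x \in R,$$
and integrating over $R$ and dividing by $\Vol(R)$ gives $\Ent(\alpha f + (1-\alpha) g) \geq \alpha \Ent(f) + (1-\alpha) \Ent(g)$.

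For strict concavity on the subspace of flows valued in $\mc O \setminus \mc E$, the key elementary input is a convex-geometry observation: each edge $e \subset \mc E$ is an extreme face of the octahedron $\mc O$, so any convex combination $\alpha p + (1-\alpha) q$ with $\alpha \in (0,1)$ that lands on $e$ forces $p, q \in e$. By contrapositive, if $p, q \in \mc O \setminus \mc E$ then every convex combination $\alpha p + (1-\alpha) q$ lies in $\mc O \setminus \mc E$. Now suppose $f, g \in AF(R)$ are both $\mc O \setminus \mc E$-valued (a.e.) and differ on a set $D \subset R$ of positive Lebesgue measure. For a.e.\ $x \in D$, both $f(x), g(x) \in \mc O \setminus \mc E$ with $f(x) \neq g(x)$, hence their midpoint lies in $\mc O \setminus \mc E$; Theorem \ref{theorem: entropy is strictly concave} then gives strict midpoint concavity of $\ent$ at $(f(x), g(x))$, and combined with concavity of $\ent$ this upgrades to strict inequality
$$\ent(\alpha f(x) + (1-\alpha) g(x)) > \alpha \ent(f(x)) + (1-\alpha) \ent(g(x))$$
for every $\alpha \in (0,1)$ and a.e.\ $x \in D$ (the standard fact that a concave function which is midpoint-strictly-concave is strictly concave). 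Off $D$ the integrand difference is zero; on $D$ it is strictly positive, so the integral is strictly positive, giving $\Ent(\alpha f + (1-\alpha) g) > \alpha \Ent(f) + (1-\alpha) \Ent(g)$.

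There is no serious obstacle here; the argument is a routine integration of pointwise results. The only things worth being careful about are the elementary face-extremality of $\mc E$ (used to keep convex combinations out of $\mc E$) and the passage from midpoint strict concavity to strict concavity at arbitrary $\alpha$, both of which are standard.
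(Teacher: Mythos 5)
Your proof is correct and takes the same route the paper intends: $\Ent$ inherits (strict) concavity from the corresponding pointwise properties of $\ent$ by integrating over $R$, with continuity of $\ent$ guaranteeing measurability of the integrands. The one step you fill in that the paper leaves implicit --- that $\mc O \setminus \mc E$ is convex because each edge is a face of the polytope $\mc O$ --- is exactly the observation needed to ensure $\alpha f + (1-\alpha)g$ stays in the domain where Theorem \ref{theorem: entropy is strictly concave} applies and to justify the integrand being strictly positive on a set of positive measure.
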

\begin{cor}\label{cor: Ent_ae}
     If $f_n \to f$ almost everywhere in $R$, then $\Ent(f) = \lim_{n\to \infty} \Ent(f_n)$. 
\end{cor}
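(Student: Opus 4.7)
The plan is to deduce this corollary as a routine application of Lebesgue's dominated convergence theorem, leveraging continuity and boundedness of $\ent$ on the compact set $\mc O$. Since each $f_n$ and the limit $f$ lie in $AF(R)$, they are (Lebesgue-a.e.) valued in the compact octahedron $\mc O$. By Lemma~\ref{lemma: entropy_continuous}, $\ent\colon \mc O \to [0,\infty)$ is continuous on the compact set $\mc O$, hence bounded by some constant $M := \max_{s\in\mc O}\ent(s) < \infty$.

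First I would observe that for (Lebesgue-)almost every $x \in R$ we have $f_n(x) \to f(x)$ in $\mc O$; by continuity of $\ent$, this gives the pointwise a.e.\ convergence
\[
\ent(f_n(x)) \;\longrightarrow\; \ent(f(x)) \qquad \text{for a.e.\ } x \in R.
\]
Next, the functions $x \mapsto \ent(f_n(x))$ are measurable (composition of a measurable $\mc O$-valued function with a continuous map) and dominated uniformly in $n$ by the constant function $M$, which is integrable on the bounded region $R$ since $\text{Vol}(R) < \infty$.

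Applying the dominated convergence theorem then gives
\[
\lim_{n\to\infty} \int_R \ent(f_n(x)) \,\dd x \;=\; \int_R \ent(f(x)) \,\dd x,
\]
and dividing by $\text{Vol}(R)$ yields $\lim_{n\to\infty} \Ent(f_n) = \Ent(f)$, as desired.

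There is essentially no obstacle here: the only ingredients are continuity of $\ent$ on $\mc O$ (already established), compactness of $\mc O$ (so $\ent$ is bounded), and finiteness of $\text{Vol}(R)$ (so constants are integrable on $R$). The result is genuinely a one-line consequence of DCT once one observes that a.e.\ convergence of $\mc O$-valued flows combined with continuity of $\ent$ produces a.e.\ convergence of the integrands, uniformly dominated by a constant.
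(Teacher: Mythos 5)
Your proof is correct and is essentially the argument the paper intends: the corollary is stated as an immediate consequence of the continuity of $\ent$ (Lemma~\ref{lemma: entropy_continuous}), and the dominated (or bounded) convergence theorem is precisely the right tool, since $\ent$ is bounded on the compact set $\mc O$ and $R$ has finite volume. One small remark: the paper sometimes applies this corollary to $\mc O$-valued measurable vector fields that are not necessarily divergence-free (e.g.\ the mollified $g_\epsilon$ in the proof of Proposition~\ref{proposition: Upper semicontinuous}), so the hypothesis you should keep in mind is simply that $f_n, f$ are $\mc O$-valued a.e.\ rather than that they lie in $AF(R)$; your argument covers this without modification.
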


From this, we show 
\begin{prop}\label{proposition: Upper semicontinuous}
    The functional $\Ent:AF(R) \to [0,\infty)$ is upper semicontinuous in the Wasserstein topology induced by $d_W$. 
\end{prop}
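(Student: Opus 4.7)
The plan is to prove upper semicontinuity via Jensen's inequality applied to box averages, using the concavity and continuity of $\ent$ from Lemmas \ref{lemma: entropy_concave} and \ref{lemma: entropy_continuous}. The key intermediate object is a piecewise-constant approximation of a flow obtained by averaging over a partition of $R$ into small boxes. The overall strategy is: suppose $f_n \to f$ in $d_W$; prove the desired limsup bound by first replacing each $\Ent(f_n)$ by its box-averaged upper bound (Jensen), then passing $n\to\infty$ on the finite sum, then refining the partition.

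More precisely, fix $\varepsilon>0$ and choose a partition $R = \bigsqcup_{i=1}^M B_i^\varepsilon$ into sets with piecewise smooth boundary and diameter at most $\varepsilon$ (for instance, the intersections of $R$ with a cubic grid of mesh $\varepsilon$; one can ensure each $\partial B_i^\varepsilon$ has Lebesgue measure zero). For any $g \in AF(R)$, set
\[
\bar g^{(i)} := \frac{1}{|B_i^\varepsilon|} \int_{B_i^\varepsilon} g(x)\, \dd x \in \mathcal O,
\]
where the containment holds because $g$ is $\mathcal O$-valued Lebesgue-a.e.\ and $\mathcal O$ is convex. Since $\ent$ is concave on $\mathcal O$, Jensen's inequality gives
\[
\frac{1}{|B_i^\varepsilon|}\int_{B_i^\varepsilon} \ent(g(x))\, \dd x \;\leq\; \ent\bigl(\bar g^{(i)}\bigr),
\]
so summing over $i$,
\[
\Ent(g) \;\leq\; \frac{1}{\Vol(R)} \sum_{i=1}^M |B_i^\varepsilon|\, \ent\bigl(\bar g^{(i)}\bigr).
\]

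Next I would apply this to each $f_n$. By Remark \ref{rem:weak convergence for signed +} and Corollary \ref{cor: weak convergence wasserstein}, convergence $f_n \to f$ in $d_W$ implies weak convergence of each of the three component signed measures. Since every $B_i^\varepsilon$ has boundary of Lebesgue (hence limiting-measure) zero and is therefore a continuity set, $\int_{B_i^\varepsilon}\dd \mu_n^j \to \int_{B_i^\varepsilon}\dd \mu^j$ for $j=1,2,3$. Thus $\bar f_n^{(i)} \to \bar f^{(i)}$ in $\mathbb R^3$ as $n\to\infty$, and by continuity of $\ent$ (Lemma \ref{lemma: entropy_continuous}) together with the finiteness of the sum,
\[
\limsup_{n\to\infty} \Ent(f_n) \;\leq\; \frac{1}{\Vol(R)} \sum_{i=1}^M |B_i^\varepsilon|\, \ent\bigl(\bar f^{(i)}\bigr).
\]

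Finally I would refine the partition by sending $\varepsilon \to 0$. The Lebesgue differentiation theorem applied to each coordinate of $f$ gives $\bar f^{(i(x,\varepsilon))} \to f(x)$ for Lebesgue-a.e.\ $x\in R$, where $i(x,\varepsilon)$ is the index of the box containing $x$. Continuity of $\ent$ then yields pointwise a.e.\ convergence $\ent(\bar f^{(i(x,\varepsilon))}) \to \ent(f(x))$, and since $\ent$ is bounded on the compact set $\mathcal O$, the dominated convergence theorem gives
\[
\lim_{\varepsilon\to 0} \frac{1}{\Vol(R)} \sum_{i=1}^M |B_i^\varepsilon|\, \ent\bigl(\bar f^{(i)}\bigr) \;=\; \frac{1}{\Vol(R)}\int_R \ent(f(x))\, \dd x \;=\; \Ent(f),
\]
completing the proof.

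The only potentially subtle point is verifying that $d_W$-convergence genuinely yields convergence of the box integrals $\int_{B_i^\varepsilon}\dd \mu_n^j$; this is the main place where one must invoke that $d_W$ metrizes weak convergence for the bounded-density signed measures under consideration (Remark \ref{rem:weak convergence for signed +}) and that the partition is chosen so that each $B_i^\varepsilon$ is a continuity set for the limit. Everything else is concavity, continuity, and Lebesgue differentiation applied on a compact region, all of which are routine once the box-averaging framework is set up.
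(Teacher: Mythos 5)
Your proof is correct and takes essentially the same route as the paper: both rely on Jensen's inequality via concavity of $\ent$, weak convergence of the component measures (to pass the limit $n\to\infty$ through spatial averages), Lebesgue differentiation, and continuity of $\ent$ plus dominated convergence to refine. The only difference is cosmetic—you average over a fixed grid partition of $R$ while the paper uses a moving ball average $g_\epsilon(x) = \mathrm{avg}_{B_\epsilon(x)}\,g$ (which incurs a small $O(\epsilon)$ boundary correction that your partition-based version avoids).
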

\begin{proof}
	Let $(f_n)_{n\geq 1}$ be a sequence of flows in $AF(R)$ such that $d_W(f_n, f) \to 0$ as $n\to \infty$ for some $f\in AF(R)$. For any $g\in AF(R)$, we can define its approximation
	$g_{\epsilon}$ given by 
	$$g_{\epsilon}(x):=\frac{1}{\text{Vol}{B_{\epsilon}(x)}}\int_{B_{\epsilon}(x)}g(y)\,\dd y.$$
Here we say that $g(y) = 0$ if $y\not\in R$. As $\epsilon\to 0$, $d_W(g_{\epsilon},g)\to 0$. 

While $g_\epsilon$ is not an asymptotic flow because it is not divergence-free, it is still valued in $\mc O$ and thus $\Ent(g_\epsilon) := \frac{1}{\text{Vol}(R)} \int_R \ent(g_\epsilon(x))\, \dd x$ still makes sense. By the Lebesgue differentiation theorem, $g_{\epsilon}$ converges to $g$ almost everywhere as $\epsilon\to 0$. By Corollary \ref{cor: Ent_ae},
$$\lim_{\epsilon\to 0}\text{Ent}(g_{\epsilon})=\text{Ent}(g).$$

By Lemma \ref{lemma: entropy_concave}, for any $x\in R$,
\begin{align*}
    \ent(g_\epsilon(x)) = \ent\bigg(\frac{3}{4\pi \epsilon^3} \int_{B_\epsilon(x)} g(y)\, \dd y\bigg) \geq \frac{3}{4\pi \epsilon^3} \int_{B_\epsilon(x)} \ent(g(y))\,\dd y.
\end{align*}
Therefore there is a constant $C$ (proportional to $\text{Area}(\partial R)/\text{Vol}(R)$ and independent of $\epsilon$) such that
$$\text{Ent}(g_{\epsilon}) + C\epsilon \geq \text{Ent}(g).$$
Since $d_W(f_n,f)\to 0$ as $n\to \infty$, by Corollary \ref{cor: weak convergence wasserstein}, $f_{n,\epsilon}$ converges pointwise to $f_{\epsilon}$. By Corollary \ref{cor: Ent_ae},
$$\limsup_{n\to \infty}\text{Ent}(f_n)\leq \limsup_{n\to \infty}\text{Ent}(f_{n,\epsilon}) + C\epsilon= \text{Ent}(f_{\epsilon}) + C\epsilon.$$
Taking $\epsilon$ to zero, we get that $$\limsup_{n\to \infty}\text{Ent}(f_n) \leq \text{Ent}(f),$$
hence $\Ent$ is upper semicontinuous. 
 \end{proof}

\begin{rem}
It is not difficult to see that $\text{Ent}$ is not continuous. Indeed consider the flows $f_n\in AF([0,1]^3)$  given by
$$f_{n}(x_1, x_2, x_3)=
\begin{cases}\eta_2 &\text{ if }x_1\in(\frac{2k}{2n},\frac{2k+1}{2n} )\text{ for some }0\leq k\leq n-1\\
-\eta_2 &\text{ if }x_1\in(\frac{2k+1}{2n},\frac{2k+2}{2n} )\text{ for some }0\leq k\leq n-1.\end{cases}$$
Then $f_{n}$ converges to the constant zero vector field but $\text{Ent}(f_n)=0$ while $\text{Ent}(0)>0$. 
\end{rem}

Our main goal is to show that there exists a unique $\Ent$ maximizer in $AF(R,b)$ under some mild conditions on the pair $(R,b)$. Standard analytic arguments are enough to show existence and a weak form of uniqueness. Let $\mathfrak e_1,...,\mathfrak e_{12}$ denote the twelve closed edges of $\mathcal O$ which make up $\mc E$. 
\begin{prop}\label{prop: weak uniqueness}
    There exists $f \in AF(R,b)$ such that $\Ent(f) = \sup_{g\in AF(R,b)} \Ent(g)$. Further, given $f_1, f_2\in AF(R,b)$, define 
    $$A = \{x\in R : f_1(x) \neq f_2(x)\},\qquad B =  \bigcup_{i=1}^{12} \{x\in R : f_1 (x),f_2(x) \in \mathfrak e_i\}.$$
    If $f_1, f_2$ are both $\Ent$ maximizers, then $A \subseteq B$. 
\end{prop}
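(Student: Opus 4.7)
The argument will split into two independent parts: existence of a maximizer via a standard compactness/semicontinuity argument, and the ``uniqueness-up-to-edges'' via strict concavity plus a convex-geometric face argument.

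For existence, the plan is to invoke the two structural results already proved. The space $(AF(R,b), d_W)$ is compact by Corollary \ref{cor: AF(R,b) compact}, and $\Ent$ is upper semicontinuous in the Wasserstein topology by Proposition \ref{proposition: Upper semicontinuous}. An upper semicontinuous function on a nonempty compact set attains its supremum, so a maximizer exists. (If $AF(R,b) = \emptyset$ the statement is vacuous.)

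For the second part, suppose $f_1, f_2 \in AF(R,b)$ both maximize $\Ent$, and set $f := (f_1+f_2)/2$. First I would check that $AF(R,b)$ is convex: divergence-freeness in the distributional sense is a linear condition, $\mathcal{O}$ is convex so $f$ takes values in $\mathcal{O}$ almost everywhere, and the trace operator $T(\cdot, \partial R)$ is linear on smooth flows and extends continuously to $AF(R)$ by Proposition \ref{prop:uniformly_continuous_AF(R)}, so $T(f,\partial R) = (T(f_1,\partial R) + T(f_2,\partial R))/2 = b$. Hence $f \in AF(R,b)$.

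The key observation is the following convex-geometric fact: each edge $\mathfrak{e}_i$ is a (one-dimensional) face of the polytope $\mathcal{O}$, so by the defining property of faces, if the midpoint $(s_1+s_2)/2$ of two points $s_1, s_2 \in \mathcal{O}$ lies in some $\mathfrak{e}_i$, then both $s_1, s_2 \in \mathfrak{e}_i$. Contrapositively, if $s_1, s_2 \in \mathcal{O}$ are not both contained in a common edge $\mathfrak{e}_i$, then $(s_1+s_2)/2 \in \mathcal{O} \setminus \mathcal{E}$. At every point $x \in A \setminus B$, this contrapositive applies to $s_1 = f_1(x)$ and $s_2 = f_2(x)$ (which differ and are not both in any single edge), so the midpoint lies in $\mathcal{O}\setminus \mathcal{E}$ and the strict concavity of $\ent$ on $\mathcal{O}\setminus \mathcal{E}$ (Theorem \ref{theorem: entropy is strictly concave}) gives the strict pointwise inequality
\[
\ent(f(x)) > \tfrac{1}{2}\bigl(\ent(f_1(x)) + \ent(f_2(x))\bigr).
\]
On all of $R$, concavity of $\ent$ (Lemma \ref{lemma: entropy_concave}) gives the non-strict version of this inequality. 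If $A\setminus B$ has positive Lebesgue measure, integrating over $R$ yields $\Ent(f) > \frac{1}{2}(\Ent(f_1) + \Ent(f_2)) = \Ent(f_1)$, contradicting maximality of $f_1$. Therefore $A \setminus B$ has measure zero, which is the claim (interpreted in the almost-everywhere sense natural for measurable vector fields). The only subtle step is the face-argument identifying where strict concavity of $\ent$ fails; the rest is routine once the structural ingredients of Sections~\ref{sec:asympflows} and \ref{subsection: strict concavity} are in place.
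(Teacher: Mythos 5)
Your proof is correct and follows the same strategy as the paper: compactness of $(AF(R,b),d_W)$ plus upper semicontinuity of $\Ent$ for existence, and strict concavity of $\ent$ on $\mathcal O\setminus\mathcal E$ for the uniqueness-up-to-edges claim. You are actually somewhat more careful than the paper's own proof, which cites Corollary~\ref{cor: Ent_concave} (strict concavity of $\Ent$ for flows valued entirely in $\mathcal O\setminus\mathcal E$) even though $f_1,f_2$ need not be so valued; your pointwise treatment, together with the explicit observation that each $\mathfrak e_i$ is a face of $\mathcal O$ (so a midpoint landing in $\mathfrak e_i$ forces both endpoints into $\mathfrak e_i$), and the explicit check that $AF(R,b)$ is convex so that $(f_1+f_2)/2 \in AF(R,b)$, fill in details the paper leaves implicit. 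The only mild discrepancy is cosmetic: you conclude ``$A\setminus B$ has measure zero'' while the statement reads $A\subseteq B$, but as you note this is the natural a.e.\ reading for measurable vector fields, and it matches the paper's intent (the paper's own proof begins ``if $f_1,f_2$ are distinct maximizers then $A$ has positive measure,'' treating distinctness as up-to-null-sets).
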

\begin{rem}
The problem is that $\ent$ is only strictly concave on $\mc O\setminus \mc E$, not all of $\mc O$. The same problem arises in two dimensions, and is addressed in \cite{gorin2021lectures} and \cite{Savin}. 
\end{rem}
\begin{proof} Since $(AF(R,b),d_W)$ is compact (Theorem \ref{thm:AF_compact}) and $\Ent$ is upper semicontinuous (Proposition \ref{proposition: Upper semicontinuous}), the existence of the maximizer follows.

    To prove weak uniqueness, recall that $\ent(s) = 0$ if and only if $s\in \mc E$. If $f_1,f_2$ are distinct maximizers then $A$ has positive measure. If $A \cap (R\setminus B)$ has positive measure, then by strict convexity of $\Ent$ on flows valued in $\mc O\setminus \mc E$ (Corollary \ref{cor: Ent_concave}), 
    \begin{align*}
        \Ent\bigg( \frac{f_1 + f_2}{2}\bigg) > \Ent(f_1 ) + \Ent( f_2),
    \end{align*}
    which would contradict the claim that $f_1, f_2$ are maximizers. Therefore $A \subseteq B$.
\end{proof}

We adapt an argument of V.~Gorin in \cite[Proposition 7.10]{gorin2021lectures} to prove uniqueness under the mild condition that the pair $(R,b)$ is \textit{semi-flexible} as defined in Definition \ref{def: flexible} below. We call this \textit{semi-flexible} since it is a weaker condition than \textit{flexible}, which will be defined at the beginning of Section \ref{sec:ldp}. 

\begin{definition}\label{def: frozen}
Fix a boundary asymptotic flow $b$ on $R$. A point $x\in R$ with boundary condition $b$ is \textit{frozen} if for all open sets $U\ni x$ and all entropy {maximizers} $f\in AF(R,b)$, there are points $y\in U$ such that $f(y) \in \mc E$. A point $x\in R$ with boundary condition $b$ is \textit{always frozen} if for all open sets $U\ni x$ and \textit{all} $g\in AF(R,b)$, there are points $y\in U$ such that $f(y) \in \mc E$.\termindex{Chapter 7!frozen points in a domain}
\end{definition}
\begin{definition}\label{def: flexible}
    The pair $(R,b)$ is \textit{semi-flexible} if there are no always frozen points in $\text{Int}(R)$. I.e., $(R,b)$ is semi-flexible if for all $x\in \text{Int}(R)$, there exists an extension $g\in AF(R,b)$ and an open set $U\ni x$ such that $g(U)\subset \mc O\setminus \mc E$. If $(R,b)$ is not semi-flexible, we say $(R,b)$ is \textit{rigid}.
\end{definition}\termindex{Chapter 7!semi-flexible, rigid}
\begin{rem}
    The weak uniqueness statement in Proposition \ref{prop: weak uniqueness} can be rephrased as saying that entropy maximizers are unique on the complement of the frozen points. In particular the task that remains is to show that a region (i.e.\ the set of frozen points) cannot both be frozen and have multiple tilings. 
\end{rem}

\begin{thm}\label{thm: unique maximizer}
If $(R,b)$ is semi-flexible, then there is a unique $\Ent$ maximizer in $AF(R,b)$. 
\end{thm}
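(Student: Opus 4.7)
The plan is to suppose for contradiction that two distinct maximizers $f_1, f_2 \in AF(R,b)$ exist, then derive a contradiction via a well-chosen perturbation built from the semi-flexibility hypothesis. First, I invoke Proposition \ref{prop: weak uniqueness}: the set $A = \{x \in R : f_1(x) \neq f_2(x)\}$ is contained, up to a null set, in $B = \bigcup_i \{x : f_1(x), f_2(x) \in \mathfrak{e}_i\}$. If $A$ has positive Lebesgue measure, then since $\partial R$ has Lebesgue measure zero, there is some edge $\mathfrak e$ of $\mathcal O$ for which the slice $A_{\mathfrak e} = A \cap \{x : f_1(x), f_2(x) \in \mathfrak e\}$ has positive measure inside $\mathrm{Int}(R)$. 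By the Lebesgue density theorem, pick a density point $x_0 \in A_{\mathfrak e} \cap \mathrm{Int}(R)$.

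Next, apply the semi-flexibility hypothesis at $x_0$: there exist $g \in AF(R,b)$ and an open neighborhood $U \ni x_0$ with $\overline{g(U)}$ at positive distance $\delta > 0$ from $\mathcal E$. Let $f^* = (f_1 + f_2)/2$; this is also an $\mathrm{Ent}$-maximizer by concavity of $\mathrm{Ent}$ (Corollary \ref{cor: Ent_concave}), and since each $\mathfrak{e}_i$ is a line segment (hence convex), $f^*(x) \in \mathfrak e$ and thus $\mathrm{ent}(f^*(x)) = 0$ for a.e.\ $x \in A_{\mathfrak e}$. Form the perturbation $h_\epsilon = (1-\epsilon) f^* + \epsilon g$ for small $\epsilon > 0$. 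It is measurable, divergence-free, $\mathcal O$-valued (by convexity of $\mathcal O$), supported in $R$, and has boundary trace $(1-\epsilon) b + \epsilon b = b$ by linearity of $T$ (inherited from Definition \ref{def: trace on smooth flows} via the smooth-approximation construction), so $h_\epsilon \in AF(R,b)$.

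The key estimate compares $\mathrm{Ent}(h_\epsilon)$ to $\mathrm{Ent}(f^*)$. On $R \setminus A$, concavity of $\mathrm{ent}$ (Lemma \ref{lemma: entropy_concave}) gives
\[
\mathrm{ent}(h_\epsilon(x)) - \mathrm{ent}(f^*(x)) \;\geq\; \epsilon\bigl(\mathrm{ent}(g(x)) - \mathrm{ent}(f^*(x))\bigr) \;\geq\; -\epsilon M,
\]
where $M = \sup_{\mathcal O} \mathrm{ent}$, so the integrated loss on $R \setminus A$ is at most $\epsilon M \,\mathrm{Vol}(R)$. On $U \cap A_{\mathfrak e}$, by contrast, $f^*(x) \in \mathfrak e$ while $g(x)$ stays at distance $\geq \delta$ from $\mathcal E$, so $h_\epsilon(x)$ lies at distance of order $\epsilon$ from $\mathcal E$. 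The crucial analytic input is the superlinear bound
\[
\mathrm{ent}(h_\epsilon(x)) \;\gtrsim\; -\epsilon \log \epsilon
\]
holding uniformly on $U \cap A_{\mathfrak e}$ for small $\epsilon$. Integrating, the gain from $U \cap A_{\mathfrak e}$ is of order $-\epsilon \log \epsilon \cdot |U \cap A_{\mathfrak e}|$, which for $\epsilon$ small dominates the $O(\epsilon)$ loss; hence $\mathrm{Ent}(h_\epsilon) > \mathrm{Ent}(f^*)$, contradicting maximality. Thus $A$ has measure zero, giving uniqueness.

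The main obstacle is justifying the superlinear bound $\mathrm{ent}(s) \gtrsim -d(s, \mathcal E) \log d(s, \mathcal E)$ for $s$ near $\mathcal E$. On the faces of $\partial \mathcal O$ this follows from Theorem \ref{thm: extremal_entropy}, which identifies $\mathrm{ent}\mid_{\partial \mathcal O}$ with $\mathrm{ent}_{\mathrm{loz}}$, together with the classical expansion $L(\theta) = \theta - \theta \log(2\theta) + O(\theta^3)$ of the Lobachevsky function near the origin. To propagate this to interior points of $\mathcal O$ near $\mathcal E$, one uses concavity of $\mathrm{ent}$ (Lemma \ref{lemma: entropy_concave}) to write a point $s \in \mathrm{Int}(\mathcal O)$ near an edge $\mathfrak e$ as a convex combination of two points on the two faces adjacent to $\mathfrak e$, each at distance comparable to $d(s, \mathcal E)$ from $\mathfrak e$, and applies the face estimate to each. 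This yields the interior bound at the same (up to constants) superlinear rate, closing the argument.
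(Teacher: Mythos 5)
Your strategy is essentially the paper's: exploit the superlinear behavior of $\ent$ near the edges $\mc E$ to show that a maximizer taking values in $\mc E \setminus \mc V$ on a positive-measure set can be strictly improved by blending with a semi-flexible flow. The paper separates this into Lemma~\ref{lem: Gorin} (maximizers avoid $\mc E \setminus \mc V$) plus a short theorem proof about $\tfrac12(f_1+f_2)$, while you compress both into a single perturbation applied directly to $f^* = \tfrac12(f_1+f_2)$, observing that midpoints of distinct edge-values land in the relative interior of an edge. That compression is clean and valid. You also correctly identify the source of the superlinear estimate (the Lobachevsky expansion via Theorem~\ref{thm: extremal_entropy} on faces, then propagation inward by concavity, with the caveat that a general $t\in\mathrm{Int}(\mc O)$ must be split into parts on the two adjacent faces \emph{and} a part on the opposite edge, not just the two faces).

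There is, however, one concrete gap. You invoke semi-flexibility to produce $g \in AF(R,b)$ and $U \ni x_0$ with $\overline{g(U)}$ \emph{at positive distance $\delta$ from $\mc E$}. The paper's Definition~\ref{def: flexible} only provides $g(U) \subset \mc O \setminus \mc E$. Since $g$ is merely measurable, this does not yield any uniform separation: $g$ may take values arbitrarily close to $\mc E$ on $U$. (The phrasing "$\overline{g(U)}$ disjoint from $\mc E$" appears informally in the introduction but is not the formal definition; the formal "flexible" condition of Definition~\ref{def:fully_flexible} does use $\overline{g(U)} \subset \mathrm{Int}(\mc O)$, but "semi-flexible" does not.) Without uniform separation, the constant $c$ in the superlinear bound $\ent(\epsilon t + (1-\epsilon) s) > c\,\epsilon\log(1/\epsilon)$ degenerates as $t$ approaches $\mc E$ (and likewise as $s = f^*(x)$ approaches the vertices $\mc V$), so the pointwise gain need not dominate the uniform $O(\epsilon)$ loss after integration. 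The fix is what the paper does: after noting that $f^*$ lies in $\mc E \setminus \mc V$ and $g$ lies in $\mc O \setminus \mc E$ a.e.\ on $U \cap A_{\mathfrak e}$, use measurability and countable additivity to pass to a subset $A' \subset U\cap A_{\mathfrak e}$ of positive measure on which $g$ stays in a fixed compact subset of $\mc O\setminus\mc E$ and $f^*$ stays a fixed distance from $\mc V$; the superlinear bound is then uniform on $A'$, and integrating over $A'$ already suffices to beat the $O(\epsilon)$ loss on the rest of $R$. With that repair your argument closes.
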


\begin{rem}
We do not know of an example of a three-dimensional region $R\subset \m R^3$ with boundary value $b$ such that the entropy maximizer for $(R,b)$ is not unique. However see Problem \ref{prob: region with non unique max}, which includes a two-dimensional, non-planar example where the maximizer is not unique. 
\end{rem}

To prove Theorem \ref{thm: unique maximizer}, we show that an equivalent definition of $(R,b)$ {semi-flexible} is that $b$ has an extension $f_0$ valued in $\mc O\setminus \mc E$ on $\text{Int}(R)$ (Lemma \ref{lem: nonwhere edge extension}). After that, the key step is to show that if a maximizer takes values in $\mc E$, we can perturb it by $f_0$ to get a flow which does not take edge values and has more entropy (Lemma \ref{lem: Gorin}). In particular we have the corollary that even if uniqueness fails for $(R,b)$, it holds if $b$ is replaced by (say) $.999b$.
\begin{cor}
Given any boundary asymptotic flow $b$ on $R$ and any $\delta\in (0,1)$ there is a unique entropy maximizer in $AF(R, \delta b)$.
\end{cor}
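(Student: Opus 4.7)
The plan is to show that $(R, \delta b)$ is automatically semi-flexible whenever $\delta \in (0,1)$, at which point Theorem \ref{thm: unique maximizer} immediately gives uniqueness. By Definition \ref{def:boundary_asymptotic_flow}, since $b$ is a boundary asymptotic flow there exists some $g \in AF(R,b)$, so $AF(R,b)$ is nonempty. First I would consider the rescaled vector field $\delta g$. Because $g$ is divergence-free as a distribution and the distributional divergence is linear, $\delta g$ is also divergence-free. Because $g$ takes values in $\mathcal{O}$ a.e.\ and $\mathcal{O}$ is star-shaped around the origin (indeed $\delta \mathcal{O} \subset \mathcal{O}$), $\delta g$ takes values in the scaled octahedron $\delta \mathcal{O} = \{s \in \mathbb{R}^3 : |s_1|+|s_2|+|s_3| \leq \delta\}$.

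Next I would use linearity of the trace operator. On $AF^\infty(R)$ the trace $T(\cdot, \partial R)$ is linear in the flow argument by Definition \ref{def: trace on smooth flows}, and the extension to all of $AF(R)$ is defined as a continuous limit (see equation \eqref{eq: AF(R) trace} and Proposition \ref{prop:uniformly_continuous_AF(R)}), so it remains linear. Hence
\[
T(\delta g, \partial R) \;=\; \delta\, T(g, \partial R) \;=\; \delta\, b,
\]
which shows $\delta g \in AF(R, \delta b)$.

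The key geometric observation is that $\delta \mathcal{O}$ is disjoint from the edges $\mathcal{E}$ of $\mathcal{O}$: every point on an edge of $\mathcal{O}$ satisfies $|s_1|+|s_2|+|s_3| = 1$, while every point of $\delta \mathcal{O}$ satisfies $|s_1|+|s_2|+|s_3| \leq \delta < 1$. Therefore for any $v \in R$, choosing $U$ to be any open neighborhood of $v$, the (essential) image $\overline{(\delta g)(U)}$ is contained in the compact set $\delta \mathcal{O}$ and is disjoint from $\mathcal{E}$. By Definition \ref{def: flexible}, this shows that $(R, \delta b)$ is semi-flexible. Applying Theorem \ref{thm: unique maximizer} to $(R, \delta b)$ then gives the unique entropy maximizer in $AF(R, \delta b)$.

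There is no substantial obstacle in this argument; the only thing to verify carefully is the linearity of $T(\cdot, \partial R)$ after its extension by continuity to all of $AF(R)$, which follows from the linearity on $AF^\infty(R)$ together with the density/approximation scheme of Proposition \ref{prop:bump_function} used to define the extension.
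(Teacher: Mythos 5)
Your proposal is correct and matches the argument the paper has in mind: the corollary is left unproved after Theorem \ref{thm: unique maximizer}, but Remark \ref{rem:aztec_is_semiflexible} indicates exactly this reasoning, namely that $\delta g$ takes values in $\delta\mathcal O\subset \mc O\setminus \mc E$ (indeed $\subset \text{Int}(\mathcal O)$, so $(R,\delta b)$ is even flexible). Your care about verifying linearity of the extended trace operator is warranted but unproblematic, since the scaling identity $\mathbb W_1^{1,1}(\delta\mu,\delta\nu)=\delta\,\mathbb W_1^{1,1}(\mu,\nu)$ (immediate from the dual formulation in Proposition \ref{prop: dual definition wasserstein}) makes $f\mapsto\delta f$ continuous in $d_W$, so linearity passes to the limit; alternatively, one can invoke Lemma \ref{lem: nonwhere edge extension} directly with $f_0=\delta g$.
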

\begin{rem}\label{rem:aztec_is_semiflexible}
    It is also not hard to see directly that $(R,\delta b)$ is semi-flexible, and in fact flexible, see Definition \ref{def:fully_flexible} and Remark \ref{rem:aztec_is_flexible}. 
\end{rem}

\begin{lemma}\label{lem: nonwhere edge extension}
    The pair $(R,b)$ is semi-flexible if and only if there exists $f_0\in AF(R,b)$ such that $f_0$ is valued in $\mc O\setminus \mc E$.
\end{lemma}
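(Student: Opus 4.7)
\textbf{Proof plan for Lemma \ref{lem: nonwhere edge extension}.}

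The easy direction is $(\Leftarrow)$: if an $f_0 \in AF(R,b)$ valued (a.e.) in $\mc O \setminus \mc E$ exists, then for every $x \in \text{Int}(R)$ we take the witness $g = f_0$ together with any open neighbourhood $U \ni x$, and the semi-flexibility condition holds immediately. The substantive content is the forward direction $(\Rightarrow)$, where one must patch together locally ``non-edge'' witnesses into a single global flow with boundary value $b$.

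Assume $(R,b)$ is semi-flexible. For each $x\in\text{Int}(R)$, pick an extension $g_x\in AF(R,b)$ and an open neighbourhood $U_x\ni x$ with $g_x(y)\in\mc O\setminus\mc E$ for a.e.\ $y\in U_x$. Since $\text{Int}(R)\subset\R^3$ is Lindel\"of, I extract a countable subcover $\{U_{x_i}\}_{i\geq 1}$ of $\text{Int}(R)$. Fix any positive weights $\lambda_i>0$ with $\sum_{i\geq 1}\lambda_i=1$ (e.g.\ $\lambda_i=2^{-i}$) and define
\begin{equation*}
    f_0(y) \ := \ \sum_{i=1}^{\infty} \lambda_i\, g_{x_i}(y),
\end{equation*}
which converges absolutely in $\R^3$ at every $y$ because each $g_{x_i}$ is bounded by the diameter of $\mc O$.

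Next I verify $f_0\in AF(R,b)$. Measurability and support in $R$ are immediate. Pointwise in $\mc O$ follows from convexity of $\mc O$. Divergence-freeness is checked against test functions $\phi$ compactly supported in $\text{Int}(R)$ by dominated convergence:
\begin{equation*}
    \int_R \nabla\phi\cdot f_0 \ = \ \sum_{i\geq 1}\lambda_i\int_R\nabla\phi\cdot g_{x_i} \ = \ 0.
\end{equation*}
For the boundary value, the partial sums $F_N=\sum_{i=1}^N\lambda_i g_{x_i}$ converge to $f_0$ pointwise and boundedly, hence in $d_W$, and linearity of $T(\cdot,\partial R)$ together with the uniform continuity of the trace (Theorem~\ref{thm: boundary_value_uniformly_continuous}) gives
\begin{equation*}
    T(f_0,\partial R) \ = \ \lim_{N\to\infty}T(F_N,\partial R) \ = \ \lim_{N\to\infty}\Bigl(\sum_{i=1}^N\lambda_i\Bigr)b \ = \ b.
\end{equation*}

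It remains to show $f_0(y)\in\mc O\setminus\mc E$ for a.e.\ $y\in R$. Let $N_i=\{y\in U_{x_i}: g_{x_i}(y)\in\mc E\}$, each of measure zero by choice of $g_{x_i}$, and let $N=\partial R\cup\bigcup_i N_i$, which is still null (note $\partial R$ is piecewise smooth, hence has measure zero, and $\{U_{x_i}\}$ covers $\text{Int}(R)$). For $y\in R\setminus N$ there is some index $j$ with $y\in U_{x_j}$ and $g_{x_j}(y)\in\mc O\setminus\mc E$. I now use the extremal property of faces: since each edge $\mathfrak e$ of $\mc O$ is a closed $1$-face of the convex polytope $\mc O$, any countable convex combination $\sum_i\lambda_i p_i$ of points in $\mc O$ which lies in $\mathfrak e$ forces every summand with $\lambda_i>0$ to lie in $\mathfrak e$ (apply the two-term face property iteratively, splitting off one $\lambda_i$ at a time, using closedness of $\mathfrak e$ under the tail average). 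If $f_0(y)$ belonged to some edge $\mathfrak e$, then in particular $g_{x_j}(y)\in\mathfrak e\subset\mc E$, a contradiction. Hence $f_0(y)\notin\mc E$ for every $y\in R\setminus N$, completing the proof.

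The only real obstacle is the verification that the infinite convex combination inherits both the distributional divergence-free condition and the prescribed boundary trace; both of these reduce to the linearity and continuity (in $d_W$) of the trace operator established in Section~\ref{sec:asympflows}. Everything else is elementary convexity and measure theory.
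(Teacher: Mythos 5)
Your proof is correct and follows the same essential approach as the paper: extract a countable subcover of the local witnesses, sum the corresponding flows with positive weights summing to one, and conclude that the resulting flow avoids $\mc E$. The paper uses the explicit countable base of rational balls in $R$; you invoke Lindel\"of on the cover $\{U_x\}$, which achieves the same effect and is arguably cleaner (the paper's phrasing ``for each $i\in\mathbb N$'' implicitly requires re-indexing to the subfamily of rational balls that sit inside some $U_x$, while your subcover handles this directly).

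Where your write-up adds value is in spelling out two verifications the paper compresses or omits entirely. First, you check that the countable convex combination $f_0$ really lies in $AF(R,b)$ --- measurability and the $\mc O$-constraint are immediate from convexity and closedness of $\mc O$, the distributional divergence vanishes by dominated convergence, and the boundary trace equals $b$ by linearity together with the uniform $d_W$-continuity of $T(\cdot,\partial R)$ from Theorem~\ref{thm: boundary_value_uniformly_continuous}. Second, and more importantly, you give the face-extremality argument that is the actual crux of why $f_0(y)\notin\mc E$: each edge $\mathfrak e$ is a face of the closed convex polytope $\mc O$, so a positively-weighted countable average of points of $\mc O$ landing in $\mathfrak e$ forces every summand into $\mathfrak e$, which contradicts $g_{x_j}(y)\notin\mc E$ for the index $j$ with $y\in U_{x_j}$. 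The paper simply asserts the conclusion ``$f_0$ is valued in $\mc O\setminus\mc E$'' without this justification; your argument supplies the missing reasoning. Your bookkeeping of null sets is also more careful, but this is a standard measure-theoretic point that does not change the substance.
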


\begin{proof}
    The reverse implication is clear, since for any $x\in \text{Int}(R)$, taking $U$ small enough so that $U\subset \text{Int}(R)$, $f_0$ is an extension such that $f_0(y)\not\in \mc E$ for all $y\in U$.
    
    If $(R,b)$ is semi-flexible, for all $x\in \text{Int}(R)$ there exists an open set $U_x\ni x$ and $f_x\in AF(R,b)$ such that $f_x(y) \in \mc O\setminus \mc E$ for all $y\in U_x$. If $U_x'\subset U_x$ is a smaller open set, then clearly the same property holds for $U_x'$.
    
    Let $\{V_i\}_{i\in \m N}$ be the collection of open balls centered at rational points in $R$ with rational radii. For any pair $(x,U_x)$ we can find $V_i$ such that $x\in V_i$ and $V_i\subset U_x$. Therefore for each $i\in \m N$, there exists $g_i \in AF(R,b)$ such that $g_i$ is valued in $\mc O\setminus \mc E$ on $V_i$. Hence the flow 
    \begin{align*}
        f_0 := \sum_{i=1}^\infty \frac{1}{2^i} g_i
    \end{align*}
    is valued in $\mc O\setminus \mc E$ everywhere in $\text{Int}(R)$ as desired.
\end{proof}

We follow the same strategy as in \cite[Proposition 7.10]{gorin2021lectures} to prove Theorem \ref{thm: unique maximizer} using the nowhere-edge-valued extension $f_0$. The key step is:

\begin{lemma}\label{lem: Gorin}
    Suppose that $(R,b)$ is semi-flexible, and let $\mc V\subset \mc E$ denote the vertices of $\partial \mc O$. If $f\in AF(R,b)$ maximizes $\Ent$, then up to a set of measure zero $f$ does not take values in $\mc E \setminus \mc V$.
\end{lemma}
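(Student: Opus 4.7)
The approach is modelled on V.~Gorin's 2D argument (cf.\ \cite[Proposition 7.10]{gorin2021lectures}): semi-flexibility supplies a nowhere-edge-valued extension, and perturbing the maximizer towards it exploits the \emph{super-linear} growth of $\ent$ transverse to $\mathcal{E}$ to overwhelm the merely linear loss elsewhere.

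By Lemma \ref{lem: nonwhere edge extension}, semi-flexibility yields $f_0 \in AF(R,b)$ with $f_0(x) \in \mathcal{O}\setminus\mathcal{E}$ for a.e.\ $x$. Suppose for contradiction that $|A|>0$ where $A := \{x \in R : f(x) \in \mathcal{E}\setminus\mathcal{V}\}$. By convexity of $AF(R,b)$, the perturbation $f_\epsilon := (1-\epsilon)f + \epsilon f_0$ lies in $AF(R,b)$ for every $\epsilon\in[0,1]$. Concavity of $\ent$ (Lemma \ref{lemma: entropy_concave}) gives $\ent(f_\epsilon(x)) - \ent(f(x)) \geq -\epsilon\,\ent(f(x)) \geq -\epsilon M$ with $M := \sup_{\mathcal{O}}\ent<\infty$, so the total loss on $R\setminus A$ is at most $\epsilon M\Vol(R)$, linear in $\epsilon$.

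The heart of the argument is a super-linear gain on $A$. I would establish: for $p$ in the interior of an edge $\mathfrak{e}\subset\mathcal{E}$ and $q\in\mathcal{O}\setminus\mathcal{E}$,
\[
\ent\bigl((1-\epsilon)p + \epsilon q\bigr) \;\geq\; c\,\epsilon\,\delta(p,q)\,\log(1/\epsilon) - C\epsilon,
\]
where $\delta(p,q)>0$ measures how far $q$ sits from the affine line carrying $\mathfrak{e}$ and $c,C$ depend only on a compact sub-region of $\mathcal{E}\setminus\mathcal{V}$ containing $p$. If $q$ lies on one of the two faces $\mathcal{F}_\pm$ containing $\mathfrak{e}$, then so does $r := (1-\epsilon)p + \epsilon q$, and Theorem \ref{thm: extremal_entropy} identifies $\ent$ with $\ent_{\text{loz}}$. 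Placing $\mathfrak{e}$ along the segment from $(1,0,0)$ to $(0,1,0)$ with $p=(a,1-a,0)$, $a\in(0,1)$, a direct computation gives $1 - r_1 - r_2 = \epsilon(1-q_1-q_2)$; the identity $L(\pi a)+L(\pi(1-a))\equiv 0$ together with smoothness of $L$ at $a$ makes $\frac{1}{\pi}(L(\pi r_1)+L(\pi r_2))$ contribute only $O(\epsilon)$, while the Lobachevsky asymptotic $L(\theta) = -\theta\log\theta + (1-\log 2)\theta + O(\theta^3)$ produces $\frac{1}{\pi}L(\pi\epsilon(1-q_1-q_2)) \sim \epsilon(1-q_1-q_2)\log(1/\epsilon)$. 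For general $q\in\mathcal{O}$, decompose $r = \beta r_+ + (1-\beta)r_-$ with $r_\pm \in \mathcal{F}_\pm$ sharing $r_1,r_2$ with $r$; then $|s_3(r_\pm)|=\epsilon(1-q_1-q_2)$, and the Lobachevsky formula gives $\ent(r_+)=\ent(r_-)$, so concavity yields $\ent(r)\geq\ent(r_+)$, reducing to the face case. The coefficient $(1-q_1-q_2)$ is positive precisely because $q\notin\mathfrak{e}$: for $q\in\mathcal{O}$ with $q_1+q_2=1$ one forces $q_1,q_2\geq 0$ and $q_3=0$, hence $q\in\mathfrak{e}$.

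Combining,
\[
\Vol(R)\bigl(\Ent(f_\epsilon)-\Ent(f)\bigr) \;\geq\; c\,\epsilon\log(1/\epsilon)\int_{A_\delta}\delta(f(x),f_0(x))\,dx \;-\; O(\epsilon),
\]
where $A_\delta := \{x \in A : d(f_0(x),\mathcal{E})\geq\delta\}$ for $\delta>0$ small enough that $|A_\delta|>0$ (existence of such $\delta$ follows from $f_0(x)\notin\mathcal{E}$ a.e.\ on $A$ via dominated convergence). For $\epsilon$ sufficiently small the $\epsilon\log(1/\epsilon)$ term strictly dominates, contradicting maximality of $f$. The main obstacle is the super-linear bound for interior $q$: one must verify that the decomposition $r = \beta r_+ + (1-\beta)r_-$ actually lands in the triangular faces (not just their affine hulls) and that the coefficient $(1-q_1-q_2)$ admits a uniform lower bound; both hold after partitioning $A$ according to which edge contains $f(x)$ and intersecting with shells $\{d(f(\cdot),\mathcal{V})\geq k^{-1}\}$, on each of which the estimates become uniform for $\epsilon$ small.
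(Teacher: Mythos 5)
Your proof is correct and follows the same global strategy the paper uses (invoke Lemma~\ref{lem: nonwhere edge extension} to get $f_0$, perturb to $(1-\epsilon)f + \epsilon f_0$, show the Lobachevsky $\epsilon\log(1/\epsilon)$ gain on $A$ dominates the $O(\epsilon)$ loss elsewhere), but the reduction from general $q\in\mathcal O$ to the face case is organized differently in a way worth noting. The paper decomposes the \emph{direction} $t$ of the perturbation as $t=\alpha t_1+\beta t_2+\gamma t_3$ with $t_1,t_2$ on the two faces adjacent to the edge containing $s$ and $t_3$ on the diagonally opposite edge, applies concavity, and then needs a second step to handle the residual $\gamma\,\ent(\epsilon t_3+(1-\epsilon)s)$ term (re-expressing $\epsilon t_3 + (1-\epsilon)s$ as an average of face points). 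Your decomposition acts on the \emph{perturbed point} $r=(1-\epsilon)p+\epsilon q$ directly: you split $r=\beta r_+ + (1-\beta)r_-$ with $r_\pm$ on the two adjacent faces sharing the first two coordinates with $r$, use the parity invariance $\ent(r_+)=\ent(r_-)$ coming from $\ent_{\mathrm{loz}}(|s_1|,|s_2|,|s_3|)$ to get $\ent(r)\ge\ent(r_+)$ by plain concavity, and then the face-case Lobachevsky estimate closes the argument in one shot. This is cleaner (no residual term to recurse on, no appeal to strict concavity), and you correctly identify the octahedron constraint $|q_3|\le 1-|q_1|-|q_2|\le 1-q_1-q_2$ as exactly what makes the convex coefficient $\beta$ well-defined and what forces the coefficient $1-q_1-q_2$ to be positive off the edge. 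Your closing remarks on uniformity (shelling $A$ away from $\mathcal V$ and requiring $d(f_0(\cdot),\mathcal E)\ge\delta$) address the same issue the paper handles by restricting to a positive-measure subset $A'\subset A$.
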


\begin{proof}
    Suppose for contradiction that $f$ is an $\Ent$ maximizer in $AF(R,b)$ which takes values in $\mc E\setminus \mc V$ on a set $A$ of positive measure, and that $f_0$ is an extension of the form guaranteed by Lemma \ref{lem: nonwhere edge extension}. We will contradict the claim that $f$ is a maximizer by showing that perturbing $f$ by $f_0$ increases $\Ent$. 

    By Theorem \ref{thm: extremal_entropy}, if $s=(s_1,s_2,s_3)$ is contained in a face of $\partial \mc O$ then $\ent(s)$ is equal to the entropy function for two dimensional lozenge tilings, namely
    \begin{align*}
        \ent(s) = \ent_{\text{loz}}(|s_1|,|s_2|,|s_3|) = \frac{1}{\pi} \bigg( L( \pi |s_1|) + L(\pi|s_2|) + L(\pi |s_3|)\bigg),
    \end{align*}
    where $L(\theta) = \int_0^{\theta} \log (2 \sin t)\, \dd t$ (\cite{cohn2001variational}, see Section \ref{sec:extreme}). As in the proof in two dimensions \cite[Proposition 7.10]{gorin2021lectures}, note from this formula that if $s\in \mc E \setminus \mc V$ and $t$ is contained in a face of $\partial \mc O$ adjacent to the edge containing $s$, then for $\epsilon>0$ small enough
    \begin{align*}
        \ent( \epsilon t + (1-\epsilon) s ) > c \epsilon \log (1/\epsilon)
    \end{align*}
    for some constant $c>0$ depending on $s,t$. More generally, $t\in \text{Int}(\mc O)$ can be written as a weighted average of the six brickwork patterns. Simplifying a bit, this means that $t$ can be written as a weighted average of $t_1$, $t_2$ in the faces adjacent to the edge containing $s$ (this takes into account four brickwork patterns), and $t_3$ in the edge diagonally opposite the edge containing $s$ (this takes into account the remaining two).
    \begin{align*}
        t = \alpha t_1 + \beta t_2 + \gamma t_3, \qquad \alpha + \beta + \gamma = 1. 
    \end{align*}
    By strict concavity of $\ent$ on $\mc O \setminus \mc E$ (Theorem \ref{theorem: entropy is strictly concave}), 
    \begin{equation}
        \ent(\epsilon t + (1-\epsilon) s ) > \alpha\, \ent(\epsilon t_1 + (1-\epsilon) s ) + \beta\, \ent(\epsilon t_2 + (1-\epsilon) s ) + \gamma\, \ent(\epsilon t_3 + (1-\epsilon) s ).
    \end{equation}
    We can use the two-dimensional result directly to bound the first two terms from below. For the third term, we note that $\epsilon t_3 + (1-\epsilon) s\in \text{Int}(\mc O)$, and for $\epsilon$ small enough this whole quantity can be written as an average of mean currents on the faces adjacent to the edge containing $s$. Using strict concavity of $\ent$ on $\mc O\setminus \mc E$ we can again apply the lower bound from the two-dimensional result. In summary, for $\epsilon >0$ small enough, there is a constant $c>0$ depending on $s,t$ so that
    \begin{equation}\label{eq:epsilon log 1/epsilon}
        \ent(\epsilon t + (1-\epsilon) s ) > c \epsilon \log(1/\epsilon).
    \end{equation}
    We now consider the perturbation 
    \begin{align*}
        (1-\epsilon) f + \epsilon f_0 \in AF(R, b).
    \end{align*}
    Let $M = \sup_{s\in \mc O} \ent(s)$ (this is finite because $\ent$ is continuous). For all $x\in R \setminus A$, since $\ent$ is concave on all of $\mc O$ (Lemma \ref{lemma: entropy_concave}) and non-negative we have 
    \begin{align*}
        \ent( (1-\epsilon)\, f(x) + \epsilon \, f_0(x) ) -\ent(f(x)) \geq \epsilon \, ( \ent(f_0(x)) - \ent(f(x)) \geq -M \epsilon. 
    \end{align*}
    Therefore 
    \begin{align*}
        \int_{R\setminus A} \ent((1-\epsilon) f(x) + \epsilon f_0(x))\, \dd x - \int_{R\setminus A} \ent(f(x))\, \dd x \geq - M \epsilon \text{Vol}(R\setminus A).
    \end{align*}
    On the other hand by Equation \eqref{eq:epsilon log 1/epsilon}, for $\epsilon$ small enough there exists $A'\subset A$ of positive measure and a fixed constant $c>0$ such that for all $x\in A'$, 
    \begin{align*}
        \ent( (1-\epsilon) f(x) + \epsilon f_0(x)) - \ent(f(x))= \ent( (1-\epsilon) f(x) + \epsilon f_0(x)) > c \epsilon \log (1/\epsilon). 
    \end{align*}
    Therefore 
    \begin{align*}
        \Ent( (1-\epsilon) f + \epsilon f_0) -\Ent(f) \geq \frac{-M \epsilon \text{Vol}(R\setminus A) + \text{Vol}(A') c \epsilon \log (1/\epsilon)}{\text{Vol}(R)}.
    \end{align*}
    For $\epsilon>0$ small enough this implies $\Ent( (1-\epsilon) f + \epsilon f_0) >\Ent(f)$ and contradicts the claim that $f$ is an entropy maximizer. 
\end{proof}

\begin{proof}[Proof of Theorem \ref{thm: unique maximizer}]
    Suppose that $f_1, f_2$ are maximizers of $\Ent$ in $AF(R,b)$. By Lemma \ref{lem: Gorin}, they cannot take values in $\mc E\setminus \mc V$. By Proposition \ref{prop: weak uniqueness}, they can only differ on frozen points, so
    \begin{align*}
        \{x\in R : f_1(x) \neq f_2(x) \} \subseteq \{x \in R : f_1(x), f_2(x) \in \mc V\}.
    \end{align*}
    On the other hand $\frac{1}{2} (f_1 + f_2)$ is also a maximizer. If there is a point where $f_1, f_2$ take different values in $\mc V$, then $\frac{1}{2}(f_1 + f_2)$ would take an edge value contradicting Lemma \ref{lem: Gorin}. Therefore $f_1 = f_2$. 
\end{proof}

\section{Large deviation principles}\label{sec:ldp}

Here we put together the results of the previous sections to prove the main results of this paper, namely two versions of a \textit{large deviation principle (LDP)} for fine-mesh limits of random dimer tilings of regions $R\subset \m R^3$ with some fixed limiting boundary value $b$, in the topology induced by the Wasserstein metric on flows $d_W$ introduced in Section \ref{sec:asympflows}. Section \ref{sec: main results intro} also includes a discussion of our results and a brief description of what a large deviation principle is in general. For more background information, see e.g.\ \cite{dembo2009large} or \cite{varadhan2016large}. Here we give a slightly more detailed informal description of the main theorems and an outline of the section before getting to formal theorem statements in Sections \ref{sec:sb-statements} and \ref{sec:statements-hb}. We use results here from throughout the paper, but a lot of the notation in this section was originally introduced in Section \ref{sec:asympflows}. 

For the large deviation principles, we only work with the boundary flows $b$ which are (i) \textit{boundary asymptotic flows} meaning that $b$ has an extension $g$ to $R$ which is an asymptotic flow (Definition \ref{def:boundary_asymptotic_flow}) and (ii) \textit{extendable outside} meaning there exists $\epsilon>0$ such that $b$ extends to a divergence-free measurable vector field valued in $\mc O$ on an $\epsilon$ neighborhood of $R$ (Definition \ref{def:extendable}). Analogous extendability conditions are also required in the large deviation principle for dimer tilings in 2D \cite{cohn2001variational}; see Remark \ref{rem:not_extendable_problem}.

In both versions of the LDP we prove, we look at measures supported on dimer tilings of finite regions in $\frac{1}{n} \m Z^3$ that cover $R$ (we call these \textit{free-boundary tilings of $R$ at scale $n$}, see Definition \ref{def:free_boundary_tilings}). We can require that the boundary values of these flows converge as $n\to \infty$ to the fixed boundary value $b$ with either a \textit{soft constraint} or a \textit{hard constraint} on the tilings. 

The large deviation principle for dimer tilings in two dimensions \cite{cohn2001variational} uses a hard constraint. In three dimensions, new subtleties arise from the fact that $\ent$ can be nonzero on $\partial \mc O$, and the analogous hard boundary large deviation principle is not true in full generality (see the discussion in Section \ref{sec: main results intro} or Example \ref{ex:hb_false}). Instead we prove two versions of an LDP, one with soft boundary constraint and one with hard boundary constraint that holds under an additional condition.

A \textit{soft constraint} means that we choose a sequence of good ``thresholds" $(\theta_n)_{n\geq 1}$ with $\theta_n\to 0$ as $n\to \infty$, and look at uniform measures $\rho_n$ on free-boundary tilings of $R$ at scale $n$ with boundary values within $\theta_n$ of $b$ in the Wasserstein metric $\m W_1^{1,1}$ that we use to compare boundary values. The \textit{soft boundary large deviation principle} (SB LDP) says that $\rho_n$ satisfy an LDP, as long as $\theta_n$ goes to $0$ slowly enough. This is stated precisely in Theorem \ref{thm:sb-ldp}. 

A \textit{hard constraint} means that we choose a fixed sequence of discrete tileable regions $R_n\subset \frac{1}{n}\m Z^3$ approximating $R$, and define $\overline \rho_n$ to be uniform measure on tilings of $R_n$. This is a stronger constraint on the boundary conditions.
We show that the measures $(\overline{\rho}_n)_{n\geq 1}$ satisfy an LDP under two conditions: (i) the region $R_n$ is tileable for all $n$ and (ii) the region and boundary value pair $(R,b)$ is \textit{flexible} meaning that for every $x\in \text{Int}(R)$, there exists $g$ extending $b$ and an open set $U\ni x$ such that $\overline{g(U)}\subset \text{Int}(\mc O)$ or equivalently, there exists $f_0\in AF(R,b)$ such that for every compact set $D\subset \text{Int}(R)$, $\;\overline{f_0(D)}\subset \text{Int}(\mc O)$ (see Definition \ref{def:fully_flexible} and Lemma \ref{lem:fully_flexible}). We call this the \textit{hard boundary large deviation principle} (HB LDP), and it is stated precisely in Theorem \ref{thm:hb-ldp}.

The condition $(R,b)$ \textit{flexible} is strictly stronger than $(R,b)$ \textit{semi-flexible}, which says that for every point $x\in \text{Int}(R)$, there is an extension $g$ and an open set $U\ni x$ such that $g(U)\subset \mc O\setminus \mc E$, or equivalently that $b$ has an extension $f_0$ which is valued in $\mc O\setminus \mc E$ (see Definition \ref{def: flexible} and Lemma \ref{lem: nonwhere edge extension}). Recall that if $(R,b)$ is not semi-flexible we call it \textit{rigid}. 

If $(R,b)$ is \textit{semi-flexible}, then $\Ent(\cdot)$ has a unique maximizer in $AF(R,b)$ (Theorem \ref{thm: unique maximizer}). Whenever this holds, as a corollary of either LDP we show that ``random dimer tilings" of $R$ with boundary values converging to $b$ \textit{concentrate} in the fine-mesh limit on the unique deterministic limiting flow which maximizes $\Ent(\cdot)$ in $AF(R,b)$. This result holds for ``random dimer tiling" defined by sampling from any sequence of measures (i.e.\ $\rho_n$ or $\overline{\rho}_n$) for which an LDP holds, see Corollaries \ref{cor:concentration} and \ref{cor:concentration-hb}. 

We summarize the conditions needed for each of the theorems in the following table. Note that in all cases we have the basic assumptions that $R\subset \m R^3$ is a compact region which is the closure of a connected domain, $\partial R$ is piecewise smooth, and $b$ is a boundary asymptotic flow which is extendable outside. 

\begin{center} \begin{tabular}{|c|c|c|c|}
    \hline
     $(R,b)$ & SB LDP & Unique $\Ent$ maximizer in $AF(R,b)$ & HB LDP\\
\hline
rigid & {yes} & {not known in general} & {no} \\
\hline
semi-flexible & {yes} & {yes} & {no}\\
\hline
flexible & {yes} & {yes} & {yes}\\
\hline
\end{tabular}
\end{center}
We remark that the ``no" entries in this table are statements that are provably not true. In particular, there exists $(R,b)$ semi-flexible for which the hard boundary LDP is false; see Example \ref{ex:hb_false} or the discussion in Section \ref{sec: main results intro}. See Problem \ref{prob: region with non unique max} for discussion of the ``not known" entry.

In Section \ref{sec:sb-statements}, we give the precise definitions, conditions, and statement for the soft boundary LDP (Theorem \ref{thm:sb-ldp}), and in Section \ref{sec:statements-hb}, we do the same for the hard boundary LDP (Theorem \ref{thm:hb-ldp}), and explain why the hard boundary LDP can be false for $(R,b)$ just semi-flexible (Example \ref{ex:hb_false}). In both cases, we prove \textit{concentration} when $(R,b)$ is semi-flexible and the LDP holds (so in hard boundary case, $(R,b)$ must be flexible) as a corollary (Corollaries \ref{cor:concentration} and \ref{cor:concentration-hb}) and show that proving the LDP is equivalent to proving corresponding upper and lower bounds statements (Theorems \ref{thm:lower} and \ref{thm:upper} for the soft boundary LDP and Theorems \ref{thm:lower-hb} and \ref{thm:upper-hb} for the hard boundary LDP). The rest of the section is dedicated to proving the upper and lower bounds.

The proofs of the lower bounds are somewhat involved. In Section \ref{sec:pc_approx} we show that if $b$ is extendable outside, then any $g\in AF(R,b)$ can be approximated by a piecewise-constant asymptotic flow on a region slightly larger than $R$ (Proposition \ref{prop:pc_approx}). This is where we use the extendable outside condition. Building on this, in Section \ref{sec: shining light} we show that any asymptotic flow can be approximated by the tiling flow of a free-boundary tiling (Theorem \ref{thm:shininglight}). Combined with the patching theorem (Theorem \ref{patching}), this is all we need to prove the \textit{soft boundary} lower bound (Theorem \ref{thm:lower}), so we prove this in Section \ref{sec:lower}. In Section \ref{sec:generalized_patching} we state and prove a more powerful \textit{generalized patching} theorem (Theorem \ref{thm:generalized_patching}) and use this to prove the hard boundary lower bound (Theorem \ref{thm:lower-hb}). 

In Section \ref{sec:upper} we prove both upper bounds (Theorem \ref{thm:upper} and \ref{thm:upper-hb}). To do this, we prove the soft boundary upper bound (Theorem \ref{thm:upper}) and note that this implies the hard boundary upper bound (Theorem \ref{thm:upper-hb}).

\subsection{Statement and set up: soft boundary LDP}\label{sec:sb-statements}

Let $R\subset \m R^3$ be a compact region which is the closure of a connected domain, with $\partial R$ piecewise smooth. Recall from Section \ref{sec:asympflows} that for each $n$, $TF_n(R)$ is the set of all scale $n$ free boundary tiling flows on $R$. The fine-mesh limits of these with respect to the Wasserstein metric on flows (Theorem \ref{thm:formal_fine_mesh}) are the asymptotic flows $AF(R)$. The space of asymptotic flows with fixed boundary value $b$ is denoted by $AF(R,b)$. For any compact, piecewise smooth surface $S\subset R$, $T(\cdot,S)$ denotes the \textit{trace operator} which takes an asymptotic or tiling flow to its boundary value on $S$ (see Sections \ref{sec: boundary values of asymptotic flows}, \ref{sec: boundary values of tiling flows}). Recall (Definition \ref{def:boundary_asymptotic_flow}) that $b$ is a \textit{boundary asymptotic flow on $R$} if there exists $g\in AF(R)$ such that $T(g,\partial R) = b$. We restrict our attention to boundary asymptotic flows $b$ which are also \textit{extendable outside}.

\begin{definition}\label{def:extendable}
    A boundary asymptotic flow $b$ on $R$ is \textit{extendable outside} if there exists $\epsilon>0$ such that $b$ extends to a divergence-free measurable vector field on an $\epsilon$ neighborhood of $R$. 
\end{definition}\termindex{Chapter 8!boundary conditions which are extendable outside}

\begin{rem}\label{rem:not_extendable_problem}
The assumption that the boundary asymptotic flow is extendable outside is inherent in \cite{cohn2001variational}. The Lipschitz condition in \cite[Theorem 1.1]{cohn2001variational} implies that there is extension of the flow in $\mathbb R^2$. Such a strong hypothesis is not necessary. However it is easy to build boundary asymptotic flows which are not extendable outside, and some of our current techniques do not work in such cases. Let $R=[-1,1]^2\setminus [0,1]^2$ and consider the flow
$f\in \text{AF}(R)$ given by 
$$f(x)=\begin{cases}(3/4,0) &\text{ if }x\in [-1,0]\times[0,1]\\
(0,3/4) &\text{ if }x\in [0,1]\times[-1,0]\\
(0,0) &\text{ if }x\in [-1,0]\times[-1,0].
\end{cases}$$
Any extension of such a flow close to the origin will have to be valued outside $\mathcal O_2$ by the divergence-free condition. We need $b$ to be extendable outside in our arguments to construct a piecewise-constant approximation $\widetilde g$ of any flow $g\in AF(R,b)$, where $\widetilde g$ is supported on a set $\widetilde R\supset R$ (Proposition \ref{prop:pc_approx}). This is an intermediate step in showing that any $g\in AF(R,b)$ can be approximated by a free-boundary tiling $\tau\in T_n(R)$ for $n$ large enough (Theorem \ref{thm:shininglight}). If $R$ is convex, then $b$ is automatically extendable and thus we don't need to add a condition. 
\end{rem}

The version of the LDP we present in this section has \textit{soft} boundary conditions in the discrete. The sequence of probability measures $(\rho_n)_{n\geq 1}$ which we show satisfy an LDP are uniform probability measures on tiling flows at scale $n$ with boundary values conditioned to be in a sequence of neighborhoods around $b$ which shrink as $n\to \infty$.

Recall that the metric on boundary values of flows is $\m W_1^{1,1}$. To define $\rho_n$, we first define the following sets.
\begin{definition}
Let $b$ be a boundary asymptotic flow and fix a threshold $\theta>0$. We denote the set of scale $n$ tiling flows on $R$ with boundary values within $\theta$ of $b$ by 
$$TF_n(R,  b, \theta) := \{f_{\tau}\in TF_n(R) : \m W_1^{1,1}(T( f_\tau,\partial R), b) <\theta\}.$$
\end{definition}
Note that if $\theta$ is too small, $TF_n(R, b, \theta)$ might be empty. However, it will follow from Theorem \ref{thm:shininglight} that given a fixed $\theta$, if $n$ is large enough then $TF_n(R, b, \theta)$ is nonempty (Corollary \ref{cor:good_thresholds_exist}). 

\termindex{Chapter 8!admissible threshold sequence}\symindex{Chapter 8!$(\theta_n)_{n\geq 1}$ - admissible threshold sequence}We say a sequence of thresholds $(\theta_n)_{n\geq 1}$ is \textbf{admissible} if $\theta_n\to 0$ as $n\to \infty$, but sufficiently slowly so that $TF_n(R, b, \theta_n)$ is nonempty for all $n$. When the threshold sequence $\theta_n$ is understood, we define 
\begin{align*}
    TF(R,b):= \cup_{n\geq 1} TF_n(R,b,\theta_n).
\end{align*}
We define a sequence of probability measures $\rho_n$ using an admissible sequence of thresholds.
\begin{definition}
For all $n\geq 1$, $\rho_n$ is the uniform probability measure on $TF_n(R, b, \theta_n)$. Further, we define $\mu_n$ to be the counting measure on $TF_n(R, b, \theta_n)$ and $Z_n$ to be its partition function, so that $\rho_n = \frac{1}{Z_n} \mu_n$. \symindex{Chapter 8!$\rho_n$, $\mu_n,Z_n$}
\end{definition}
\begin{rem}\label{rem:uniform}
If $\text{Unif}_n$ denotes the uniform probability measure on $TF_n(R)$, then $\rho_n$ is the conditional distribution
\begin{align*}
    \rho_n(\cdot) = \text{Unif}_n(\cdot \mid D_{b, \theta_n})
\end{align*}
where $D_{b,\theta_n}$ is the event that the boundary value of a flow is within $\theta_n$ of $b$.
\end{rem}\termindex{Chapter 8!soft boundary large deviation principle (SB LDP)}

\begin{thm}[Soft boundary large deviation principle]\label{thm:sb-ldp} Let $R\subset \m R^3$ be a compact region which the closure of a connected domain, with piecewise smooth boundary $\partial R$. Let $b$ be a boundary asymptotic flow which is extendable outside. 
    
    \symindex{Chapter 8!$v_n=n^3 \Vol(R)$ is roughly the number of vertices of $\frac{1}{n}\Z^3$ in $R$}There exists a sequence of admissible thresholds $(\theta_n)_{n\geq 1}$ such that the uniform probability measures $(\rho_n)_{n\geq 1}$ on $TF_n(R, b, \theta_n)$ satisfy a large deviation principle in the topology induced by $d_W$ with good rate function $I_{{b}}(\cdot)$ and speed $v_n = n^3 \Vol(R)$. Namely for any $d_W$-Borel measurable set $A$,
    \begin{equation}\label{eq:ldp-prob-sb}
    -\inf_{ g\in A^\circ} I_{ b}( g) \leq \liminf_{n\to \infty} v_n^{-1} \log \rho_n(A) \leq \limsup_{n\to \infty} v_n^{-1} \log \rho_n(A) \leq -\inf_{ g\in \overline{A}} I_{ b}( g) 
\end{equation}
Further, the rate function $I_{ b}( g) = C_{ b} - \Ent( g)$ if $g$ is an asymptotic flow, where $C_{ b} = \max_{ f\in AF(R, b)} \Ent(f)$. If $g$ is not an asymptotic flow then $I_b(g) = \infty$.\symindex{Chapter 8!$I_b$ - rate function}
\end{thm}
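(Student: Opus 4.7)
The plan is to reduce Theorem \ref{thm:sb-ldp} to the standard pair of LDP bounds and then to execute each bound using the tools already developed in the paper. First I would verify that $I_b$ is a good rate function: since $(AF(R,b), d_W)$ is compact by Corollary \ref{cor: AF(R,b) compact} and $\Ent$ is upper semicontinuous by Proposition \ref{proposition: Upper semicontinuous}, each sublevel set $\{g : I_b(g) \leq a\} = \{g \in AF(R,b) : \Ent(g) \geq C_b - a\}$ is a closed subset of the compact space $AF(R,b)$. The rate function $I_b$ is automatically extended to $\infty$ outside $AF(R,b)$, which is closed in $(AF(R)\cup TF(R), d_W)$ by Theorem \ref{thm: boundary_value_uniformly_continuous}. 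Once this is settled, it suffices to establish the lower bound $\liminf v_n^{-1} \log \rho_n(U) \geq -\inf_{g\in U} I_b(g)$ for open $U$ (Theorem \ref{thm:lower}) and the matching upper bound on closed sets (Theorem \ref{thm:upper}); both bounds reduce to showing, for each $g \in AF(R,b)$, that $v_n^{-1} \log \mu_n(B_\delta(g))$ is close to $\Ent(g)$ when $n$ is large and $\delta$ small.

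For the lower bound I would work near a fixed $g \in AF(R,b)$. Since $b$ is extendable outside, Proposition \ref{prop:pc_approx} gives a piecewise-constant approximation $\widetilde g$ of $g$ on a slightly enlarged region $\widetilde R \supset R$ taking constant mean-current values $s_i \in \mc O$ on a finite partition $\{R_i\}$ of cubes. The shining light construction (Theorem \ref{thm:shininglight}) then produces a reference tiling flow in $TF_n(R)$ whose boundary is close to $b$ and whose restriction to each $R_i$ is nearly-constant with value $s_i$; this simultaneously shows that admissible thresholds $\theta_n \to 0$ exist (Corollary \ref{cor:good_thresholds_exist}) and furnishes boundary data for patching. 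On each box $R_i$ with $s_i \in \text{Int}(\mc O)$, I would sample from an ergodic Gibbs measure of mean current $s_i$ (which exists and has specific entropy $\ent(s_i)$ by Theorem \ref{theorem: existence of gibbs} and Corollary \ref{cor: EGMs exist!}), and then invoke the patching theorem (Theorem \ref{patching}) on thin annuli between the boxes to glue these independent samples into a single tiling of $R$ whose flow is within $d_W$-distance $\delta$ of $\widetilde g$. Proposition \ref{thm:epsilon_nearly_constant_vs_EGM} then counts at least $\exp\bigl(v_n \Ent(\widetilde g)(1+O(\delta))\bigr)$ such tilings. Boxes with $s_i \in \partial \mc O$ need a separate treatment using the slab decomposition of Section \ref{sec:extreme}, but since $\ent$ is lower semicontinuous after our continuity lemma (Lemma \ref{lemma: entropy_continuous}) one can approximate any such $g$ by flows avoiding $\partial \mc O$ up to arbitrarily small entropy loss.

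For the upper bound I would cover $(AF(R,b) \cup TF(R), d_W)$ --- which is compact by the argument in Theorem \ref{thm: boundary_value_uniformly_continuous} --- by finitely many small Wasserstein balls $B_\delta(g_1), \ldots, B_\delta(g_k)$ and bound $\mu_n(B_\delta(g_j))$ individually. Partitioning $R$ into small cubes $Q$ and looking at marginal tile densities, any $f_\tau$ with $d_W(f_\tau, g_j) < \delta$ must have tile-density profile $\delta$-close to $g_j$ on most cubes $Q$; on each such $Q$ the number of tilings with a prescribed density is bounded by $\exp\bigl(|Q|\, (\ent(g_j|_Q) + o(1))\bigr)$ by the definition of specific entropy as the infimum $\inf_\Lambda |\Lambda|^{-1} H_\Lambda$. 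Multiplying across boxes, taking $\delta \to 0$, and using upper semicontinuity of $\Ent$ gives $\limsup v_n^{-1} \log \mu_n(B_\delta(g_j)) \leq \Ent(g_j) + o_\delta(1)$. The threshold sequence $\theta_n$ should be chosen slow enough that it does not restrict the counting of interior configurations; concretely, $\theta_n$ will be dictated by the rate of the shining light approximation and should simultaneously satisfy $\theta_n \gg n^{-c}$ for some $c > 0$ determined by the boundary trace continuity modulus in Theorem \ref{thm: boundary_value_uniformly_continuous}. The hardest step, I expect, will be coordinating the shining light construction near $\partial R$ with the patching theorem's nearly-constant hypothesis when the boundary flow $b$ approaches $\partial \mc O$; the extendable outside condition is what lets us buffer this transition, but verifying the lower bound near such boundary points requires carefully choosing the piecewise-constant approximation $\widetilde g$ so that none of its constant blocks lie exactly on edges $\mc E$ where $\ent = 0$ could artificially penalize the count.
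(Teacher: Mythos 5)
Your overall structure matches the paper's: reduce to local upper and lower bounds via exponential tightness and compactness of $(AF(R,b)\cup TF(R,b),d_W)$, prove the lower bound via piecewise-constant approximation, shining light, and patching against samples from ergodic Gibbs measures, and prove the upper bound via coarse graining. The lower bound sketch is essentially correct. Your closing worry about blocks landing on $\partial\mc O$ is already resolved by Proposition~\ref{prop:pc_approx}, which constructs $\widetilde g$ so that all constant values lie strictly in $\text{Int}(\mc O)$ and are rational, so no separate treatment of boundary mean currents is needed in the lower bound.

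There is, however, a genuine gap in the upper bound. You claim that on a small cube $Q$ the number of free-boundary tilings with a prescribed tile-density profile near $s$ is at most $\exp\bigl(|Q|(\ent(s)+o(1))\bigr)$ ``by the definition of specific entropy as the infimum $\inf_\Lambda |\Lambda|^{-1}H_\Lambda$.'' This direction does not follow from the infimum definition. For any $\threeeven$-invariant $\mu$ one has $|Q|h(\mu)\le H_Q(\mu)\le\log(\#\text{tilings of }Q)$, i.e.\ the infimum characterization gives a \emph{lower} bound on the number of configurations, not an upper bound. To get the required upper bound on $\log \mu_n(A_\delta(g))$, the paper constructs an honest $\threeeven$-invariant measure $\nu_n$ by tiling $\mathbb R^3$ with translated copies of $R$, placing independent samples from the uniform measure on each copy, and averaging over even translations. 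Independence across copies and superadditivity then give $h(\nu_n)\ge n^{-3}\log\mu_n(A_\delta(g))$; one then passes to a subsequential weak limit $\nu$ and invokes upper semicontinuity of $h(\cdot)$ (a theorem of Georgii, not the infimum formula) to obtain $\limsup n^{-3}\log\mu_n(A_\delta(g))\le h(\nu)$. Finally one decomposes $\nu$ into measures $\nu_C$ conditioned on the origin landing in each small cube $C$, identifies the mean current of $\nu_C$ with the average of a limit flow $\varphi\in\overline{A_\delta(g)}$ over $C$, and bounds $h(\nu_C)\le\ent(s_C)$. Without the invariant-measure construction and the upper-semicontinuity step, your bound-per-cube claim is unjustified, and the whole upper bound collapses. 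This is the main technical content of Section~\ref{sec:upper} and cannot be replaced by an appeal to the definition.
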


\begin{rem}
The existence of a sequence of thresholds for which the theorem holds follows from Theorem \ref{thm:lower}. The only requirement is that $(\theta_n)_{n\geq 0}$ goes to $0$ sufficiently slowly.
\end{rem}

\begin{rem}
The weaker, analogous theorem with free boundary values in the limit would also hold, i.e.\ there is a large deviation principle for the sequence of uniform measures $(\text{Unif}_n)_{n\geq 1}$ on $TF_n(R)$ from Remark \ref{rem:uniform}. The rate function in this case is also of the form $C - \Ent(\cdot)$, with $C = \max_{f\in AF(R)} \Ent(f)$. In fact, since $s=0$ is the unique maximizer of $\ent(\cdot)$, $C = \Ent(0)$, where $0$ is the constant zero flow. 
\end{rem}

Under the additional condition that the pair $(R,b)$ is semi-flexible (see Definition \ref{def: flexible}), $\Ent$ has a unique maximizer in $AF(R,b)$ (Theorem \ref{thm: unique maximizer}). In this case, Theorem \ref{thm:sb-ldp} implies a concentration or weak law of large numbers result for fine-mesh limits of $\rho_n$-random tiling flows. 

\begin{cor}\label{cor:concentration}
Fix $\epsilon>0$. Assume that $(R,b)$ is semi-flexible so that $\Ent$ has a unique maximizer in $AF(R,b)$ which we denote by $f_{\max}$. Define the event
\begin{align*} 
A_{\epsilon} = \{f : d_W(f,f_{\max})>\epsilon\}.
\end{align*}
Then
\begin{align*}
    \rho_n(A_{\epsilon}) \leq C^{-n^3}
\end{align*}
where $C>1$ is a constant depending only on $b$ and $R$. In other words, for any $\epsilon>0$, the probability that a tiling flow at scale $n$ sampled from $\rho_n$ (i.e., with boundary value conditioned to be in a shrinking interval around $b$) differs from the entropy maximizer by more than $\epsilon$ goes to $0$ exponentially fast as $n\to \infty$ with rate $n^3$. 
\end{cor}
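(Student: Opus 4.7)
The plan is to derive concentration as a standard consequence of the upper bound half of the large deviation principle in Theorem~\ref{thm:sb-ldp}, combined with the uniqueness of the $\Ent$ maximizer in $AF(R,b)$ guaranteed by Theorem~\ref{thm: unique maximizer} under semi-flexibility. First I would observe that the set $A_\epsilon = \{f : d_W(f,f_{\max}) > \epsilon\}$ is open in the $d_W$ topology, and its closure satisfies $\overline{A_\epsilon} \subseteq \{f : d_W(f,f_{\max}) \geq \epsilon\}$, which in particular excludes $f_{\max}$. Applying Theorem~\ref{thm:sb-ldp} to $A_\epsilon$ yields
\[
\limsup_{n\to\infty} v_n^{-1}\log \rho_n(A_\epsilon) \leq -\inf_{g \in \overline{A_\epsilon}} I_b(g),
\]
so the task reduces to showing $\delta := \inf_{g \in \overline{A_\epsilon}} I_b(g) > 0$.

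The key step will be to argue strict positivity of $\delta$. Recall that $I_b(g) = C_b - \Ent(g)$ on $AF(R,b)$ (and $+\infty$ otherwise), so $I_b(g) = 0$ if and only if $g$ is an $\Ent$ maximizer in $AF(R,b)$. By Theorem~\ref{thm: unique maximizer}, semi-flexibility ensures this maximizer is unique and equal to $f_{\max}$. Suppose for contradiction that $\delta = 0$; then by goodness of $I_b$ (its sublevel sets are compact subsets of $AF(R,b)$, which itself is compact by Corollary~\ref{cor: AF(R,b) compact}), any minimizing sequence $(g_k) \subset \overline{A_\epsilon}$ has a $d_W$-convergent subsequence with limit $g^* \in \overline{A_\epsilon}$. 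Lower semicontinuity of $I_b$ gives $I_b(g^*) \leq \liminf_k I_b(g_k) = 0$, forcing $g^* = f_{\max}$. But $f_{\max} \notin \overline{A_\epsilon}$, a contradiction. Hence $\delta > 0$.

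Given $\delta > 0$, the upper bound supplies an $N$ such that for all $n \geq N$,
\[
\rho_n(A_\epsilon) \leq \exp\bigl(-\tfrac{\delta}{2}\,v_n\bigr) = \exp\bigl(-\tfrac{\delta}{2}\Vol(R)\,n^3\bigr).
\]
Setting $C_0 = \exp\bigl(\tfrac{\delta}{2}\Vol(R)\bigr) > 1$, this reads $\rho_n(A_\epsilon) \leq C_0^{-n^3}$. To obtain a constant $C > 1$ that works uniformly in $n$ (including the finitely many $n < N$), I would shrink $C_0$ slightly: since $\rho_n(A_\epsilon) \leq 1$ trivially, one can choose $C \in (1, C_0]$ small enough that $C^{-n^3} \geq 1$ for $n < N$ fails to be restrictive, i.e.\ take $C = \min(C_0, \tilde C)$ for a suitable $\tilde C > 1$, yielding the claimed bound for all $n$ with $C$ depending only on $R$ and $b$ (through $\delta$ and $\Vol(R)$).

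The main (mild) obstacle is the strict-positivity argument for $\delta$. All the ingredients are in place: compactness of $AF(R,b)$, lower semicontinuity of $I_b$ (equivalently, upper semicontinuity of $\Ent$ from Proposition~\ref{proposition: Upper semicontinuous}), and uniqueness of the maximizer under semi-flexibility (Theorem~\ref{thm: unique maximizer}). No new machinery is required; the corollary is essentially the standard ``LDP plus unique minimizer implies concentration'' schema.
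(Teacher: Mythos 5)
Your proof is correct and relies on the same essential ingredients as the paper's argument (the LDP upper bound, compactness of $AF(R,b)$, upper semicontinuity of $\Ent$, and uniqueness of the maximizer under semi-flexibility); the only difference is packaging: you derive strict positivity of $\inf_{\overline{A_\epsilon}}I_b$ abstractly via a minimizing-sequence and lower-semicontinuity argument, whereas the paper does the equivalent work by covering $AF(R,b)$ with finitely many small balls on which $\Ent$ is strictly below $\Ent(f_{\max})$ and summing the resulting bounds. Both are standard and interchangeable routes to the concentration conclusion.
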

\begin{proof}
    Cover $AF(R,b)$ by open neighborhoods $B_g$ around each $g\in AF(R,b)$ so that if $g\neq f_{\max}$ then $\Ent(h)<\Ent(f_{\max})$ for all $h\in \overline{B}_g$, and $B_{f_{\max}}$ is the $\epsilon$-neighborhood of $f_{\max}$. Since $AF(R,b)$ is compact, this has a finite subcover $B_{1},...,B_{k}$, where $B_i$ is a neighborhood of $g_i$. Without loss of generality, $B_1 = B_{f_{\max}}$. By Theorem \ref{thm:sb-ldp}, for $n$ large enough,
    \begin{align*}
        \rho_n(A_{\epsilon}) \leq \sum_{i=2}^k \rho_n(B_{i}) \leq \sum_{i=2}^k \exp(v_n (\Ent(f_i)-\Ent(f_{\max}))),
    \end{align*}
    where $f_i$ is the entropy-maximizer in $\overline{B}_i$. Since $\Ent(f_i) - \Ent(f_{\max}) < 0$ for all $i\neq 1$, this completes the proof.
\end{proof}

Recall that $\mu_n = Z_n \rho_n$ is counting measure on $TF_n(R, b, \theta_n)$. We define notation for Wasserstein open balls\symindex{Chapter 8!$A_{\delta}(g)$ - Wasserstein open balls of radius $\delta$ around $g$}, namely
\begin{align*}
    A_\delta(g) = \{h : d_W( h, g) <\delta\}.
\end{align*}
By \cite[Lemma 2.3]{varadhan2016large}, the large deviation principle for $(\rho_n)_{n\geq 1}$ (Theorem \ref{thm:sb-ldp}) is implied by local upper and lower bound statements (Theorem \ref{thm:lower} and \ref{thm:upper}), plus a property called \textit{exponential tightness}, namely that for any $\alpha<\infty$, there exists a compact set $K_\alpha$ such that, for any closed set $C$ disjoint from $K_\alpha$,
    \begin{equation}\label{eq:exponential_tightness}
        \limsup_{n\to \infty} v_n^{-1} \log \rho_n(C) \leq -\alpha.
    \end{equation}
By Corollary \ref{cor: AF(R,b) compact}, $(AF(R,b),d_W)$ is compact. The space $TF(R,b)$ is countable, and by Theorems \ref{thm:formal_fine_mesh} and Theorem \ref{thm: boundary_value_uniformly_continuous}, the limit points of $TF(R,b)$ are contained in $AF(R,b)$. Therefore $(AF(R,b)\cup TF(R,b),d_W)$ is compact, from which exponential tightness follows. To prove the soft boundary large deviation principle (Theorem \ref{thm:sb-ldp}), it remains to prove the following upper and lower bound theorems.

\begin{thm}[Soft boundary lower bound]\label{thm:lower}
 For any $ g\in AF(R, b)$,
\begin{align*}
    \lim_{\delta\to 0}\liminf_{n\to \infty} v_n^{-1} \log \mu_n(A_{\delta}( g)) \geq \Ent( g).
\end{align*}
\end{thm}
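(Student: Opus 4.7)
The plan is to construct, for any $\eta>0$ and for all $n$ sufficiently large, explicitly many tiling flows lying in $A_\delta(g)\cap TF_n(R,b,\theta_n)$, and then count them to produce the entropy lower bound. The construction is a two-scale one: first fix a coarse piecewise-constant approximation to $g$, then vary freely inside each coarse block while holding boundary data fixed.

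First I would apply Proposition \ref{prop:pc_approx} to approximate $g$ by a piecewise-constant asymptotic flow $\widetilde g$, taking constant values $s_1,\dots,s_M\in\mathcal O$ on boxes $B_1,\dots,B_M$ of small side $\epsilon$ covering a slightly enlarged region $\widetilde R\supset R$ (the enlargement is possible precisely because $b$ is extendable outside). After convex-combining each $s_i$ by a tiny amount with the zero mean current, I can ensure every $s_i\in\text{Int}(\mathcal O)$, and by continuity of $\ent$ (Lemma \ref{lemma: entropy_continuous}) this costs only $o(1)$ in entropy, so $|\Ent(\widetilde g)-\Ent(g)|<\eta$ and $d_W(\widetilde g,g)<\eta$.

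Next I would apply Theorem \ref{thm:shininglight} to produce, for large $n$, a free-boundary tiling flow $f_{\tau^*}\in TF_n(R)$ with $d_W(f_{\tau^*},\widetilde g)<\eta$ and whose trace on $\partial R$ is within $\theta_n$ of $b$ (so $\tau^*\in TF_n(R,b,\theta_n)$ provided $\theta_n$ is not chosen too small). Because $\widetilde g$ is constant equal to $s_i$ on each $B_i$, the approximation property of $\tau^*$ forces $\tau^*$ restricted to the inner boundary of a slightly shrunk sub-box $B_i'\subset B_i$ to be $\epsilon$-nearly-constant with value $s_i$ (up to an error $o(1)$ in $\epsilon$). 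For each $i$ I would then fix $\tau^*$ on $\partial B_i'$ and on $\widetilde R\setminus\bigcup_i B_i'$, but count the number of extensions to the interior of $B_i'$ independently. By Proposition \ref{thm:epsilon_nearly_constant_vs_EGM} applied with any ergodic Gibbs measure $\mu_{s_i}$ of mean current $s_i$ (which exists by Corollary \ref{cor: EGMs exist!}, and satisfies $h(\mu_{s_i})=\ent(s_i)$ by Theorem \ref{theorem: existence of gibbs}), the log-number of such extensions is at least $n^3\Vol(B_i')\,\ent(s_i)(1+O(\epsilon))$. Multiplying across the $M$ boxes, the total log-count is at least
\[
  n^3\,\sum_{i=1}^M \Vol(B_i')\,\ent(s_i)\,(1+O(\epsilon))\;\geq\;v_n\bigl(\Ent(\widetilde g)-O(\epsilon)\bigr)\;\geq\;v_n\bigl(\Ent(g)-\eta-O(\epsilon)\bigr).
\]

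Every tiling produced this way has the same boundary on $\partial R$ as $\tau^*$, hence automatically satisfies the soft boundary constraint within $\theta_n$ of $b$. Moreover, altering the tiling only inside the $B_i'$ changes the overall flow in $d_W$ by an amount controlled by $\sum_i\Vol(B_i)\cdot \mathrm{diam}(\mathcal O)\lesssim\eta$ (using Lemma \ref{lem:covered} and the fact that within each $B_i'$ both flows take values in $\mathcal O$ and satisfy the same mass on its boundary), so all these tiling flows lie in $A_\delta(g)$ once $\eta$ is small relative to $\delta$. Taking $\liminf_{n\to\infty}$, then $\eta,\epsilon\to 0$, then $\delta\to 0$, yields the claimed lower bound.

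The main obstacle is verifying that $\tau^*$ from Theorem \ref{thm:shininglight} is genuinely $\epsilon$-nearly-constant on each $\partial B_i'$ with the correct value $s_i$, since Proposition \ref{thm:epsilon_nearly_constant_vs_EGM} requires this nearly-constant hypothesis (and in turn relies on the patching theorem). A secondary subtlety is the accounting of errors from three sources — the piecewise-constant approximation ($\eta$), the perturbation of $s_i$ into $\text{Int}(\mathcal O)$, and the $\epsilon$-nearly-constant slack — all of which must be shown to be $o(v_n)$ in log-count so that they collapse in the iterated limits. Handling boxes $B_i$ that intersect $\partial R$ requires the extendability of $b$ (so that $\widetilde g$ makes sense across $\partial R$) and the uniform continuity of the trace operator (Theorem \ref{thm: boundary_value_uniformly_continuous}) to conclude that boundary values remain within $\theta_n$ of $b$ along the whole construction; this forces $\theta_n\to 0$ no faster than the implicit approximation errors in the shining-light construction, which is exactly the meaning of \emph{admissible} threshold sequence.
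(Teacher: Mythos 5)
Your overall architecture matches the paper's proof exactly: piecewise-constant approximation via Proposition~\ref{prop:pc_approx}, an explicit tiling $\tau^*$ from the shining-light construction (Theorem~\ref{thm:shininglight}), a partition into small cubes where you patch in samples from ergodic Gibbs measures, and the entropy count via Proposition~\ref{thm:epsilon_nearly_constant_vs_EGM}. So there is no disagreement of strategy.

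However, the step you flag yourself as ``the main obstacle'' is in fact a genuine gap, and the way you propose to close it would not work. You claim that $d_W(f_{\tau^*},\widetilde g)<\eta$ together with $\widetilde g\equiv s_i$ on $B_i$ ``forces $\tau^*$ restricted to $\partial B_i'$ to be $\epsilon$-nearly-constant with value $s_i$.'' But the $\epsilon$-nearly-constant condition (Definition~\ref{def:nearly constant}) is a pointwise-on-patches flux statement, not a weak-topology statement. Wasserstein closeness only controls integrated fluxes through surfaces at the scale set by the ambient Wasserstein distance; it does not give flux control on every $\epsilon$-patch of $\partial B_i'$ at the scale needed for Proposition~\ref{thm:epsilon_nearly_constant_vs_EGM}. (Concretely: one can take a divergence-free flow that oscillates rapidly inside $B_i$ but has tiny Wasserstein distance to the constant $s_i$, and whose flux through a fixed small patch of $\partial B_i'$ is badly off.) The trace operator $T(\cdot,\cdot)$ is only uniformly continuous as a map into measures with $\m W_1^{1,1}$ distance, which again is a weak topology on boundary data — not enough for the per-patch estimate.

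The paper avoids this by never appealing to Wasserstein closeness to deduce the nearly-constant property. Instead it uses the \emph{explicit structure} of the tiling $\tau^*$ produced in the proof of Theorem~\ref{thm:shininglight}: within each tetrahedron $X_i$, $\tau^*$ is literally a periodic tiling of mean current $\widetilde g_i$ on the interior of each channel $C\cap X_i$ (outside a boundary layer of width $O(r/n)$). The small cubes $Q_k^i$ are chosen inside the channel interiors, so $\tau^*\mid_{\partial Q_k^i}$ is periodic, and periodicity of period $r$ (constant in $n$) immediately gives the $\epsilon_1$-nearly-constant condition for any $\epsilon_1$ once $n$ is large. This is a structural fact about the shining-light tiling, not a consequence of Wasserstein approximation. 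Without this, you have no tiling to which Proposition~\ref{thm:epsilon_nearly_constant_vs_EGM} can be applied.

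A secondary issue: after patching in samples $\sigma$ from the EGMs $\mu_{i,k}$, you need to guarantee that the resulting tiling flows stay in $A_\delta(g)$. You argue via total-mass preservation on each box, but mass equality on boxes is not by itself enough, since the mass could be distributed very differently inside each cube. The paper imposes an explicit additional constraint on the sampled $\sigma$ (Equation~\eqref{eq:wass bound ergodic theorem} in the proof of Theorem~\ref{thm:lower}, holding with probability close to one by the ergodic theorem) and then verifies via Lemma~\ref{lem:covered} that this constraint keeps the final flow inside $A_{\delta/2}(\widetilde g)$ while costing at most $o(v_n)$ in log-count. This is what makes the ``three error sources'' accounting that you mention go through. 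As written, your proposal does not establish this.
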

\begin{thm}[Soft boundary upper bound]\label{thm:upper}
 For any $ g\in AF(R, b)$,
\begin{align*}
   \lim_{\delta\to 0}\limsup_{n\to \infty} v_n^{-1} \log \mu_n(A_{\delta}( g)) \leq \Ent( g).
\end{align*}
\end{thm}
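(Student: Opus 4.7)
The plan is a coarse-graining argument: partition $R$ into small cubes on which $g$ is nearly constant, count tilings on each cube with a fixed mean current using $\ent$, and recover $\Ent(g)$ as a Riemann sum. Fix $g\in AF(R,b)$ and $\delta>0$. I would partition a closed thickening $\hat R \supset R$ into cubes $B_1,\ldots,B_M$ of side length $\epsilon$ separated by corridors of width $\epsilon^2$ (hence of total volume $O(\epsilon)\to 0$), and set $s_i := |B_i|^{-1}\int_{B_i} g\,\dd x$. The two key inputs are a Wasserstein-to-local-mean-current transfer and a local counting lemma.

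For the transfer, applying Proposition \ref{prop: dual definition wasserstein} with a $1$-Lipschitz cutoff approximating $\mathbbm{1}_{B_i}$ with transition width $\alpha = \delta^{1/2}/\epsilon$ gives, for any $f_\tau \in A_\delta(g)$ and each $j\in\{1,2,3\}$,
\begin{equation*}
\biggl|\int_{B_i}\dd\mu_j^{f_\tau} - \int_{B_i}\dd\mu_j^{g}\biggr| \;\leq\; C\,\delta^{1/2}\epsilon,
\end{equation*}
so the mean current of $\tau$ on $B_i$ lies in the ball $\overline{B_\eta(s_i)}$ with $\eta = O(\delta^{1/2}\epsilon^{-2})$. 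For the counting lemma, let $N_n(B,s,\eta)$ denote the number of free-boundary tilings at scale $n$ of a cube $B$ of side $\epsilon$ with mean current in $\overline{B_\eta(s)}$; standard Legendre-duality arguments from statistical mechanics, using the variational characterization $\ent(s) = \sup_{\mu\in\mc P^s} h(\mu)$ together with the tilted partition function at chemical potential $\lambda$ conjugate to the mean current, yield
\begin{equation*}
\limsup_{n\to\infty} (n\epsilon)^{-3}\log N_n(B,s,\eta) \;\leq\; \sup_{s'\in\overline{B_\eta(s)}} \ent(s'),
\end{equation*}
and continuity of $\ent$ (Lemma \ref{lemma: entropy_continuous}) lets us collapse the right-hand side to $\ent(s)$ as $\eta\to 0$.

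Combining the two inputs, tilings in $A_\delta(g)$ factor (up to the thin corridor region, whose tilings contribute $\exp(o(v_n))$ since the corridor occupies volume fraction $O(\epsilon)$) as products over the $B_i$ of tilings with constrained mean current:
\begin{equation*}
\mu_n(A_\delta(g)) \;\leq\; \exp\bigl(o(v_n)\bigr)\prod_{i=1}^M N_n(B_i, s_i, \eta).
\end{equation*}
Taking $v_n^{-1}\log$ and sending $n\to\infty$, the right-hand side is bounded by $\Vol(R)^{-1}\sum_i \epsilon^3\,\ent(s_i) + o(1)$, which is a Riemann sum for $\Vol(R)^{-1}\int_R \ent(g)\,\dd x = \Ent(g)$. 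Sending $\epsilon\to 0$ (with $\eta\to 0$ controlled by $\epsilon$), and finally $\delta\to 0$ (with the consistency $\delta\ll \epsilon^4$ built in), completes the argument. The main obstacle is coordinating the four small parameters $\delta,\epsilon,\eta,\alpha$ so that the nested limits $\lim_{\delta\to 0}\limsup_{n\to\infty}$ demanded by the theorem close correctly; the counting lemma itself, while requiring a Legendre-transform argument and some care near $\partial\mc O$, is a standard thermodynamic statement once one has the variational characterization of $\ent$ and its continuity established in Section \ref{sec:entropy}. Boxes meeting $\partial R$ and the corridors between $B_i$ both contribute only $o(v_n)$ to the log-count by volume considerations and can be absorbed into the error terms.
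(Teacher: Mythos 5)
Your coarse-graining structure (partition into small boxes, control the mean current per box via Wasserstein distance, Riemann sum) is sound in outline, and the Wasserstein-to-mean-current transfer with $\eta = O(\delta^{1/2}\epsilon^{-2})$ and the $\delta\ll\epsilon^4$ coordination are correct. But the argument rests entirely on the ``local counting lemma''
\begin{equation*}
\limsup_{n\to\infty}(n\epsilon)^{-3}\log N_n(B,s,\eta)\;\leq\;\sup_{s'\in\overline{B_\eta(s)}}\ent(s'),
\end{equation*}
which you dispatch in one sentence as a ``standard'' Legendre-duality fact — and that is a genuine gap, because this lemma is precisely the hard core of the theorem, and proving it requires machinery beyond what you cite. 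The Chernoff/tilting bound expresses the log-count in terms of a pressure $P(\lambda)=\lim(n\epsilon)^{-3}\log Z_n(\lambda)$, and to get from $P(\lambda)$ back to $\ent$ you need the full Lanford--Ruelle \emph{pressure} variational principle $P(\lambda)=\sup_\mu[h(\mu)+\langle\lambda,s(\mu)\rangle/2]$, not just Theorem~\ref{theorem: entropy maximizer gibbs} (which only says fixed-mean-current entropy maximizers are Gibbs) and not just $\ent(s)=\sup_{\mu\in\mc P^s}h(\mu)$ (which is a definition). You also need concave Fenchel--Moreau duality for $\ent$, with attendant care when the conjugate potential $\lambda^*$ diverges as $s\to\partial\mc O$ — which you flag but don't resolve, and which matters because the limit shape genuinely takes boundary values in the regions of interest (frozen phases). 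None of this is in the paper.

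The paper's proof is constructed specifically so as \emph{not} to need a per-box microcanonical counting estimate. Instead of counting tilings of a single small box with fixed mean current, it turns the entire counting measure $\mu_n$ into a $\threeeven$-invariant measure $\nu_n$ on $\Omega$ by tiling $\m R^3$ with translates of a cube, taking independent $\pi_{n,\delta}$-samples on each translate, and averaging over $\threeeven$. The key chain is then: the Shannon entropy of $\pi_{n,\delta}$ equals $\log Z_{n,\delta}$; superadditivity of conditional entropies and the averaging gives $h(\nu_n)\geq n^{-3}\log\mu_n(A_\delta(g))$; upper semicontinuity of $h(\cdot)$ passes to a subsequential limit $\nu$; and the \emph{definitional} inequality $h(\nu_C)\leq\ent(s(\nu_C))$ applied to the conditioned pieces $\nu_C$ of $\nu$ (together with Jensen for concavity of $\ent$ and Lebesgue differentiation) yields $\Ent(\varphi)\leq\Ent(g)+o_\delta(1)$. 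No tilted partition function, no pressure, no Legendre transform, and no boundary-of-$\mc O$ delicacy appear, because every step uses only facts about invariant measures and specific entropy already established in Sections~\ref{sec::prelim} and~\ref{sec:entropy}. Your route would work if you carried out the equivalence-of-ensembles argument in full, but as written, the load-bearing step is asserted rather than proved.
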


\subsection{Statement and set up: hard boundary LDP}\label{sec:statements-hb}

This section parallels Section \ref{sec:sb-statements}, but the LDP we prove is for measures $(\overline{\rho}_n)_{n\geq 1}$ defined with a \textit{hard boundary constraint} in the discrete, instead of the soft constraint used to define the measures $(\rho_n)_{n\geq 1}$ in Section \ref{sec:sb-statements}. 

Again let $R\subset \m R^3$ be a compact region which is the closure of a connected domain with $\partial R$ piecewise smooth, and assume that $b$ is a boundary asymptotic flow which is extendable outside. Unlike the soft boundary LDP, we add the condition that the pair $(R,b)$ is \textit{flexible}. 
\begin{definition}\label{def:fully_flexible} \termindex{Chapter 8!flexible}
    A pair $(R,b)$ is \textit{flexible} if for all $x\in \text{Int}(R)$, there exists $g\in AF(R,b)$ and an open set $U\ni x$ such that $\overline{g(U)}\subset \text{Int}(\mc O)$. 
\end{definition}
By completely analogous arguments to the proof of Lemma \ref{lem: nonwhere edge extension}, we have the following equivalent definition of $(R,b)$ flexible. 
\begin{lemma}\label{lem:fully_flexible}
    A pair $(R,b)$ is {flexible} if and only if there exists $f_0\in AF(R,b)$ such that for every compact set $D\subset \text{Int}(R)$, \, $\overline{f_0(D)}\subset \text{Int}(\mc O)$.
\end{lemma}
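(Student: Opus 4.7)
My plan is to mimic exactly the proof of Lemma \ref{lem: nonwhere edge extension} (the semi-flexible case), with $\mathcal O \setminus \mc E$ replaced everywhere by $\mathrm{Int}(\mc O)$ and with ``pointwise in $\mathrm{Int}(R)$'' strengthened to ``uniformly on compact subsets of $\mathrm{Int}(R)$.'' The only genuinely new ingredient is a convexity fact: if $K \subset \mathrm{Int}(\mc O)$ is compact and $\lambda \in (0,1]$, then $\lambda K + (1-\lambda)\mc O$ is a compact subset of $\mathrm{Int}(\mc O)$. This holds because $\mc O$ is convex with nonempty interior, so convex combinations with strictly positive weight on an interior point stay in the interior; compactness then follows from compactness of $K$ and $\mc O$.

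For the $(\Leftarrow)$ direction, given $f_0$ with the stated property and $x \in \mathrm{Int}(R)$, pick any open ball $U \ni x$ with $\overline U$ compact and $\overline U \subset \mathrm{Int}(R)$. Then $\overline{f_0(U)} \subset \overline{f_0(\overline U)} \subset \mathrm{Int}(\mc O)$, so $(R,b)$ is flexible.

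For the $(\Rightarrow)$ direction, enumerate the countable basis $\{V_i\}_{i \in \mathbb N}$ of open balls with rational centers and rational radii whose closures lie in $\mathrm{Int}(R)$. For each $i$ such that there exist $g \in AF(R,b)$ and an open $U \ni $ (center of $V_i$) with $\overline{g(U)} \subset \mathrm{Int}(\mc O)$ and $V_i \subset U$, fix such a $g_i$; for the remaining $i$, let $g_i$ be any element of $AF(R,b)$ (which is nonempty because $b$ is a boundary asymptotic flow). Set
\[
f_0 \;:=\; \sum_{i=1}^{\infty} 2^{-i} g_i .
\]
This series converges pointwise since each $g_i(x) \in \mc O$ is uniformly bounded, it is measurable, supported in $R$, valued in $\mc O$ (since $\mc O$ is convex), and divergence-free in the distributional sense by linearity of the divergence and dominated convergence. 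Hence $f_0 \in AF(R)$, and $T(f_0,\partial R) = b$ by linearity and continuity of the trace operator (Theorem \ref{thm: boundary_value_uniformly_continuous}), so $f_0 \in AF(R,b)$.

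Now let $D \subset \mathrm{Int}(R)$ be compact. By the flexibility hypothesis and the construction of $\{V_i\}$, the family of those $V_i$ for which $\overline{g_i(V_i)} \subset \mathrm{Int}(\mc O)$ covers $\mathrm{Int}(R)$, hence covers $D$. Extract a finite subcover $V_{i_1},\ldots,V_{i_k}$ and set $K_j := \overline{g_{i_j}(V_{i_j})} \subset \mathrm{Int}(\mc O)$, which is compact. On $V_{i_j}$ we can write
\[
f_0(x) \;=\; 2^{-i_j} g_{i_j}(x) + (1 - 2^{-i_j}) h_j(x),
\]
where $h_j(x)$ is a convex combination of values in $\mc O$, hence in $\mc O$. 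Therefore $f_0(V_{i_j}) \subset 2^{-i_j} K_j + (1 - 2^{-i_j})\mc O$, which by the convexity fact above is a compact subset of $\mathrm{Int}(\mc O)$. Taking the union over $j = 1,\ldots,k$, we conclude $f_0(D) \subset \bigcup_{j=1}^k \bigl(2^{-i_j} K_j + (1 - 2^{-i_j})\mc O\bigr)$, which is a finite union of compact subsets of $\mathrm{Int}(\mc O)$, so $\overline{f_0(D)} \subset \mathrm{Int}(\mc O)$, as required.

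The main potential obstacle is verifying that $f_0$ is genuinely an asymptotic flow, and in particular that taking an infinite convex combination preserves the trace $b$. This is handled by the linearity of the trace operator together with its $d_W$-continuity (Proposition \ref{prop:uniformly_continuous_AF(R)}), applied to the $d_W$-convergent partial sums $\sum_{i=1}^N 2^{-i} g_i + 2^{-N}\cdot g_1$ (renormalized to lie in $AF(R,b)$); since each renormalized partial sum has trace $b$ and the sequence $d_W$-converges to $f_0$, the trace of $f_0$ is $b$.
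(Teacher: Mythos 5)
Your proof is correct and follows exactly the approach the paper intends: the paper states only that the lemma holds ``by completely analogous arguments to the proof of Lemma~\ref{lem: nonwhere edge extension},'' and your argument is precisely that adaptation, with the key new ingredient (the compactness of $\lambda K + (1-\lambda)\mathcal O$ inside $\mathrm{Int}(\mathcal O)$ for compact $K\subset\mathrm{Int}(\mathcal O)$) correctly supplied in place of the ``non-edge stays non-edge'' fact used in the semi-flexible case.
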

\begin{rem}\label{rem:aztec_is_flexible}
It is not hard to see directly that the flexible definition given in Definition \ref{def:fully_flexible} is satisfied for the 3D regions in the introduction built out of aztec diamonds. On each 2D aztec diamond $R_a= R \cap \{z = a\}$ and each point $x\in R_a$, consider a rectangle inscribed in $R_a$ containing $x$ and with edges parallel to the coordinate axes. Then the flow which is $0$ inside the rectangle and linear parallel to the adjacent edge of the rectangle in the four triangles is a 2D asymptotic flow with the right boundary conditions. Averaging these flows for different rectangles gives a flow valued in $\text{Int}(\mc O)$ on $R_a$, and combining them gives a flow $f$ valued in $\text{Int}(\mc O)$ everywhere in $\text{Int}(R)$ (in fact, $f$ will be valued in the middle slice of $\text{Int}(\mc O)$ where the third coordinate is zero). 
\end{rem}

Let $R_n\subset \frac{1}{n} \m Z^3$ be tileable regions approximating $R$ in Hausdorff distance from outside, i.e., the union of lattice squares in $R_n$ covers $R$ and all lattice squares intersect $R$. 
We define $\overline{\rho}_n$ to be uniform measure on tilings of $R_n$. 
If $(R,b)$ is flexible and the boundary values of tilings of $R_n$ converge to $b$ in $\m W_1^{1,1}$ as $n\to \infty$, we prove that $(\overline{\rho}_n)_{n\geq 1}$ satisfy an LDP.

\begin{thm}[Hard boundary large deviation principle]\label{thm:hb-ldp} Let $R\subset \m R^3$ be a compact region which the closure of a connected domain, with piecewise smooth boundary $\partial R$. Let $b$ be a boundary asymptotic flow which is extendable outside, and assume that $(R,b)$ is flexible. 

Let $R_n\subset \frac{1}{n}\m Z^3$ be a sequence of tileable regions approximating $R$ in Hausdorff distance and with boundary values  converging to $b$ in $\m W_1^{1,1}$. Define $\overline{\rho}_n$ to be the uniform probability measure on tilings of $R_n$. 

The measures $(\overline{\rho}_n)_{n\geq 1}$ satisfy a large deviation principle in the topology induced by $d_W$ with good rate function $I_{{b}}(\cdot)$ and speed $v_n = n^3 \Vol(R)$. Namely for any $d_W$-Borel measurable set $A$,
    \begin{equation}\label{eq:ldp-prob-hb}
    -\inf_{ g\in A^\circ} I_{ b}( g) \leq \liminf_{n\to \infty} v_n^{-1} \log \overline{\rho}_n(A) \leq \limsup_{n\to \infty} v_n^{-1} \log \overline{\rho}_n(A) \leq -\inf_{ g\in \overline{A}} I_{ b}( g)
\end{equation}
Further, the rate function $I_{ b}( g) = C_{ b} - \Ent( g)$ if $g$ is an asymptotic flow, where $C_{ b} = \max_{ f\in AF(R, b)} \Ent(f)$. If $g$ is not an asymptotic flow then $I_b(g) = \infty$.
\end{thm}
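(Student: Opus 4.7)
The overall strategy is to follow the same template as the soft boundary LDP. By \cite[Lemma 2.3]{varadhan2016large}, establishing Theorem \ref{thm:hb-ldp} reduces to proving exponential tightness together with the matching local upper and lower bounds $\lim_{\delta\to 0}\limsup_{n\to \infty} v_n^{-1}\log \bar\rho_n(A_\delta(g)) \leq \Ent(g) - C_b$ and $\lim_{\delta\to 0}\liminf_{n\to\infty} v_n^{-1}\log \bar\rho_n(A_\delta(g)) \geq \Ent(g) - C_b$ for each $g \in AF(R,b)$, and $-\infty$ for $g \notin AF(R,b)$. Exponential tightness is automatic: by Theorems \ref{thm:formal_fine_mesh}, \ref{thm: boundary_value_uniformly_continuous} and Corollary \ref{cor: AF(R,b) compact}, all subsequential $d_W$-limits of tiling flows with boundary values converging to $b$ lie in the compact space $AF(R,b)$, so one can take the compact level sets $K_\alpha$ to be $AF(R,b)$ itself. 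Thus the proof reduces to Theorems \ref{thm:lower-hb} and \ref{thm:upper-hb}.

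For the upper bound, every tiling of $R_n$ is a free-boundary tiling of $R$ with boundary value exactly $b_n$, hence in particular lies within $\m W_1^{1,1}$-distance $\theta_n$ of $b$ once $n$ is large, for any admissible threshold sequence. Consequently $\bar Z_n \cdot \bar\rho_n(A_\delta(g)) \leq \mu_n(A_\delta(g))$, where $\bar Z_n$ is the number of tilings of $R_n$ and $\mu_n$ is the soft-boundary counting measure. Applying the soft-boundary upper bound (Theorem \ref{thm:upper}) to $\mu_n(A_\delta(g))$ and a lower bound on $\bar Z_n$ (supplied by the hard-boundary lower bound at the $\Ent$-maximizer $f_{\max}$, combined with the fact that Theorem \ref{thm: unique maximizer} guarantees its uniqueness when $(R,b)$ is flexible, hence semi-flexible) yields the desired upper bound with the correct normalization $C_b = \Ent(f_{\max})$.

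The lower bound is the substantive step and the main obstacle. Given $g \in AF(R,b)$ and $\delta>0$, I first invoke Proposition \ref{prop:pc_approx}, which uses the ``extendable outside'' hypothesis, to approximate $g$ in $d_W$ by a piecewise-constant asymptotic flow $\tilde g$ on a region $\widetilde R \supset R$. Then the ``shining light'' construction (Theorem \ref{thm:shininglight}) produces, for each large $n$, a free-boundary tiling $\tau_n^{\mathrm{int}}$ on a slightly shrunken region $R' \subset R$ whose flow approximates $\tilde g$, and an entropy lower bound on the number of such interior tilings of the form $\exp(v_n \Ent(g) + o(v_n))$ via Proposition \ref{thm:epsilon_nearly_constant_vs_EGM} applied cell by cell in the piecewise-constant decomposition. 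The key new ingredient, and the hard part, is extending $\tau_n^{\mathrm{int}}$ to a tiling of $R_n$ matching the fixed boundary condition $b_n$ on $\partial R$. This cannot be done with the basic patching Theorem \ref{patching}, since the two boundary conditions on the patching annulus $R_n \setminus R'$ need not be nearly-constant with a common mean current. Instead one must apply the generalized patching Theorem \ref{thm:generalized_patching}, which verifies tileability of an annular region via Hall's matching theorem by comparing the discrete even/odd imbalance against a ``measuring stick'' tiling sampled from a measure whose mean current stays in $\mathrm{Int}(\mc O)$. Exactly this measuring stick is provided by the flexibility hypothesis through Lemma \ref{lem:fully_flexible}: there exists $f_0 \in AF(R,b)$ with $\overline{f_0(D)} \subset \mathrm{Int}(\mc O)$ for every compact $D \subset \mathrm{Int}(R)$, giving the positive constant $K_\mu$ in the flux estimate (Lemma \ref{lem: flow bound}) needed to beat the imbalance contribution from the boundary. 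Patching succeeds for all but an exponentially small (in $v_n$) fraction of interior tilings, so the counting lower bound survives, and since $b_n \to b$ the Wasserstein balls $A_\delta(g)$ capture the resulting tilings for $n$ sufficiently large. Dividing by $\bar Z_n$ and taking the appropriate limits yields the hard-boundary lower bound with rate function $C_b - \Ent(g)$.
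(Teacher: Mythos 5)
Your overall template matches the paper's: reduce to local upper and lower bounds plus exponential tightness via the Varadhan--Dembo--Zeitouni lemma, derive exponential tightness from compactness of $AF(R,b)\cup TF(R,b)$, prove the upper bound by dominating $\overline\mu_n(A_\delta(g))$ by $\mu_n(A_\delta(g))$ (choosing the soft thresholds $\theta_n$ to majorize $\m W_1^{1,1}(b_n,b)$) and invoking Theorem~\ref{thm:upper}, and prove the lower bound by combining the shining light construction, EGM patching for the interior count, and the generalized patching theorem to close the annulus against the fixed boundary.

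There is, however, a real gap in your lower bound. Theorem~\ref{thm:generalized_patching} takes as input a sequence of tilings $\tau_n$ converging to a flow $g\in AF(R,b)$ that is \emph{flexible}, i.e.\ both satisfying $T(g,\partial R)=b$ exactly and having $\overline{g(D)}\subset\mathrm{Int}(\mc O)$ for every compact $D\subset\mathrm{Int}(R)$. The target $g$ you are given need not be flexible (only the \emph{pair} $(R,b)$ is), and your substitute $\widetilde g$ from Proposition~\ref{prop:pc_approx} is valued in $\mathrm{Int}(\mc O)$ but need not have boundary value exactly $b$, so neither directly satisfies the hypotheses. You invoke the $f_0$ from Lemma~\ref{lem:fully_flexible} as the ``measuring stick'' for the imbalance estimate, but in the paper's argument $f_0$ plays a different role: it is used in Corollary~\ref{cor:fixed bc} to perturb $g$ to $g_\epsilon=\epsilon f_0+(1-\epsilon)g$, which \emph{is} flexible (by convexity of $\mc O$ and openness of its interior) and still has boundary value $b$, while remaining $O(\epsilon)$-close to $g$ in $d_W$. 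Only after this replacement can the generalized patching theorem be invoked, with the piecewise-constant approximation and shining light measures constructed inside its proof for $g_\epsilon$ rather than for $g$. Without that perturbation step your argument does not quite go through; with it, it becomes the paper's proof. (A smaller slip: the flux estimate used inside the generalized patching argument is Lemma~\ref{lemma:shininglight_flowbound} with constant $K_D$, not Lemma~\ref{lem: flow bound} with $K_\mu$, which is the ergodic-measure version used only for the basic patching theorem.)
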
\termindex{Chapter 8!hard boundary large deviation principle (HB LDP)}

\begin{rem}
    The large deviation principle in \cite{cohn2001variational} has hard boundary conditions, where the regions $R_n$ approximate $R$ from \textit{within}, i.e.\ $R_n\subset R$. We instead assume that $R_n\supset R$, and that our regions approximate $R$ from outside.
\end{rem}

The \textit{flexible} condition on $(R,b)$ is needed for the \textit{generalized patching theorem} (Theorem \ref{thm:generalized_patching}). We do not know the exact condition on $(R,b)$ needed for Theorem \ref{thm:hb-ldp} to hold, however there do exist regions $(R,b)$ which are just semi-flexible but for which the hard boundary LDP fails. 
\begin{exmp}\label{ex:hb_false}
    As discussed in the introduction, there exists {semi-flexible} region and boundary condition pairs $(R,b)$ with $R\subset \m R^3$ for which the hard boundary large deviation principle is false. The region $R$ is a ``tilted tube," and the boundary value $b$ takes values in a face of $\partial \mc O$. This construction is related to the measures with boundary mean current discussed in Section \ref{sec:extreme}.
    Recall the definition of the \textit{slabs}
    \begin{align*}
        L_c = \{(x_1,x_2,x_3): x_1 + x_2 + x_3 = 2c \text{ or } 2c+1\}. 
    \end{align*}
    Each slab is the union of two planes. Any tiling sampled from a measure with mean current $(s_1,s_2,s_3)\in \partial \mc O$ with $s_1,s_2,s_3\geq 0$ breaks into a sequence of complete dimer tilings of the slabs (Proposition \ref{prop: Sc lattice description}). 
    
    Let $B_n = [-n,n]^3$. Let $A_n(0) = L_0 \cap B_n$, let $A_n(c) = L_c \cap [B_n+(0,0,2c)]$, and finally let $A_{n,n} = \cup_{i=-n}^n A_n(i)$. The region $R$ is then defined so that $\frac{1}{n} A_{n,n}$ is \textit{a} sequence of discrete regions approximating it. We choose the boundary value $b$ to be a constant mean current $s=(s_1,s_2,s_3)\in \partial \mc O$ with $s_1,s_2,s_3>0$. Note that the constant asymptotic flow $g(x) =s\in AF(R,b)$, and by Theorem \ref{thm: extremal_entropy} has $\Ent(g) = \ent_{\text{loz}}(s)>0$. Here are two options we could choose for the sequence of discrete regions:
    \begin{itemize}
        \item We define one sequence of regions $R_n^1$ where for each $c$ such that $L_c$ intersects $A_{n,n}$, $R_n^1\cap L_c=S_c$ is a region such that a lozenge tiling of $S_c$ has slope $s$ along $\partial S_c$. 
        \item We define another sequence of regions $R_n^2$ by alternating between frozen brickwork lozenge tilings. Choose a sequence of ratios $(s_1^n,s_2^n,s_3^n)$ converging to $(s_1,s_2,s_3)$ as $n\to\infty$. We partition the group of indices $c$ such that $L_c$ intersects $A_{n,n}$, into three groups with sizes proportional to $s_1^n,s_2^n,s_3^n$. For $i=1,2,3$, for each $c$ in the $i^{th}$ group, we define $R_n^2\cap L_c$ to be the region tileable by the $\eta_i$ lozenge brickwork tiling. 
    \end{itemize}
    By results for 2D lozenge tilings and the relationships established in Section \ref{sec:extreme}, the hard boundary LDP would hold for the sequence $R_n^1$. On the other hand, $R_n^2$ is frozen for all $n$, so the number of free boundary tilings of $R_n^2$ is $1$ for all $n$. While this unique tiling does approximate the constant flow $g(x)=s$, the corresponding lower bound for the LDP does not hold.

    More generally, one might conjecture that the hard boundary LDP fails when there exist regions in $\text{Int}(R)$ where the $\Ent$-maximizing flow is valued in the faces of $\partial \mc O$. However we do not know if this is a necessary or sufficient condition for the hard boundary LDP to fail, or if there exists a region $R\subset \m R^3$ where the $\Ent$-maximizing flow takes face values in the interior of $R$ but not all of $R$. See Problem \ref{prob:limit_shape_values}.
\end{exmp}

The analogous concentration or weak law of large numbers result that held for the soft boundary measures $\rho_n$ (Corollary \ref{cor:concentration}) also holds for the hard boundary measures $\overline{\rho}_n$. Note that flexible implies semi-flexible, so the maximizer $f_{\max}\in AF(R,b)$ is unique by Theorem \ref{thm: unique maximizer}.
\begin{cor}\label{cor:concentration-hb}
    Assume that $(R,b)$ is flexible and $\overline{\rho}_n$ are as in Theorem \ref{thm:hb-ldp}. Let $f_{\max}$ denote the unique maximizer of $\Ent$ in $AF(R,b)$. Define the event
\begin{align*} 
A_{\epsilon} = \{f : d_W(f,f_{\max})>\epsilon\}.
\end{align*}
Then
\begin{align*}
    \overline{\rho}_n(A_{\epsilon}) \leq C^{-n^3}
\end{align*}
where $C>1$ is a constant depending only on $R$ and $b$. In other words, for any $\epsilon>0$, the probability that a tiling flow at scale $n$ sampled from $\overline{\rho}_n$ (i.e., corresponding to a tiling of $R_n$) differs from the entropy maximizer by more than $\epsilon$ goes to $0$ exponentially fast as $n\to \infty$ with rate $n^3$.
\end{cor}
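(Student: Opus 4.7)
The plan is to mimic the proof of Corollary~\ref{cor:concentration} essentially verbatim, replacing the soft-boundary LDP (Theorem~\ref{thm:sb-ldp}) by the hard-boundary LDP (Theorem~\ref{thm:hb-ldp}), which is exactly the input available under the hypothesis that $(R,b)$ is flexible. Since flexible implies semi-flexible, Theorem~\ref{thm: unique maximizer} produces the unique maximizer $f_{\max}\in AF(R,b)$, and $(AF(R,b),d_W)$ is compact by Corollary~\ref{cor: AF(R,b) compact}.

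First I would use upper semicontinuity of $\Ent$ (Proposition~\ref{proposition: Upper semicontinuous}) together with uniqueness of the maximizer to produce, for every $g\in AF(R,b)$ with $g\neq f_{\max}$, an open $d_W$-neighborhood $B_g\ni g$ such that
\[
\sup_{h\in \overline{B_g}} \Ent(h) < \Ent(f_{\max}).
\]
Indeed, if no such neighborhood existed one could extract a sequence $h_k\to g$ with $\Ent(h_k)\to\Ent(f_{\max})$, and upper semicontinuity would force $\Ent(g)\geq \Ent(f_{\max})$, contradicting uniqueness. Set $B_{f_{\max}}$ to be the $d_W$-open ball of radius $\epsilon$ around $f_{\max}$, so that $A_\epsilon$ is disjoint from $B_{f_{\max}}$.

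Next, by compactness of $(AF(R,b),d_W)$ the open cover $\{B_g : g\in AF(R,b)\}$ admits a finite subcover $B_1,\dots,B_k$ with $B_1=B_{f_{\max}}$ and, for each $i\geq 2$, $\sup_{\overline{B_i}}\Ent =: m_i < \Ent(f_{\max})$. Set
\[
\eta := \min_{2\leq i\leq k} \bigl(\Ent(f_{\max}) - m_i\bigr) > 0.
\]
Because $A_\epsilon\cap AF(R,b) \subset B_2\cup\cdots\cup B_k$, and tiling flows outside every $B_i$ that lie in $A_\epsilon$ must converge in the fine-mesh limit to some point of $AF(R,b)$ by Theorem~\ref{thm:formal_fine_mesh}, for $n$ large enough the $\overline{\rho}_n$-mass of $A_\epsilon$ is controlled by the sum of $\overline{\rho}_n(B_i)$ for $i\geq 2$.

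Finally I apply the upper bound part of Theorem~\ref{thm:hb-ldp} to each $B_i$, $i\geq 2$: since $\overline{B_i}$ is closed,
\[
\limsup_{n\to\infty} v_n^{-1}\log \overline{\rho}_n(B_i) \leq -\inf_{g\in \overline{B_i}} I_b(g) = -\bigl(\Ent(f_{\max})-m_i\bigr) \leq -\eta.
\]
Summing over the finitely many $i\in\{2,\dots,k\}$ and using $v_n = n^3\Vol(R)$ yields
\[
\overline{\rho}_n(A_\epsilon) \leq (k-1)\,e^{-\tfrac{\eta}{2} n^3\Vol(R)} \leq C^{-n^3}
\]
for a constant $C=C(R,b,\epsilon)>1$ and all $n$ sufficiently large. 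The only non-routine ingredients are the LDP upper bound and uniqueness of the maximizer, both of which are already in hand; there is no real obstacle beyond assembling these pieces.
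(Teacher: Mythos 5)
Your proof is correct and follows essentially the same route as the paper, which simply declares the proof "analogous to the proof of Corollary \ref{cor:concentration} with $\rho_n$ replaced by $\overline{\rho}_n$ and Theorem \ref{thm:sb-ldp} replaced by Theorem \ref{thm:hb-ldp}." You have fleshed out that analogy in the same way the paper itself proves Corollary \ref{cor:concentration}: cover the compact space $AF(R,b)$ by the $\epsilon$-ball at $f_{\max}$ together with neighborhoods on which $\Ent$ stays uniformly below $\Ent(f_{\max})$ (upper semicontinuity), take a finite subcover, and apply the LDP upper bound to each closed ball.
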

\begin{proof}
    Analogous to the proof of Corollary \ref{cor:concentration} with $\rho_n$ replaced by $\overline{\rho}_n$ and Theorem \ref{thm:sb-ldp} replaced by Theorem \ref{thm:hb-ldp}.
\end{proof}

Like Theorem \ref{thm:sb-ldp}, by \cite[Lemma 2.3]{varadhan2016large}, to prove Theorem \ref{thm:hb-ldp} it suffices to show that the measures $(\overline{\rho}_n)_{n\geq 1}$ satisfy local upper and lower bound statements (Theorems \ref{thm:lower-hb} and \ref{thm:upper-hb}) plus the \textit{exponential tightness property} stated for $\rho_n$ in Equation \eqref{eq:exponential_tightness}, which follows by analogous straightforward arguments for $\overline{\rho}_n$. 

We let $\overline{Z}_n$ denote the partition function of $\overline{\rho}_n$ and $\overline{\mu}_n = \overline{Z}_n \overline{\rho}_n$ the corresponding counting measures.\symindex{Chapter 8!$\overline{\rho}_n,\overline{\mu}_n,\overline{Z}_n$}

\begin{thm}[Hard boundary lower bound]\label{thm:lower-hb}
 For any $ g\in AF(R, b)$,
\begin{align*}
    \lim_{\delta\to 0}\liminf_{n\to \infty} v_n^{-1} \log \overline{\mu}_n(A_{\delta}( g)) \geq \Ent( g).
\end{align*}
\end{thm}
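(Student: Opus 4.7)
The plan is to bootstrap from the soft boundary lower bound (Theorem \ref{thm:lower}) by using the generalized patching theorem (Theorem \ref{thm:generalized_patching}) to bridge free-boundary tilings approximating $g$ with the prescribed hard boundary $b_n$. Thus the entropy content comes from the interior while the hard constraint is satisfied by a thin annular ``correction layer'' which contributes a negligible amount on the exponential scale.

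Fix $g \in AF(R,b)$ and $\delta>0$. First I would choose a small parameter $\eta>0$ and consider the inner region $R_\eta = \{x \in R : d(x,\partial R) \geq \eta\}$. Consider the restriction $g|_{R_\eta}$ with boundary value $b_\eta := T(g, \partial R_\eta)$. By the soft boundary lower bound (Theorem \ref{thm:lower}) applied to $(R_\eta, b_\eta)$ — whose proof relies only on the shining-light construction (Theorem \ref{thm:shininglight}) for producing a tiling approximation of $g|_{R_\eta}$ and on the regular patching theorem (Theorem \ref{patching}) together with Proposition \ref{thm:epsilon_nearly_constant_vs_EGM} to count local extensions of the approximation — one obtains at least
\[
\exp\bigl(v_n^{(\eta)}\,\Ent(g|_{R_\eta}) - o(v_n)\bigr)
\]
scale-$n$ tilings of regions approximating $R_\eta$ whose flows lie in the $d_W$-ball $A_{\delta/3}(g|_{R_\eta})$, where $v_n^{(\eta)} = n^3 \Vol(R_\eta)$.

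Next, for each such interior tiling $\tau^{\mathrm{int}}$, I would invoke the generalized patching theorem (Theorem \ref{thm:generalized_patching}) to fill in the annular region $R_n \setminus R_\eta$ with a tiling whose inner boundary extends $\tau^{\mathrm{int}}$ and whose outer boundary matches the prescribed value $b_n$. This is exactly where the flexible hypothesis on $(R,b)$ is used: by Lemma \ref{lem:fully_flexible}, there exists $f_0 \in AF(R,b)$ with $\overline{f_0(D)} \subset \mathrm{Int}(\mathcal O)$ on every compact $D \subset \mathrm{Int}(R)$, and this $f_0$ provides the ``room to maneuver'' near $\partial R$ that Theorem \ref{thm:generalized_patching} requires in order to guarantee tileability of the annulus with the two prescribed boundary conditions. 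The resulting tilings of $R_n$ have flows lying in $A_\delta(g)$, because the interior contributes a flow within $\delta/3$ of $g|_{R_\eta}$ and the annular shell $R_n \setminus R_\eta$ has volume $O(\eta)$, hence contributes at most $O(\eta)$ to $d_W$ by a Lemma \ref{lem:covered}-type estimate. Since different $\tau^{\mathrm{int}}$ yield different tilings of $R_n$, we obtain
\[
v_n^{-1}\log \overline{\mu}_n(A_\delta(g)) \;\geq\; \frac{v_n^{(\eta)}}{v_n}\,\Ent(g|_{R_\eta}) - o(1).
\]
Taking $\liminf_{n\to\infty}$, then $\eta \to 0$ using $\Vol(R_\eta) \to \Vol(R)$ and the upper semicontinuity of $\Ent$ (Proposition \ref{proposition: Upper semicontinuous}) applied to a suitable $\eta \to 0$ limit of $g|_{R_\eta}$ extended by $f_0$ on the shell, and finally $\delta \to 0$, yields the claimed bound.

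The main obstacle is the generalized patching step. For the soft boundary lower bound one has full freedom in choosing the outer boundary, so the ordinary patching theorem (Theorem \ref{patching}) with nearly-constant ergodic boundary data suffices. In the hard boundary regime, the outer boundary $b_n$ is dictated, and one must construct a single tiling of the non-square annulus $R_n \setminus R_\eta$ interpolating between a generic interior tiling and $b_n$. Controlling the Hall-counterexample imbalance in this non-translation-invariant, non-constant setting is precisely the content of Theorem \ref{thm:generalized_patching}, and its dependence on flexibility explains why the same strategy cannot recover the hard boundary LDP for semi-flexible but non-flexible $(R,b)$ — consistently with Example \ref{ex:hb_false}.
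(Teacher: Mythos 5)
Your strategy — count interior tilings of $R_\eta$ via the soft boundary lower bound, then use generalized patching to extend each one to the prescribed outer boundary $b_n$ — reorders the paper's proof. The paper instead applies generalized patching \emph{once} (via Corollary~\ref{cor:fixed bc}) to produce a single shining-light-style tiling $\tau$ of the fixed region $R_n$ with $d_W(f_\tau,g)<\delta$, and then recycles the counting argument from the proof of Theorem~\ref{thm:lower} (patching ergodic Gibbs samples into small cubes inside $\tau$). Your reordering is plausible in principle, but it forces you to invoke the generalized patching theorem for each of exponentially many interior tilings and to track uniformity of the tileability threshold; the paper avoids this bookkeeping by doing the hard-boundary correction before the multiplicity count.

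There is, however, a genuine gap: you misstate the hypothesis of Theorem~\ref{thm:generalized_patching}. That theorem does not merely require $(R,b)$ to be flexible; it requires the inner tiling sequence $\tau_n$ to satisfy $d_W(f_{\tau_n},g)\to 0$ for a \emph{flexible} flow $g\in AF(R,b)$. The flexibility of the specific target flow feeds into Lemma~\ref{lemma:shininglight_flowbound}: the constant $K_D = (1-k_D)/18$ is strictly positive only if $\overline{g(D)}\subset\text{Int}(\mathcal O)$, so if $g$ merely lies in $AF(R,b)$ (and is not itself flexible), the test-tiling flow bound degenerates and the imbalance argument fails. Your interior tilings $\tau^{\text{int}}$ approximate $g|_{R_\eta}$ with $g$ a \emph{general} element of $AF(R,b)$, so the hypothesis is not met; merely knowing that \emph{some} $f_0\in AF(R,b)$ is flexible does not help unless your interior tilings are actually close to a flexible flow. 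The paper's Corollary~\ref{cor:fixed bc} resolves this by first replacing $g$ with the perturbation $g_\epsilon = \epsilon g_0 + (1-\epsilon)g$, which \emph{is} flexible (because $g_0$ is) and is within $C\epsilon$ of $g$; the generalized patching theorem is then applied with $g_\epsilon$ as the target. To repair your argument you would need to insert this perturbation at the outset, counting interior tilings approximating $g_\epsilon|_{R_\eta}$ rather than $g|_{R_\eta}$, and take $\epsilon\to 0$ alongside $\delta\to 0$ at the end.
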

\begin{thm}[Hard boundary upper bound]\label{thm:upper-hb}
 For any $ g\in AF(R, b)$,
\begin{align*}
   \lim_{\delta\to 0}\limsup_{n\to \infty} v_n^{-1} \log \overline{\mu}_n(A_{\delta}( g)) \leq \Ent( g).
\end{align*}
\end{thm}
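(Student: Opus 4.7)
The plan is to reduce directly to the soft boundary upper bound (Theorem \ref{thm:upper}) by a pointwise comparison of the two counting measures. The hard boundary measure $\overline{\mu}_n$ counts tilings of the specific region $R_n$, all of which realize the exact boundary value $b_n$ on $\partial R$. Since $b_n \to b$ in $\m W_1^{1,1}$ and any admissible threshold sequence can be enlarged pointwise while remaining admissible (enlarging only grows the sets $TF_n(R, b, \theta_n)$), we may choose the soft boundary threshold sequence $(\theta_n)$ from Theorem \ref{thm:upper} to additionally satisfy $\m W_1^{1,1}(b_n, b) \leq \theta_n$ for every $n$. Then for every tiling $\tau$ of $R_n$, its restriction to tiles intersecting $R$ is an element of $T_n(R)$ whose tiling flow lies in $TF_n(R, b, \theta_n)$, so the restriction map sends tilings of $R_n$ into the support of $\mu_n$.

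The one subtlety is that this restriction map need not be injective: two tilings of $R_n$ can agree on every tile intersecting $R$ yet differ on tiles contained entirely in the annular shell $R_n \setminus R$. Since $R$ has piecewise smooth boundary and $R_n$ is a scale-$n$ outward approximation of $R$, the shell $R_n \setminus R$ contains at most $O(n^2)$ cubes of $\frac{1}{n}\mathbb Z^3$ (with a constant depending on $R$), and therefore admits at most $\exp(C n^2)$ completions for some $C = C(R) < \infty$. This bounds the fiber of the restriction map, yielding the pointwise inequality
\[
\overline{\mu}_n(A_\delta(g)) \;\leq\; \exp(C n^2)\cdot \mu_n(A_\delta(g))
\]
for every $\delta > 0$ and every $d_W$-measurable set $A_\delta(g)$. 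Since $v_n = n^3 \Vol(R)$, the multiplicity prefactor contributes only $v_n^{-1}\log \exp(C n^2) = O(1/n) \to 0$ under the LDP scaling.

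Applying $v_n^{-1}\log$, taking $\limsup_{n\to\infty}$ and then $\lim_{\delta\to 0}$, we obtain
\[
\lim_{\delta\to 0}\limsup_{n\to\infty} v_n^{-1}\log \overline{\mu}_n(A_\delta(g)) \;\leq\; \lim_{\delta\to 0}\limsup_{n\to\infty} v_n^{-1}\log \mu_n(A_\delta(g)) \;\leq\; \Ent(g),
\]
where the final inequality is exactly Theorem \ref{thm:upper}. The argument uses neither flexibility of $(R,b)$ nor any finer information about the tileable structure of $R_n$, mirroring the fact that the flexibility hypothesis in Theorem \ref{thm:hb-ldp} is invoked only for the matching lower bound (via the generalized patching theorem). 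The only potential obstacle is a clean verification of the multiplicity bound on the shell completions, but this is a straightforward consequence of the piecewise smoothness of $\partial R$ and a uniform upper bound on the number of perfect matchings of a bipartite lattice region in terms of its cube count.
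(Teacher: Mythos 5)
Your argument is, at its core, identical to the paper's: enlarge the admissible threshold sequence so that $\m W_1^{1,1}(b_n,b)<\theta_n$ for all $n$ large, use the resulting inclusion of the hard-boundary ensemble into the soft-boundary ensemble, and apply Theorem~\ref{thm:upper}. Your observation that flexibility is not needed here, and enters only in the lower bound via the generalized patching theorem, also matches the paper. The one place you deviate is the ``multiplicity'' step, which is superfluous and rests on a misreading of the definitions. In the paper's conventions a tiling of $R_n$ \emph{is} a scale-$n$ free-boundary tiling of $R$ (every tile meets $R$; see the sentence ``implicit in this definition is that all tilings of $R_n$ are scale $n$ free-boundary tilings of $R$,'' and the parenthetical identifying $\overline\rho_n$ with uniform measure on free-boundary tilings of $R$ with boundary value exactly $b_n$). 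Moreover $R_n$ itself is forced by $b_n$, since the trace $T(f_\tau,\partial R)=b_n$ records exactly which edges crossing $\partial R$ lie in $\tau$ and hence which cubes outside $R$ are covered. So the map you call ``restriction'' is a bijection onto $\{f_\tau : T(f_\tau,\partial R)=b_n\}\subset TF_n(R,b,\theta_n)$, there is no annular shell of tiles disjoint from $R$, no fiber to bound, and $\overline\mu_n(A_\delta(g))\leq\mu_n(A_\delta(g))$ holds outright as a comparison of counting measures — which is how the paper states it. Your extra factor $e^{Cn^2}$ is of course subexponential at speed $v_n=n^3\Vol(R)$, so the detour does no harm to the conclusion, but it is not needed and the worry it addresses does not arise.
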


\subsection{Piecewise constant approximation}\label{sec:pc_approx}

The goal of this section is to show that if $b$ is extendable outside, then any $g\in AF(R,b)$ is well-approximated in the Wasserstein metric on flows by an asymptotic flow $\widetilde g$ which is piecewise-constant, taking constant values on a mesh $\mc X$ of small tetrahedra covering $R$ (see Remark \ref{rem: why tetrahedra} for why tetrahedra). 

\begin{prop}\label{prop:pc_approx}
    Fix $\epsilon>0$, and suppose that $b$ is a boundary asymptotic flow which is extendable outside. For any $g\in AF(R,b)$, there exists $\delta>0$ and a $\delta$-mesh of tetrahedra $\mc X$ covering $R$ with the following properties. Let $\widetilde{R}=\cup_{X\in \mc X} X$. There exists a flow $\widetilde{g}$ satisfying:
    \begin{itemize}
        \item $\widetilde g\in AF(\widetilde{R})$;
        \item $d_W(g,\widetilde{g})<\epsilon$;
        \item For each $X\in \mc X$, $\widetilde{g}\mid_X = \widetilde g_X$ is constant;
        \item $\widetilde g$ is valued strictly in $\text{Int}(\mc O)$;
        \item $\widetilde g$ takes only rational values. 
    \end{itemize}
\end{prop}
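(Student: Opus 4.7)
The plan is to first use the extendable-outside hypothesis to move the problem away from $\partial R$, then smooth, then discretize on a tetrahedral mesh via flux matching. By hypothesis $b$ extends to a divergence-free measurable $\mathcal O$-valued field $b^*$ on an $\epsilon_0$-neighborhood $R^+$ of $R$. Combining $g$ and $b^*$ across $\partial R$ yields a divergence-free measurable $\mathcal O$-valued field $\bar g$ on $R^+$. Mollifying $\bar g$ with a smooth bump function of radius $\eta\ll\epsilon_0$ (as in Proposition \ref{prop:bump_function}) produces a smooth divergence-free field $g_\eta$, defined on the slightly shrunken set $R^{+,\eta}$, which still contains $R$. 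The field $g_\eta$ takes values in $\mathcal O$ by convexity, and $d_W(g,g_\eta|_R)=O(\sqrt{\eta})$ exactly as in Proposition \ref{prop:bump_function}; crucially, we do not lose a boundary layer here because $\bar g$ already lived on the larger set $R^+$.

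Second, build the mesh and the piecewise-constant field by flux matching. Fix $\eta$ small. For $\delta\ll\eta$ let $\mathcal X$ be a tetrahedral mesh of diameter $\delta$ covering $R$, contained in $R^{+,\eta}$, with rational vertex coordinates and bounded aspect ratio (for instance, the standard Kuhn triangulation of a cubic refinement of $\frac{1}{N}\mathbb Z^3$). Set $\widetilde R=\bigcup_{X\in\mathcal X}X$. For each face $F$ of the mesh record the oriented flux $\phi_F=\int_F\langle g_\eta,\xi_F\rangle\,d\sigma_F$. For every tetrahedron $X$ the four signed fluxes on its faces sum to zero by the divergence theorem applied to $g_\eta$ on $X$. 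The linear map sending a constant vector $v\in\mathbb R^3$ to its four oriented face fluxes on $X$ is an isomorphism onto the three-dimensional space of zero-sum flux 4-tuples (its kernel is trivial because the four outward normals of $X$ span $\mathbb R^3$). So there is a unique constant $\widetilde g_X\in\mathbb R^3$ realizing these fluxes on $X$, and the resulting piecewise-constant field $\widetilde g^{(0)}$ has matching normal components across every internal face, hence is divergence-free as a distribution on $\operatorname{Int}(\widetilde R)$. A brief estimate (comparing $\widetilde g_X$ with the cell average $|X|^{-1}\int_X g_\eta$, which solves the same linear system up to an $O(\delta\|\nabla g_\eta\|_\infty)$ face-flux error) gives $\|\widetilde g^{(0)}-g_\eta\|_{L^\infty(\widetilde R)}=O(\delta)$, and therefore $d_W(\widetilde g^{(0)}|_R,g_\eta|_R)=O(\delta\cdot\mathrm{Vol}(R))$ by Proposition \ref{prop: wass vs sup}. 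The same estimate plus convexity of $\mathcal O$ shows $\widetilde g^{(0)}$ is $\mathcal O$-valued for $\delta$ small.

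Third, push into $\operatorname{Int}(\mathcal O)$ and rationalize. Replace $\widetilde g^{(0)}$ by $\widetilde g^{(1)}:=(1-\lambda)\widetilde g^{(0)}$ for a small $\lambda>0$; uniform scaling preserves divergence-freeness across every internal face and contracts all values toward $0\in\operatorname{Int}(\mathcal O)$, so $\widetilde g^{(1)}(X)\in\operatorname{Int}(\mathcal O)$ for every $X$. Finally, the set of divergence-free piecewise-constant fields on $\mathcal X$ is a rational affine subspace of $\prod_X\mathbb R^3$, since the face-flux-matching equations have rational coefficients (tetrahedra have rational vertices). Pick a rational point $\widetilde g$ in this subspace sufficiently close to $\widetilde g^{(1)}$ that the strict inclusion into $\operatorname{Int}(\mathcal O)$ is preserved. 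Choosing $\eta$, then $\delta$, then $\lambda$, then the rationalization error each much smaller than the previous yields $\widetilde g\in AF(\widetilde R)$ satisfying all five properties with total $d_W$-error below $\epsilon$.

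The main obstacle is reconciling the three simultaneous requirements on $\widetilde g$: being divergence-free across internal faces (a linear constraint coupling neighbouring cells), taking rational values (a Diophantine constraint), and taking values strictly inside $\mathcal O$ (a convex constraint). Tetrahedra are essential here because the dimension of zero-sum flux tuples on a tetrahedron (three) exactly matches the dimension of constant vector fields on it, producing a clean flux-to-constant isomorphism that would fail on, say, cubical cells; uniform scaling then handles the $\operatorname{Int}(\mathcal O)$ requirement without disturbing divergence-freeness; and rationality is resolved by Diophantine approximation inside the rational affine variety of divergence-free solutions. The ordering of the approximation steps is what keeps every successive error additive in $d_W$.
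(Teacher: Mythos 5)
Your proof is correct and takes essentially the same route as the paper's: extend across $\partial R$ using extendability, regularize, build a $\delta$-tetrahedral mesh strictly inside the extended domain, define the piecewise-constant flow by matching face fluxes (with the divergence theorem guaranteeing the fourth face is consistent), contract toward the origin to land in $\operatorname{Int}(\mathcal O)$, and then perturb to a nearby rational solution of the rational divergence-free linear system. The paper averages $g'$ over balls to get a merely continuous flow and then uses uniform continuity where you mollify to a smooth flow and use a Lipschitz bound; both yield the needed $O(\delta/\eta)$ cell-by-cell error so long as $\delta$ is chosen after $\eta$, as you do.

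One point worth flagging, though it does not affect the validity of the proposition itself: you instantiate the mesh as a Kuhn triangulation, whose internal faces have normals such as $(1,-1,0)$. The paper deliberately restricts to the ``one regular plus four right-angled tetrahedra'' decomposition of the cube so that \emph{every} mesh face lies in a coordinate plane or a plane with normal $(\pm 1,\pm 1,\pm 1)$; this constraint is not used in Proposition~\ref{prop:pc_approx} but is required downstream in the shining-light construction (Theorem~\ref{thm:shininglight}), specifically in the gluing step (Lemma~\ref{lem:gluing}), which only handles those two types of planes. So if you intend $\widetilde g$ and $\mathcal X$ to feed into the later tiling-approximation argument, you should replace the Kuhn triangulation with the paper's mesh (which also has rational vertices, so your rationalization step goes through unchanged).
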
\symindex{Chapter 8!$\mathcal X$ - tetrahedral mesh}
\symindex{Chapter 8!$\widetilde g$ - usually denotes the piecewise approximation of $g$}\termindex{Chapter 8!piecewise constant approximation}\termindex{Chapter 8!tetrahedral mesh}
\begin{rem}
    We need the condition that $b$ is extendable outside so that we can take $\widetilde{R}$ to contain $R$. If $b$ is not extendable outside, a similar construction works, but the resulting piecewise-constant flow will be an asymptotic flow on a region $R'$ slightly smaller than $R$ instead.
\end{rem}

\begin{rem}\label{rem: why tetrahedra}
\textbf{Tetrahedral mesh.} The fact that the mesh in this construction is built out of tetrahedra is necessary to ensure that $\widetilde{g}$ is divergence-free (needed for $\widetilde{g}$ to be an asymptotic flow). This is because a divergence-free flow on a polyhedron with $F$ faces is determined by its flow through $F-1$ of them. Since we have $3$ free parameters to specify $\widetilde{g}$ on one polyhedron, we need $F-1\leq 3$. The only polyhedra that satisfy this are tetrahedra. 
\begin{figure}
    \centering
        \includegraphics[scale=0.28]{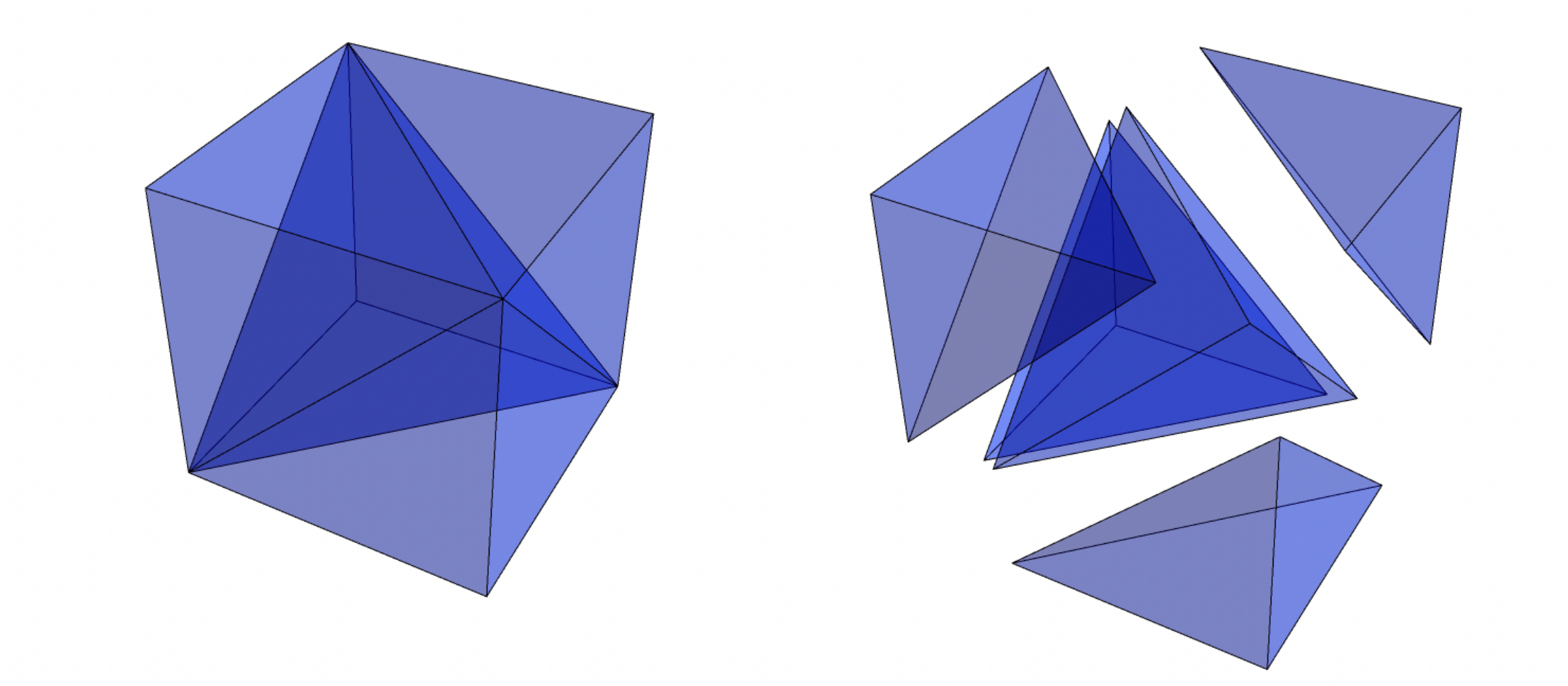} \caption{A cube cut into one regular tetrahedron and four right-angled tetrahedra. The second picture shows the same tetrahedra moved apart.}
    \label{fig:cube cut into tetra}
\end{figure}

However, regular tetrahedra alone do not tile 3-space\footnote{Over 2,000 years ago, Aristotle (mistakenly) claimed in \textit{De Caelo}, Book III Part 8 \cite{aristotle} that regular tetrahedra do tile 3-space. It took around 1,000 years for the mistake to be fixed, see \cite{tetrahedra} for a detailed account of the story.}, so we cannot take all elements of the mesh to be identical. Instead, 3-space can be tiled by regular tetrahedra and right-angled tetrahedra. To see this, note that cubes tile 3-space, and a cube can be cut into four right-angled tetrahedra and one regular tetrahedron (see Figure \ref{fig:cube cut into tetra}). The faces of the regular tetrahedron have normal vectors of the form $(\pm 1, \pm 1, \pm 1)$,  while the right-angled tetrahedra have four coordinate plane faces and one face with normal vector $(\pm 1, \pm 1, \pm 1)$. For technical reasons (see the proof of Theorem \ref{thm:shininglight}, in particular Lemma \ref{lem:gluing}), our arguments are simplified by assuming that the faces of the tetrahedra are always contained in one of these two types of planes. In the proof of Proposition \ref{prop:pc_approx} we will also use this to say that the possible normal vectors to the tetrahedra can be assumed to form a finite set. We assume throughout that our tetrahedral mesh is built out of regular and right-angled tetrahedra.
\end{rem}

\begin{proof}[Proof of Proposition \ref{prop:pc_approx}]

    Since $b$ is extendable outside, there exist $\alpha_0>0$ such that $g$ can be extended to $g'\in AF(R^{\alpha_0})$, where $R^{\alpha_0}$ is
    \begin{align*}
        R^{\alpha_0} = \{x\in \mathbb R^3 : d(x,R)\leq \alpha_0\}. 
    \end{align*}
    Given this, for any $0<\alpha<\alpha_0$, we can approximate $g$ by a \textit{continuous} asymptotic flow $g_\alpha\in AF(R^{\alpha_0-\alpha})$:
    \begin{align*}
        g_\alpha(x) := \frac{1}{|B_\alpha(x)|} \int_{B_\alpha(x)} g'(y) \, \dd y, \qquad x\in R^{\alpha_0-\alpha}.
    \end{align*}
    As $\alpha\to 0$, $d_W(g,g_\alpha\mid_{R})\to 0$. We construct a piecewise-constant, divergence-free approximation $u$ of $g_\alpha$, then modify it to construct $\widetilde{g}$ also satisfying the last two conditions. 
        
    For any fixed $\delta<\alpha_0-\alpha$ (to be specified more precisely later), we take a $\delta$-tetrahedral mesh $\mc X$ built from regular and right-angled tetrahedra (see Remark \ref{rem: why tetrahedra}) such that $X\cap R\neq \emptyset$ for all $X\in \mc X$. Let $\widetilde{R} = \cup_{X\in \mc X} X$ and note that $R\subset \widetilde{R} \subset R^{\alpha_0-\alpha}$. 
  
    Consider one tetrahedron $X\in \mc X$. Let $\zeta_1,\zeta_2,\zeta_3,\zeta_4$ denote the faces of $X$ and let $n_1,n_2,n_3,n_4$ denote their outward pointing normal vectors. Define a vector $u_X$ by 
    \begin{align*}
        u_X \cdot n_i = \frac{1}{\text{area}(\zeta_i)} \int_{\zeta_i} \langle g_\alpha, n_i\rangle \, \dd A \qquad i = 1,2,3.
    \end{align*}
    Since $g_\alpha$ is divergence-free on $X$, 
    \begin{align*}
        u_X \cdot n_4 = \frac{1}{\text{area}(\zeta_4)} \int_{\zeta_4}\langle g_\alpha,n_4\rangle \,\dd A.
    \end{align*}
    Define $u(x):= u_X$ for $x\in X$.

    It remains to show that (up to multiplying by a constant $\lambda\leq 1$ but very close to $1$) $\lambda u\in AF(\widetilde R)$ and bound $d_W(\lambda u, g_\alpha\mid_{\widetilde{R}})$. 

    Since $g_\alpha$ is continuous and ${R}^{\alpha_0-\alpha}$ is compact, $g_\alpha$ is uniformly continuous on $R^{\alpha_0-\alpha}$. Thus given any $\beta>0$ there exists $\theta>0$ such that $|x-y|<\theta$ implies $|g_\alpha(x)-g_\alpha(y)|<\beta$. 

    Fixing $\beta$, we now require that $\delta<\theta$ so that uniform continuity implies that for any $X\in \mc X$ and point $x\in X$, we have that $|g_\alpha(x) - \text{avg}_X g_\alpha|<\beta$. The normal vectors $n_1,n_2,n_3$ to three faces of $X$ are linearly independent but not necessarily orthogonal. However since all $X\in \mc X$ are of one of five forms (see Figure \ref{fig:cube cut into tetra}), there is a constant $K>0$ independent of $X\in \mc X$ so that
    \begin{align*}
        |u_X - \text{avg}_X g_\alpha| \leq K\sum_{i=1}^3 |\text{avg}_{\zeta_i} (g_\alpha\cdot n_i) - \text{avg}_{X} (g_\alpha\cdot n_i)| < 3 K\beta. 
    \end{align*}

     Therefore for all $x\in \widetilde{R}$,
    \begin{equation}
        |u(x) - g_\alpha(x) | < (3K + 1)\beta.
    \end{equation}
    Replacing $u$ by $\lambda u$ with $\lambda = 1-(3K+1)\beta-\beta$, the new flow $\lambda u\in AF(\widetilde R)$ and in fact is valued in $\text{Int}(\mc O)$. Further by Proposition \ref{prop: wass vs sup}, there is another constant $C>0$ such that 
    \begin{equation}\label{eq: approximation to average}
        d_W(\lambda u,g_\alpha\mid_{\widetilde{R}})< C(6K+3) \beta.
    \end{equation}
     By the triangle inequality,
    \begin{align*}
        d_W(\lambda u, g) \leq d_W(\lambda u, g_\alpha\mid_{\widetilde{R}})+d_W(g_\alpha\mid_{\widetilde R},g)\leq 
        d_W(\lambda u,g_\alpha\mid_{\widetilde{R}}) + d_W(g_\alpha\mid_{\widetilde{R}\setminus R}, 0) + d_W(g,g_\alpha\mid_{R}),
    \end{align*}
    where in the second inequality, we use that $g$ is supported in $R$ and a basic property of the Wasserstein distance. The first term is controlled by Equation \eqref{eq: approximation to average}. The second is bounded by a fixed constant times $\delta$,  and the third is bounded by a fixed constant times $\alpha$. (Since $g_\alpha$ is defined by averaging $g$ over balls of size $\alpha$, to transform $g_\alpha$ into $g$, the maximum distance that flow needs to be moved is $\alpha$, and the total flow is of constant order. This gives a bound on the Wasserstein distance of order $\alpha$.) Therefore taking $\alpha,\beta>0$ small enough and correspondingly taking $\delta< \max\{\alpha_0-\alpha,\theta\}$, $d_W(\lambda u,g)$ can be made arbitrarily small. 

    The flow $\lambda u\in AF(\widetilde R)$ and is valued in $\text{Int}(\mc O)$. Finally we modify $\lambda u$ as follows to construct $\widetilde g$ which also takes rational values. To do this, we solve the linear constraint problem to make the values of the flow rational without breaking the divergence-free condition.
        
        Let $M$ be the number of tetrahedra in the mesh $\mc X$. Enumerate the faces of the tetrahedra by $a_1,...,a_m$. Choose a unit normal vector $n_i$ for each face. For any flow $f$, let $F(f) = (F_1(f),...,F_m(f))$, where $F_i(f) = \int_{a_i} \langle f, n_i\rangle \, \dd x$. Note that if $v$ is a piecewise-constant flow on the mesh, then $F(v)$ determines $v$. 
        
        If $F(v)$ corresponds to a divergence-free piecewise-constant flow $v$, then it satisfies a matrix $A$ of $M$ linear constraints of the form
\begin{align*}
    \pm F_{k_1}(v)\pm F_{k_2}(v)\pm F_{k_3}(v)\pm F_{k_4}(v) = 0
\end{align*}
for $a_{k_j}$, $j\in \{1,2,3,4\}$ the faces of a tetrahedron $X\in \mc X$ (the signs are determined by the normal vector orientation, the four terms should all be for flow oriented out of $X$). Thus $F(v)$ solves $A \,F(v) = 0$. Since $A$ has integer entries, there is a rational basis for the space of solutions $Y$ of $A \, Y = 0$. Any other solution can be written as a linear combination of the rational ones, so rational solutions are dense. 

Thus we can find $\widetilde g$ such that $\widetilde g_X$ takes all rational values and $|\widetilde g_X - (1-\delta_1)u_X|$ is as small as needed. Applying Proposition \ref{prop: wass vs sup} again completes the proof. 
\end{proof}

\subsection{Existence of tiling approximations}\label{sec: shining light}

Building on the approximation result in the previous section, we now show that if $b$ is extendable outside then any ${g}\in AF(R,b)$ can be approximated in Wasserstein distance by a tiling flow. More precisely:
\begin{thm}\label{thm:shininglight}
Fix $\delta>0$ and suppose $b$ is a boundary asymptotic flow which is extendable outside. For any ${g}\in AF(R,b)$, there exists $n(\delta)$ such that if $n\geq n(\delta)$, then there is a free boundary tiling $\tau\in T_n(R)$ such that $f_\tau\in A_\delta(g)$.
\end{thm}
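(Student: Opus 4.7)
The plan is to reduce the problem to approximating a piecewise-constant flow on a tetrahedral mesh, tile each tetrahedron using an ergodic measure of the correct mean current, and glue the resulting local tilings together across the faces of the mesh using the patching theorem. As a first step I would apply Proposition \ref{prop:pc_approx} with error $\delta/3$ to obtain a tetrahedral mesh $\mc X$ of scale $\Delta$ covering some $\widetilde R\supseteq R$, together with a piecewise-constant asymptotic flow $\widetilde g\in AF(\widetilde R)$ that takes rational values $\widetilde g_X\in \text{Int}(\mc O)$ on each $X\in \mc X$ and satisfies $d_W(g,\widetilde g|_R)<\delta/3$. It then suffices to produce, for all sufficiently large $n$, a free-boundary tiling $\tau\in T_n(R)$ with $d_W(f_\tau,\widetilde g|_R)<2\delta/3$.

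For each $X\in \mc X$ I would use Lemma \ref{lem: ergodic measures exist} to choose an ergodic measure $\mu_X$ of mean current $\widetilde g_X$, and use Corollary \ref{cor: ergodic implies nearly constant} to fix a deterministic representative $\tau_X^\star$ that is $\epsilon$-nearly-constant with value $\widetilde g_X$ for every sufficiently large scale. Inside a concentric copy $X^\circ\subset X$ shrunken by a small factor $\sigma$, I would place the appropriately translated and rescaled restriction of $\tau_X^\star$. The complement $\widetilde R\setminus \bigcup_{X\in\mc X}X^\circ$ is then a thin ``skeleton'' following the faces, edges, and vertices of the mesh, on which the dimers still have to be supplied in a way that glues the local tilings together.

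The hard part is exactly this gluing step across the mesh skeleton, and is where I would expect the main obstacle. Across a face $\zeta$ shared by two tetrahedra $X_1$ and $X_2$, the tilings $\tau_{X_1}^\star$ and $\tau_{X_2}^\star$ are $\epsilon$-nearly-constant with different mean currents $\widetilde g_{X_1}\neq \widetilde g_{X_2}$, though the normal fluxes through $\zeta$ agree because $\widetilde g$ is divergence-free. Since Theorem \ref{patching} is stated only for a common mean current, it cannot be applied verbatim. To bridge the two sides I would insert a thin buffer band along $\zeta$ of thickness much smaller than $\Delta$, slice it into $N$ parallel sub-slabs, and fill each sub-slab with a tiling sampled from an ergodic measure whose mean current is the linear interpolant $(1-t_j)\widetilde g_{X_1}+t_j\widetilde g_{X_2}$ for a partition $0=t_0<\cdots<t_N=1$. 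For $N$ large and $\epsilon\ll 1/N$, consecutive sub-slabs have essentially the same mean current and can be reconciled inside a cubical annulus by Theorem \ref{patching}. The same strategy, with an additional layer of care at edges and corners of the mesh (where three or more buffers meet), handles the lower-dimensional skeleton; here the restriction to tetrahedra built from right-angled and regular tetrahedra, and hence a finite set of face normals, is convenient because the geometry of the overlapping buffers becomes combinatorially finite.

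Finally I would bound the Wasserstein error by combining Lemma \ref{lem:covered} on each bulk region $X^\circ$, where $\tau_X^\star$ is $\epsilon$-nearly-constant with the target constant $\widetilde g_X$, with a volume bound $O(\sigma)$ for the buffer and corner regions. Choosing $\sigma$ small, $N$ large, and $\epsilon$ small in terms of $\delta$ and $\Delta$ yields $d_W(f_\tau,\widetilde g|_R)<2\delta/3$, which combined with Step 1 gives $d_W(f_\tau,g)<\delta$. The principal technical hurdle throughout is ensuring simultaneous compatibility of all the patchings along the faces, edges, and vertices of the mesh; this is where the divergence-freeness of $\widetilde g$ is doing the essential work, since it guarantees that the interpolated mean currents remain inside $\mc O$ and that the several buffer patchings meeting at a corner can be consistently arranged.
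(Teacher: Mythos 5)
Your approach diverges from the paper's at the gluing step, and I think it has a genuine gap there. The paper does start with the same piecewise-constant approximation $\widetilde g$ from Proposition \ref{prop:pc_approx}, but it then constructs the tiling explicitly from deterministic \emph{periodic} tilings $\tau_v$ arranged along ``channels'' following the flow lines of $w(\widetilde g)+\eta_1$, and glues consecutive periodic pieces using the purely periodic gluing result Lemma \ref{lem:gluing} (whose proof is a small Hall-theorem computation on a torus). You instead fill each tetrahedron with a deterministic sample from an ergodic measure of the correct mean current and then try to reconcile neighboring tetrahedra with Theorem \ref{patching}. That step does not go through.

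Theorem \ref{patching} requires both tilings to be $\epsilon$-nearly-constant with the \emph{same} value $s\in\text{Int}(\mc O)$, and it is stated for the cubic annulus $B_n\setminus B_{(1-\delta)n}$, whose topology is an outer cube surrounding an inner one. Your gluing problem is a slab along a shared face $\zeta$ between two tetrahedra where the two mean currents $\widetilde g_{X_1}\neq\widetilde g_{X_2}$ genuinely differ; the agreement is only in the normal component $\langle\widetilde g_{X_1}-\widetilde g_{X_2},\xi_\zeta\rangle=0$. The interpolating-sub-slab idea doesn't fix this: if consecutive interpolants differ by a fixed $O(1/N)$, then their fluxes through $\epsilon$-patches differ by $O(\epsilon^2 n^2/N)$, which is \emph{not} $o(\epsilon^2 n^2)$, so they are not $\epsilon$-nearly-constant with a common $s$ and the hypothesis of Theorem \ref{patching} fails. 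Letting $N\to\infty$ with $n$ doesn't obviously help either, because the number of patchings and the accumulated errors grow, and the patching theorem gives no quantitative control allowing such a diagonal argument. Moreover, the patching theorem simply does not address the slab geometry; the proof of Theorem \ref{patching} relies essentially on the inner region $B_{(1-\delta)n}$ being surrounded, and a flat face has no analogous enclosed region. The edge-and-vertex gluing, which you defer, is also genuinely problematic: at a corner of the mesh many interpolating buffers would have to be mutually compatible, and there is no tool in the paper to do this for random ergodic samples. The paper's channel construction sidesteps all of this by making the building blocks \emph{periodic} and by gluing only across the tube cross-sections inside a channel (where the flux-agreement $u\cdot n_P = v\cdot n_P$ is exactly what Lemma \ref{lem:gluing} needs, and where periodicity makes tileability a finite computation), while deleting a vanishing fraction of flow lines near channel boundaries to isolate channels from each other. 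To make your approach work you would essentially need to prove a new slab-geometry, mixed-current patching lemma, which is a substantial additional result not present in the paper.
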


The two dimensional analog of this theorem (i.e.\ \cite[Prop.\,3.2]{cohn2001variational}) is the statement that any asymptotic height function can be approximated by the height function of a tiling. In particular, one can choose the maximal height function (analog of $f_\tau$) less than the given asymptotic height function (analog of $g$). There is no analogous notion of ``maximal" tiling flow, so our argument in three dimensions is more complicated, and relies on an explicit construction.

We call the explicit construction in the proof of Theorem \ref{thm:shininglight} the ``shining light construction." The first step is to build piecewise-linear ``channels." We give a method for tiling the channels and show that we can glue them together to construct a tiling of the whole region. The channels are tubular neighborhoods of the flow lines of a tiling flow approximating a piecewise-constant flow as constructed in Proposition \ref{prop:pc_approx}.  We call it the ``shining light construction" because we imagine the flow as beams of light bending through the channels.

Before proving Theorem \ref{thm:shininglight}, we note that the existence of an admissible sequences of thresholds $(\theta_n)_{n\geq 1}$ follows as a straightforward corollary. 
\begin{cor}\label{cor:good_thresholds_exist}
    For any boundary asymptotic flow $b$ which is extendable outside and any threshold $\theta>0$, $TF_n(R, b, \theta)$ is nonempty for $n$ large enough. In particular, admissible sequences of thresholds $(\theta_n)_{n\geq 1}$ exist for any boundary asymptotic flow $b$ which is extendable outside. 
\end{cor}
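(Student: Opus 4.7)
The plan is to deduce the corollary directly from Theorem \ref{thm:shininglight} together with the uniform continuity of the trace operator (Theorem \ref{thm: boundary_value_uniformly_continuous}). Since $b$ is a boundary asymptotic flow, by Definition \ref{def:boundary_asymptotic_flow} there exists some $g \in AF(R,b)$ with $T(g, \partial R) = b$. Fix any $\theta > 0$. By Theorem \ref{thm: boundary_value_uniformly_continuous}, the trace map $T(\cdot, \partial R)$ is uniformly continuous on $AF(R) \cup TF(R)$, so there exists $\delta > 0$ such that whenever $h$ belongs to $AF(R) \cup TF(R)$ and $d_W(h, g) < \delta$, one has $\m W_1^{1,1}(T(h, \partial R), b) < \theta$.

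Next, I apply Theorem \ref{thm:shininglight} with this $\delta$: there exists $n(\delta)$ such that for all $n \geq n(\delta)$, there is a free boundary tiling $\tau \in T_n(R)$ with $d_W(f_\tau, g) < \delta$. Combining with the previous paragraph, $\m W_1^{1,1}(T(f_\tau, \partial R), b) < \theta$, i.e. $f_\tau \in TF_n(R, b, \theta)$. Hence $TF_n(R,b,\theta)$ is nonempty for all $n$ large enough.

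For the second assertion, I construct an admissible sequence $(\theta_n)_{n \geq 1}$ as follows. Take any sequence $\theta^{(k)} \downarrow 0$ (e.g.\ $\theta^{(k)} = 1/k$). By the first part, for each $k$ there is an $N_k$ such that $TF_n(R, b, \theta^{(k)})$ is nonempty for all $n \geq N_k$; we may take the $N_k$ strictly increasing in $k$. Now define $\theta_n = \theta^{(k)}$ for $N_k \leq n < N_{k+1}$. Then $\theta_n \to 0$ and $TF_n(R,b,\theta_n)$ is nonempty for every $n \geq N_1$; adjusting the first few terms if necessary gives an admissible sequence in the sense of the definition preceding Theorem \ref{thm:sb-ldp}.

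There is no real obstacle here since the two inputs — Theorem \ref{thm:shininglight} and Theorem \ref{thm: boundary_value_uniformly_continuous} — do all the work; the only mild subtlety is that one must invoke uniform continuity of $T$ on the \emph{union} $AF(R) \cup TF(R)$ (not merely on $AF(R)$), which is precisely the content of Theorem \ref{thm: boundary_value_uniformly_continuous}, in order to compare the tiling flow $f_\tau$ with the asymptotic flow $g$ through their boundary traces.
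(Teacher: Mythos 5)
Your proof is correct and follows exactly the same route as the paper: choose $g \in T^{-1}(b)$, use uniform continuity of $T$ (Theorem \ref{thm: boundary_value_uniformly_continuous}) to pick $\delta$ from $\theta$, and then apply Theorem \ref{thm:shininglight} to produce $\tau \in T_n(R)$ with $d_W(f_\tau, g) < \delta$. The explicit diagonal construction of the admissible sequence $(\theta_n)$ is a detail the paper leaves implicit, but your argument for it is standard and correct.
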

\begin{proof}
    Recall that $T(\cdot, \partial R): AF(R)\to \mc M^s(R)$ is the boundary value operator and choose $g\in T^{-1}(b)$. By Theorem \ref{thm:shininglight}, for any $n\geq n(\delta)$ there exists a tiling $\tau \in T_n(R)$ with $d_W(f_\tau,g)<\delta$. Since $T$ is uniformly continuous (Theorem \ref{thm: boundary_value_uniformly_continuous}), we can choose $\delta>0$ so that $d_W(g,f_\tau)<\delta$ implies $d_W(b,T(f_\tau)) <\theta$.
\end{proof}

We now proceed to the explicit construction. Recall that $\eta_i$ is the $i^{th}$ positively-oriented unit coordinate vector and that $e_i$ denotes the edge in $\m Z^3$ connecting the origin to $\eta_i$. Similarly, $-e_i$ is the edge connecting the origin to $-\eta_i$. 

Let $\tau_1$ denote the brickwork tiling where all tiles are $-\eta_1$ bricks. To prove Theorem \ref{thm:shininglight}, we show that we can construct a tiling $\tau$ so that the flow corresponding to the double dimer tiling $(\tau, \tau_1)$ is close to the flow $g+\eta_1$. A double dimer tiling consists of a collection of oriented infinite paths, finite loops, and double edges. See Section \ref{section:tiling_flows} and Section \ref{subsection: flows for double dimer}. 

Since $\tau_1$ consists of only $-\eta_1$ tiles, for any other tiling $\tau$, $(\tau,\tau_1)$ consists of only infinite paths and double edges (i.e.\ no finite loops). The double dimer flow $f_{(\tau,\tau_1)} = f_{\tau} - f_{\tau_1}$ is $0$ whenever the tilings agree, and otherwise points in the direction of the oriented infinite path. 

For $x\in \m Z^3$, let $\tau(x)$ denote the tile at $x$ in $\tau$. We say that a tiling $\tau$ of $\m Z^3$ is \textit{periodic} if there exist even integers $r_1,r_2,r_3>0$ such that $\tau(x)$ is equal to its translates $\tau(x+r_1\eta_1)=\tau(x+r_2\eta_2)=\tau(x+r_3 \eta_3)$ for all $x \in \m Z^3$. For periodic tilings, we can define a notion of the \textit{mean current of a tiling}, denoted $s(\tau)$, as the average direction of the tiles in any $r_1\times r_2\times r_3$ box.\termindex{Chapter 8!periodic tiling}

We give a method for constructing a periodic tiling $\tau_v$ of $\mathbb{Z}^3$ of a fixed, rational mean current $v\in \mc O$. This construction will serve as a building block in the proof of Theorem \ref{thm:shininglight}. 

\noindent \textbf{Construction of tiling $\tau_v$.} \symindex{Chapter 8!$\tau_v$}

First we give a construction for $v=(v_1,v_2,v_3)\in \partial \mc O\cap \m Q^3$ with $v_1\geq 0$, then we adapt this to the general case. 

Here we view dimer tiles $a$ in a tiling as vectors directed from even to odd. When we subtract a tiling, we reverse the direction of its dimers. Since $v$ is rational and has nonzero norm, we can find a sequence of tiles ${a}_1,...,{a}_r\in \{{\eta}_1, \text{sign}(v_2) {\eta}_2, \text{sign}(v_3) {\eta}_3\}$ such that ${a}_1+...+{a}_r + r \eta_1$ is parallel to ${v} + {\eta}_1$.

Below by a \textit{plane with normal vector (1,1,1)}, we mean a collection of cubes in $\m Z^3$ with coordinates $\{(x_1,x_2,x_3): x_1 + x_2 + x_3 = c\}$ for some constant $c\in \m Z$. We analogously define a plane with normal vector $(\pm 1, \pm 1, \pm 1)$ to be the modification of this with appropriate signs.

Choose a plane $C_0$ with normal vector $\xi = (1, \text{sign}(v_2), \text{sign}(v_3))$. Let $C_k$ denote $C_0 + (0,0,k)$ for all $k\in \mathbb Z$. Further, assume that the cubes on $C_0$ are even, so that edges parallel to one of $\{\eta_1,\text{sign}(v_2)\eta_2,\text{sign}(v_3)\eta_3\}$ connect cubes on $C_0$ to cubes on $C_1$. Since $v\in \partial \mathcal O$, any tiling with mean current $v$ splits into perfect dimer tilings of \textit{slabs} $L_k$ which consist of unions of adjacent planes $L_k:=C_{2k}\cup C_{2k+1}$, $k\in \mathbb Z$ (see Section \ref{sec:extreme}). By Proposition \ref{prop: Sc lattice description}, each slab is a copy of the 2-dimensional hexagonal lattice. There are three 3D dimer tilings of $C_{2k}\cup C_{2k+1}$ consisting of only one type of dimer, and these correspond to the three brickwork lozenge tilings using one type of lozenge. (See Figure \ref{figure: lozenge dimer} for a review of the correspondence between 3D dimers and lozenges.)

Restricted to each slab $L_k$, $\tau_v$ will be one of the three brickwork lozenge tilings. On $L_0=C_0\cup C_1$, $\tau_v$ will be the ${a}_1$ brickwork lozenge tiling. On $L_1=C_2\cup C_3$, $\tau_v$ will be the ${a}_2$ brickwork lozenge tiling. We continue this by repeating the periodic sequence $a_1,...,a_r$ forwards and backwards in $k$ to choose the tile type for $\tau_v$ on all other slabs $L_k=C_{2k}\cup C_{2k+1}$. 

The reference tiling $\tau_1$ consists of all $-\eta_1$ tiles, which connect $C_{2k}$ to $C_{2k-1}$. Subtracting $\tau_1$, the tiles in $-\tau_1$ connect $C_{2k-1}$ to $C_{2k}$, meaning that they connected the ``odd" half of $L_{k-1}$ to the ``even" half of $L_k$. Hence in the double dimer tiling $(\tau_v,\tau_1)$, every tile is on an infinite path. Along each infinite path, $(\tau_v,\tau_1)$ consists of the periodic sequence of tiles parallel to $\ldots a_1,\eta_1, a_2,\eta_1,\dots,\eta_1,a_r,\ldots$. In particular, all infinite paths are parallel to ${v}+{\eta}_1$. This completes the construction for $v\in \partial \mc O\cap \m Q^3$, $v_1\geq 0$.
\begin{figure}
    \centering
    \includegraphics[scale=0.4]{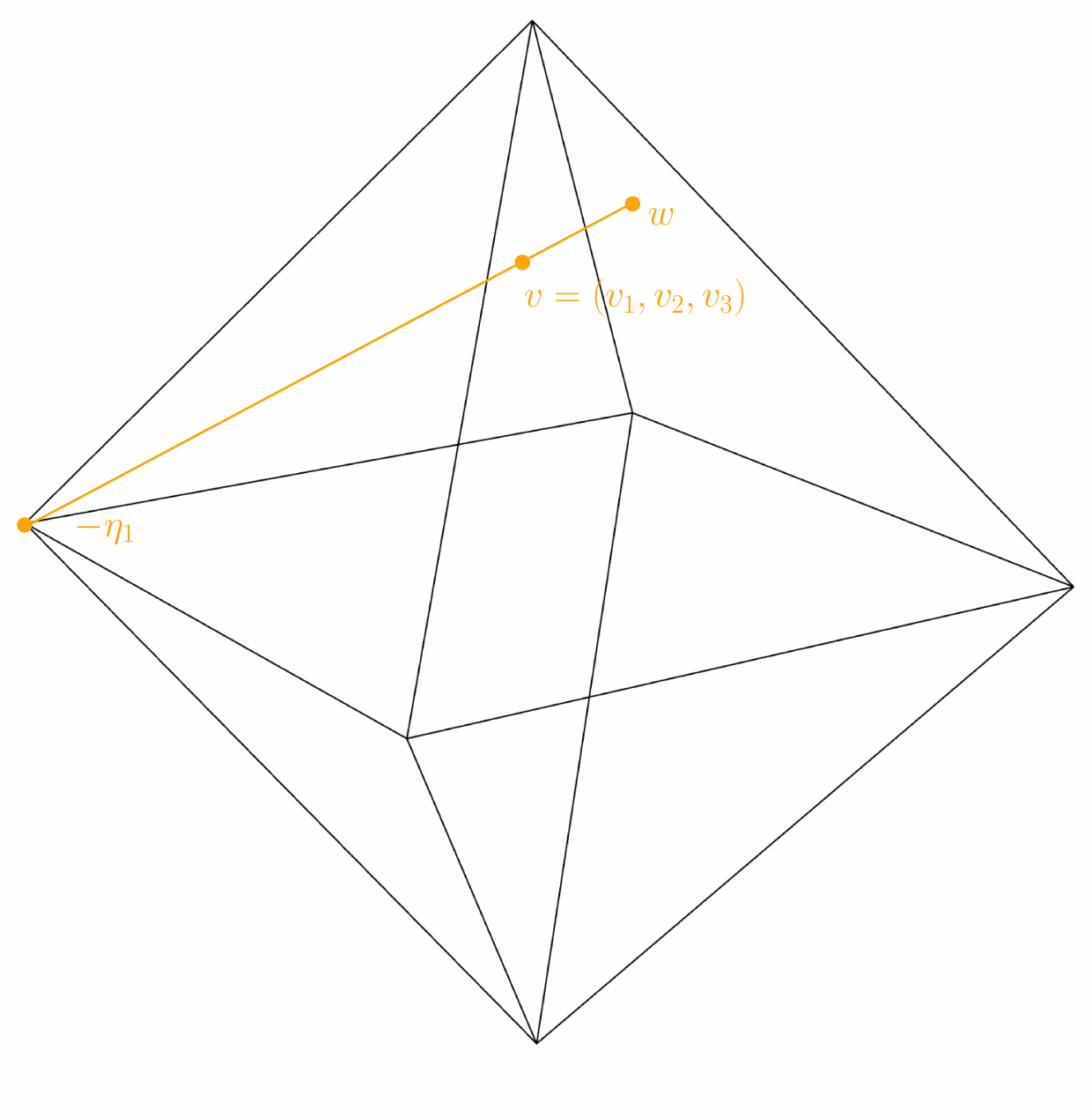}
    \caption{Above is an example of $v= (v_1,v_2,v_3)$ and its relationship to $w(v) = w$.}
    \label{fig:relationship between v and w}
\end{figure}

Now we extend the construction to any $v\in \mc O\cap \m Q^3$, $v\neq -\eta_1$. Let $p_v$ be the line through $-\eta_1 = (-1,0,0)$ and $v=(v_1,v_2,v_3)$. Let $w = w(v)$ be the intersection of $p_v$ with $\{u=(u_1,u_2,u_3) \in \partial \mc O : u_1\geq 0\}$. See Figure \ref{fig:relationship between v and w}. The relationship between $v,w$ will be sufficiently important that we record it as a definition. 
\begin{definition}\label{def:relationship between v and w}
Fix $v\in \mc O$, $v\neq -\eta_1$, and let $p_v$ be the line through $-\eta_1$ and $v$. We define $w(v)$ to be the intersection of $p_v$ with the part of $\partial \mc O$ with non-negative first coordinate. \symindex{Chapter 8!$w(v)$}
\end{definition}

Note that if $v$ is rational, $w=w(v)\in \partial \mc O$ is rational. Since the first coordinate of $w$ is non-negative, we can construct $\tau_w$ as described above. In $(\tau_w,\tau_1)$, every tile is along an infinite path. On the other hand, in $(\tau_1,\tau_1)$ none of the tiles are along infinite paths. To construct $(\tau_v,\tau_1)$, we interpolate between these two options by choosing an intermediate density of infinite paths.

If the line $p_v(t)$ is parameterized with constant speed so that $p_v(1) = (-1,0,0)$ and $p_v(0) = w$, let $a\in[0,1]$ be such that $p_v(a) = v$. Since $v$ rational, $w$ and $a$ are also rational. Thus there exist periodic patterns of cubes in $C_0$ with density $a$. To construct $(\tau_v,\tau_1)$, we fix a choice of periodic pattern of cubes on $C_0$ with density $a$. We delete all the infinite paths in $(\tau_w,\tau_1)$ which do not go through one of the chosen cubes on $C_0$ and replace them with tiles parallel to $-\eta_1$. The resulting tiling is $\tau_v$.

The tilings $\tau_v$ have a few important properties which we highlight. 
\begin{lemma}\label{lem:tau_v_props}
   Let $v\in \mathcal O; v\neq -\eta_1$. \begin{enumerate}
        \item $\tau_v$ has mean current $v$;
        \item Let $w = w(v)$ be as in Definition \ref{def:relationship between v and w}. The infinite paths in $(\tau_v,\tau_1)$ are parallel to $w+\eta_1$.
        \item For any rational plane $P$, the restriction of $\tau_v$ to $P$ is doubly periodic, with period depending on $r$ (the number of tiles $a_1,...,a_r$ used to approximate $w(v)$), the choice of periodic pattern of cubes in $C_0$ and $P$. 
    \end{enumerate}
\end{lemma}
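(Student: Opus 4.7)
The plan is to verify the three properties directly from the explicit construction of $\tau_v$, first for the boundary case $v\in\partial\mathcal O\cap\mathbb Q^3$ with $v_1\geq 0$ and then extending to general $v$ via the interpolation recipe. Property (3) will be immediate in the boundary case: each slab $L_k=C_{2k}\cup C_{2k+1}$ carries a single brickwork lozenge pattern (doubly periodic within the slab), and the choice of pattern cycles through $a_1,\ldots,a_r$ with period $r$ in $k$. The resulting $\tau_v$ is $\mathbb Z^3$-periodic with a period given by the least common multiple of the in-slab periods and the translation $2r\eta_3$, so its restriction to any rational plane is automatically doubly periodic. For general $v$ one additionally observes that the density-$a$ pattern chosen on $C_0$ may be taken to be periodic in $C_0$ (since $a$ is rational) and commensurate with the $\mathbb Z^3$-periodicities of $\tau_w$ and $\tau_1$.

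For Property (1) in the boundary case, I would compute the mean current as an average over a fundamental domain: each slab contributes only tiles of type $a_k$ and all slabs have the same asymptotic tile density, so $s(\tau_v)=\frac{1}{r}\sum_{k=1}^r a_k$. The $a_k$ were chosen so that $\sum_k a_k+r\eta_1$ is parallel to $v+\eta_1$; since each $a_k$ is a unit vector lying on the face of $\partial\mathcal O$ containing $v$, the sum has $\ell^1$-norm exactly $r$, and parallelism combined with $v\in\partial\mathcal O$ forces $\frac{1}{r}\sum a_k=v$. For Property (2), I would trace one full period of a path of $(\tau_v,\tau_1)$ starting at a cube of $C_0$: the path alternates an $a_k$-edge from $\tau_v$ with an $\eta_1$-edge (the reverse of a $\tau_1$ dimer), and over one cycle moves by $\sum_k a_k+r\eta_1$, which is parallel to $v+\eta_1=w+\eta_1$. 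Hence every infinite path has asymptotic direction $w+\eta_1$.

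For general $v$, the tiling $\tau_v$ agrees with $\tau_w$ on cubes lying on retained paths of $(\tau_w,\tau_1)$ and with $\tau_1$ elsewhere. Property (2) is then inherited from the boundary case applied to $w$, since every retained path is a path of $(\tau_w,\tau_1)$ and the discarded paths are simply filled in by $-\eta_1$ dimers (no new infinite paths appear). For Property (1), the mean current of $\tau_v$ is the convex combination of $s(\tau_w)=w$ and $s(\tau_1)=-\eta_1$ with weights given by the volumetric proportions of cubes on retained versus non-retained paths; by the parametrization of the segment from $w$ to $-\eta_1$ used to define $a$, these weights are exactly the ones for which the combination equals $v$.

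The hard part will be confirming that the density-$a$ selection on $C_0$ propagates to the correct volumetric proportion in all of $\mathbb Z^3$. Every path of $(\tau_w,\tau_1)$ has asymptotic direction $w+\eta_1$, and since $w\in\partial\mathcal O$ with $w_1\geq 0$ one computes $\xi\cdot(w+\eta_1)=2\neq 0$ (where $\xi$ is the normal to $C_0$), so every infinite path crosses every translate $C_k$ with the same asymptotic frequency. Consequently the density of retained paths on $C_0$ equals their density on every parallel plane, and therefore their volumetric density in $\mathbb Z^3$, which completes the mean current computation for general $v$.
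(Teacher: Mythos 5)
The paper states this lemma immediately after the explicit construction of $\tau_v$ and provides no proof environment, presenting the three items as facts to be read off from that construction. Your write-up is a correct and reasonable elaboration along essentially the same lines: you verify the boundary case $v\in\partial\mathcal O$, $v_1\geq 0$ by direct computation and then extend to general $v$ via the path-deletion recipe. The $\ell^1$-norm bookkeeping in the boundary case is the right way to make property (1) precise. Since each $a_k\in\{\eta_1,\text{sign}(v_2)\eta_2,\text{sign}(v_3)\eta_3\}$ has $\ell^1$-norm $1$ with all signs aligned, one gets $\|\sum_k a_k + r\eta_1\|_1 = 2r$ while $\|v+\eta_1\|_1 = 2$; parallelism then forces $\sum_k a_k + r\eta_1 = r(v+\eta_1)$ and hence $\tfrac{1}{r}\sum_k a_k = v$, which is exactly the average tile direction over a period of $r$ slabs. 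Your observation that $\xi\cdot(w+\eta_1)=2\neq 0$, so every infinite path of $(\tau_w,\tau_1)$ crosses each translate $C_k$ exactly once, is the key point that transports the density of retained cubes from $C_0$ to all of $\m Z^3$, and it is needed for both (1) and (3) in the general case; this is a genuine gap-fill beyond what the paper says.

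One small issue worth flagging, which you inherit from the paper's own wording: the paper parametrizes $p_v(1)=-\eta_1$, $p_v(0)=w$, $p_v(a)=v$, so $v=(1-a)w - a\eta_1$, and then says to keep the paths hitting a density-$a$ set on $C_0$. Carrying out your convex-combination computation with that convention gives $s(\tau_v) = a\,w + (1-a)(-\eta_1) = p_v(1-a)$, which equals $v$ only if $a=1/2$. This is evidently a sign typo in the paper's construction (the retained density should be $1-a$, or equivalently the endpoints of the parametrization should be swapped). Your proof asserts that ``the weights are exactly the ones for which the combination equals $v$'' without pinning them down, so you do not commit to the wrong sign, but a careful reader should note the discrepancy when filling in the arithmetic.
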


\begin{rem}
Note that $\tau_v$ is not uniquely determined by $v$. It depends on the sequence of tiles $a_1,...,a_r$ used to approximate ${v}$, and on the periodic pattern of initial sites on $C_0$. 
\end{rem}

We now show that pieces of $\tau_v,\tau_u$ can ``glued" along a plane $P$, as long as $ v, u$ have the same flow through $P$. The amount of space $k$ we need to glue is a constant depending only on 
the period of the tilings $\tau_u$ and $\tau_v$.

\begin{lemma}\label{lem:gluing}
  Suppose that $u,v\in \mathcal{O} \cap \mathbb{Q}^3; u, v\neq \eta_1$ and $\tau_u, \tau_v$ are tilings as in Lemma \ref{lem:tau_v_props}. Suppose that $P$ is a coordinate plane or plane with normal vector of the form $(\pm 1, \pm 1, \pm 1)$. In both cases we denote the normal vector by $ n_P$. Let $r$ be such that  $\tau_u$ and $\tau_v$ are periodic in $P$ with fundamental domain an $r\times r$ parallelogram.
  If $ v\cdot n_P =  u \cdot  n_P$, then there is an even integer $k>0$ (depending on $r$ and $P$) such that $\tau_v$ restricted to the left half-space of $P$ can be connected to $\tau_u$ restricted to the right half-space of $P_k=P+k \,n_P$ for some $k=O(r)$. Further, the connecting tiling $\tau$ is also periodic in $P$ with fundamental domain an $r\times r$ parallelogram.
 
\end{lemma}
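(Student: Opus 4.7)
The plan is to produce a tiling of the slab $S_k=\{x\in\mathbb Z^3:0\le x\cdot n_P\le k\}$ between $P$ and $P+kn_P$ whose restriction to $P$ matches the ends of the tiles of $\tau_v$ that protrude into $S_k$, and whose restriction to $P+kn_P$ matches those of $\tau_u$. By the assumed $r\times r$ periodicity of $\tau_v$ and $\tau_u$ along $P$, it is enough to tile one fundamental parallelepiped $B$ of $S_k$ with periodic side boundary conditions; the global interpolating tiling $\tau$ is then obtained by placing this same tiling on each translate of $B$, and is automatically $r\times r$-periodic in $P$.

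First I would check that the region $S_k'\subset B$ not already occupied by the protruding boundary tiles is balanced. The flux hypothesis $v\cdot n_P=u\cdot n_P$ says that the number of $\tau_v$-tiles protruding into $B$ through its bottom face equals the number of $\tau_u$-tiles protruding into $B$ through its top face, so together with the fact that $k$ is even (and $B$ itself is balanced) one gets $\mathrm{even}(S_k')=\mathrm{odd}(S_k')$. With balance in hand, I would invoke Hall's matching theorem in the form of Corollary~\ref{cor:minimal_tileable}, reducing the task to ruling out a minimal counterexample $U\subseteq S_k'$ whose interior boundary $\Sigma$ is a minimal monochromatic black discrete surface (Proposition~\ref{prop:minimalsurface}). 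Since $S_k'$ lives in a thin slab of thickness $k$ on the quotient $(\mathbb Z/r\mathbb Z)^2\times[0,k]$, the area of such a $\Sigma$ is at most $O(kr+r^2)$.

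The next step is to bound $\mathrm{imbalance}(U)$ from above, following the template of the proof of the patching theorem (Theorem~\ref{patching}): partition $S_k'$ into a ``middle'' region and thin boundary regions, use a test tiling to produce many tiles that cover odd cubes of $U$ without covering the corresponding even cubes, and control the boundary contributions by the area of $\Sigma$ and the length of $\partial\Sigma$. Because we are on a torus of side $r$ and thickness $k=O(r)$, these auxiliary terms are of size $O(r^2)$.

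The main obstacle is that here $v$ and $u$ may lie in $\partial\mathcal O$, so one cannot directly apply Lemma~\ref{lem: flow bound} with an arbitrary ergodic measure of the required mean current --- the constant $K_\mu$ there vanishes on the edges of $\mathcal O$, which is precisely why $v=-\eta_1$ had to be excluded from the construction of $\tau_v$. To circumvent this I would use $\tau_v$ itself (or $\tau_u$, whichever has the ``richer'' mix of tile types) as the test tiling: $\tau_v$ is explicitly periodic, and its density of each tile direction used by $\Sigma$ can be read off from the sequence $a_1,\dots,a_r$ and the rerouting density $a$ in its construction. Combined with Lemma~\ref{lem: counting squares}, this gives a direct combinatorial lower bound, of order $\mathrm{area}(\Sigma)$, on the number of $\tau_v$-tiles crossing $\Sigma$, with an implicit constant depending on $v$ (and vanishing only in the excluded edge case). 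Choosing $k$ to be a sufficiently large constant multiple of $r$ makes this lower bound dominate the upper bound on $\mathrm{imbalance}(U)$, yielding the required contradiction and hence tileability of $S_k'$. Tiling every translate of $B$ in the same way produces $\tau$ with the claimed periodicity and with $k=O(r)$, completing the proof.
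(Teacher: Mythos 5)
Your overall scaffolding agrees with the paper's: pass to the quotient torus $R/\sim$ by the $r\times r$ periodicity, verify balance from the flux condition $u\cdot n_P = v\cdot n_P$, and then rule out a minimal counterexample $U$ via Hall's theorem (Theorem~\ref{thm:halls matching 2} and Corollary~\ref{cor:minimal_tileable}). The divergence --- and the gap --- is in how you bound $\mathrm{imbalance}(U)$.

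The paper's bound is purely geometric and does not use any test tiling. Writing $\partial U = T \cup S$ with $T = \partial U \cap \partial(R/\sim)$ and $S$ the monochromatic black interior boundary, Proposition~\ref{prop: white black imbalance} gives $6\cdot\mathrm{imbalance}(U) = \mathrm{white}(T) - \mathrm{black}(T) - \mathrm{area}(S)$. Now $T \subset R_0 \cup R_k$ so $\mathrm{white}(T) \le 2r^2$, while $S$ must span from $R_0$ to $R_k$, so Proposition~\ref{prop:quadratic growth} forces $\mathrm{area}(S) \ge \kappa k r$ once $k>r$. Taking $k = cr$ with $c > 2/\kappa$ makes the imbalance non-positive. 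The special geometry of the thin slab (boundary area $O(r^2)$ but spanning surface area $\Omega(kr)$) is exactly what makes this short argument close, with no dependence on the mean currents $u,v$ beyond the flux compatibility; this is why it applies even when $u,v\in\partial\mathcal O$, which is essential since the building-block tilings $\tau_v$ are precisely the extremal ones.

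Your proposal instead imports the test-tiling argument from Theorem~\ref{patching}, and this is where it breaks. You correctly notice that Lemma~\ref{lem: flow bound} has constant $K_\mu=0$ for $s\in\partial\mathcal O$, but your fix --- take $\tau_v$ itself as the test tiling and claim the crossing count is $\Omega(\mathrm{area}(\Sigma))$ with a constant ``vanishing only in the excluded edge case'' --- does not work. The constant in a flow-bound of that kind vanishes on all of $\partial\mathcal O$, not only on the edges $\mathcal E$: if $v$ lies in the interior of a face, $\tau_v$ uses only three of the six tile directions (say north, east, up), and for a black square of $\Sigma$ to be crossed by a $\tau_v$-tile, its outward normal must be south, west, or down. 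A minimal black surface $\Sigma$ spanning the slab can have all its outward normals on the positive side (e.g.\ a surface wrapping the $r\times r$ torus in one cross-section direction with $U$ on one fixed side of it), in which case $\tau_v$ contributes zero crossings, and symmetrically for $\tau_u$. Lemma~\ref{lem: counting squares} balances the three \emph{axes}, not the six \emph{signed} directions, so it does not rescue the bound. This is a genuine gap, and it affects exactly the boundary-mean-current regime that the gluing lemma must handle.
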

\begin{rem}
    We restrict to these two types of planes $P$ since the the faces of tetrahedra in the mesh used to define the piecewise constant approximation (Proposition \ref{prop:pc_approx}) are contained in one of these two types of planes; see Remark \ref{rem: why tetrahedra}. The analogous result for other planes also holds, but we do not need it. 
\end{rem}
\begin{proof}
    By Lemma \ref{lem:tau_v_props}, $\tau_u,\tau_v$ are periodic, so there exists $r>0$ finite and determined by $\tau_u,\tau_v,P$ such that $\tau_u$ is periodic on $P$ with fundamental domain $R_0\subset P$, where $R_0$ an $r\times r$ parallelogram contained in $P$, and similarly $\tau_v$ is periodic on $P_k$ with fundamental domain also an $r\times r$ parallelogram in $P_k$. Let $R$ be the parallelopiped region parallel to $n_P$ between one fundamental domain $R_0\subset P$ and another $R_k\subset P_k$.

    Let $R/\sim$ be $R$ with opposite faces other than $R_0$ and $R_k$ paired (i.e., $R/\sim$ is a 2-dimensional torus crossed with an interval). Given the periodicity of $\tau_u,\tau_v$, to that show the region between $P$ and $P_k$ is tileable with $\tau_u\mid_{P}$ and $\tau_v\mid_{P_k}$, it suffices to show that $R/\sim$ is tileable with $\tau_u\mid_{R_0}$ and $\tau_{v}\mid_{R_k}$. 

    To show that this region is tileable we use the same techniques as in Section \ref{sec:patching plus hall}. In other words, first we show that $R/\sim$ with $\tau_u\mid_{R_0}$ and $\tau_{v}\mid_{R_k}$ is balanced, and then use Hall's matching theorem (\ref{thm:halls matching 2}). The setting here is more elementary than what we consider in Section \ref{sec:patching plus hall}, since here the tilings defining the boundary conditions are completely periodic. 

    Since $k$ is even, any perpendicular slice of $R/\sim$ is a fundamental domain for $\tau_u$ or $\tau_v$, the condition $v\cdot n_P = u\cdot n_P$ is equivalent to
    \begin{equation}\label{eq:total_flow}
    \sum_{e \text{ intersecting }R_0} v_{\tau_u}(e) \cdot n_P = \sum_{e \text{ intersecting }R_k} v_{\tau_v}(e) \cdot n_P.
\end{equation}
    Equation \eqref{eq:total_flow} is in turn equivalent to $R/\sim$ with boundary conditions $\tau_u\mid_{R_0}$ and $\tau_{v}\mid_{R_k}$ being balanced. 

    Since the region is balanced, by Hall's matching theorem (Theorem \ref{thm:halls matching 2}) it is not tileable if and only if there exists a counterexample region $U$ which is a strict subset. Since $U$ is a strict subset, $U$ has a nonempty interior boundary $S\subset \partial U$ (i.e., $S$ is the part of $\partial U$ which is strictly between $R_0$ and $R_k$, see Section~\ref{sec: minimal} for more details). Let $T =\partial U\setminus S$. By Proposition \ref{prop: white black imbalance},
    \begin{align*}
        \text{imbalance}(U) = \frac{1}{6}\bigg(\text{white}(T) -\text{black}(T) - \text{area}(S)\bigg). 
    \end{align*}
    Since $\text{area}(T)\leq 2r^2$, $\text{white}(T)\leq 2r^2$. Since the region is tileable with boundary condition from just one of the tilings, $S$ must connect $R_0$ and $R_k$, if $k> r$, by Proposition \ref{prop:quadratic growth}, there is a universal constant $\kappa$ such that $\text{area}(S)\geq \kappa k r$. Therefore by Proposition \ref{prop: white black imbalance},
    \begin{align*}
        \text{imbalance}(U) \leq \frac{2r^2 - \kappa k r}{6}.
    \end{align*}
    Choosing $k= cr$ for some constant $c>2/\kappa$, $U$ is not a counterexample which contradicts the assumption that the region is not tileable. This completes the proof. 
\end{proof}

Using the tilings $\tau_v$ as our building blocks and their gluing properties to put them together, we now proceed to prove Theorem \ref{thm:shininglight}.

\begin{proof}[Proof of Theorem \ref{thm:shininglight}]

Choose a scale $\epsilon>0$ so that the piecewise-constant approximation $\widetilde g$ from Proposition \ref{prop:pc_approx} on an $\epsilon$-scale tetrahedral mesh $\mc X = \{X_1,...,X_M\}$ satisfies $d_W(g,\widetilde g) < \delta/2$ and hence $A_{\delta/2}(\widetilde g) \subset A_\delta(g)$. We assume that all $X\in \mc X$ are regular or right-angled tetrahedra so that all their faces are contained in coordinate planes or planes with normal vector $(\pm 1, \pm 1, \pm 1)$. Recall that $\widetilde g\in AF(\widetilde {R})$ and that $R\subset \widetilde R$, so any free boundary tiling of $\widetilde R$ can be restricted to a free boundary tiling of $R$. To prove the theorem, it suffices to construct $\tau \in T_n(\widetilde R)$ with $d_W(f_\tau, \widetilde g) <\delta/2$.
	
\textit{Constructing channels.} We construct \textit{channels} $C_1,...,C_K$ which are disjoint, partition $\widetilde R$ and will be nicely chosen tubular neighborhoods of a modification of the flow lines of $\widetilde g + \eta_1$. For any $X_j\in \mc X$, let $v_j := \widetilde g \mid_{X_j}$. Recall Definition \ref{def:relationship between v and w}, which relates a vector $v$ with $w(v)$, which is the direction of the infinite paths in a periodic tiling $\tau_v$. For each channel $C_i$, the intersection $C_i\cap X_j$ will be a tube parallel to $$w(v_j)+\eta_1.$$
Since $\widetilde g$ is valued strictly in $\text{Int}(\mc O)$, $v_j\neq -\eta_1$ for all $X_j\in \mc X$, and hence $w(v_j)$ is well-defined everywhere. As shorthand, we let $w(\widetilde g)$ be the piecewise-constant flow equal to $w(v_j)$ on $X_j$. The definitions are made so that if $\tau_{v_j}$ is a periodic tiling built by the construction earlier in this section, the infinite paths in $(\tau_{v_j},\tau_1)$ move parallel to the direction of the channel on $X_j$. The values of $\widetilde g$ change on the boundaries $\partial X_j$ of tetrahedra in the mesh. We choose the channels $C_i$ to be thin enough as follows so that, viewing $C_i$ as a sequence of open tubes, each end of the tube $C_i\cap X_j$ is contained in a single plane (i.e., each end is contained in a single face of $\partial X_j$).

Project the corners and edges of $X_j$ onto $\partial X_j$ along $w(v_j)+\eta_1$. Call these projections $\gamma_j$. The set $\gamma_j\subset \partial X_j$ are the ones along a flow line of $w(\widetilde g) + \eta_1$ that goes through an edge of $X_j$. In fact, since the projection is linear, $\gamma_j$ is a collection of lines which divide the faces of $X_j$ into between $1$ and $3$ sections. 

We further divide $X_j$ by taking into account the iterated projections of the corners and edges of all the other tetrahedra in the mesh. In other words for all $j$, if $X_j,X_k$ share a face $F=X_k\cap X_j$, then we project $\gamma_j\cap F$ onto $\partial X_k$ by orthogonal projection along $w(v_k)+\eta_1$.  We iterate this for all tetrahedra until there is a projection of $\gamma_j$ on $\partial X_\ell$ for all $\{\ell,j\}$ pairs. See the left figure in Figure \ref{fig: tetra and channel}.

The result is that for each $j\in \{1,...,M\}$, each triangular face of $\partial X_j$ is partitioned into between $1$ and $3^M$ pieces, all with piecewise-linear boundaries, and $X_j$ is partitioned into tubes parallel to $w(v_j)+ \eta_1$ with these pieces as their ends. Each sequence of successive tubes glued on their intersections with $\partial X_j$ is a \textit{channel}\termindex{Chapter 8!channel}. The collection of channels $C_1,...,C_K$ is pairwise disjoint and covers $\widetilde R$. Since $w(\widetilde g)+\eta_1$ has positive first coordinate everywhere, each channel connects a patch on $\partial \widetilde R$ to another. 

\textit{Tiling a channel.} Fix $n$ large and a choice of channel $C$. We construct a scale $n$ tiling of $C$ which has only $-\eta_1$ tiles in a neighborhood of $\partial C$ of constant-order width in $n$, and use this to say that we can put together the tilings of the channels together to construct one tiling of the whole region. 

Let $T_1,...,T_m$ be the sequence of tubes of the form $X_j\cap C$ in order from one intersection of $C$ with $\partial \widetilde R$ to the other. Let $v_1,...,v_m$ be the corresponding values of $\widetilde g$ on the tubes.  Consider the $\mathbb{Z}^3$ tilings $\tau_{v_1},...,\tau_{v_m}$ constructed earlier in this section using the reference tiling $\tau_1$. Recall that for each $v_i$, all tiles in $(\tau_{v_i},\tau_1)$ are either double tiles (which must be $-\eta_1$ tiles) or are on an infinite path, and that all infinite paths follow the same periodic sequence. Let $r_j$ be the period of $\tau_{v_j}$, i.e.\ if $\tau(x)$ denotes the tile at $x$ in $\tau$, then for $j=1,...,m$, $r_j$ is such that translates $\tau_{v_j}(x+r_j \eta) = \tau_{v_j}(x)$ for all unit vectors $\eta$.

The main operation we will use is that for any infinite path $\ell\subset (\tau_{v_j},\tau_1)$, we can modify $\tau_{v_j}$ by ``shifting" all the tiles along $\ell$, i.e.\ by removing all the tiles on $\tau_{v_j}$ along $\ell$ and replacing them with $-\eta_1$ tiles. We refer to this as \textit{deleting} the path $\ell$. The idea is to delete paths that would exit the channel before hitting $\partial \widetilde R$, and then to bound the number of paths that we delete.

Let $\pi_j\subset \partial X_j$ be the starting end of $T_j$, so that $T_j$ is a tube connecting $\pi_j$ to $\pi_{j+1}$. For each $j$, we start by restricting $\tau_{v_j}$ to $T_j$. Any infinite path $\ell\subset (\tau_{v_i},\tau_1)$ which enters $T_j$ in $\pi_j$ must exit through $\partial T_j\setminus \pi_{j}$, since paths in $(\tau_{v_j},\tau_1)$ always have direction with positive $\eta_1$ component. 

\begin{figure}
    \centering
	\includegraphics[scale=0.4]{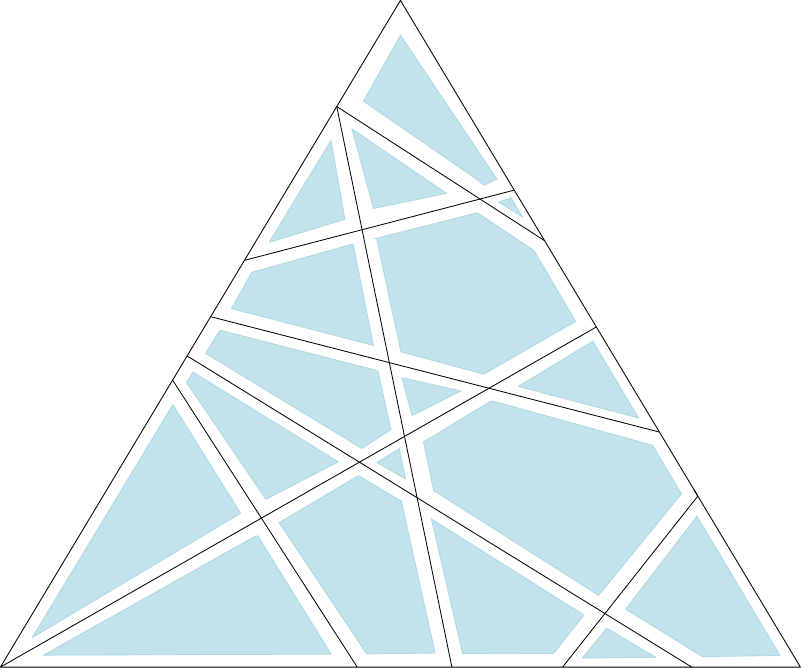}$\qquad\qquad$\includegraphics[scale=1.3]{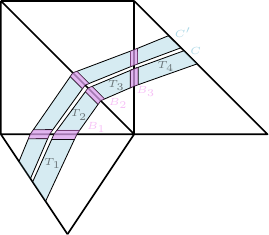}\\
	\caption{On the left is a face of one tetrahedron. The segments are the ends of channels, the smaller blue regions are places where we do not delete infinite paths. The width of the white area is $O(r/n)$. The figure on the right is a 2D schematic showing two channels $C, C'$, with the tubes $T_j$ and connector regions $B_j$ labeled along $C$. The width of the white area between $C$ and $C'$ is $O(r/n$).}
    \label{fig: tetra and channel}
\end{figure}

First, we delete all infinite paths $\ell\subset (\tau_{v_j},\tau_1)$ which do not enter $T_j$ through $\pi_j$ and exit for the first time through $\pi_{j+1}$, replacing the tiles of $\tau_{v_j}$ along these paths with $-\eta_1$ tiles. Note that this includes deleting all infinite paths which do not intersect $T_j$.

By Lemma \ref{lem:tau_v_props}, the infinite paths in $(\tau_{v_j},\tau_1)$ have asymptotic direction $w(v_j)$, with oscillation bounded by the length of the periodic sequence used to construct $\tau_{v_j}$, which is $O(r_j)$. Since the direction of the tube $T_j$ is also $w(v_j)$, any infinite path in $(\tau_{v_j},\tau_1)$ that enters $T_j$ through $\pi_j$ and exits through $T_j\setminus \pi_{j+1}$ is within $O(r_j)$ distance counted as number of edges in $\frac{1}{n} \m Z^3$ of $\partial C$ along $T_j$. Similarly, any path which enters $T_j$ through $\partial T_j\setminus \pi_j$ would also remain within $O(r_j)$ distance in number of $\frac{1}{n} \m Z^3$ edges of $\partial C$ along $T_j$. Let $r=\text{lcm}_j r_j$, then in summary, the paths that we delete which intersect $T_j$ are all contained in an neighborhood of $\partial C\cap T_j$ of width $O(r)$ in edge distance in $\frac{1}{n} \m Z^3$, corresponding to a neighborhood of Euclidean width $O(r/n)$ (recall that $r$ is a constant independent of $n$). 

Second (to avoid issues with corners and edges of tetrahedra, and to isolate channels from each other), we delete all infinite paths which are within a Euclidean neighborhood of width $1000/n$ of $\partial T_j\setminus \{\pi_j\cup \pi_{j+1}\}$ (i.e., $1000$ lattice cubes in $\frac{1}{n} \m Z^3$). By the same logic as above, these are still contained in an $O(r/n)$-width neighborhood of $\partial C$. We call this tiling $\tau_{v_j}'$.

Third, we want to glue the tiling on $T_j$ to the tiling on $T_{j+1}$. To do this, we cut out a neighborhood of width $O(r)$ in $\frac{1}{n}\m Z^3$ lattice cubes around $\pi_{j+1}$ (the face shared by $T_j$ and $T_{j+1}$) which we call the \textit{connector region} $B_j$. Let $\alpha_j, \alpha_{j+1}$ be the ends of the connector region (i.e.\ translates of $\pi_{j+1}$), see the purple region in right side figure in Figure \ref{fig: tetra and channel}. Let $P_j$ be the plane containing $\alpha_j$ and $P_{j+1}$ be the plane containing $\alpha_{j+1}$. 

Since $\tau_{v_j}$ and $\tau_{v_{j+1}}$ are periodic tilings of periods $r_j, r_{j+1}$, and since $B_j$ has width $O(r)$, by Lemma \ref{lem:gluing} we can construct a tiling $\sigma_j$ of $\m Z^3$ such that it agrees with $\tau_{v_j}$ in the left half-space of $P_j$ and $\tau_{v_{j+1}}$ on the right half-space of $P_{j+1}$, and fills in the region in-between in a periodic way with period $O(r)$. We can do this for all $j=1,...,m$. 

Overlaying $(\sigma_j,\tau_1)$, we again get a collection of infinite paths and double tiles. First, we delete all infinite paths in $\sigma_j$ which were deleted to construct $\tau_{v_j}'$ and $\tau_{v_{j+1}}'$ from $\tau_{v_j}$ and $\tau_{v_{j+1}}$. Second, we delete any infinite paths which exit $C$ between $\alpha_j$ and $\alpha_{j+1}$ (i.e., any path which exits $C$ along the connector $B_j$). Since $B_j$ has length $O(r)$ and since $\sigma_j$ is periodic with period $O(r)$, again any infinite path which exits in $B_j$ is contained in an $O(r)$ neighborhood of $\partial C$ along $B_j$. 

Finally, we can glue together the tilings $\sigma_1,...,\sigma_m$ by going back and deleting any infinite path in $(\sigma_j,\tau_1)$ which connects to one which would have been deleted in $(\sigma_i,\tau_1)$ for all other $i\neq j$. Since the number of tubes $m$ is constant, in the end we have a tiling $\tau$ of $C$ where we have deleted infinite paths of $(\tau,\tau_1)$ in a neighborhood of width at most constant-order in $n$ (concretely $1000 + O(r)$, where $r$ is constant in $n$) in distance measuring in edges of $\frac{1}{n} \m Z^3$, corresponding to a neighborhood of Euclidean width of $O(r/n)$.

Since all channels $C$ are tiled so that they have only $-\eta_1$ tiles in a neighborhood of $\partial C$, we can put them together. Therefore we have constructed a tiling $\tau\in T_n(\widetilde R)$. 

\textit{Bounding the final distance.} To emphasize the dependence on $n$, let $\tau^n$ be the tiling at scale $n$ constructed above and let $\tau_1^n$ be the $-\eta_1$ brickwork pattern at scale $n$. On one hand, the total flow of $\widetilde g+ \eta_1$ over any $X_j\in \mc X$ is 
\begin{align*}
    \text{vol}(X_j) (v_j + \eta_1).
\end{align*}
We claim that the double dimer flow $f_{(\tau^n,\tau_1^n)}= f_{\tau^n}-f_{\tau_1^n}$ has the same total flow, up to an $O(n^{-1})$ error. To explain the order of error, recall that for a scale $n$ tiling flow, each $1/n^3$-volume lattice cube has flow of order $1/n^3$. The error comes from the region around the boundary of the channels where some infinite paths were deleted and replaced with $-\eta_1$ tiles. The number of lattice sites on the boundary of the channel is order $n^2$, and the region has width constant order in $n$ in lattice cubes from $\frac{1}{n} \m Z^3$, so the amount of deleted flow in this region has order $n^2/n^3 = 1/n$. Therefore there is a constant $K$ such that 
$$\bigg|  \text{vol}(X_j)(\widetilde g_j +\eta_1) -\sum_{e\in \frac{1}{n}\m Z^3,\, e\cap X_j\neq \emptyset} (f_{\tau^n} - f_{\tau_1^n})(e)\bigg| < K n^{-1}.$$
There is also a constant $C= C(\widetilde R)$ such that $M = C \epsilon^{-3}$ (recall that $M=|\mc X|$ is the number of tetrahedra in the mesh). By Lemma \ref{lem:covered} applied to the partition $X_1,...,X_M$ of $\widetilde R$, 
\begin{align*}
	d_W(f_{\tau^n}-f_{\tau_1^n}, \widetilde g +  \eta_1) < 3M ( 10 \epsilon^{4} + K n^{-1}) < 30 C \epsilon + 3 C K \epsilon^{-3} n^{-1}.
\end{align*}
Taking $n$ large enough so that $1/n<\epsilon^4$, this becomes a bound which is a constant times $\epsilon$. A few applications of the triangle inequality and the ``mass shift" property of Wasserstein distance, i.e.\ that $\m W_1^{1,1}(\mu,\nu) = \m W_1^{1,1}(\mu+\rho,\nu+\rho)$ (see Lemma \ref{lemma: wasserstein mass shift}), give that 
\begin{align*}
    d_W(f_{\tau^n}, \widetilde g) < d_W(f_{\tau^n}- f_{\tau_1^n},\widetilde g+\eta_1) + d_W(f_{\tau_1^n},-\eta_1).
\end{align*}
Since $d_W(f_{\tau_1^n},-\eta_1)\to 0$ as $n\to \infty$, we can make this as small as needed as $n\to \infty$. Therefore we can choose $\epsilon$ small enough and $n$ large enough given $\delta$ so that $\tau^n\in T_n(\widetilde R)$ has $d_W(f_{\tau^n},\widetilde g)<\delta/2$. Restricting $\tau^n$ to $R$ completes the proof. 
\end{proof}

\subsection{Soft boundary lower bound}\label{sec:lower}

With the machinery developed in the previous section we can now prove the \textit{soft boundary lower bound}, namely Theorem \ref{thm:lower}. In particular we show that for $(R,b)$ with $b$ extendable outside, then for any $g\in AF(R, b)$,
\begin{align*}
    \lim_{\delta\to 0}\liminf_{n\to \infty} v_n^{-1} \log \mu_n(A_{\delta}( g)) \geq \Ent( g).
\end{align*}

Recall that $\mu_n$ is counting measure on $TF_n(R, b, \theta_n)$, the set of free boundary tiling flows on $R$ at scale $n$ with boundary values within $\theta_n$ of $b$. The main idea of the proof is to show that from the one free boundary tiling flow $f_\tau\in A_\delta(g)\cap TF_n(R)$ constructed in previous section (Theorem \ref{thm:shininglight}), we can use the patching theorem (Theorem \ref{patching}) to show that there are actually many tiling flows in $A_\delta(g)$.

\begin{proof}[Proof of Theorem \ref{thm:lower}]

By Proposition \ref{prop:pc_approx}, there exists $\delta_1>0$ such that there is a $\delta_1$-tetrahedral mesh $\mc X=\{X_1,...,X_M\}$, region $\widetilde R= \cup_{X\in \mc X} X$ containing $R$, and an asymptotic flow $\widetilde g\in AF(\widetilde R)$ taking constant values on tetrahedra in $\mc X$ with $d_W(g,\widetilde g)<\delta/2$ so that 
\begin{align*}
    A_{\delta/2}(\widetilde g) \subset A_\delta(g). 
\end{align*}
Let $\widetilde g_i:=\widetilde g\mid_{X_i}$. Computing directly, 
\begin{align*}
    \Ent(\widetilde g) = \frac{1}{\Vol(R)} \sum_{i=1}^M \Vol(X_i) \ent(\widetilde g_i). 
\end{align*}
On the other hand by Proposition \ref{proposition: Upper semicontinuous}, 
\begin{align*}
    \Ent(\widetilde g) = \Ent(g) + o_{\delta}(1).
\end{align*}
Using the shining light construction from the proof of Theorem \ref{thm:shininglight}, for any $n$ large enough there exists a tiling $\tau \in T_n(R)$ such that $f_\tau\in A_{\delta/2}(\widetilde g)$ has a particular form. Let $C_1,...,C_K$ denote the channels in the shining light construction. For each tetrahedron $X$ and channel $C$ that intersect, $X\cap C$ is a tube. As in the proof of Theorem \ref{thm:shininglight}, $\tau\mid_{X\cap C}$ is periodic at a scale independent of $n$, and has mean current $\widetilde g_X \in \text{Int}(\mc O)$ on $X\cap C$ outside a neighborhood of $\partial(X\cap C)$ of width constant order in $n$. 

We choose $\epsilon \ll \delta_1$, and partition the interior of $X\cap C$ (where $\tau$ has mean current in $\text{Int}(\mc O)$) into small cubes with side length $\leq \epsilon$. For each $i=\{1,...,M\}$, call the pieces of the partition contained in $X_i$
$$\{Q_1^{i},...Q_{k_i}^i\}_{i=1}^M.$$
For any $(i,k)$ pair, $\tau\mid_{Q_k^i}$ is periodic. Recall that $Q_k^i$ has diameter $<\epsilon$. For $\epsilon_1\ll \epsilon$, for $n$ large enough $\tau\mid_{\partial Q_k^i}$ is $\epsilon_1$-nearly-constant with value $\widetilde g_i$ (see Definition \ref{def:nearly constant}), so it satisfies the conditions for the outer boundary condition in the patching theorem (Theorem \ref{patching}). Fix $c\in (0,1)$. For each $(i,k)$ pair, we choose an EGM $\mu_{i,k}$ of mean current $\widetilde g_i$ (these exist by Corollary \ref{cor: EGMs exist!}). Since $\widetilde g_i\in \text{Int}(\mc O)$, a sample from $\mu_{i,k}$ satisfies the conditions of Theorem \ref{patching} with probability going to $1$ as $n\to \infty$ (Corollary \ref{cor: ergodic implies nearly constant}). Therefore by Theorem \ref{patching}, for $n$ large enough, with probability $(1-c)$, $\tau$ restricted to $\partial Q_k^i$ can be patched with a sample $\sigma$ from $\mu_{i,k}$ on an annulus of width $cn$. By the ergodic theorem, given any $\epsilon_2>0$, for $n$ large enough we can assume that \begin{equation}\label{eq:wass bound ergodic theorem}
d_W(f_\sigma\mid_{Q_k^i},\widetilde g_i\mid_{Q_k^i})<\epsilon_2
\end{equation}
with probability $1-c$. Therefore with probability $1-2c$, Equation \eqref{eq:wass bound ergodic theorem} holds and $\sigma$ can be patched with $\tau$.

Let $\pi_{i,k,n}$ denote uniform measure on tilings $\sigma$ of $Q_k^i$ at scale $n$ with $\sigma\mid_{\partial Q_k^i} = \tau$ and satisfying Equation \eqref{eq:wass bound ergodic theorem}, and let $Z_n(Q_k^i)$ be its partition function. The additional constraint that Equation \eqref{eq:wass bound ergodic theorem} is satisfied does not change the exponential order of the number of tilings, hence by Proposition \ref{thm:epsilon_nearly_constant_vs_EGM}, we get the following consequences for entropy:
\begin{align*}
    (1+O(c)) h(\mu_{i,k}) \leq n^{-3} \Vol(Q_{k}^i)^{-1} H(\pi_{i,k,n}) = n^{-3}\Vol(Q_k^i)^{-1} \log Z_n(Q_k^i). 
\end{align*}
By Lemma \ref{lem:covered} applied to the partition $\{Q_1^i,....,Q_{k_i}^i\}_{i=1}^M\cup \{R\setminus \cup_{i=1}^M \cup_{k=1}^{k_i} Q_k^i\}$, if $\sigma\in T_n(R)$ is a free boundary tiling of $R$ whose restrictions to each $Q_{k}^i$ are in the support of $\pi_{i,k,n}$ then using Equation \eqref{eq:wass bound ergodic theorem},
$$d_W( f_\tau,  f_\sigma) < 3 \epsilon^{-3}(10 \epsilon^4 + \epsilon_2) + 3 C(\partial R) \epsilon$$
where $C(\partial R)$ is a constant depending only on $R$. In particular, choosing $\epsilon_2=\epsilon^4$ and taking $\epsilon$ sufficiently small, $f_\sigma \in A_{\delta/2}(\widetilde g)$. By Theorem \ref{thm: boundary_value_uniformly_continuous} (uniform continuity of the boundary value operator $T(\cdot,\partial R)$), we can choose $\epsilon$ small enough so that the boundary values $T(f_\sigma,\partial R)$ are within $\theta_n$ of $b$ for all $\sigma$ in the support of $\pi_{i,k,n}$. Therefore for $n$ large enough, 
\begin{align*}
    \mu_n(A_{\delta/2}(\widetilde g)) \geq \prod_{i=1}^M \prod_{k=1}^{k_i} Z_n(Q_k^i) \geq \prod_{i=1}^M \prod_{k=1}^{k_i} \exp\bigg( n^3 \Vol(Q_k^i) h(\mu_{i,k}) (1+O(c))\bigg).
\end{align*}
Recall that $v_n^{-1} = n^{-3} \Vol(R)^{-1}$. Since $\mu_{i,k}$ is an EGM of mean current $\widetilde g_i\in \text{Int}(\mc O)$, $h(\mu_{i,k}) = \ent(\widetilde g_i)$ by Theorem \ref{theorem: existence of gibbs}. Rearranging and taking into account the $O(\epsilon)$ proportion of each tetrahedron $X\in \mc X$ that is not included in the patched regions, 
\begin{align*}
    v_n^{-1}\log \mu_n(A_{\delta/2}(\widetilde g)) &\geq \sum_{i=1}^M \sum_{k=1}^{k_i} \frac{\Vol(Q_k^i)}{\Vol(R)} \ent(\widetilde g_i) (1+O(c)) \\ &\geq \sum_{i=1}^M \frac{\Vol(X_i)}{\Vol(R)} \ent(\widetilde g_i) (1+ O(c))(1-O(\epsilon))\\
    &= \Ent(\widetilde g) (1+ O(c))(1-O(\epsilon)). 
\end{align*}
Recall that $\epsilon$ is the size of the patched regions, and $c<\epsilon$ is the patching error. All of $\epsilon,c,\epsilon_2$ are much smaller than $\delta>0$, and go to $0$ as $\delta\to 0$. In particular for any fixed $\delta>0$, and $\epsilon>0$ fixed small enough given $\delta>0$, there is a $n(\delta)$ such that for all $n>n(\delta)$, 
\begin{align*}
    \liminf_{n\to \infty} v_{n}^{-1}\log \mu_n(A_{\delta}(g)) \geq \liminf_{n\to \infty} v_{n}^{-1}\log \mu_n(A_{\delta/2}(\widetilde g)) \geq  \Ent(\widetilde g) + O(\epsilon) = \Ent(g) + o_\delta(1).
\end{align*}
Taking $\delta\to 0$ completes the proof.
\end{proof}

\subsection{Generalized patching and hard boundary lower bound}\label{sec:generalized_patching}

To prove the hard boundary lower bound (Theorem \ref{thm:lower-hb}), we need one more tool. Recall that $\overline{\rho}_n$ is the uniform probability measure on tilings of a fixed region $R_n\subset \frac{1}{n} \m Z^3$.

The shining light construction (Theorem \ref{thm:shininglight}) shows that for any $\delta>0$ and $g\in AF(R,b)$, for $n$ large enough there exists a free-boundary tiling $\tau\in T_n(R)$ such that $d_W(f_\tau,g)<\delta$. For hard boundary conditions, we need to know that every $g\in AF(R,b)$ can be approximated by a tiling of the \textit{fixed} region $R_n$. To do this, we prove a \textit{generalized patching theorem} (Theorem \ref{thm:generalized_patching}).

Let $B_n = [-n,n]^3$. Recall that the patching theorem (Theorem \ref{patching}) says that if two tilings $\tau_1,\tau_2$ are \textit{nearly constant} with value $s\in \text{Int}(\mc O)$ (Definition \ref{def:nearly constant}), then for any $c>0$ there is $n$ large enough that we can patch together $\tau_2\mid_{B_{(1-c)n}}$ and $\tau_1\mid_{\m Z^3\setminus B_n}$ by tiling the width-$c n$ annulus between them. 

In Section \ref{sec:patching plus hall} where the regular patching theorem (Theorem \ref{patching}) was proved, all our tools were combinatorial, and we thought of tilings $\tau$ of $\m Z^3$ without rescaling to $\frac{1}{n} \m Z^3$. Here we look at the tileability of more general ``annular regions," where the tilings are rescaled to live in $\frac{1}{n} \m Z^3$. 

Let $R\subset \m R^3$ be a compact set which is the closure of a connected domain and has piecewise-smooth boundary $\partial R$ (i.e., the sort of region to which our LDP applies). For any small $c>0$, we define 
\begin{align*}
    R^c = \{x\in R: d(x,\partial R)\geq c\}.
\end{align*}
The set $R\setminus R^c$ is an annular region. On the discrete side, given a free-boundary tiling $\tau\in T_n(R)$, it restricts to $\tau'\in T_n(R^c)$ which is a free boundary tiling of $R^c$. We let $R_n^c\subset \frac{1}{n} \m Z^3$ be the region covered by $\tau'$. Given another free-boundary tiling $\sigma\in T_n(R)$, let $R_n\subset \frac{1}{n}\m Z^3$ be the region covered by $\sigma$. The region $A= R_n\setminus R_n^c\subset \frac{1}{n} \m Z^3$ is the type of annular region we study here. 

Let $(R,b)$ be flexible and suppose that $R_n\subset \frac{1}{n} \m Z^3$ is a sequence of regions satisfying the conditions of the hard boundary LDP (Theorem \ref{thm:hb-ldp}), i.e., tileable regions with boundary conditions converging to $b$. To prove any $g\in AF(R,b)$ can be approximated by $f_\tau$ with $\tau$ a tiling of $R_n$, we show that we can \textit{patch together} suitable tilings on annuli of the form $R_n\setminus R_n^c$, with hard boundary condition on the outside.

\begin{definition}
    We say that a flow $g\in AF(R,b)$ is \textit{flexible} if $g$ satisfies the condition that for any compact set $D\subset \text{Int}(R)$, $\overline{g(D)}\subset \text{Int}(\mc O)$. 
\end{definition}
The pair $(R,b)$ is \textit{flexible} (see Definition \ref{def:fully_flexible} and Lemma \ref{lem:fully_flexible}) if and only if there exists $g\in AF(R,b)$ which is flexible. 

\begin{thm}[Generalized patching theorem]\label{thm:generalized_patching}
    Fix $c>0$. Let $(R,b)$ be flexible, with $b$ a boundary asymptotic flow which is extendable outside. Let $R_n\subset \frac{1}{n} \m Z^3$ be a sequence of tileable regions approximating $R$ in Hausdorff distance and with boundary values on $\partial R$ converging to $b$ in $\m W_1^{1,1}$. Let $\sigma_n$ be a sequence of tilings of $R_n$. 

    Let $\tau_n\in T_n(R)$ be a sequence of tilings such that $d_W(g,f_{\tau_n}) \to 0$ as $n\to \infty$ for $g\in AF(R,b)$ flexible. Let $\tau_n'$ be $\tau_n$ restricted to a free boundary tiling of $R^c$, and let $R_n^c\subset \frac{1}{n} \m Z^3$ be the cubes covered by $\tau_n'$. 
    
    For $n$ large enough, $R_n\setminus R_n^c$ is tileable. 
\end{thm}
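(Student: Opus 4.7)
The strategy closely parallels the proof of the regular patching theorem (Theorem \ref{patching}), with two essential modifications: the annular region $A_n := R_n \setminus R_n^c$ is no longer a cube-annulus, and the ``test tiling'' used to bound the imbalance can no longer be sampled from an ergodic measure with a single mean current, since the flow $g$ being approximated is non-constant. The role of the ergodic sampler will be played by $\tau_n$ itself, for which the flexibility hypothesis gives exactly the control needed.

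The plan is as follows. First, observe that $A_n$ is balanced: the outer boundary condition is inherited from any tiling of $R_n$ (e.g.\ $\sigma_n$) and the inner boundary condition from $\tau_n'$, and both determine tileable regions. So by Corollary \ref{cor:minimal_tileable}, if $A_n$ fails to be tileable there is a minimal counterexample $U_n$ whose interior boundary surface $S_n$ is a minimal monochromatic black discrete surface connecting $\partial R_n^c$ and $\partial R_n$ (we rescale $\frac{1}{n}\Z^3$ to $\Z^3$ throughout). The area growth argument from Proposition \ref{prop:quadratic growth} and the annulus bound of Lemma \ref{annulusbound} generalize essentially verbatim to the annular region $R\setminus R^c$ in place of a cube annulus, giving $\kappa n^2\leq \mathrm{area}(S_n)\leq K n^2$ with constants depending on $c$ and the geometry of $R$, together with an indenting-type statement (Lemma \ref{lem: indenting}) that produces a slice of $S_n$ in the ``deep interior'' of the annulus whose $\epsilon$-covering area is $O(\epsilon^{1/2} n^2)$.

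Next, partition a bulk piece of $S_n$, say $S_n' \subset S_n \cap R^{c/2}$ (well-inside $R$), into small patches of continuum diameter $\epsilon$. By flexibility of $g$, there exists a compact $K_\epsilon \subset \mathrm{Int}(\mathcal O)$ such that $g(x)\in K_\epsilon$ for every $x$ within distance $c/2$ of the closure of the bulk region. Hence on each $\epsilon$-patch $\alpha\subset S_n'$, $g$ is approximated by a constant $v_\alpha\in K_\epsilon$, and since $d_W(f_{\tau_n},g)\to 0$ the hypotheses of Theorem \ref{patching} are satisfied locally by $\tau_n$ inside a thin neighborhood of $\alpha$. This lets us invoke the flow bound (Lemma \ref{lem: flow bound}) against $\tau_n$: a uniform positive fraction $K>0$ (depending only on $\inf_{v\in K_\epsilon}\min_i(p_i(v))$, where $p_i(v)$ are the six tile probabilities for the ergodic Gibbs measure of mean current $v$) of the odd cubes adjacent to $S_n'$ are actually matched in $\tau_n$ to cubes on the opposite side of $S_n$. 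Taking $U_n'$ to be $U_n$ together with all $\tau_n$-tiles intersecting $S_n'$ yields, as in Section \ref{sec: patching proof}, an imbalance contribution of $-K\cdot\mathrm{area}(S_n') + O(n)$ to $\mathrm{imbalance}(U_n)$.

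Finally, control the residual contributions from the two places where the uniform interior control fails: the ``shadow'' of $U_n$ in the thin neighborhood of $\partial R$ (width $O(c n)$ inside the annulus), and the corner/boundary-patch region near $\partial R_n$. For the shadow, use column-by-column bounds exactly as in the proof of Theorem \ref{patching}, now based on Wasserstein closeness of $T(f_{\tau_n},\partial R^c)$ to $T(g,\partial R^c)$ and of $T(f_{\sigma_n},\partial R)$ to $b$, both of which follow from Theorem \ref{thm: boundary_value_uniformly_continuous}. For the corner/boundary region, a crude volume bound suffices since it has continuum volume $O(c\cdot\epsilon^{1/2})$. Choosing first $c$, then $\epsilon$, then $n$ appropriately, the total imbalance of $U_n$ is bounded by
\[
-K\kappa n^2 + o(n^2),
\]
contradicting $\mathrm{imbalance}(U_n)>0$. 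Hence $A_n$ is tileable for all $n$ large.

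The main obstacle is Step 3, where the flow bound for $\tau_n$ on $S_n'$ requires translating the ergodic-theoretic input of Lemma \ref{lem: flow bound} (which assumed a single ergodic measure) into a statement about the single tiling $\tau_n$. This is done by covering $S_n'$ with $\epsilon$-patches on which $g$ is nearly constant and using Wasserstein closeness of $f_{\tau_n}$ to $g$ to extract, patch by patch, an effective ergodic-theorem-type count of tiles of each orientation; the flexibility hypothesis is exactly what guarantees that the constants $K_\alpha$ obtained in this count are bounded below by a uniform $K>0$, independent of the patch.
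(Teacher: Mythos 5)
Your proposal has a genuine gap in Step~3, which you yourself flag as ``the main obstacle.'' You propose to use $\tau_n$ itself as the test tiling to bound $\mathrm{imbalance}(U_{\mathrm{mid}}')$, invoking an ``effective ergodic-theorem-type count'' extracted from $d_W(f_{\tau_n},g)\to 0$. This does not work. Wasserstein closeness of $f_{\tau_n}$ to $g$ controls the \emph{average} tile direction over small boxes, but it says nothing about how many tiles of $\tau_n$ cross a \emph{specific} discrete surface $S_n$ --- and $S_n$ is an adversarial object handed to you by Hall's theorem, not one you get to choose. Concretely, if $\tau_n$ consists of alternating slabs of brickwork in different coordinate directions (as in the constructions of Section~\ref{sec:localmoves}), then $d_W(f_{\tau_n}, g)$ can be small for $g$ valued deep in $\mathrm{Int}(\mathcal O)$, yet no tile of $\tau_n$ whatsoever crosses the slab boundary surfaces. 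The conclusion you want --- that a positive fraction of odd cubes adjacent to $S_n'$ are matched across $S_n$ by $\tau_n$ --- is simply false for such $\tau_n$. This is precisely why the regular patching theorem (Theorem~\ref{patching}) samples the test tiling from a fresh ergodic measure rather than using $\tau_1$ or $\tau_2$ directly, and why Lemma~\ref{lem: flow bound} is a statement about an expectation over an ergodic measure, not about a single tiling.

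The paper resolves this by introducing a separate test object: a \emph{shining light measure} $\lambda_n$ (Definition~\ref{def:shininglight_measures}), which is the uniform measure over bounded translates of a shining-light tiling built from a piecewise-constant approximation $\widetilde g$ that \emph{inherits the flexible condition}. Because the shining-light tiling is periodic (with period bounded independently of $n$) on each tube, averaging over translates gives a deterministic lower bound on the expected number of tiles crossing any surface patch --- this is Lemma~\ref{lemma:shininglight_flowbound}, the true generalization of Lemma~\ref{lem: flow bound}. One then samples $\tau_{\mathrm{SL}}$ from $\lambda_n$ satisfying the flux bound, and keeps $\tau_n$ and $\sigma_n$ in the strictly separate role of boundary conditions, whose effect on the shell imbalance is controlled via Wasserstein bounds on the trace operator (Theorem~\ref{thm: boundary_value_uniformly_continuous} and Lemma~\ref{lem:constant_order_box_Wass_bound}). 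Your shell/shadow/corner accounting (Steps 4--5) is broadly in the spirit of the paper's argument, but it cannot rescue the central flow-bound step without the independent periodic test object.
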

\begin{rem}
   The \textit{flexible} condition here is analogous to the $s\in \text{Int}(\mc O)$ condition in the original patching theorem (Theorem \ref{patching}). The generalized patching theorem is the reason the hard boundary LDP requires that $(R,b)$ is flexible. 
\end{rem}

Before we prove this, we explain how it can be used to prove the hard boundary lower bound (Theorem \ref{thm:lower-hb}). First, from the generalized patching theorem, it is straightforward to prove the fixed boundary analog of Theorem \ref{thm:shininglight}. 
\begin{cor}\label{cor:fixed bc}
    Suppose that $(R,b)$ is flexible and $b$ is a boundary asymptotic flow which is extendable outside. Fix a sequence of tileable regions $R_n$ approximating $R$ in Hausdorff distance and with boundary values on $\partial R$ converging to $b$ in $\m W_1^{1,1}$. For any $\delta>0$ and any $g\in AF(R,b)$, there is $n$ large enough such that there exists a tiling $\tau$ of $R_n$ with $d_W(f_{\tau},g) < \delta$.
\end{cor}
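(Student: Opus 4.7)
My plan is to combine the shining light construction of Theorem \ref{thm:shininglight} with the generalized patching theorem (Theorem \ref{thm:generalized_patching}). The shining light construction provides a free-boundary tiling of $R$ approximating any given asymptotic flow, while the generalized patching theorem lets us replace such a tiling in a thin collar along $\partial R$ with one realizing the fixed boundary values $b_n$. The main obstacle is that Theorem \ref{thm:generalized_patching} requires the flow being approximated in the interior to be \emph{flexible}, i.e.\ bounded away from $\partial \mathcal O$ on compact subsets of $\mathrm{Int}(R)$, whereas the given $g \in AF(R,b)$ need only take values in $\mathcal O$. I would address this by perturbing $g$ slightly into the interior of $\mathcal O$ before running the two-step argument.

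Specifically, by Lemma \ref{lem:fully_flexible}, flexibility of $(R,b)$ supplies $f_0 \in AF(R,b)$ with $\overline{f_0(D)} \subset \mathrm{Int}(\mathcal O)$ for every compact $D \subset \mathrm{Int}(R)$. For $\epsilon \in (0,1)$, the convex combination $g_\epsilon := (1-\epsilon) g + \epsilon f_0$ lies in $AF(R,b)$ (divergence-free, $\mathcal O$-valued, correct boundary trace) and is itself flexible: if $f_0(D) \subset \{s \in \mathcal O : d(s, \partial \mathcal O) \geq \eta\}$, then since $\mathcal O$ is convex, for any $|t| < \epsilon \eta$ we may write $g_\epsilon(x) + t = (1-\epsilon) g(x) + \epsilon \bigl(f_0(x) + t/\epsilon\bigr) \in \mathcal O$, so $\overline{g_\epsilon(D)}$ is contained in the $\epsilon\eta$-interior of $\mathcal O$. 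Since $|g - g_\epsilon| \leq 2 \epsilon$ pointwise, Proposition \ref{prop: wass vs sup} gives $d_W(g, g_\epsilon) \to 0$ as $\epsilon \to 0$.

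Given $\delta > 0$, I would first fix $\epsilon > 0$ small enough that $d_W(g, g_\epsilon) < \delta/3$, and then fix $c > 0$ small enough that any two tilings of $R_n$ that agree on $R_n^c$ have tiling flows within $\delta/3$ in $d_W$. This last bound is a purely volumetric estimate from Lemma \ref{lem:covered}: partitioning $R$ into $O(c^{-3})$ cubes of side $c$, the measures differ only on the $O(c^{-2})$ cubes meeting the collar $R\setminus R^c$, each contributing at most $O(c^3)$, so the total Wasserstein cost is $O(c)$. Applying Theorem \ref{thm:shininglight} to $g_\epsilon$ (whose boundary trace $b$ is extendable outside by hypothesis), for all sufficiently large $n$ there is $\tau_n \in T_n(R)$ with $d_W(f_{\tau_n}, g_\epsilon) < \delta/3$. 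Since $R_n$ is tileable, fix any tiling $\sigma_n$ of $R_n$. Because $g_\epsilon$ is flexible, Theorem \ref{thm:generalized_patching} applies: for $n$ large enough the annular region $R_n \setminus R_n^c$ is tileable with inner boundary condition $\tau_n$ and outer boundary condition $\sigma_n$, and gluing produces a tiling $\tau$ of $R_n$ that agrees with $\tau_n$ on $R_n^c$. The triangle inequality then yields
\[ d_W(f_\tau, g) \leq d_W(f_\tau, f_{\tau_n}) + d_W(f_{\tau_n}, g_\epsilon) + d_W(g_\epsilon, g) < \tfrac{\delta}{3} + \tfrac{\delta}{3} + \tfrac{\delta}{3} = \delta, \]
which completes the proof. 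The only mildly delicate points are verifying that $g_\epsilon$ inherits flexibility and the volumetric bound on $d_W(f_\tau, f_{\tau_n})$, both of which are routine applications of the machinery already in place.
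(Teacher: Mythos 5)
Your proof is correct and takes essentially the same route as the paper's. The paper's argument is in fact terser: it just produces the flexible $g_\epsilon = \epsilon g_0 + (1-\epsilon)g$, and then states in one line that ``by Theorem~\ref{thm:generalized_patching} for $n$ large enough there exists a tiling $\tau$ of $R_n$ such that $d_W(g_\epsilon,f_\tau)<\delta/2$,'' leaving implicit precisely the steps you spell out --- invoking Theorem~\ref{thm:shininglight} to get an approximating free-boundary tiling $\tau_n$, applying generalized patching to replace $\tau_n$ by a tiling $\tau$ of $R_n$ agreeing with $\tau_n$ on $R^c_n$, and using the collar-volume estimate to control $d_W(f_\tau,f_{\tau_n})$. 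You have additionally verified that $g_\epsilon$ inherits flexibility, which the paper asserts without proof; your convexity argument is a valid way to see this. The one cosmetic remark is that Proposition~\ref{prop: wass vs sup} is stated for continuous flows, so it is cleaner to cite the trivial add/delete bound directly (as the paper does with its unnamed constant $C$), but this is immaterial since the proof of that proposition makes no use of continuity.
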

\begin{proof}
    Since $(R,b)$ is flexible, there exists $g_0\in AF(R,b)$ which is flexible. For any $\epsilon > 0$, the new flow $g_\epsilon = \epsilon g_0 + (1-\epsilon) g$ satisfies $d_W(g,g_\epsilon) < C \epsilon$ for some constant $C>0$. Taking $\epsilon$ small enough, we can guarantee that $d_W(g,g_\epsilon)< \delta/2$. 

    Since $g_\epsilon$ is flexible and $R_n$ is tileable, by Theorem \ref{thm:generalized_patching} for $n$ large enough there exists a tiling $\tau$ of $R_n$ such that $d_W(g_\epsilon, f_\tau)<\delta/2$. By the triangle inequality $d_W(g,f_\tau)< \delta$, which completes the proof.
\end{proof}
Adding Corollary \ref{cor:fixed bc} as the first step, the proof of Theorem \ref{thm:lower-hb} (hard boundary lower bound) is the same as the proof of Theorem \ref{thm:lower} (soft boundary lower bound).
\begin{proof}[Proof of Theorem \ref{thm:lower-hb}]
    By Corollary \ref{cor:fixed bc}, given any $\delta >0$, for $n$ large enough and any $g\in AF(R,b)$ we can find a tiling $\tau$ of $R_n$ such that $d_W(f_\tau,g)<\delta$. Further, this tiling is of the form given in the shining light construction (proof of Theorem \ref{thm:shininglight}), other than in an annulus of width $c\in(0,1)$ where $c$ can be taken arbitrarily small. The remainder of the proof is identical to the proof of Theorem \ref{thm:lower}, where we use the regular patching theorem to patch in samples from ergodic Gibbs measures of appropriate mean currents. 
\end{proof}

\begin{figure}
    \centering
    \includegraphics[scale=0.6]{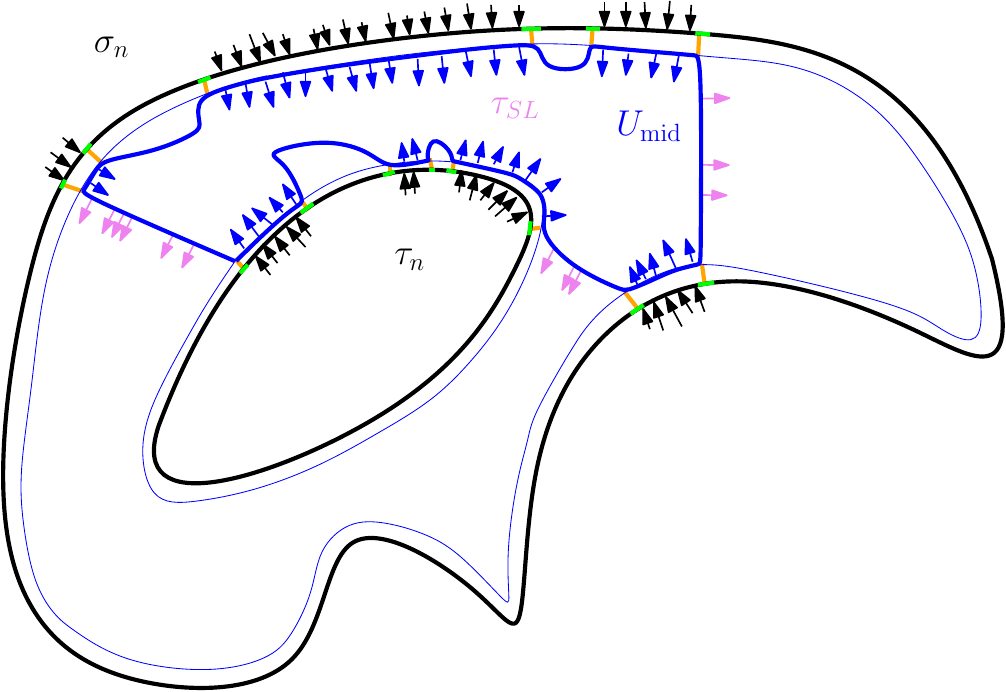}
    \caption{2D schematic picture for the proof of the generalized patching theorem. See further explanation in the proof sketch below.}
    \label{fig:generalized_patching}
\end{figure}
It remains to prove the generalized patching theorem (Theorem \ref{thm:generalized_patching}). The proof is structurally analogous to the proof of the regular patching theorem for cubes $B_n = [-n,n]^3$ (Theorem \ref{patching}), and relies on a sequence of lemmas. We give an outline of the main ideas to explain where each of the lemmas is used, accompanied by the schematic picture in Figure \ref{fig:generalized_patching}. We then state and prove each of the lemmas, followed by a proof of Theorem \ref{thm:generalized_patching}. 

Note that there are some superficial changes between the results here and their analogs in Section \ref{sec:patching plus hall}, since here our regions $R_n\subset \frac{1}{n} \m Z^3$ instead of $B_n\subset \m Z^3$. Basically this corresponds to a change in units. We introduce a few new pieces of notation to make it easier to work with the tilings $\tau$ of $\frac{1}{n} \m Z^3$ instead of $\m Z^3$. 
\begin{itemize}
    \item If $Q$ is a discrete surface built out of lattice squares in $\frac{1}{n}\m Z^3$, we define $\text{area}_n(Q)$ to be the number of lattice squares on $Q$. This is $n^2$ times the Euclidean area of $Q$. If $Q$ is a surface not built of lattice squares, we can still use $\text{area}_n(Q)$ to mean the Euclidean area of $Q$ times $n^2$.
    \item If $\ell$ is a discrete curve built out of edges of squares in $\frac{1}{n} \m Z^3$, we defined $\text{length}_n(\ell)$ to be number of lattice edges in $L$. This is $n$ times the Euclidean length of $\ell$. 
    \item If $\tau$ is a tiling of $\frac{1}{n}\m Z^3$, then the tiling flow $f_\tau$ and pretiling flow $v_\tau$ are typically \textit{rescaled} so that for $e\in \frac{1}{n}\m Z^3$, $v_\tau(e) = \pm 1/n^3$ or $0$, and $f_\tau(e) = \pm5/6n^3$ or $\pm 1/6n^3$. We define $\widetilde v_\tau$ to be \textit{unrescaled flow} $\widetilde v_\tau(e) = \pm 1$ or $0$ for $e\in \frac{1}{n}\m Z^3$, and similarly $\widetilde f_\tau(e) = \pm 5/6$ or $\pm 1/6$. 
\end{itemize}
\textbf{Proof sketch.} The proof of Theorem \ref{thm:generalized_patching} uses a mixture of combinatorial results like in Section \ref{sec:patching plus hall} and more analytic results about Wasserstein distance, which are for rescaled tiling flows. These pieces of notation make it easier to go between these points of view, and to explain the purely combinatorial arguments in a way more analogous to Section \ref{sec:patching plus hall}.

The main combinatorial tool is again Hall's matching theorem (Theorem \ref{thm:halls matching 2}), which says that if $A =R_n\setminus R_n^c$ is not tileable, then there exists a counterexample set $U\subset A$ which proves it. The \textit{interior boundary} $S$ of $U$ is without loss of generality a minimal monochromatic discrete surface with number of squares from $\frac{1}{n} \m Z^3$ on the surface bounded above and below by a constant times $n^2$ (Lemma \ref{lem:general_annulusbound}). The first step is to \textit{indent} slightly and let $A_{\text{mid}}\subset A$ be a slightly smaller annulus where $U$ is well-behaved (Lemma \ref{lem:generalized_indenting}). We then define $U_{\text{mid}} = U\cap A_{\text{mid}}$. 

We find a \textit{test tiling} $\tau_{\text{SL}}$ using a shining light construction (Definition \ref{def:shininglight_measures}) and show that it satisfies a flow bound (Lemma \ref{lemma:shininglight_flowbound}). This is where we use the condition that $g$ is flexible. Using Lemma \ref{lemma:shininglight_flowbound}, we show that there is a constant $K \in (0,1)$ such that $\text{imbalance}(U_{\text{mid}}') \leq - K n^2 + O(n)$, where $U_{\text{mid}}'$ is $U_{\text{mid}}$ plus a few cubes from the rest of $U$ (determined by $\tau_{\text{SL}}$). This ``slack" corresponds to flow from $\tau_{\text{SL}}$ which exits through the boundary of $U_{\text{mid}}$ in the interior, see the pink arrows in Figure \ref{fig:generalized_patching}.

It remains to bound the imbalance in $U_{\text{shell}} := U\setminus U_{\text{mid}}'$. This we break into two pieces: 
\begin{itemize}
    \item Regions of $U_{\text{shell}}$ contained in nice ``cylinders" connecting $\partial A$ to $\partial A_{\text{mid}}$. These are the regions where there are black and blue arrows in Figure \ref{fig:generalized_patching}, the sides of the cylinder are the orange regions which we call the ``ribbon surface."
    \item The rest of $U_{\text{shell}}$, which we call the ``leftover region." 
\end{itemize}
Up to error related to the orange area in Figure \ref{fig:generalized_patching} (the ribbon surface), we show that the imbalance of $U_{\text{shell}}$ is the same as the imbalance in the cylinder regions. Finally we relate the imbalance in the cylinders to the flux of tiling flows $f_{\sigma_n}$, $f_{\tau_n}$, and $f_{\tau_{\text{SL}}}$, which we can bound using Wasserstein convergence considerations and Lemma \ref{lem:constant_order_box_Wass_bound}, up to an error proportional to the green area in Figure \ref{fig:generalized_patching}.

This completes the sketch, and we now proceed to the lemmas. We first note that Lemma \ref{annulusbound} for cubes has an analog for general regions. The only difference is that the constants $c_1,c_2$ change since they can depend on the regions. 
\begin{lemma}\label{lem:general_annulusbound}
    Let $R_n\subset \frac{1}{n} \m Z^3$ be a region of diameter in lattice squares at least $n$ such that $R_n$ are regions approximating a fixed region $R\subset \m R^3$ with $\partial R$ a piecewise smooth surface. Define $A = R_n\setminus R_n^c$ for some $c>0$. Suppose that $S\subset A$ is a monochromatic minimal discrete surface in $\frac{1}{n}\m Z^3$ with connects the inner and outer boundaries of $A$. Then there exist constant $c_1,c_2$ independent of $S$ and $n$ such that $$ c_1 n^2 \geq \text{area}_n(S) \geq c_2 n^2$$
    where $c_2 \sim c^2$.
\end{lemma}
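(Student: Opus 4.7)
The proof is a direct adaptation of Lemma~\ref{annulusbound}, with the cube annulus $B_n \setminus B_{(1-\delta)n}$ replaced by $A := R_n \setminus R_n^c$ and the units rescaled from $\mathbb{Z}^3$ to $\frac{1}{n}\mathbb{Z}^3$. The two ingredients — a capping construction on $\partial A$ to produce the upper bound, and the quadratic growth estimate (Proposition~\ref{prop:quadratic growth}) applied at a point of $S$ for the lower bound — both transfer, with only the geometric constants needing to be tracked.

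For the upper bound, the plan is to build a monochromatic black surface $T \supset \partial A$ by replacing each white square of $\partial A$ by the at most five black squares obtained from capping it with the adjacent odd cube on the correct side of $A$, giving $\text{area}_n(T) \leq 5\,\text{area}_n(\partial A)$. Since $\partial R$ is piecewise smooth and $R$ is compact, both $\partial R$ and $\partial R^c$ have finite Euclidean surface area depending only on $R$ and $c$, so $\partial R_n$ and $\partial R_n^c$ together contain $O(n^2)$ lattice squares of $\frac{1}{n}\mathbb Z^3$. Because $\partial S \subset \partial A$, the curves $\partial S$ cut $T$ into components, one of which is a monochromatic sub-surface $S' \subset T$ with $\partial S' = \partial S$. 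Minimality of $S$ then gives $\text{area}_n(S) \leq \text{area}_n(S') \leq \text{area}_n(T) \leq c_1 n^2$, where $c_1$ depends on $R$ and $c$ but not on $S$ or $n$.

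For the lower bound, the key observation is that $\partial R$ and $\partial R^c$ are at Euclidean distance at least $c$ by definition of $R^c$, so for $n$ large the discrete boundaries $\partial R_n$ and $\partial R_n^c$ are at Euclidean distance at least $c/2$. Since $S$ touches both, the Euclidean diameter of $S$ is at least $c/2$, and hence its $L^\infty$-diameter is at least $c/(2\sqrt{3})$. Rescaling by $n$, the set $\bar S := nS$ is a minimal monochromatic discrete surface in $\mathbb{Z}^3$ with $\text{area}(\bar S) = \text{area}_n(S)$ and $L^\infty$-diameter at least $cn/(2\sqrt{3})$. Fixing any $\bar p \in \bar S$, there is a second point of $\bar S$ at $L^\infty$ distance at least $cn/(4\sqrt{3})$ from $\bar p$, so $\bar S$ is not contained in the box $\bar p + [-m,m]^3$ with $m = \lfloor cn/(4\sqrt{3})\rfloor - 1$. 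Applying Proposition~\ref{prop:quadratic growth} yields $\text{area}_n(S) = \text{area}(\bar S) \geq \kappa m^2$, which for $n$ large is at least $c_2 n^2$ with $c_2$ of the desired order $c^2$.

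Neither step is a serious obstacle; the only real bookkeeping issues are the Euclidean-to-$L^\infty$ conversion when invoking Proposition~\ref{prop:quadratic growth}, the $O(1/n)$ slack between $(\partial R,\partial R^c)$ and its scale-$n$ discretization $(\partial R_n,\partial R_n^c)$, and the verification that piecewise smoothness of $\partial R$ is enough to bound $\text{area}_n(\partial A)$ by a constant times $n^2$. Once these are noted, the argument is identical to that of Lemma~\ref{annulusbound}.
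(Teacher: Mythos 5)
Your upper bound matches the paper's: cap the white squares of $\partial A$ to get a monochromatic competitor for $S$, use piecewise smoothness of $\partial R$ and $\partial R^c$ to bound $\text{area}_n(\partial A)$ by a constant times $n^2$, and invoke minimality. That part is fine.

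The lower bound has a gap in where you base Proposition~\ref{prop:quadratic growth}. You take an \emph{arbitrary} $\bar p \in \bar S$ and argue (correctly) that $\bar S \not\subset \bar p + [-m,m]^3$ for $m \approx cn/(4\sqrt 3)$. But the paper is careful to pick $p \in S$ at distance at least $c/3$ from $\partial A$, and this is not just bookkeeping: the proof of Proposition~\ref{prop:quadratic growth} works by cutting $S$ with concentric lattice annuli $A_k = B_{m'+k}(p)\setminus B_{m'-k}(p)$, bounding $\text{area}(S\cap A_k)$ by the isoperimetric inequality, and then using the growth estimate $\text{area}(S_{k+1})-\text{area}(S_k)\geq \tfrac14 \text{length}(\Gamma_k)$, which requires every edge of $\Gamma_k = \partial(S\cap A_k)$ to be covered by a new face of $S$. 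If $\bar p$ sits near $\partial A$ (hence near $\partial S$, since $\partial S\subset\partial A$), the boxes around $\bar p$ spill out of the annulus; $\Gamma_k$ then contains arcs of $\partial S$ that have no new face on the other side, and the growth estimate can fail. In that regime the proof of the proposition shows only that $\text{area}(S\cap B_m(\bar p))$ is controlled, and this quantity may be small when most of $B_m(\bar p)$ lies outside $A$. The fix is exactly the step you skipped: because $S$ connects the inner and outer boundaries of $A$, some $p\in S$ lies at Euclidean distance at least $c/3$ from $\partial A$; centering the box there (with radius slightly less than $cn/(3\sqrt 3)$ so it stays strictly inside $A$) keeps the concentric annuli away from $\partial S$ and makes Proposition~\ref{prop:quadratic growth} applicable as the paper intends. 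Once you make that choice, your argument and the paper's coincide.
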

\begin{proof}
This proof is completely analogous to the proof of Lemma \ref{annulusbound}.     

For the upper bound, we use that $S$ is minimal to get a bound which is a constant times the surface area of $\partial R_n$. Since $R_n\subset \frac{1}{n}\m Z^3$ are regions approximating $R$, and since $\partial R$ is piecewise smooth, $\text{area}_n(\partial R_n)$ is bounded by a constant times $n^2$, where this constant is determined by $\partial R$. 
        
For the lower bound, we apply Proposition \ref{prop:quadratic growth} to a point $p\in S$ which is distance at least $c/3$ from $\partial A$ to get that 
    \begin{align*}
        \text{area}_n(S) \geq \kappa ((c/3)\, n)^2 = c_2 n^2,
    \end{align*}
    where $\kappa$ is a universal constant coming from the isopertimetric inequality. From this expression we see that $c_2$ is order $c^2$.
\end{proof}

\begin{lemma}[Generalized indenting lemma]\label{lem:generalized_indenting}
Let $A = R_n \setminus R_{n}^c$, and fix $\beta>0$ small. Let $S$ be a minimal monochromatic surface connecting inner and outer boundaries of $A$. There exists $\epsilon<c$ independent of $S$ such that the following hold for $n$ large enough:
    \begin{enumerate}
        \item Let $A_{a,b}$ denote the annulus between layers $\partial R_n^a$ and $\partial R_n^b$. Then $\text{area}_n(S\cap A_{\epsilon,2\epsilon}) < \beta n^2$. 
        \item There exists $a\in (\epsilon, 2\epsilon)$ such that $\text{length}_n(\partial(\partial R_n^a \cap S)) \leq (\beta/\epsilon) n$. 
    \item A ``ribbon surface" $\gamma$ for $S\cap \partial R_n^a$ is a surface connecting $\partial R_n^a$ to $\partial R_n$ with boundary $\partial (S\cap \partial R_n^a)$ on $\partial R_n^a$. The ``{ribbon area}" is the minimal $\text{area}_n$ of a ribbon surface, and is bounded by $2\beta n^2$. 
    \end{enumerate}
    The analogous bounds hold for some $a'\in (c-2\epsilon, c -\epsilon)$.
\end{lemma}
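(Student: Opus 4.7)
The plan is to mirror the cube-case indenting lemma (Lemma \ref{lem: indenting}) by combining the global area bound from Lemma \ref{lem:general_annulusbound} with two successive pigeonhole arguments, then constructing the ribbon surface in claim (3) by an explicit outward extension of the boundary curve found in claim (2).

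First I would apply Lemma \ref{lem:general_annulusbound} to get the global bound $\text{area}_n(S)\leq c_1 n^2$, with $c_1$ depending only on $R$ and not on $S$ or $n$. Given $\beta>0$ small, choose an integer $M$ with $M>c_1/\beta$ --- this $M$ depends only on $\beta$ and $R$ --- and set $\epsilon_0=c/(2M)$. Partition the ``shell'' $R_n\setminus R_n^{c/2}$ into $M$ concentric sub-annuli of equal width $\epsilon_0$, namely $A_{k\epsilon_0,(k+1)\epsilon_0}$ for $k=0,\ldots,M-1$. By pigeonhole, some index $k^\star=k^\star(S)$ satisfies $\text{area}_n(S\cap A_{k^\star\epsilon_0,(k^\star+1)\epsilon_0})\leq c_1n^2/M<\beta n^2$. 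Relabelling the inner and outer radii of this sub-annulus as $\epsilon$ and $2\epsilon$ (after choosing $M$ so that widths match up to a factor of $2$) gives claim (1). Note that $\epsilon$ lies in a finite set of candidates determined entirely by $\beta$ and $R$; only the choice among this finite set may depend on $S$.

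For claim (2), the annulus $A_{\epsilon,2\epsilon}$ contains roughly $\epsilon n$ discrete layers $\partial R_n^a$. Each lattice square of $S\cap A_{\epsilon,2\epsilon}$ contributes length at most $4$ to $\partial(\partial R_n^a\cap S)$ summed over the layers it touches, so
\begin{align*}
\sum_{a\in(\epsilon,2\epsilon)}\text{length}_n\bigl(\partial(\partial R_n^a\cap S)\bigr)\;\leq\;4\,\text{area}_n(S\cap A_{\epsilon,2\epsilon})\;\leq\;4\beta n^2.
\end{align*}
A second pigeonhole over the $\epsilon n$ layers produces some $a\in(\epsilon,2\epsilon)$ with $\text{length}_n(\partial(\partial R_n^a\cap S))\leq (4\beta/\epsilon)n$, and absorbing the factor of $4$ into the initial choice of $\beta$ gives the stated bound $(\beta/\epsilon)n$. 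For claim (3), I would build $\gamma$ by extending the curve $\partial(\partial R_n^a\cap S)\subset\partial R_n^a$ outward to $\partial R_n$ along the outward normal field of $\partial R$. This sweeps out a discrete surface whose lattice area is at most (length of the curve)$\times$(distance from $\partial R_n^a$ to $\partial R_n$), bounded by $(\beta/\epsilon)n\cdot 2\epsilon n=2\beta n^2$.

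The main obstacle will be the ribbon construction in claim (3): $\gamma$ must be a bona fide discrete surface in $\frac{1}{n}\m Z^3$ whose boundary on $\partial R_n^a$ is exactly $\partial(S\cap\partial R_n^a)$, and the outward normal to $\partial R$ is not uniquely defined at edges and corners of the piecewise-smooth boundary. I plan to handle this by partitioning the boundary curve into arcs lying near single smooth pieces of $\partial R$, extending each along the local outward normal snapped to an appropriate lattice direction, and stitching adjacent ribbon pieces together in the $\epsilon$-neighborhood of the seams between smooth pieces. Piecewise-smoothness of $\partial R$ gives a uniform (in $S$ and $n$) bound on the additional area contributed by snapping and stitching, so this correction can be absorbed into the constant when choosing $M$. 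The two pigeonhole steps themselves are routine and essentially unchanged from the proof of Lemma \ref{lem: indenting}.
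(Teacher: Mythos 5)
Your overall strategy is right — use the global area bound from Lemma \ref{lem:general_annulusbound}, a first pigeonhole for small band area, a second pigeonhole for small boundary length, and an explicit extension for the ribbon. You also correctly flag that $\epsilon$ really only lies in a finite $S$-independent set (a point on which the paper's own statement is a bit loose), and that the factor of $4$ lost in the paper's step (2) needs to be absorbed. However, there is a genuine gap in your claim (1).

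The gap is in your choice of \emph{equal-width} annuli $A_{k\epsilon_0,(k+1)\epsilon_0}$ followed by ``relabelling the inner and outer radii as $\epsilon$ and $2\epsilon$.'' An annulus $A_{k\epsilon_0,(k+1)\epsilon_0}$ is of the required form $A_{\epsilon,2\epsilon}$ only when $k=1$: if you set $\epsilon = k\epsilon_0$ and require $2\epsilon = (k+1)\epsilon_0$ you force $k=1$, and for $k\geq 2$ no $A_{\epsilon,2\epsilon}$ can be squeezed inside $A_{k\epsilon_0,(k+1)\epsilon_0}$ (that would need $k\epsilon_0\leq\epsilon$ and $2\epsilon\leq(k+1)\epsilon_0$, i.e.\ $k\leq 1$), so the small-area bound transfers the wrong way. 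Since the pigeonhole index $k^\star(S)$ can be anything in $\{0,\ldots,M-1\}$, your construction does not produce a band of the form $A_{\epsilon,2\epsilon}$ in general. This matters downstream: the ribbon estimate in your claim (3) uses the fact that the layer $a\in(\epsilon,2\epsilon)$ is within Euclidean distance $2\epsilon$ of $\partial R_n$, so the ribbon area is at most $(\beta/\epsilon)n\cdot 2\epsilon n = 2\beta n^2$. If instead $a\in(k^\star\epsilon_0,(k^\star+1)\epsilon_0)$ for large $k^\star$, the ribbon has to span a distance up to $(k^\star+1)\epsilon_0$ and its area becomes $\approx (k^\star+1)\beta n^2$, not $2\beta n^2$. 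The fix is to pigeonhole over the \emph{dyadic} family of annuli $A_{b/2^{k+1}, b/2^k}$, $k=0,\ldots,K-1$, which are all of the form $A_{\epsilon,2\epsilon}$ with $\epsilon=b/2^{k+1}$; their total $S$-area is $\leq c_1 n^2$, so choosing $K\geq c_1/\beta$ and applying pigeonhole yields some $k$ with area $\leq\beta n^2$, and $\epsilon$ lies in the finite set $\{b/2,b/4,\ldots,b/2^K\}$ determined by $\beta$, $c$, $R$. This is what the paper does (phrased as iterated halving). Separately, your concern in claim (3) about making $\gamma$ a bona fide lattice surface is unnecessary: the lemma's ``ribbon surface'' is allowed to be a general (non-discrete) surface, as the paper notes parenthetically in its proof.
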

\begin{rem}
    For two surfaces $A,B$, the set $A\cap B$ is either a surface (2-dimensional), a curve (1-dimensional), or a combination of the two. In any of these cases, we take $\partial (A\cap B)$ to mean that we take union of the curve part of $A\cap B$ and the boundary of the surface part of $A\cap B$. 
\end{rem}
\begin{proof}

By Lemma \ref{lem:general_annulusbound}, $c_2 n^2 \leq \text{area}_n(S)\leq c_1 n^2$. For a given $b>0$, divide a band of the form $A_{0,b}$ in half. Each time we divide, by Lemma \ref{lem:general_annulusbound} both the halves of $S$ have area bounded below by a fixed constant times $b^2$. On the other hand, one of the halves of $S$ can have at most $1/2$ the original area. Iterating this, we can find $\epsilon>0$ small enough so that the outer band after we split has area at most $\beta n^2$.

By the pigeonhole principle, there exists a layer $an$ between $\epsilon n$ and $2\epsilon n$ where 
\begin{align*}
    \text{length}_n(\partial(\partial R_n^a \cap S)) \leq (\beta n^2)/(\epsilon n)=(\beta/\epsilon) n.
\end{align*}
Given this, we can find a ribbon surface $\gamma$  with $\text{area}_n(\gamma)\leq (\beta/\epsilon)n (2\epsilon n) = 2\beta n^2$, and hence the ribbon area is bounded by $2 \beta n^2$. 
\end{proof}

Suppose that $g\in AF(R,b)$ is flexible. For any $\delta>0$, we can find a piecewise-constant flow $\widetilde g$ with $d_W(g,\widetilde g)<\delta$ (Proposition \ref{prop:pc_approx}). More precisely, $\widetilde g$ is piecewise-constant on a mesh of small tetrahedra $\mc X$. The region $\widetilde R = \cup_{X\in \mc X} X$ contains $R$, and $\widetilde g\in AF(\widetilde R)$. In the proof of Theorem \ref{thm:shininglight}, to construct a tiling approximation $\tau\in T_n(R)$ of $g$ for $n$ large, we construct a tiling approximation $\widetilde \tau \in T_n(\widetilde R)$ of $\widetilde g$ for $n$ large, and restrict it to $R$. 

The flexible condition passes from $g$ to $\widetilde g$ as follows. For any compact set $D\subset \text{Int}(R)$, $g$ flexible means that $\overline{g(D)}\subset \text{Int}(\mc O)$. In particular there is a constant $k_D\in[0,1)$ such that if $x\in D$ then $$|g(x)|_1\leq k_D < 1,$$ 
where $|\cdot|_1$ denotes the $L^1$ norm. When $g$ is flexible, we can choose $\widetilde g$ so that for any compact set $D\subset \text{Int}(R)$, for all tetrahedra $X\in \mc X$ such that $X\subset D$, $|\widetilde g_X|_1\leq k_D$. When this holds, we say that $\widetilde g$ is a \textit{piecewise constant approximation of $g$ inheriting the flexible condition.}

For any $\epsilon>0$ and $D\subset \text{Int}(R)$ compact, for $\delta>0$ small enough we can find $\widetilde g$ which is piecewise-constant on a scale $\delta$ tetrahedral mesh $\mc X$, has $d_W(g,\widetilde g)<\epsilon$, and inherits the flexible condition on $D$, i.e.\ $|\widetilde g_X|\leq k_D$ for all $X\in \mc X$ such that $X\subset D$. 

In the shining light construction in the proof of Theorem \ref{thm:shininglight}, we cut the tetrahedra into tubes (with thin space between them), and construct the tiling approximation $\widetilde \tau$ of $\widetilde g$ by filing the tubes in $X$ with periodic tilings of mean current approximating $\widetilde g_X$ on $X$, and fill the thin area in between with all $-\eta_1$ tiles. There is some maximum period $r$ that we use to construct the periodic tilings on the tubes, and $r$ is independent of $n$. For all $n$, at all the sites where $\widetilde \tau$ does not have mean current $\widetilde g$, $\widetilde \tau$ looks locally like the $-\eta_1$ brickwork pattern. The width of the region containing the places where $\widetilde \tau$ looks like the $-\eta_1$ brickwork pattern is $O(r)$ and therefore independent of $n$.  

\begin{definition}[Shining light measures]\label{def:shininglight_measures}
Let $g\in AF(R,b)$ and let $\widetilde g$ be an approximation as discussed above. For each $n$, let $\widetilde \tau_n\in T_n(\widetilde R)$ be a tiling produced by the shining light construction with $\widetilde g$, where the periodic tilings in the tubes have maximum period $r$. Fix a large constant $C$ which is a multiple of $r$. We define a \textit{sequence of shining light measures} $\lambda_n$ for $g$ using $\widetilde g$ so that for each $n$, $\lambda_n$ is uniform measure on tilings of the form $(\widetilde \tau_n + x)\mid_R\in T_n(R)$ for $x\in \frac{1}{n}\m Z^3$ with $|x| \leq C$. 
\end{definition}

We include averaging over translations in the definition of $\lambda_n$, since this allows to use flexible condition on $g$ to get bounds on the expected flux. Using this, we prove a lemma analogous to Lemma \ref{lem: flow bound} from Section \ref{sec:patching plus hall}. Instead of a result for ergodic measures of mean current $s\in \text{Int}(\mc O)$, this lemma is for a sequence of shining light measures $\lambda_n$ for a flexible flow $g$. This lemma is why the \textit{flexible} condition is needed for generalized patching and hence for the hard boundary LDP. 

\begin{lemma}[Shining light flow bound]\label{lemma:shininglight_flowbound}
    Let $D\subset \text{Int}(R)$, and let $g\in AF(R,b)$ be flexible. Let $\widetilde g$ be piecewise constant approximation of $g$ on a tetrahedral mesh $\mc X$ such that $\widetilde g$ inherits the flexible condition, and such that the union of tetrahedra $\mc D\subset \mc X$ covering $D$ is still contained in $\text{Int}(R)$.
    
     Let $S$ be a monochromatic black surface in $\frac{1}{n}\m Z^3$ with boundary $\partial S$ and $\lambda_n$ be a sequence of shining light measures as in Definition \ref{def:shininglight_measures} for $\widetilde g$. Let $\Theta_D$ be the collection of odd cubes adjacent to $S\cap D$. Let $N = \text{area}_n(S\cap D)$ be the number of squares from $\frac{1}{n} \m Z^3$ on $S\cap D$. Then there is a constant $K_D\in (0,1)$ independent of $S$ such that for all $n$ large enough,
     \begin{align*}
         \m E_{\lambda_n}[|\text{flux}(\widetilde v_\tau,S\cap D)|]\geq K_{D} |\Theta_{D}| +O(n^{-1}N) + O(\text{length}_n(\partial(S\cap D)).
     \end{align*}
     The constant $K_D$ depends only on $D$ and $g$. In particular, it is independent of $\widetilde g$, as long as $\widetilde g$ inherits the flexible condition and is constructed on a small enough mesh $\mc X$. 
\end{lemma}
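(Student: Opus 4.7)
The strategy mirrors the proof of Lemma \ref{lem: flow bound}, but replaces the ergodic-theorem input by the explicit structure of the shining light tilings, with the flexibility condition supplying the uniform lower bound on tile densities. I would begin by analyzing the tile-type frequencies of the periodic tilings $\tau_{\widetilde g_X}$ constructed in Section \ref{sec: shining light}. Recall that if $\widetilde g_X = v = (1-a)(-\eta_1) + aw$ with $w = w(v) \in \partial\mc O$ and $w_1 \geq 0$, then $\tau_v$ uses only four tile types, with densities $p_1 = aw_1$ ($\eta_1$), $p_2 = 1-a$ ($-\eta_1$), and $p_{j,\pm} = a|w_j|$ ($\mathrm{sign}(w_j)\eta_j$) for $j = 2,3$. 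A direct case analysis on the sign of $v_1$ shows that $|v|_1 \leq k_D < 1$ forces both $aw_1 \geq (1-k_D)/2$ and $1-a \geq (1-k_D)/2$. Since $g$ is flexible, $\overline{g(D)} \subset \text{Int}(\mc O)$, so there exists $k_D < 1$ with $|g(x)|_1 \leq k_D$ on $D$; choosing $\widetilde g$ close enough to $g$ and with mesh small enough so that the approximation inherits flexibility on $D$, we have $|\widetilde g_X|_1 \leq k_D$ for every $X \in \mc D$. Hence $K'_D := (1-k_D)/2 > 0$ bounds $\min(p_1, p_2)$ from below uniformly, depending only on $g$ and $D$.

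Next, for a square $f \in S \cap X$ with outward normal in the $\pm\eta_1$ direction lying in the ``bulk'' of a tube (at Euclidean distance more than $O(r/n)$ from the tube boundary and the thin $-\eta_1$-brickwork transition layers), the periodic structure of $\widetilde\tau_n$ near $f$ together with averaging the translate $x$ uniformly over $|x| \leq C$ with $C \gg r$ implies
\[
\m P_{\lambda_n}\bigl[\text{a tile of }\widetilde\tau\text{ crosses }f\bigr] = p_{i(f)} + o(1),
\]
where $p_{i(f)} \in \{aw_1, 1-a\}$ is the density of the tile direction that must cross $f$. In particular this probability is at least $K'_D + o(1)$ for all such ``bulk'' squares.

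I would then apply Lemma \ref{lem: counting squares} to each piece $S \cap X$: the set of squares in $S \cap X$ with normal parallel to $\pm\eta_1$ has cardinality $\tfrac13\text{area}_n(S\cap X) + O(\text{length}_n(\partial(S \cap X)))$. Summing over $X \in \mc D$ by linearity and using the lower bound above,
\[
\m E_{\lambda_n}\bigl[|\text{flux}(\widetilde v_\tau, S \cap D)|\bigr] \geq \frac{K'_D}{3}\,\text{area}_n(S \cap D) - \text{(error terms)}.
\]
The errors come from three sources: the boundary of $S\cap D$, contributing $O(\text{length}_n(\partial(S\cap D)))$; the squares of $S$ lying within lattice-distance $O(r)$ of an interface $\partial X$ between tetrahedra of the mesh, which by a standard surface-area counting is controlled by $\sum_{X}\text{length}_n(S\cap \partial X) = O(n)$ (the mesh has diameter independent of $n$ and $S$ has area of order $n^2$ in lattice units); and the squares lying in the constant-lattice-width thin regions where $\widetilde\tau_n$ reverts to $-\eta_1$-brickwork, which contribute $O(n)$ by the same argument. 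Since $N = \text{area}_n(S \cap D) = O(n^2)$, both interior errors are $O(n^{-1}N)$, matching the error scale in the statement. Finally, because each square of $S \cap D$ is adjacent to exactly one odd cube while each cube of $\Theta_D$ touches at most six such squares, $|\Theta_D| \leq N \leq 6|\Theta_D|$, giving the bound with $K_D := K'_D/3 = (1-k_D)/6$.

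The main obstacle will be cleanly quantifying the contributions from the thin transition regions and from the mesh interfaces, since these are precisely the places where the periodicity of $\widetilde\tau_n$ breaks down and the density identity $\m P[\text{tile crosses }f] \approx p_{i(f)}$ need not hold. The key observation is that these are all lower-dimensional or codimension-one neighborhoods whose total $\text{area}_n$ intersection with $S$ is at most $O(n)$, which is exactly of the order $n^{-1} N$; no finer estimate is needed because the main term $K_D N$ is of order $n^2$.
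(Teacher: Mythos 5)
Your proposal takes the same essential route as the paper: use the flexibility condition to obtain a uniform lower bound on tile densities of the shining light tilings on each mesh tetrahedron in $\mc D$, then combine Lemma~\ref{lem: counting squares} with the monochromatic structure of $S$ to convert this into a flux lower bound, with error terms absorbed into $O(n^{-1}N)$ and $O(\text{length}_n(\partial(S\cap D)))$.

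There are two points worth comparing. First, your treatment of the density bound is more explicit than the paper's and yields a sharper constant. You use the parametrization $\widetilde g_X = (1-a)(-\eta_1) + a\,w$ with $w=w(\widetilde g_X)$ to show directly that $p_{\eta_1}=aw_1$ and $p_{-\eta_1}=1-a$ and that $|\widetilde g_X|_1\le k_D$ forces $\min\{aw_1,\,1-a\}\ge (1-k_D)/2$, so the \emph{same} coordinate pair $\{\eta_1,-\eta_1\}$ is ``good'' on every $X\in\mc D$. This makes the step where Lemma~\ref{lem: counting squares} is applied globally to $S\cap D$ go through transparently. The paper instead defines $p_D = \min_{X}\max_j\min\{p_{2j-1}(X),p_{2j}(X)\}\ge (1-k_D)/6$ from the generic pigeonhole bound $\sum_j\min\{p_{2j-1},p_{2j}\}\ge(1-k_D)/2$; this is correct, but if a reader did not know that the optimal pair is always $\{\eta_1,-\eta_1\}$ they would need per-$X$ counting rather than the single application of Lemma~\ref{lem: counting squares} to $S\cap D$. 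Your observation supplies exactly what makes the paper's bookkeeping legitimate, and gives $K_D=(1-k_D)/6$ versus $K_D=(1-k_D)/18$.

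Second, a gap: your accounting of the squares of $S$ near mesh interfaces. You claim the squares of $S$ within lattice distance $O(r)$ of $\cup_X\partial X$ (and similarly of the $-\eta_1$-brickwork transition layers) contribute $O(n)$ to the sum, citing ``$\sum_X\text{length}_n(S\cap\partial X)=O(n)$.'' This estimate is not justified in general: the faces of the mesh tetrahedra lie in coordinate and $(\pm1,\pm1,\pm1)$ planes, so a coordinate-plane face can coincide with a plane of lattice squares, and then $S\cap\partial X$ may be a two-dimensional set of area up to order $n^2$ rather than a one-dimensional curve of length $O(n)$. The paper avoids this issue entirely: rather than splitting squares into bulk versus boundary, it averages over the translates $|x|\le C$ built into $\lambda_n$ to conclude that for \emph{every} $f\in S\cap D$ the crossing probability is $p_{i(f)}(X)+O(n^{-1})$, since the probability that the translate $f-x$ falls into a thin transition region of Euclidean width $O(r/n)$ inside the fixed ball $|x|\le C$ is itself $O(n^{-1})$. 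With that uniform estimate, the separate boundary-square count is unnecessary. Since you already invoke the averaging over $|x|\le C$ to get $p_{i(f)}+o(1)$ for bulk squares, the fix is to observe that the same argument applies to all squares of $S\cap D$, replacing the questionable geometric counting with the uniform $O(n^{-1})$ error per square.
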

\begin{rem}
    Recall that for $\tau$ a tiling in $\frac{1}{n}\m Z^3$, the flow $\widetilde v_\tau$ is the \textit{non-rescaled} pretiling flow. 
\end{rem}
\begin{proof}
    Restricted to any tetrahedron $X\in \mc X$, $\lambda_n$ samples a tiling which is periodic with mean current $\widetilde g_X$ up an $O(n^{-1})$ error. In particular, there are probabilities $p_1(X),....,p_6(X)$ such that the probability of seeing a tile of type $i$ in $\tau$ restricted to $X$ sampled from $\lambda_n$ is $p_i(X) +O(n^{-1})$ for $i=1,...,6$. Let $N_1,...,N_6$ denote the corresponding six types of squares $f$ on $S$. Let $N_i(X)$ denote the number of each type of square in $S$ restricted to $X$. 
    
    Recall that $\mc D\subset \mc X$ is the collection of mesh tetrahedra $X\in \mc X$ such that $X\cap D\neq \emptyset$. By assumption $\cup_{X\in \mc D} X\subset \text{Int}(R)$. Since $\widetilde g$ inherits the flexible condition, there is a constant $k_D\in [0,1)$ such that $|\widetilde g_X|_1\leq k_D$ for all $X\in \mc D$. This is related to the probabilities since 
    \begin{equation}\label{eq:prob_to_flow}
        \widetilde g_X = (p_1(X) - p_2(X), p_3(X)-p_4(X),p_5(X)-p_6(X)) 
    \end{equation}
    Since $S$ is monochromatic, the flux of $\widetilde v_\tau$ across $S$ is equal to minus the number of tiles in $\tau$ which cross $S$. Thus the expected value of the flux (up to sign) can be split as the sum of indicator functions $\mathbbm{1}_f$, where $f$ is a face of $S \cap D$, and $\mathbbm{1}_f(\tau)$ is $1$ if $f$ is crossed by a tile in $\tau$ and $0$ otherwise. Then
    \begin{align*}
        \m E_{\lambda_n}[|\text{flux}(\widetilde v_\tau,S\cap D)|] \geq \sum_{X\in \mc D} \sum_{f\in S\cap X\cap D} \m E_{\lambda_n}[\mathbbm{1}_f(\tau)] = \sum_{X\in \mc D} \sum_{i=1}^6 N_i(X) (p_i(X) + O(n^{-1})). 
    \end{align*}
    The total number of squares on the surface is $N = \sum_{X\in \mc D}\sum_{i=1}^6 N_i(X) = \text{area}_n(S\cap D)$. By Lemma \ref{lem: counting squares}, $N_1+ N_2, N_3+N_4, N_5+N_6$ are all equal to $N/3$ up to an error of $O(\text{length}_n(\partial(S\cap D))$. Let 
    \begin{align*}
        p_D = \min_{X\in \mc D} \max \{\min\{p_1(X),p_2(X)\},\min\{p_3(X),p_4(X)\},\min\{p_5(X),p_6(X)\}\}.
    \end{align*}
    Since $|\widetilde g_X|_1 \leq k_D<1$, for all $X\in \mc D$ at least four of $p_1(X),...,p_6(X)$ must be nonzero, including one from each pair. Combined with Equation \eqref{eq:prob_to_flow}, one can easily check from this that $p_D \geq (1-k_D)/6>0$. On the other hand by the same arguments as in Lemma \ref{lem: flow bound}, 
    \begin{align*}
        \m E_{\lambda_n}[|\text{flux}(\widetilde v_\tau,S\cap D)|] \geq p_D N /3 + O(n^{-1}N) + O(\text{length}_n(\partial (S\cap D))). 
    \end{align*}
    Since $|\Theta_D|\leq \text{area}_n(S\cap D)=N$, this completes the proof. The constant $K_D = p_D / 3= (1-k_D)/18$. As $k_D$ is determined by just $g$ and $D$, we note that this is independent of the choice of $\widetilde g$, as long as $\widetilde g$ inherits the flexible condition and is constructed on a small enough mesh $\mc X$. 
\end{proof}

Equipped with these lemmas, we can now give the proof of the generalized patching theorem. 

\begin{proof}[Proof of Theorem \ref{thm:generalized_patching}]
    If $A = R_n\setminus R_n^c$ is not tileable, then by Theorem \ref{thm:halls matching 2} there exists a counterexample region $U\subset A$ which has only odd cubes along its interior boundary $S$, but has 
    \begin{align*}
        \text{imbalance}(U) = \text{even}(U) - \text{odd}(U) > 0. 
    \end{align*}
    By Corollary \ref{cor:minimal_tileable}, we can assume that the interior boundary $S\subset \partial U$ is a minimal monochromatic discrete surface in $\frac{1}{n} \m Z^3$. 

    By Lemma \ref{lem:generalized_indenting}, for any $\beta>0$ we can find $\epsilon$ and inner and outer layers $a_+\in (\epsilon, 2\epsilon)$ and $a_-\in (c-2\epsilon, c-\epsilon)$ such that for $a=a_+$ or $a=a_-$,
    \begin{equation}\label{eq:length_generalized}
        \text{length}_n(\partial (S\cap \partial R_n^a)) \leq (\beta/\epsilon)n
    \end{equation}
    and further such that there is a ribbon surface $\gamma$ for $\partial (S\cap \partial R_n^a)$ such that 
    \begin{equation}\label{eq:ribbon}
        \text{area}_n(\gamma) \leq 2\beta n^2. 
    \end{equation}
    We define the middle annulus $A_{\text{mid}} = R_n^{a_+}\setminus R_n^{a_-}$. Let $U_{\text{mid}}=U\cap A_{\text{mid}}$. 
    
    We now choose a compact set $D\subset \text{Int}(R)$. We can assume that $D$ is contained in $A_{\text{mid}}$. We can find a piecewise-constant approximation $\widetilde g$ of $g$ on a tetrahedral mesh $\mc X$ which inherits the flexible condition. We let $\mc D\subset \mc X$ be the collection of tetrahedra $X$ such that $X\cap D\neq \emptyset$. We can assume that the mesh scale of $\mc X$ is small enough so that the union of all $X\in \mc D$ is still contained in $\text{Int}(R)$. 
        
    By analogous pigeonhole principle arguments in the indenting lemmas, we can further assume that $D$ has width in squares from $\frac{1}{n} \m Z^3$ of at least $cn/2$ and has $\text{length}_n(\partial (S\cap D)) = O(n)$.
    
    Let $N = \text{area}_n(S\cap D)$. By Lemma \ref{lemma:shininglight_flowbound}, we can find a sequence of shining light measures $\lambda_n$ satisfying for $n$ large enough, 
    \begin{align*}
        \m E_{\lambda_n}[|\text{flux}(\widetilde v_\tau,S\cap D)|]\geq K_D |\Theta_D|+O(N n^{-1}) + O(\text{length}_n(\partial(S\cap D)). 
    \end{align*}
    By Lemma \ref{lem:general_annulusbound}, $|\Theta_D|\geq N/6 = \text{area}_n(S\cap D)/6\geq ( c_2 /24 ) n^2$. Therefore 
    \begin{align*}
        \m E_{\lambda_n}[|\text{flux}(\widetilde v_\tau,S\cap D)|] \geq \frac{K_D c_2}{24}n^2 + O(n).
    \end{align*}
    In particular, we can sample a tiling $\tau_{\text{SL}}$ from $\lambda_n$ such that 
    \begin{equation}\label{eq:test_tiling_SL}
        |\text{flux}(\widetilde v_{\tau_{\text{SL}}},S\cap D)| \geq \frac{K_D c_2}{24}n^2 + O(n).
    \end{equation}
    Let $U_{\tau_{\text{SL}}}$ be the cubes covered by $\tau_{\text{SL}}$ restricted to $U_{\text{mid}}$. This is a tileable region, so 
    \begin{align*}
        \text{imbalance}(U_{\tau_{\text{SL}}}) = 0. 
    \end{align*}
    Let $U_{\text{mid}}'\subset U$ be $U_{\tau_{\text{SL}}}$ minus even cubes in $A\setminus U$ with a face on $S$ which are connected to $U$ by $\tau_{\text{SL}}$. By Equation \eqref{eq:test_tiling_SL}, 
    \begin{equation}\label{eq:imbalance_mid}
        \text{imbalance}(U_{\text{mid}}') \leq -\frac{K_D c_2}{24}n^2 + O(n).
    \end{equation}
    Let $U_{\text{shell}} = U\setminus U_{\text{mid}}'$. It remains to bound the imbalance in $U_{\text{shell}}$.
    
    Consider the set $\alpha = U\cap \partial A_{\text{mid}}$. 
    For each connected component $\alpha_i$ of $\alpha$, we form a closed surface using a cylinder ribbon surface component $\gamma_i$ of $\gamma$ and corresponding patch $\alpha_i'\subset \partial A$. Let $\alpha'$ be the union of the $\alpha_i'$ components. 

     Let $V_i$ be the region enclosed by $\alpha_i,\gamma_i$, and $\alpha_i'$, and let $V = \cup_i V_i$. 

     The regions $U_{\text{shell}}\setminus V$ are the \textit{leftover regions}. Let $W$ be a connected component of the leftover region. By construction, $\partial W$ intersects at most one of $\partial R_n$ or $\partial R^c_n$. Thus the boundary condition on $\partial W$ comes from only one tiling, either $\tau_n$ or $\sigma_n$. 

     Suppose it comes from $\sigma_n$, i.e.\ that $\partial W\cap \partial R_n\neq \emptyset$ (the version where it comes from $\tau_n$ is identical, we just make a choice for concreteness). Since $\sigma_n$ can be extended to a tiling of all of $R_n$, we can extend $\sigma_n$ to a tiling covering $W$. Let $W_\sigma$ be the region covered by the tiles from $\sigma_n$ which intersect $W$. Clearly $\text{imbalance}(W_\sigma)=0$, and $W_\sigma\cap \partial R_n = W\cap \partial R_n$. If a tile in $W_\sigma$ crosses $\partial W$, then either
     \begin{itemize}
         \item It crosses $\partial W \cap S$, in which case $W_\sigma$ contains an even cube which is not contained in $W\subset U$. 
         \item It crosses $\partial W\cap \gamma$ (recall that $\gamma$ is the ribbon surface). In this case $W_\sigma$ could have an odd cube which is not in $W$. However the number of these added cubes over all components $W$ of $\text{U}_{\text{shell}}\setminus V$ is bounded by $\text{area}_n(\gamma) \leq 4\beta n^2$.
     \end{itemize}
    Therefore 
    \begin{align*}
        \text{imbalance}(U_{\text{shell}}\setminus V) \leq 4 \beta n^2. 
    \end{align*}
    We now show that 
    \begin{align*}
        \text{imbalance}(U_{\text{shell}}\cap V) \leq \text{imbalance}(V) + 4\beta n^2. 
    \end{align*}
    The $4\beta n^2$ term again comes from the ribbon area. We use ideas analogous to those above. Let $Y$ be a component of $V\setminus (U_{\text{shell}}\cap V)$. First note that $Y$ intersects at most one of $\partial R_n$ and $\partial R_n^c$, so its boundary condition comes from only one tiling. 

    Assume $\sigma_n$ is the tiling which defines the boundary condition on $Y$, and extend it to a tiling which covers $Y$. Let $Y_\sigma$ be the region covered by $\sigma_n$ tiles which intersect $Y$. Clearly $\text{imbalance}(Y_\sigma) = 0$. If a tile in $Y_\sigma$ crosses $\partial Y$, then it is in one of two cases: 
    \begin{itemize}
        \item It crosses $\partial Y \cap S$. Since $Y\subset A\setminus U$, in this case $Y_\sigma$ contains an odd cube which is not in $Y$. This makes the imbalance of $Y$ larger. 
        \item It crosses $\partial Y \cap \gamma$ (recall $\gamma$ is the ribbon surface). In this case $Y_\sigma$ could have an even cube which is not in $Y$. However the number of these added cubes over all components $Y$ of $V\setminus (\text{U}_{\text{shell}}\cap V)$ is bounded by $\text{area}_n(\gamma) \leq 4\beta n^2$.
    \end{itemize}
    Therefore in summary,
    \begin{align*}
        \text{imbalance}(U_\text{shell}) \leq 8\beta n^2 + \text{imbalance}(V).
    \end{align*}
    We now relate imbalance to flux. As the proof of Proposition \ref{prop: white black imbalance} (where we relate black and white surface area to imbalance), given a set $V$, we apply the divergence theorem to the reference flow $\widetilde r(e) =1/6$ for all $e$ in $\frac{1}{n}\m Z^3$ oriented even to odd to get that 
    \begin{align*}
        \text{imbalance}(V) = \text{flux}(\widetilde r, \partial V) = \frac{1}{6}\bigg(\text{white}(\partial V)-\text{black}(\partial V)\bigg).
    \end{align*}
    We apply this to the set $V=\cup_i V_i$. By Equation \eqref{eq:ribbon}, the $\gamma_i$ contribute at most $4\beta n^2$. Therefore 
    \begin{equation}\label{eq:imbalance_as_flux_r}
        \text{imbalance}(U_\text{shell})  \leq \sum_i \text{flux}(\widetilde r, \partial V_i) \leq 12\beta n^2 + |\text{flux}(\widetilde r,\alpha) - \text{flux}(\widetilde r,\alpha')| .
    \end{equation}
    For the second inequality, we orient $\alpha,\alpha'$ to always both have inward-pointing normal vector (i.e. inward on $\partial A$ and inward on $\partial A_{\text{mid}}$), meaning one has the opposite normal vector as when we compute flux for $\partial V_i$. This is why we get a minus sign.     
    
    The non-rescaled flow $\widetilde f_\tau$ is the divergence-free version of the pretiling flow $\widetilde v_\tau$; related by the equation $\widetilde f_\tau(e) = \widetilde v_\tau(e) - \widetilde r(e)$ for all edges $e$ oriented even to odd. The rescaled version has $f_\tau = \frac{1}{n^3} \widetilde f_\tau$.
    
    Since the boundary condition on $\alpha$ is given by $\tau_{\text{SL}}$ and the boundary conditions on $\alpha'$ are given by $\tau_n$ on the inner boundary and $\sigma_n$ on the outer boundary, none of the tiles from the corresponding tilings cross $\alpha,\alpha'$ and hence
    \begin{align*}
        \text{flux}(\widetilde v_{\tau_{\text{SL}}},\alpha) = \text{flux}(\widetilde v_\ast,\alpha') = 0,
    \end{align*}
    where $\widetilde v_\ast$ is equal to $\widetilde v_{\tau_n}$ on the inner boundary of $\partial A$ and is equal to $\widetilde v_{\sigma_n}$ on the outer boundary of $\partial A$. Therefore
    \begin{equation}\label{eq:imbalanace_as_flux}
         \text{imbalance}(U_\text{shell})  \leq 12\beta n^2 + |\text{flux}(\widetilde f_{\tau_{\text{SL}}},\alpha) - \text{flux}(\widetilde f_{\ast},\alpha')|,
    \end{equation}
    where $\widetilde f_{\ast} = \widetilde f_{\tau_n}$ on the inner boundary of $\partial A$ and $\widetilde f_{\ast}=\widetilde f_{\sigma_n}$ on the outer boundary of $\partial A$.

    It remains to bound these flux differences, and this is where we use information about the boundary conditions. First note that for any surface $X$ and any tiling $\tau$ of $\frac{1}{n}\m Z^3$, the flux of the non-rescaled $\widetilde f_\tau$ and the rescaled $f_\tau$ are related by:
    \begin{equation}\label{eq:differ_by_n2}
        \text{flux}(\widetilde f_\tau,X) = n^2 \text{flux}(f_\tau,X).  
    \end{equation}
    We have that the \textit{rescaled versions} of the tiling flows $f_{\tau_n}$ and $f_{\tau_{\text{SL}}}$ (rescaled, so without the tildes) converge as $n\to \infty$ to the $g\in AF(R,b)$ given in the theorem statement, that is, 
    \begin{align*}
        \lim_{n\to \infty} d_W( f_{\tau_n},g) = \lim_{n\to \infty}d_W( f_{\tau_{\text{SL}}}, g) =0.
    \end{align*}
    Recall that $T(\cdot, X)$ denotes the trace operator which takes a flow to its restriction to a surface $X$. By Theorem \ref{thm: boundary_value_uniformly_continuous}, for $X$ fixed and any $f_1,f_2\in AF(R)$, given any $\delta>0$ there exists $\delta_1$ such that if $d_W(f_1,f_2)< \delta_1$ then $\m W_1^{1,1}(T(f_1,X),T(f_2,X))<\delta$. Recall also that $T(g,\partial R) = b$, and that we are given that $T(f_{\sigma_n},\partial R)$ converges to $b$ in $\m W_1^{1,1}$. Given these facts, we can choose $n$ large enough to guarantee the following: 
    \begin{itemize}
        \item For the outer boundary $\partial R$, 
        \begin{equation}\label{eq:bound1}
        \m W_1^{1,1}(T( f_{\sigma_n},\partial R), b) < \delta
        \end{equation}
        \begin{equation}\label{eq:bound2}
            \m W_1^{1,1}(T( f_{\tau_{\text{SL}}},\partial R), b) < \delta.
        \end{equation}
        \item For the inner boundary $\partial R^c$,
        \begin{equation}\label{eq:bound3}
            \m W_1^{1,1}(T( f_{\tau_n},\partial R^c),T(g,\partial R^c))<\delta
        \end{equation}
        \begin{equation}\label{eq:bound4}
            \m W_1^{1,1}(T( f_{\tau_{\text{SL}}},\partial R^c),T(g,\partial R^c))<\delta.
        \end{equation}
        \item Finally, let $\partial R_{\text{mid}}=\partial (R^{a_+}\setminus R^{a_-})$ be the piecewise smooth surface approximated by $\partial A_{\text{mid}}= \partial(R_n^{a_+}\setminus R_n^{a_-})$. Then  
        \begin{equation}\label{eq:bound5}
            \m W_1^{1,1}(T( f_{\tau_{\text{SL}}},\partial R_{\text{mid}}),T(g,\partial R_{\text{mid}}))<\delta.
        \end{equation}
    \end{itemize}   

    Recall also that boundary value flows correspond to measures, and note that $\text{flux}(f,X) = T(f,X)(X)$ is the total mass of the measure $T(f,X)$ on $X$. In particular, for a surface $X$ and tiling $\tau$ of $\frac{1}{n}\m Z^3$ and $B\subset X$,
    \begin{align*}
        \text{flux}(f_\tau, B) = T(f_\tau,X)(B).
    \end{align*}
Lemma~\ref{lem:constant_order_box_Wass_bound} applied to measures $\mu,\nu$ supported on a surface $X$ says that if $\m W_1^{1,1}(\mu,\nu)<\delta$, then for any $B\subset X$,  
    \begin{align*}
        \m W_1^{1,1}(\mu\mid_B,\nu\mid_B) \leq \delta + \delta^{1/2} (C(B)+1),
    \end{align*}
    where $\delta^{1/2} C(B)$ is bounded by $2$ times the difference of the area of B and the $\delta^{1/2}$ neighborhood of $B$ within $X$; equivalently, by the area of the annulus of width $\delta^{1/2}$ with inner boundary $\partial B$ (see Remark \ref{rem:constant_explained}). 
    
To use this, we relate $\alpha,\alpha'$ which are contained in the discrete surfaces $\partial A_{\text{mid}}$ and $\partial A$ built out of $\frac{1}{n}\m Z^3$ lattice squares, to $B,B'$ on the piecewise smooth surfaces $\partial R_{\text{mid}}$ and $\partial R\cup\partial R^c$ respectively.

By Equation \eqref{eq:length_generalized}, $\text{length}_n(\partial \alpha) \leq (2\beta/\epsilon)n$. Correspondingly the Euclidean length is bounded as $\text{length}(\partial \alpha)\leq (2\beta/\epsilon)$. Given this, we can cover $\partial \alpha$ with a collection of cubes $\mc C$ with Euclidean side length $3\epsilon$, with $|\mc C|\leq 2\beta/(3\epsilon^2)$. Since the Euclidean width between $\partial A$ and $\partial A_{\text{mid}}$ is less than $2\epsilon$, $\mc C$ also covers $\partial \alpha'\subset \partial A$.

The Hausdorff distances between $\partial A$ and $\partial R\cup \partial R^c$ and between $\partial A_{\text{mid}}$ and $\partial R_{\text{mid}}$ are both bounded by $2/n$. There are corresponding sets $B\subset \partial R_{\text{mid}}$ and $B'\subset \partial R\cup \partial R^c$ which differ from $\alpha, \alpha'$ respectively by Hausdorff distance at most $2/n$. Thus for $n$ large enough, $\mc C$ also covers $B,B'$. Since $\partial R\cup \partial R^c$ and $\partial R_{\text{mid}}$ are piecewise smooth, there is a constant $C'$ such that the area of either surface restricted to one of the cubes in $\mc C$ is at most $C' \epsilon^2$. Since $|\mc C| \leq 2\beta/(3\epsilon^2)$, there is some constant $C$ such that if $\delta^{1/2}\leq\epsilon$, then
\begin{align}
    \label{eq:C(B) bound}\delta^{1/2} C(B) &\leq C \beta\\
    \label{eq:C(B') bound}\delta^{1/2}C(B') &\leq C \beta.
\end{align}
We have that $\text{length}(\partial \alpha)\leq 2\beta/\epsilon$, so the number of $\frac{1}{n} \m Z^3$ lattice points along $\partial \alpha$ is bounded by a constant times $n$ (the constant here depends on $\beta/\epsilon$). Since the Hausdorff distance between $\alpha$ and $B$ is bounded by $2/n$, for any tiling $\tau$ of $\frac{1}{n} \m Z^3$, the flux of $f_\tau$ through a surface is proportional to the number of $\frac{1}{n}\m Z^3$ lattice points on the surface times $\frac{1}{n^2}$. Thus for any tiling $\tau$, since $f_\tau$ is divergence-free,
\begin{align}\label{eq:alphaB bound}
    |\text{flux}(f_\tau,\alpha) - \text{flux}(f_\tau,B)| \leq O(n^{-1}).
\end{align}
By Equation \eqref{eq:C(B') bound}, since $\text{length}(\partial B')\leq C(B')$, also have that $\text{length}(\partial B')\leq C \beta/\epsilon$. Since the Hausdorff distance between $\alpha'$ and $B'$ is bounded by $2/n$, the number of $\frac{1}{n}\m Z^3$ lattice points on a surface between them is also bounded by a constant times $n$ (the constant here depends on $\beta/\epsilon$). Thus we analogously get that for any tiling $\tau$ of $\frac{1}{n} \m Z^3$,
\begin{align}\label{eq:alphaB' bound}
    |\text{flux}(f_\tau,\alpha') - \text{flux}(f_\tau,B')| \leq O(n^{-1}).
\end{align}
Therefore by Lemma \ref{lem:constant_order_box_Wass_bound}, for $\delta$ such that $\delta^{1/2}<\epsilon$, Equations \eqref{eq:bound1},   \eqref{eq:bound3} to relate $f_*$ to $g$ on $B'$, plus Equation  \eqref{eq:C(B') bound} where we determine the constant $C(B')$, and finally Equation \eqref{eq:alphaB' bound} to relate $f_*$ on $\alpha'$ to $f_*$ on $B'$, we get that
 \begin{align}
    |\text{flux}(f_{*},\alpha') - \text{flux}(g,B')| \leq \delta + \delta^{1/2} + C\beta + O(n^{-1}),
\end{align}
where $f_{*}$ is $f_{\sigma_n}$ on the outer boundary of $\partial A$ and $f_{\tau_n}$ on the inner boundary. Similarly, using Equations \eqref{eq:bound2}, \eqref{eq:bound4}, and \eqref{eq:bound5} to relate $f_{\tau_\text{SL}}$ to $g$, plus Equations \eqref{eq:alphaB bound}, \eqref{eq:C(B) bound} for the constant $C(B)$ and to relate $f_{\tau_\text{SL}}$ on $\alpha, B$, the test tiling $\tau_{\text{SL}}$ satisfies analogous bounds on both $\alpha$ and $\alpha'$:
\begin{align}
    |\text{flux}(f_{\tau_{\text{SL}}},\alpha') - \text{flux}(g,B')| &\leq \delta +\delta^{1/2}+ C\beta + O(n^{-1})\\
     |\text{flux}(f_{\tau_{\text{SL}}},\alpha) - \text{flux}(g,B)| &\leq \delta +\delta^{1/2} + C\beta + O(n^{-1}).
\end{align}
Since $g$ is divergence-free and takes values with norm bounded between $-1$ and $1$, and $B,B'$ differ from $\alpha,\alpha'$ by Hausdorff distance bounded by $2/n$, Equation \eqref{eq:ribbon} implies that
    \begin{equation}\label{eq:bound6}
        |\text{flux}(g,B) - \text{flux}(g,B')| \leq 4\beta + O(n^{-1}).
    \end{equation}

Combining Equation \eqref{eq:differ_by_n2} with the above, 
    \begin{align}
        &|\text{flux}(\widetilde f_{\tau_{\text{SL}}},\alpha) - \text{flux}(\widetilde f_{\ast},\alpha')| \\
        &\leq \bigg[|\text{flux}(f_{\tau_{\text{SL}}},\alpha)-\text{flux}(g,B)| +|\text{flux}(g,B)-\text{flux}(g,B')| +|\text{flux}(g,B')-\text{flux}(f_{*},\alpha')|\bigg] n^2 \\
        &\leq (2 \delta + 2\delta^{1/2} + 2 C \beta + O(n^{-1}) + 4\beta )n^2.
    \end{align}

    Combining this with Equation \eqref{eq:imbalance_mid} and Equation \eqref{eq:imbalanace_as_flux}, we get that
    \begin{align*}
        \text{imbalance}(U) &= \text{imbalance}(U_{\text{mid}}') + \text{imbalance}(U_{\text{shell}}) \\
        &\leq -\frac{K_D c_2}{24}n^2 + (16+2C)\beta n^2 + 2\delta n^2 + 2\delta^{1/2} n^2 + O(n).
    \end{align*}
    The factor $K_D c_2/24$ and the constant $C$ are fixed independent of $\delta,\beta$. Implicit here is also the parameter $\epsilon$ that we indent by.
    
    Taking $\epsilon$ small enough, we can make $\beta$ as small as needed. The parameter $\delta>0$ is related to the distance between tiling flows and their limits and is required to satisfy $\delta^{1/2}<\epsilon$, but this can be guaranteed for $n$ large enough. Therefore for $n$ large enough, $\text{imbalance}(U)$ will be non-positive, and hence $U$ is not a counterexample. This completes the proof.
\end{proof}

\subsection{Upper bounds}\label{sec:upper}

To complete the proof of the large deviation principles, we need prove the upper bounds, namely Theorem \ref{thm:upper} for the soft boundary LDP and Theorem \ref{thm:upper-hb} for the hard boundary LDP. We show that the soft boundary upper bound implies the hard boundary one, and then prove the soft boundary one.
\begin{lemma}
    Theorem \ref{thm:upper} implies Theorem \ref{thm:upper-hb}.
\end{lemma}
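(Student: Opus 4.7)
The plan is to observe that the hard-boundary counting measures $\overline{\mu}_n$ are dominated by the soft-boundary counting measures $\mu_n$ once the threshold sequence $\theta_n$ is chosen compatibly with the convergence $b_n \to b$, and then conclude by monotonicity.

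More precisely, first I would choose the admissible threshold sequence. Since $b_n \to b$ in $\m W_1^{1,1}$, there exists a sequence $\eta_n \downarrow 0$ with $\m W_1^{1,1}(b_n,b) < \eta_n$ for all $n$ large. Starting from any admissible threshold sequence $(\theta_n^\circ)_{n\geq 1}$ for the pair $(R,b)$ (which exists by Corollary \ref{cor:good_thresholds_exist} and the hypothesis that $b$ is extendable outside), set $\theta_n := \max\{\theta_n^\circ,\, 2\eta_n\}$. Then $\theta_n \to 0$, $TF_n(R,b,\theta_n) \supseteq TF_n(R,b,\theta_n^\circ)$ is nonempty, so $(\theta_n)_{n\geq 1}$ is admissible, and Theorem \ref{thm:upper} applies with this sequence to yield, for every $g \in AF(R,b)$,
\begin{equation*}
\lim_{\delta\to 0}\limsup_{n\to\infty} v_n^{-1}\log \mu_n(A_\delta(g)) \leq \Ent(g).
\end{equation*}

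Next, I would argue the domination $\overline{\mu}_n \leq \mu_n$. Every tiling of the fixed region $R_n$ is a free-boundary tiling of $R$ at scale $n$ whose boundary trace on $\partial R$ equals $b_n$. By the choice of $\theta_n$, for $n$ large we have $\m W_1^{1,1}(b_n,b) < \theta_n$, so every such tiling flow lies in $TF_n(R,b,\theta_n)$. Hence, as sets,
\begin{equation*}
\{f_\tau : \tau \text{ tiling of } R_n\} \cap A_\delta(g) \;\subseteq\; TF_n(R,b,\theta_n) \cap A_\delta(g),
\end{equation*}
which gives $\overline{\mu}_n(A_\delta(g)) \leq \mu_n(A_\delta(g))$ for all $n$ sufficiently large (depending only on the convergence of $b_n$, not on $\delta$ or $g$).

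Finally, taking logarithms, dividing by $v_n = n^3 \Vol(R)$, and passing to $\limsup_{n\to\infty}$ followed by $\lim_{\delta\to 0}$ preserves the inequality, so
\begin{equation*}
\lim_{\delta\to 0}\limsup_{n\to\infty} v_n^{-1}\log \overline{\mu}_n(A_\delta(g)) \;\leq\; \lim_{\delta\to 0}\limsup_{n\to\infty} v_n^{-1}\log \mu_n(A_\delta(g)) \;\leq\; \Ent(g),
\end{equation*}
which is exactly the statement of Theorem \ref{thm:upper-hb}. There is no genuine obstacle: the argument is purely a monotonicity/containment observation together with the freedom to enlarge the threshold sequence, both of which are built into the soft-boundary setup. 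The flexibility hypothesis on $(R,b)$ needed for Theorem \ref{thm:hb-ldp} as a whole enters only through the lower bound (Theorem \ref{thm:lower-hb}) via the generalized patching theorem; the upper bound requires no such hypothesis.
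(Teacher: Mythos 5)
Your proof is correct and follows essentially the same approach as the paper's: you choose the admissible threshold sequence $(\theta_n)$ to eventually dominate $\m W_1^{1,1}(b_n,b)$, observe the resulting set containment implies $\overline\mu_n(A_\delta(g)) \leq \mu_n(A_\delta(g))$ for large $n$, and conclude by monotonicity. Your explicit construction of $\theta_n$ and your closing remark that flexibility is only needed for the lower bound are small elaborations beyond the paper's proof but do not change the argument.
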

\begin{proof}
    Recall that $\mu_n$ is counting measure on $TF_n(R,b,\theta_n)$ for some sequence of thresholds $(\theta_n)_{n\geq 1}$ with $\theta_n\to 0$ as $n\to \infty$ sufficiently slowly.
    
    On the other hand $\overline{\mu}_n$ is counting measure on tilings of fixed regions $R_n$ with scale $n$ tileable boundary value $b_n$ such that $b_n\to b$ as $n\to \infty$.
    
    We choose the sequence of thresholds $\theta_n$ so that there exists $N$ such that if $n\geq N$ then $\m W_1^{1,1}(b_n,b)<\theta_n$. In this case, for any $g\in AF(R,b)$, for $n$ large enough,
    \begin{align*}
        \overline\mu_n(A_{\delta}( g))\leq \mu_n(A_{\delta}( g)).
    \end{align*}
    Therefore if Theorem \ref{thm:upper} holds, then for all $g\in AF(R,b)$,
    \begin{align*}
   \lim_{\delta\to 0}\limsup_{n\to \infty} v_n^{-1} \log \overline\mu_n(A_{\delta}( g))\leq \lim_{\delta\to 0}\limsup_{n\to \infty} v_n^{-1} \log \mu_n(A_{\delta}( g)) \leq \Ent( g).
\end{align*}
This completes the proof of Theorem \ref{thm:upper-hb} from Theorem \ref{thm:upper}.
\end{proof}

It remains to prove Theorem \ref{thm:upper}, namely that for any $ g\in AF(R, b)$,
\begin{align*}
   \lim_{\delta\to 0}\limsup_{n\to \infty} v_n^{-1} \log \mu_n(A_{\delta}( g)) \leq \Ent( g).
\end{align*}
The main idea is ``coarse graining", i.e.\ that on a very small box, a uniform random tiling of $R$ looks approximately like a random tiling sampled from a $\threeeven$-invariant Gibbs measure of mean current $s$, where $s$ is the expected mean current on the box. 

\begin{proof}[Proof of Theorem \ref{thm:upper}]
In this proof, we assume without loss of generality that $R$ is contained in the unit cube $B = [0,1]^2$ (this is just to avoid complicating the proof with an extra scaling parameter).

Let $\pi_{n,\delta}$ be the uniform probability measure on the set of tilings $\tau$ with tiling flow $f_\tau\in A_\delta(g) \cap TF_n(R)$ and satisfying $\m W_1^{1,1}(T(f_\tau),b)<\theta_n$. The purpose of this is so that the partition function of $\pi_{n,\delta}$ is $Z_{n,\delta} = \mu_{n}(A_\delta(g))$. 

Tile $\m R^3$ by translated copies of $B$, each with a translated copy of $R$ inside it. Let $\Lambda_n = \frac{1}{n} \m Z^3 \cap B $. We define a $\threeeven$-invariant measure $\nu_n$ on tilings of $\m Z^3$ as follows (this measure can sample tilings with some double tiles and some untiled sites). We take an independent sample from $\pi_{n,\delta}$ on each copy of $R$, and then average over translations by $x\in \text{even}(\Lambda_n)$. The measure $\nu_n$ samples tilings that are perfect matchings on the interior of each copy of $R$. All sites in each copy of $B\setminus R$ which are not covered by a tile connecting it to a copy of $R$ are empty. Two copies of $R$ might intersect on their boundaries (e.g.\ in the case $R= B$), in which case $\nu_n$ can sample double tiles. However the fraction of possible sites where $\nu_n$ samples double tiles is bounded by the fraction of sites in $\partial B$, namely
\begin{align*}
    \frac{6n^2}{ n^3} = \frac{6}{n}.
\end{align*}
We define a subsequential limit
\begin{align*}
    \nu:= \lim_{j\to \infty} \nu_{n_j}.
\end{align*}
Note that $\nu$ is a $\threeeven$-invariant measure on tilings of $\m Z^3$, allowed to have untiled sites. It can written as a weighted average of a measure on dimer tilings and the empty ensemble. 

Let $\nu_{n,0}$ be defined analogously to $\nu_n$, but without averaging over translations. Let $\nu_{n,x}$ be the version where all tilings are translated by a fixed $x\in \text{even}(\Lambda_n)$. The Shannon entropy of $\pi_{n,\delta}$ is 
\begin{align*}
 n^{-3}\text{Vol}(R)^{-1} \log Z_{n,\delta} = v_n^{-1} \log \mu_n(A_\delta(g)).
\end{align*}
By construction, 
$$|\Lambda_{n}|^{-1} H_{\Lambda_{n}}(\nu_{n,0})=  n^{-3} \log \mu_n(A_\delta(g)).$$ 
For any other $x\in \text{even}(\Lambda_n)$, a sample on $B$ contains pieces from up to $8$ samples of $\pi_{n,\delta}$. By subadditivity of $H$,
\begin{align*}
    H_{\Lambda_n}(\nu_{n,x})\geq H_{\Lambda_n}(\nu_{n,0}) \qquad \forall \, x\in \text{even}(\Lambda_n).
\end{align*}
The specific entropy of $\nu_n$ can be computed using any sequence of boxes $\Delta_M$ with $|\Delta_M|\to\infty$ as $M\to \infty$. In particular, we can choose $\Delta_M =\Lambda_{Mn}$ so that
\begin{align*}
    h(\nu_n) = \lim_{M\to \infty} |\Lambda_{Mn}|^{-1}H_{\Lambda_{Mn}}(\nu_n).
\end{align*}
On each of the $M^3$ copies of $\Lambda_n$ in $\Lambda_{Mn}$, $\nu_n$ samples an independent draw from $\pi_{n,\delta}$. Thus 
\begin{align*}
    h(\nu_n) \geq \lim_{M\to \infty} M^3 |\Lambda_{Mn}|^{-1}H_{\Lambda_{n}}(\nu_{n,0}) = |\Lambda_n|^{-1} H_{\Lambda_n}(\nu_{n,0}) = n^{-3} \log \mu_n(A_\delta(g)). 
\end{align*}
On the other hand, since $h$ is upper-semicontinuous, 
\begin{align*}
    \limsup_{j\to \infty} h(\nu_{n_j}) \leq h(\nu).
\end{align*}
Therefore 
\begin{align*}
    \limsup_{j\to \infty} v_{n_j}^{-1}\mu_{n_j}(A_\delta(g)) \leq \frac{1}{\Vol(R)} h(\nu),
\end{align*}
and we have reduced the problem to bounding $h(\nu)$. Define $\varphi_n$ to be the $\nu_n$-expected flow, namely
\begin{align*}
    \varphi_n := Z_{n,\delta}^{-1} \sum_\tau f_\tau, 
\end{align*}
where the sum is over tilings $\tau$ in the support of $\pi_{n,\delta}$. We define a subsequential limit
$$\varphi:= \lim_{j\to \infty} \varphi_{n_j}.$$
Up to taking additional subsequences we can assume that the subsequences for $\varphi_n$ and $\nu_n$ are the same. Note that $\varphi\in AF(R)$. Since $A_\delta(g)$ is convex, Theorem \ref{thm:formal_fine_mesh} implies that $\varphi\in \overline{A_\delta(g)}$. Therefore 
$$\frac{1}{\Vol(R)} \int_{R} \ent(\varphi(x)) \, \dd x = \Ent(\varphi) \leq \sup_{h\in \overline{A_\delta(g)}} \Ent(h) = \Ent(g) + o_\delta(1).$$
The last equality uses that $\Ent$ is upper semi-continuous in the Wasserstein topology (Proposition \ref{proposition: Upper semicontinuous}). This reduces the problem to showing that $\frac{1}{\Vol(R)} h(\nu)$ is bounded by $\Ent(\varphi)$. To this end, we partition $B$ into a collection $\mc C$ of $k^3$ smaller cubes of size $1/k^3$. We define a new flow $\alpha_k$ supported in $R$ by, for all $C\in \mc C$ such that $C\cap R\neq \emptyset$, 
\begin{align*}
    \alpha_k(x) = \frac{1}{|C\cap R|} \int_{C\cap R} \varphi(y) \, \dd y \qquad \forall x\in C\cap R.
\end{align*}
For $x\not\in R$, $\alpha_k(x) = 0$. Since $\ent$ is concave (Lemma \ref{lemma: entropy_concave}), by Jensen's inequality, 
\begin{align*}
    \int_{C\cap R} \ent(\alpha_k(x)) \dd x \geq \int_{C\cap R} \ent(\varphi(x)) \dd x.
\end{align*}
On the other hand, $\alpha_k$ converges to $\varphi$ a.s. and $|\alpha_k|\leq 1$, so $\alpha_k$ converges to $\varphi$ in $L^1$, hence by Corollary \ref{cor: Ent_ae},
\begin{align*}
    \lim_{k\to\infty} \Ent(\alpha_k) = \Ent(\varphi). 
\end{align*}
Therefore it is sufficient to show that $\Ent(\alpha_k)$ is an upper bound for all $k$. We now define $\nu_{n,C}$ to be $\nu_{n}$ but averaged only over the translations $x\in \text{even}(\Lambda_n\cap C)$ (equivalently, conditioned on the origin being in $C$). For each $C\in \mc C$, let $\nu_C$ be a subsequential limit of $\nu_{n,C}$. The measures $\nu_C$ are $\threeeven$-invariant, and we can choose the subsequences so that 
\begin{align*}
    \nu = k^{-3} \sum_{C\in \mc C} \nu_C.
\end{align*}
Therefore 
\begin{align*}
    h(\nu) = k^{-3} \sum_{C\in \mc C} h(\nu_C). 
\end{align*}
Note that if $C\cap R = \emptyset$, then $\nu_C$ is the empty ensemble and hence in that case $h(\nu_C)=0$. When $C\cap R\neq \emptyset$, then $\nu_C$ splits as the sum of an empty ensemble (corresponding to selecting the origin in $C\setminus C\cap R$) and a measure on dimer tilings (corresponding to selecting the origin in $C\cap R$). In a slight abuse of notation we refer to the mean current of $\nu_C$ as the mean current of its component which is a measure on dimer tilings. To bound $h(\nu_C)$ when $C\cap R\neq \emptyset$ we compute this mean current. Recall from Section \ref{section:measures-currents} that $s_0(\tau)$ is the vector of the tile at the origin in $\tau$, and the mean current of a $\threeeven$-invariant measure $\mu$ can be computed as
\begin{align*}
    s(\mu) = \int_{\Omega} s_0(\tau)\, \dd \mu(\tau) = \m E_{\mu}[s_0(\tau)].
\end{align*}
We can also compute the mean current by looking at the expected tile direction over a set of points instead of just looking at the origin. Let $E(\Lambda_n)$ denote the edges in $\Lambda_n$ oriented from even to odd. We can similarly define 
$$s_{n,C}(\tau) := |\text{even}(\Lambda_n\cap C\cap R)|^{-1} \sum_{e\in E(\Lambda_n\cap C\cap R)} f_\tau(e)e.$$ 
Here note that we intersect with $R$ because if the origin is chosen in $C\setminus C\cap R$ we get the empty ensemble, and hence the mean current is not defined. This is the average direction of $f_\tau$ over $\Lambda_n\cap C\cap R$, and by $\threeeven$-invariance $s(\mu)$ can also be computed 
\begin{align*}
    s(\mu) = |\text{even}(\Lambda_n\cap C\cap R)|^{-1} \sum_{x\in \text{even}(\Lambda_n\cap C\cap R)} \int_{\Omega} s_0(\tau+x) \,  \dd\mu(\tau) = \int_{\Omega} s_{n,C}(\tau)\, \dd \mu(\tau). 
\end{align*}
Using this, we compute that 
\begin{align*}    \m E_{\nu_{n,C}}[s_{n,C}(\tau)] = Z_{n,\delta}^{-1}  \sum_{\tau} \sum_{e\in E(\frac{1}{n} \m Z^3\cap C\cap R)} |C\cap R|^{-1} f_\tau(e)e = \text{avg}_{C\cap R}(\varphi_n).
\end{align*}
Since $\nu_C$ is a subsequential limit of $\nu_{n,C}$ (up to choice of another subsequence),
\begin{align*}
    s_C:= s(\nu_C) = \int_{\Omega} s_0(\tau) \,\dd \nu_C(\tau)= 
    \lim_{j\to \infty}
    \int_{\Omega} s_0(\tau) \,\dd \nu_{n_j,C}(\tau)&=\lim_{j\to \infty}
    \int_{\Omega} s_{n_j,C}(\tau) \,\dd \nu_{n_j,C}(\tau)
    \\
    &=\lim_{j\to \infty} \text{avg}_{C\cap R}(\varphi_{n_j}).
\end{align*}
On the other hand, since $\varphi_{n_j}\to \varphi$ in $L^1$, 
\begin{align*}
    s_{C} = \limsup_{j\to\infty} \text{avg}_C(\varphi_{n_j}) = \text{avg}_C(\varphi). 
\end{align*}
Finally we relate $h(\nu_C)$ to $\ent(s_C)$. Recall that $\ent(s):=\max_{\rho\in \mc P^s} h(\rho)$, where $\mc P^s$ is the space of $\threeeven$-invariant probability measures on dimer tilings of mean current $s$. The measure $\nu_C$ is a sum of an empty ensemble (corresponding to the origin being chosen in $C\setminus R$) which has zero entropy and a $\threeeven$-invariant on dimer tilings (corresponding to the origin being chosen in $C\cap R$) which has mean current $s_C$. Thus
\begin{align*}
    h(\nu_C) \leq \ent(s_C).
\end{align*}
Therefore for all $k>1$,
\begin{align*}
    \limsup_{j\to \infty} v_{n_j}^{-1} \mu_{n_j}(A_\delta(g)) &\leq \frac{1}{\Vol(R)} h(\nu) = \frac{1}{\Vol(R)} \frac{1}{k^3} \sum_{C \in \mc C} h(\nu_C) \\ &\leq \frac{1}{\Vol(R)} \frac{1}{k^3} \sum_{C\in \mc C}\Vol(R\cap C) \ent(s_C) = \Ent(\alpha_k). 
\end{align*}
Taking $k\to \infty$, this shows that 
\begin{align*}
    \limsup_{n_j\to \infty} v_{n_j}^{-1} \mu_{n_j}(A_\delta(g)) \leq \Ent(\varphi) = \Ent(g) + o_\delta(1).
\end{align*}
Since this holds for any convergent subsequence $n_j$, taking $\delta\to 0$ completes the proof. 
  
\end{proof}
 
\section{Open problems}\label{sec:open problems} \label{sec:open}

We mentioned in the introduction that there is literature exploring the local move connectivity problem, considering moves such as the ``flip'' and ``trit'' illustrated below.
\begin{figure}[H]
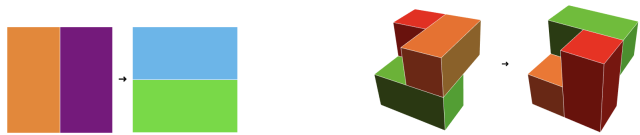

    \centering
        \includegraphics[scale=0.25]{flip3d.pdf}$\qquad\qquad$\includegraphics[scale=0.2]{trit.pdf}
    \caption{Flip and trit.}
    \label{fig:flip_trit}
\end{figure}
Both the flip and the trit amount to finding a cycle in $\mathbb Z^3$ (of length $4$ or $6$ respectively) that alternates between membership and non-membership in $\tau$, and then swapping the members and non-members. Generally, a {\em cycle swap} is a swap of an alternating cycle of length $k$, and a {\em $k$-swap} is a cycle swap for which the cycle has length $k$. It is clear that any two perfect matchings of the same region can be connected by a sequence of such swaps (simply by applying swaps to all of the cycles contained in the union of the two perfect matchings). But it is in general not so clear whether one can get from any matching to any other using only $k$-swaps for small $k$.

\begin{problem}\label{prob: local moves on box}
Is there a finite $K$ such that for any positive $j$, $m$ and $n$ (at least one of which is even) it possible to get from any dimer configuration of an $j \times m \times n$ box to any other via sequence of $k$-swaps with $k \leq K$?  Is this possible using only flips and trits?
\end{problem}

The examples we have presented in Section~\ref{sec:localmoves} already show the answer to both questions is no if one replaces boxes with general simply connected regions, such as those that can be tiled with alternating slabs of brickwork, each oriented a different direction. If we think in terms of the non-intersecting path interpretation from Section~\ref{subsec:history}, we can see that the existence of taut patterns like the ones shown there are an obstruction to local move connectedness.

Progress was made on Problem \ref{prob: local moves on box} just after the first draft of this paper was released in \cite{localdimer}. In particular their results show that any dimer tiling of a $j\times m \times n$ box (for $jmn$ even, $j,m,n\geq 2$) admits at least one flip or trit \cite[Theorem 1]{localdimer}. See Section~\ref{sec:localmoves} for further description of their results. 

\begin{problem}\label{prob: markov chain mixing time}
What can be said about the convergence rate of the mixing algorithm described in Section~\ref{sec:uniform sampling}? Is there a more efficient way to sample random perfect matchings of 3D regions?
\end{problem}

\begin{problem}\label{prob: unique EGM}
Is there a unique ergodic Gibbs measure corresponding to each mean current in the interior of $\mathcal O$?
\end{problem}

\begin{problem}\label{prob: infinite paths}
If $\nu_1$ and $\nu_2$ are ergodic Gibbs measures of the same mean current, and $(\tau_1, \tau_2)$ is sampled uniformly from $(\nu_1 \otimes \nu_2)$, are there necessarily infinitely many infinite paths in the union of $\tau_1$ and $\tau_2$?
\end{problem}

\begin{problem}
What can be said about the {\em typical fluctuations} of the flow associated to a uniformly random perfect matching of a simple region such as a cube or torus? Do they converge to a natural Gaussian process?
\end{problem}

In 2D, Kenyon showed that domino tiling height functions converge in law to the Gaussian free field \cite{kenyon2000conformal}. This suggests that the discrete gradients of the height functions should converge (at least in some sense) to the gradient of the Gaussian free field. The dual of the discrete gradient (i.e., the discrete flow) should converge in some sense to the dual of the gradient of the Gaussian field---which can be shown to be equivalent to the field obtained by projecting vector-valued white noise orthogonally onto the space of divergence-free fields.  It seems reasonable to conjecture that the same holds in any dimension.

\begin{problem} Is it the case for $d \geq 3$ that the discrete divergence-free flows obtained from uniformly random perfect matchings (on a torus or box, say, or in the $\mathbb Z^3$ Gibbs measure setting) converge in the fine mesh limit to the Gaussian random generalized flow obtained by projecting vector-valued white noise onto the space of divergence-free flows? \end{problem}

\begin{problem}\label{prob: region with non unique max}
Does there exist a three-dimensional region $R\subset \m R^3$ and a boundary condition $b$ for which the $\Ent$ maximizer for $(R,b)$ is not unique? We have shown that such a system would have to be ``rigid'' in the sense defined in the introduction (i.e., there is an interior point $x$ such that for any neighborhood $U$ of $x$ the set $\overline{g(U)}$ must intersect one of the edges of $\mathcal O$).  But we have not ruled out the existence of multiple $\Ent$ maximizers. 

In fact there do exist two dimensional surfaces $R$ where the corresponding $\Ent$ maximizer is not unique. Consider the ``slanted cylinder" below, where the left and right edges are glued following the numbers in the diagram. Here are two possible tilings of the slanted cylinder.
\begin{figure}[H]
    \centering
    \includegraphics[scale=0.6]{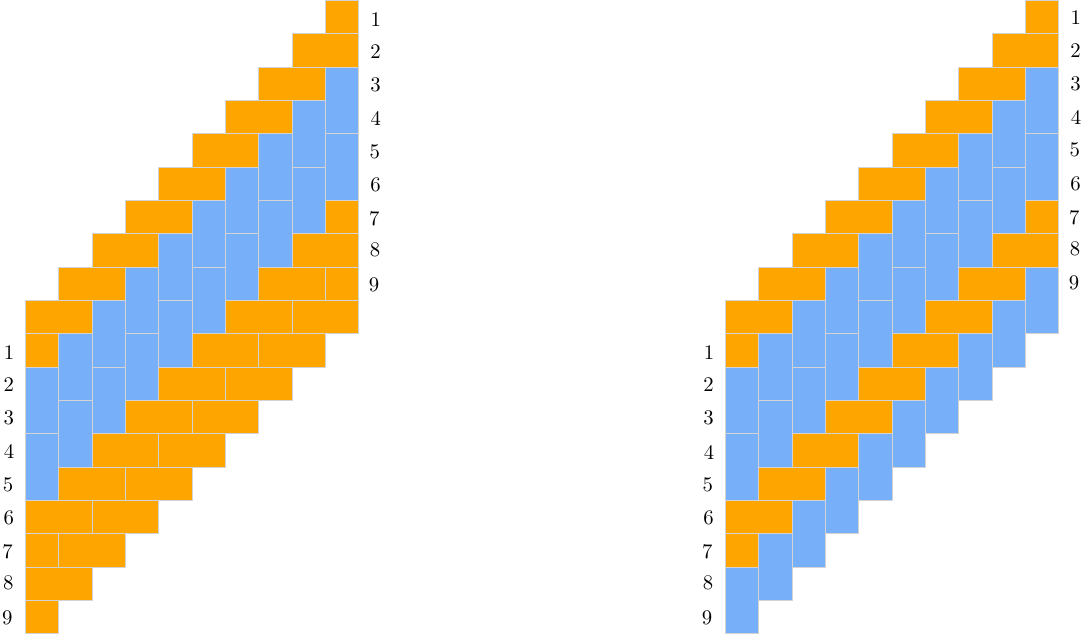}
    \caption{Two tilings of the slanted cylinder. The left and right edges are glued. }
    \label{fig:slanted_cylinder}
\end{figure}
Any tiling of the slanted cylinder consists of a choice of north (N) or east (E) tile for each diagonal, so if the cylinder has height $m$ then it has $2^m$ distinct tilings. Since there is only one choice to make on each of the diagonal ``stripes'' (deciding whether to color it blue or orange) the entropy per site tends to zero as the width of the cylinder tends to infinity, and the functions obtained as fine-mesh limits of these constructions are all maximizers of $\Ent$. A slanted cylinder can also be realized as an induced subgraph of $\mathbb Z^3$ as shown below.

\begin{figure}[H]
    \centering
    \includegraphics[scale=0.8]{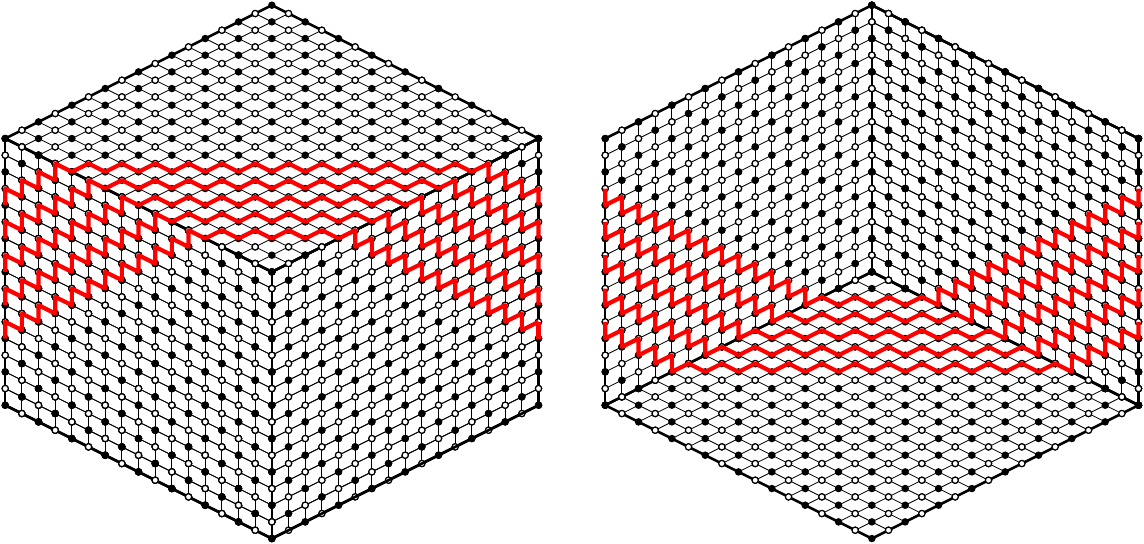}
    \caption{Front and back sides of the surface of a cube, with five ``stripes'' wrapping around it, whose vertices correspond to the squares in Figure~\ref{fig:slanted_cylinder} and form a slanted cylinder embedded in $\mathbb Z^3$. Hall's matching theorem implies that every perfect matching of the set of vertices hit by these stripes is obtained by choosing one of the two possible perfect matchings within each stripe.}
    \label{fig:stripedcube}
\end{figure}

If we try to take a fine mesh limit of this example, we get a region with zero volume in $\mathbb R^3$. The question is whether this kind of phenomenon can arise for regions with non-zero volume that are ordinary subsets of $\mathbb R^3$ (as opposed to, say, 3D analogs of the slanted cylinder). ``Thickening'' the ribbon-like example above (by taking the union of multiple ribbon layers, taken on different concentric cubes) does not seem to work, as a tiling obtained that way need not be locally frozen (trit moves may be possible at the corners).

\end{problem}

\begin{problem}\label{prob:limit_shape_values}
Is there a region $R\subset \m R^3$ and boundary asymptotic flow $b$ where the $\Ent$ maximizing flow takes values on a face of $\partial \mc O$ within a strict subset of the interior of $R$? Or within a strict subset of all of $R$? (Does this happen on the boundary of the Aztec octahedron?) For an example of $(R,b)$ where the limit shape takes values in a face of $\partial \mc O$ on all of $R$ (i.e., not only a strict subset of the region), see Example \ref{ex:hb_false}. 
\end{problem}

\begin{problem}
    Given a region $R\subset \m R^3$ and a flow $b$ on $\partial R$, is there an elegant way to describe the conditions under which $AF(R,b)$ is nonempty? In other words, under what conditions does $b$ admit an extension to $R$ which is an asymptotic flow (measurable, divergence-free, and valued in $\mc O$)? Recall that if $R\subset \m Z^3$ is a discrete region and $b$ is a discrete vector field on $\partial R\subset \m Z^3$, then Hall's matching theorem or the min cut, max flow principle say that $b$ is extendable if and only if there is no \textit{counterexample region} $U\subset R$ such that $S = \partial U\cap R$ is a type of discrete minimal surface, and any extension of $b$ would be required to have too much flow across $S$. Is there a continuum version of Hall's matching theorem and the min cut, max flow principle that characterizes when $b$ on $\partial R$ can be extended to an asymptotic flow---i.e., a statement that $b$ is extendable as long as there is no ``minimal surface" cut $S$ such that any extension of $b$ would be required to have too much flow across $S$? See e.g.\ \cite{Strang2010} for discussion of related problems.

    A particularly simple case of interest is that where $R$ is a polyhedron and the boundary value $b$ is constant on the faces of the polyhedron.
\end{problem}

\begin{problem} Let us try to generalize Aztec prism example from the introduction. Suppose $R\subset \m R^3$ is a prism of the form $S \times [0,1]$ (where $S$ is a two-dimensional region) and $b$ is equal to $0$ on the top and bottom faces of the prism. Alternatively, one may identify the top and bottom of the prism, to obtain $S$ cross a circle. We expect that one can show from basic symmetry that the $\Ent$ minimizing flow $g$ has zero flow in the vertical direction, that its restriction to a slice $S \times \{x \}$ does not depend on $x$.  Understanding the behavior within this slice is then a two-dimensional flow problem. Is this behavior the same as what one would see for the corresponding two-dimensional dimer model on the slice?
\end{problem}

\begin{problem}
What can be said about the interfaces between frozen regions on the boundaries of limit shapes (such as those apparent in the figures in the introduction)? How large do the fluctuations tend to be?
\end{problem} 

\begin{problem}
The 2D Aztec diamond has four frozen regions (one for each vertex) and the 3D Aztec octahedron appears to have twelve frozen regions (one for each edge). One might guess that in the $k$-dimensional analog we would see $4 \binom{k}{2}$ frozen regions, one for each co-dimension-two boundary simplex. Can anything along these lines be proved, either in 3D or in higher dimensions?
\end{problem} 

\begin{problem}
In two dimensions, the large deviation theory \cite{cohn2001variational} can be generalized to many other types of random height function models \cite{AST_2005__304__R1_0}, even though for most of these models we cannot compute $\ent$ explicitly. For example, instead of having height differences constrained to $\{3/4,-1/4 \}$ as in the 2D dimer model, they could be constrained to some other set, like $\{-1,1\}$ or $\{-1,0,1\}$. That raises a natural question for us. To what other discrete divergence-free flow models in 3D (or in higher dimensions) can the results of this paper be extended? For example, what if instead of restricting the even-to-odd flows to lie in $\{5/6,-1/6\}$ we restrict them to $\{-1,1\}$ or to some other set? Would the max-flow-min-cut theory available in these settings allow us to complete the steps that relied on Hall's matching theorem in this paper? Could the ``chain swapping'' arguments used in this paper be adapted to establish the strict concavity of $\ent$ in these settings?
\end{problem}

As we mentioned earlier, given a lattice flow $v$ on $\mathbb Z^3$ one can define a discrete ``curl'' that assigns to each oriented plaquette---which corresponds to an oriented edge of the dual lattice---the flow of $v$ around that plaquette.  One can then define the {\em vector potential} $A_\tau$ on the dual lattice of $\mathbb Z^3$ whose curl corresponds to the flow $f_\tau$ on $\mathbb Z^3$, though $A_\tau$ is {\em a priori} only determined up to the addition of a vector field with curl zero. Restricting the flow $f_\tau$ to take values in $\{5/6,-1/6\}$ is then equivalent to restricting the curl of $A_\tau$ to lie in $\{5/6,-1/6\}$.

Readers familiar with lattice gauge theory (see \cite{chatterjee2019yang} for a survey) can tell a similar story about a constrained lattice {\em connection} with gauge group $U(1)$ (the complex unit circle) as follows.  Fix some small constant $\alpha \in (0,\pi)$ and constrain the holonomy around every plaquette (oriented clockwise as one looks from the even to the odd incident cube) to lie in $\{e^{5 \alpha i / 6}, e^{-\alpha i /6} \}$.  Then define a domino to be a pair of cubes separated by a plaquette with holonomy $e^{5 \alpha i / 6}$. Since the product of oriented holonomies around a single cube is zero, each interior cube belongs to exactly one domino, and (up to boundary conditions) one expects a uniformly random constrained connection to correspond to a uniformly random 3D domino tiling. 

\begin{problem} \label{prob:gauge}
Can our large deviation theory be extended to any other types of holonomy-constrained random connections, Abelian or otherwise? Are there other aspects of gauge theory for which this perspective is useful?
\end{problem}

Although we have not explained this in detail, we believe that all the arguments of this paper will still apply to the setting where the edges are periodically ``weighted’’ in the manner described in \cite{kenyon2006dimers}.  For example, one might consider a weighting that strongly favors edges whose vertices have the form $(x,y,z)$ and $(x,y,z+1)$ where $z$ is even. If the weight is strong enough, one can use a standard Peierls argument to show if we are given two independent samples from the minimal-specific-free-energy ergodic Gibbs measure, then there are a.s.\ no infinite paths in their union.

\begin{problem}
If we allow periodic weights, as in \cite{kenyon2006dimers}, what can we say about the phase diagram? Are there some choices of weights for which the double dimer 
model a.s.\ contains no infinite paths and others for which it a.s.\ contains infinitely many infinite paths? Are there any other possibilities? Can one say, even on a rough qualitative level, how similar the function $\ent$ described here (and its periodically-edge-weighted analogs) will be to the surface tension functions described in \cite{kenyon2006dimers} (which are interesting algebraic geometry constructions with finitely many singular cusps)? In this generalized setting, can one say anything about the magnitude of the typical fluctuations of a random flow, or how such fluctuations might depend on the edge weights?
\end{problem}

\bibliographystyle{alpha}
\bibliography{dimer}

\printindex

\end{document}